\documentclass{thesis}


\usepackage[latin2]{inputenc}
\usepackage{euscript}
\usepackage{amsmath}
\usepackage{amsthm}
\usepackage{epsfig}
\usepackage{amssymb}
\usepackage{amscd}
\usepackage{epic}
\usepackage[matrix,arrow]{xy}
\usepackage{enumerate}
\usepackage{graphicx}
\usepackage{array}

\usepackage[usenames,dvipsnames]{pstricks}
\usepackage{pst-node} 
\usepackage{auto-pst-pdf}
\usepackage{pst-grad} 
\usepackage{pst-plot} 




\usepackage{hyperref}
\hypersetup{colorlinks,bookmarksnumbered,linkcolor=blue,citecolor=blue,menucolor=blue}


\makeatletter
\hypersetup{pdftitle=\@title,pdfauthor=\@author}
\makeatother
%
%
%
%

\theoremstyle{plain}
\newtheorem{theorem}[subsubsection]{Theorem}
\newtheorem{lemma}[subsubsection]{Lemma}
\newtheorem{proposition}[subsubsection]{Proposition}
\newtheorem{corollary}[subsubsection]{Corollary}

\theoremstyle{definition}
\newtheorem{definition}[subsubsection]{Definition}
\newtheorem{example}[subsubsection]{Example}
\newtheorem{examples}[subsubsection]{Examples}
\newtheorem{remark}[subsubsection]{Remark}
\newtheorem{eexample}[subsection]{Example}

\newtheorem*{defnonum}{Definition}
\newtheorem{bekezdes}[subsubsection]{}
\newtheorem{La}[subsubsection]{Laufer algorithm}
\newtheorem*{SWI}{SWI Conjecture}
\newtheorem*{GSWI}{GSWI Conjecture}
\newtheorem*{Nconj}{N\'emethi's Conjecture}


\DeclareMathOperator{\Hom}{{\rm Hom}}

\DeclareMathOperator{\rank}{{\rm rank}}

\newcommand{\et}{{\mathcal T}}
\newcommand{\bH}{{\mathbb H}}

\def\C{\mathbb C}
\def\Q{\mathbb Q}
\def\R{\mathbb R}
\def\Z{\mathbb Z}

\def\N{\mathbb N}
\def\bS{\mathbb S}

\newcommand{\cale}{{\mathcal E}}

\newcommand{\calv}{{\mathcal V}}

\newcommand{\calt}{{\mathcal T}}

\newcommand{\calj}{{\mathcal J}}
\newcommand{\ocalj}{\overline{{\mathcal J}}}
\newcommand{\scalj}{{\mathcal J}^*}
\newcommand{\calQ}{{\mathcal Q}}
\newcommand{\calF}{{\mathcal F}}
\newcommand{\calC}{{\mathcal C}}
\newcommand{\calS}{{\mathcal S}}
\newcommand{\ocalS}{\overline{{\mathcal S}}}

\newcommand{\calP}{\mathcal{P}}

\newcommand{\cals}{{\bf s}}
\newcommand{\calst}{{\bf st}}

\newcommand{\zc}{\Z^2({\bf c})}
\newcommand{\aalpha}{\ell}
\newcommand{\K}{k_{can}}

\newcommand{\m}{\mathfrak{m}}

\newcommand{\vasi}{\mathbf{i}}
\newcommand{\vast}{\mathbf{t}}

\newcommand{\lk}{l'_{[k]}}

\newcommand{\ii}{({\bf i},\overline{I})}

\newcommand{\iij}{({\bf i},\overline{J})}
\newcommand{\bt}{{\bf t}}

\newcommand{\frsw}{\mathfrak{sw}}


\newcommand{\cO}{\mathcal{O}}
\newcommand{\cH}{\mathcal{H}}

\newcommand{\cV}{\mathcal{V}}

\newcommand{\cF}{\mathcal{F}}
\newcommand{\cP}{\mathcal{P}}

\newcommand{\cS}{\mathcal{S}}
\newcommand{\cE}{\mathcal{E}}
\newcommand{\cG}{\mathcal{G}}
\newcommand{\calN}{\mathcal{N}}

\newcommand{\calT}{\mathcal{T}}

\newcommand{\calR}{\mathcal{R}}

\newcommand{\calW}{{\mathcal W}}
\newcommand{\calK}{\mathcal{K}}

\newcommand{\bz}{{\bf z}}

\newcommand{\setZ}{\mathbb{Z}}

\def\bH{\mathbb H}

\newcommand{\hh}{\mathfrak{h}}
\newcommand{\bl}{{\bf l}}\newcommand{\bll}{l}

\newcommand{\frd}{\mathfrak{d}}\newcommand{\frdd}{\mathfrak{d}}
\newcommand{\fro}{\mathfrak{o}}
\newcommand{\fra}{\mathfrak{a}}
\newcommand{\bc}{{\bf c}}
\newcommand{\tc}{\widetilde{c}}
\newcommand{\wtt}{\widetilde{t}}
\newcommand{\well}{\widetilde{\aalpha}}

\def\C{\mathbb C}
\def\Q{\mathbb Q}
\def\R{\mathbb R}
\def\Z{\mathbb Z}

\def\N{\mathbb N}
\def\bS{\mathbb S}

\newcommand{\calX}{{\mathcal X}}
\newcommand{\calL}{{\mathcal L}}

\newcommand{\frI}{{\frak I}}
\newcommand{\frv}{\mathfrak{v}}
\newcommand{\frl}{\mathfrak{l}}
\newcommand{\fH}{\mathfrak{H}}
\newcommand{\reds}{\dagger}

\newcommand{\bms}{\mbox{\boldmath$s$}}

\newcommand{\sol}{\mathfrak{Sol}}

\newcommand{\labelpar}{\label}

\author{Tam\'as L\'aszl\'o}

\adviser{Professor Andr\'as N\'emethi}

\title{Lattice cohomology and Seiberg--Witten invariants of normal surface singularities}

\abstract{

One of the main questions in the theory of normal surface singularities is to understand 
the relations between their geometry and topology. 

The lattice cohomology is an important tool in the study of topological properties of a 
plumbed 3--manifold $M$ associated with a connected negative definite plumbing graph $G$. It connects 
the topological properties with analytic ones when $M$ is realized as a 
singularity link, i.e. when $G$ is a good resolution graph of the singularity. 
Its computation is based on the (Riemann--Roch)
weights of the lattice points of
$\Z^s$, where $s$ is the number of vertices of $G$.

The first part of the thesis reduces the rank of this lattice to the number of `bad' vertices of the graph.
Usually, the geometry/topology of $M$ is encoded exactly by these `bad' vertices and 
their number measures how far the plumbing graph stays from a rational one.

In the second part, we identify the following three objects: the Seiberg--Witten invariant of a 
plumbed 3--manifold,
the periodic constant of its topological Poincar\'e series, 
and a coefficient of an equivariant multivariable Ehrhart polynomial. 
For this, we construct the corresponding polytope from the plumbing graph, together with an action of
$H_1(M,\Z)$, and we develop Ehrhart theory for them. Moreover, we generalize the concept of the 
periodic constant for multivariable series and establish 
its corresponding properties. 

The effect of the reduction appears also at the level of the multivariable 
topological Poincar\'e series, simplifying the corresponding polytope and the Ehrhart theory as well.
We end the thesis with detailed calculations and examples.}

\acknowledgements{

First of all I would like to express my most sincere gratitude and appreciation to my adviser, Professor 
N\'emethi Andr\'as, who introduced me into this nice subject and showed me how the life of a mathematician 
looks like. 

The writing of this thesis would not have been possible without his 
patience, support and encouragement, and without all the time we spent together working on the problems. 
I really enjoyed his guidance and I look forward to continuing to learn and work with him in the years to come.

I am very thankful to the community of the Department of Mathematics and Applications at the Central European 
University, and of the R\'enyi Institute of Mathematics. Special thanks go to Stipsicz Andr\'as for his 
support and useful discussions. It has been an incredible privilege for me to be part of these communities.
\vspace{0.5cm}

Szeretn\'ek k\"osz\"onetet mondani az eg\'esz csal\'adomnak, de legf\H ok\'eppen \'edes\-any\'am\-nak, L\'aszl\'o 
Juliann\'anak, \'es \'edesap\'amnak, L\'aszl\'o Zsigmondnak, a rengeteg szeretet\'ert \'es 
\"onzetlen t\'amogat\'as\'ert, amelyek n\'elk\"ul\"ozhetetlenek voltak tanulm\'anyaim sor\'an \'es 
n\'elk\"ul\"ozhetetlenek napjainkban is.

\vspace{0.5cm}
I am also grateful to my friends and colleagues who make our stay in Budapest pleasant and homelike.

Last, but not least, I dedicate this work to the most important person in my life, my wife Eszter, 
thanking for her unimaginable love and support.}
\dedication{To Eszter}

\begin{document}

\makefrontmatter


\chapter{Introduction}\
\indent The subject of this thesis can be placed in the {\em local singularity theory}, 
which is a meeting point 
in mathematics, where many areas come together, such as algebra, geometry, topology and combinatorics, 
just to mention some of them. 

Before we start to describe this subject, we would like to offer this chapter for non--specialists 
as well, as a survey of this extremely active area of current research, with challenging problems. 
Since a lot of results and directions were developed in the last decades and the presentation of all of them 
would 
be too long, we have to pick some pieces to present here, which make 
the overall clear, but they are also important from our point of view. 
We hope that this chapter will give the frame of the whole picture drawn by the thesis.
\bigskip

In algebraic geometry, the research on the smooth complex algebraic surfaces has a history of more than a hundred years. It 
started with the classification of Enriques and the Italian school. Then in the 60's, 
a `modern' classification was provided by Kodaira, which uses the new techniques of algebraic geometry 
and topology, e.g. sheaves, cohomologies and characteristic classes, with paying particular attention to 
the relationships of the analytic structures with topological invariants of the underlying 
smooth 4--manifolds. Typical examples were the topological characterization of rational surfaces 
or of the K3 surfaces. Later, the works of Donaldson and Witten (on 4--dimensional Seiberg--Witten 
theories) gave powerful tools for this comparison research.

In parallel with these theories, the study of singular surfaces started, giving birth to the local 
singularity theory too. This theory investigates the local behavior of the singularities and has to solve 
new problems in the shadow of the old question:
\begin{center}
{\bf what is the relation between the analytic and topological structures?} 
\end{center}
This is the guiding 
question of our research too, targeting {\em normal surface singularities}.

\begin{defnonum}
 Let $f_1,\dots,f_m:(\C^n,0)\rightarrow (\C,0)$ be germs of analytic functions. 
Then the germ of the common zero set 
  $$(X,0)=(\{f_1=\dots=f_m=0\},0)\subset (\C^n,0)$$ is called a 
{\em complex surface singularity}, if the 
  rank of the Jacobian matrix $J(x):=(\partial f_i/\partial z_j(x))_{i,j}$ is $n-2$ 
for any smooth point $x\in X$. Moreover, if $\rank J(0)<n-2$, but $\rank J(x)=n-2$ for any point 
$x\in X\setminus 0$ , we say that $(X,0)$ has an {\em isolated singularity} at the origin.
\end{defnonum}
In particular, if $m=1$ we talk about {\em 2--dimensional hypersurface singularities} 
and if $m=n-2$, then our object is called a {\em complete intersection surface singularity}. 
Notice that in general, $m$ can be higher then $n-2$ (cf. \cite[1.2]{Nfive}). 

The local ring $\cO_{(X,0)}$ of analytic functions on $(X,0)$ is defined as the quotient 
of the ring $\cO_{(\C^n,0)}$ of power 
series, convergent in a small 
neighbourhood of $0$, by the ideal $(f_1,\dots,f_m)$. Its unique maximal ideal is 
$\mathfrak{m}_{(X,0)}=(z_1,\dots,z_n)$. 
This ring determines the singularity up to a local analytic isomorphism. 
Let us provide the following example: assume that $\Z_p$ acts on $\C^2$ by 
$\xi\ast(z_1,z_2):=(\xi z_1,\xi^{-1} z_2)$. This induces an action on 
$\cO_{(\C^2,0)}=\C\{z_1,z_2\}$, for which the ring of invariants is $\langle z_1^p,z_1z_2,z_2^p\rangle$. 
This is isomorphic with $\C\{u,v,w\}/\langle uw=v^p\rangle$, hence the geometric quotient 
$(\C^2,0)/\Z_p\simeq \{(u,v,w)\in (\C^3,0) \ : \ uw=v^p\}$ defines a surface singularity.

In any dimension, the {\em normality} condition means that we require $\cO_{(X,0)}$ to be integrally 
closed in its quotient field, or equivalently, a 
bounded holomorphic function defined on $X\setminus 0$ can be extended to a holomorphic function 
defined on $X$. In the case of surface singularities, this condition implies that $(X,0)$ has 
at most an isolated singularity at the origin (see \cite[\S 3]{Lauferb}).  

One can define several invariants from the local ring in order to encode the type of the singularity. 
For example, we mention the 
{\em Hilbert--Samuel function}, or, in particular, the {\em embedding dimension} and the {\em multiplicity}
of $(X,0)$. For their definitions and 
properties we refer to $\cite{Nfive}$.
\bigskip

One may think of a normal surface singularity as an abstract geometric object $(X,0)$ with its local 
ring $\cO_{(X,0)}$ and maximal ideal $\mathfrak{m}_{(X,0)}$, which encode the local analytic type. 
Then the main approach to analyze $(X,0)$ is a `good' resolution 
$\pi:(\widetilde X,E)\rightarrow (X,0)$. Thus, $\widetilde X$ is a smooth surface, 
$\pi$ is proper and maps $\widetilde X \setminus E$ 
isomorphically onto $X\setminus 0$, where the exceptional divisor $E=\pi^{-1}(0)$ is a 
normal crossing divisor. This means that the irreducible 
components $E_i$ are smooth projective curves, intersect each other transversally and 
$E_i\cap E_j \cap E_k=\emptyset$ for 
distinct indices $i,j,k$. 

The numerical analytic invariants of this description might come from two different directions. They can be ranks 
of sheaf--cohomologies of analytic vector bundles on $\widetilde X$. 
The most important in this category, from our viewpoint, is  
the {\em geometric genus}, which can be defined by the following formula 
$$p_g:=\dim H^1(\widetilde X,\cO_{\widetilde X}),$$ 
cf. \ref{AL:1}.
Notice that $p_g$ can be 
expressed on the level of $X$ as well, using 
holomorphic 2--forms (\cite{Laufer72}, \cite[1.4]{Nfive}). 

The other direction is based on the {\em Hilbert--Poincar\'e series} of $\cO_{(X,0)}$ 
associated with $\pi^{-1}(0)$--divisorial multi--filtration. 
We will give the definition of this invariants in Section 
\ref{s:motSW}, since they serve a motivation for the topological counterpart, 
which will be one of the main objects in Chapter \ref{c:SW}.

The resolution makes a bridge with the topological investigation of the normal surface 
singularity, which, as we will see in \ref{ss:link}, is equivalent with the description of its {\em link}. 
This is a special 3--manifold which can be constructed using the {\em dual resolution graph} too, 
via the configuration of the exceptional irreducible curves $E_i$ 
of the resolution (see Section \ref{ss:link}). 
 
It raises the following natural questions: 
\begin{center}\bf 
Is it possible to recover some of the analytic invariants from the link, 
or equivalently, from the resolution graph? 
What kind of statements can be made about the analytic type of a singularity 
with a given topology?
\end{center}

Before we start to discuss the main questions, which motivated a huge amount of work in the last decades, 
we stop for a moment and motivate the reason why we choose the study 
of the surface singularities. 

If $(X,0)$ is a curve singularity, then its link consists of as many disjoint copies of the 
circle $S^1$, as the number of irreducible components of the curve at its singular point. 
Hence, it contains no other information about the analytic type of $(X,0)$. 

We have the same situation 
in higher dimension too: from the point of view of the main questions, the topological information 
encoded by the link is rather poor. To justify this sentence, consider the example of a 
Brieskorn singularity $(X,0)=\{x_1^2+x_2^2+x_3^2+x_4^3=0\}\subset (\C^4,0)$. 
Brieskorn proved in \cite{B66a}, 
that the link is diffeomorphic to $S^5$, but $X$ is far from being smooth. More 
examples can be found also in \cite{B66b}. 

It turned out that the case of surfaces is much more interesting and complicated: {\em one can have many 
analytic types with a given topology}. This suggests that we have to assume some conditions 
on the analytic side in order to investigate the connections. 

%
%

H. Laufer in \cite{Laufer73} (completing the work of Grauert, Brieskorn, Tjurina and Wagreich) gives 
the complete list of those resolution graphs which have a unique analytic structure. 
These are the so--called {\em taut} singularities. He classified even those 
resolution graphs, which support only a finitely many analytic structures, 
they are called {\em pseudo--taut} singularities. This class 
is very restrictive, in the sense that almost all of them are rational. 
Hence, in order to understand the case of more general resolution 
graphs, we don't need a complete topological description, 
just to characterize topologically some of the discrete invariants of $(X,0)$. 

Summerizing the above discussion, Artin and Laufer in 60's and 70's started to determine some of 
the analytic invariants from the graph. They characterized topologically the {\em rational} and 
{\em minimally} elliptic singularities. Laufer believed that this program, 
called the {\em Artin--Laufer program} (Section \ref{s:AL}), stops for more general cases. 
After twenty years, N\'emethi \cite{Nem99} 
clarified the elliptic case completely, and proposed the continuation of the program 
with some extra assumptions. This means that if we pose some analytical and topological 
conditions on the singularities, there is a hope to 
understand the connection between analytic and topological data further. 

We believe that for the continuation of the Artin--Laufer program, i.e. to make topological 
characterizations of some special analytic types, or their invariants, 
we have to find and understand first the topological counterparts of the numerical analytic invariants.
\bigskip

In 2002, the work of N\'emethi and Nicolaescu (\cite{SWI,SWII,SWIII}) suggested a new approach. 
They formulated the so--called {\em Seiberg--Witten invariant 
conjecture} (see Subsection \ref{ss:SWIC}), which relates the geometric genus of the normal surface 
singularity with the Seiberg--Witten invariant of its link. This generalizes the conjecture of 
Neumann and Wahl (\cite{NW}), formulated for complete intersection singularities with 
integral homology sphere links. 

They proved the relation for some `nice' analytic structures, but later Luengo-Velasco, Melle-Hernandez 
and N\'emethi showed that it fails in general (see \ref{ss:SWIC} for details). However, the corrected versions 
transfer us into the world 
of low dimensional topology, and create tools to understand the Seiberg--Witten 
invariants via some homology theories (\ref{ss:swgen}). 
For example, the Seiberg--Witten 
invariants appear as the normalized Euler characteristic of the Seiberg--Witten 
Floer homology of Kronheimer and Mrowka, and of the Heegaard--Floer homology introduced by Ozsv\'ath 
and Szab\'o. These theories had an extreme influence on modern mathematics of the last years, 
and solved a series of open problems and old conjectures related to the classification of the 
smooth 4--manifolds and the theory of knots. 

Motivated by the work of Ozsv\'ath and Szab\'o and the Seiberg--Witten invariant conjecture, N\'emethi 
opened a new channel towards the continuation  of the Artin--Laufer program. 

He constructed a new 
invariant, the {\em graded root}, which is a special tree--graph with vertices labeled by integers. The 
main idea is that the set of topological types, sharing the same graded root, form a family with 
uniform analytic behavior too. N\'emethi conjectures that each family, identified by a root, can be 
uniformly treated at least from the point of view of the analytic invariants as well. The graded roots 
describe and give a model for the Heegaard--Floer homology in the rational and almost rational cases, 
using the {\em computation sequences} of Laufer. This concept gave birth to the {\em theory of lattice 
cohomology}, which is our main research subject in this thesis (Section \ref{s:motforlatcoh}, 
Chapters \ref{ch:2} and \ref{ch:5}).

It is a cohomological theory attached to a lattice defined by the resolution graph of the singularity.
The lattice cohomology is a topological invariant of the singularity link, which has a strong relation 
with the geometry of the exceptional divisors in the resolution. This has even more structures than the 
Heegaard--Floer homology. Nevertheless, by disregarding these extra data, conjecturally they are 
isomorphic (see \ref{ss:HF} for the details about the conjecture).

Moreover, the normalized Euler characteristic of the lattice cohomology equals the 
Seiberg--Witten invariant, and the non--vanishing of higher cohomology modules explains the failure 
and corrects the Seiberg--Witten invariant conjecture in the pathological cases (\ref{ss:SWIrev}).
\bigskip 

The calculation of the lattice cohomology is rather hard, since it is based on the weights of all 
the lattice points. In general, the rank of the lattice is large, it equals the number of vertices 
in the resolution graph.
However, one of the main results of the thesis is the proof of a {\em reduction procedure}, which reduces 
the rank of the lattice to a smaller number. This is the number of {\em `bad' vertices}. It measures 
the topological complexity of the graph (how far is from a rational graph) and encodes the 
geometrical/topological structure of the singularity. This shows that the reduction procedure is not 
just a technical tool, but an optimal way to recover the main geometric structure of the 3--manifold 
and to relate it with the lattice cohomology.
\bigskip

There is another concept which is strongly related to the Seiberg--Witten invariant conjecture and 
connects the geometry with the topology. This is the {\em theory of Hilbert--Poincar\'e series 
associated with the singularities}. Campillo, Delgado and Gusein-Zade studied Hilbert--Poincar\'e series 
associated with a divisorial multi--index filtration on $\cO_{(X,0)}$ (\ref{FM}). Then, N\'emethi 
unified and generalized the formulae of this concept and defined the topological counterpart, the 
{\em multivariable topological Poincar\'e series}, showing their coincidence in some `nice' 
cases. 

It was proven that the {\em constant term} of a quadratic polynomial associated with the topological 
series equals the Seiberg--Witten 
invariant. This shows a strong analogy with the analytic side, where the geometric genus can be 
interpreted in this way. This analogy, together with the interactions between the analytical and 
topological series, puts the Seiberg--Witten invariant conjecture in a new framework. Then, it is 
natural to ask whether the topological Poincar\'e series has a common generalization with the lattice 
cohomology. A simpler question targets those cohomological information which are encoded by the 
Poincar\'e series.

The first step is to recover the Seiberg--Witten invariants from this series. This subject contributes the second main part of this thesis 
(Chapters \ref{c:SW} and \ref{ch:4}) and has very 
interesting final outputs. 

It turnes out that the Seiberg--Witten invariant is the 
{\em multivariable periodic constant} of the 
Poincar\'e series. Moreover, the Ehrhart theory identifies the Seiberg--Witten invariant with a certain 
coefficient of a multivariable equivariant Ehrhart polynomial. Furthermore, 
the reduction procedure applies to this series as well 
and reduces its variables to the variables associated with the bad vertices.

Using this approach, in the cases of series with at most two variables, one can give precise 
algorithm for the calculation of the periodic constants, or equivalently, for the 
Seiberg--Witten invariants. Moreover, the case of two variables has a surprising relation with the 
theory of modules over semigroups and affine monoids.

\chapter{Preliminaries}\

\indent This chapter is devoted to the presentation of the classical definitions and results regarding to the 
topology of normal surface singularities. It provides the first interactions of the 
geometrical and topological settings, presents a conjecture regarding these invariants, 
and last but not least, it 
motivates the subject of the next chapter and of the thesis too, the {\em lattice cohomology and its 
reduction}. Besides the references given in the body of this chapter, we recommend the following classical 
books and lecture notes as well \cite{Milnorbook,Sev02,Dimca,NP,Nfive,Ninv}.

\section{Topology of normal surface singularities}\ 
In this section we give an introduction to the topology of normal surface singularities with 
the definition of the main object, the {\em link} of the singularity, 
and using its key properties, 
we show how one can encode its 
topological data with combinatorial objects.
\subsection{The link of $(X,0)$} \labelpar{ss:link}\
All this topological research area was started with a breakthrough of Milnor \cite{Milnorbook}, regarding complex 
hypersurfaces. Nevertheless, his argument works not only 
in the hypersurface case, but also in general, when we consider 
an arbitrary complex analytic singularity, see \cite{LeDT}. 
In the case of surfaces, the idea is the following.

We consider a normal surface singularity $(X,0)$ embedded into $(\C^n,0)$. Then, if $\epsilon$ is small enough, the 
$(2n-1)$--dimensional sphere $S^{2n-1}_{\epsilon}$ intersects $(X,0)$ transversally and the intersection 
$$M:= X \cap S^{2n-1}_{\epsilon}$$ 
is a closed, oriented $3$--manifold, which does not depend on the embedding and on 
$\epsilon$. M is called the {\em link} of $(X,0)$. 
Moreover, if $B^{2n}_{\epsilon}$ is the $2n$--dimensional ball 
of radius $\epsilon$ around $0$, then one shows that $X\cap B^{2n}_{\epsilon}$ is homeomorphic 
to the cone over $M$, hence the {\em link characterizes completely the local topological type 
of the singularity}. 

An important discovery of Mumford \cite{Mum61} was that if $M$ is simply connected, 
then $X$ is smooth at $0$. Neumann 
\cite{NP} extended this fact as follows: the link of a normal surface singularity can be recovered from its 
fundamental group except two cases, which are completely understood. These exceptions are the Hirzebruch--Jung 
(or cyclic quotient) and the cusp singularities.

The first connection between the analytical and topological properties of $(X,0)$ is 
realized by the {\em resolution of the 
singular point}. The resolution of $(X,0)$ is a holomorphic map $\pi:(\widetilde X,E)\rightarrow (X,0)$ 
with the properties, that $\widetilde X$ is smooth, $\pi$ is proper and maps 
$\widetilde X \setminus E$ isomorphically onto $X\setminus 0$. $E:=\pi^{-1}(0)$ 
is called the {\em exceptional divisor} with irreducible components $\{E_j\}_j$. If, moreover, we 
assume that $E$ is a normal crossing divisor, namely the irreducible 
components $E_j$ are smooth projective curves, intersect each other transversally and 
$E_i\cap E_j \cap E_k=\emptyset$ for distinct indices $i,j,k$, then we talk about {\em good} resolution. One 
can define {\em minimal} (not necessarily good) resolutions as well, if there is no rational smooth 
irreducible component $E_j$ 
with self--intersection number $b_j:=(E_j,E_j)=-1$. But in almost all the cases in our investigation we use 
the good resolution.

To encode the combinatorial data of 
a good resolution, one can associate to it the {\em dual resolution graph} $G(\pi)$ (usually we omit $\pi$ from the notation). 
In this graph the vertices correspond to the irreducible components $E_j$ and the edges represent their 
intersection points. Moreover, we add two weights for every vertex of $G$: the self--intersection number 
$b_j$ and the genus $g_j$ of $E_j$. In this way we may also associate an intersection form $\frI$ whose 
matrix is $(E_i,E_j)_{i,j}$, where $(E_i,E_j)$ is the number of edges connecting 
the two corresponding vertices for $i\neq j$.

The first result, originated from DuVal and Mumford, says that $G$ is connected and $\frI$ is negative definite. 
Then a crucial work of Grauert \cite{G62} shows that every connected 
negative definite weighted dual graph does arise 
from resolving some normal surface singularity $(X,0)$.

$\pi$ identifies $\partial \widetilde X$, the boundary of $\widetilde X$, with $M$. Hence, 
the graph $G$ can be 
regarded as a plumbing graph which makes $M$ to be an $S^1$--plumbed 3--manifold. Using the plumbing 
construction (see e.g. \cite[1.1.9]{Sev02}), any resolution graph $G$ determines $M$ completely. 
Conversely, 
we have to consider the equivalence class of plumbing graphs defined by finite sequences of blow--ups and/or 
blow--downs along rational $(-1)$--curves, since the resolution $\pi$ and its graph are not unique. But 
different resolutions provide equivalent graphs in the aforementioned sense. Then a result of Neumann 
\cite{NP} shows that the oriented diffeomorphism type of $M$ determines completely the equivalence class 
of $G$.

Finally, we define two families of 3--manifolds, which we will be working with throughout the thesis. $M$ is 
called {\em rational homology sphere} ($\Q HS$) if $H_1(M,\Q)=0$. In particular, we say that it is an 
{\em integral homology sphere} ($\Z HS$) if $H_1(M,\Z)=0$. \
Notice that $H_1(M,\Q)$ vanishes if and only if 
the free part of $H_1(M,\Z)$ vanishes. The plumbing construction says that the first Betti number 
$b_1(M)$ is equal to $c(G)+2\sum_j g_j$, where $c(G)$ is the number 
of independent cycles of the graph $G$. Hence, the final conclusion is that
\begin{center}
 $M$ is $\Q HS$ if and only if $G$ is a tree and $g_j=0$ for all $j$.
\end{center}

\subsection{Combinatorics of the resolution/plumbing graphs}\labelpar{intro:comb}\
Let $G$ be a connected negative definite plumbing graph and denote the set of vertices by $\calj$. 
As described in the previous section, 
it can be realized as the resolution graph of some normal surface singularity $(X,0)$, 
and the link $M$ can be considered as the plumbed 3--manifold associated with $G$. 

In the sequel {\bf we assume that $M$ is a $\Q HS$}.

Let $\widetilde{X}$ be the smooth 4-manifold with boundary $M$ obtained either by 
resolution $\pi:\widetilde{X}\to X$ of $(X,0)$ with resolution graph $G$, or via 
plumbing disc bundles associated with the vertices of $G$ with Euler number $b_j$ 
(for more details on plumbings we refer to \cite[\textsection 8]{HNK71} or \cite[1.1.9]{Sev02}). 
Since $\widetilde X$ has 
a deformation retract to the bouquet of $|\calj|$ copies of 2--spheres 
$S^2 \vee \dots \vee S^2$, the only non--vanishing homologies are $H_0(\widetilde X,\Z)=\Z$ and 
$H_2(\widetilde X,\Z)=\Z^{|\calj|}$. Moreover, there is an intersection form $\frI$ on $H_2(\widetilde X,\Z)$.
Since we identify the homology classes of the zero sections with $\{E_j\}_{j\in {\mathcal J}}$, 
the matrix of $\frI$ with respect to the basis $\{E_j\}_{j\in {\mathcal J}}$ is given by 
$$\frI_{ij}=\left\{
\begin{array}{cl}
b_j & \mbox{if} \ i=j\\
1 & \mbox{if $i\neq j$ and the corresponding vertices are connected by an edge}\\
0 & \mbox{if $i\neq j$ otherwise}.
\end{array}\right.
$$
We know that in our case $\frI$ is non--degenerate, negative definite and makes 
$L:=H_2(\widetilde{X},\Z)$ to be a lattice generated by $\{E_j\}_{j\in {\mathcal J}}$. 
Let $L':=Hom(L,\Z)$ be the dual of $L$.
The fact that the homology of $\widetilde X$ has no torsion part and the Poincar\'e--Lefschetz duality 
imply that $L'\cong H^2(\widetilde{X},\Z)\cong H_2(\widetilde{X},M,\Z)$. Then the begining of the 
long exact relative homology sequence for the pair $(\widetilde X,M)$ splits into the short exact 
sequence
$$0\longrightarrow L\stackrel{\iota}{\longrightarrow}
L'{\longrightarrow} H \longrightarrow 0,$$
where $H:=L'/L=H_1(M,\Z)$. The morphism $\iota:L\to L'$ can be identified with 
$L\to Hom(L,\Z)$ given by $l\mapsto (l,\cdot)$. The intersection form has a natural 
extension to $L_{\Q}=L\otimes \Q$ and we can regard $L'$ as a sublattice of $L_{\Q}$ in a way that 
$L'=\{l'\in L\otimes \Q\,:\, (l',L)\subseteq  \Z\}$. For conventional reason, one may choose 
the generators of $L'$ to be the (anti)dual elements $E_j^*$ defined via
$(E_j^*,E_i)=-\delta_{ji}$ (the negative of the Kronecker symbol). Clearly, the
coefficients of $E^*_j$ are the columns of $-\frI^{-1}$, and the negative definiteness of $\frI$ 
guarantees that  
\begin{equation}\label{eq:POS}
\mbox{all the entries of $E^*_j$ are strict positive.}
\end{equation}
We will also set $\det(G):=\det(-\frI)$ to be the determinant associated with the graph $G$.

\bekezdes{\bf Cycles.}\labelpar{comb:cycles} 
The elements of $L_{\Q}$ are called {\em rational cycles}. 
There is a {\em natural ordering} of them: $l'_1\leq l'_2$ if
$l'_{1j}\leq l'_{2j}$ for all $j\in {\mathcal J}$. Moreover, we say that $l'$ is 
{\em effective} if $l'\geq 0$.
If $l'_i=\sum_jl'_{ij}E_j$ for $i\in \{1,2\}$, then we write
$\min\{l'_1,l'_2\}:= \sum_j\min\{l'_{1j},l'_{2j}\}E_j$. Furthermore,
if $l'=\sum_jl'_jE_j$ then we set $|l'|:=\{j\in {\mathcal J}\,:\, l'_j\not=0\}$ for the
{\em support} of $l'$.

\bekezdes{\bf Characteristic elements and $spin^c$--structures of $M$.}\labelpar{comb:chandspinc}

We define the set of characteristic elements in $L'$ by
$$Char:=\{k\in L': \, (k,x)+(x,x)\in 2\Z \ \mbox{for any $x\in L$}\}.$$
There is a unique rational cycle $\K\in L'$ which satisfies the system of
{\it adjunction relations} 
\begin{equation}\label{eq:adj}
(\K,E_j)=-b_j- 2 \ \mbox{for all} \ j\in \calj,
\end{equation}
and it is called the  {\em canonical cycle}. Then $Char=\K+2L'$ and there is
a natural action of $L$ on $Char$ by $l*k:=k+2l$, whose orbits are
of type $k+2L$. Then $H=L'/L$ acts freely and transitively on the
set of orbits by $[l']*(k+2L):=k+2l'+2L$.

Consider the tangent bundle $T\widetilde X$ of the oriented 4--manifold $\widetilde X$ 
(we can pick a Riemannian metric as well). Then $T\widetilde X$ determines an orthonormal 
frame bundle (principal $O(4)$--bundle) which we denote by $F_O(\widetilde X)$. It is 
well known that the orientability of $\widetilde X$ means that this bundle can be reduced to 
an $SO(4)$--bundle, making the fibers connected. Can be thought in a way, that any 
trivialization of the bundle over the disconnected $0$--skeleton of $\widetilde X$ can be 
extended to a trivialization over the connected $1$--skeleton. In this sense, $spin$ and 
$spin^c$--structures are generalizations of the orientation. 

A $spin$--structure on 
$\widetilde X$ (more precisely on $T\widetilde X$) means that the trivialization of the tangent 
bundle can be extended to the $2$--skeleton. Then the $spin^c$--structure is a `complexified' version 
of that: we say that $\widetilde X$ has a $spin^c$--structure if there exists a  
complex line bundle $\calL$ so that $T\widetilde X \oplus \calL$ 
has a $spin$--structure. This $\calL$ is called the {\em determinantal line bundle} 
of the $spin^c$--structure. If $\widetilde X$ admits a $spin$--structure, 
then using the fiber product one can construct a {\em canonical} $spin^c$--structure 
as well. This can be done also when an almost complex structure is given.
(More details regarding of these definitions and constructions can be found in 
\cite{GS,M95}.)

By \cite[Proposition 2.4.16]{GS}, the fact that in our case $L'=H^2(\widetilde X,\Z)$ has no 
$2$--torsion implies that $\calL$ determines the $spin^c$--structure, and the first Chern class 
(of $\calL$) realizes an identification
between the set of $spin^c$--structures $Spin^c(\widetilde{X})$ on
$\widetilde{X}$ and $Char\subseteq L'$. Moreover, 
$Spin^c(\widetilde{X})$ is an $L'$ torsor compatible with the
above action of $L'$ on $Char$.

If we look at the boundary, the image of the restriction $Spin^c(\widetilde{X})
\to Spin^c(M)$ consists of exactly those $spin^c$--structures on $M$, whose 
Chern classes are the restrictions $L'\to H^2(M,\Z)\cong H_1(M,\Z)$, 
i.e. are the torsion elements in $H$. \newline
Therefore, in our situation, 
all the  $spin^c$--structures on
$M$ are obtained by restriction,
$Spin^c(M)$ is an $H$ torsor, and the actions are compatible with the factorization
$L'\to H$. Hence, one has
an identification of $Spin^c(M)$  with the set of $L$--orbits of
$Char$, and this identification is compatible with the
action of $H$ on both sets. In this way, any
$spin^c$-structure of $M$ will be represented by 
$[k]:=k+2L\subseteq Char$. The canonical $spin^c$--structure corresponds to $[-\K]$, moreover 
$[k]$ has  the form $k_{can}+2(l'+L)$ for some $l'\in L'$.

\bekezdes{\bf The distinguished representatives of $[k]$.}\labelpar{ss:distin}
Notice that if we look at the (anti)canonical $spin^c$--structure $[\K]$, there is a special 
element in this orbit, namely the canonical cycle $\K$. In the following, we generalize this 
fact for all $[k]$: 
among all the characteristic elements in $[k]$ we will choose a very special one.

We define first the {\em Lipman (or anti--nef) cone} 
\begin{equation}\label{eq:Lipman}
\calS':=\{l'\in L' \ : \ (l',E_j)\leq0 \ \mbox{ for any }j\in\calj\}.
\end{equation}
Since $\frI$ is negative definite, if $l'\in \calS'$ then $l'\geq 0$.
Then we have the following lemma:
\begin{lemma}\emph{(\cite[5.4]{OSZINV})}\labelpar{def:KR}
 If we fix $[k]=\K+2(l'+L)$, there is a unique minimal element $l'_{[k]}$ of $(l'+L)\cap \calS'$.
\end{lemma}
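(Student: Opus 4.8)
The plan is to establish the statement in three steps: the set $(l'+L)\cap\calS'$ is nonempty; it possesses minimal elements for the coordinatewise order $\le$; and such a minimal element is unique. Only the last step carries genuine content, and it rests on the fact that $\calS'$ is stable under the coordinatewise minimum.

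\textbf{Nonemptiness and existence of a minimal element.} For nonemptiness I would exploit the antidual generators: since $(E_i^*,E_j)=-\delta_{ij}\le 0$, we have $(\sum_i E_i^*,E_j)=-1$ for every $j\in\calj$. Let $d$ be the order of the class of $\sum_i E_i^*$ in $H=L'/L$, so that $d\sum_i E_i^*\in L$. For any integer $m$ with $md>\max_j(l',E_j)$, the cycle $x_0:=l'+m\,d\sum_i E_i^*$ lies in $l'+L$ and satisfies $(x_0,E_j)=(l',E_j)-md<0$ for all $j$; hence $x_0\in(l'+L)\cap\calS'$. For the existence of minimal elements, recall from the text that $\calS'$ consists of effective cycles, so every element of $(l'+L)\cap\calS'$ is $\ge 0$; since this set lies in the translate $l'+L$ of a lattice, the elements that are $\le x_0$ in it form a finite set, so a minimal element exists.

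\textbf{Stability under $\min$ and uniqueness.} The key lemma I would prove is: if $x,y\in\calS'$ then $z:=\min\{x,y\}\in\calS'$. Fixing $j$ and assuming without loss of generality $z_j=x_j$, one has $z-x=-\sum_{i\ne j,\ x_i>y_i}(x_i-y_i)E_i$, whence
$$(z,E_j)=(x,E_j)-\sum_{i\ne j,\ x_i>y_i}(x_i-y_i)(E_i,E_j)\le (x,E_j)\le 0,$$
using $(E_i,E_j)\ge 0$ for $i\ne j$ and $x_i-y_i>0$ in the sum. Moreover, if $x$ and $y$ lie in the same coset modulo $L$, then all differences $x_i-y_i$ are integers, so $z-x\in L$ and hence $z\in l'+L$. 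Now if $x,y$ were both minimal in $(l'+L)\cap\calS'$, then $z=\min\{x,y\}$ would again belong to $(l'+L)\cap\calS'$ while $z\le x$ and $z\le y$, forcing $z=x=y$ by minimality. This proves uniqueness.

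The main obstacle — though it is a small one — is the bookkeeping in the last step: one must check that the coordinatewise minimum does not leave the coset $l'+L$ (this is exactly where integrality of $x-y$ enters) and, simultaneously, that the off-diagonal sign $(E_i,E_j)\ge 0$ keeps it inside $\calS'$. Once these two observations are in place, the unique minimal element is $l'_{[k]}:=\min$ of $(l'+L)\cap\calS'$, and the argument is complete.
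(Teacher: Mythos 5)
Your proof is correct, and it follows exactly the standard argument of the cited source: the key point that $\calS'$ (intersected with a fixed coset of $L$) is closed under coordinatewise $\min$ — using $(E_i,E_j)\geq 0$ for $i\neq j$ — is precisely the mechanism the paper itself uses in the analogous uniqueness step of Proposition \ref{lemF1}. Nothing is missing; the nonemptiness via $l'+md\sum_iE_i^*$ and the finiteness of $\{y\in l'+L : 0\le y\le x_0\}$ are both sound.
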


\begin{definition}(\cite[5.5]{OSZINV})\labelpar{distrep}
For any class $[k]$ we define the {\em distinguished representative} 
$k_r:=k_{can}+2l'_{[k]}$.
\end{definition}
For example, since the minimal element  of  $L\cap \calS'$ is the  zero cycle,
we get  $l'_{[k_{can}]}=0$, and the distinguished representative in $[k_{can}]$
is  the canonical cycle $k_{can}$ itself. In general, $l'_{[k]}\geq 0$.
For their importance see Section \ref{red:section}, and further properties 
can be found in \cite{OSZINV,Nlat,Ng}. This motivates also to partition the elements of 
the Lipman cone into different classes $[k]$, therefore we define
\begin{equation}\label{eq:Lipmank}
 \calS_{[k]}:=\{l\in L \ : \ (l+\lk,E_j)\leq 0 \ \mbox{ for any }j\in\calj\}.
\end{equation}

\section{The Artin--Laufer program. Case of rational singularities}\labelpar{s:AL}

\subsection{Algebro--geometric definitions and preliminaries}\labelpar{AL:1}\
The aim of this section is to introduce some tools from the analytical (algebro)--geometric point of view 
for the study of the normal surface singularity $(X,0)$. Since our work restricts to the topology of $(X,0)$, this description will be 
rather sketchy: we need just those parts, which motivate the names and notations in \ref{intro:comb} 
and create the main tools connecting the geometry and topology of $(X,0)$. For more details regarding 
of this section, we recommend some general references such as \cite{Nfive} and \cite{BPVVbook}.

\bekezdes\labelpar{AL:11} 
We start with a resolution $\pi:\widetilde X\to X$. The group of divisors $Div(\widetilde X)$ 
of $\widetilde X$ consists of formal finite sums $D=\sum_i m_i D_i$, where $D_i$ is an irreducible curve 
on $\widetilde X$ and $m_i \in \Z$. For any divisor $D$, one can say that it is supported on $|D|=
\cup_{m_i\neq 0} D_i$. If we pick a meromorphic function $f$ defined on $\widetilde X$, then 
$(f)=\sum_i m_i D_i$ is a {\em principal divisor}, where $D_i$'s are irreducible components of the zeros 
and the poles of $f$, and $m_i$ is the multiplicity (order of zero, resp. pole) of $f$ along $D_i$. 
Divisors supported  on the exceptional divisor $E$ are called cycles, already defined in \ref{intro:comb}. 
We have seen, that one can define a natural ordering, the effectiveness and the intersection of cycles, which 
is determined by the resolution graph $G$. 

The pullback $f\circ \pi$ of a given analytic function $f:(X,0)\to (\C,0)$ determines 
an effective principal divisor $\mathrm{div}(f\circ\pi)$. 
Let $m_{E_j}(f)$ be the multiplicity of $\mathrm{div}(f\circ\pi)$ 
along $E_j$, then $\mathrm{div}(f\circ\pi)=\sum_{j\in \calj} m_{E_j}(f) E_j + St(f)$, where $St(f)$ is supported 
on the strict transform $\overline{\pi^{-1}(f^{-1}(0)\setminus 0)}$ 
(closure of $\pi^{-1}(f^{-1}(0)\setminus 0)$) of the set $f^{-1}(0)$. Then, 
for such an $f$ and a resolution $\pi$ (encoded by its resolution graph $G$) one can associate the 
cycle 
$$(f)_G=\sum_{j\in \calj} m_{E_j}(f) E_j.$$

In order to get some information on the local ring $\cO_{(X,0)}$ (i.e. about the structure of 
analytic functions $f:(X,0)\to (\C,0)$) from the resolution, we may define the set of 
cycles 
\begin{equation}
 \calS_{an}:=\{(f)_G \ : \ f\in \mathfrak{m}_{(X,0)} \}.
\end{equation}
Then $\calS_{an}$ is an ordered semigroup and if $l_1,l_2\in \calS_{an}$, then $l=\min\{l_1,l_2\}$ 
(defined in \ref{comb:cycles}) is an element of $\calS_{an}$ too. This fact guarantees the existence 
of a unique non--zero minimal element in $\calS_{an}$, which, according to S.S.-T. Yau, is called the 
{\em maximal ideal cycle} of the singularity and it is denoted by $Z_{max}$. One can show 
that $Z_{max}$ (or the whole $\calS_{an}$) depends on the analytic structure of the $(X,0)$. In 
general, it can not be recovered from the topology. However, there are some cases, when this situation 
may happen.

Can be proven, that for any $f\in \mathfrak{m}_{(X,0)}$ one has $(\mathrm{div}(f\circ\pi), E_j)=0$ 
for all $j\in \calj$. This, together with $(St(f), E_j)\geq 0$ imply that 
$((f)_G, E_j)\leq 0$ for every $j\in\calj$. This 
motivates the definition of the `topological candidate' for $\calS_{an}$, namely 
$$\calS_{top}:=\{l\in L \ : \ (l,E_j)\leq 0 \ \forall j\in \calj\}.$$ Notice that this is the 
same as the Lipman cone $\calS_{[\K]}$ (\ref{eq:Lipmank}) defined for $[\K]$.

$\calS_{top}$ shares the same properties as mentioned before for the analytic counterpart. Hence, 
it has a unique non--zero minimal element $Z_{min}$, which was introduced by Artin 
\cite{Artin62,Artin66} and we call it the {\em minimal cycle} or {\em Artin's (fundamental) cycle}. 
Notice that, since $\calS_{an}\subseteq \calS_{top}$, we have $Z_{min}\leq Z_{max}$, where in general 
strict inequality appears.

It turns out that $Z_{min}$ can be calculated easily by an algorithm on the graph $G$, 
established by Laufer \cite{Laufer72}. This is fundamental from the point of view of the generalized 
Laufer sequences which will be discussed in Section \ref{red:gLauferseq}. 

\begin{La}\labelpar{La}One constructs a sequence $\{z_n\}_{n=1}^t$ of cycles as follows. 
 \begin{enumerate}
  \item Start with a cycle $z_1=E_j$ for some $j\in \calj$.
  \item If $z_n$ is already constructed for some $n>0$ and there exists some $E_{j(n)}$ 
for which $(z_n, E_{j(n)})>0$, then set $z_{n+1}=z_n+E_{j(n)}$.
  \item If $(z_n, E_j)\leq 0$ for all $j$, then stop and $z_n$ gives $Z_{min}$.
 \end{enumerate}
\end{La}

\bekezdes\labelpar{CohSh} Some invariants of the geometry can be deduced from the cohomology 
of sheaves on $\widetilde X$. For example, consider $\cO_{\widetilde X}$, the sheaf of 
holomorphic functions on $\widetilde X$ and $\cO^*_{\widetilde X}$, 
the subsheaf of invertible functions. 
We may also consider the group $Pic(\widetilde X)$ of holomorphic line bundles on $\widetilde X$ 
(modulo isomorphism), which is naturally isomorphic to $H^1(\widetilde X, \cO^*_{\widetilde X})$. 
Notice that the groups $H^1(\widetilde X, \cO_{\widetilde X})$, or 
$H^1(\widetilde X, \cO^*_{\widetilde X})$ does not depend 
on the resolution $\pi:\widetilde X\to X$.

The analytic invariant 
$h^1(\widetilde X, \cO_{\widetilde X}):=\dim H^1(\widetilde X, \cO_{\widetilde X})$ 
is called the {\em geometric genus} of the singularity and will be denoted by $p_g$.

To any integral cycle $l=\sum_j m_j E_j$ we can associate the line bundle $\cO(-l)$, defined by the 
invertible sheaf of holomorphic functions on $\widetilde X$, which vanish of order $m_j$ on $E_j$. 
One can define $\cO_l:=\cO_{\widetilde X}/\cO(-l)$ as well.

According to \cite[\textsection 6]{BPVVbook}, the short exact exponential sequence
$$0\longrightarrow \Z_{\widetilde X}\longrightarrow
\cO_{\widetilde X}\stackrel{exp}{\longrightarrow} \cO^*_{\widetilde X}\longrightarrow 0$$
gives rise to the long exact exponential cohomology sequence, which in our case ($\widetilde X$ 
 is a smooth complex surface, $M$ is $\Q HS$) splits into the short exact sequence 
\begin{equation}\label{eq:ses}
0\longrightarrow \C^{p_g}\longrightarrow
Pic(\widetilde X)\stackrel{c_1}{\longrightarrow} L'\longrightarrow 0,
\end{equation}
where $c_1(\calL)$ is the first Chern class of $\calL \in Pic(\widetilde X)$. Notice that 
$c_1(\cO(-l))=l$, hence $c_1$ admits a group--section $L\to Pic(\widetilde X)$ above 
the subgroup $L$ of $L'$ which, in fact, can be extended naturally to $L'\to 
Pic(\widetilde X)$ (see \cite[3.6]{Nline}), defining the line bundles $\cO(-l')$ for any $l'\in L'$. 

As an example, we denote by $\varOmega^2_{\widetilde X}$ the sheaf of holomorphic $2$--forms on 
$\widetilde X$. It is an element of $Pic(\widetilde X)$, hence it corresponds to a class of divisors. 
Modulo the principal divisors, this class well--defines the {\em canonical divisor} $K_{\widetilde X}$. 
The adjunction formula showes that the intersections with the exceptional divisor can be calculated 
via the equations $(K_{\widetilde X},E_j)=-b_j-2$ for all $j$. $K_{\widetilde X}$ is analytic, but 
one can associate to it the canonical cycle $c_1(\varOmega^2_{\widetilde X})\in L'$, which is the same 
as $k_{can}$ in \ref{comb:chandspinc}. 
\begin{definition}
 We say that $(X,0)$ is {\em Gorenstein} if we can find a section $\widetilde\omega$ of 
 $\varOmega^2_{\widetilde X}$ whose divisor is supported on $E$.
 It is {\em numerically Gorenstein} if the coefficients of $\K$ are integers. \newline
 Notice that the first definition is equivalent with the fact that there is a global section of $\varOmega^2_{X\setminus 0}$ 
 which is nowhere vanishing on $X\setminus 0$, i.e. $\varOmega^2_{X\setminus 0}$ is holomorphically 
 trivial. On the other hand, numerical Gorenstein property means that $\varOmega^2_{X\setminus 0}$ 
is a topologically trivial line bundle. Therefore, if $(X,0)$ is Gorenstein, 
 then it is numerically Gorenstein as well.
The general theory says that numerical Gorenstein property is 
 equivalent with the fact that the first Chern class of $\varOmega^2_{\widetilde X}$ projected to 
$H^2(M,\Z)=H^2(X\setminus 0,\Z)$ is zero. 
In the sense of \ref{comb:chandspinc}, this means that the class 
 of $k_{can}$ in $H$ is zero, hence $k_{can}\in L$.
 
 As a generalization, one can define the {\em $\Q$--Gorenstein} property as well, which requires that some 
 power of $\varOmega^2_{X\setminus 0}$ should be holomorphically trivial.
\end{definition}

The formal neighbourhood theorem implies that $p_g=\dim_{\C}\varprojlim_{l>0} H^1(\widetilde X, \cO_l)$, 
hence if one wishes to compute $p_g$, one has to understand 
$\dim_{\C}H^1(\widetilde X, \cO_l)$ for $l\in L$ and $l>0$. Then, by 
Riemann--Roch theorem, it is known that although $\dim_{\C}H^0(\widetilde X, \cO_l)$ and 
$\dim_{\C}H^1(\widetilde X, \cO_l)$ are analytic, 
\begin{equation}\label{eq:chi}
\chi(l):=\chi(\cO_l)=\dim_{\C}H^0(\widetilde X, \cO_l)-\dim_{\C}H^1(\widetilde X, \cO_l)
\end{equation}
is topological and equal to $-(l,l+\K)/2$. One can consider also the `twisted' version, i.e. we fix 
an $\calL\in Pic(\widetilde X)$ and write $c_1(\calL)=l'\in L'$ for its Chern class. If we set 
$k:=\K-2l'$, then $\chi(\calL\otimes\cO_l)=-(l,l+k)/2$.

In this way, for any characteristic element $k\in Char$ one defines a function  
\begin{equation}\label{eq:RR}
 \chi_k:L\to \Z \ \ \ \mbox{by} \ \ \ \chi_k(l)=-\frac{1}{2}(l, l+k).
\end{equation}
This function will be the main ingredient defining the lattice cohomology in Chapter \ref{ch:2}, 
and somehow hides a deep connection with this analytic theory. 

In the sequel, we keep the notation $\chi$ associated 
with $\K$.

\subsection{Artin--Laufer program}\labelpar{AL:2}\
As we mentioned in the introductory part, it is interesting to investigate special families 
of normal surface singularities, where some of the analytic 
invariants (coming from $\cO_{(X,0)}$) are topological. Since one of the most important 
numerical analytic invariants of $(X,0)$ is the geometric genus $p_g$, we will localize our discussion 
around it. However, at some point we will mention what is happening with some other analytic invariants 
(defined in \ref{AL:1}) as well.

The {\em Artin--Laufer program} has a long history, started with the work of Artin in the 60's. 
In \cite{Artin62,Artin66} he showed that the {\em rational singularities} can 
be characterized completely from the graph (see also \ref{AL:3}).
 He computed even the multiplicity and the embedding dimension of 
these singularities from the topological data.

Then Laufer \cite{Lauferb,Laufer72} developed further the theory. 
Among others, he found an algorithm for finding the 
Artin's cycle $Z_{min}$, which is now called the {\em Laufer algorithm}, see \ref{La},  
and extended the topological characterization of rational singularities to 
{\em minimally elliptic singularities} (\cite{Laufer77}) as well. He also noticed that for more complicated 
singularities the program can not be continued. 

However, N\'emethi's work in \cite{Nem99} pointed out and conjectured that 
if we pose some analytical and topological conditions, e.g. the Gorenstein and $\Q HS$ conditions, 
then some numerical analytic invariants 
(including $p_g$) are topological. 
This was carried out explicitly for elliptic singularities.

In order to achieve results in the topological characterization of the aforementioned invariants, one has 
to find their `good' topological candidate. E.g., in the case of $p_g$ one has to find a topological 
upper bound for any normal surface singularities with $\Q HS$, which is optimal in the sense that for some 
`nice' singularities it yields exactly $p_g$. A good example for this phenomenon is 
the length of the elliptic sequence, the upper bound 
valid for elliptic singularities, introduced and intensively 
studied by Laufer \cite{Laufer77} and S.S.-T. Yau \cite{Yau80}. In Section \ref{SW:2}, we will expose 
another candidate for $p_g$ and give some details on the development of results of the last ten years. 
Another example can be found in Chapter \ref{c:SW}, where we study the topological counterpart of the 
Hilbert--Poincar\'e series associated with some filtrations on $(X,0)$.

But first, we recall the Artin--Laufer characterization of rational singularities, since this 
class is the origo of our research in the topology of normal surface singularities.

\subsection{Rational singularities}\labelpar{AL:3}\
In general (without any assumption on the link), a normal surface singularity $(X,0)$ is called 
{\em rational} if $p_g=0$. The formal neighbourhood theorem immediately implies that this is equivalent 
with $\dim_{\C} H^1(\widetilde X,\cO_l)=0$ for any $l>0$. 
In particular, this induces the vanishing of 
all the genera $g_j$ and that $G$ should be a tree. Hence the link of a rational singularity 
is automatically a $\Q HS$.

Notice that somehow the definition of the rational singularity is motivated by the short exact 
sequence \ref{eq:ses}, since if $p_g=0$, then $Pic(\widetilde X)$ is isomorphic to $L'$, hence 
it is completely topological. Artin \cite{Artin62,Artin66} proved that in this case 
$\calS_{an}=\calS_{top}$ 
and $Z_{max}=Z_{min}$. These equalities were enough to calculate some analytic invariants, such as 
the multiplicity, the embedding dimension and the Hilbert--Samuel function in terms of $Z_{min}$, 
which shows how this cycle controls most of the geometry of the rational singularities.

Moreover, Artin succeeded to replace the vanishing of $\dim_{\C} H^1(\widetilde X,\cO_l)$ by a 
criterion formulated in terms of $\chi(l)$, namely $\chi(l)\geq 1$ for all $l>0$. 
However, in general it is difficult 
to verify this criterion for all positive cycles. Therefore, another breakthrough was that, in fact, it 
is enough to consider only the Artin's cycle $Z_{min}$, since it controls the criterion for all the 
other positive cycles as well. This fact can be formulated also in terms of the Laufer algorithm. 

In the next theorem we summarize the results of Artin and Laufer, characterizing topologically 
the rational singularities.

\begin{theorem}[Topological characterization of rational singularities]\labelpar{thm:rat} 
Let $(X,0)$ be a normal surface singularity, then the following statements are equivalent:
\begin{enumerate}
 \item $p_g=0$;
 \item $\chi(l)\geq 1$ for any $l>0$;
 \item $\chi(Z_{min})=1$;
 \item In the Laufer algorithm \ref{La} one has $(z_n,E_{j(n)})=1$ for every $n\geq 1$.
\end{enumerate}
\end{theorem}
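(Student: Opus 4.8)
The plan is to prove the chain of equivalences $(1)\Leftrightarrow(2)\Leftrightarrow(3)\Leftrightarrow(4)$ by going around a cycle, using the formal neighbourhood theorem together with Riemann--Roch as the bridge between the analytic content of $(1)$ and the purely combinatorial content of $(2)$--$(4)$. For $(1)\Rightarrow(2)$, recall that $p_g=0$ forces $\dim_{\C}H^1(\widetilde X,\cO_l)=0$ for every $l>0$, hence $\chi(l)=\dim_{\C}H^0(\widetilde X,\cO_l)\geq 0$; to upgrade $\geq 0$ to $\geq 1$ one observes that $\cO_l$ has a nonzero global section (the constant function restricted to the non-reduced scheme structure on $|l|$), so $\dim_{\C}H^0(\widetilde X,\cO_l)\geq 1$. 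The implication $(2)\Rightarrow(3)$ is trivial since $Z_{min}>0$, and one must also note $\chi(Z_{min})\le 1$: indeed $H^0(\widetilde X,\cO_{Z_{min}})=\C$ because any global holomorphic function on a connected proper scheme is constant, combined with $\dim H^1\ge 0$, so in fact $\chi(Z_{min})\le \dim H^0(\cO_{Z_{min}})\le 1$; together with $(2)$ this pins it to $1$.

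The more substantial direction is $(3)\Rightarrow(4)$ and $(4)\Rightarrow(2)$, both of which are best handled via the Laufer algorithm \ref{La}. Write the computation sequence $z_1=E_{j(0)}$, $z_{n+1}=z_n+E_{j(n)}$, terminating at $z_t=Z_{min}$, and use the standard additivity
\begin{equation*}
\chi(z_{n+1})=\chi(z_n)+\chi(E_{j(n)})-(z_n,E_{j(n)})=\chi(z_n)+1-(z_n,E_{j(n)}),
\end{equation*}
valid because every $E_{j(n)}$ is a smooth rational curve (so $\chi(E_{j(n)})=1$), which itself uses that $G$ is a tree with all $g_j=0$; one first shows this follows from each of $(3)$ and $(4)$, or one argues within the more general framework. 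Since each step of Laufer's algorithm has $(z_n,E_{j(n)})\ge 1$ by construction, each term $1-(z_n,E_{j(n)})\le 0$, so $\chi$ is non-increasing along the sequence and $\chi(Z_{min})=1+\sum_{n}(1-(z_n,E_{j(n)}))\le \chi(E_{j(0)})=1$. If $\chi(Z_{min})=1$ then every summand must vanish, i.e. $(z_n,E_{j(n)})=1$ for all $n$, giving $(3)\Rightarrow(4)$; conversely $(4)$ makes $\chi$ constant equal to $1$ along the sequence.

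For $(4)\Rightarrow(2)$ one needs that $Z_{min}$ "controls" all positive cycles: given arbitrary $l>0$ one runs a computation sequence that first builds up $Z_{min}$ and then continues, adding base components $E_{j}$ one at a time until reaching $l$ (or $\min\{l, \text{something}\}$, handling the case where $l\not\ge Z_{min}$ by the minimality of $Z_{min}$ in $\calS_{top}$ — any $l>0$ satisfies $l\ge Z_{min}$ is false in general, so one instead uses that outside $\calS_{top}$ there is always a component one can add, and that $\chi$ can only go up by such additions once we are past $Z_{min}$). The key numerical input is again $\chi(l+E_j)=\chi(l)+1-(l,E_j)$, and the monotonicity argument shows $\chi(l)\ge 1$ provided one can always choose the added component with $(l',E_j)\le 1$ at each stage once $(4)$ holds; this propagation step, showing the local condition at $Z_{min}$ spreads to all of $\calS_{top}$ and then to all positive cycles, is the part I expect to be the main obstacle and the one requiring the most care — it is exactly where Laufer's original combinatorial analysis of the algorithm does the real work. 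Finally $(2)\Rightarrow(1)$: by $\chi(l)\ge 1$ and $\chi(l)=h^0(\cO_l)-h^1(\cO_l)$ with $h^0(\cO_l)\le$ (number of times a constant can be lifted) — more precisely one shows $h^0(\cO_l)\le$ something bounded and $h^1(\cO_l)=h^0(\cO_l)-\chi(l)$, and a separate induction on $l$ using the exact sequences $0\to\cO_{E_j}(-l+E_j)\to\cO_l\to\cO_{l-E_j}\to 0$ forces $h^1(\cO_l)=0$ for all $l>0$, whence $p_g=\dim\varprojlim H^1(\widetilde X,\cO_l)=0$.
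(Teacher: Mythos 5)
First, note that the paper itself gives no proof of Theorem \ref{thm:rat}: it is stated explicitly as a summary of classical results of Artin and Laufer, with references to \cite{Artin62,Artin66,Laufer72}, so there is no in-paper argument to compare yours against. Judged on its own terms, your proposal correctly handles the easy parts. The implication $(1)\Rightarrow(2)$ via the formal neighbourhood theorem and the constant section of $\cO_l$ is fine, and the equivalence $(3)\Leftrightarrow(4)$ via the telescoping identity $\chi(z_{n+1})=\chi(z_n)+1-(z_n,E_{j(n)})$ with $(z_n,E_{j(n)})\geq 1$ is exactly the standard argument (with the caveat, which you acknowledge, that one must first extract $g_j=0$ from the hypotheses so that $\chi(E_{j(n)})=1$; the telescoping sum does deliver this, since positive genus would only decrease $\chi(Z_{min})$ further below $1$). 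One local error: your justification that $h^0(\cO_{Z_{min}})\leq 1$ because ``global holomorphic functions on a connected proper scheme are constant'' is false for non-reduced schemes (e.g. $h^0(\cO_{2E})$ can exceed $1$); the correct bound $\chi(Z_{min})\leq 1$ comes from the computation-sequence inequality you write down elsewhere, so this is repairable but the stated reason is wrong.

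The genuine gap is that the two substantive implications are not proved. For $(4)\Rightarrow(2)$ (equivalently $(3)\Rightarrow(2)$), you describe a plan — continue a computation sequence past $Z_{min}$ toward an arbitrary $l>0$ — and then explicitly concede that the propagation step ``is the part I expect to be the main obstacle.'' That step is the entire content of Artin's theorem: one needs an inductive argument (Artin's, using $\chi(D+D')=\chi(D)+\chi(D')-(D,D')$ together with the fact that every $l>0$ can be compared with multiples of $Z_{min}$, or Laufer's reformulation through the algorithm) showing that $\chi(Z_{min})=1$ forces $\chi(l)\geq 1$ for \emph{all} $l>0$, not just those reachable by a Laufer sequence from $Z_{min}$; your sketch does not explain how to handle $l\not\geq Z_{min}$ or why $\chi$ cannot drop below $1$ once one leaves the Lipman cone. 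Likewise $(2)\Rightarrow(1)$ is only gestured at: the vanishing $h^1(\cO_l)=0$ requires an induction choosing at each stage a component $E_j\leq l$ with $(E_j,l-E_j)\leq 1$ (so that $H^1(\cO_{E_j}(E_j-l))=0$), and the existence of such a component is again a consequence of the numerical hypothesis $\chi\geq 1$ that must be argued, not assumed. As it stands the proposal establishes the cycle of equivalences only up to these two missing inductions, which are precisely where Artin's and Laufer's work lies.
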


Starting from the topological point of view, we may set the following definition:

\begin{definition}\labelpar{def:rat}
 If a resolution graph $G$ satisfies one of the last three conditions in the last theorem, 
 we say that $G$ is a {\em rational graph}.
\end{definition}
The class of rational graphs is closed under taking subgraphs and decreasing the 
self--intersections. We observe that $Z_{min}\geq \sum_{j\in\calj}E_j$, a fact which follows from 
the Laufer algorithm and connectedness of $G$. If we have equality, we say that 
$Z_{min}$ is a {\em reduced Artin's cycle}: in this case $(X,0)$ is called {\em minimal 
rational} and $G$ is a {\em minimal rational graph}.

\begin{examples}\
 \begin{enumerate}
  \item Let $G$ be an arbitrary tree with all the genus decorations zero. For any vertex $j$, 
  we define the {\em valency} $\delta_j$ as the number of edges with endpoint $j$. Let 
   $$ b_j=\Bigg\{
  \begin{array}{l}
   -\delta_j \ \ \mbox{if} \ \ \delta_j\neq 1\\
   -2 \ \ \mbox{if} \ \ \delta_j=1
  \end{array} \ \ \mbox{for any} \ j\in \calj.
   $$
   Then the intersection matrix $\frI$ is automatically negative definite and with the 
   Laufer algorithm one can show that $Z_{min}=\sum_{j\in \calj}E_j$ and $\chi(Z_{min})=1$. 
   Hence, any $(X,0)$ with minimal resolution graph $G$ is a minimal rational singularity.
   
  \item Assume that $(X,0)$ is rational and numerically Gorenstein. We can show that 
  $\K=0$, hence the adjunction formulae \ref{eq:adj} implies $b_j=-2$ for all $j$. This 
  graphs are the minimal resolution graphs of rational double points (or ADE singularities). 
 \end{enumerate}
\end{examples}

As we will see in Section \ref{ss:bv}, from topological point of view, the rationality can be 
generalized and all the resolution graphs can be sorted into classes, where {\em lattice cohomology 
will serve as a measuring object for the topology of the corresponding singularities}.

\section{Seiberg--Witten invariants and a conjecture of N\'emethi and Nicolaescu}\labelpar{SW:2}\
Historically, the {\em Seiberg--Witten invariants} were defined for compact smooth $4$--manifolds. 
They were introduced by Witten \cite{Witten} during his investigation with Seiberg on the Seiberg--Witten 
gauge theory in theoretical physics. They are similar to the invariants defined by the 
Donaldson theory, and they provide a strong tool in proving key results of the smooth $4$--manifolds. 
Their advantage is that the main objects which define the numerical data, the moduli spaces of 
solutions of the Seiberg--Witten equations, are mostly compact, hence can be avoided the problems 
coming from the compactification of the moduli spaces in Donaldson theory. 

Besides the original work of Witten, detailed presentation of the theory can be found in the 
book of Nicolaescu \cite{Nic00_1}, see also the book of Morgan \cite{Morgan}.

\subsection{Seiberg--Witten invariants for closed 3--manifolds}\labelpar{ss:swgen}\
In our case, we analyze the Seiberg--Witten invariants for closed $3$--manifolds. 
Considering an additional geometric data $(g,\eta)$ on $M$, where $g$ is a Riemannian metric and 
$\eta$ is a closed $2$--form, one can define the {\em Seiberg--Witten equations} (we refer to 
\cite{Lim,Nic04} for precise definitions and details). Then for any $spin^c$--structure $\sigma$ 
on $M$, the space of solutions divided by the gauge group defines the moduli space of $(\sigma,
g,\eta)$--monopoles, and the {\em Seiberg--Witten invariant} $\widetilde\frsw_{\sigma}(M,g,\eta)$ 
is the signed count of them.

It turns out that, when $b_1(M)=0$ (i.e. $M$ is a $\Q HS$), the situation is the worst, since 
$\widetilde\frsw_{\sigma}(M,g,\eta)$ depends on the choice of the parameters $g$ and $\eta$, thus it 
is not an invariant. However, altering by a counter term $KS_M(\sigma,g,\eta)$, 
called the {\em Kreck--Stolz} invariant, 
solves the problem. Therefore we define the `modified' Seiberg--Witten 
invariant 
$$\frsw_{\sigma}(M):=\frac{1}{8}KS_M(\sigma,g,\eta)+\widetilde\frsw_{\sigma}(M,g,\eta).$$

\begin{theorem}[\cite{Lim}]  
 If $M$ is a connected $3$--manifold with $b_1(M)=0$, then 
\begin{equation*}
 \frsw: Spin^c(M) \longrightarrow \Q \ \ (\mbox{more precisely} \ \Z[1/8 |H|]) 
\end{equation*}
is an oriented diffeomorphism invariant of $M$.
\end{theorem}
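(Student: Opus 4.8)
The plan is to establish two things: first, that the right-hand side of the definition
$\frsw_\sigma(M)=\tfrac18 KS_M(\sigma,g,\eta)+\widetilde\frsw_\sigma(M,g,\eta)$ is independent of the auxiliary data $(g,\eta)$; and second, that the resulting function on $Spin^c(M)$ is natural under orientation--preserving diffeomorphisms. The second point is essentially formal once the first is in hand: the Seiberg--Witten equations, the Chern--Simons--Dirac functional, and the eta--invariants of the twisted Dirac and odd signature operators entering $KS_M$ are all defined coordinate--freely from $(M,g,\eta,\sigma)$, so an orientation--preserving diffeomorphism $\phi\colon M\to M'$ carries $(\sigma,g,\eta)$--monopoles bijectively onto $(\phi_*\sigma,\phi_*g,\phi_*\eta)$--monopoles with signs preserved, and likewise intertwines the relevant spectral data. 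Hence $\widetilde\frsw_{\phi_*\sigma}(M',\phi_*g,\phi_*\eta)=\widetilde\frsw_\sigma(M,g,\eta)$ and $KS_{M'}(\phi_*\sigma,\phi_*g,\phi_*\eta)=KS_M(\sigma,g,\eta)$; choosing the data on $M'$ to be pushed forward from $M$ and invoking metric--independence yields $\frsw_{\phi_*\sigma}(M')=\frsw_\sigma(M)$, which is exactly diffeomorphism invariance of the map $\frsw$.

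For the independence of $(g,\eta)$, the first step is transversality: by a Sard--Smale argument, for a residual set of pairs $(g,\eta)$ the irreducible part of the monopole moduli space is a compact oriented $0$--manifold, so $\widetilde\frsw_\sigma(M,g,\eta)$ is a well--defined signed count in $\Z$, while reducible solutions are absent. Reducibles occur precisely when the harmonic projection of a certain perturbing class lies on one of a locally finite family of codimension--one ``walls'' in parameter space; the complement of the walls decomposes into chambers. I would then argue in two stages. (a) Within a single chamber, a generic path $(g_t,\eta_t)$ produces a smooth compact oriented cobordism between the moduli spaces at its endpoints, so $\widetilde\frsw_\sigma$ is constant; meanwhile the eta--invariants in $KS_M$ vary continuously with no spectral flow of the twisted Dirac operator, and the Atiyah--Patodi--Singer variation formula expresses the change of $KS_M$ purely as an integral of local (metric--curvature and $\eta$) transgression terms, which the Kreck--Stolz normalization is designed to cancel; so $\tfrac18 KS_M+\widetilde\frsw_\sigma$ is constant in the chamber. (b) When the path crosses a single wall transversally, a unique reducible bifurcates, $\widetilde\frsw_\sigma$ jumps by $\pm1$ (in general by a spectral--flow count), and the eta--invariant piece of $KS_M$ jumps by $\mp8$ times the same spectral flow, again by the APS index theorem applied to the cobordism $M\times[0,1]$, the spectral flow being precisely the number of eigenvalue crossings of the twisted Dirac operator at the wall. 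The two jumps cancel, so $\frsw_\sigma$ is unchanged across walls, hence globally constant.

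Granting independence of the data, one evaluates $\frsw_\sigma$ for a convenient choice (say $\eta$ a small generic perturbation) and reads off integrality from the wall--crossing formula together with the arithmetic of the eta--invariant, whose relevant fractional part is controlled by the order $|H|$ of $H=H_1(M,\Z)$; this gives the refined statement $\frsw\colon Spin^c(M)\to\Z[1/8|H|]$.

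The main obstacle, as I see it, is step (b): making the wall--crossing analysis rigorous — identifying the walls explicitly, proving that crossing one produces exactly one reducible solution with the predicted orientation and sign, and matching the jump of $\widetilde\frsw_\sigma$ against the jump of $\tfrac18 KS_M$ through the APS index theorem on $M\times[0,1]$. This is where the delicate interplay between the spectral flow of the Dirac operator, the factor $\tfrac18$, and the Kreck--Stolz counterterm takes place. The secondary technical point is the claim in (a) that the local--term integral in the APS variation formula vanishes after the Kreck--Stolz normalization, but this is essentially built into the definition of $KS_M$ and should follow from a direct computation of transgression forms.
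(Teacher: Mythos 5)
The paper does not prove this statement: it is quoted verbatim from Lim's paper \cite{Lim} and used as a black box, so there is no internal proof to compare against. Judged on its own terms, your outline follows the standard strategy of Lim (and of Marcolli--Wang, Chen): show that the combination $\frac18 KS_M+\widetilde\frsw_\sigma$ is locally constant in $(g,\eta)$ by a cobordism argument within chambers and a cancellation of jumps across walls, and then deduce naturality under orientation--preserving diffeomorphisms formally. That is the right architecture.

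Two points need correction or completion. First, your description of the wall structure is the $b_+=1$ four--dimensional picture, not the three--dimensional $b_1=0$ one. When $b_1(M)=0$, the reducible $(\,A,\Phi=0\,)$ cannot be perturbed away: for every $(g,\eta)$ and every $spin^c$ structure the curvature equation has an (essentially unique, up to gauge) reducible solution, since $c_1(\sigma)$ is torsion. The reducible is therefore always present and is excluded from the count $\widetilde\frsw_\sigma$ by fiat; the walls in the parameter space $(g,\eta)$ are the loci where the Dirac operator coupled to that reducible acquires a kernel, and it is there that irreducible solutions are created or annihilated in a neighborhood of the reducible. Saying that ``reducible solutions are absent'' for generic parameters and that reducibles ``occur precisely when a harmonic projection lies on a wall'' misidentifies the mechanism. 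Second, the step you yourself flag as the main obstacle --- matching the jump of $\widetilde\frsw_\sigma$ (given by the spectral flow of the twisted Dirac operator along the path) against the jump of $\frac18 KS_M$ (computed from the variation of the eta--invariants via APS on $M\times[0,1]$), including the factor $8$ and the $\Z[1/8|H|]$ integrality --- is not a peripheral technicality but is precisely the content of Lim's theorem. As written, the proposal is a correct plan with the central computation left open, plus the structural misstatement above; it is not yet a proof.
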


In general, it is extremely difficult to compute $\frsw_{\sigma}(M)$ using its analytic definition. 
Therefore, there are some projects which aim to replace this definition with a different one, or, to 
provide a topological/combinatorial calculation for the invariants:
\begin{itemize}
 \item Answering a question of Turaev, Nicolaescu's result \cite{Nic04} shows that 
$\frsw_{\sigma}(M)$ is the Reidemeister--Turaev torsion normalized by the Casson--Walker invariant. 
This identification is based on the surgery formula for the monopole count given by Marcolli and 
Wang \cite{MW}, and for the Kreck--Stolz invariant contained in the paper of Ozsv\'ath and Szab\'o 
\cite{OSz00}. In terms of the graph $G$, combinatorial formula for the Casson--Walker invariant 
is given in the book of Lescop \cite{Lescop}, while the Reidemeister--Turaev torsion is determined 
by N\'emethi and Nicolaescu in \cite{SWI}. This formula for the torsion is based on a 
Dedekind--Fourier sum which, in most of the cases, is still hard to determine.  

On the other hand, Braun 
and N\'emethi \cite{BN} provides a cut--and--paste surgery formula for the Seiberg--Witten invariants 
in the case of negative definite plumbed 3--manifold, motivated by Okuma's formula \cite{Opg} 
targeting analytic invariants of splice--quotient singularities.

\item Another program is the {\em categorification} of the invariants. The aim is to construct homological 
theories whose `normalized Euler characteristic' gives the Seiberg--Witten 
invariant (with a suitable normalization). This interpretation also gives several alternative definitions 
for the $\frsw_{\sigma}(M)$. 

For examples, with a generalization of the Seiberg--Witten mo\-no\-poles, 
Kronheimer and Mrowka \cite{KM} constructed the {\em Seiberg--Witten Floer homology} which, in fact, is 
isomorphic to the {\em Heegaard--Floer homology}, developed by Ozsv\'ath and Szab\'o 
\cite{OSz,OSz7,OSzP}, and they categorify $\frsw_{\sigma}(M)$. Moreover, as a consequence of exact `triangles', 
one also gets further surgery formulae for the Seiberg--Witten invariants.

In \cite{NSW}, N\'emethi proved that the normalized Euler characteristic of the lattice cohomology 
is also the Seiberg--Witten invariant. This proof uses the surgery formula of \cite{BN}.
\end{itemize}

We will give more details regarding the Heegaard--Floer homology and its relation with the lattice 
cohomology in Sections \ref{s:motforlatcoh} and \ref{ss:HF}. Moreover, Chapter 
\ref{c:SW} provides an Ehrhart theoretical interpretation of the Seiberg--Witten invariants, which 
(at least in special cases) calculates them by using the {\em topological Poincar\'e series}, 
without knowing the Betti numbers of the lattice cohomology.

\subsection{The Seiberg--Witten invariant conjecture}\labelpar{ss:SWIC}\
In the spirit of the Artin--Laufer program, the article \cite{SWI} of N\'emethi and Nicolaescu 
formulates the following conjecture, giving a possible topological counterpart for the geometric 
genus. It is an extension of the Casson invariant conjecture of Neumann and Wahl \cite{NW}.

\begin{SWI}[\cite{SWI}]
Assume that $(X,0)$ is a normal surface singularity whose link $M$ is a $\Q HS$. Then the following 
facts hold:
\begin{enumerate}
 \item There is a topological upper bound for $p_g$, given by 
  $$p_g\leq \frsw_{\sigma_{can}}(M)-\frac{\K^2+|\calj|}{8}.$$
 \item If $(X,0)$ is $\Q$--Gorenstein, then in part 1 one has equality.
\end{enumerate}
\end{SWI}
This can be generalized in the following way:

\begin{GSWI}[\cite{Nline}] We consider $l'\in L'$ and define the characteristic element 
$k:=\K-2l'\in Char$. Then 
\begin{enumerate}
 \item For any line bundle $\calL \in Pic(\widetilde X)$ with $c_1(\calL)=l'$ one has 
  $$\dim_{\C}H^1(\widetilde X, \calL)\leq -\frsw_{[k]}(M)-\frac{k^2+|\calj|}{8}.$$
 \item If $\calL=\cO_{\widetilde X}(l')$ and $(X,0)$ is $\Q$--Gorenstein then in part 1 one has 
  equality.
\end{enumerate}
\end{GSWI}
In particular, if $\calL=\cO_{\widetilde X}$, then we get back SWI. The conjecture was verified 
first in \cite{SWI} for some families of rational, elliptic and hypersurface singularities. 
It was proved also for singularities with good $\C^*$--action \cite{SWII} and for suspension 
singularities (of type $\{f(x,y)+z^n=0\}$ with $f$ irreducible) \cite{SWIII}. 
Then \cite{NCL} proves the validity of the conjecture for {\em splice--quotient singularities}, 
a class which was defined by Neumann and Wahl \cite{nw-CIuac} and contains most of the other classes above.

Using the 
Heegaard--Floer homological interpretation of $\frsw$, N\'emethi verified GSWI for all 
{\em almost rational singularities} (see \ref{ss:bv} for their definition).

Unfortunately, the conjecture at this generality is {\bf not true}. A paper of Luengo-Velasco, 
Melle-Hern\'andez and N\'emethi on {\em superisolated singularities} \cite{LMN} 
gives counterexamples even for the SWI case (Example \ref{example:superisol}). 
However, one can use lattice cohomological methods to correct the upper bound, reinterpreting 
the topological candidate for the geometric genus (or for $\dim_{\C}H^1(\widetilde X, \calL)$). 
We will return to this discussion in Section \ref{ss:SWIrev}.

\section{Motivation of the Lattice cohomology} \labelpar{s:motforlatcoh}

\subsection{Historical remark}\
In the continuation of the work on {\it Heegaard--Floer theory} (\ref{ss:HF}), Ozsv\'ath and Szab\'o 
constructed in \cite{OSzP} a combinatorial $\Z[U]$--module $\bH^+(G,[k])$ for any $spin^c$--structure 
$[k]\in Spin^c(M)$. They considered a `special' class of graphs for which $\bH^+$ serves as a model 
for the original $\Z[U]$--module $HF^+(M,[k])$. 

N\'emethi \cite{OSZINV} extended this special class 
to the so--called {\em almost rational graphs} (\ref{ss:bv}), a class whose definition was strongly 
influenced by the Artin--Laufer program (\ref{AL:2}). They are characterized by the property 
that there exists 
a vertex such that decreasing its Euler decoration we get a rational graph. Moreover, he proved 
that for such graphs the isomorphism $HF^+(M,[k])\cong\bH^+(G,[k])$ is still valid, and provided a precise 
combinatorial algorithm for the calculation of this module. 

For this purpose, one has to define the 
notion of a {\em graded root} $(R_k,\chi_k)$ associated with any connected, negative definite plumbing 
graph $G$ and characteristic element $k$. Since its grading $\chi_k$ is given by the Riemann--Roch 
formula \ref{eq:RR}, this object, in fact, connects two different directions: the 
one coming from the Heegaard--Floer theory (and through this from the Seiberg--Witten theory) with the 
other one, coming from algebraic geometry. Conjecturally, $(R_k,\chi_k)$ guides the hierarchy of the 
topological types of links of normal surface singularities, containing all the information about 
the module $\bH^+(G,[k])$. On the other hand, examples show that the computation and results about 
$\bH^+(G,[k])$ can not be extended to a larger class than the almost rational graphs, hence 
this idea one had to be generalized. 

This observation gave birth to the idea of the {\it lattice cohomology}, which was introduced in 
\cite{Nlat} by N\'emethi, and its $0$--th degree cohomology module $\bH^0$ is given by $\bH^+$.

\subsection{Relation with other theories}\
Notice that, as can be seen in \ref{s:DP}, the lattice cohomology is purely combinatorial. 
Conjecturally, it contains all the information about the Heegaard--Floer homology of $M$ too. This 
would provide an alternative {\em combinatorial} definition for the theory of Ozsv\'ath and Szab\'o. 
We will present this conjecture and the active research around it in \ref{ss:HF}.

As we already pointed out in \ref{ss:swgen}, \cite{NSW} proves that the lattice cohomology (similarly 
as the Heegaard--Floer homology) categorifies the normalized Seiberg--Witten invariant of the link $M$, 
i.e. it realizes by its normalized Euler characteristic the Seiberg--Witten invariant. This provides
a new combinatorial formula for the Seiberg--Witten invariants as well.

From analytic point of view, the ranks of the lattice cohomology modules and their Euler characteristic 
have subtle connection with certain analytic invariants of analytic realizations of $M$ as singularity links 
(\ref{ss:SWIrev}). For example, the existence of the non--trivial higher cohomologies explain conceptually
the failure of the Seiberg--Witten invariant conjecture in the pathological cases,
see \cite{SWI,SWII,SWIII,NO} and \cite{LMN} for counterexamples. 

In this sense, {\em the lattice cohomology makes a bridge between the analytic and topological/combinatorial
invariants of the singularity}.

\subsection{The reduction procedure}\labelpar{ss:redmot}\
Usually, the explicit computation of the lattice cohomology is very hard.
A priori, it is based on the computation of the weights of all lattice points (of a certain $\Z^s$)
and on the description of those `regions', where the weights are less than a fixed integer.
The lattice, which appears in the construction, has a very `large' rank: it is the number of vertices of 
the corresponding plumbing/resolution graph $G$ of $M$.

The main result of the next chapter 
establishes a {\it reduction procedure} (Reduction Theorem \ref{red}), which reduces the rank of the 
lattice to $\nu$, the number of `{\em bad}' vertices (\ref{ss:bv}) on the plumbing graph $G$. 
A graph has no bad vertices if it is rational.
Otherwise, if one has to decrease the self--intersection number of (at most) $\nu$ vertices 
to get a rational graph, we say that these vertices are the `bad' ones. 
This number is definitely much smaller
(usually it is even smaller than the number of nodes) and 
provides some kind of `filtration' on negative definite
plumbing graphs/manifolds, which measures how far the graph stays from a rational graph. 

We wish to emphasize that the reduction to `bad' vertices is not just a technical procedure.
Usually, the geometry of the singularity link, or the key information about the structure
of the 3--manifold, is coded by these vertices. In other words, by a good choice of the bad vertices, 
we connect in a direct way the structure of the lattice cohomology with the essential geometrical/topological
structure of $M$.

For the illustration of this phenomenon, let us consider the following examples.
A minimal good star--shaped graph has at most one bad vertex, namely the central one.
In this case, the sequence $x(i)$ (see \ref{red:gLauferseq}) ($i\in \Z_{\geq 0}$) and the weights of 
its terms are closely related
with Dolgachev's and Pinkham's computation (\cite{P}) of the geometric genus and of the Poincar\'e series of weighted 
homogeneous singularities, see e.g. \cite{SWII,OSZINV}.

Or, let $K$ be the connected sum of $\nu$ irreducible algebraic knots $\{K_i\}_{i=1}^\nu$
of $ S^3$. Consider the surgery 3--manifold $M=S^3_{-d}(K)$ ($d\in \Z_{\geq 0}$).
Then the minimal number of bad vertices is exactly $\nu$, and they can be related with the knots, e.g., 
the lattice cohomology associated with these  vertices is guided by the
semigroups of the knot components $K_i$ (for details see \cite{NR},
where the Reduction Theorem already was applied).

Even the `naive  case of all nodes' can be interesting in the right situation.
If the graph is minimal good, then reducing the weight of the nodes we get a minimal rational
graph (with reduced fundamental cycle), hence the set of all nodes might serve as set of bad vertices.
This becomes especially meaningful  when we consider, e.g., the graph/link of a {\em Newton non--degenerate 
hypersurface singularity}. In this case the nodes correspond to the faces of the
Newton diagram (by toric resolution), cf. \cite{BN07}. Hence, this choice of the bad vertices establishes 
the connection with the combinatorics of the source object, the Newton diagram.

The methods used in the Reduction Theorem \ref{red} and in its proof have their origin
in \cite{OSZINV,Nlat}, although technically the general situation is more sophisticated.
The main ingredient is the generalization of the `special' cycles and Laufer sequences (\ref{red:gLauferseq}) 
defined by N\'emethi in \cite[7.6.]{OSZINV} for almost rational graphs (i.e. when $\nu$=1).

The effects of the reduction appear not only at the level of the cohomology modules.
The lattice cohomology has subtle connections with a certain  {\em multivariable topological Poincar\'e 
series}, where the 
number of variables of this series is the number of vertices of the plumbing graph. 
(This is defined combinatorially from the graph. It resonates and sometimes equals 
the multivariable Poincar\'e series, associated with the divisorial filtration indexed by all the divisors
in the resolution, provided by certain analytic realizations \cite{NPS,NSW,NCL}.)
For example, the Seiberg--Witten invariant appears as the {\em periodic constant} of this series
\cite{NSW,BN,NO} and can be interpreted via {\em Ehrhart theory}, as we will present in Chapter 
\ref{c:SW}.

One of the applications of the Reduction Theorem (and its proof) is that this series `reducts' 
by eliminating all the variables except those, corresponding to the `bad' vertices. The reduced 
series still contains all the lattice cohomological information.

The reduction recovers several known results as well: e.g.
the vanishing of the reduced lattice cohomology for rational graphs,
proved in \cite[\S 4]{Nlat}. More generally, it
implies the vanishing property
 $\bH^q(M)=0$ whenever $q\geq \nu$. The original 
proof of this fact can be found in \cite{Nexseq} and \ref{ss:exactseq}, where the proof
uses surgery exact sequences. Notice that this vanishing is sharp, e.g. consider the connected sum 
$K$ of $\nu$ copies of the $(2,3)$--torus knot,
and take the $(-d)$-surgery of the $3$--sphere $S^3$ along $K$, for some $d \in \Z_{>0}$. Then 
N\'emethi and Rom\'an \cite{NR} proved that 
the minimal number of bad vertices is $\nu$, and 
 $\bH^{\nu-1}(S^3_{-d}(K))=\Z$ (disregarding the $U$--action).

\chapter{Lattice cohomology and its reduction}\labelpar{ch:2}\

\indent In the beginning of this chapter we define the lattice cohomology and express its 
important properties, and interaction with questions related to the topology of normal surface 
singularities. Then in \ref{ss:bv}, we state the {\em new} characterization of rational singularities, 
which motivates the topological generalization of the rationality. It is worth to present a proof of the 
Vanishing Theorem \ref{vanishth}, which is using an exact sequence motivated by the work of Ozsv\'ath and Szab\'o 
\cite{OSz,OSz7,OSzP} on {\em Heegaard--Floer theory}. 

N\'emethi \cite{Nlat} formulated a conjecture 
which claims that lattice cohomology contains all the information about the Heegaard--Floer modules 
in the case of singularities. Therefore, we walk around this connection and list the current results. 
Moreover, in \ref{ss:SWIrev} we turn back to the GSWI conjecture and correct its inequality using a lattice 
cohomological invariant. General reference for this part is the long list of papers by N\'emethi, e.g. 
\cite{OSZINV,Nsur,Ng,Nlat,Nem10}.

The end of the chapter presents one of the main result of our research \cite{redthm}, 
the {\em Reduction Theorem} 
for lattice cohomology, which was motivated already in \ref{ss:redmot}. Note that the theorem 
implies immediately the aforementioned characterization and the 
Vanishing Theorem. Direct applications can be found in \ref{s:aplred} and \cite{NR}.

\section{Definitions and Properties}\labelpar{s:DP}

\subsection{General construction}

\bekezdes\labelpar{ss:3.1} {\bf Preliminaries, $\Z[U]$--modules.}
The lattice cohomology has a  graded $\Z[U]$--module structure.
For its building blocks we will use the following notations, cf. \cite{OSzP,OSZINV}.

Consider the graded $\Z[U]$--module $\Z[U,U^{-1}]$, and   denote by
 $\calt_0^+$ its quotient by the submodule  $U\cdot \Z[U]$.
This has a grading in such a way that $\deg(U^{-d})=2d$ ($d\geq
0$). Similarly, for any $n\geq 1$,  the quotient of $\Z\langle
U^{-(n-1)}, U^{-(n-2)},\ldots, 1,U,\ldots \rangle$ by $U\cdot
\Z[U]$ (with the same grading) defines the  graded module
$\calt_0(n)$. Hence, $\calt_0(n)$, as a $\Z$--module, is freely
generated by $1,U^{-1},\ldots,U^{-(n-1)}$, and has finite
$\Z$--rank $n$.
More generally, for any graded $\Z[U]$--module $P$ with
$d$--homogeneous elements $P_d$, and  for any  $r\in\Q$,   we
denote by $P[r]$ the same module graded (by $\Q$) in such a way
that $P[r]_{d+r}=P_{d}$. Then set $\calt^+_r:=\calt^+_0[r]$ and
$\calt_r(n):=\calt_0(n)[r]$. For example, the $\Z[U]$--module 
$\Z\langle U^{m}, U^{m-1},\ldots, U^{m-(n-1)} \rangle$ with this grading 
will be denoted by $\calt_{2m}(n)$.

\bekezdes{\bf Lattice cohomology associated with $\Z^s$ and a system
of weights.}\labelpar{ss:lw}

We fix  a free $\Z$--module, with a fixed basis
$\{E_j\}_{j=1}^s$, denoted by $\Z^s$. It is also convenient to fix
a total ordering of the index set ${\mathcal J}$, which in the
sequel will be denoted by $\{1,\ldots,s\}$.
Using  the pair $(\Z^s, \{E_j\}_j)$ and a system of weights, we determine a
cochain complex whose cohomology is our central object.

$\Z^s\otimes \R$ has a natural cellular decomposition into cubes. The
set of zero--dimensional cubes is provided  by the lattice points of 
$\Z^s$. Any $l\in \Z^s$ and subset $I\subseteq {\mathcal J}$ of
cardinality $q$  define a {\em $q$--dimensional cube}, denoted by
$(l,I)$ (or only by $\square_q$) which has its
vertices in the lattice points $(l+\sum_{j\in I'}E_j)_{I'}$, where
$I'$ runs over all subsets of $I$. On each such cube we fix an
orientation. For example, this can be determined by the order
$(E_{j_1},\ldots, E_{j_q})$, where $j_1<\cdots < j_q$, of the
involved base elements $\{E_j\}_{j\in I}$. The set of oriented
$q$--dimensional cubes defined in this way is denoted by $\calQ_q$
($0\leq q\leq s$).

Let $\calC_q$ be the free $\Z$--module generated by the oriented cubes
$\square_q\in\calQ_q$. Clearly, for each $\square_q\in \calQ_q$,
the oriented boundary $\partial \square_q$ has the form
$\sum_k\varepsilon_k \, \square_{q-1}^k$ for some
$\varepsilon_k\in \{-1,+1\}$, where the $(q-1)$--cubes $\{\square_{q-1}^k\}_k$
are the  {\em faces} of $\square_q$.
Then we have $\partial\circ\partial=0$, and  the
homology of the chain complex $(\calC_*,\partial)$ is just the homology of $\R^s$, so 
we don't get anything new. 

However, if we encode `some phenomena' on the cubes via 
a set of {\em weight functions}, more interesting (co)homology can be obtained.

\begin{definition}
A set of functions
$w_q:\calQ_q\to \Z$  ($0\leq q\leq s$) is called a {\em set of
compatible weight functions}  if the following hold:

(a) for any integer $k\in\Z$, the set $w_0^{-1}(\,(-\infty,k]\,)$
is finite;

(b) for any $\square_q\in \calQ_q$ and for any of its faces
$\square_{q-1}\in \calQ_{q-1}$ one has $w_q(\square_q)\geq
w_{q-1}(\square_{q-1})$.
\end{definition}

\begin{example}\labelpar{ex:ww}\
\begin{enumerate}
\item Assume that some $w_0:\calQ_0\to\Z$  satisfies (a) for all $k\in \Z$. For any $q>1$ set
\begin{equation*}
w_q(\square_q):=\max\{w_0(v) \,:\, \mbox{$v$ is a vertex of \,$\square_q$}\}.
\end{equation*}
Then $\{w_q\}_q$ is a set of compatible weight functions.
\item Consider the function $w_0:\Z\to \Z$ such that $w_0(n)=[|n|/2]$ or 
$w_0(n)=[|n|/2]+4\{|n|/2\}$ (where $[\ ]$ and $\{\ \}$ denote the integral and the rational part), 
and define $w_1$ as 
in the first example. Then $\{w_q\}_{q\in\{0,1\}}$ is compatible.
\end{enumerate}
\end{example}

In the presence  of a set of compatible weight functions
$\{w_q\}_q$,  one sets $\calF^q:=\Hom_{\Z}(\calC_q,\et^+_0)$.
Then  $\calF^q$ is a $\Z[U]$--module by the action 
$(p*\phi)(\square_q):=p(\phi(\square_q))$ where $p\in \Z[U]$ and 
$\phi\in \calF^q$. It has a $2\Z$--grading: $\phi\in \calF^q$ is
homogeneous of degree $2d$, if for each $\square_q\in\calQ_q$
with $\phi(\square_q)\not=0$, $\phi(\square_q)$ is a homogeneous
element in $\et^+_0$ of degree $2d-2\cdot w(\square_q)$. 
(In the sequel we will omit the index $q$ of $w_q$.)

Next, we define the (co)boundary operator $\delta_w:\calF^q\to \calF^{q+1}$. For
this, fix $\phi\in \calF^q$ and we show how $\delta_w\phi$ acts on
a cube $\square_{q+1}\in \calQ_{q+1}$. First write
$\partial\square_{q+1}=\sum_k\varepsilon_k \square ^k_q$, or a more precise 
form of $\partial$ can be determined via the orientation given by the order of 
the base elements: if
 $\square_{q+1}=(l,I)=(l,\{j_1,\ldots,j_{q+1}\})$, then
\begin{equation*}\label{eq:partial}
\partial (l,I)=\sum_{n=1}^{q+1} (-1)^n\big( \,
U^{w(l,I)-w(l,I\setminus j_n)} (l,I\setminus j_n)-U^{w(l,I)-w(l+E_{j_n},I\setminus j_n)}
(l+E_{j_n},I\setminus j_n)\, \big).
\end{equation*}
In any case, we set
$$(\delta_w\phi)(\square_{q+1}):=\sum_k\,\varepsilon_k\,
U^{w(\square_{q+1})-w(\square^k_q)}\, \phi(\square^k_q).$$
Then by an explicit calculation one has $\delta_w\circ\delta_w=0$, hence
$(\calF^*,\delta_w)$ is a cochain complex.
Moreover, $(\calF^*,\delta_w)$ has an
augmentation as well.  Indeed, set $\m_w:=\min_{l\in \Z^s}w_0(l)$ and
choose  $l_w\in \Z^s$ such that $w_0(l_w)=\m_w$. Then one defines
the $\Z[U]$--linear map
$\epsilon_w:\et^+_{2\m_w}\longrightarrow \calF^0$ such that
$\epsilon_w (U^{-\m_w-n})(l)$ is the class of $U^{-\m_w+w_0(l)-n}$
in $\et^+_0$ for any $n\in \Z_{\geq 0}$. \cite[3.1.7]{Nlat} shows that
 $\epsilon_w$ is injective, and $\delta_w\circ\epsilon_w=0$.

\bekezdes\labelpar{def12}{\bf Definitions of the Lattice cohomology.} 
The homology of the cochain complex
$(\calF^*,\delta_w)$ is called the {\em lattice cohomology} of the
pair $(\R^s,w)$, and it is denoted by $\bH^*(\R^s,w)$. The
homology of the augmented cochain complex
$$0\longrightarrow\et^+_{2\m_w}\stackrel{\epsilon_w}{\longrightarrow}
\calF^0\stackrel{\delta_w}{\longrightarrow}\calF^1
\stackrel{\delta_w}{\longrightarrow}\ldots$$ is called the {\em
reduced lattice cohomology} of the pair $(\R^s,w)$, and it is
denoted by $\bH_{red}^*(\R^s,w)$.
For any $q\geq 0$, both $\bH^q$ and $\bH_{red}^q$ admit an induced graded
$\Z[U]$--module structure, and one has  graded
$\Z[U]$--module isomorphisms 
\begin{center}
$\bH^0=\et^+_{2\m_w}\oplus\bH^0_{red}$ \ \ and \ \  
$\bH^q=\bH^q_{red}$ (for $q>0$).
\end{center}

In the case when each $\bH^q_{red}$ has finite $\Z$--rank, one can define
the {\em normalized Euler characteristic}
\begin{equation}\label{eq:eu}
eu(\bH^*(\R^s,w)):=-\m_w+
\textstyle{\sum_q}(-1)^q\rank_\Z \bH^q_{red}(\R^s,w).
\end{equation}

\bekezdes\labelpar{mod} {\bf Modification.} Instead of
all the cubes of $\R^s$ we can consider an arbitrary subset $T$ of cubes in $\R^s$ 
(e.g. $[0,\infty)^s$, or the `rectangle'
$R:=[0,T_1]\times\cdots\times [0,T_s]$ for some $T_i\in \Z_{\geq
0}$). In such a case, we write $\bH^*(T,w)$ for the corresponding lattice cohomologies, since 
the restriction map induces a natural graded $\Z[U]$--module homomorphism 
$r^*:\bH^*(\R^s,w)\to \bH^*(T,w)$.
\begin{example}\labelpar{ex:pathcoh}
Consider a sequence 
$\gamma=\{x_n\}_{n=0}^t$ ($t$ can be $\infty$) such that 
$x_n\neq x_m$ 
for $n\neq m$ and $x_{n+1}=x_n\pm E_{j(n)}$ for $0\leq n< t$. Let $T$ be the union of 
$0$--cubes marked by the points $\{x_n\}$ and $1$--cubes (segments) of type $[x_n,x_{n+1}]$. Repeating 
the above construction, 
we get a graded $\Z[U]$--module $\bH^*(T,w)$. 
It is called the {\em path cohomology} associated 
with the `path' $\gamma$ and the compatible weights $\{w_0,w_1\}$. It will be denoted by 
$\bH^*(\gamma,w)$. \newline
The construction implies that $\bH^q(\gamma,w)=0$ for $q\geq 1$. Hence, in 
`finite' ($\bH^0_{red}$ has finite $\Z$--rank, or in particular, the length of $\gamma$ is finite) 
cases one can define the Euler characteristic 
$$eu(\gamma,w):=eu(\bH^*(\gamma,w))=-m_w+\rank_\Z \bH^0_{red}(\gamma,w).$$
Then \cite[3.5.2]{Nlat} gives the formula 
$$eu(\gamma,w)=-w_0(0)+\sum_{n=0}^{t-1}w_1([x_n,x_{n+1}])-w_0(x_{n+1}).$$
\end{example}

\bekezdes{\bf The geometric $\bS^*$--realization.}\labelpar{geomdef} 

A more geometric realization of the
modules $\bH^*$ can be given in the following way.
For each $N\in\Z$, define
$S_N=S_N(w)\subseteq\R^s$ as the union of all the cubes $\square_q$
(of any dimension) with $w(\square_q)\leq N$. Clearly, $S_N=\emptyset$,
whenever $N<\m_w$. For any $q\geq0$, set
\begin{equation*}
\bS^q(\R^s,w):=\oplus_{N\geq \m_w}H^q(S_N,\Z).
\end{equation*}
Then $\bS^q$ is  $2\Z$--graded, the $d=2N$--homogeneous elements
 $\bS^q_d$ consists of $H^q(S_N,\Z)$. Also, $\bS^q$ is a $\Z[U]$--module.
 The $U$--action is given by the restriction map
 $r_{N+1}:H^q(S_{N+1},\Z)\longrightarrow H^q(S_N,\Z)$,
 namely, $U*(\alpha_N)_N:=(r_{N+1}\alpha_{N+1})_N$. Moreover, for $q=0$, a fixed
 basepoint $l_w\in S_{\m_w}$ provides an augmentation $H^0(S_N,\Z)=\Z\oplus\widetilde{H}^0(S_N,\Z)$,
 hence an augmentation of the graded $\Z[U]$--modules

\begin{equation*}
\bS^0=(\oplus_{N\geq \m_w}\Z)\oplus(\oplus_{N\geq \m_w}\widetilde{H}^0(S_N,\Z))=\et^+_{2\m_w}\oplus\bS^0_{red}.
\end{equation*}
The point is that this $\Z[U]$--module $\bS^*$ coincides with the lattice cohomology $\bH^*$.
More precisely, we have the following theorem.

\begin{theorem}\emph{(\cite[3.1.12]{Nlat})}\labelpar{th:HS}
There exists a graded $\Z[U]$--module isomorphism, compatible with
the augmentations, between $\bH^*(\R^s,w)$ and $\bS^*(\R^s,w)$.
Similar statement is valid for $\bH^*(T,w)$ for any $T\subseteq \R^s$ as in \ref{mod}.
\end{theorem}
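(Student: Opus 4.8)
The plan is to construct the isomorphism $\bH^q(\R^s,w)\cong\bS^q(\R^s,w)$ by comparing two filtered complexes computing the same objects, level by level in the weight $N$. First I would observe that for each fixed $N\geq\m_w$ the space $S_N=S_N(w)$ is a finite CW-subcomplex of $\R^s$ (finite by condition (a) in the definition of compatible weights, since $S_N$ contains only cubes all of whose vertices lie in the finite set $w_0^{-1}((-\infty,N])$). Its simplicial/cellular cochain complex $C^*(S_N,\Z)$ is built from exactly those oriented cubes $\square_q\in\calQ_q$ with $w(\square_q)\leq N$. The key point is that the lattice cochain complex $(\calF^*,\delta_w)$ with values in $\calt_0^+=\Z[U,U^{-1}]/U\Z[U]$ is, in each fixed homogeneous degree, a direct sum over $N$ of these finite cochain complexes $C^*(S_N,\Z)$, with the $U$-action implementing the restriction maps $r_{N+1}\colon C^*(S_{N+1})\to C^*(S_N)$.

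Concretely, I would set up the dictionary as follows. A homogeneous element $\phi\in\calF^q$ of degree $2d$ assigns to each cube $\square_q$ a homogeneous element of $\calt_0^+$ of degree $2d-2w(\square_q)$; such an element is nonzero only if $d\geq w(\square_q)$, i.e.\ only on cubes belonging to $S_d$, and it is then a scalar multiple of the canonical generator $U^{-(d-w(\square_q))}$. So $\phi$ is the same data as a cellular $q$-cochain on $S_d$ with $\Z$-coefficients — call it $\Phi_d$ — together with the ``lower'' truncations obtained by the $U$-action, which under the grading shifts $d\mapsto d-1,d-2,\dots$ restrict $\Phi_d$ to $S_{d-1}, S_{d-2},\dots$. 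The sign bookkeeping in the formula for $\partial(l,I)$ and in $(\delta_w\phi)(\square_{q+1})=\sum_k\varepsilon_k U^{w(\square_{q+1})-w(\square_q^k)}\phi(\square_q^k)$ is precisely arranged so that the powers of $U$ cancel the weight shifts and $\delta_w$ becomes the ordinary cellular coboundary on each $S_N$; this is where I would do the (routine but essential) explicit check that the diagram commutes. Passing to cohomology in each homogeneous degree then gives $\bH^q_{2N}\cong H^q(S_N,\Z)$, and assembling over $N$ with the $U$-action matching the restriction maps gives the claimed graded $\Z[U]$-module isomorphism $\bH^q\cong\bS^q$. The compatibility with augmentations follows because the generator $U^{-\m_w-n}$ in $\et^+_{2\m_w}$ maps, via $\epsilon_w$, to the cochain taking value $U^{-\m_w+w_0(l)-n}$ at each vertex $l$, which is exactly the constant cochain $1\in H^0(S_{\m_w+n},\Z)$ in the $\Z$-summand, matching the basepoint augmentation $H^0(S_N)=\Z\oplus\widetilde H^0(S_N)$.

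The main obstacle — and the place where care is genuinely needed rather than mere transcription — is verifying that the orientations and signs are globally consistent: one must pin down a single convention for orienting each cube $(l,I)$ (e.g.\ via the ordered base elements $E_{j_1},\dots,E_{j_q}$ with $j_1<\cdots<j_q$) and check that with this convention the coboundary $\delta_w$ agrees on the nose with the cellular coboundary of each $S_N$, including correctly handling the fact that a cube of $\R^s$ may lie in $S_N$ while some of its faces have strictly smaller weight (so the $U$-power $U^{w(\square_{q+1})-w(\square_q^k)}$ is a genuine positive power, not $U^0$). Once this sign/weight matching is nailed down, the rest is a formal consequence of taking cohomology of a direct sum of complexes in each degree. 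Finally, for the last sentence of the statement — the analogue for $\bH^*(T,w)$ with $T\subseteq\R^s$ a subcomplex of cubes — the identical argument applies verbatim, replacing $S_N$ by $S_N\cap T$ (the union of cubes of $T$ with weight $\leq N$), since nothing in the construction used that we took all cubes of $\R^s$; I would simply remark that the proof goes through mutatis mutandis.
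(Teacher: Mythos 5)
Your plan is correct and is essentially the proof of the cited result [Nlat, 3.1.12]: the thesis itself only quotes the theorem, and the argument in the reference is exactly the degree-by-degree identification you describe, namely $\calF^q_{2d}\cong C^q(S_d,\Z)$, with the $U$--powers in $\delta_w$ absorbing the weight shifts so that the differential becomes the cellular coboundary of $S_d$ and the $U$--action becomes the restriction along $S_{N}\subseteq S_{N+1}$. The one point to be explicit about when "assembling over $N$" is that $\Hom_\Z(\calC_q,\et^+_0)$ is a priori a (restricted) product, not a direct sum, of its homogeneous pieces, so the comparison should be carried out on the graded object $\oplus_d\calF^q_{2d}$ --- the convention implicit in viewing $\bH^q$ as a graded $\Z[U]$--module --- after which the isomorphism with $\bS^q=\oplus_N H^q(S_N,\Z)$ is immediate from your dictionary.
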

From now on we  denote both  realizations with the same symbol
$\bH^*$, no matter which one we use. In the next examples we illustrate how to use this realization 
for the calculation of the lattice cohomology.
\begin{example}\labelpar{ex:latcoh1}
\begin{itemize}
\item[(a)] Consider the first case from Example \ref{ex:ww}(2), when we have the lattice 
$\Z\subset\R$ and $w_0(n)=[|n|/2]$ for all $n\in\Z$. Obviously $\m_w=0$ and $S_N$ is 
the segment $[-2N-1,2N+1]$ which is contractible for all $N\geq 0$. Hence $\bH^0(\R,w)=\calT_0^+$.
\item[(b)] Let $w_0(n)=[|n|/2]+4\{|n|/2\}$. Then $\m_w=0$ and one can show that if $N\geq 1$, 
$S_N$ has three components belonging to the `central' component of $S_{N+1}$ as it is shown in Figure 3.1.
Therefore, taking into account the $\Z[U]$--action, the lattice cohomology can be written as 
$\bH^0(R,w)=\calT_0^+\bigoplus \oplus_{N\geq 1}\calT_{2N}(1)$.
\begin{figure}[h!]\label{fig:1}
\vspace{1cm}
\centering
\begin{pspicture}(0,-0.97296876)(14.007,0.93296874)
\psdots[dotsize=0.1](1.0,0.13296875)
\psdots[dotsize=0.1](2.0,0.13296875)
\psdots[dotsize=0.1](3.0,0.13296875)
\psdots[dotsize=0.1](4.0,0.13296875)
\psdots[dotsize=0.1](5.0,0.13296875)
\psdots[dotsize=0.1](6.0,0.13296875)
\psdots[dotsize=0.1](7.0,0.13296875)
\psdots[dotsize=0.1](8.0,0.13296875)
\psdots[dotsize=0.1](9.0,0.13296875)
\psdots[dotsize=0.1](10.0,0.13296875)
\psdots[dotsize=0.1](11.0,0.13296875)
\psdots[dotsize=0.1](12.0,0.13296875)
\psdots[dotsize=0.1](13.0,0.13296875)
\psline[linewidth=0.0139999995cm](0.0,0.13296875)(14.0,0.13296875)
\psellipse[linewidth=0.0139999995,dimen=outer](6.993,0.13296875)(0.2,0.2)
\psellipse[linewidth=0.0139999995,dimen=outer](8.993,0.13296875)(0.2,0.2)
\psellipse[linewidth=0.0139999995,dimen=outer](4.993,0.13296875)(0.2,0.2)
\psellipse[linewidth=0.0139999995,dimen=outer](10.993,0.13296875)(0.2,0.2)
\psellipse[linewidth=0.0139999995,dimen=outer](2.993,0.13296875)(0.2,0.2)
\psellipse[linewidth=0.0139999995,dimen=outer](12.993,0.13296875)(0.2,0.2)
\psellipse[linewidth=0.0139999995,dimen=outer](0.993,0.13296875)(0.2,0.2)
\psellipse[linewidth=0.0139999995,dimen=outer](6.993,0.13296875)(2.4,0.6)
\psellipse[linewidth=0.0139999995,dimen=outer](6.993,0.13296875)(4.6,0.8)
\rput(7.0533123,-0.3){$S_0$}
\rput(7.0533123,0.5){$S_1$}
\rput(5.4533124,0.3){$S_1$}
\rput(8.653313,-0.05){$S_1$}
\rput(9.453313,-0.18203124){$S_2$}
\rput(10.653313,0.41796875){$S_2$}
\rput(3.4533124,0.41796875){$S_2$}
\rput(10.053312,-0.78203124){$S_3$}
\rput(13.453313,-0.18203124){$S_3$}
\rput(1.2533125,-0.18203124){$S_3$}
\end{pspicture}
\caption{The $w_0(n)=[|n|/2]+4\{|n|/2\}$ case.}
\end{figure}
\end{itemize}
\end{example}

\subsection{Case of the singularities}\label{ss:pl}
Let $G$ be a negative definite
plumbing graph as in \ref{intro:comb}. Let $|\calj|=s$ be the number of vertices.
Then we can associate with $L=\Z^s$ the free $\Z$--module $\calC_q$ generated by oriented cubes
$\square_q\in\calQ_q$, as in \ref{ss:lw}.

To any $k\in Char$ we associate  weight functions
$\{w_q\}_q$ as follows.
One can use the function $\chi_k:L\to \Z$ we have given in (\ref{eq:RR}) by
$$\chi_k(l)=-(l,l+k)/2,$$
and set $\m_k:=\min\, \{\, \chi_k(l)\, :\, l\in L\}$.
Then the weight functions are defined as in \ref{ex:ww}(1) via 
$$w_0:=\chi_k \ \ \ \mbox{and} \ \ \ w_q(\square_q)=\max\{\chi_k(v)\,:\, v\ \mbox{is a vertex of} \  
\square_q\}.$$
\begin{definition}
The associated lattice cohomologies with this weight functions are called the 
{\em lattice cohomology associated with the pair $(G,k)$} and are denoted by 
$\bH^*(G,k)$ and $\bH^*_{red}(G,k)$. We write $\m_k:=\m_{w}=\min_{l\in L}\chi_k(l)$.
\end{definition}

\begin{theorem}\emph{(\cite[3.2.4]{Nlat})}
 The $\Z[U]$--modules $\bH^*_{red}(G,k)$ are finitely generated over $\Z$, 
hence $eu(\bH^*(G,k)):=eu(\bH^*(\R^s,w))$ is well--defined, cf. (\ref{eq:eu}). 
In particular, this implies that $S_N$ is contractible for $N$ sufficiently large.
\end{theorem}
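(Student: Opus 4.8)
The plan is to establish finite generation of $\bH^*_{red}(G,k)$ over $\Z$ by controlling the topology of the spaces $S_N = S_N(w)$ for large $N$, using the $\bS^*$-realization of Theorem \ref{th:HS}. The key structural input is that the weight function $w_0 = \chi_k$ is a (essentially) positive-definite quadratic function on the lattice $L$, since $\frI$ is negative definite: writing $\chi_k(l) = -(l,l+k)/2$, the leading term $-(l,l)/2$ is a positive-definite quadratic form, so $\chi_k(l) \to +\infty$ as $\|l\| \to \infty$, and moreover $\chi_k^{-1}((-\infty,N])$ is a finite set for every $N$ (this is exactly condition (a) in the definition of compatible weights, and it is what makes $\m_k$ well-defined).

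\medskip

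First I would make the sublevel sets explicit. The real extension of $\chi_k$ to $L\otimes\R \cong \R^s$ is a quadratic polynomial whose quadratic part is positive definite; hence its real sublevel set $\{x \in \R^s : \chi_k(x) \leq N\}$ is a (solid) ellipsoid $P_N$, which is convex and in particular contractible, and which grows with $N$. Now $S_N$ is the union of all closed cubes of the cubical decomposition of $\R^s$ all of whose vertices $v$ satisfy $\chi_k(v) \leq N$. The next step is to compare $S_N$ with the ellipsoid $P_N$: a cube lies in $S_N$ iff the \emph{max} of $\chi_k$ over its vertices is $\leq N$, and by convexity of $\chi_k$ the max over the cube is attained at a vertex, so $S_N$ is precisely the union of the closed cubes contained in $P_N$, i.e. the maximal cubical subcomplex of $P_N$. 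For $N$ large the ``error'' between $P_N$ and $S_N$ is of bounded width (a cube has diameter $\sqrt{s}$, a constant), while the ellipsoid's inradius grows like $\sqrt{N}$; so for $N \gg 0$ the inclusion $S_N \hookrightarrow P_N$ is a deformation retract (one can push the complement of $S_N$ in $P_N$ radially, or invoke a standard approximation-of-convex-bodies-by-inscribed-cubical-complexes argument). This gives that $S_N$ is contractible for all $N$ sufficiently large, which is the last sentence of the theorem, and in particular $H^q(S_N,\Z) = 0$ for $q > 0$ and $= \Z$ for $q = 0$ once $N \geq N_0$.

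\medskip

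Given this, finite generation follows formally. By definition $\bH^q_{red} = \bigoplus_{N \geq \m_k} \widetilde H^q(S_N,\Z)$ for $q = 0$ and $\bigoplus_{N}H^q(S_N,\Z)$ for $q > 0$; each $S_N$ is a finite cubical complex (its $0$-skeleton is the finite set $\chi_k^{-1}((-\infty,N])$), so each $H^q(S_N,\Z)$ is a finitely generated $\Z$-module, and by the previous paragraph $\widetilde H^q(S_N,\Z) = 0$ for all $q$ once $N \geq N_0$. Hence the direct sum is finite, so $\bH^q_{red}(G,k)$ has finite $\Z$-rank for every $q$, and $eu(\bH^*(G,k))$ is well-defined via \eqref{eq:eu}. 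The one point requiring a little care — and the main obstacle — is making the comparison ``$S_N \simeq P_N$ for $N \gg 0$'' rigorous and uniform: one must check that once $N$ is large enough that $P_N$ contains a cube around its center, every point of $P_N$ can be connected within $P_N$ to $S_N$ in a way that assembles to a retraction, i.e. that $P_N \setminus S_N$ collapses onto $\partial S_N$. This is where the convexity of $\chi_k$ and the positive-definiteness of $\frI$ are used essentially; the estimate is that the gap between the ``all vertices $\leq N$'' complex and the true sublevel set has thickness $O(1)$ while the scale of $P_N$ is $\Theta(\sqrt N)$, so for large $N$ there is no obstruction. Everything else is bookkeeping with the definitions in \ref{geomdef} and the isomorphism of Theorem \ref{th:HS}.
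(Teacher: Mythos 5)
Your reduction is sound up to one point: each $S_N$ is a finite cubical complex (condition (a) on $w_0$), so finite generation of every $\bH^q_{red}$ follows once $S_N$ is contractible for $N\gg 0$; and your identification of $S_N$ is correct — since $-\frI$ is positive definite, $\chi_k$ extends to a strictly convex quadratic on $\R^s$, its maximum on a cube is attained at a vertex, and $S_N$ is exactly the union of cubes contained in the ellipsoid $P_N$. The gap is the step ``$S_N\hookrightarrow P_N$ is a deformation retract for $N\gg 0$'', which is the entire content of the theorem and for which neither of your suggested justifications works. Radial pushing toward the center $c$ of $P_N$ fails because $S_N$ need not be star--shaped with respect to $c$: a segment from a point of a cube $\square\subseteq P_N$ to $c$ stays in $P_N$ by convexity, but it passes through cubes whose far vertices may lie outside $P_N$, so the segment leaves $S_N$ and the radial map does not retract onto $S_N$. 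The ``bounded--width error'' observation only yields a sandwich $P_N^{-\sqrt{s}}\subseteq S_N\subseteq P_N$ between two contractible convex bodies (where $P_N^{-\sqrt{s}}$ is the inner parallel body), and being sandwiched between contractible sets whose inclusion is a homotopy equivalence does \emph{not} force the middle set to be contractible — a point together with a disjoint circle, sitting inside a disk, is sandwiched in exactly this way. To close the gap you would have to actually kill the potential topology living in the boundary layer $S_N\setminus P_N^{-\sqrt{s}}$, e.g.\ by a collapsing or discrete Morse argument on the cubes that meet $\partial P_N$; this is a genuine lemma about maximal inscribed cubical subcomplexes of dilates of a fixed ellipsoid, not a quotable standard fact.

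For comparison, the proof in \cite{Nlat} (which the text says uses the techniques of \ref{ss:2}--\ref{ss:3}) avoids the continuous geometry altogether: negative definiteness of $\frI$ provides an effective cycle $Z$ with $(Z,E_j)<0$ for all $j$, and one builds generalized Laufer computation sequences giving step--by--step lattice contractions of $\R^s$ (or $[0,\infty)^s$) onto a finite rectangle $R$ along which $\chi_k$ does not increase. These contractions preserve every sublevel set, so $S_N$ deformation retracts onto the finite complex $S_N\cap R$ for \emph{every} $N$ — which gives finite generation directly — and $S_N\cap R=R$ is a rectangle, hence contractible, once $N\geq \max_R \chi_k$. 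If you want to keep your convex--geometry framing, you would essentially have to reprove this combinatorial contraction in the boundary layer anyway, so the computation--sequence route is both the intended one and the more economical one.
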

The proof (cf. \cite[p.7]{Nlat}) of this theorem uses the techniques of \ref{ss:2}, therefore 
we omit here. We remark that Example \ref{ex:latcoh1}(b) can not be the lattice cohomology 
associated with some surface singularity, since $\bH^0_{red}$ is not finitely generated over $\Z$.

Although, each $k\in Char$ provides a different cohomology module, 
there are only $|H|$ essentially different ones. Indeed, assume that $[k]=[k']$, 
hence $k'=k+2l$ for some $l\in L$. Then one has the identity 
$$\chi_{k'}(x-l)=\chi_k(x)-\chi_k(l) \ \ \ \mbox{ for any $x\in L$},$$
which tells that 
the transformation $x\mapsto x':=x-l$ realizes the following
identification:
$$\bH^*(G,k')=\bH^*(G,k)[-2\chi_k(l)].$$
Therefore, up to this {\em shift}, we have well--defined modules $\bH^*(G,[k])$ for any 
$spin^c$--structure $[k]$, and we may highlight uniformly a specific one, which represents 
$\bH^*(G,[k])$. One way to do this is to choose the distinguished representative $k_r$ 
(\ref{distrep}) for the class $[k]$, then $\bH^*(G,k_r)$ will represent the modules 
associated with $[k]$.

Notice that the $3$--manifold $M$ can be given by many different negative definite plumbing graphs 
$G$, but all these graphs can be connected by a finite sequence of blow ups and blow downs of 
$(-1)$--vertices. In order to see the invariance of the lattice cohomology, one has to check 
that the representative module $\bH^*(G,k_r)$ does not change under this calculus.

The next proposition emphasizes the advantage of the choice of $k_r$ for any $spin^c$ structure 
$[k]$, together with the invariance of $\bH^*(G,[k])$ under changing the negative definite 
plumbing representation of $M$.
\begin{proposition}\labelpar{propF2} \
\begin{itemize}
 \item[(a)]  \ $\bH^*(G,k_r)\cong\bH^*([0,\infty)^s,k_r)$ for any $k_r$.
 \item[(b)] \ The set $\{\bH^*(G,k_r)\}_{[k_r]}$ is independent on
the plumbing representation $G$ of the 3--manifold $M$, hence it associates a $\Z[U]$--module to
any pair $(M,[k_r])$, where $[k_r]\in\, Spin^c(M)$.
\end{itemize}
\end{proposition}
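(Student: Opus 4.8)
The plan is to prove the two parts separately, with part (a) providing the key geometric input for part (b). For part (a), I would first recall from Lemma \ref{def:KR} and Definition \ref{distrep} that the distinguished representative $k_r=k_{can}+2l'_{[k]}$ is characterized by $l'_{[k]}$ being the unique minimal element of $(l'+L)\cap\calS'$, where $\calS'$ is the Lipman cone. The aim is to show that the inclusion $[0,\infty)^s\hookrightarrow\R^s$ induces an isomorphism on lattice cohomology when the weights are $\{w_q\}$ associated with $\chi_{k_r}$. Using the geometric $\bS^*$--realization of Theorem \ref{th:HS}, it suffices to show that for every $N$ the inclusion $S_N\cap[0,\infty)^s\hookrightarrow S_N$ is a homotopy equivalence (or at least a cohomology isomorphism compatible with the $U$--action). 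The heart of this is a retraction argument: one shows that for $l\in L$ with some negative coordinate, $\chi_{k_r}(l)\geq\chi_{k_r}(l^+)$ where $l^+$ is obtained by replacing negative coordinates by $0$ (or by a suitable one--step move increasing a negative coordinate toward $0$), so that one can push $S_N$ into the first quadrant without increasing weights. The precise statement I would invoke is that $\chi_{k_r}$ takes its minimum on $\calS_{[k_r]}$-type cones sitting in $[0,\infty)^s$, which is exactly where the minimality of $l'_{[k]}$ in the Lipman cone enters: it guarantees that the ``descent'' toward the first quadrant is monotone for the weight function attached to $k_r$. This monotonicity is presumably already recorded in \cite{Nlat} or \cite{OSZINV}, and I would cite it; the main content is assembling it into a deformation retract of each $S_N$ compatible with the restriction maps $r_{N+1}$, hence with the $\Z[U]$--module structure.

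For part (b), I would argue that the oriented diffeomorphism type of $M$ determines the plumbing graph $G$ up to the blow--up/blow--down calculus along $(-1)$--vertices (Neumann's theorem, quoted in \ref{ss:link}), so it is enough to check that $\bH^*(G,k_r)$ is unchanged under a single blow--up of a $(-1)$--vertex. Let $G'$ be obtained from $G$ by blowing up, with $L'=\Z^{s+1}=L\oplus\Z E_0$ where $E_0$ is the new $(-1)$--vertex. There is a natural lattice inclusion $\pi^*:L\hookrightarrow L'$ (pullback) and a projection, and one knows $\chi_{k'_{can}}(\pi^* l)=\chi_{k_{can}}(l)$ for the canonical characteristic elements, and more generally that the distinguished representatives match up under this correspondence of $spin^c$--structures on $M$. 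The key computation is that the extra $E_0$--direction contributes nothing to the cohomology: for a cube with an edge in the $E_0$ direction, the weight in the two $E_0$--faces differs by a controlled amount, and one shows that the sublevel sets $S_N\subseteq\R^{s+1}$ deformation retract onto their slices $\{x_0=\text{const}\}\cap S_N$ (or onto a copy of the $S_N$ for $G$), again compatibly with $U$. Concretely, for the blow--up of a vertex, the function $\chi_{k'}$ restricted to lines in the $E_0$ direction is a quadratic with a unique integral or half--integral minimum, so each such line meets $S_N$ in an interval; a standard ``fibered over an interval'' argument then gives the homotopy equivalence. Combining with part (a), which lets us compute everything inside $[0,\infty)^{s}$ resp.\ $[0,\infty)^{s+1}$, makes these retractions easy to control since the relevant cones are finite--type.

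The main obstacle I anticipate is not any single hard theorem but the bookkeeping of the $\Z[U]$--grading and the shift: under blow--down the identification of $spin^c$--structures $Spin^c(M)$ for $G$ and $G'$ is canonical, but the characteristic elements $k_r$ and $k'_r$ representing the same class need not correspond under the naive pullback; they differ by the canonical cycle's behavior under blow--up, and one must track the grading shift $[-2\chi_k(l)]$ from the displayed identity $\bH^*(G,k')=\bH^*(G,k)[-2\chi_k(l)]$ to see that the shift is exactly absorbed, so that the \emph{representative} module (with its absolute $\Q$--grading) is genuinely invariant. Verifying that $\m_{k_r}$ and the normalization match across the calculus — equivalently that $\m_{k_r}$ together with $\bH^*_{red}$ is a diffeomorphism invariant — is where I would spend the most care; I expect this to reduce to the known additivity $\chi_{k_{can}}$ behaves well under blow--up, i.e.\ $(k_{can}')^2+(s+1)=k_{can}^2+s$, which is the surface--singularity analogue of the invariance of $K^2+\#\{\text{vertices}\}$ already implicit in the SWI conjecture's right--hand side. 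With that identity in hand, part (b) follows formally from part (a) plus the fibered retraction over the blown--up direction.
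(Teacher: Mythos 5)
Note that the thesis does not reprove this proposition: it defers to \cite[3.3.4, 3.3.5, 3.4]{Nlat}, and your outline is essentially the argument given there — for (a) a weight–non‑increasing contraction of $\R^s$ onto $[0,\infty)^s$ driven by the minimality of $l'_{[k]}$ in $(l'+L)\cap\calS'$, and for (b) reduction to a single blow--up, the quadratic behavior of $\chi_{k'}$ along the new $E_0$--direction, and the identity $(k'_r)^2+(s+1)=k_r^2+s$. Two small points of care: the reduction to the first quadrant is not the one--step replacement of negative coordinates by $0$ but a stepwise generalized Laufer computation sequence (your parenthetical alternative; compare the mechanism of Lemma \ref{lem:con4}, where at each step one finds a single $E_j$ to add without increasing $\chi_{k_r}$, using exactly the minimality of the distinguished lift), and under blow--up the distinguished representatives do in fact correspond explicitly, $k'_r=\pi^*k_r+E_0$, which makes the grading bookkeeping you worry about straightforward.
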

The property is proved in \cite[3.3.4 \& 3.3.5 \& 3.4]{Nlat}. Another 
interpretation of the construction and the invariance can be found in \cite{OSSz2}.

One can consider also the sum 
$$\bH^*(M):=\oplus_{[k]\in Spin^c(M)}\bH^*(M,[k]).$$

\begin{example}
 Consider the most basic example, when the normal surface singularity is $(\C^2,0)$. It is smooth at 
the origin and its link is just an $S^3$. We may pick one of its negative definite plumbing 
representation given by: 
\begin{picture}(110,20)(80,15)
\put(100,18){\circle*{3}}
\put(100,28){\makebox(0,0){$-1$}}
\put(110,15){.}
\end{picture}
 If $E$ represents the vertex, then the lattice $L=\Z\langle E \rangle\cong \Z$, $E^2=-1$ and 
the adjunction formula immediately gives $\K=E$. The only $spin^c$ structure is $[\K]$, and 
$\chi(n)=-\frac{((n+1)E,nE)}{2}=\frac{n(n+1)}{2}$ for any $n\in \Z$ (i.e. for $nE\in L$). By 
\ref{propF2}(a), its enough to look at $\Z_{\geq 0}$ on which $\chi$ is increasing. Hence, it follows that 
$S_N$ is contractible to the point $n=0$ for $N\geq 0$, therefore the lattice cohomology 
is the {\em trivial} one, $\bH^0(S^3,\K)=\calT_0^+$ and $\bH^q(S^3,\K)=0$ for $q>0$. 
\end{example}

\subsection{`Bad' vertices and the rationality of graphs}\labelpar{ss:bv}\
We continue the discussion held in \ref{AL:3} from the point of view of lattice cohomology. 
Recall that a normal surface singularity is rational if its geometric genus is zero.
This vanishing property was characterized combinatorially by Artin's criterion (see 
Theorem \ref{thm:rat}):
\begin{equation}\label{eq:artin}
\mbox{rationality}\ \Longleftrightarrow \ \ \chi(l)> 0 \ \ \mbox{for any $l>0$, $l\in L$,}
\end{equation}
where the notaion $\chi$ is associated with $\K$. 
Subsection \ref{def:rat} defines the set of rational graphs (resolution graphs of rational singularities), which is 
closed under taking subgraphs and decreasing the weights of vertices. 

The next theorem points out 
that the lattice cohomology of rational graphs is trivial, and in this way it gives a {\em new 
topological characterization} of rational normal surface singularities. The idea behind it is that 
one can produce a deformation retract of the space $\R^s$ to the origin along which $\chi_k$ is decreasing 
(using the methods of \ref{ss:2}), hence $S_N$ is contractible whenever is 
non--empty.
\begin{theorem}\emph{(N\'emethi \cite[6.3]{OSZINV} and \cite[4.1]{Nlat})}\labelpar{thm:ratlatcoh}\newline
 $G$ is a rational graph if and only if $\bH^0(G,\K)=\calT_0^+$. 
Moreover, in this case, $\bH^q(G,\K)=0$ for $q\geq 1$, and one has the same result for any 
distinguished representative $k_r \in Char$ as well.
\end{theorem}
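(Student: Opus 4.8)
The plan is to prove the equivalence in two directions, using the geometric $\bS^*$-realization of Theorem~\ref{th:HS}, which reduces everything to the topology of the sublevel sets $S_N = S_N(\chi_k) \subseteq \R^s$. By Proposition~\ref{propF2}(a) it suffices to work inside $[0,\infty)^s$, so all lattice points considered have nonnegative coordinates. The key observation, which I would isolate as a lemma, is that Artin's criterion \eqref{eq:artin} is exactly the statement needed to run a \emph{decreasing deformation retract}: starting from any $l > 0$ in $L$ (or more generally any cube with a nonzero vertex), one can move toward the origin along a path $l = x_0, x_1, \dots, x_m = 0$ with $x_{i+1} = x_i - E_{j(i)}$ along which $\chi_k$ is (weakly) decreasing. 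For $k = \K$ this follows because $\chi(l) - \chi(l - E_j) = -(E_j, l) + \chi(E_j) = -(E_j,l) + 1$, and connectedness together with rationality forces, at each stage, existence of some $E_j$ in the support with $(E_j, l) \geq 1$, giving a non-increasing step; iterating produces the retraction. This is precisely the mechanism alluded to in the sentence preceding the theorem statement, and it is the content of \cite[6.3]{OSZINV}; I would cite the combinatorial machinery of \ref{ss:2} (generalized Laufer sequences) to make the path construction rigorous.

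For the forward direction ($G$ rational $\Rightarrow$ trivial lattice cohomology): using the retraction above, every nonempty $S_N$ deformation retracts onto the single lattice point $0$ (the weights strictly decrease toward $0$ along the constructed paths, keeping us inside $S_N$ at each step because $\chi_k$ only goes down), hence $S_N$ is contractible for all $N \geq \m_k$. Therefore $H^0(S_N, \Z) = \Z$ and $H^q(S_N, \Z) = 0$ for $q \geq 1$ and all $N$, so in the $\bS^*$-picture $\bS^0 = \oplus_{N \geq \m_k} \Z = \calT^+_{2\m_k}$ with $\bS^0_{red} = 0$, and $\bS^q = 0$ for $q \geq 1$. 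Since $\chi_{\K}(0) = 0$ we get $\m_{\K} = 0$ and $\bH^0(G, \K) = \calT_0^+$. The same argument applies verbatim to any distinguished representative $k_r$, since rationality gives $\chi_{k_r}(l) > 0$ for all $l > 0$ as well (here one uses $l'_{[k]} \geq 0$ and the relation of $\chi_{k_r}$ to $\chi$; this is where I would invoke \cite[5.4--5.5]{OSZINV} and the properties of $\calS'$).

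For the converse ($\bH^0(G,\K) = \calT_0^+ \Rightarrow G$ rational): I would argue by contraposition. If $G$ is not rational, then by \eqref{eq:artin} there exists $l > 0$ with $\chi(l) \leq 0$. Take such an $l$ with $\chi(l)$ minimal; then $\m_{\K} \leq \chi(l) \leq 0$, and in fact one can arrange $\m_{\K} < 0 = \chi(0)$ unless $\chi(l) = 0$, in which case a short separate argument (the origin and $l$ lie in different connected components of some $S_N$, or $S_0$ is disconnected) produces a nontrivial class in $\bH^0_{red}$. Concretely: if $\m_{\K} < 0$ then the grading of $\calT_0^+$ is wrong (its minimal degree would be $2\m_{\K} \neq 0$), contradicting $\bH^0(G,\K) = \calT_0^+$; if $\m_{\K} = 0$ but there is $l > 0$ with $\chi(l) = 0$, then $S_0$ contains both $0$ and $l$ but, by minimality/connectedness considerations, is disconnected, so $\widetilde{H}^0(S_0, \Z) \neq 0$ and $\bH^0_{red} \neq 0$. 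Either way $\bH^0(G,\K) \neq \calT_0^+$.

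The main obstacle is the converse direction's second case — showing that a positive cycle with $\chi(l) = 0$ actually forces $S_0$ (or some $S_N$) to be disconnected rather than merely containing two points. This requires controlling the geometry of the sublevel set, not just its vertex set: one must verify no chain of $1$-cubes of weight $\leq 0$ connects $0$ to $l$. I expect this to follow from a careful analysis of the generalized Laufer computation sequences of \ref{ss:2} together with the structure of $\calS_{top}$, essentially the argument already present in \cite[4.1]{Nlat}; the forward direction and the $\m_{\K} < 0$ subcase are comparatively routine once the decreasing-retract lemma is in hand.
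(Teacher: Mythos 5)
Your overall route --- restrict to $[0,\infty)^s$ via Proposition~\ref{propF2}(a), build a weight--non--increasing deformation retract of each $S_N$ onto the origin using Artin's criterion, and argue the converse by contraposition through $\m_{\K}$ and the disconnectedness of $S_0$ --- is exactly the route the paper sketches (and delegates to \cite[6.3]{OSZINV}, \cite[4.1]{Nlat}); the paper also gets the forward direction as the $\nu=0$ case of the Reduction Theorem. However, your key descent lemma is wrong as stated. The correct step formula is
$\chi(l)-\chi(l-E_j)=1-(l-E_j,E_j)=1-(l,E_j)+E_j^2$,
not $1-(E_j,l)$: you dropped the self--intersection term. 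Consequently the criterion ``there exists $E_j$ in the support with $(E_j,l)\geq 1$'' is backwards: since $E_j^2\leq -1$, such a $j$ gives $\chi(l)-\chi(l-E_j)\leq -1$, i.e.\ $\chi$ \emph{strictly increases} when you step toward the origin. The correct condition for a non--increasing step $l\mapsto l-E_j$ is $(l,E_j)\leq E_j^2+1$, equivalently $(l+\K,E_j)<0$, and its existence for every $l>0$ is where rationality actually enters: if $(l+\K,E_j)\geq 0$ for every $j\in |l|$, then $(l+\K,l)\geq 0$, hence $\chi(l)\leq 0$, contradicting $\chi(l)\geq 1$ from (\ref{eq:artin}). (This averaging argument is the same one the paper uses in Lemma~\ref{lem:i+I} and Proposition~\ref{prop:G}(III), and it adapts to $k_r$ because $(l,l'_{[k]})\leq 0$ for $l\geq 0$, $l'_{[k]}\in\calS'$.) You also still need the retraction to act on all cubes, not just vertices, but the machinery of \ref{ss:2} (Lemma~\ref{lem:con1} and its relatives) that you cite does exactly that.

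The step you flag as ``the main obstacle'' in the converse is in fact immediate and needs no Laufer--sequence analysis. Working in $[0,\infty)^s$, the only $1$--cubes containing the origin are $[0,E_j]$, and $w([0,E_j])=\max\{\chi(0),\chi(E_j)\}=\max\{0,1\}=1>0$ since every $E_j$ has genus zero. Hence $\{0\}$ is an isolated connected component of $S_0$. So if $G$ is not rational and $\m_{\K}=0$, any $l>0$ with $\chi(l)=0$ lies in a different component of $S_0$, giving $\widetilde{H}^0(S_0,\Z)\neq 0$ and $\bH^0_{red}\neq 0$; together with your (correct) $\m_{\K}<0$ case this finishes the converse. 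With the descent criterion repaired and this one--line observation inserted, the proposal becomes a complete proof along the paper's lines.
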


\begin{remark}
One can say that a graph $G$ is {\em lattice cohomologically `weak'} if the module $\bH^*(G,\K)$ 
associated with the $spin^c$ structure $[\K]$ dominates all the others. E.g., by the previous theorem 
this is the case for rational graphs. We refer to \cite[4.2]{Nlat}, which shows the same phenomenon 
for the elliptic graphs too. Therefore, one can ask the question whether there is any other graph with similar 
properties, or in 
other words, what can we say about the maximal set of weak graphs in this sense? See 
\cite[5.2.6]{Nlat} for further details in this direction.
\end{remark}

Any non--rational graph can be transformed into a rational one by decreasing some of the 
decorations along some of its vertices. Indeed, if all the decorations of a graph $G$ 
are sufficiently negative (e.g. $(E_j,E)\leq 0$ for any $j$), then $G$ is rational.
In order to measure how far the given graph is from the `rationality', one can give the following 
definitions (see also \cite{OSzP,OSZINV,Nlat,Nexseq,NR,redthm}).
\begin{definition}[\bf Family of bad vertices]
We say that a graph has a {\it family of $\nu$ bad vertices}, if one can find a subset 
of vertices  $\{j_k\}_{k=1}^\nu$, called {\it bad vertices},  such that replacing their decorations 
$b_j=(E_j,E_j)$ by some more negative integers $b'_j\leq b_j$ we get a rational graph.

There is a (usually non--unique) family of bad vertices with smallest cardinality. 
If this cardinality is less than or equal to $\nu$, then the graph is called {\em $\nu$--rational} 
(or {\em type--$\nu$} as in \cite{OSSz3}). 
The case $\nu=1$ appeared earlier in \cite{OSZINV} and it was called {\it almost rational}.
\end{definition}

The main result of this chapter will show that the geometry encoded by the lattice cohomology is 
concentrated to these vertices.

\subsection{Exact sequence and vanishing}\labelpar{ss:exactseq}\
We are going to present an exact sequence, called the {\em surgery exact sequence} (or surgery exact triangle), 
which was firstly proved by Ozsv\'ath and Szab\'o in the context of Heegaard--Floer homology. 
In order to understand the deep connection between these 
theories (\ref{ss:HF}), there was a desire to prove it for lattice cohomology as well. 

This was done over $\Z_2$--coefficients by Greene \cite{Greene}, then N\'emethi \cite{Nexseq} extended 
over $\Z$. Since the current subsection is a summary of parts of \cite{Nexseq}, we will omit 
the proofs.

For any graph $G$ and a fixed vertex $j_0$, we may consider the graphs $G\setminus j_0$ and $G_{j_0}^+$. 
The first one is obtained by deleting the vertex $j_0$ and its adjacent edges, while the second is 
defined by replacing the decoration $b_{j_0}$ of $j_0$ by $b_{j_0}+1$. 
The negative definiteness of $G$ implies that $G\setminus j_0$ is negative definite too, but 
this is not true for $G_{j_0}^+$. However, $G_{j_0}^+$ is negative definite if and only if 
$\det(G)>\det(G\setminus j_0)$ (see also \cite[Lemma 6.1.1]{Nexseq}). Indeed, we have 
$$\det(G)=\det(G_{j_0}^+)+\det(G\setminus j_0),$$ 
where $\det(G)$ and $\det(G\setminus j_0)$ are positive. 
Conversely, if $G_{j_0}^+$ is negative definite then $G$, hence 
$G\setminus j_0$ is so. However, $G\setminus j_0$ fails to be connected in a generic situation.

In order to speak about lattice cohomologies of these graphs, we have to extend the definition. 
Notice that formally \ref{ss:pl} allows to drop the connectedness and the negative definiteness 
conditions, and assume only that the graphs are non--degenerate (i.e. $\det(G)\neq 0$).
However, the effect of leaving the negative definite assumption is more serious: 
we loose the geometric interpretation 
since $S_N$ may not necessarily be compact. Moreover, the lattice cohomology may not be stable under 
the blow ups and blow downs connecting the plumbing representation (for example and further discussion 
see \cite[2.4]{Nexseq}). Nevertheless, it is convenient to extend the definition in order to have 
a larger flexibility for computations using the following surgery exact sequence. 

\begin{theorem}\emph{(\cite{Nexseq})}\labelpar{thm:exseq} 
\begin{enumerate}
 \item 
There exists a long exact sequence of $\Z[U]$--modules
$$\ldots\longrightarrow \bH^q(G_{j_0}^+)\longrightarrow \bH^q(G)\longrightarrow 
\bH^q(G\setminus j_0) \longrightarrow \bH^{q+1}(G_{j_0}^+) \longrightarrow\ldots \ .$$ 
\item If $G_{j_0}^+$ is negative definite, then at the begining of the exact sequence 
$$0\longrightarrow \bH^0(G_{j_0}^+)\longrightarrow \bH^0(G)\longrightarrow 
\bH^0(G\setminus j_0) \longrightarrow \bH^{1}(G_{j_0}^+) \longrightarrow\ldots \ ,$$
the canonical submodule $\calT^+(G\setminus j_0)$ of $\bH^0(G\setminus j_0)$ is mapped to zero.
\end{enumerate}
\end{theorem}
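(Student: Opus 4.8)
The plan is to construct the long exact sequence at the cochain level and then identify the geometric meaning of the connecting maps. First I would set up the three cochain complexes $(\calF^*(G),\delta)$, $(\calF^*(G_{j_0}^+),\delta)$ and $(\calF^*(G\setminus j_0),\delta)$ on a common footing. The key observation is that $L(G)=\Z^s$ and $L(G_{j_0}^+)=\Z^s$ share the same underlying lattice (only the quadratic form changes by altering $b_{j_0}\mapsto b_{j_0}+1$), while $L(G\setminus j_0)=\Z^{s-1}$ is the coordinate sublattice obtained by setting the $j_0$-th coordinate to zero. Writing $\chi^{G}_k$, $\chi^{+}_k$ and $\chi^{\setminus}_k$ for the corresponding Riemann--Roch functions, one has a pointwise comparison: along the hyperplane $\{l_{j_0}=0\}$ the function $\chi^{G}_k$ restricts to $\chi^{\setminus}_k$ (for the appropriate restricted characteristic element), and $\chi^{+}_k(l)=\chi^{G}_k(l)+\binom{l_{j_0}}{2}$-type correction, so that $\chi^{+}_k$ differs from $\chi^{G}_k$ only through the $j_0$-direction. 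This lets me think of $\R^s$ as fibered over the $j_0$-axis with fibers $\R^{s-1}$ carrying the $G\setminus j_0$ weights.

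The central step is a Mayer--Vietoris / cone argument on the spaces $S_N$. Using the geometric $\bS^*$-realization (Theorem \ref{th:HS}), I would cover $S_N(G)$ by the sublevel sets coming from the two `half-space' truncations in the $j_0$-direction (say $l_{j_0}\ge 0$ and $l_{j_0}\le 0$), whose intersection deformation retracts onto $S_N(G\setminus j_0)$ sitting in the hyperplane $\{l_{j_0}=0\}$, while each half-space piece is related to $S_N(G_{j_0}^+)$ by the monotonicity of $\chi$ along the $j_0$-direction (this is exactly where the identity $\det(G)=\det(G_{j_0}^+)+\det(G\setminus j_0)$ and the convexity of $\chi$ in one variable enter: $\chi^{G}_k$ is, along each line parallel to $E_{j_0}$, a quadratic with leading coefficient governed by $-b_{j_0}$, and increasing $b_{j_0}$ by one shifts where its minimum sits). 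The associated Mayer--Vietoris sequence in cohomology, assembled over all $N\ge\m$ and made $U$-equivariant via the restriction maps $r_{N+1}$, yields the long exact sequence
$$\ldots\longrightarrow \bH^q(G_{j_0}^+)\longrightarrow \bH^q(G)\longrightarrow
\bH^q(G\setminus j_0) \longrightarrow \bH^{q+1}(G_{j_0}^+) \longrightarrow\ldots\, .$$
Care is needed with the gradings: the shift is dictated by the difference $\chi^{+}_k(l_w^+)-\chi^{G}_k(l_w)$ of minima, and one must check that the maps are homogeneous of the correct degree; I would track this by comparing $\m$-values directly.

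For part (2), assume $G_{j_0}^+$ is negative definite, equivalently $\det(G)>\det(G\setminus j_0)$. Then all three complexes have compact $S_N$'s, so $\bH^0$ of each contains its canonical $\calT^+_{2\m}$ summand. Left-exactness at the start of the sequence is formal from the long exact sequence once I verify $\bH^{-1}=0$, which holds since $S_N$ is empty for $N<\m$. The substantive claim is that the map $\bH^0(G)\to\bH^0(G\setminus j_0)$ kills the canonical tower $\calT^+(G\setminus j_0)$. I would argue this by looking at large $N$: for $N\gg0$ all the $S_N$ are contractible, so the $\calT^+$-tower of $\bH^0(G\setminus j_0)$ is detected by the generator of $H^0(S_N(G\setminus j_0),\Z)=\Z$; but the restriction $S_N(G)\supseteq S_N(G\setminus j_0)$ composed with the comparison of basepoints shows that a class restricting from $S_N(G)$ to the generator on the hyperplane can only come from the $\calT^+(G)$-tower, whose image I must show lands in $\calT^+(G_{j_0}^+)$'s orthogonal complement — more precisely, using the augmentation-compatibility in Theorem \ref{th:HS}, the canonical submodule of $\bH^0(G)$ maps isomorphically to the canonical submodule of $\bH^0(G_{j_0}^+)$ (both are $\calT^+_{2\m}$ with matching $\m$ under the negative-definiteness hypothesis, since then the minimum of $\chi$ is attained at $l=0$ for all three), hence by exactness the composite $\calT^+(G)\hookrightarrow\bH^0(G)\to\bH^0(G\setminus j_0)$ is zero, and since $\calT^+(G\setminus j_0)$ is the image of $\calT^+(G)$ it too maps to zero.

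The hard part will be the Mayer--Vietoris step: making the deformation retracts onto $S_N(G\setminus j_0)$ and onto the $G_{j_0}^+$-models precise and simultaneously $U$-equivariant (i.e. compatible as $N$ varies), and pinning down the grading shifts. This is where the techniques of the cited Laufer-type deformation arguments (the methods of \ref{ss:2} referenced in the excerpt) do the real work — one needs that along each fiber line parallel to $E_{j_0}$ the weight function is unimodal so that the half-space truncations are homotopy equivalent to the `$G_{j_0}^+$' picture rather than merely homologous. Everything else is bookkeeping with the exact sequence and the canonical $\calT^+$ summands.
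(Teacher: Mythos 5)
The paper itself gives no proof of this theorem: it explicitly states that the subsection is a summary of \cite{Nexseq} and omits the proofs, so your attempt can only be measured against the argument in \cite{Greene,Nexseq}. There the proof is a cochain--level construction: one slices the cube complex of the rank--$s$ lattice according to the $j_0$--coordinate, compares the weight functions slice by slice, and exhibits one of the three complexes as quasi--isomorphic to the mapping cone of an explicit cochain map between the other two; the connecting homomorphism and the mixing of the $spin^c$--classes come out of that construction. Your central mechanism --- a two--set Mayer--Vietoris cover of $S_N(G)$ by the half--spaces $l_{j_0}\geq 0$ and $l_{j_0}\leq 0$ --- cannot produce the surgery triangle. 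A cover $S_N(G)=U\cup V$ yields $\cdots\to H^q(S_N(G))\to H^q(U)\oplus H^q(V)\to H^q(U\cap V)\to H^{q+1}(S_N(G))\to\cdots$, so the connecting map would land back in $\bH^{q+1}(G)$, not in $\bH^{q+1}(G_{j_0}^+)$, and the middle term appears with the wrong multiplicity and in the wrong position. Moreover the identification of a half--space piece of $S_N(G)$ with $S_N(G_{j_0}^+)$ is unjustified and false in general: $\chi^{G^+}$ and $\chi^{G}$ differ by a quadratic function of $l_{j_0}$, so their sublevel sets are not homotopy equivalent (indeed $G_{j_0}^+$ need not be negative definite, in which case $S_N(G_{j_0}^+)$ is non--compact while each half of $S_N(G)$ is compact); and any map induced by an inclusion of sublevel sets would preserve $spin^c$--classes, whereas the paper itself notes that the operators in the exact sequence mix the classes $[k]$.

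Part (2) is also not salvageable as written. The claim concerns the map $\bH^0(G\setminus j_0)\to\bH^1(G_{j_0}^+)$, and by exactness it is equivalent to showing that $\calT^+(G\setminus j_0)$ lies in the image of $\bH^0(G)\to\bH^0(G\setminus j_0)$. You instead argue that the composite $\calT^+(G)\hookrightarrow\bH^0(G)\to\bH^0(G\setminus j_0)$ vanishes and then assert that $\calT^+(G\setminus j_0)$ "is the image of $\calT^+(G)$"; taken together these two statements would force $\calT^+(G\setminus j_0)=0$, which is absurd, so the paragraph is internally contradictory. The auxiliary claim that under negative definiteness the minimum of $\chi_k$ is attained at $l=0$ for all three graphs is also false for general $k$ (and even for $\K$ outside the rational case); the minima $\m_k$ of the three weight functions genuinely differ, and tracking those shifts is part of what the cochain--level proof in \cite{Nexseq} actually does.
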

A disadvantage of this sequence is that the operators mix the classes $[k]$, hence it is hard to 
calculate the modules $\bH^*(G,[k])$ separately. Still, one can provide an exact sequence which connects 
the lattice cohomologies of $G$ and $G\setminus j_0$ with fixed classes, making the concept of 
{\em relative lattice cohomology}. We omit the details here and refer to \cite[4]{Nexseq}.
Nevertheless, using this surgery exact sequence we can prove the following vanishing result of lattice 
cohomology.

\begin{theorem}[Vanishing Theorem]\labelpar{vanishth}
Assume that $G$ has a family of $\nu$ bad vertices, then $\bH^q(G)=0$ for $q\geq \nu$. 
(In particular, $\bH^q(G,[k])=0$ for any $[k]$ too.)
\end{theorem}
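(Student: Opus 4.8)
The plan is to induct on the number of bad vertices $\nu$, using the surgery exact sequence of Theorem~\ref{thm:exseq} to peel off one bad vertex at a time. The base case $\nu=0$ is exactly the rationality statement: if $G$ has no bad vertices it is rational, and Theorem~\ref{thm:ratlatcoh} gives $\bH^q(G,\K)=0$ for all $q\geq 1$, in particular for $q\geq\nu=0$ we need $\bH^q=0$, which holds for $q\geq 1$; the $q=0$ case is subtler since $\bH^0=\calT_0^+\neq 0$, so in fact the clean base case to run the induction from is $\nu=1$ (almost rational graphs), where one must show $\bH^q(G)=0$ for $q\geq 1$. This already follows from Theorem~\ref{thm:ratlatcoh} applied after decreasing the decoration of the one bad vertex, combined with a comparison argument, or directly from the graded root description in \cite{OSZINV}; I would cite it as the anchor.

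\textbf{The inductive step.} Assume the result for all graphs with at most $\nu-1$ bad vertices, and let $G$ have a family of bad vertices $\{j_0,\dots,j_{\nu-1}\}$ of size $\nu$. Fix $j_0$ and consider the two auxiliary graphs $G\setminus j_0$ and $G_{j_0}^+$. The first I would like to have at most $\nu-1$ bad vertices: indeed, the remaining vertices $\{j_1,\dots,j_{\nu-1}\}$ form a family of bad vertices for $G\setminus j_0$, because decreasing their decorations makes $G$ rational, hence makes the subgraph $G\setminus j_0$ rational as well (rationality is inherited by subgraphs, as recalled after Definition~\ref{def:rat}). So the inductive hypothesis applies to each connected component of $G\setminus j_0$, giving $\bH^q(G\setminus j_0)=0$ for $q\geq\nu-1$. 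Now feed this into the long exact sequence
$$\cdots\longrightarrow \bH^q(G_{j_0}^+)\longrightarrow \bH^q(G)\longrightarrow \bH^q(G\setminus j_0)\longrightarrow \bH^{q+1}(G_{j_0}^+)\longrightarrow\cdots$$
For $q\geq\nu$ the outer term $\bH^q(G\setminus j_0)$ vanishes, so $\bH^q(G_{j_0}^+)\twoheadrightarrow\bH^q(G)$; it remains to kill $\bH^q(G_{j_0}^+)$ for $q\geq\nu$.

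\textbf{Handling $G_{j_0}^+$: the main obstacle.} This is the delicate point. The graph $G_{j_0}^+$ (one unit added to $b_{j_0}$) has the same vertex set and the family $\{j_0,\dots,j_{\nu-1}\}$ is still a family of bad vertices for it --- decreasing their decorations yields a rational graph, and going from $b_{j_0}$ to $b_{j_0}+1$ and then down to the rational value is just a net decrease --- so it again has at most $\nu$ bad vertices, but \emph{not} obviously fewer, so naive induction does not close. The standard resolution (following \cite{Nexseq}) is a secondary induction: the quantity $\det(G_{j_0}^+)=\det(G)-\det(G\setminus j_0)$ is strictly smaller than $\det(G)$ when $G_{j_0}^+$ is still negative definite, so one induces on $\det(G)$ as well; and when $G_{j_0}^+$ fails to be negative definite, one must invoke the extended (non-negative-definite) lattice cohomology set up in \ref{ss:exactseq} and a separate vanishing argument for that degenerate case. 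So the real structure of the proof is a double induction on $(\nu,\det G)$, with the surgery sequence reducing $(\nu,\det G)$ either to $(\nu-1,\ast)$ via $G\setminus j_0$ or to $(\nu,\det G - \det(G\setminus j_0))$ via $G_{j_0}^+$; termination is guaranteed because $\det(G\setminus j_0)>0$. The one genuinely new ingredient to verify carefully is the behaviour at the boundary of the negative-definite cone, i.e.\ controlling $\bH^*(G_{j_0}^+)$ when it is only non-degenerate; here I would either add enough $(-1)$- or further blow-ups to stay negative definite, or appeal directly to the degenerate-case analysis already present in \cite{Nexseq}. Once that is in hand, the exact sequence gives $\bH^q(G)=0$ for $q\geq\nu$, completing the induction; the parenthetical statement $\bH^q(G,[k])=0$ follows since $\bH^q(G)=\oplus_{[k]}\bH^q(G,[k])$.
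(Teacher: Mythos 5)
Your skeleton (induction on $\nu$, the surgery exact sequence, and the observation that $G\setminus j_0$ inherits a family of $\nu-1$ bad vertices) matches the paper, but the way you treat the third graph of the triangle is where the argument breaks, and you have correctly located the break without repairing it. If you run the sequence with $G$ in the middle and $G_{j_0}^+$ (decoration \emph{increased} by one) on the left, then iterating forces $\det$ to drop by the fixed positive amount $\det(G\setminus j_0)$ at every step, so the chain \emph{must} exit the negative definite cone after finitely many steps; your secondary induction on $\det(G)$ therefore has no admissible base case, and the ``degenerate-case analysis'' you defer to is precisely the missing vanishing statement (for non-negative-definite graphs $S_N$ need not be compact and no vanishing theorem is available). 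The suggestion to blow up does not help either, since blow-ups do not change whether $G_{j_0}^+$ is negative definite in the relevant sense.

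The paper's proof avoids this entirely by running the exact sequence in the opposite direction. Set $G_j(-m)$ to be $G$ with $b_j$ replaced by $b_j-m$, $m\geq 0$; all these graphs are negative definite, as is $G\setminus j$. Apply Theorem~\ref{thm:exseq} to the triple with base graph $G_j(-m-1)$, so that its ``$+$''-modification is exactly $G_j(-m)$ and the deletion graph is $G\setminus j$. Since $\bH^{q-1}(G\setminus j)$ and $\bH^q(G\setminus j)$ vanish (in the relevant reduced sense) for $q\geq\nu$ by the inductive hypothesis on $\nu-1$, the sequence yields $\bH^q(G_j(-m))\cong\bH^q(G_j(-m-1))$ for all $q\geq\nu$, hence $\bH^q(G)\cong\bH^q(G_j(-m))$ for every $m$. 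For $m$ large enough the vertex $j$ is no longer bad, so $G_j(-m)$ has only $\nu-1$ bad vertices and the right-hand side vanishes. In short: replace your increasing modification $G_{j_0}^+$ by the decreasing family $G_j(-m)$, realizing $G$ as the top of a chain of isomorphisms rather than trying to push past it; this is the one idea your proposal is missing.
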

\begin{proof}
 It goes using induction over $\nu$. When $\nu=0$, then all the components of $G$ are rational. Hence 
by \ref{thm:ratlatcoh}, their reduced lattice cohomology is vanishing. 
Assume that the statement is true for 
$\nu-1$ and let $G$ be a graph with $\nu$ bad vertices. Choose a bad vertex $j$ and form the graph 
$G_j(-m)$ by replacing the decoration $b_j$ by $b_j-m$ for $m\geq 0$. The long exact sequence 
associated with $G_j(-m-1)$, $G_j^+(-m-1)\equiv G_j(-m)$ and $G\setminus j$ and the inductive 
argument say that $\bH^q(G_j(-m))\cong \bH^q(G_j(-m-1))$ for $q\geq \nu$. Hence, induction over $m$ 
shows that $\bH^q(G)\cong\bH^q(G_j(-m))$ for all $m$ and $q\geq \nu$. Since for large enough $m$, 
$G_j(-m)$ has only $\nu-1$ bad vertices, the result follows.
\end{proof}

\section{Relation with other theories revisited}

\subsection{Heegaard--Floer homology and N\'emethi's conjecture}\labelpar{ss:HF}\
First of all, we review some basic facts from the theory of Heegaard--Floer homology (the $HF^+$ version) 
introduced by Ozsv\'ath and Szab\'o in \cite{OSz,OSz7}. Besides the long list of original papers of 
Ozsv\'ath and Szab\'o, for more details on the definitions and properties, we recommend 
the lecture notes \cite{OSz06a,OSz06b}.

Consider an oriented 3--manifold $M$, which we assume 
to be a rational homology sphere. Then $HF^+(M)$ is an abelian group with a $\Z_2$--grading, 
and it splits as a direct sum according to the $spin^c$--structures on $M$. We may write  
$$HF^+(M)=\oplus_{[k]\in Spin^c(M)}\ HF^+(M,[k]),$$
and denote by $HF^+_{even}(M,[k])$, respectively $HF^+_{odd}(M,[k])$, the parts of $HF^+(M,[k])$ 
with the corresponding parity. For any $spin^c$--structure $[k]$, $HF^+(M,[k])$ admits a 
$\Z[U]$--action which preserves the $\Z_2$--grading 
and gives the Heegaard--Floer homology a $\Z[U]$--module structure. 
Since all the $spin^c$--structures are torsion, 
the corresponding components admit a $\Q$--grading compatible with the $\Z[U]$-action, where $\deg(U)=-2$. 

One has a graded $\Z[U]$-module isomorphism
$$HF^+(M,[k])=\calt^+_{d(M,[k])}\oplus HF^+_{red}(M,[k]),$$
where $d(M,[k])$ is the smallest degree of non--trivial
elements of $HF^+(M,[k])$. The reduced part 
$HF^+_{red}(M,[k])$ has a
finite $\Z$-rank and an induced $\Z_2$--grading as well. Therefore, one 
also considers the Euler characteristic
$$\chi(HF^+(M,[k])):=\rank_\Z HF^+_{red,even}(M,[k])
-\rank_\Z HF^+_{red,odd}(M,[k]),$$
which, following \cite{Rus}, recovers the Seiberg--Witten invariant of $(M,[k])$, normalized 
by $d(M,[k])/2$, i.e.
$$\chi(HF^+(M,[k]))=\frsw_{[k]}(M)-d(M,[k])/2.$$

With respect to the change of orientation the above invariants
behave as follows:
the $spin^c$-structures $Spin^c(M)$ and
$Spin^c(-M)$ are canonically identified (where $-M$ denotes $M$
with opposite orientation). Moreover,
$d(M,[k])=-d(-M,[k])$ and
$\chi(HF^+(M,[k]))=-\chi(HF^+(-M,[k]))$.

In the case when $M$ is a negative definite plumbed 3--manifold, its plumbing graph $G$ gives a 
cobordism from $-M$ to $S^3$ which induces a map 
\begin{equation}\label{eq:TG}
T_G:HF_{even}^+(-M)\longrightarrow \bH^0(M),
\end{equation}
defined in \cite{OSzP}. By results of \cite{OSzP,OSZINV}, this creates an identification between the Heegaard--Floer and 
lattice cohomology theories in the case when $\nu=1$, i.e. $G$ is almost rational. 
More precisely, for any $[k]\in Spin^c(M)$
$$HF^+_{odd}(-M,[k_r])=0 \ \ \mbox{and} \ \ 
HF^+_{even}(-M,[k_r])=\bH^0(G,k_r)\Big[-\frac{k_r^2+s}{4}\Big],$$
in particular
$d(M,[k_r])=\max_{k'\in[k_r]}\frac{(k')^2+s}{4}=\frac{k_r^2+s}{4}-2\m_{k_r}$.

In the spirit of this connection, one can predict the following identification as well (\cite[5.2.4]{Nlat}).
\begin{Nconj} Let $G$ be the negative definite plumbing representation of $M$ 
as before. Then for any distinguished representative $k_r$ one has 
\begin{eqnarray*}
HF^+_{red,even}(-M,[k_r])=\bigoplus_{q \ even} \bH^q_{red}(G,k_r)\Big[-\frac{k_r^2+s}{4}\Big] 
\ \ \mbox{and} \\ 
HF^+_{red,odd}(-M,[k_r])=\bigoplus_{q \ odd} \bH^q_{red}(G,k_r)\Big[-\frac{k_r^2+s}{4}\Big].
\end{eqnarray*}
\end{Nconj}

In \cite{NR}, N\'emethi and Rom\'an prove the conjecture for $2$--rational graphs associated 
with the manifold $S^3_{-d}(K)$, obtained by $(-d)$--surgery of $S^3$ along the connected sum $K$ 
of a collection of algebraic knots determined by irreducible plane curve singularities. They use 
the Reduction Theorem \ref{red} in order to split the exact sequence \ref{thm:exseq}(1) with 
the vanishing of $\bH^2(G)$. 
Their argument does not really require the specialty of the $S^3_{-d}(K)$ graph. Therefore, one can  
mimic the proof with the 
assumption that $G$ has to be the simplest $2$--rational graph, in the sense that 
there exists a bad vertex so that if we decrease its decoration by $-1$, we get an almost rational 
graph. The failure of this argument in arbitrary case is based on the fact that at this moment 
there is no natural morphisms connecting the modules of the two theories, except the level $q=0$. 
In this special case, the isomorphism between $HF^+_{odd}(-M)$ and 
$\bH^1(G)$ can be induced by the $0$--level morphisms. 

There is an another approach, done by 
Ozsv\'ath, Stipsicz and Szab\'o \cite{OSSz1}, which constructs a spectral sequence converging
to the Heegaard--Floer homology, and its $E_2$--term agrees with the lattice cohomology theory. As 
an application, they finished the identification in this $2$--rational case.
Moreover, they considered the relative version (for knots in $M$) of lattice cohomology too 
\cite{OSSz2}, and with its help they proved in \cite{OSSz3} the case, when $G$ has a 
vertex $v$ with the property that if we delete $v$ and its adjacent edges, we get a rational graph.

A different version of the relative lattice cohomology was defined by Gorsky and N\'emethi \cite{GN}, 
which is associated with local plane curve singularities and it is identified
with the {\em motivic Poincar\'e series} of such germs.

\subsection{Seiberg--Witten invariant conjecture revisited}\labelpar{ss:SWIrev}\
We finished Section \ref{ss:SWIC} with the promise that we return and correct the upper bound given in 
GSWI conjecture. This can be done using path cohomological methods \ref{ex:pathcoh}. 

Consider the notations of 
\ref{CohSh} and pick a line bundle $\calL \in Pic(\widetilde X)$ with $c_1(\calL)=l'$. 
For simlicity, we use the 
notation $h^*(\calL)$ for $\dim_{\C}H^*(\widetilde X, \calL)$. Then we need a theorem which is 
a generalization of the {\em Kodaira type Vanishing Theorem} \cite[pg. 301]{Nfive}.
\begin{theorem}\emph{(Laufer--Grauert--Riemenschneider, \cite[6.1.2]{Nlat})} 
If $c_1(\calL)\in \K-\calS'$, then for any $l\in L$, $l>0$ we have 
$h^1(\calL\otimes \cO_l)=0$, hence $h^1(\calL)=0$ as well. 
\end{theorem}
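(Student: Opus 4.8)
The plan is to reduce both assertions to the single statement $R^1\pi_*\calL=0$, and then to extract that vanishing from the positivity hypothesis $c_1(\calL)\in\K-\calS'$, either by invoking a relative Grauert--Riemenschneider vanishing or by a direct induction along generalized Laufer computation sequences.

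First I would record the reduction. For $l''\ge l'>0$ the exact sequence $0\to \calL\otimes(\cO(-l')/\cO(-l''))\to \calL\otimes\cO_{l''}\to \calL\otimes\cO_{l'}\to 0$ has its kernel supported on a projective curve, so $H^2$ of that kernel vanishes and the map $H^1(\calL\otimes\cO_{l''})\to H^1(\calL\otimes\cO_{l'})$ is onto. By the formal neighbourhood theorem (cf.\ \ref{CohSh}, \cite[\textsection 6]{BPVVbook}) the finite-length module $R^1\pi_*\calL$ is the inverse limit of this system of finite-dimensional spaces with surjective transition maps, so $R^1\pi_*\calL=0$ is \emph{equivalent} to $h^1(\calL\otimes\cO_l)=0$ for every $l\in L$, $l>0$. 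Moreover, since a Stein representative $X$ of the germ has $H^1(X,\pi_*\calL)=0$, the Leray spectral sequence gives $H^1(\widetilde X,\calL)=H^0(X,R^1\pi_*\calL)$, so $R^1\pi_*\calL=0$ also forces $h^1(\calL)=0$. Thus everything follows once $R^1\pi_*\calL=0$ is established.

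For the vanishing itself I would write $c_1(\calL)=\K-s$ with $s\in\calS'$ and set $\calN:=\calL\otimes(\varOmega^2_{\widetilde X})^{-1}$, so that $c_1(\calN)=-s$ and $\deg(\calN|_{E_j})=(c_1(\calN),E_j)=-(s,E_j)\ge0$ for all $j\in\calj$; that is, $\calN$ is $\pi$--nef. Then $\calL\cong\varOmega^2_{\widetilde X}\otimes\calN$ and $R^1\pi_*(\varOmega^2_{\widetilde X}\otimes\calN)=0$ is the expected relative Grauert--Riemenschneider/Kodaira-type vanishing for a $\pi$--nef twist of the canonical bundle, generalizing \cite[pg.\ 301]{Nfive}. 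Alternatively --- and more in the spirit of this chapter --- I would prove $h^1(\calL\otimes\cO_l)=0$ by induction along a generalized Laufer computation sequence $0=l_0<l_1<\dots<l_t=l$, $l_{k+1}=l_k+E_{i_k}$ (\ref{La}, \cite{OSZINV}): the inclusion $\cO(-l_{k+1})\hookrightarrow\cO(-l_k)$ gives
\begin{equation*}
0\longrightarrow (\calL\otimes\cO(-l_k))|_{E_{i_k}}\longrightarrow \calL\otimes\cO_{l_{k+1}}\longrightarrow \calL\otimes\cO_{l_k}\longrightarrow 0,
\end{equation*}
where $E_{i_k}\cong\P^1$ (as $M$ is a $\Q HS$) and the left-hand sheaf is a line bundle on $\P^1$ of degree $(c_1(\calL)+l_k,E_{i_k})$; if this degree stays $\ge-1$ along the sequence, the long exact cohomology sequences (with vanishing outer $H^2$'s) propagate the vanishing of $H^1$ from $\calL\otimes\cO_{l_k}$ to $\calL\otimes\cO_{l_{k+1}}$, starting from the trivial case $l_0=0$.

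The hard part will be controlling those restrictions to the exceptional $\P^1$'s. In the cohomological route one needs the relative Grauert--Riemenschneider vanishing precisely in the non-big range, since $(c_1(\calN))^2=(s,s)\le 0$; in the combinatorial route one must exhibit, for an arbitrary effective $l>0$, a computation sequence along which $(c_1(\calL)+l_k,E_{i_k})\ge-1$ at every step. Unwinding the hypothesis, that inequality reads $(l_k-s,E_{i_k})\ge b_{i_k}+1$, which the anti-nef condition $s\in\calS'$ makes automatic whenever the next index can be taken with $(l_k,E_{i_k})\ge0$; producing such a sequence reaching an arbitrary $l$ --- using the negative definiteness of $\frI$ and the connectedness of $G$, as in the generalized Laufer sequences of \cite{OSZINV} --- is the step I expect to require the real work.
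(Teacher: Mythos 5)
The paper does not actually prove this statement --- it only quotes it from \cite[6.1.2]{Nlat} --- so there is no in-text proof to compare with; what follows is an assessment of your plan on its own terms. Your skeleton (reduce both assertions to $R^1\pi_*\calL=0$ via the surjectivity of $H^1(\calL\otimes\cO_{l''})\to H^1(\calL\otimes\cO_{l'})$, formal functions and Leray, then kill $h^1(\calL\otimes\cO_l)$ by induction along computation sequences) is exactly the standard argument, and your reading of the hypothesis as ``$\calL\otimes(\varOmega^2_{\widetilde X})^{-1}$ is $\pi$--nef'', i.e.\ $\deg(\calL|_{E_j})\geq \deg(\varOmega^2_{\widetilde X}|_{E_j})=-b_j-2$ for all $j$, is the right geometric content (the thesis's $c_1$--conventions are not internally consistent --- compare $c_1(\cO(-l))=l$ with $c_1(\varOmega^2_{\widetilde X})=\K$ --- so this is the safe normalization). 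Be aware, however, that ``invoking relative Grauert--Riemenschneider for a nef twist of the canonical bundle'' is circular: that statement \emph{is} the theorem, so only the computation--sequence route constitutes a proof.

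On that route there are two issues, one of which would derail the argument. First, the sign: $\deg(\cO(-l_k)|_{E_{i_k}})=-(l_k,E_{i_k})$ (test it on $\cO(-E_j)|_{E_j}$, the conormal bundle, of degree $-b_j>0$), so the connecting line bundle on $E_{i_k}\cong\P^1$ has degree $\deg(\calL|_{E_{i_k}})-(l_k,E_{i_k})\geq -b_{i_k}-2-(l_k,E_{i_k})=-2-(l_{k+1},E_{i_k})$. Hence ``degree $\geq -1$'' is guaranteed by $(l_{k+1},E_{i_k})<0$, \emph{not} by $(l_k,E_{i_k})\geq 0$; with your sign the required sequences generally do not exist (already $l=2E_1$ on a single $(-2)$--curve fails at the second step), while with the correct sign they always do. Second, the existence you flag as ``the real work'' is a three--line consequence of negative definiteness: for any $W>0$ one has $(W,W)<0$, hence $(W,E_j)<0$ for some $j\in |W|$; peel off such an $E_j$ and iterate down to $0$; reversing the resulting decreasing sequence yields $0=l_0<l_1<\dots<l_t=l$ with $(l_{k+1},E_{i_k})<0$ at every step. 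With the corrected sign and this lemma the induction closes, and your first paragraph then delivers $h^1(\calL)=0$.
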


If we choose a path $\gamma=\{x_i\}_{i=0}^t$ so that $x_0=0$, $x_{i+1}=x_i+ E_{j(i)}$ and 
$x_t\in -l'-\K+\calS'$, then the exact sequence 
$0\longrightarrow \calL\otimes\cO(-x_t)\longrightarrow \calL \longrightarrow \calL\otimes \cO_{x_t}
\longrightarrow 0$ and the theorem above imply that 
$$h^1(\calL)=h^1(\calL\otimes \cO_{x_t}),$$ i.e. $h^1(\calL)$ can be achieved restricting 
$\calL$ to a cycle in the `special' zone. Moreover, one can prove the following property:
\begin{proposition}\emph{(\cite[6.2.2]{Nlat})}
 For any $0\leq i<t$ one has
 $$h^1(\calL\otimes \cO_{x_{i+1}})-
h^1(\calL\otimes \cO_{x_i})\leq \max\{0, \chi_k(x_i)- \chi_k(x_{i+1})\}.$$
\end{proposition}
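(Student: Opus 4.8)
The plan is to compare the two cycles $x_i$ and $x_{i+1}=x_i+E_{j(i)}$ via the exact sequence of sheaves
$$0\longrightarrow \calL\otimes\cO(-x_{i+1})\otimes\cO_{E_{j(i)}}\longrightarrow \calL\otimes\cO_{x_{i+1}}\longrightarrow \calL\otimes\cO_{x_i}\longrightarrow 0,$$
which expresses the thickening from $x_i$ to $x_{i+1}$ as an extension by a line bundle supported on the single curve $E_{j(i)}$. Here the kernel sheaf $\calF:=\calL\otimes\cO(-x_i)\otimes\cO_{E_{j(i)}}$ is a line bundle on the (possibly non-reduced, but here reduced since the increment is a single $E_{j(i)}$) curve $E_{j(i)}$, whose degree on $E_{j(i)}$ is $(c_1(\calL)-x_i,E_{j(i)})=(l'-x_i,E_{j(i)})$. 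Taking the long exact sequence in cohomology gives
$$\cdots\to H^1(\calF)\to H^1(\calL\otimes\cO_{x_{i+1}})\to H^1(\calL\otimes\cO_{x_i})\to 0,$$
so that $h^1(\calL\otimes\cO_{x_{i+1}})-h^1(\calL\otimes\cO_{x_i})\leq h^1(\calF)$. Thus everything reduces to bounding $h^1(\calF)$ for a line bundle $\calF$ on a smooth projective curve $E_{j(i)}$ of genus $g_{j(i)}=0$ (since $M$ is a $\Q HS$), i.e. on a $\P^1$.

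On $\P^1$, Riemann--Roch reads $h^0(\calF)-h^1(\calF)=\deg\calF+1$, and moreover $h^1(\calF)=0$ when $\deg\calF\geq -1$ while $h^0(\calF)=0$ when $\deg\calF\leq -1$; in all cases $h^1(\calF)=\max\{0,-\deg\calF-1\}$. So I would compute $\deg\calF+1=(l'-x_i,E_{j(i)})+1$. The point is to recognize this quantity in terms of $\chi_k$. With $k=\K-2l'$, one has $\chi_k(x_{i+1})-\chi_k(x_i)=-\tfrac12\big[(x_{i+1},x_{i+1}+k)-(x_i,x_i+k)\big]$; expanding $x_{i+1}=x_i+E_{j(i)}$ and using $(E_{j(i)},E_{j(i)})=b_{j(i)}$ together with $(\K,E_{j(i)})=-b_{j(i)}-2$ from the adjunction relations \eqref{eq:adj}, a short computation gives
$$\chi_k(x_i)-\chi_k(x_{i+1}) = (x_i,E_{j(i)})+\tfrac12(E_{j(i)},E_{j(i)}+k) = (x_i-l',E_{j(i)})-1 = -\deg\calF-1.$$
Hence $h^1(\calF)=\max\{0,-\deg\calF-1\}=\max\{0,\chi_k(x_i)-\chi_k(x_{i+1})\}$, and combined with the displayed cohomology inequality this yields exactly
$$h^1(\calL\otimes\cO_{x_{i+1}})-h^1(\calL\otimes\cO_{x_i})\leq\max\{0,\chi_k(x_i)-\chi_k(x_{i+1})\}.$$

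The main obstacle is bookkeeping rather than conceptual: one must verify that the kernel of $\cO_{x_{i+1}}\twoheadrightarrow\cO_{x_i}$ twisted by $\calL$ is indeed the line bundle $\cO_{E_{j(i)}}(-x_i)\otimes\calL|_{E_{j(i)}}$ on the reduced curve $E_{j(i)}$ with the degree as claimed — this is the standard identification $\cO(-x_i)/\cO(-x_{i+1})\cong\cO_{E_{j(i)}}(-x_i)$, valid because $x_{i+1}-x_i$ is reduced and irreducible — and then to run the $\chi_k$-difference computation without sign errors, carefully tracking that $c_1(\calL)=l'$ and $k=\K-2l'$. No vanishing theorem is needed here; this proposition is the elementary "local" input that, together with the Laufer--Grauert--Riemenschneider vanishing quoted just above, feeds the path-cohomological reinterpretation of the GSWI upper bound.
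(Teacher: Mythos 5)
Your argument is correct and is exactly the standard proof from the cited reference \cite[6.2.2]{Nlat} (the thesis itself only quotes the statement): restrict along the one-step extension, use $h^1(\cO_{\P^1}(d))=\max\{0,-d-1\}$, and identify $-\deg\calF-1$ with $\chi_k(x_i)-\chi_k(x_{i+1})$ via the adjunction relations. The only blemish is a typo in your displayed exact sequence, where the kernel should be $\calL\otimes\cO(-x_i)\otimes\cO_{E_{j(i)}}$ (as you correctly write when defining $\calF$ and computing its degree), not $\calL\otimes\cO(-x_{i+1})\otimes\cO_{E_{j(i)}}$.
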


Then by summing up the inequalities we get 
$h^1(\calL)\leq \sum_{i=0}^{t-1} \max\{0, \chi_k(x_i)- \chi_k(x_{i+1})\}$. Notice that even if we expand  
the sequence arbitrarily long inside the special zone $-l'-\K+\calS'$, nothing will be changed. Therefore, 
if $\calP$ is the set of paths with connecting $x_0=0$ with some elements in the special zone, then 
Example \ref{ex:pathcoh}, together with the above discussion deduce the following inequality
$$h^1(\calL)\leq \min_{\gamma\in\calP} eu(\gamma,k),$$ 
where $eu(\gamma,k)$ denotes the normalized Euler characteristic of the path cohomology associated with 
$\gamma$ and $\chi_k$.
Be aware that in general 
$\min_{\gamma\in\calP} eu(\gamma,k)<eu(\bH^0(G,k))$, see Example \ref{ex:boundlesscan}.

\section{Reduction Theorem}\labelpar{red:section}\
The goal of the present section is to show that
the lattice cohomology of the lattice $L$
(or any rectangle of it) can be reduced to a considerably `smaller rank object'. The main tool
in this reduction is the {\em theory of computation sequences}, initiated by Laufer (\cite{Laufer72}). 
In the first subsection we introduce the needed generalization, in the second we state the main theorem and the 
third subsection presents the proof. 
Notice that the idea of the Reduction Theorem is present already in \cite{OSZINV}.

The new lattice of rank $\nu$ will be associated with a {\em family of bad vertices},
the new  lattice points are associated with some important cycles of $L$
as distinguished members of Laufer--type computational sequences of $L$.
We start with their definition.

\subsection{Special cycles and generalized Laufer sequences}\labelpar{red:gLauferseq}\
Suppose we have a
 family of {\it distinguished}  vertices $\ocalj:=\{j_k\}_{k=1}^\nu\subseteq \calj$
 (usually they are defined by some geometric property).
Then split the set of vertices $\calj$ into the disjoint union $\overline{\calj}\sqcup\calj^*$.  Furthermore,
let $\{m_j(x)\}_j$ denote  the coefficients of a  rational cycle  $x$, that is $x=\sum_{j\in\calj}m_j(x)E_j$.

In order to simplify the notation we set  ${\bf i}:=(i_1,\ldots,i_j,\ldots,i_\nu)\in\Z^\nu$;
for any $j\in \ocalj$ we write $1_j\in \Z^\nu$ for the vector with all entries zero except at place
$j$ where it is 1,  and for     any
 $I\subseteq \ocalj$ we define $1_I=\sum _{j\in I}1_j$. Similarly, for any
 $I\subseteq \calj$ set $E_I=\sum_{j\in I}E_j$.

Then the cycles $x({\bf i})=x(i_1,\ldots,i_\nu)$ are defined via the next Proposition.

\begin{proposition}\labelpar{lemF1} Fix $[k]$ and $\ocalj\subseteq \calj$ as above.
For any ${\bf i}\in(\Z_{\geq 0})^\nu$ there exists a unique cycle
$x({\bf i})\in L$ satisfying the following properties:

\begin{itemize}
\item[(a)] $m_{j}(x({\bf i}))=i_j$ for any distinguished  vertex $j\in\ocalj$;

\item[(b)] $(x({\bf i})+l'_{[k]},E_j)\leq0$ for every `non--distinguished vertex' $j\in\calj^*$;

\item[(c)] $x({\bf i})$ is minimal with the two previous properties.

\end{itemize}
Moreover,
\ (i)  $x(0,\ldots ,0)=0$;
\ (ii)   $x({\bf i})\geq 0$; and
\ (iii)  $x({\bf i})+E_{\overline{I}}
\leq x({\bf i}+1_{\overline{I}})$ for any $\overline{I}\subseteq \ocalj$.
\end{proposition}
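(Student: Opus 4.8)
The plan is to construct $x(\mathbf{i})$ by a Laufer-type algorithm and then verify minimality and the monotonicity properties. First I would fix $\mathbf{i}\in(\Z_{\geq 0})^\nu$ and consider the set
$$A(\mathbf{i}):=\{x\in L \ : \ m_j(x)=i_j \text{ for all } j\in\ocalj, \ (x+l'_{[k]},E_j)\leq 0 \text{ for all } j\in\calj^*\}.$$
The key observation is that $A(\mathbf{i})$ is nonempty and closed under taking coefficient-wise minima: if $x,y\in A(\mathbf{i})$, then $\min\{x,y\}$ still has the prescribed coefficients $i_j$ on $\ocalj$ (both $x$ and $y$ do), and for $j\in\calj^*$ one uses the standard argument that if $m_j(\min\{x,y\})=m_j(x)$ then $(\min\{x,y\}+l'_{[k]},E_j)\leq (x+l'_{[k]},E_j)\leq 0$, since decreasing coefficients at vertices $\neq j$ only decreases the intersection with $E_j$ (the off-diagonal entries of $\frI$ are $\geq 0$). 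Nonemptiness follows because adding a large multiple of $\sum_{j\in\calj^*}E_j^*$-type corrections — or more concretely, running the Laufer algorithm below — produces an element. Once $A(\mathbf{i})$ is nonempty and $\min$-closed, it has a unique minimal element, which we declare to be $x(\mathbf{i})$; this gives (a), (b), (c) at once.

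For the explicit construction (needed both for nonemptiness and for later use in the Reduction Theorem), I would run the generalized Laufer algorithm: start from the cycle $z_0:=\sum_{j\in\ocalj}i_jE_j$, which has the correct coefficients on $\ocalj$ by construction. If there is some $j\in\calj^*$ with $(z_n+l'_{[k]},E_j)>0$, set $z_{n+1}:=z_n+E_j$; otherwise stop. Since $\frI$ is negative definite, this terminates (the same argument as in the classical Laufer algorithm \ref{La}: the quantity $\chi_{\kr}$ or a suitable linear functional bounds the process), and the output lies in $A(\mathbf{i})$. A standard lemma shows the output is independent of the choices made and equals the minimal element $x(\mathbf{i})$: any $y\in A(\mathbf{i})$ satisfies $y\geq z_n$ for all $n$ by induction — if $y\geq z_n$ but $y\not\geq z_{n+1}=z_n+E_j$, then $m_j(y)=m_j(z_n)$, so $m_j(y-z_n)=0$ while $y-z_n\geq 0$, giving $(y+l'_{[k]},E_j)\geq (z_n+l'_{[k]},E_j)>0$, contradicting $y\in A(\mathbf{i})$.

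For (i): when $\mathbf{i}=0$, the cycle $0$ lies in $A(0)$ precisely because $(l'_{[k]},E_j)\leq 0$ for all $j\in\calj$ (this is the defining property of $l'_{[k]}\in\calS'$, Lemma \ref{def:KR}), and $0$ is obviously minimal, so $x(0,\dots,0)=0$. Property (ii) $x(\mathbf{i})\geq 0$ is immediate from the algorithm: $z_0\geq 0$ and we only ever add effective cycles. For (iii), the monotonicity $x(\mathbf{i})+E_{\overline I}\leq x(\mathbf{i}+1_{\overline I})$: set $y:=x(\mathbf{i}+1_{\overline I})$. On $\ocalj$, $m_j(y)=i_j+(1_{\overline I})_j\geq m_j(x(\mathbf{i})+E_{\overline I})$ with equality, so the coefficients match on $\ocalj$; I would then show $y-E_{\overline I}\in A(\mathbf{i})$, which by minimality of $x(\mathbf{i})$ gives $x(\mathbf{i})\leq y-E_{\overline I}$, i.e. the claim. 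The condition on $\calj^*$ for $y-E_{\overline I}$ reads $(y-E_{\overline I}+l'_{[k]},E_j)\leq 0$ for $j\in\calj^*$; since $\overline I\subseteq\ocalj$ and $j\notin\ocalj$, we have $(E_{\overline I},E_j)\geq 0$, hence $(y-E_{\overline I}+l'_{[k]},E_j)\leq (y+l'_{[k]},E_j)\leq 0$. The remaining point — that $y-E_{\overline I}$ still has nonnegative coefficients, or rather that the coefficient constraint on $\ocalj$ is exactly met — is where one must be slightly careful: one needs $m_j(y)\geq 1$ for $j\in\overline I$, which holds because $i_j+1\geq 1$. I expect the main obstacle to be the careful bookkeeping in the $\min$-closedness and termination arguments when $\calj^*$ is disconnected or when $l'_{[k]}$ is genuinely fractional; these are exactly the places where the $\nu=1$ case of \cite{OSZINV} needs genuine generalization rather than transcription.
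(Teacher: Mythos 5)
Your overall skeleton is the same as the paper's: show that the set $A({\bf i})$ of cycles satisfying (a)--(b) is non--empty and closed under taking coordinatewise minima, declare $x({\bf i})$ to be its unique minimal element, and prove (iii) by checking that $x({\bf i}+1_{\overline I})-E_{\overline I}$ lies in $A({\bf i})$ (using $(E_{\overline I},E_j)\geq 0$ for $j\in\calj^*$). The min--closedness and the (iii) arguments are exactly the paper's. Where you genuinely diverge is in (ii): the paper does \emph{not} use the algorithm, but writes $x({\bf i})=x_1-x_2$ with disjoint effective supports and shows that $l'_{[k]}-x_2\in(l'_{[k]}+L)\cap\calS'$, so $x_2=0$ by the minimality of $l'_{[k]}$ from Lemma \ref{def:KR}; your route instead gets $x({\bf i})\geq 0$ as the output of a Laufer sequence started at $\sum_j i_jE_j\geq 0$. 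Your route is legitimate and arguably more transparent, but it makes the whole proposition depend on the algorithm being fully justified, which brings me to the one real soft spot.

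The soft spot is non--emptiness/termination. You offer the Laufer algorithm as a proof that $A({\bf i})\neq\emptyset$, but the only termination argument you actually supply is the induction $y\geq z_n$ for $y\in A({\bf i})$ --- which presupposes $A({\bf i})\neq\emptyset$; and the fallback ``$\chi_{k_r}$ bounds the process'' does not work, since along such a sequence one only has $\chi_{k_r}(z_{n+1})-\chi_{k_r}(z_n)=1-(z_n+l'_{[k]},E_{j(n)})\leq 0$, with equality whenever the intersection number is exactly $1$, so $\chi_{k_r}$ can stay constant forever as far as this inequality is concerned. Your other suggestion, adding a large multiple of ``$\sum_{j\in\calj^*}E_j^*$'', also needs care: the duals $E_j^*$ in the full lattice have strictly positive entries at \emph{every} vertex (cf. (\ref{eq:POS})), so this destroys condition (a) on $\ocalj$ and forces a correction step. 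The clean fixes are either the paper's (take $a\widetilde x+(a-1)l'_{[k]}$ with $\widetilde x$ anti--nef as in \cite[7.3]{OSZINV} and $a$ large, then subtract suitable $h_jE_j$ with $j\in\ocalj$, which only improves (b) because the off--diagonal intersections are $\geq 0$), or to add $N\cdot Z^*$ where $Z^*$ is an effective cycle \emph{supported on} $\calj^*$ with $(Z^*,E_j)<0$ for all $j\in\calj^*$ (it exists because the subgraph on $\calj^*$ is negative definite, and it leaves the $\ocalj$--coefficients untouched). With non--emptiness secured this way, your induction does give both termination and the identification of the output with $\min A({\bf i})$, and then your derivations of (i) and (ii) go through.
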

\begin{proof}
The proof is similar to the proof of  \cite[Lemma 7.6]{OSZINV}, valid
for  $\nu=1$ (or to the existence of the Artin's cycle which corresponds to
  $\nu=0$ and the canonical class).

  First we verify the existence of an element $x\in L$ with (a)--(b). By (the proof of)
\cite[7.3]{OSZINV} there exists
$\widetilde{x}\geq \sum_{j\in\ocalj}E_j$ such that $(\widetilde{x}+l'_{[k]},E_j)\leq 0$
for any $j\in \calj$.  Take some $a\in\Z_{>0}$ sufficiently large so that $(a-1)l'_{[k]}\in L$, and
 $h_j:=m_{j}(a\widetilde{x}+(a-1)l'_{[k]})-i_j\geq 0$ for any $j\in \ocalj$. Since $l'_{[k]}\geq 0$, this is
 possible. Then set $x:=a\widetilde{x}+(a-1)l'_{[k]}-\sum_{j\in \ocalj} h_jE_j$. Clearly $m_{j}(x)=i_j$
 for any $j\in\ocalj$ and $(x+l'_{[k]}, E_i)=a(\widetilde{x}+l'_{[k]},E_i)-\sum_{j\in\ocalj}h_j(E_j,E_i)\leq 0$ for any
 $i\in \calj^*$.

 Next, we verify that there is a unique minimal element with (a)--(b). This follows from the fact that
 if $x_1$ and $x_2$ satisfy (a)--(b), then $x:=\min\{x_1,x_2\}$ does too. Indeed, for any $j\in\calj^*$,
 at least for one index $n\in\{1,2\}$ one has $E_j\not\in|x_n-x|$. Then $(x+l'_{[k]},E_j)=(x_n+l'_{[k]},E_j)-
 (x_n-x,E_j)\leq 0$.

 Finally, we verify (i)--(ii)--(iii).  For (ii)  write $x({\bf i})$ as
 $x_1-x_2$ with $x_1\geq 0$, $x_2\geq 0$, $|x_1|\cap |x_2|=\emptyset$. Fix an index $j\in \calj^*$.
 If $j\not\in |x_1|$ then $(l'_{[k]}-x_2,E_j)\leq (l'_{[k]}-x_2+x_1,E_j)\leq 0$. If
 $j\in |x_1|$ then $(l'_{[k]}-x_2,E_j)\leq (l'_{[k]},E_j)\leq 0$, cf. \ref{def:KR}.
 Moreover, $|x_2|\subset  \calj^*$ implies $(-x_2,E_j)\leq 0$ for any $j\in \ocalj$.
 Hence $l'_{[k]}-x_2\in (l'_{[k]}+L)\cap \calS'$,
 which implies $x_2=0$ by the minimality of $l'_{[k]}$.
 This ends (ii) and shows (i) too.
 For (iii) notice that
 $(x({\bf i}+1_{\overline{I}})+l'_{[k]},E_j)  -(E_{\overline{I}},E_j)\leq 0$ for any $j\in \calj^*$,
 hence the result follows from the minimality property (c) applied for $x({\bf i})$.
 \end{proof}

\begin{remark}\labelpar{rem:xi}
For the system of inequalities determining the cycles $x({\bf i})\in L$, we consider the $2\times 2$ 
block structure of the intersection matrix $\frI$ associated 
with the decomposition $L'=\overline{L'}\oplus (L')^*$. 
More precisely, we consider the intersection matrix in the 
form
\begin{equation*}
\Bigg(
\begin{array}{cc}
A & B\\
B^t & C\\
\end{array}
\Bigg),
\end{equation*}
where $A:=(E_v,E_w)_{v,w\in \overline{\mathcal{J}}}$, $B:=(E_v,E_w)_{v\in \overline{\mathcal{J}},w\in 
\mathcal{J}^*}$ and $C:=(E_v,E_w)_{v,w\in \mathcal{J}^*}$. 
Let $(\overline{l},l^*)$ to be the components 
(in $E_j$--basis) of some $l'\in L'$ according to the decomposition. In particular, we write $x(\bf i)$ 
as $({\bf i},x({\bf i})^*)$ and set $ \lk=(\overline{c},c^*)$. Then the property \ref{lemF1}(b) 
reformulates as $ B^t(\vasi +\overline{c})+ C (x(\vasi)^*+c^*)\leq 0$. 
\end{remark}

These cycles  satisfy the following universal property as well.

\begin{lemma}\label{lem:lauf} Fix some ${\bf i}\in(\Z_{\geq 0})^\nu$.
Assume that   $x\in L$ satisfies $m_{j}(x)= m_{j}(x({\bf i}))$ for all $j\in\ocalj$.

If $x\leq x({\bf i})$, then there is a `generalized Laufer computation sequence'
connecting $x$ with $x({\bf i})$. More precisely, one constructs a sequence $\{x_n\}_{n=0}^t$ as follows.
Set $x_0=x$. Assume that $x_n$ is already constructed. If for some $j\in\calj^*$
one has $(x_n+l'_{[k]},E_j)>0$ then take $x_{n+1}=x_n+E_{j(n)}$, where $j(n)$ is such an index. If $x_n$
satisfies \ref{lemF1}(b), then stop and set $t=n$. Then this procedure stops after finite steps and
$x_t$ is exactly $x({\bf i})$.

Moreover, along the
computation sequence  $\chi_{k_r}(x_{n+1})\leq  \chi_{k_r}(x_{n})$ for any $0\leq n<t$.
\end{lemma}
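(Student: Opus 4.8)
The plan is to run the generalized Laufer algorithm and keep the cycles it produces trapped between $x$ and $x({\bf i})$, using exactly the defining properties (a)--(b) of $x({\bf i})$ from Proposition \ref{lemF1}; the terminal value is then forced by the minimality (c), and the monotonicity of $\chi_{k_r}$ drops out of a one-line Riemann--Roch computation. Concretely, I would show by induction on $n$ that, as long as the algorithm has not stopped, the cycle $x_n$ satisfies $x_n\le x({\bf i})$ and $m_j(x_n)=i_j=m_j(x({\bf i}))$ for every $j\in\ocalj$; the base case $n=0$ is the hypothesis.

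The heart of the argument is the inductive step, in particular the claim that the algorithm never overshoots $x({\bf i})$. Assume $x_n\le x({\bf i})$ and that there is a vertex $j(n)\in\calj^*$ with $(x_n+l'_{[k]},E_{j(n)})>0$; note this pairing is a positive integer, since $l'_{[k]}\in L'$ and $E_{j(n)}\in L$. Put $y:=x({\bf i})-x_n\ge 0$. If $j(n)\notin|y|$, then $(y,E_{j(n)})=\sum_{v\in|y|}m_v(y)(E_v,E_{j(n)})\ge 0$, because the off-diagonal intersection numbers $(E_v,E_{j(n)})$ with $v\ne j(n)$ are non-negative and the diagonal term does not occur; hence
$$(x({\bf i})+l'_{[k]},E_{j(n)})=(x_n+l'_{[k]},E_{j(n)})+(y,E_{j(n)})>0,$$
contradicting property (b) of $x({\bf i})$ at the non-distinguished vertex $j(n)$. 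Thus $j(n)\in|y|$, i.e.\ $m_{j(n)}(x_n)<m_{j(n)}(x({\bf i}))$, so $x_{n+1}=x_n+E_{j(n)}\le x({\bf i})$; and $j(n)\in\calj^*$ leaves the distinguished coefficients unchanged.

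Termination is then immediate: each step raises $\sum_{v\in\calj}m_v(x_n)$ by exactly $1$, while $x_n\le x({\bf i})$ bounds this sum, so the algorithm halts after at most $\sum_{v\in\calj}\big(m_v(x({\bf i}))-m_v(x)\big)$ steps. At the terminal $x_t$ the stopping rule is precisely property (b), property (a) holds since every base element added was indexed in $\calj^*$, and $x_t\le x({\bf i})$. Since $\min\{x_t,x({\bf i})\}$ again satisfies (a)--(b) (as in the proof of Proposition \ref{lemF1}) and lies below $x({\bf i})$, minimality of $x({\bf i})$ gives $\min\{x_t,x({\bf i})\}=x({\bf i})$, i.e.\ $x({\bf i})\le x_t$; combined with $x_t\le x({\bf i})$ this yields $x_t=x({\bf i})$.

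Finally, for the last assertion I would record the identity $\chi_{k_r}(l+E_j)-\chi_{k_r}(l)=1-(l+l'_{[k]},E_j)$, valid for every $l\in L$ and $j\in\calj$, which follows from $\chi_{k_r}(l)=-(l,l+k_r)/2$, the adjunction relation $(\K,E_j)=-(E_j,E_j)-2$, and $k_r=\K+2l'_{[k]}$. Applying it with $l=x_n$, $j=j(n)$ and using $(x_n+l'_{[k]},E_{j(n)})\ge 1$ gives $\chi_{k_r}(x_{n+1})-\chi_{k_r}(x_n)=1-(x_n+l'_{[k]},E_{j(n)})\le 0$, as required. The only place that needs genuine care is the no-overshoot claim; the rest is bookkeeping or routine intersection arithmetic.
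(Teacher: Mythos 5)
Your proof is correct and follows essentially the same route as the paper's: the same induction showing $x_n\le x({\bf i})$, the same contradiction with property (b) of Proposition \ref{lemF1} at $j(n)$ to rule out overshooting, the appeal to minimality (c) to identify $x_t$ with $x({\bf i})$, and the same one-line Riemann--Roch identity for the monotonicity of $\chi_{k_r}$. The only differences are cosmetic (you spell out the termination bound and the intersection-form computation, which the paper leaves implicit).
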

\begin{proof}
We show by induction that $x_n\leq x({\bf i})$ for any $0\leq n\leq t$; then
 the minimality property $(c)$ of $x({\bf i})$ will finish the argument. For $n=0$ this is clear.
Assume it is true for $x_n$. Then we have to verify
 that  $m_{j(n)}(x_{n})<m_{j(n)}(x({\bf i}))$. Suppose that this is not true, that is
 $m_{j(n)}(x({\bf i})-x_n)=0$. Then $(x_n+l'_{[k]},E_{j(n)})=(x({\bf i})+
 l'_{[k]},E_{j(n)})-(x({\bf i})-x_n,E_{j(n)})\leq0$, a contradiction.

Finally, notice
that $(x_n+l'_{[k]},E_{j(n)})>0$ implies   $\chi_{k_r}(x_{n+1})\leq\chi_{k_r}(x_n)$.
\end{proof}

Note that the generalized computation sequence usually is not unique, one can make several choices for $j(n)$
at each step $n$.

If the choice of the  {\it distinguished vertices} $\ocalj$ is guided by some specific geometric feature,
then  the cycles $x({\bf i})$ will inherit  further properties.

\bekezdes{}Therefore, in the sequel {\bf we fix a (non--necessarily minimal) set $\ocalj$ of bad vertices} 
(that is, by modification of their decorations one gets a rational graph as in \ref{ss:bv}). 
Next, we start to list some additional properties
satisfied by the cycles  $x({\bf i})$ associated with $\ocalj$.
The first is an addendum of Lemma \ref{lem:lauf}.

\begin{lemma}\label{lem:laufb} Fix some ${\bf i}\in(\Z_{\geq 0})^\nu$.
Assume that   $x\in L$ satisfies $m_{j}(x)= m_{j}(x({\bf i}))$ for all $j\in\ocalj$. Then
 $\chi_{k_r}(x)\geq \chi_{k_r}(x({\bf i}))$.
\end{lemma}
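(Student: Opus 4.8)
The plan is to deduce the inequality from the two special cases $x\le x({\bf i})$ and $x\ge x({\bf i})$, which are handled by genuinely different tools, and then to glue these cases together with the elementary inequality
\begin{equation*}
\chi_{k_r}(a)+\chi_{k_r}(b)\ \ge\ \chi_{k_r}(\min\{a,b\})+\chi_{k_r}(\max\{a,b\})\qquad(a,b\in L),
\end{equation*}
whose difference equals $(a-\min\{a,b\},\,b-\min\{a,b\})\ge 0$, a sum of non-negative off-diagonal entries of $\frI$. This last fact, and all the manipulations below, rest on the additivity relation $\chi_k(a+b)=\chi_k(a)+\chi_k(b)-(a,b)$, valid for any $k\in Char$, which is immediate from $\chi_k(l)=-\frac{1}{2}(l,l+k)$.

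\emph{Case $x\le x({\bf i})$.} Here the hypotheses of Lemma~\ref{lem:lauf} are met verbatim, so there is a generalized Laufer sequence $x=x_0,\dots,x_t=x({\bf i})$ with $\chi_{k_r}(x_{n+1})\le\chi_{k_r}(x_n)$ for every $n$; telescoping gives $\chi_{k_r}(x)\ge\chi_{k_r}(x({\bf i}))$.

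\emph{Case $x\ge x({\bf i})$.} Put $d:=x-x({\bf i})\ge 0$. Since $x$ and $x({\bf i})$ have equal coefficients along $\ocalj$, the cycle $d$ is supported on $\calj^*$. Writing $k_r=k_{can}+2l'_{[k]}$ and using the additivity relation one computes
\begin{equation*}
\chi_{k_r}(x)-\chi_{k_r}(x({\bf i}))\ =\ \chi(d)\ -\ (d,\,x({\bf i})+l'_{[k]}).
\end{equation*}
The term $-(d,x({\bf i})+l'_{[k]})$ is $\ge 0$ because $d\ge 0$ is supported on $\calj^*$, on which $(x({\bf i})+l'_{[k]},E_j)\le 0$ by Proposition~\ref{lemF1}(b). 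For $\chi(d)$ I would argue as follows: as $\ocalj$ is a family of bad vertices, lowering the decorations of its vertices yields a rational graph, and deleting $\ocalj$ from that graph leaves exactly $G|_{\calj^*}$ — a full subgraph of a rational graph, hence itself rational. Now $\chi(d)$ for a cycle $d$ supported on $\calj^*$ only involves the self-intersections $(E_j,E_j)$ and the numbers $(E_j,k_{can})=-b_j-2$ with $j\in\calj^*$, which are unchanged on passing from $G$ to $G|_{\calj^*}$; hence $\chi(d)$ equals the analogous quantity for $G|_{\calj^*}$, and Artin's criterion \eqref{eq:artin} gives $\chi(d)\ge 0$. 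Thus $\chi_{k_r}(x)\ge\chi_{k_r}(x({\bf i}))$. I expect this identification of $G|_{\calj^*}$ as a rational graph — together with the bookkeeping between $\chi$ and $\chi_{k_r}$ — to be the only non-formal ingredient of the proof.

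\emph{General case.} For arbitrary $x\in L$ with $m_j(x)=m_j(x({\bf i}))$ for all $j\in\ocalj$, set $y=\min\{x,x({\bf i})\}$ and $z=\max\{x,x({\bf i})\}$. Both still agree with $x({\bf i})$ along $\ocalj$, and $y\le x({\bf i})\le z$, so the two cases above give $\chi_{k_r}(y)\ge\chi_{k_r}(x({\bf i}))$ and $\chi_{k_r}(z)\ge\chi_{k_r}(x({\bf i}))$. Feeding $a=x$, $b=x({\bf i})$ into the elementary inequality,
\begin{equation*}
\chi_{k_r}(x)\ \ge\ \chi_{k_r}(y)+\chi_{k_r}(z)-\chi_{k_r}(x({\bf i}))\ \ge\ \chi_{k_r}(x({\bf i})),
\end{equation*}
which is exactly the assertion of Lemma~\ref{lem:laufb}.
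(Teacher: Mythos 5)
Your proof is correct and is essentially the paper's argument in a slightly different packaging: writing $x=x(\mathbf i)-y_1+y_2$ with $y_1,y_2\geq 0$ of disjoint supports in $\calj^*$, the paper's single identity $\chi_{k_r}(x)=\chi_{k_r}(x(\mathbf i)-y_1)+\chi(y_2)+(y_1,y_2)-(x(\mathbf i)+l'_{[k]},y_2)$ is exactly your submodularity inequality (whose defect is $(y_1,y_2)$) combined with your ``$x\geq x(\mathbf i)$'' computation applied to $z=x(\mathbf i)+y_2$, and both proofs then invoke the same three inputs: Lemma~\ref{lem:lauf} for the part below $x(\mathbf i)$, property (b) of Proposition~\ref{lemF1}, and rationality of the full subgraph on $\calj^*$ to get $\chi(y_2)\geq 0$.
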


\begin{proof}
Write $x=x({\bf i})-y_1+y_2$ with $y_1\geq 0$, $y_2\geq 0$, both $y_i$ supported on
$\calj^*$, and $|y_1|\cap|y_2|=\emptyset$. Then $\chi_{k_r}(x)=\chi_{k_r}(x({\bf i})-y_1)
+\chi(y_2)+(y_1,y_2)-(x({\bf i})+l'_{[k]},y_2)$.
Via this identity $\chi_{k_r}(x)
\geq \chi_{k_r}(x({\bf i})-y_1)$.
Indeed, $(y_1,y_2)\geq 0$ by support--argument, $-(x({\bf i})+l'_{[k]},y_2)\geq 0$ by definition of
$x({\bf i})$, and $\chi(y_2)\geq 0$ since $y_2$ is supported on a rational subgraph
(cf. \cite[(6.3)]{OSZINV}). On the other hand, by \ref{lem:lauf},
$\chi_{k_r}(x({\bf i})-y_1)\geq \chi_{k_r}(x({\bf i}))$. \end{proof}

The computation sequence of Lemma \ref{lem:lauf}
is a generalization of Laufer's computation sequence \ref{La} targeting
Artin's fundamental cycle $Z_{min}$ (see \ref{AL:11}).
In fact, for rational graphs, the algorithm is more precise. For further references we repeat it here:

\begin{bekezdes}\labelpar{LC}{\bf Laufer algorithm and criterion} (\cite{Laufer72} or \ref{La}). \
{\it Let $\{z_n\}_{n=0}^T$ be the  computation sequence
(similar as above with $[k]=[k_{can}]$)
connecting $z_0=E_{j}$ (for some $j\in\calj$)
 and the Artin's fundamental cycle $z_T=Z_{min}$.
 (This means that $z_{n+1}=z_n+E_{j(n)}$ for some $j(n)$, where
 $(z_n,E_{j(n)})>0$.)
 Then the graph is  {\it rational} if and only if
at every step $0\leq n<T$ one has $(E_{j(n)},z_n)=1$.

 The same statement is true for a sequence connecting $z_0=E_{I}$  with $z_{min}$ for any connected $E_I$.

 (Both statement can be reinterpreted by the identity $\chi(E_{I})=\chi(z_{min})=1$.)}
\end{bekezdes}

In some of the applications regarding the cycles $x({\bf i})$
 we do not really need their precise forms,
rather  the values $\chi_{k_r}(x({\bf i}))$. These can be
computed inductively thanks to the following.

\begin{proposition}\labelpar{propF1}
For any  $k_r\in Char$,  ${\bf i}\in(\Z_{\geq 0})^\nu$ and
$j\in \overline{\calj}$ one has
\begin{equation*}
\chi_{k_r}(x({\bf i}+1_j))=
\chi_{k_r}(x({\bf i}))+1-(x({\bf i})+l'_{[k]},E_{j}).
\end{equation*}
Moreover,  $\chi_{k_r}(x(0,\ldots,0))=0$.
\end{proposition}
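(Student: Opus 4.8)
The plan is to combine the additivity of $\chi$ with a single generalized Laufer computation sequence, so as to reduce the identity to the assertion that each step of that sequence is ``tight''; the rationality produced by the bad vertices then delivers that tightness.

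The last claim is immediate: $x(0,\ldots,0)=0$ by Proposition \ref{lemF1}(i), hence $\chi_{k_r}(x(0,\ldots,0))=-\frac{1}{2}(0,0+k_r)=0$. For the main identity I would first record two elementary facts. From $\chi_k(l)=-\frac{1}{2}(l,l+k)$ one gets $\chi_k(a+b)=\chi_k(a)+\chi_k(b)-(a,b)$; and from $k_r=\K+2l'_{[k]}$ together with the adjunction relations \ref{eq:adj} one gets $\chi_{k_r}(E_l)=1-(l'_{[k]},E_l)$ for every vertex $l$. Combining them,
\begin{equation*}
\chi_{k_r}(z+E_l)=\chi_{k_r}(z)+1-(z+l'_{[k]},E_l)\qquad(z\in L,\ l\in\calj).
\end{equation*}
Evaluating this at $z=x({\bf i})$ and $l=j$ already produces the right-hand side of the proposition, so it only remains to prove that $\chi_{k_r}(x({\bf i})+E_j)=\chi_{k_r}(x({\bf i}+1_j))$.

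Next I would invoke the generalized Laufer machinery. By Proposition \ref{lemF1}(iii), $x({\bf i})+E_j\le x({\bf i}+1_j)$, and these cycles carry the same $\ocalj$-coordinates; so Lemma \ref{lem:lauf}, used with the multi-index ${\bf i}+1_j$ and the starting cycle $x({\bf i})+E_j$, produces a computation sequence $x_0=x({\bf i})+E_j,\ x_1,\ldots,x_t=x({\bf i}+1_j)$ with $x_{n+1}=x_n+E_{j(n)}$, $j(n)\in\calj^*$ and $(x_n+l'_{[k]},E_{j(n)})>0$; since all these quantities are integers, $(x_n+l'_{[k]},E_{j(n)})\ge 1$. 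Telescoping the step identity along this sequence gives
\begin{equation*}
\chi_{k_r}(x({\bf i}+1_j))=\chi_{k_r}(x({\bf i})+E_j)+\sum_{n=0}^{t-1}\bigl(1-(x_n+l'_{[k]},E_{j(n)})\bigr),
\end{equation*}
a sum of nonpositive terms (consistently, Lemma \ref{lem:laufb} independently forces its value to be $\le 0$). Hence the whole proposition reduces to showing that every step of the sequence is \emph{tight}: $(x_n+l'_{[k]},E_{j(n)})=1$ for $0\le n<t$.

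Establishing this tightness is the main obstacle, and it is exactly where the hypothesis that $\ocalj$ is a family of \emph{bad} vertices must be used (for a general distinguished set the identity can genuinely fail). The computation sequence only alters $\calj^*$-coordinates, so it splits, along the connected components of $G|_{\calj^*}$, into Laufer-type subsequences living inside rational graphs — a component of $G|_{\calj^*}$ is rational because it is a subgraph of the rational graph obtained from $G$ by making the decorations of $\ocalj$ more negative. Because $G$ is a tree, each such component is attached to $j$ through at most one edge; the first vertex of a component ever added is forced to be that unique neighbour of $j$, which carries step-value $1$; and the later steps inside the component are controlled by Laufer's algorithm and rationality criterion \ref{LC}, a step of value $\ge 2$ being impossible since it would exhibit a non-rational computation subsequence inside the modified (rational) graph. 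Granting this, the displayed sum vanishes and $\chi_{k_r}(x({\bf i}+1_j))=\chi_{k_r}(x({\bf i}))+1-(x({\bf i})+l'_{[k]},E_j)$, as required. The case $\nu=1$ of this tightness argument is \cite[\S7]{OSZINV}; the task here is to carry the bookkeeping through for arbitrary $\nu$.
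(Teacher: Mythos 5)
Your proposal follows the paper's proof in all essentials: both reduce the identity to the claim that every step of the generalized Laufer sequence from $x({\bf i})+E_j$ to $x({\bf i}+1_j)$ satisfies $(x_n+l'_{[k]},E_{j(n)})=1$, and both extract this tightness from Laufer's rationality criterion \ref{LC} applied to the graph with the bad decorations decreased. The one place where your sketch deviates, and is slightly imprecise, is the mechanism for invoking the criterion. The paper sets $z_n:=x_n-x({\bf i})$ and observes that $\{z_n\}$ is the beginning of a \emph{single} Laufer sequence connecting the one-vertex cycle $E_j$ with $Z_{min}$ inside the whole modified rational graph $\widetilde G$ (the values $(z_n,E_{j(n)})$ are unaffected by changing the decorations on $\ocalj$ because every $j(n)$ lies in $\calj^*$); the criterion then gives $(z_n,E_{j(n)})=1$, and $(x_n+l'_{[k]},E_{j(n)})=(z_n,E_{j(n)})+(x({\bf i})+l'_{[k]},E_{j(n)})\leq 1$ finishes. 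Your splitting along the connected components of $G|_{\calj^*}$ does not quite work as stated: if $j(n)$ lies in a component $C$ and is adjacent to $j$, then $(z_n|_C,E_{j(n)})=(z_n,E_{j(n)})-1$ may vanish, so the subsequence restricted to $C$ alone need not be a Laufer computation sequence for the rational graph $C$. The fix is to keep $j$ attached to each component (i.e.\ run the criterion on $C\cup\{j\}$ with the decreased decoration at $j$), or simply to work in all of $\widetilde G$ as the paper does. This is a repairable presentational issue rather than a gap in the strategy; the rest of your argument (the additivity of $\chi_{k_r}$, the telescoping, and $x(0,\ldots,0)=0$) matches the paper.
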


\begin{proof}
We consider  the  computation sequence $\{x_n\}_{n=0}^t$
connecting $x({\bf i})+E_{j}$ and $x({\bf i}+1_j)$ and
we  prove  that $(x_n+l'_{[k]},E_{j(n)})$ is exactly $1$ for any $0\leq n<t$.
Indeed, we take $z_n:=x_n-x({\bf i})$ for $0\leq n\leq t$ and
 one verifies that $\{z_n\}_{n=0}^t$ is the beginning of a
 Laufer sequence $\{z_n\}_{n=0}^T$ (with $t\leq T$)
connecting $E_{j}$ with $z_{min}$ (as in \ref{LC}).
This follows from $(x_n+l'_{[k]},E_{j(n)})>0$ and $(x({\bf i})+l'_{[k]},E_{j(n)})\leq 0$.
Moreover,
the values $(z_n,E_{j(n)})$ will stay unmodified for every $n$ if we replace our graph $G$
with the rational graph $\widetilde{G}$ by decreasing the decorations of the bad vertices.
 Therefore, by Laufer's Criterion \ref{LC}, $(z_n,E_{j(n)})=1$ in
 $\widetilde{G}$, hence consequently in $G$ too. This shows that
\begin{equation*}
1=(x_n-x({\bf i}),E_{j(n)})=(x_n+l'_{[k]},E_{j(n)})-
(x({\bf i})+l'_{[k]},E_{j(n)})
\geq (x_n+l'_{[k]},E_{j(n)}).
\end{equation*} Since $(x_n+l'_{[k]},E_{j(n)})>0$, this number  must equal 1.

This shows   $\chi_{k_r}(x_{n+1})=\chi_{k_r}(x_n)$, or
$\chi_{k_r}(x({\bf i}+1_j))=\chi_{k_r}(x({\bf i})+E_{j})$.
\end{proof}

The next technical result about computation sequences is crucial in the proof of the main result.

\begin{proposition}\label{prop:G}
Fix ${\bf i}\in(\Z_{\geq 0})^\nu$ and a subset $\overline{J}\subseteq \ocalj$.
Let $\cals\iij\subseteq \calj^*$
be the support of $x({\bf i}+1_{\overline{J}})-x({\bf i})-E_{\overline{J}}$.

(I) \ For any subset $\cals'\subseteq \cals\iij$  one can find  a generalized
Laufer computation sequence $\{x_n\}_{n=0}^t$ as in Lemma \ref{lem:lauf} connecting
$x_0=x({\bf i})+E_{\overline{J}} +E_{ \cals'}$ with $x_t=x({\bf i}+1_{\overline{J}})$ with the property that there exists a certain
$t_s$ ($0\leq t_s\leq t$)  such that

(a) $x_{t_s}=x({\bf i})+E_{\overline{J}}+E_{\cals\iij}$, and

 (b)  $\chi_{k_r}(x_n)=\chi_{k_r}(x({\bf i}+1_{\overline{J}}))$  for any $t_s\leq n\leq t$,
or,   $(x_n+l'_{[k]},E_{j(n)})=1$ for $t_s\leq n<t$.

 (II) \ Let $\widetilde{\cals}$ be a subset of $\calj^*$  such that
 \begin{equation}\label{eq:sss}
 \chi_{k_r}(x({\bf i})+E_{\overline{J}\cup\widetilde{\cals}})=
 \chi_{k_r}(x({\bf i}+1_{\overline{J}})).
 \end{equation}
 Then $\widetilde{\cals}\subseteq \cals\iij$. Moreover, there exists a computation
 sequence $\{x_n\}_{n=0}^t$ as in Lemma \ref{lem:lauf} connecting
$x_0=x({\bf i})+E_{\overline{J}}$ with $x_t=x({\bf i})+E_{\overline{J}\cup\widetilde{\cals}}$
such that  $\chi_{k_r}(x_{n+1})\leq \chi_{k_r}(x_n)$ for any $0\leq n<t$.

 (III) \ For any  cycle $l^*>0$ with support $|l^*|\subseteq \calj^*\setminus \cals\iij$, there exists a
 computation sequence  $\{x_n\}_{n=0}^t$  of type  $x_{n+1}=x_n+E_{j(n)}$ (for  $n<t$),
 $x_0=x({\bf i})+E_{\overline{J}\cup\cals\iij}$ and
 $x_t=x({\bf i})+E_{\overline{J}\cup\cals\iij}+l^*$ such that $\chi_{k_r}(x_{n+1})\geq \chi_{k_r}(x_{n})$
 for any $0\leq n< t$ (that is, with $(x_n+l'_{[k]},E_{j(n)})\leq 1$).
\end{proposition}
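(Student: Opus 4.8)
All three parts are assertions about the existence of generalized Laufer computation sequences (Lemma~\ref{lem:lauf}) joining prescribed cycles with prescribed monotonicity of $\chi_{k_r}$, and the plan is to base everything on four ingredients. First, the identity $\chi_{k_r}(x+E_j)=\chi_{k_r}(x)+1-(x+l'_{[k]},E_j)$ (already implicit in the proofs of \ref{lem:lauf} and \ref{propF1}), so that a one--step move $x_n\to x_n+E_{j(n)}$ satisfies $\chi_{k_r}(x_{n+1})\leq\chi_{k_r}(x_n)$ iff $(x_n+l'_{[k]},E_{j(n)})\geq1$ and $\chi_{k_r}(x_{n+1})\geq\chi_{k_r}(x_n)$ iff $(x_n+l'_{[k]},E_{j(n)})\leq1$. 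Second, the fact that the subgraph spanned by $\calj^*$ is rational (it is a subgraph of the rational graph $\widetilde{G}$ obtained from $G$ by decreasing the decorations of the bad vertices), hence $\chi(l)\geq1$ for every cycle $0<l$ supported on $\calj^*$ (Artin's criterion, cf.~\ref{thm:rat}). Third, the resulting \emph{strong form of Laufer's criterion}: if $\widetilde{G}$ is rational and $\{z_n\}$ is any sequence with $z_{n+1}=z_n+E_{j(n)}$, $(z_n,E_{j(n)})>0$, issued from a connected reduced cycle $z_0=E_I$ (so $\chi(z_0)=1$), then $(z_n,E_{j(n)})=1$ for every $n$ --- indeed, inductively $\chi(z_n)=1$, and if $(z_n,E_{j(n)})\geq2$ then $\chi(z_{n+1})=2-(z_n,E_{j(n)})\leq0$, contradicting $z_{n+1}>0$. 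Fourth, the elementary consequences of the definition of $\cals\iij$: the cycle $D:=x({\bf i}+1_{\overline{J}})-x({\bf i})-E_{\overline{J}}$ is effective, supported exactly on $\cals\iij\subseteq\calj^*$ with all its $\cals\iij$--coefficients at least $1$; thus $x({\bf i})+E_{\overline{J}}+E_{\cals\iij}\leq x({\bf i}+1_{\overline{J}})$, and since $x({\bf i})+E_{\overline{J}}+E_{\cals'}$ agrees with $x({\bf i}+1_{\overline{J}})$ on $\ocalj$, Lemma~\ref{lem:laufb} gives $\chi_{k_r}(x({\bf i})+E_{\overline{J}}+E_{\cals'})\geq\chi_{k_r}(x({\bf i}+1_{\overline{J}}))$ for every $\cals'\subseteq\cals\iij$.

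For (I) I would break the sequence at $x_{t_s}:=x({\bf i})+E_{\overline{J}}+E_{\cals\iij}$. The \emph{second phase} runs the algorithm of Lemma~\ref{lem:lauf} from $x_{t_s}$ to $x({\bf i}+1_{\overline{J}})$: since $x_{t_s}$ agrees with $x({\bf i}+1_{\overline{J}})$ on $\ocalj$ and on $\calj^*\setminus\cals\iij$, only $\cals\iij$--coordinates are still below their target, so no vertex outside the support of $x_{t_s}$ is ever added and, in particular, the connected components of $E_{\overline{J}}+E_{\cals\iij}$ never merge; inside each such component the induced sequence is a positive--pairing sequence issued from a connected reduced cycle, so the strong form of Laufer's criterion (applied on the rational subgraph carried by that component, where the pairings $(z_n,E_{j(n)})$ coincide with the ones computed in $G$) forces every step to have pairing exactly $1$, and combined with $(x({\bf i})+l'_{[k]},E_{j(n)})\leq0$ this yields $(x_n+l'_{[k]},E_{j(n)})=1$. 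Hence $\chi_{k_r}$ is constant along the second phase, equal to $\chi_{k_r}(x({\bf i}+1_{\overline{J}}))$; this proves (b) and, in particular, $\chi_{k_r}(x_{t_s})=\chi_{k_r}(x({\bf i}+1_{\overline{J}}))$. The \emph{first phase} connects $x_0=x({\bf i})+E_{\overline{J}}+E_{\cals'}$ to $x_{t_s}$ --- numerically consistent, since $\chi_{k_r}(x_0)\geq\chi_{k_r}(x({\bf i}+1_{\overline{J}}))=\chi_{k_r}(x_{t_s})$ by the above and Lemma~\ref{lem:laufb} --- by successively enlarging the reduced configuration already built inside $\cals\iij$: while $x_{t_s}$ has not been reached, $x_n\neq x({\bf i}+1_{\overline{J}})$, hence $x_n$ fails property \ref{lemF1}(b) (otherwise $x_n\geq x({\bf i}+1_{\overline{J}})$ by minimality, whereas $x_n\leq x({\bf i}+1_{\overline{J}})$), so a legal move exists, and using \ref{lemF1}(b) for $x({\bf i})$ together with the fact that every connected component of $\overline{J}\cup\cals\iij$ contains a bad vertex one argues that a legal move can be chosen inside $\cals\iij$ (this is the delicate step, cf.~below).

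For (II), the inclusion $\widetilde{\cals}\subseteq\cals\iij$ follows at once: since $\widetilde{\cals}\subseteq\calj^*$, the cycle $x({\bf i})+E_{\overline{J}\cup\widetilde{\cals}}=x({\bf i})+E_{\overline{J}}+E_{\widetilde{\cals}}$ has the same $\ocalj$--coordinates as $x({\bf i}+1_{\overline{J}})$, so Lemma~\ref{lem:laufb} gives $\chi_{k_r}(x({\bf i})+E_{\overline{J}\cup\widetilde{\cals}})\geq\chi_{k_r}(x({\bf i}+1_{\overline{J}}))$, which \eqref{eq:sss} forces to be an equality. Following the computation in the proof of Lemma~\ref{lem:laufb}, I would write $x({\bf i})+E_{\overline{J}}+E_{\widetilde{\cals}}=x({\bf i}+1_{\overline{J}})-y_1+y_2$ with $y_1,y_2\geq0$ of disjoint support inside $\calj^*$; the equality forces $\chi(y_2)=0$, hence $y_2=0$ since $y_2$ is supported on the rational graph $\calj^*$, and since $y_2=\max\{E_{\widetilde{\cals}}-D,0\}=E_{\widetilde{\cals}\setminus\cals\iij}$ (using that $D$ is supported on $\cals\iij$ with coefficients $\geq1$) we get $\widetilde{\cals}\subseteq\cals\iij$. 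For the required sequence one reruns the first phase of (I) with $\cals'=\emptyset$, now steering the enlargement so that the reduced configuration built inside $\cals\iij$ passes through $E_{\widetilde{\cals}}$ exactly --- possible because $x({\bf i})+E_{\overline{J}}+E_{\widetilde{\cals}}$ realises the minimal value of $\chi_{k_r}$ on its $\ocalj$--class, so the sequence cannot step over it --- and $\chi_{k_r}$ is non--increasing along it by Lemma~\ref{lem:lauf}.

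For (III), set $x_0=x({\bf i})+E_{\overline{J}\cup\cals\iij}$ and $D':=D-E_{\cals\iij}\geq0$, so that $x({\bf i}+1_{\overline{J}})=x_0+D'$ with $D'$ supported on $\cals\iij$; property \ref{lemF1}(b) for $x({\bf i}+1_{\overline{J}})$ then gives $(x_0+l'_{[k]},E_j)=(x({\bf i}+1_{\overline{J}})+l'_{[k]},E_j)-(D',E_j)\leq0$ for every $j\in\calj^*\setminus\cals\iij$, in particular on $|l^*|$. The plan is to build $l^*$ up one vertex at a time; for $j\in|l^*|$ and $x_n=x_0+r_n$ with $0\leq r_n<l^*$ one has $(x_n+l'_{[k]},E_j)=(x_0+l'_{[k]},E_j)+(r_n,E_j)\leq(r_n,E_j)$, so it suffices to show that for every cycle $0\leq r<l^*$ there is an index $j\in|l^*-r|$ with $(r,E_j)\leq1$ --- an ``anti--Laufer'' step whose availability again rests on the rationality of $\calj^*$ (were every such $j$ to pair $\geq2$ with $r$, one could force the $\chi$--value of a positive cycle supported on $\calj^*$ strictly below $1$). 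The resulting computation sequence satisfies $(x_n+l'_{[k]},E_{j(n)})\leq1$, i.e.\ is $\chi_{k_r}$--non--decreasing, as required. The main obstacle is the first phase of (I) (and hence of (II)): making precise that the sequence can be steered so as to hit $x({\bf i})+E_{\overline{J}\cup\cals\iij}$ exactly with $\chi_{k_r}$ non--increasing requires a careful Laufer--type analysis of the configuration $\overline{J}\cup\cals\iij$ inside the rational graph $\widetilde{G}$ --- each component meeting $\ocalj$, and no coordinate of $\cals\iij$ being enlarged twice before all of $\cals\iij$ has been reached --- which is routine when $\nu=1$ (as in \cite[7.6]{OSZINV}) but genuinely more involved in the present generality; establishing the anti--Laufer step of (III) is a second, milder point.
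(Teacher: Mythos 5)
Your framework is right and two pieces are genuinely sound: the identity-based derivation of $\widetilde{\cals}\subseteq\cals\iij$ in (II) (writing $x({\bf i})+E_{\overline{J}\cup\widetilde{\cals}}=x({\bf i}+1_{\overline{J}})-y_1+y_2$ and forcing $\chi(y_2)=0$, hence $y_2=E_{\widetilde{\cals}\setminus\cals\iij}=0$ by rationality) is a clean alternative to the paper's route, which instead shifts a computation sequence by $E_{\Delta\cals}$ and contradicts $\chi(E_{\Delta\cals})>0$; and your treatment of the ``second phase'' of (I) matches the paper's part (b). But the proposal leaves the actual core of the proposition unproved. Part (I)(a) --- that a non-increasing generalized Laufer sequence issued from $x({\bf i})+E_{\overline{J}}+E_{\cals'}$ can be made to pass through the reduced cycle $x({\bf i})+E_{\overline{J}}+E_{\cals\iij}$ --- is exactly the statement you defer as ``the delicate step,'' and it is where all the work lies. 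The paper proves it by choosing $j(n)$ greedily so that each of the first steps adds a vertex not yet in the support; if this stalls at $x_{t_1}$ with $\|x_{t_1}\|\neq\cals\iij$, one waits for the first later step $t_2$ at which the support grows again, restricts the intermediate sequence to a suitable connected component $C$ of the reduced cycle $x_{t_1}-x({\bf i})$, and exhibits the beginning of a Laufer sequence on a rational graph with a step of pairing $\geq 2$, contradicting Laufer's criterion \ref{LC}. Nothing in your sketch substitutes for this. The same gap propagates to the second half of (II): the assertion that the sequence ``cannot step over'' $x({\bf i})+E_{\overline{J}\cup\widetilde{\cals}}$ because that cycle minimizes $\chi_{k_r}$ in its $\ocalj$--class is not an argument --- minimality of the value does not force a monotone sequence to visit the minimizer. (The paper instead adds the $j(n)$'s from $\widetilde{\cals}$ first and shows, via $\chi(E_{\Delta\widetilde{\cals}})>0$ and Lemma \ref{lem:laufb}, that the procedure cannot stall before exhausting $\widetilde{\cals}$.)

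There is also a concrete error in (III). Your reduction ``it suffices to show that for every cycle $0\leq r<l^*$ there is $j\in|l^*-r|$ with $(r,E_j)\leq 1$'' asserts something false: take inside $\calj^*\setminus\cals\iij$ a star with central vertex $c$ and $m\geq 2$ arms consisting of $(-2)$--vertices $E_1,\dots,E_m$, let $l^*=E_c+\sum_iE_i$ and $r=\sum_iE_i$; then $|l^*-r|=\{c\}$ and $(r,E_c)=m\geq 2$. So a bottom-up greedy construction can paint itself into a corner, and your claimed sufficient condition cannot hold for all $r$. The statement (III) is still true, but the induction must be run downward, as in the paper: for $l^*>0$ one finds $j\in|l^*|$ with $\chi_{k_r}(x({\bf i})+E_{\overline{J}\cup\cals\iij}+l^*-E_j)\leq\chi_{k_r}(x({\bf i})+E_{\overline{J}\cup\cals\iij}+l^*)$, because otherwise $(E_j,l^*-E_j)\geq 2$ for all $j\in|l^*|$, hence $(l^*,l^*+k_{can})\geq 0$, contradicting the rationality of the subgraph spanned by $|l^*|$; peeling off one $E_j$ at a time and reading the sequence forward gives the required non-decreasing sequence.
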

\begin{proof}
(I) We will use the following notation: for any $x\geq x({\bf i})+E_{\overline{J}}$ we write
 $\|x\|$ for the support $|x- x({\bf i})-E_{\overline{J}}|$. Note that Lemma \ref{lem:lauf}
guarantees the existence of a  computation sequence connecting $x({\bf i})+E_{\overline{J}\cup \cals'}$ with
$x({\bf i}+1_{\overline{J}})$.  We consider such a sequence $\{x_n\}_{n=0}^t$ constructed in such a way
that in the procedure of choices of $j(n)$'s  at the first steps
we try to  increase  $\|x_n\|$ as much as possible.
 More precisely, for any $0\leq n<t_1$, the index $j(n)\in\calj^*$ is chosen as follows:
 \begin{equation}\label{eq:jn}
 \left\{\begin{array}{l} (x_n+l'_{[k]},E_{j(n)})>0 \\
 E_{j(n)}\not\in \|x_n\|.\end{array}\right.
 \end{equation}
 Assume that this  stops for $n=t_1$, that is,
for $n=t_1$ there is no index $j(n)\in\calj^*$ which would  satisfy (\ref{eq:jn}).
We claim that
$\|x_{t_1}\|=\|x({\bf i}+1_{\overline{J}})\|=\cals\iij$,
hence $t_s=t_1$ satisfies part (a) of the proposition.

Indeed, assume that this is not the case. Then we continue the construction of the sequence, and let
$t_2+1$ be the first index when $\|x\|$ increases again, that is $\|x_n\|=\|x_{t_1}\|$ for
$t_1\leq n\leq t_2$ and $\|x_{t_2+1}\|=\|x_{t_1}\|\cup \{j^*\}\not=\|x_{t_1}\|$ for some $j^*\in\calj^*$.
Hence $j^*=j(t_2)$.

Since $(x_{t_2}+l'_{[k]},E_{j^*})>0$ and $(x_{t_1}+l'_{[k]},E_{j^*})\leq 0$,
we get $(x_{t_2}-x({\bf i}),E_{j^*})>-(x({\bf i})+l'_{[k]},E_{j^*})\geq (x_{t_1}-x({\bf i}),E_{j^*})$.
Since $x_{t_2}-x({\bf i})$ and $x_{t_1}-x({\bf i})$ have the same support, which does not contain $j^*$,
this strict inequality can happen only if  $(x_{t_1}-x({\bf i}),E_{j^*})>0$.
By the same argument, in fact, there exists a connected component $C$ of the reduced cycle $x_{t_1}-x({\bf i})$
such that \begin{equation}\label{eq:GG}
((x_{t_2}-x({\bf i}))|_C,E_{j^*})> (C,E_{j^*})>0.\end{equation}
Next, we analyze the restriction of the sequence $z_n:=x_n-x({\bf i})$ to $C$ for $t_1\leq n\leq t_2$.
First note that $(z_n,E_{j(n)})=(x_n+l'_{[k]},E_{j(n)})-(x({\bf i})+l'_{[k]},E_{j(n)})>0$.
If $E_{j(n)}$ is supported by $C$ then it does not intersect any other components of $x_{t_1}-x({\bf i})$,
hence $(z_n|_C,E_{j(n)})>0$ too.
Let us consider that subsequence $\tilde{z}_*$
of $z_n|_C$ which is obtained from $z_n|_C$ by eliminating those steps from the computation sequence
of $\{x_n\}_{n=t_1}^{t_2}$  which correspond to elements $j(n)$ not supported by $C$.
Then the sequence starts with $C$, ends with $(x_{t_2}-x({\bf i}))|_C$, it is the beginning of a Laufer
sequence connecting the {\it connected}
 $C$ with the fundamental cycle of $C$, but at the step $t_2$ one has
$(z_{t_2}|_C,E_{j(t_2)})\geq 2$, cf. (\ref{eq:GG}).

Note also that the sequence $z_n|_C$ is reduced along $\overline{J}$, hence along the procedure we do not add any base element from $\overline{J}$, hence if we decrease the self--intersections of these vertices we will not
modify the Laufer data along the sequence. Hence, we can assume that $C$ is  supported by a
rational graph.
But this contradicts the existence of  $\tilde{z}_*$, cf. \ref{LC}.

Part (b) uses the same argument.
We fix a connected component  of $x_{t_s}-x({\bf i})$. Since in the Laufer steps the components do not
interact, we can even assume that the support of $x_{t_s}-x({\bf i})$ is connected.
 Then  $x_n-x({\bf i})$ for  $n\geq t_s$ is part of the computations sequence connecting
 the reduced connected $x_{t_s}-x({\bf i})$ to its fundamental cycle. Since we may assume that
  $C$ is rational (since the steps do not involve $\overline{J}$),
  along the sequence we must have
 $(x_n-x({\bf i}),E_{j(n)})=1$  by \ref{LC}. This happens only if
  $(x_n+l'_{[k]},E_{j(n)})=1$ and $(x({\bf i})+l'_{[k]},E_{j(n)})=0$.

(II) Assume that $\widetilde{\cals}\not\subseteq \cals\iij$, and set
$\cals':=  \widetilde{\cals}\cap\cals\iij$
and  $\Delta\cals:=  \widetilde{\cals}\setminus \cals\iij$.
Take  a computation sequence $\{x_n\}_{n=0}^t$ as in (I) connecting
$x({\bf i})+E_{\overline{J}\cup \cals'}$ with $x({\bf i}+1_{\overline{J}})$.
 Since $\chi_{k_r}(x_n)$ is non--increasing, cf.  \ref{lem:lauf},
$1-(E_{j(n)}, x_n+l'_{[k]})\leq 0$.
Therefore,
$1-(E_{j(n)}, x_n+E_{\Delta\cals}+l'_{[k]})\leq 0$
too, since $j(n)\not\in \Delta\cals$. Since $\{x_n+E_{\Delta\cals}\}_n$ connects
$x({\bf i})+E_{\overline{J}\cup \widetilde{\cals}}$ with $x({\bf i}+1_{\overline{J}})+E_{\Delta\cals}$, we get
$$\chi_{k_r}(x({\bf i})+E_{\overline{J}\cup \widetilde{\cals}})\geq \chi_{k_r}
(x({\bf i}+1_{\overline{J}})+E_{\Delta\cals}).$$
This together  with assumption (\ref{eq:sss}) and Lemma \ref{lem:laufb} guarantee that, in fact,
\begin{equation}\label{eq:EQ}
\chi_{k_r}(x({\bf i}+1_{\overline{J}})+E_{\Delta\cals})=
\chi_{k_r}(x({\bf i}+1_{\overline{J}})).\end{equation}
On the other hand,
\begin{equation*}
\chi_{k_r}(x({\bf i}+1_{\overline{J}})+E_{\Delta\cals})-
\chi_{k_r}(x({\bf i}+1_{\overline{J}}))
=\chi(E_{\Delta\cals})-(E_{\Delta\cals},
x({\bf i}+1_{\overline{J}})+l'_{[k]})\geq \chi(E_{\Delta\cals}),
\end{equation*}
where the last inequality follows from the definition of $x({\bf i}+1_{\overline{J}})$. Since
$\chi(E_{\Delta\cals})$ is the number of connected components of $E_{\Delta\cals}$, it is strictly
positive, a fact which contradicts (\ref{eq:EQ}).

For the second part we construct a computation sequence as in (I), applied for
$\cals'=0$, in such a way that first we choose only the $j(n)$'s from $\widetilde{\cals}$. We claim that in this
way we fill in all $\widetilde{\cals}$. Indeed, assume that this procedure stops at the level of $x_m$; that is,
$x({\bf i})+E_{\overline{J}}\leq x_m <  x({\bf i})+E_{\overline{J}\cup \widetilde{\cals}}$
and
\begin{equation}\label{eq:ineqtilde}
(E_j,x_m+l'_{[k]})\leq 0 \ \ \mbox{for all $j\in \Delta\widetilde{\cals}:=
\widetilde{\cals}\setminus ||x_m||$}.
\end{equation}
Then
$$\chi_{k_r}( x({\bf i})+E_{\overline{J}\cup \widetilde{\cals}})
-\chi_{k_r}(x_m)=\chi(E_{\Delta\widetilde{\cals}})-
(E_{\Delta\widetilde{\cals}}, x_m+l'_{[k]})\geq
\chi(E_{\Delta\widetilde{\cals}}),$$
where the last inequality follows from (\ref{eq:ineqtilde}). Since
$\chi(E_{\Delta\widetilde{\cals}})>0$,   the
assumption (\ref{eq:sss}) imply
 $\chi_{k_r}(x_m)<\chi_{k_r}( x({\bf i}
+1_{\overline{J}}))$, a fact which contradicts Lemma \ref{lem:laufb}.

(III) The statement follows by induction from the following fact: if $l^*>0$,
$|l^*|\subseteq\calj^*\setminus \cals\iij$, then there exists $j\in|l^*|$ so that
$$\chi_{k_r}(x({\bf i})+E_{\overline{J}\cup\cals\iij}+l^*-E_j)\leq
\chi_{k_r}(x({\bf i})+E_{\overline{J}\cup\cals\iij}+l^*).$$
Indeed, if not, then $(E_j,   x({\bf i})+E_{\overline{J}\cup\cals\iij}+l'_{[k]}+l^*-E_j)\geq 2$
for any $j\in |l^*|$. On the other hand,
$(E_j,  x({\bf i})+E_{\overline{J}\cup\cals\iij}  +l'_{[k]})\leq 0$, by the proof of part (I) (namely, the choice of
$t_s=t_1$), or by the definition of $\cals\iij$. Therefore,
$(E_j,l^*-E_j)\geq 2$, or, $(E_j,l^*+k_{can})\geq 0$ for all $j$. Summing up over the coefficients of $l^*$,
we get $(l^*,l^*+k_{can})\geq 0$, which contradicts (\ref{eq:artin}) since the subgraph generated by $|l^*|$
is rational.
\end{proof}

\subsection{The statement}\labelpar{ss:reduction}\
Now we are ready to formulate the main result of this section: in the definition of
the lattice cohomology we wish to replace the (cubes of the) lattice $L$ with cubes of a smaller rank
free $\Z$--module associated with the bad vertices.

\bekezdes\label{bek:331}
 {\bf Definition of the (quadrant of the) new free $\Z$--module.} Let us fix $[k]$ and
assume that the graph $G$ admits a family of $\nu$
bad vertices as above. Then define $\overline{L}=(\Z_{\geq 0})^\nu$, and the function
$\overline{w}_0:(\Z_{\geq 0})^\nu\to \Z$ by
\begin{equation}
\overline{w}_0(i_1,\ldots,i_\nu):=\chi_{k_r}(x(i_1,\ldots,i_\nu)).
\end{equation}
 Then $\overline{w}_0$ defines a set $\{\overline{w}_q\}_{q=0}^\nu$ of compatible weight functions
 depending on $[k]$, defined similarly as in \ref{ex:ww}, denoted by $\overline{w}[k]$.

\begin{theorem}[\bf Reduction Theorem]\labelpar{red}
Let $G$ be a negative definite connected graph and let $k_r$ be the distinguished representative
of a characteristic class. Suppose $\overline{\calj}=\{j_k\}_{k=1}^\nu$ is a
family of bad vertices and $(\overline{L},\overline{w}[k])$ is the
first quadrant of the new weighted free $\Z$--module
associated with $\overline{\calj}$ and $k_r$. Then there is a graded $\Z[U]$--module isomorphism
\begin{equation}\label{eq:reda}
\bH^*(G,k_r)\cong\bH^*(\overline{L},\overline{w}[k]).
\end{equation}
\end{theorem}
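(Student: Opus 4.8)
The plan is to use the geometric realization $\bS^*$ of Theorem \ref{th:HS}: both sides of \eqref{eq:reda} are computed from the topology of the sublevel sets $S_N$ of the respective weight functions, so it suffices to produce a homotopy equivalence between the sublevel sets $S_N\subseteq (\R_{\geq 0})^s$ (for $\chi_{k_r}$ on $L$, restricted to the first quadrant by Proposition \ref{propF2}(a)) and the sublevel sets $\overline{S}_N\subseteq (\R_{\geq 0})^\nu$ (for $\overline{w}_0$ on $\overline{L}$), compatible with the inclusions $S_N\hookrightarrow S_{N+1}$ so that the $U$--action is respected. The cycles $x(\mathbf{i})$ furnish the candidate map on lattice points: the vertex $\mathbf{i}\in\overline{L}$ should correspond to the lattice point $x(\mathbf{i})\in L$, and by Proposition \ref{lemF1} we have $\overline{w}_0(\mathbf{i})=\chi_{k_r}(x(\mathbf{i}))$, so this map is weight--preserving on $0$--cubes. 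The core of the argument is to extend this to a map of cubical complexes, i.e. to fill in, for each edge $[\mathbf{i},\mathbf{i}+1_j]$ and each higher cube $(\mathbf{i},\overline{I})$ of $\overline{L}$, a suitable subcomplex of $(\R_{\geq 0})^s$ joining the images of the vertices, without raising the weight above that of the cube.

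First I would set up the combinatorial correspondence precisely. For a cube $(\mathbf{i},\overline{I})$ of $\overline{L}$ with $\overline{I}\subseteq\ocalj$, its $\overline{w}_\nu$--weight is $\max_{\overline{I}'\subseteq\overline{I}}\chi_{k_r}(x(\mathbf{i}+1_{\overline{I}'}))$; by Proposition \ref{propF1} and the monotonicity \ref{lemF1}(iii) this maximum is attained at $\overline{I}'=\overline{I}$, so the weight equals $\chi_{k_r}(x(\mathbf{i}+1_{\overline{I}}))$. On the $L$--side I would define, for each such cube, a contractible subcomplex $T(\mathbf{i},\overline{I})\subseteq\R^s$ containing the points $x(\mathbf{i}+1_{\overline{I}'})$ for all $\overline{I}'\subseteq\overline{I}$, built out of the generalized Laufer computation sequences of Lemma \ref{lem:lauf} and Proposition \ref{prop:G}. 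The decisive point is that Proposition \ref{prop:G}(I)--(III) guarantees that the union of these Laufer paths lies inside the sublevel set $S_{\chi_{k_r}(x(\mathbf{i}+1_{\overline{I}}))}$: part (I) handles the step from $x(\mathbf{i})+E_{\overline{J}}$ up to $x(\mathbf{i}+1_{\overline{J}})$ with weights bounded by the top weight of the cube, part (III) shows one can push further into $\calj^*$ without decreasing weight (so those extra cycles also stay in the right sublevel), and part (II) pins down which supports in $\calj^*$ realize the relevant $\chi_{k_r}$--values. One then checks that these $T(\mathbf{i},\overline{I})$ are mutually compatible along faces, so that $S_N=\bigcup_{\overline{w}_\nu(\square)\leq N} T(\square)$, and that each $T(\mathbf{i},\overline{I})$ deformation retracts onto $x(\mathbf{i})$ via the Laufer sequences, along which $\chi_{k_r}$ is non--increasing (Lemma \ref{lem:lauf}); hence the inclusion of the ``skeleton of $x(\mathbf{i})$'s'' into $S_N$ is a homotopy equivalence, and this skeleton is precisely $\overline{S}_N$ up to the identification $\mathbf{i}\leftrightarrow x(\mathbf{i})$.

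The argument is then assembled as follows: (1) define the simplicial/cubical map $\phi_N\colon\overline{S}_N\to S_N$ on vertices by $\mathbf{i}\mapsto x(\mathbf{i})$ and on cubes via the subcomplexes $T(\mathbf{i},\overline{I})$; (2) show $\phi_N$ is weight--compatible, hence commutes with the $U$--action coming from $S_N\hookrightarrow S_{N+1}$ and $\overline{S}_N\hookrightarrow\overline{S}_{N+1}$; (3) show $\phi_N$ induces an isomorphism on $H^*$ for every $N$, using that both $S_N$ and $\overline{S}_N$ retract onto the ``$x(\mathbf{i})$--skeleton'' — on the $\overline{L}$--side trivially, on the $L$--side by the Laufer--path retraction supplied by Proposition \ref{prop:G}; (4) pass to the direct sum over $N\geq\m_{k_r}=\m_{\overline{w}}$ (the minima agree since $\chi_{k_r}(x(0))=0=\chi_{k_r}(0)$ and $0$ realizes the minimum on the quadrant) to get the graded $\Z[U]$--module isomorphism $\bS^*(G,k_r)\cong\bS^*(\overline{L},\overline{w}[k])$, and invoke Theorem \ref{th:HS} on both sides to conclude \eqref{eq:reda}.

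The main obstacle is step (3), specifically proving that the ``fibre'' of $\phi_N$ over each point — morally the part of $S_N$ sitting above a cube of $\overline{L}$, cut out by fixing the coordinates $m_j=i_j$ for $j\in\ocalj$ — is contractible and sits at the right weight level. This is exactly where all of Proposition \ref{prop:G} is needed: one must control not only the monotonicity of $\chi_{k_r}$ along a single computation sequence but the global geometry of the region $\{x\in S_N : m_j(x)=i_j,\ j\in\ocalj\}$, showing it is a union of Laufer paths that collectively deformation--retract to the single point $x(\mathbf{i})$ while never exceeding weight $N$. Handling the interaction of several bad vertices simultaneously (the cubes of dimension $>1$, where $\overline{I}$ has more than one element) requires checking that the retractions chosen on the faces glue consistently; I expect this bookkeeping, rather than any single inequality, to be the technically heaviest part of the proof.
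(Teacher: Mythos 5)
Your overall architecture is the right one and, at the level of strategy, matches the paper's: relate the sublevel sets $S_N$ and $\overline{S}_N$ through the correspondence ${\bf i}\leftrightarrow x({\bf i})$, control weights with the generalized Laufer sequences of Lemma \ref{lem:lauf} and Proposition \ref{prop:G}, and conclude via the $\bS^*$--realization of Theorem \ref{th:HS}. (The paper runs the comparison in the opposite direction: it projects $S_N$ onto $\overline{S}_N$ by forgetting the $\calj^*$--coordinates and applies a Mayer--Vietoris/Leray argument to the fibers $\phi^*_N({\bf i},\overline{I})$, rather than building a section $\overline{S}_N\to S_N$; but this is largely presentational.)

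The genuine gap is your claim that the complex $T({\bf i},\overline{I})$ built from the points $x({\bf i}+1_{\overline{I}'})$ and the Laufer paths joining them lies in $S_{\chi_{k_r}(x({\bf i}+1_{\overline{I}}))}$. It need not. Along the computation sequence from $x({\bf i})+E_{\overline{J}}$ to $x({\bf i}+1_{\overline{J}})$ the weight $\chi_{k_r}$ is \emph{non--increasing}, so its maximum sits at the initial point, and for $|\overline{J}|\geq 2$ the inequality $\chi_{k_r}(x({\bf i})+E_{\overline{J}})\geq \chi_{k_r}(x({\bf i}+1_{\overline{J}}))$ is in general strict (only for $|\overline{J}|=1$ does Proposition \ref{propF1} give equality). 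Consequently a cube $({\bf i},\overline{I})\in\overline{S}_N$ has weight $\max_{\overline{J}}\chi_{k_r}(x({\bf i}+1_{\overline{J}}))\leq N$, while its naive lift $(x({\bf i}),\overline{I})$ has weight $\max_{\overline{J}}\chi_{k_r}(x({\bf i})+E_{\overline{J}})$, which can exceed $N$; it is therefore not clear that \emph{any} cube of $S_N$ sits over $({\bf i},\overline{I})$. Producing one --- i.e. finding a support $\widetilde{\cals}\subseteq \cals({\bf i},\overline{I})$ with $\chi_{k_r}(x({\bf i})+E_{\overline{J}\cup\widetilde{\cals}})\leq N$ for \emph{all} $\overline{J}\subseteq\overline{I}$ simultaneously --- is the non--emptiness statement (Proposition \ref{th:nonempty}), whose proof is a separate, lengthy induction over decorated graphs and does not follow from Proposition \ref{prop:G}. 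Likewise, contracting the fiber onto the distinguished cube $(x({\bf i})+E_{\widetilde{\cals}},\overline{I})$ is not just face--gluing bookkeeping: that cube is neither a $\chi_{k_r}$--minimal nor a $\chi_{k_r}$--maximal point of the fiber, so the retraction must alternately add components of $\widetilde{\cals}$ and delete components of $\cals({\bf i},\overline{I})\setminus\widetilde{\cals}$ in an order prescribed by a dedicated selection procedure (Proposition \ref{prop:sel}) keeping every intermediate cube below level $N$. Both ingredients are absent from your plan, and without them your step (3) does not go through.
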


Note that via  \ref{propF2}, (\ref{eq:reda}) is equivalent to the isomorphism:
\begin{equation}\label{eq:redb}
\bH^*([0,\infty)^s,k_r)\cong\bH^*([0,\infty)^\nu,\overline{w}[k]).
\end{equation}

\begin{remark}
 The reduction theorem immediately implies the Vanishing Theorem \ref{vanishth} for lattice cohomology, 
in particular, the new classification of rational surface singularities 
\ref{thm:ratlatcoh}.
\end{remark}

\subsection{Proof of the Reduction Theorem}\
In this section we abbreviate $k_r$ into $k$, $\overline{w}[k]$ into $\overline {w}$.
Assume that there exists a pair  $j, j'\in\ocalj$, $j\not=j'$, such that $(E_j,E_{j'})=1$. Then we can blow up
the intersection point $E_j\cap E_{j'}$. We have to observe two facts. First,
the lattice cohomology $\bH^*(G,k)$ is stable with respect to this blow up \cite{Nlat,Nexseq}. Second,
the `strict transform' of the set $\ocalj$ can serve for a new set of bad vertices
and the right hand side of (\ref{eq:reda}) stays stable as well. Therefore, by additional blow ups,
{\it  we can assume that}
\begin{equation}\label{ss:assumption}
(E_j, E_{j'})=0 \ \ \mbox{for every pair  $j, j'\in\ocalj$, $j\not=j'$.}
\end{equation}

\bekezdes{\bf The first step. Comparing $S_N$ and $\overline{S}_N$.}\label{Leray}

We consider the projections  $\phi:(\Z_{\geq 0})^s\to (\Z_{\geq 0})^\nu$
and  $\phi:[0,\infty)^s\to [0,\infty)^\nu$
given by $(m_j)_{j\in\calj}\mapsto (m_{j})_{j\in\ocalj}$. This induces a projection
of the cubes too. If $(l,I)\in \calQ(L)$ is a cube of $L$, then write $I$ as $\overline{I}\cup I^*$ where
$\overline{I}=I\cap \ocalj$ and $I^*=I\cap \calj^*$. Then the vertices of $(l,I)$ are projected via $\phi$ into the
vertices of the cube $(\phi(l),\overline{I})\in \calQ(\overline{L})$ of $\overline{L}$.
It is convenient to write $\overline {I}:=\phi(I)$ and $\phi(l,I):=(\phi(l),\overline{I})$.

By \ref{lem:laufb}, we get that  for any $l\in (\Z_{\geq 0})^s$ we have $w(l)\geq \overline{w}(\phi(l))$, hence
\begin{equation}\label{eq:ineq}
w((l,I))\geq \overline{w}(\phi(l,I)) \ \  \ \mbox{for any cube $(l,I)\in \calQ(L)$}.
\end{equation}
Recall that for any $N$ we define $S_N\subseteq [0,\infty)^s$ as the union of cubes of
$[0,\infty)^s$ of weight $\leq N$.
Similarly, let $\overline{S}_N\subseteq [0,\infty)^\nu$ be the
union of cubes $({\bf i},\overline{I})$
 with $\overline{w}({\bf i},\overline{I})\leq N$.
 Then, the statement of Theorem \ref{red}, via Theorem
 \ref{th:HS},  is equivalent to the fact that
 \begin{equation}\label{eq:leray}
\mbox{ {\it
$S_N$ and $\overline{S}_N$ have the same cohomology groups for any integer $N$.}}
\end{equation}

Note that by (\ref{eq:ineq}) $\phi(S_N)\subseteq \overline{S}_N$, and by construction
 $\phi|_{S_N}:S_N\to \overline{S}_N$ is a cubical map.
For any $\ii\subseteq \overline{S}_N$
we consider $\phi^*_N\ii\subseteq S_N$ defined as
the union of all cubes $(l,I)\subseteq S_N$ with  $\phi(l,I)=\ii$.
[We warm the reader that this is not the inverse image
$(\phi|_{S_N})^{-1}\ii$, rather it is the closure of the inverse image of the interiour
of the cube $\ii$; see also below.] If $\psi:[0,\infty)^s\to [0,\infty)^{s-\nu}$ is the second projection
on the $\calj^*$--coordinate direction, then $\phi^*_N\ii$ is the product of $\psi(\phi^*_N\ii)$ with the
cube $\ii$; in particular, it has the homotopy type of $\psi(\phi^*_N\ii)$.

A Mayer--Vietoris inductive (or Leray type spectral sequence)
argument shows that (\ref{eq:leray}) follows from
\begin{equation}\label{eq:leray3}
\mbox{ {\it
$\phi^{*}_N\ii$ is non--empty and contractible  for any $\ii\in\overline{S}_N$.}}
\end{equation}

\bekezdes{\bf Generalities about contractions.}\labelpar{ss:2}
 {\it In the sequel we fix a cube $\ii$ from $\overline{S}_N$}  and we start
to prove (\ref{eq:leray3}).  For any such cube $\ii$ we also consider the inverse image
$\phi^{-1}\ii$ consisting of the union of
all cubes $(l,I)$ of $[0,\infty)^s$ with $\phi(l,I)\subseteq \ii$ (not
necessarily from  $S_N$). We can also consider $(\phi|_{S_N})^{-1}\ii$,
the union of cubes $(l,I)$ from $S_N$ with $\phi(l,I)\subseteq \ii$.
Clearly, $$\phi^*_N\ii\subseteq (\phi|_{S_N})^{-1}\ii\subseteq \phi^{-1}\ii.$$
Note that $\phi^{-1}\ii$ is the product of the cube   $\ii$ with $[0,\infty)^{s-\nu}$.
Our goal is to contract this `fiber direction  space' $[0,\infty)^{s-\nu}$
in such a way that  along the contraction $\chi_{k}$ does not increase, and the
contraction preserves the subspaces $\phi^*_N\ii$ and $(\phi|_{S_N})^{-1}\ii$ as well.

The cycles supported on $\calj^*$ (`fiber direction') will be denoted by  $l^*=\sum_{j\in\calj^*}m_jE_j$.
For any pair $l_1^*$ and $l_2^*$ with
$l_1^*\leq l_2^*$ we consider the real $s$--dimensional rectangle
 $R_{\ii}(l_1^*,l_2^*)$, the product of a rectangle in the $(s-\nu)$--dimensional space
with the cube $\ii$:  it is the convex closure of the lattice points, which have the form
$$x({\bf i})+E_{\overline{J}}+l^* \ \ \mbox{with} \ \ \ \overline{J}\subseteq \overline{I} \ \ \
\mbox{and} \ \ l^*\in  L, \ \ l_1^*\leq l^*\leq l_2^*.$$
We extend this notation allowing $l_2^*$ to have all its entries  $\infty$.

Note that the lattice points $x({\bf i})+E_{\overline{J}}+l^*$, being in $[0,\infty)^s$, are effective,
hence the relevant $l^*$ satisfies $l^*\geq l^*_{1,min}:=-x({\bf i})+\sum_{j\in\ocalj}i_jE_j$
(the projection of $-x({\bf i})$ on the $\calj^*$-components). In particular,
 $R_{\ii}(l^*_{1,min},\infty)=\phi^{-1}\ii\subseteq [0,\infty)^s$, and  we can assume
 that $l_1^*$ and $l_2^*$ satisfy $l^*_{1,min}\leq l_1^*\leq l_2^*\leq \infty$. Note also that
 $l_{1,min}^*\leq 0$.

\medskip

We start to discuss the existence of  a contraction
$c:R_{\ii}(l_1^*,l_2^*+E_j)\to R_{\ii}(l_1^*,l_2^*)$ for some $j\in\calj^*$, acting in the direction of the
$\calj^*$--coordinates and having the property that
 $\chi_k$ will  not increase along it. The map
$c$ is defined  as follows. If a lattice point $l$ is in $R_{\ii}(l_1^*,l_2^*)$, then
 $c(l)=l$. Otherwise $l$ has the form $l=x({\bf i})+E_{\overline{J}}+l^*+E_j$ for some
 $l^*$ with $l_1^*\leq l^*\leq l_2^*$ and $m_j(l^*)=m_j(l^*_2)$. Then set $c(l)=l-E_j$.
The next criterion guarantees that $\chi_k$ does not increase along this contraction.

\begin{lemma}\label{lem:con1}
Assume that for some $l_2^*$ and $j\in\calj^*$ one has
$$\chi_k(x({\bf i})+E_{\overline{I}}+l^*_2+E_j)\geq \chi_k(x({\bf i})+E_{\overline{I}}+l^*_2).
$$
Then, for any $l^*$ with $l_1^*\leq l^*\leq l_2^*$ and $m_j(l^*)=m_j(l^*_2)$,
and for every $\overline{J}\subseteq \overline{I}$, one also has
$$\chi_k(x({\bf i})+E_{\overline{J}}+l^*+E_j)\geq \chi_k(x({\bf i})+E_{\overline{J}}+l^*).
$$
Therefore, $\chi_k(c(l))\leq \chi_k(l)$ for any $l\in  R_{\ii}(l_1^*,l_2^*+E_j)$.
\end{lemma}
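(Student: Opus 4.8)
The plan is to convert both the hypothesis and the conclusion into statements about a single linear functional and then observe an elementary monotonicity. The basic identity I would use is $\chi_k(a+b)=\chi_k(a)+\chi_k(b)-(a,b)$ for $a,b\in L$, together with the value $\chi_k(E_j)=1-(E_j,l'_{[k]})$ for every $j\in\calj$. The latter holds because $\chi(E_j)=-\tfrac{1}{2}(E_j,E_j+\K)=1$ by the adjunction relations (\ref{eq:adj}), and $\chi_k=\chi-(\,\cdot\,,l'_{[k]})$ since $k=k_r=\K+2l'_{[k]}$. Applying the identity with $b=E_j$ gives $\chi_k(y+E_j)-\chi_k(y)=1-(y+l'_{[k]},E_j)$, so for any $y\in L$ the inequality $\chi_k(y+E_j)\ge\chi_k(y)$ is \emph{equivalent} to $(y+l'_{[k]},E_j)\le 1$.

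With this dictionary, the hypothesis of the lemma reads $(x({\bf i})+E_{\overline I}+l_2^*+l'_{[k]},E_j)\le 1$, and the first claimed conclusion is $(x({\bf i})+E_{\overline J}+l^*+l'_{[k]},E_j)\le 1$ for $\overline J\subseteq\overline I$, $l_1^*\le l^*\le l_2^*$ and $m_j(l^*)=m_j(l_2^*)$. I would deduce the latter from the former via the monotonicity
\[
(x({\bf i})+E_{\overline J}+l^*+l'_{[k]},E_j)\ \le\ (x({\bf i})+E_{\overline I}+l_2^*+l'_{[k]},E_j).
\]
The difference of the right and left sides is $(E_{\overline I\setminus\overline J},E_j)+(l_2^*-l^*,E_j)$. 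Now $E_{\overline I\setminus\overline J}$ is effective and supported on $\ocalj$, hence on vertices distinct from $j$ (as $j\in\calj^*$); and $l_2^*-l^*$ is effective and supported on $\calj^*\setminus\{j\}$, the exclusion of $j$ being exactly the assumption $m_j(l^*)=m_j(l_2^*)$. Since every off-diagonal entry $(E_v,E_j)$ with $v\ne j$ of the intersection form is $\ge 0$, both pairings are $\ge 0$, so the difference is $\ge 0$ and the monotonicity holds.

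Finally, the last assertion of the lemma follows at once: the contraction $c$ is the identity except on lattice points of the form $l=x({\bf i})+E_{\overline J}+l^*+E_j$ with $\overline J\subseteq\overline I$, $l_1^*\le l^*\le l_2^*$ and $m_j(l^*)=m_j(l_2^*)$, where $c(l)=l-E_j=x({\bf i})+E_{\overline J}+l^*$; for such an $l$ the inequality just established gives $\chi_k(l)\ge\chi_k(c(l))$.

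I do not expect a genuine obstacle here; the one point that needs care is the support bookkeeping in the middle step, namely recognizing that passing from $x({\bf i})+E_{\overline I}+l_2^*$ down to $x({\bf i})+E_{\overline J}+l^*$ only removes cycle components that do not meet the vertex $j$, so that the pairing with $E_j$ changes by a sign-definite amount. In particular no use is made of negative definiteness or of the rationality of any subgraph at this step, which is what makes the statement a purely combinatorial lemma about the cellular structure.
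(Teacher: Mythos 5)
Your proof is correct and is essentially the paper's own argument: the paper's one-line proof reads ``Use $\chi_k(z+E_j)=\chi_k(z)+1-(E_j,z+l'_{[k]})$ and $(E_j, E_{\overline{I}}-E_{\overline{J}}+l^*_2-l^*)\geq 0$,'' which is exactly your dictionary plus your monotonicity step. You have merely made explicit the support bookkeeping (that $E_{\overline I\setminus\overline J}$ and $l_2^*-l^*$ are effective and avoid the vertex $j$) that the paper leaves implicit.
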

\begin{proof} Use $\chi_k(z+E_j)=\chi_k(z)+1-(E_j,z+l'_{[k]})$ and
$(E_j, E_{\overline{I}}-E_{\overline{J}}+l^*_2-l^*)\geq  0$.\end{proof}
The following lemma generalizes  results of \cite[\S\,3.2]{Nlat}, where the case $\nu=1$ is treated.
\begin{lemma}\label{lem:con2}
Assume that for some fixed $l_2^*$ there exists an infinite sequence of cycles
$\{x^*_n\}_{n\geq 0}$, $x^*_n=\sum_{j\in\calj^*}m_{j,n}E_j$,  with $x^*_0=l_2^*$ such that

\begin{itemize}
\item[(a)] $x^*_{n+1}=x^*_n+E_{j(n)}$ for some $j(n)\in\calj^*$, $n\geq 0$;
\item[(b)] $\chi_k(x({\bf i})+E_{\overline{I}}+x^*_{n+1})\geq \chi_k(x({\bf i})+E_{\overline{I}}+x^*_n)$
for any $n\geq 0$.
\item[(c)] for any fixed $j$ the sequence $m_{j,n}$ tends to infinity as $n$ tends to infinity;
\end{itemize}
Then there exists a contraction of  $R_{\ii}(l_1^*,\infty)$ to  $R_{\ii}(l_1^*, l_2^*)$
along which $\chi_k$ is non--increasing.
\end{lemma}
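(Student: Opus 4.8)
The plan is to build the required deformation retraction as an infinite composition of the one--step contractions furnished by Lemma \ref{lem:con1}. Fix $n\geq 0$ and apply that lemma with $l_2^*$ replaced by $x_n^*$ and $j$ replaced by $j(n)$; hypothesis (b) is exactly the inequality $\chi_k(x({\bf i})+E_{\overline{I}}+x_n^*+E_{j(n)})\geq\chi_k(x({\bf i})+E_{\overline{I}}+x_n^*)$ required there. Hence for every $n$ we obtain a cubical contraction $c_n\colon R_{\ii}(l_1^*,x_{n+1}^*)\to R_{\ii}(l_1^*,x_n^*)$ which restricts to the identity on $R_{\ii}(l_1^*,x_n^*)$ and along which $\chi_k$ is non--increasing. (Recall $x_{n+1}^*=x_n^*+E_{j(n)}$ by (a), so $R_{\ii}(l_1^*,x_{n+1}^*)$ and $R_{\ii}(l_1^*,x_n^*)$ differ by one `layer' of cubes in the $j(n)$--direction, which $c_n$ collapses.)

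Next I would set $C_n:=c_0\circ c_1\circ\cdots\circ c_{n-1}\colon R_{\ii}(l_1^*,x_n^*)\to R_{\ii}(l_1^*,l_2^*)$. As a finite composition of cubical deformation retractions, each $C_n$ is itself a deformation retraction onto $R_{\ii}(l_1^*,l_2^*)$, restricts to the identity there, and $\chi_k$ does not increase along the associated homotopy. Because $c_m$ is the identity on $R_{\ii}(l_1^*,x_m^*)$ for every $m$, the maps $C_{n}$ and $C_{n+1}$ agree on $R_{\ii}(l_1^*,x_n^*)$, and likewise their (cubewise straight--line) homotopies agree there. So the family $\{C_n\}_n$ patches to a single map together with a single homotopy.

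Finally I would identify the domain of the patched map. By condition (c), for every $j\in\calj^*$ the $j$--th coefficient of $x_n^*$ tends to $\infty$, so $R_{\ii}(l_1^*,\infty)=\bigcup_{n\geq 0}R_{\ii}(l_1^*,x_n^*)$ is an increasing union of finite subcomplexes; consequently the patched data define a deformation retraction $R_{\ii}(l_1^*,\infty)\to R_{\ii}(l_1^*,l_2^*)$ along which $\chi_k$ is non--increasing, which is the assertion. The one genuine obstacle is the passage to the infinite composition on the non--compact complex $R_{\ii}(l_1^*,\infty)$: one must check the composition is well defined and continuous, and this follows from the fact that each $c_n$ fixes $R_{\ii}(l_1^*,x_n^*)$ pointwise (even through its homotopy), so on any given point of $R_{\ii}(l_1^*,\infty)$ all but finitely many of the $c_n$ act trivially. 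Condition (c) is precisely what guarantees that the increasing union $\bigcup_n R_{\ii}(l_1^*,x_n^*)$ exhausts the whole fibre direction $R_{\ii}(l_1^*,\infty)$; without it one would only retract a proper subcomplex.
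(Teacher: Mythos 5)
Your proof is correct and follows exactly the route the paper takes: its entire proof of this lemma is ``Use Lemma \ref{lem:con1} and induction over $n$,'' i.e.\ the same step-by-step application of the one-layer contraction followed by composition, with condition (c) ensuring the increasing union $\bigcup_n R_{\ii}(l_1^*,x_n^*)$ exhausts $R_{\ii}(l_1^*,\infty)$. Your additional remarks on the well-definedness of the infinite composition (each point being eventually fixed) are a legitimate elaboration of what the paper leaves implicit.
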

\begin{proof} Use Lemma \ref{lem:con1} and induction over $n$. 
\end{proof}
Symmetrically, by similar proof, one has the following statements too.

\begin{lemma}\label{lem:con3} \

(I) \ For any  fixed $l_1^*$ and $j\in\calj^*$ with $l_1^*-E_j\geq l_{1,min}^*$ if
$$\chi_k(x({\bf i})+l^*_1-E_j)\geq \chi_k(x({\bf i})+l^*_1),
$$
then for any $l^*$ with $l_1^*\leq l^*\leq l_2^*$ and $m_j(l^*)=m_j(l^*_1)$,
and for every $\overline{J}\subseteq \overline{I}$, one also has
$$\chi_k(x({\bf i})+E_{\overline{J}}+l^*-E_j)\geq \chi_k(x({\bf i})+E_{\overline{J}}+l^*).
$$
Therefore, $ R_{\ii}(l_1^*-E_j,l_2^*)$ contracts onto $ R_{\ii}(l_1^*,l_2^*)$ such that
$\chi_k$ does non increase  along the contraction.

(II) \ Assume that there exists a sequence of cycles
$\{x^*_n\}_{n=0}^t$ with $x^*_0=l_{1,min}^*$  and  $x^*_t=l_{1}^*$ such that for any $0\leq n<t$
one has
\begin{itemize}
\item[(a)] $x^*_{n+1}=x^*_n+E_{j(n)}$ for some $j(n)\in\calj^*$,
\item[(b)] $\chi_k(x({\bf i})+x^*_{n})\geq \chi_k(x({\bf i})+x^*_{n+1})$.
\end{itemize}
Then there exists a contraction of  $R_{\ii}(l_{1,min}^*,l_2^*)$ to  $R_{\ii}(l_1^*, l_2^*)$
along which $\chi_k$ is non--increasing.
\end{lemma}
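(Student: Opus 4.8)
The plan is to obtain Lemma~\ref{lem:con3} by the argument of Lemmas~\ref{lem:con1} and~\ref{lem:con2} run in reverse: where those lemmas collapse a top slice $l^*\mapsto l^*-E_j$ in the $E_j$--direction, here one collapses a bottom slice $l^*\mapsto l^*+E_j$. The only inputs needed are, as before, the Riemann--Roch identity $\chi_k(z+E_j)=\chi_k(z)+1-(E_j,z+l'_{[k]})$ together with the signs $(E_i,E_j)\geq 0$ for $i\neq j$ and $(E_j,D)\geq 0$ whenever $D\geq 0$ is effective and $j\notin|D|$.

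For part~(I): via the identity, the hypothesis $\chi_k(x({\bf i})+l_1^*-E_j)\geq\chi_k(x({\bf i})+l_1^*)$ reads $(E_j,\,x({\bf i})+l_1^*-E_j+l'_{[k]})\geq 1$. Given $\overline{J}\subseteq\overline{I}$ and $l^*$ with $l_1^*\leq l^*\leq l_2^*$ and $m_j(l^*)=m_j(l_1^*)$, the cycle $E_{\overline{J}}+(l^*-l_1^*)$ is effective with support inside $\ocalj\cup(\calj^*\setminus\{j\})$, so $(E_j,\,E_{\overline{J}}+l^*-l_1^*)\geq 0$ precisely because $j\in\calj^*$; adding this to the displayed inequality gives $(E_j,\,x({\bf i})+E_{\overline{J}}+l^*-E_j+l'_{[k]})\geq 1$, which by the identity is exactly $\chi_k(x({\bf i})+E_{\overline{J}}+l^*-E_j)\geq\chi_k(x({\bf i})+E_{\overline{J}}+l^*)$. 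The side condition $l_1^*-E_j\geq l^*_{1,min}$ keeps $R_{\ii}(l_1^*-E_j,l_2^*)$ inside $\phi^{-1}\ii\subseteq[0,\infty)^s$, and the map that is the identity on $R_{\ii}(l_1^*,l_2^*)$ and sends a vertex $x({\bf i})+E_{\overline{J}}+l^*$ of the extra slice $m_j(l^*)=m_j(l_1^*)-1$ to $x({\bf i})+E_{\overline{J}}+l^*+E_j$ is a deformation retraction of $R_{\ii}(l_1^*-E_j,l_2^*)$ onto $R_{\ii}(l_1^*,l_2^*)$; applying the inequality just proved to the shifted cycle $l^*+E_j$ (which has $j$--coefficient $m_j(l_1^*)$) shows $\chi_k$ does not increase on vertices along this retraction.

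For part~(II): I would induct on the length $t$ of the sequence. Part~(I) applies with $l_1^*:=x_{n+1}^*$ and $j:=j(n)$, since $x_{n+1}^*-E_{j(n)}=x_n^*\geq x_0^*=l^*_{1,min}$ and the numerical hypothesis $\chi_k(x({\bf i})+x_{n+1}^*-E_{j(n)})\geq\chi_k(x({\bf i})+x_{n+1}^*)$ is exactly assumption~(b) at step $n$; it yields a deformation retraction $R_{\ii}(x_n^*,l_2^*)\to R_{\ii}(x_{n+1}^*,l_2^*)$ along which $\chi_k$ is non--increasing. Composing these for $n=0,\dots,t-1$ gives the asserted contraction of $R_{\ii}(l^*_{1,min},l_2^*)=R_{\ii}(x_0^*,l_2^*)$ onto $R_{\ii}(l_1^*,l_2^*)=R_{\ii}(x_t^*,l_2^*)$.

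I do not anticipate a genuine obstacle: the statement is the mirror image of Lemmas~\ref{lem:con1}--\ref{lem:con2}, and the only points requiring care are bookkeeping ones — that $j\in\calj^*$ is what forces the sign $(E_j,\,E_{\overline{J}}+l^*-l_1^*)\geq 0$, and that the hypotheses $l_1^*-E_j\geq l^*_{1,min}$ in~(I) and $x_0^*=l^*_{1,min}$ in~(II) are exactly what keeps every rectangle occurring in the argument inside the ambient quadrant $\phi^{-1}\ii$, so that the successive collapses legitimately compose into a single deformation retraction.
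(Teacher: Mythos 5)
Your proof is correct and is exactly the argument the paper intends: the paper gives no separate proof of Lemma \ref{lem:con3}, stating only ``symmetrically, by similar proof,'' and your write-up is the faithful mirror of Lemmas \ref{lem:con1}--\ref{lem:con2}, using the identity $\chi_k(z+E_j)=\chi_k(z)+1-(E_j,z+l'_{[k]})$ together with $(E_j,E_{\overline{J}}+l^*-l_1^*)\geq 0$ (valid since $j$ lies outside the support of that effective cycle) and then induction along the computation sequence.
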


\bekezdes{\bf Contractions.}\label{ss:3}

First we show the existence of a sequence of cycles
$\{x^*_n\}_{n=0}^t$ with $x^*_0=l_{1,min}^*$ and $x^*_t=0$  which satisfies the assumptions
of Lemma \ref{lem:con3}(II). This follows inductively from the following lemma.
\begin{lemma}\label{lem:con4} For any $x^*$  with $l^*_{1,min}\leq x^*<0$ and supported on $\calj^*$
there exists at least one index
$j\in |x^*|$ such that
\begin{equation}\label{eq:xx}
\chi_k(x({\bf i})+x^*)\geq \chi_k(x({\bf i})+x^*+E_j).\end{equation}
\end{lemma}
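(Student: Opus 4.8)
The plan is to derive the inequality (\ref{eq:xx}) from the minimality of the cycle $x({\bf i})$, that is, from property \ref{lemF1}(c). Set $z:=x({\bf i})+x^*$. Since $x^*$ is supported on $\calj^*$, the cycle $z$ still satisfies $m_j(z)=i_j$ for every distinguished vertex $j\in\ocalj$, so $z$ has property \ref{lemF1}(a); the hypothesis $x^*\geq l^*_{1,min}$ guarantees that $z$ is effective (on $\calj^*$ one has $m_j(z)=m_j(x({\bf i}))+m_j(x^*)\geq 0$, and on $\ocalj$ one has $m_j(z)=i_j\geq 0$); and $x^*<0$ forces $z<x({\bf i})$ with $z\neq x({\bf i})$.

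Next I would invoke that $x({\bf i})$ is the \emph{unique} minimal cycle with properties \ref{lemF1}(a)--(b), established in the proof of \ref{lemF1} via the observation that (a)--(b) is preserved under taking $\min$. Since $z$ satisfies (a) but $z\leq x({\bf i})$, $z\neq x({\bf i})$, the cycle $z$ cannot satisfy (b): there is an index $j\in\calj^*$ with $(x({\bf i})+x^*+l'_{[k]},E_j)>0$, hence $\geq 1$.

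The key remaining point — and the only step that needs real care — is that such a $j$ automatically lies in the support $|x^*|$, which is exactly what makes the lemma usable for the contraction sequence of Lemma \ref{lem:con3}(II) (there one may only move towards $0$ by adding base elements $E_j$ at vertices where $x^*$ is still negative). Indeed, if $j\notin|x^*|$, then $m_j(x^*)=0$, and a support argument gives $(x^*,E_j)=\sum_{i\in|x^*|}m_i(x^*)(E_i,E_j)\leq 0$, because each $m_i(x^*)<0$ and each $(E_i,E_j)\geq 0$ for $i\neq j$; combined with $(x({\bf i})+l'_{[k]},E_j)\leq 0$ from \ref{lemF1}(b), this forces $(x({\bf i})+x^*+l'_{[k]},E_j)\leq 0$, contradicting the previous paragraph. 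Hence $j\in|x^*|$.

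It then remains only to translate $(E_j,x({\bf i})+x^*+l'_{[k]})\geq 1$ into (\ref{eq:xx}) via the adjunction identity $\chi_k(w+E_j)=\chi_k(w)+1-(E_j,w+l'_{[k]})$ used already in the proof of Lemma \ref{lem:con1}: applying it with $w=x({\bf i})+x^*$ gives $\chi_k(x({\bf i})+x^*+E_j)=\chi_k(x({\bf i})+x^*)+1-(E_j,x({\bf i})+x^*+l'_{[k]})\leq\chi_k(x({\bf i})+x^*)$, which is precisely the claimed inequality. So apart from the support step, the argument is a direct application of minimality together with the adjunction formula.
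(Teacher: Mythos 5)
Your proof is correct and is essentially the paper's own argument, merely organized in the contrapositive direction: the paper assumes $(E_j,x({\bf i})+x^*+l'_{[k]})\leq 0$ for all $j\in|x^*|$, extends this to all of $\calj^*$ by the same support argument you use, and contradicts the minimality of $x({\bf i})$, whereas you first extract a violating index $j\in\calj^*$ from minimality and then show it must lie in $|x^*|$. The three key ingredients — minimality \ref{lemF1}(c), the observation that $(E_j,x^*)\leq 0$ and $(E_j,x({\bf i})+l'_{[k]})\leq 0$ for $j\notin|x^*|$, and the adjunction identity — coincide exactly.
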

\begin{proof}
(\ref{eq:xx}) is equivalent to $(E_j,x({\bf i})+l'_{[k]}+x^*)\geq 1$ for some $j\in |x^*|$.
Assume the opposite, that is,  $(E_j,x({\bf i})+l'_{[k]}+x^*)\leq 0$ for every $j\in|x^*|$.
 On the other hand, for  $j\in\calj^*\setminus |x^*|$ one has $(E_j,x^*)\leq 0$ and
 $(E_j,x({\bf i})+l'_{[k]})\leq 0$ by \ref{lemF1}(b). Hence  $(E_j,x({\bf i})+l'_{[k]}+x^*)\leq 0$
 for every $j\in\calj^*$.
 This contradicts the minimality of $x({\bf i})$ in \ref{lemF1}(c).
 \end{proof}

 In particular, Lemma  \ref{lem:con3}(II) applies for $l_1^*=0$ and any $l_2^*\geq 0$ (including $\infty$).

Next, we search for a convenient small cycle $l_2^*$ for which Lemma \ref{lem:con2} applies as well.
First we show that $l^*_2=\infty$ can be replaced by
$x({\bf i}+1_{\overline{I}})-x({\bf i})-E_{\overline{I}}$.
\begin{lemma}\label{lem:i+I}
There exists a sequence as in Lemma \ref{lem:con2} with $x^*_0=
x({\bf i}+1_{\overline{I}})-x({\bf i})-E_{\overline{I}}$.
\end{lemma}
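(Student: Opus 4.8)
I would abbreviate $k = k_r$ and $\lk = l'_{[k]}$, and first rephrase the target. Setting $z_n := x(\vasi) + E_{\overline{I}} + x^*_n$, a sequence $\{x^*_n\}_{n\ge 0}$ as required by Lemma \ref{lem:con2} (with $x^*_0 = x(\vasi+1_{\overline{I}}) - x(\vasi) - E_{\overline{I}}$) is the same datum as a sequence $z_0 = x(\vasi+1_{\overline{I}})$, $z_{n+1} = z_n + E_{j(n)}$ with $j(n)\in\calj^*$, along which $\chi_k$ is non-decreasing and every $\calj^*$-coefficient tends to infinity. Writing $z_n = x(\vasi+1_{\overline{I}}) + w_n$ with $w_0 = 0$ and $w_n \ge 0$ supported on $\calj^*$, and using the increment formula $\chi_k(z + E_j) = \chi_k(z) + 1 - (E_j, z + \lk)$ together with the defining property $(E_j, x(\vasi+1_{\overline{I}}) + \lk) \le 0$ for $j\in\calj^*$ from \ref{lemF1}(b), one gets
\[
\chi_k(z_{n+1}) - \chi_k(z_n) \;=\; 1 - (E_{j(n)}, x(\vasi+1_{\overline{I}}) + \lk) - (E_{j(n)}, w_n) \;\ge\; 1 - (E_{j(n)}, w_n).
\]
Hence the lemma reduces to the purely combinatorial claim: \emph{there is an infinite chain $0 = w_0 < w_1 < \cdots$ supported on $\calj^*$, with $w_{n+1} = w_n + E_{j(n)}$ and $(E_{j(n)}, w_n) \le 1$ at every step, all of whose coefficients tend to $\infty$}; then $x^*_n := (x(\vasi+1_{\overline{I}}) - x(\vasi) - E_{\overline{I}}) + w_n$ works, property \ref{lem:con2}(c) holding because $x^*_0 \ge 0$ (cf.\ \ref{lemF1}).

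To construct such a chain I would use that $G|_{\calj^*}$ is a rational graph: it is a full subgraph of the rational graph obtained from $G$ by making the decorations of the bad vertices $\ocalj$ sufficiently negative, and rationality is inherited by subgraphs (cf.\ \ref{thm:rat}, \ref{def:rat}). Decompose $\calj^*$ into its connected components $\calj^*_1,\dots,\calj^*_c$, each a negative definite rational graph with minimal cycle $Z_\ell = Z_{min}$ of $\calj^*_\ell$, and fix for each $\ell$ a Laufer computation sequence $E_{j_0^{(\ell)}} = y^{(\ell)}_0 < \cdots < y^{(\ell)}_{T_\ell} = Z_\ell$, $y^{(\ell)}_{n+1} = y^{(\ell)}_n + E_{j^{(\ell)}(n)}$, which by Laufer's criterion \ref{LC} satisfies $(E_{j^{(\ell)}(n)}, y^{(\ell)}_n) = 1$ at every step. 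Build $w_*$ by interleaved ``rounds'': for $m = 0,1,2,\dots$ and, within each $m$, for $\ell = 1,\dots,c$, pass from the current cycle to the one obtained by adding a fresh copy of $Z_\ell$ in the fixed Laufer order, i.e.\ add first $E_{j_0^{(\ell)}}$ and then $E_{j^{(\ell)}(0)},\dots,E_{j^{(\ell)}(T_\ell - 1)}$. At each such addition of a vertex of $\calj^*_\ell$ the current cycle has the form $w = mZ_\ell + y^{(\ell)}_n + (\text{a non-negative combination of the } Z_{\ell'},\ \ell'\ne\ell)$, and since distinct components of $G|_{\calj^*}$ are not adjacent this gives $(E_{j^{(\ell)}(n)}, w) = m(E_{j^{(\ell)}(n)}, Z_\ell) + (E_{j^{(\ell)}(n)}, y^{(\ell)}_n) \le 0 + 1 = 1$, using that $Z_{min}$ is anti-nef on its component and the Laufer identity; the initial step $E_{j_0^{(\ell)}}$ is handled identically, with $(E_{j_0^{(\ell)}}, mZ_\ell) \le 0$. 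After the rounds of index $m$ are completed for all components the chain has reached $(m+1)\sum_\ell Z_\ell$, which tends to infinity in every $\calj^*$-coordinate because $Z_\ell \ge \sum_{j\in\calj^*_\ell} E_j > 0$; so \ref{lem:con2}(a)(c) and the bound $(E_{j(n)}, w_n) \le 1$ all hold.

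The reduction in the first paragraph and the inheritance of rationality by subgraphs use only facts already established (\ref{lemF1}(b), the increment formula for $\chi_k$, and \ref{thm:rat}), so I expect them to be routine. The real core --- and the main place where care is needed --- is the verification in the second paragraph that iterating one fixed Laufer sequence of the fundamental cycle, restarted each time from a large multiple $mZ_\ell$, keeps every intersection number at most $1$; this is precisely where the anti-nef property of $Z_{min}$ and the equality $(E_{j^{(\ell)}(n)}, y^{(\ell)}_n) = 1$ of Laufer's criterion combine, and it is also where one must check that the disjointness of the components of $G|_{\calj^*}$ lets the rounds for different components be interleaved without disturbing these bounds.
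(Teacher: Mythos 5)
Your proof is correct, but it takes a genuinely different route from the paper's. The paper argues in two stages: it first chooses a strictly anti--nef cycle $Z^*$ supported on $\calj^*$ (using only negative definiteness) and observes that for $\ell_0$ sufficiently large the sequence $\{\ell Z^*+x^*_n\}_{\ell\geq \ell_0}$ runs from $\ell_0Z^*$ to infinity with $\chi_k$ non--decreasing; it then connects $x({\bf i}+1_{\overline{I}})-x({\bf i})-E_{\overline{I}}$ to this anchor by showing that from any $l^*>0$ supported on $\calj^*$ one can delete some $E_j$, $j\in |l^*|$, without increasing $\chi_k$ at $x({\bf i}+1_{\overline{I}})+l^*$ --- the existence of such a $j$ being proved by contradiction via Artin's criterion $\chi(l^*)>0$ for the rational subgraph on $\calj^*$. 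You instead build the entire infinite chain explicitly, by interleaving Laufer computation sequences for the fundamental cycles $Z_\ell$ of the components of $G|_{\calj^*}$, and you control every increment through the identity $(y^{(\ell)}_n,E_{j^{(\ell)}(n)})=1$ of Laufer's criterion combined with the anti--nefness of $Z_{min}$ and the non--adjacency of distinct components. Both arguments ultimately rest on the rationality of $G|_{\calj^*}$ (Laufer's and Artin's criteria being equivalent), but yours is fully constructive and dispenses with both the ``sufficiently large $\ell_0$'' step and the step--by--step contradiction, at the cost of the bookkeeping for the interleaved rounds --- which your observation $(E_{j^{(\ell)}(n)},mZ_\ell+y^{(\ell)}_n)\leq m\cdot 0+1$ handles correctly. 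The preliminary reduction, via $\chi_k(z+E_j)=\chi_k(z)+1-(E_j,z+l'_{[k]})$ and property \ref{lemF1}(b) for $x({\bf i}+1_{\overline{I}})$, is also sound.
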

\begin{proof} First we show the existence of some $l^*_2$, with all its coefficient very large,
which  can be connected by a computation sequence to $\infty$ with properties
(a)-(b)-(c) of \ref{lem:con2}. For this,
consider the full subgraph supported by $\calj^*$. Since it is negative definite, it supports an
effective cycle $Z^*$ such that $(Z^*,E_j)<0$ for any $j\in \calj^*$. Consider any sequence $\{x^*_n\}_{n=0}^t$,
$x^*_{n+1}=x^*_n+E_{j(n)}$,  such that $x^*_0=0$ and $x^*_t=Z^*$. Then, there exists
$\ell_0\geq 1$ sufficiently large such that   for any $\ell\geq \ell_0$ and $n$ one has
$$\chi_k(x({\bf i})+E_{\overline{I}}+\ell Z^*+x^*_{n+1})
\geq \chi_k(x({\bf i})+E_{\overline{I}}+\ell Z^*+x^*_{n}).$$
Hence the sequence $\{\ell Z+x_n\}_{\ell \geq \ell_0,\, 0\leq n\leq t}$ connects $l_2^*=\ell_0Z^*$ with $\infty$ with the required properties.

Next, we connect $x({\bf i}+1_{\overline{I}})-x({\bf i})-E_{\overline{I}}$ with this $l^*_2$
via a sequence which satisfies (a)-(b)-(c) of Lemma \ref{lem:con2}. Its existence follows from the following statement:

For any $l^*>0$ supported by $\calj^*$ there exists at least one index $j\in|l^*|$ such that
$$\chi_k(x({\bf i}+1_{\overline{I}})+l^*-E_j)\leq \chi_k(x({\bf i}+1_{\overline{I}})+l^*).$$
Indeed, assume the opposite. Then $(E_j,l^*)\geq E_j^2+2$ for any $j\in|l^*|$.
Hence $(E_j,l^*+k_{can})\geq 0$, or $\chi(l^*)\leq 0$, which contradict the rationality of the subgraph
 supported by $\calj^*$.
\end{proof}

Finally,  by Proposition \ref{prop:G}(I) (applied for $\overline{I}=\overline{J}$ and
${\bf s'}={\bf s}({\bf i}, \overline{J})$),
the newly determined `upper' bound $l^*_2=x({\bf i}+1_{\overline{I}})-x({\bf i})-E_{\overline{I}}$
can be pushed down further to its support $\cals\ii$.
Hence \ref{prop:G}(I), \ref{lem:i+I} and \ref{lem:con4} imply the following.

\begin{corollary}\label{cor:con}
There exists a deformation contraction of $\phi^{-1}\ii$ to $R_{\ii}(0, E_{\cals\ii})$ along which $\chi_k$ is
non--increasing.  Moreover, its restriction induces a deformation
 retract from  $(\phi|_{S_N})^{-1}\ii$ to $S_N\cap R_{\ii}(0, E_{\cals\ii}  )$. Restricting further,
 it gives a deformation
 retract from  $\phi^*_N\ii$ to  $\Phi^*_N\ii$, where $\Phi^*_N\ii$ is the product of the cube $\ii$ with
  $$\psi(\phi^*_N\ii)\cap \{l^*\,:\, 0\leq l^*-\psi(x({\bf i}))\leq  E_{\cals\ii}\}.$$
\end{corollary}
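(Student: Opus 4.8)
The plan is to realize the deformation contraction asserted in the corollary as a concatenation of three cubical contractions, each acting only in the $\calj^*$--coordinate directions and each non--increasing for $\chi_k$, which move the infinite fibre rectangle $\phi^{-1}\ii=R_{\ii}(l^*_{1,min},\infty)$ step by step down to the bounded rectangle $R_{\ii}(0,E_{\cals\ii})$; all the combinatorial input for the individual steps is already available from Lemma \ref{lem:con4}, Lemma \ref{lem:i+I} and Proposition \ref{prop:G}, so the remaining work is to assemble them and then to track the behaviour of the contraction on the sublevel sets $S_N$.

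For the first step I would raise the lower bound: applying Lemma \ref{lem:con4} inductively yields a finite chain $x^*_0=l^*_{1,min},\ldots,x^*_t=0$ with $x^*_{n+1}=x^*_n+E_{j(n)}$, $j(n)\in\calj^*$, and $\chi_k(x({\bf i})+x^*_n)\geq\chi_k(x({\bf i})+x^*_{n+1})$, whence Lemma \ref{lem:con3}(II) (with $l_2^*=\infty$) gives a $\chi_k$--non--increasing deformation retraction of $R_{\ii}(l^*_{1,min},\infty)$ onto $R_{\ii}(0,\infty)$. For the second step I would lower the upper bound to $l_2^*:=x({\bf i}+1_{\overline{I}})-x({\bf i})-E_{\overline{I}}$: Lemma \ref{lem:i+I} provides a sequence starting at $l_2^*$ and satisfying hypotheses (a)--(c) of Lemma \ref{lem:con2}, so Lemma \ref{lem:con2} contracts $R_{\ii}(0,\infty)$ onto $R_{\ii}(0,l_2^*)$, again with $\chi_k$ non--increasing. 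For the third step, note first that $l_2^*\geq 0$ by \ref{lemF1}(iii) and that its support is $\cals\ii$, so $l_2^*\geq E_{\cals\ii}$; then Proposition \ref{prop:G}(I), used with $\overline{J}=\overline{I}$ and $\cals'=\cals\ii$, gives an ascending chain $x_0=x({\bf i})+E_{\overline{I}}+E_{\cals\ii},\ldots,x_t=x({\bf i}+1_{\overline{I}})$ along which $\chi_k$ is constant (here $t_s=0$, since $x_0$ already has the shape of \ref{prop:G}(I)(a) and the chain is strictly increasing). Setting $z_n:=x_n-x({\bf i})-E_{\overline{I}}$ one gets an ascending chain from $z_0=E_{\cals\ii}$ to $z_t=l_2^*$ with $\chi_k(x({\bf i})+E_{\overline{I}}+z_{n+1})=\chi_k(x({\bf i})+E_{\overline{I}}+z_n)$, and running Lemma \ref{lem:con1} backwards along it (erasing the last generator from the upper bound at each step) contracts $R_{\ii}(0,l_2^*)$ onto $R_{\ii}(0,E_{\cals\ii})$ with $\chi_k$ non--increasing. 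Concatenating the three homotopies produces the deformation contraction of $\phi^{-1}\ii$ onto $R_{\ii}(0,E_{\cals\ii})$ along which $\chi_k$ is non--increasing.

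Finally, for the two restricted statements I would use that every contraction above is a cubical map which fixes the $\ocalj$--coordinates, hence keeps $\phi(l,I)$ unchanged — in particular preserves both conditions $\phi(l,I)\subseteq\ii$ and $\phi(l,I)=\ii$ — and never raises $\chi_k$ on a lattice point, hence never raises the weight $w$ of a cube, so each $S_N$ is mapped into itself. Consequently the composite contraction restricts to a deformation retract of $(\phi|_{S_N})^{-1}\ii$ onto $S_N\cap R_{\ii}(0,E_{\cals\ii})$ and of $\phi^*_N\ii$ onto $\phi^*_N\ii\cap R_{\ii}(0,E_{\cals\ii})=\Phi^*_N\ii$, and the product description of $\Phi^*_N\ii$ drops out of $\phi^*_N\ii=\ii\times\psi(\phi^*_N\ii)$ together with $R_{\ii}(0,E_{\cals\ii})=\ii\times\{\,l^*:0\leq l^*-\psi(x({\bf i}))\leq E_{\cals\ii}\,\}$. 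The only point I expect to require care is the mutual coherence of the three contractions on $\phi^{-1}\ii$, $(\phi|_{S_N})^{-1}\ii$ and $\phi^*_N\ii$ simultaneously, i.e. that each later contraction leaves the rectangle collapsed by the previous ones pointwise fixed and pushes no cube out of $S_N$; this is exactly what the $\chi_k$--monotonicity and the ``acts only in the $\calj^*$--directions'' property guarantee, and once it is in hand the corollary is a routine assembly of the three lemmas cited above.
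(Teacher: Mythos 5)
Your proposal is correct and follows essentially the same route as the paper: the paper's (very terse) argument is precisely the concatenation of the lower-bound contraction from Lemma \ref{lem:con4} via Lemma \ref{lem:con3}(II), the upper-bound contraction from Lemma \ref{lem:i+I} via Lemma \ref{lem:con2}, and the final push-down to $E_{\cals\ii}$ via Proposition \ref{prop:G}(I) applied with $\overline{J}=\overline{I}$ and $\cals'=\cals\ii$, with the restrictions to $(\phi|_{S_N})^{-1}\ii$ and $\phi^*_N\ii$ handled exactly as you do, by noting that each step acts only in the $\calj^*$--directions and never increases $\chi_k$.
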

Note that this last space  $\Phi^*_N\ii$ is now rather `small': it is contained in the cube
$(x({\bf i}),\overline{I}\cup \cals\ii)$.
Nevertheless, the $N$--filtration of this cube can be rather complicated!

The statement of the above corollary
means that if  $\Phi^*_N\ii$ is empty if and only if  $\phi^*_N\ii$ is empty,
and when  they are not empty then they
have the same homotopy type. Therefore, via (\ref{eq:leray3}), we need to show that
$$\Phi^*_N\ii \ \ \mbox{ {\it is non--empty and contractible}.}$$

\bekezdes{\bf The non--emptiness of  $\Phi^*_N\ii$.}\labelpar{ss:nonempty}
Recall that we fixed an integer $N$ and a   cube
$({\bf i},\overline{I})$ which belongs to $\overline{S}_N$. By Definition \ref{bek:331} and
Proposition~\ref{prop:G}(I)(b) this reads as
\begin{equation}\label{eq:NN}
 \chi_k(x({\bf i}+1_{\overline{J}}))=
  \chi_k(x({\bf i})+E_{\overline{J}\cup \cals({\bf i},\overline{J})})
 \leq N \ \
 \mbox{for every} \ \  \overline{J}\subseteq\overline{I}.
\end{equation}
The non-emptiness follows from the following statement.
 \begin{proposition}\label{th:nonempty} For any fixed cube $({\bf i},\overline{I})\in \overline{S}_N$
 there exists  a cycle in $L$ of the form $x({\bf i})+E_{\widetilde{\cals}\ii}$ such that
 $\widetilde{\cals}\ii\subseteq \cals\ii$ and
 $(x({\bf i})+E_{\widetilde{\cals}\ii},\overline{I}) \subseteq \Phi_N^{*}\ii$; that is
 \begin{equation}\label{eq:NN2}
 \chi_k(x({\bf i})+E_{\overline{J}\cup \widetilde{\cals}\ii})
 \leq N \ \
 \mbox{for every} \ \  \overline{J}\subseteq\overline{I}.
\end{equation}
 \end{proposition}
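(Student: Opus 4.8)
The plan is to construct the set $\widetilde{\cals}\ii$ greedily, as the union of the `small' supports $\cals(\bt)=\cals(\bt,\overline{J})$ over all $\overline{J}\subseteq\overline{I}$, and then to verify the inequality (\ref{eq:NN2}) for each $\overline{J}$ by pushing around a generalized Laufer computation sequence. First I would set
$$
\widetilde{\cals}\ii:=\bigcup_{\overline{J}\subseteq\overline{I}}\cals(\bt,\overline{J})\subseteq\calj^*;
$$
by definition each $\cals(\bt,\overline{J})\subseteq\cals(\bt,\overline{I})=\cals\ii$ (monotonicity of the supports, which follows from \ref{lemF1}(iii) together with the characterization of $\cals(\bt,\overline{J})$ as the support of $x(\bt+1_{\overline{J}})-x(\bt)-E_{\overline{J}}$), so $\widetilde{\cals}\ii\subseteq\cals\ii$ as required. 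It remains to check (\ref{eq:NN2}) for this choice.

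The key step is: fix $\overline{J}\subseteq\overline{I}$ and estimate $\chi_k(x(\bt)+E_{\overline{J}\cup\widetilde{\cals}\ii})$. Write $\widetilde{\cals}\ii=\cals(\bt,\overline{J})\cup\Delta$, where $\Delta:=\widetilde{\cals}\ii\setminus\cals(\bt,\overline{J})$. By Proposition~\ref{prop:G}(I), applied with $\cals'=\cals(\bt,\overline{J})$, there is a generalized Laufer sequence $\{x_n\}_{n=0}^{t}$ connecting $x(\bt)+E_{\overline{J}\cup\cals(\bt,\overline{J})}$ to $x(\bt+1_{\overline{J}})$, and $\chi_k$ is non-increasing along it, with final value $\chi_k(x(\bt+1_{\overline{J}}))\leq N$ by (\ref{eq:NN}). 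Now I add $E_\Delta$ to the whole sequence, exactly as in the proof of Proposition~\ref{prop:G}(II): since each $j(n)\notin\Delta$, the inequality $1-(E_{j(n)},x_n+l'_{[k]})\leq 0$ still gives $1-(E_{j(n)},x_n+E_\Delta+l'_{[k]})\leq 0$, so $\chi_k$ is non-increasing along $\{x_n+E_\Delta\}_n$ too; hence
$$
\chi_k(x(\bt)+E_{\overline{J}\cup\widetilde{\cals}\ii})=\chi_k(x_0+E_\Delta)\geq\chi_k(x_t+E_\Delta)=\chi_k(x(\bt+1_{\overline{J}})+E_\Delta).
$$
Finally I must bound $\chi_k(x(\bt+1_{\overline{J}})+E_\Delta)$ from above by $N$. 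Here I would use the identity
$$
\chi_k(x(\bt+1_{\overline{J}})+E_\Delta)=\chi_k(x(\bt+1_{\overline{J}}))+\chi(E_\Delta)-(E_\Delta,x(\bt+1_{\overline{J}})+l'_{[k]}),
$$
so the issue is whether the last two terms can push the value above $\chi_k(x(\bt+1_{\overline{J}}))\leq N$. This is the main obstacle: a priori $\chi(E_\Delta)>0$ while $-(E_\Delta,x(\bt+1_{\overline{J}})+l'_{[k]})$ could be negative of larger absolute value, or the reverse. To control it I would argue componentwise, showing that each vertex $j\in\Delta$ actually lies in some $\cals(\bt,\overline{J}')$ with $\overline{J}'\subseteq\overline{J}$ up to reordering the bad vertices, and use \ref{lemF1}(iii) to compare $x(\bt+1_{\overline{J}})$ with $x(\bt+1_{\overline{J}'})$; more precisely, the cycle $x(\bt+1_{\overline{J}})+E_\Delta$ dominates $x(\bt)+E_{\overline{J}\cup\cals(\bt,\overline{J}')}$ on the relevant components, and since $j\in\cals(\bt,\overline{J}')$ one has $(E_j,x(\bt+1_{\overline{J}'})+l'_{[k]})>0$, equivalently $(E_j,x(\bt)+l'_{[k]}+\cdots)\geq 1$ after a support comparison. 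Feeding these inequalities into the displayed identity will bound $\chi_k(x(\bt+1_{\overline{J}})+E_\Delta)$ by a value of the form $\chi_k(x(\bt+1_{\overline{J}'}))$ for a suitable $\overline{J}'\subseteq\overline{I}$, which by (\ref{eq:NN}) is $\leq N$. Combining with the chain of inequalities above yields (\ref{eq:NN2}), completing the non-emptiness. I expect the delicate book-keeping to be precisely in this last comparison, sorting out which $j(n)$'s and which base elements of $\Delta$ interact across the various connected components of the reduced cycles $x(\bt+1_{\overline{J}})-x(\bt)-E_{\overline{J}}$, exactly the kind of rationality-of-subgraph argument via \ref{LC} that powers Proposition~\ref{prop:G}.
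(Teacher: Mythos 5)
There is a genuine gap, in fact two.

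First, the candidate itself is wrong. Since $\cals({\bf i},\overline{J})\subseteq\cals\ii$ for every $\overline{J}\subseteq\overline{I}$ (the monotonicity you correctly invoke), your union $\bigcup_{\overline{J}\subseteq\overline{I}}\cals({\bf i},\overline{J})$ is nothing but $\cals\ii$ itself, so you are proposing to take the \emph{full} support $\widetilde{\cals}\ii=\cals\ii$. This fails in general: already for $\overline{J}=\emptyset$ one has
$\chi_k(x({\bf i})+E_{\cals\ii})-\chi_k(x({\bf i}))=\chi(E_{\cals\ii})-(E_{\cals\ii},x({\bf i})+l'_{[k]})\geq \#(E_{\cals\ii})>0$,
and nothing forces the left-hand side to stay $\leq N$. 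Concretely, in the reformulation of \ref{bek:sigma} take one bad vertex $j_1$ with $\sigma_{j_1}\leq -1$ adjacent to a single vertex $v\in\calj^*$ with $\sigma_v=1$: then $\cals(\{j_1\})=\{v\}$, $\Delta N(G)=0$ is realized only by $\overline{J}=\emptyset$, yet $\#(E_{\{v\}})+(\sigma_v-1)=1>0=\Delta N(G)$, so $\widetilde{\cals}=\{v\}=\cals\ii$ violates (\ref{eq:red3}) while $\widetilde{\cals}=\emptyset$ works. The correct $\widetilde{\cals}\ii$ is in general a carefully chosen \emph{proper} subset of $\cals\ii$, balanced so that $\sigma_j(\widetilde{\cals})\geq 0$ for $j$ in a maximizing $\widetilde{J}$ and $\sigma_j(\widetilde{\cals})\leq 0$ outside it (these are forced, cf. Lemma \ref{lem:tildeS}(c)); producing it is exactly what requires the reformulation, the three reductions and the long induction of \ref{ss:nonempty} — the remark there that the statement is nontrivial even when all $\sigma_j=1$ is a warning that no one-line candidate suffices.

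Second, even granting a candidate, your verification scheme cannot close. The Laufer-sequence comparison you borrow from Proposition \ref{prop:G}(II) yields
$\chi_k(x({\bf i})+E_{\overline{J}\cup\widetilde{\cals}\ii})\geq\chi_k(x({\bf i}+1_{\overline{J}})+E_{\Delta})$,
i.e.\ a \emph{lower} bound on the quantity you must bound from \emph{above} by $N$; an upper bound on the right-hand side is therefore useless. In Proposition \ref{prop:G}(II) this same inequality is effective only because it is played against an assumed equality (\ref{eq:sss}) to force $\chi_k(x({\bf i}+1_{\overline{J}})+E_{\Delta\cals})=\chi_k(x({\bf i}+1_{\overline{J}}))$ and derive a contradiction from $\chi(E_{\Delta\cals})>0$; it never produces upper bounds. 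The final "componentwise" comparison you sketch is precisely the missing content of the proof, and as stated it has no mechanism for reversing the direction of the inequality.
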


\begin{proof}
The proof is long, it fills all this Subsection \ref{ss:nonempty}.
 It is an induction over the cardinality of $\calj$, respectively of $\overline{I}$.
At start we reformulate it by keeping only the necessary combinatorial data, and we
also perform three  reductions to simplify the involved combinatorial complexity.
We will also write $\widetilde{\cals}:=\widetilde{\cals}\ii$ for the wished cycle.

\bekezdes\label{bek:egy}  {\bf Starting the reformulation.} Define (cf. Proposition \ref{prop:G}(I))
\begin{equation}\label{eq:NN3}
N(G):= \max_{\overline{J}\subseteq \overline{I}}\chi_k(x({\bf i}+1_{\overline{J}}))=
 \max_{\overline{J}\subseteq \overline{I}} \chi_k(x({\bf i})+E_{\overline{J}\cup \cals({\bf i},\overline{J})}).
\end{equation}
$N(G)$ is the smallest integer $N$ for which (\ref{eq:NN}) is valid; hence it is enough to prove
Theorem \ref{th:nonempty}  only for $N=N(G)$. Note that $N(G)$ depends on $\ii$, though in its notation this is
not emphasized.

In fact, even the weight $\chi_{k}(x({\bf i}))$ ---
and partly the cycle $x({\bf i})$, cf. \ref{bek:sigma}, ---
are irrelevant in the sense that it is enough to treat a relative version of the statement.
Indeed, we can consider only
the value $\Delta N(G):=N(G)-\chi_{k}(x({\bf i}))$, which equals (use the last term of (\ref{eq:NN3})):
\begin{equation}\label{eq:red1}
\Delta N(G)=\max_{\overline{J}\subseteq \overline{I}}\, \Big(
 \chi(E_{\overline{J}\cup \cals({\bf i},\overline{J})})-
(E_{\overline{J}\cup \cals({\bf i},\overline{J})}\,,\, x({\bf i})+l'_{[k]})\Big).\end{equation}
 Then, cf. (\ref{eq:NN2}),  we have to find
$\widetilde{\cals}\subseteq \cals\ii$, such that  for any $\overline{J}\subseteq \overline{I}$ one has
  \begin{equation}\label{eq:NN4}
  \chi(E_{\overline{J}\cup \widetilde{\cals}})-
(E_{\overline{J}\cup \widetilde{\cals}}\,,\, x({\bf i})+l'_{[k]})\leq \Delta N(G).
\end{equation}
 Note also that for a reduced cycle $Z$ of $G$ (as $E_{\overline{J}\cup \cals({\bf i},\overline{J})}$
 or $E_{\overline{J}\cup \widetilde{\cals}}$),  $\chi(Z)$ is the number of components of $Z$,
 which sometimes will also be denoted by $\#(Z)$.

\medskip

It is convenient to set the following notation.
For any vertex $j$ and $J\subseteq \calj$ set
$$\sigma_j:=1-(E_j,x({\bf i})+l'_{[k]}) \ \ \mbox{and} \ \
\sigma_j(J):=\sigma_j -(E_j,E_J).$$
By definition of $x({\bf i})$,  one has  $\sigma_j>0$ for any $j\in\scalj$.
Note also  that the information needed in (\ref{eq:red1}) and (\ref{eq:NN4}) about $x({\bf i})+l'_{[k]}$ can
be totally encoded  by the integers $\sigma_j$. This permits to reformulate the statement of the paragraph
\ref{bek:egy} into the following version:

\bekezdes\label{bek:sigma}{\bf Final Reformulation.}
Let $G$ be a connected graph
(e.g. a plumbing graph whose Euler decorations are deleted),
with $\calj=\ocalj\sqcup\scalj$, such that any two vertices of $\ocalj$ are not adjacent, and
with additional decorations
$\{\sigma_j\}_{j\in\calj}$ where $\sigma_j>0$ for $j\in\scalj$.
Fix $\overline{I}\subseteq \ocalj$.  For each $\overline{J}\subseteq \overline{I}$ we
define $\cals(\overline{J})$ as the minimal support in $\scalj$ such that for any $j\in \scalj\setminus \cals(\overline{J})$ one has $\sigma_j(\overline{J}
\cup\cals(\overline{J}))>0$. [Clearly, $\cals(\overline{J})$ corresponds to $\cals({\bf i}, \overline{J})$
in the original version, see also  \ref{prop:G}.]

The `modified' Laufer algorithm to find $\cals(\overline{J})$ (transcribed in the language
of $\sigma_j$'s) is the following. We construct the sequence of supports
$\{s_n\}_{n=0}^t$ by the next principle: $s_0=\emptyset$, and  if $s_n$ is already constructed and there exists some
$j(n)\in \scalj\setminus s_n$ such that
\begin{equation}\label{eq:degger}
\sigma_{j(n)}(\overline{J}\cup s_n)
=\sigma_{j(n)}-(E_{j(n)},E_{\overline{J}\cup s_n})
\leq 0\end{equation}
then take $s_{n+1}:=s_n\cup j(n)$; otherwise stop,
and set $t=n$. [This again follows from the fact that $(E_j, x({\bf i})+E_{\overline{J}\cup s_n}+l'_{[k]})>0$
if and only if  $\sigma_j(\overline{J}\cup s_n)\leq 0$.]
Note that $\cals(\emptyset)=\emptyset$.

Then  the statements  form  \ref{bek:egy} (hence what we need to show) read as follows.


For any $\overline{J}\subseteq \overline{I}$ set
\begin{equation}\label{eq:red2}
\Delta (\overline{J};G) :=
 \#(E_{\overline{J}\cup \cals(\overline{J})})+ \sum_{j\in
\overline{J}\cup \cals(\overline{J})}( \sigma_j-1), \ \  \mbox{and} \ \ \
\Delta N(G)=\max_{\overline{J}\subseteq \overline{I}}\,
\Delta (\overline{J};G).\end{equation}
Then there exists  $\widetilde{\cals}\subseteq \cals(\overline{I})$ which for any $\overline{J}\subseteq \overline{I}$
satisfies
\begin{equation}\label{eq:red3}
 \#(E_{\overline{J}\cup \widetilde{\cals}})+ \sum_{j\in
\overline{J}\cup \widetilde{\cals}}( \sigma_j-1)\leq \Delta N(G).\end{equation}

\medskip

Before we formulate the reductions, we list some additional {\it properties of this setup}.

\bekezdes\label{bek:s_n}
\ {\bf (P1)} \
We analyze how the numerical invariants are modified along the computation sequence
$\{s_n\}_{n=0}^t$ of \ref{bek:sigma}.
Note that if (\ref{eq:degger}) occurs, since $\sigma_{j(n)}>0$, $j(n)$ should be adjacent to $\overline{J}\cup s_n$.
If it is adjacent to only one vertex of $\overline{J}\cup s_n$, then necessarily $\sigma_{j(n)}=1$.
Furthermore,  in any situation,  $\#(E_{\overline{J}\cup s_n})$ is decreasing by
$(E_{j(n)}, E_{\overline{J}\cup s_n})-1$. Therefore, the sequence
$a_n(\overline{J}):=\#(E_{\overline{J}\cup s_n})+
\sum_{j\in \overline{J}\cup s_n}(\sigma_j-1)$ is modified during this step by
$$a_{n+1}(\overline{J})-a_{n}(\overline{J})=
\sigma_{j(n)}-(E_{j(n)}, E_{\overline{J}\cup s_n})\leq 0.$$

\medskip

\noindent {\bf (P2)} \
For any $\overline{J}\subseteq \overline{I}$ and vertex  $j\in \overline{I}\setminus \overline{J}$ one has
\begin{equation*}\label{eq:j_0}
\Delta(\overline{J}\cup j;G)=\Delta(\overline{J};G)+
\sigma_{j}- (E_j,E_{\cals(\overline{J})}).
\end{equation*}
The proof runs as follows. Let $\{s_n\}_{n=0}^t$ be the computation sequence for
$\cals(\overline{J})$. It can be considered as the first part of a sequence for $\cals(\overline{J}\cup j)$
too; let $\{s_n\}_{n=t+1}^{t'}$ be its continuation for $\cals(\overline{J}\cup j)$.
The coefficients $a_n(\overline{J})$ and $a_n(\overline{J}\cup j)$ for $n\leq t$ can be compared. Indeed,
$a_0(\overline{J}\cup j)=a_0(\overline{J})+\sigma_j$, and, similarly as in (P1),
$a_{t}(\overline{J}\cup j)=a_t(\overline{J})+\sigma_j-(E_j,E_{\cals(\overline{J})})$,
which is
the right hand side of the above identity (since $a_t(\overline{J})=\Delta (\overline{J};G)$).

   Next, we show that $a_n(\overline{J}\cup j)$ is constant for any further value $n\geq t$.
First take  $n=t$. Then $\sigma_{j(t)}-(E_{j(t)}, E_{\overline{J}\cup \cals(\overline{J})})>0$
(since $\cals(\overline{J})$ is completed), but
$\sigma_{j(t)}-(E_{j(t)}, E_{\overline{J}\cup \cals(\overline{J})\cup j})\leq 0$
(since $\cals(\overline{J}\cup j)$ is not completed). Hence $(E_j,E_{j(t)})=1$ and (using (P1) too)
$a_{t+1}(\overline{J}\cup j)-a_t(\overline{J}\cup j)=
\sigma_{j(t)}-(E_{j(t)}, E_{\overline{J}\cup \cals(\overline{J})\cup j})= 0$.

In general, set  $s^j_n:=s_n\setminus \cals(\overline{J})$, e.g. $s_t^j=\emptyset$.
 At every step,
by induction, $E_{j\cup s^j_n}$is connected, hence $(E_{j(n)},  E_{j\cup s^j_n})$ can
be at most one (since the graph contains no loops). Hence,
$\sigma_{j(n)}-(E_{j(n)}, E_{\overline{J}\cup \cals(\overline{J})})>0$,
and $\sigma_{j(n)}-(E_{j(n)}, E_{\overline{J}\cup \cals(\overline{J})\cup s^j_n\cup j})\leq 0$ imply
 $(E_{j(n)},  E_{j\cup s^j_n})=1$ and $a_{n+1}(\overline{J}\cup j)=a_n(\overline{J}\cup j)$.

\medskip

\noindent {\bf (P3)}
Fix a vertex $\overline{j}\in\overline{I}$ with $\sigma_{\overline{j}}\geq 1$,  and
assume that  {\it for all  realizations} of  $\Delta N(G) $  as $\Delta(\overline{J},G)$
(as in (\ref{eq:red2})) one has  $\overline{J}\ni \overline{j}$.
Let $G_{-1}$ be the graph obtained from $G$ by replacing the decoration $\sigma_{\overline{j}}$ by
$\sigma_{\overline{j}}-1$. We claim that
\begin{equation}\label{eq:-1}
\Delta N(G_{-1})=\Delta N(G)-1.
\end{equation}
Indeed,  since $\{\sigma_j\}_{j\in\calj^*}$ is unmodified,
the support $\cals(\overline{J})$
for any $\overline{J}$  is the same determined in $G_{-1}$ or in $G$.
If $\overline{J}\not \ni \overline{j}$ then $\Delta(\overline{J},G_{-1})=\Delta (\overline{J},G)$ by (\ref{eq:red2}),
hence $\Delta (\overline{J},G_{-1}) <\Delta N(G)$.
If $\overline{J}\ni \overline{j}$ then
$\Delta (\overline{J},G_{-1})=\Delta (\overline{J},G)-1$ by the same
(\ref{eq:red2}). Since one such $\overline{J}$ realizes $\Delta N(G)$, the claim follows.

 \bekezdes\label{bek:ibar} {\bf First Reduction: $\overline{I}=\ocalj$.} \
Consider $\ocalj\setminus \overline{I}=\overline{I}^c$ and the graph $G\setminus \overline{I}^c$ obtained from the
original graph $G$ by deleting the vertices $\overline{I}^c$ and adjacent edges.
The connected components of  $G\setminus \overline{I}^c$
do not interact  from the point of view of the statement of the above theorem. Indeed, the Laufer algorithm does not
propagate along the bad vertices  $\overline{I}^c$, and it is also enough to find supports  $\widetilde{\cals}$ for each component independently. Hence, {\it we may assume that $\overline{I}=\ocalj$}.

\bekezdes
{\bf Second Reduction: $\sigma_j>0$ for any $j$.} \
Consider the situation from \ref{bek:sigma} with $\overline{I}=\ocalj$, cf. \ref{bek:ibar}.
Assume that $\sigma_j\leq 0$ for some $j\in \overline{I}=\ocalj$, and consider the graph $G\setminus j$ obtained from
$G$ by deleting the vertex $j$ and its adjacent edges. Note the following facts:

$\bullet$ \ The maximum $\Delta N(G)$ in (\ref{eq:red2})
can be realized by a subset $\overline{J}$ which does not contain $j$.
In fact, for any $\overline{J}$ with $j\not\in\overline{J}$ one has
$\Delta(\overline{J}\cup j;G)\leq \Delta(\overline{J};G)$. Indeed, using the notations from \ref{bek:s_n},
$a_0(\overline{J}\cup j)\leq a_0(\overline{J})$; the sequence $s_n$ associated with
 $\overline {J}$ is good as the beginning of the sequence of $\overline{J}\cup j$, and during this inductive steps
 $a_n(\overline{J}\cup j)$ drops more than $a_n(\overline{J})$; and finally, if the sequence of
 $\overline{J}\cup j$ is longer, then its $a_{n}$--values decrease
 even more (cf. \ref{bek:s_n}).

$\bullet$ \  All the supports of type
 $\cals(\overline{J})$ definitely are included  in $G\setminus j$ (since are subsets of
${\mathcal J}^*$).

$\bullet$ \ If we find for each component of $G\setminus j$ some $\widetilde{\cals}$ satisfying the statements of the theorem for that component, then their union solves the problem for $G$ as well.

Therefore, having $G$ with some $\sigma_j\leq 0$, we can delete $j$ and continue to search for $\widetilde{\cals}$
for $G\setminus j$: that support will work for $G$ as well.

If we delete all vertices with
$\sigma_j\leq 0$ ($j\in \overline{I}$) then
 we arrive to a situation when $\sigma_j>0$ for any $j\in\overline{I}$, hence,
a posteriori, $\sigma_j>0$ for any $j\in\calj$.

\medskip

{\it Note that the wished reformulated statement from \ref{bek:sigma}, even for all $\sigma_j=1$, when the problem depends purely on the shape of the graph, is far to be trivial.}

\bekezdes\label{bek:G^-} {\bf Third Reduction: $G=G^-$}. \ Assume $\overline{I}=\ocalj$,
 cf. \ref{bek:ibar}.
 Let $G^-$ be the minimal connected subgraph of $G$ generated  by the vertices $\overline{I}$.
Here the vertices $\calj(G^-)$ have an induced  disjoint decomposition into $\ocalj(G^-)=\ocalj$ and
$\scalj(G^-)=\calj(G^-)\cap \scalj$.
Moreover, each connected component of $G\setminus G^-$ is glued to $G^-$
via a unique $j\in \overline{I}$.

We claim that a solution $\widetilde{\cals}$ for $G^-$ provides a solution for $G$ too.
Indeed, for any $\overline{J}\subseteq \overline{I}$, the supports  $\cals_G(\overline{J})$ and
$\cals_{G^-}(\overline{J})$ generated in $G$, respectively in $G^-$ satisfy the following.

 $\overline{J}\cup\cals_G(\overline{J})$
can be obtained from $\overline{J}\cup \cals_{G^-}(\overline{J})$ by gluing some subtrees of
$G\setminus G^-$ along some elements of $\overline{J}$. These subtrees are maximal among
those connected subgraphs of $G$ (supported in $\calj^*\setminus \calj(G^-)$)
with all $\sigma_j=1$ and adjacent to $G^-$. In particular,
 $\overline{J}\cup \cals_{G^-}(\overline{J})\subseteq \overline{J}\cup\cals_G(\overline{J})$,
and their topological realizations are homotopy equivalent;
$\sigma_j=1$ for any $j\in  \cals_{G}(\overline{J})\setminus  \cals_{G^-}(\overline{J})$;
and the integers $\#E_{\overline{J}\cup \cals(\overline{J})}$
computed for $G$ and $G^-$ are the same.

Therefore,  $\Delta N(G)=\Delta N(G^-)$, and
a solution $\widetilde{\cals}$ for $G^-$ is a solution for $G$ too.

Hence, {\it we can assume that $G=G^-$}.
\bigskip

This ends the possible reductions/preparations  and we start the inductive argument.

\bekezdes {\bf The induction.}
The proof is based on inductive argument over $\sigma_j$-- decorated  graphs
(with  $\overline{I}=\ocalj$, $\sigma_j>0$ and $G=G^-$), where we will consider subgraphs (with
induced decorations $\sigma_j$), and eventually we will decrease the decorations $\{\sigma_j\}_{j\in\ocalj}$.

If $\overline{I}$ is empty then $\Delta N(G)=0$;  if
$\overline{I}$  contains exactly one element $j_0$,  then by (\ref{bek:G^-}) $G=\{j_0\}$ and
by (\ref{eq:red2}) $\Delta N(G)=\sigma_{j_0}$. In both cases
$\widetilde{\cals}=\emptyset$ answers the problem.
\bekezdes
The inductive step is based on the following picture.
Recall that $G$ agrees with the smallest connected subgraph generated by
$\ocalj$. Let $j_0\in\ocalj$ be one of its end--vertices (that is, a vertex which has only one adjacent vertex
in $G$). Denote that connected component of $G\setminus \ocalj$ which is adjacent to $j_0$ by $G^*_0$.

If $G\setminus \ocalj=G^*_0$ then all the vertices from $\ocalj$ are adjacent to $G^*_0$ and
$\ocalj=\overline{I}$ is {\it exactly}  the set of end--vertices of $G$. Then one verifies
 (use \ref{bek:s_n}(P2)) that

$\bullet$ \   $\Delta(\overline{J};G) $ is increasing function in $\overline{J}$, hence
  $\Delta N(G)=\Delta(\overline{I},G)$,  and

$\bullet$ \   $\#(E_{\overline{J}\cup \cals(\overline{I})})=
\#(E_{\overline{I}\cup \cals(\overline{I})})$, hence (\ref{eq:red3}) holds for $\widetilde{\cals}=
\cals(\overline{I})$.

\medskip

Next, assume that $G\setminus \ocalj\not=G^*_0$. We may also assume (by a good choice of
$j_0$) that there is {\it only one} vertex $\overline{j}$ of $\ocalj$ which is simultaneously adjacent to
$G^*_0$ and to some other component of  $G\setminus \ocalj$. Let $\{j_0,j_1,\ldots, j_k,\overline{j}\}$
be the elements of $\ocalj$ which are adjacent to $G^*_0$. Then $j_0,j_1,\ldots, j_k$ are end--vertices of
$G$.
%
Let $G'$ be  obtained from $G$ be deleting $G^*_0$, $\{j_0,j_1,\ldots, j_k\}$
and all their adjacent edges.
Figure 3.2 is the schematic picture of $G$, where the vertices from $\scalj$ are not emphasized.

\begin{figure}[h!]
\begin{picture}(390,100)(10,-10)
\put(30,10){\framebox(65,30)}
\put(80,35){\line(2,-1){20}}
\put(80,15){\line(2,1){20}}
\put(83,28){\makebox(0,0){$\vdots$}}\put(190,45){\makebox(0,0){$\ldots$}}
\put(100,25){\circle*{4}}
\put(100,25){\line(1,0){200}}
 \put(300,25){\circle*{4}}
\put(150,25){\line(0,1){20}}
\put(150,45){\line(1,1){20}}
\put(150,45){\line(-1,1){20}}
\put(170,65){\circle*{4}} \put(130,65){\circle*{4}}
\put(250,25){\line(0,1){40}} \put(250,25){\line(-1,1){40}}
 \put(250,45){\line(-1,1){20}}
 \put(250,65){\circle*{4}} \put(230,65){\circle*{4}} \put(210,65){\circle*{4}}
\put(100,33){\makebox(0,0){\tiny{$\overline{j}$}}}
\put(300,33){\makebox(0,0){\tiny{$j_0$}}}
\put(130,72){\makebox(0,0){\tiny{$j_k$}}}
\put(250,72){\makebox(0,0){\tiny{$j_1$}}}
\put(230,72){\makebox(0,0){\tiny{$j_2$}}}
\put(190,72){\makebox(0,0){$\ldots$}}
\put(285,50){\makebox(0,0){\small{$G^*_0$}}}
\put(35,50){\makebox(0,0){\small{${G}'$}}}
\put(340,25){\circle*{4}}
\put(385,27){\makebox(0,0){\small{\mbox{= elements of $\ocalj$}}}}
\dashline[3]{3}(108,5)(295,5)\dashline[3]{3}(108,5)(108,60)
\dashline[3]{3}(108,60)(295,60)\dashline[3]{3}(295,5)(295,60)
\dashline[3]{3}(25,5)(104,5)\dashline[3]{3}(104,5)(104,60)
\dashline[3]{3}(25,60)(104,60)\dashline[3]{3}(25,5)(25,60)
\end{picture}
\caption{The inductive step}
\end{figure}
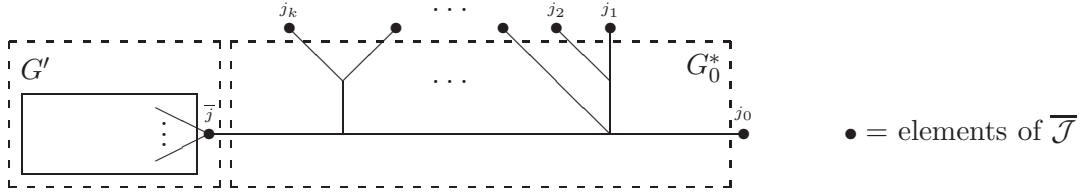

%

The inductive step splits in several cases ({\bf A} and {\bf B}, {\bf A} splits
into {\bf I} and {\bf II},
while {\bf I} has two subcases {\bf I.a} and {\bf I.b}).

\bekezdes\label{bek:j_02} {\bf A.} {\it \underline{ Assume that $\Delta N(G)$ in (\ref{eq:red2}) can be
realized by some $\overline{J}$
with  $j_0\not\in \overline{J}$.} }

\medskip
Fix such a $\overline{J}$.
Since $\sigma_{j_0}\geq 1$ and 
$\Delta(\overline{J},G)\geq\Delta(\overline{J}\cup j_0,G)$, from \ref{bek:s_n}(P2) one gets
\begin{equation}\label{eq:delta2}
\mbox{$\sigma_{j_0}=1$ and  $j_0$ is adjacent to a vertex  of $\cals(\overline{J})$}.
\end{equation}
Assume that some $j_\ell$ ($1\leq \ell\leq k$) is not in $\overline{J}$. Then again by
$\Delta(\overline{J}\cup j_\ell,G)\leq \Delta(\overline{J},G)$ and \ref{bek:s_n}(P2) we get
that $\sigma_{j_\ell}=1$ and $j_\ell$ is adjacent to $\cals(\overline{J})$.
In particular, $\Delta(\overline{J}\cup j_\ell,G)=\Delta(\overline{J},G)$, and we can replace
$\overline{J}$ by $\overline{J}\cup j_\ell$. Hence, for uniform treatment, in such a situation
 we can always assume that
\begin{equation}\label{eq:jell}
\{j_1,\ldots, j_k\}\subseteq \overline{J}.
\end{equation}
Let $\cals^*_0$ be the support generated by $\{j_1,\ldots,j_k\}$ via the (reformulated)
Laufer algorithm   \ref{bek:sigma};
then  $\cals^*_0\subseteq G^*_0$.

We will need another fact  too. Let $\overline{J}'$ be a subset of $\ocalj(G')$. Then
\begin{equation}\label{eq:''}
\Delta(\overline{J}',G')=\Delta(\overline{J}',G),
\end{equation}
that is, the $\Delta$--invariants of $\overline{J}'$
 in $G'$ and in $G$ are the same. Indeed, if $\overline{j}\not\in \overline{J}'$,
then the identity  is clear since $\overline{J}'$  generates the same supports
 $\cals(\overline{J}',G')=\cals(\overline{J}',G)$ in $G'$ and $G$. Otherwise,
  $\cals(\overline{J}',G)$ is the union of $\cals(\overline{J}',G')$
with the  maximal element of those
connected subgraph of $G^*_0$ which are  adjacent to $\overline{j}$ and  $\sigma_j=1$ for all
 their vertices $j$.

Now, our discussion bifurcates into two cases: {\it
whether $\overline{j}$ \, is adjacent to $\cals^*_0$ or not.}

\medskip

\noindent
{\it {\bf I.} \underline{The case when  $\overline{j}$ is not adjacent to $\cals^*_0$.} }

\medskip

We start with the following general statement, valid for any  $\overline{J}\subseteq \overline{I}$,
which does not contain $j_0$ but it contains $\{j_1,\ldots, j_k\}$. For such $\overline{J}$,
whenever $\overline{j}$ is not adjacent to $\cals^*_0$  one has:
\begin{equation}\label{eq:Delta}
\Delta(\overline{J},G)=\Delta(\{j_1,\ldots, j_k\},G)+\Delta(\overline{J}\cap G',G),
\end{equation}
where $\overline{J}\cap G'$ stands for $\overline{J}\cap \calj(G')$.
For its proof run first the Laufer algorithm for the vertices $ \{j_1,\ldots, j_k\}$ getting
$\cals^*_0$, then add the remaining vertices from $\overline{J}\cap G'$ and continue the
algorithm.

Therefore, for any $\overline{J}$ as in the assumption \ref{bek:j_02} (and with (\ref{eq:jell}))
we get that $\overline{J}\cap G'$ realizes $\Delta N(G')$. (Otherwise, we would be able to replace the subset
$\overline{J}\cap G'$ of $\overline{J}$ by another subset of $\ocalj\cap G'$ which would give larger
$\Delta(\overline{J}\cap G',G')=\Delta(\overline{J}\cap G',G)$, cf. also with (\ref{eq:''}), which would contradict
(\ref{eq:Delta}).)
Hence, (\ref{eq:Delta})  combined with (\ref{eq:''})  give:
\begin{equation*}\label{eq:DeltaN}
\Delta N(G)=\Delta N(G')+\Delta(\{j_1,\ldots, j_k\},G).
\end{equation*}

\noindent
{\bf I.a.} {\it \underline{Assume that $\Delta N(G') $ can be  realized by some $\overline{J}'$ in $G'$
which does not contain $\overline{j}$.}}\medskip

Then, we can apply the above statements for $\overline{J}=\overline{J}'\cup\{j_1,\ldots, j_k\}$.
Note that the Laufer algorithm runs in two independent regions cut by $\overline{j}$,
namely in $G^*_0$ and in  $G'\setminus \overline{j}$. Hence (\ref{eq:delta2}) guarantees that
$j_0$ is adjacent to $\cals^*_0$.

Furthermore, if $\widetilde{\cals}(G')$ is a support answering the problem
for $G'$, then $\widetilde{\cals}=\widetilde{\cals}(G') \cup \cals^*_0$ is a solution for $G$.
Note also that in this case $\cals^*_0$ coincides with the collection of components of
$\cals(\overline{I})$ sitting in $G^*_0$.

\medskip

\noindent
{\bf I.b.} {\it \underline{Assume that all realizations of  $\Delta N(G') $  by some $\overline{J}'$ in $G'$
 contain $\overline{j}$.}}\medskip

Let $G'_{-1}$ be the graph obtained from $G'$ by replacing the decoration $\sigma_{\overline{j}}$ by
$\sigma_{\overline{j}}-1$. Then,  by \ref{bek:s_n}(P3),  we get
\begin{equation*}
\Delta N(G'_{-1})=\Delta N(G')-1.
\end{equation*}
By induction, one can find a support  $\widetilde{\cals}(G'_{-1})$ which solves the problem for  $G'_{-1}$.
Let $\calst$ be the connected (minimal) string in $G^*_0$ adjacent to both  $\overline{j}$ and $j_0$ (connecting them).

If $j_0$ is adjacent to $\cals^*_0$ then $\widetilde{\cals}= \widetilde{\cals}(G'_{-1})\cup \cals^*_0$
is a solution for $G$.

Otherwise  $\widetilde{\cals}= \widetilde{\cals}(G'_{-1})\cup \cals^*_0\cup \calst$
is a solution for $G$.

\medskip

\noindent
{\it {\bf II.} \underline{ The case when  $\overline{j}$ is adjacent to $\cals^*_0$. }}

\medskip

Note  that in this case by the combinatorics of the choice of $j_0$ and by (\ref{eq:delta2})
we get that $j_0$ is adjacent to $\cals^*_0$ too.

We claim that for $G'_{-1}$ associated with the graph $G'$ and its vertex $\overline{j}$ one gets
\begin{equation*}\label{eq:DeltaN2}
\Delta N(G)=\Delta N(G'_{-1})+\Delta(\{j_1,\ldots, j_k\},G).
\end{equation*}
Moreover, $\widetilde{\cals}= \widetilde{\cals}(G'_{-1})\cup \cals^*_0$
is a solution for $G$.

\bekezdes\label{bek:j_03} {\bf B.} {\it \underline{Assume that for all  realizations of
$\Delta N(G)$ as $\Delta(\overline{J},G)$ one has
  $j_0\in \overline{J}$. }}

\medskip

Replace in $G$ the decoration $\sigma_{j_0}$ by  $\sigma_{j_0}-1$, find a solution for
$G_{-1}$, then that solution works for $G$ too.
\end{proof}

This ends the proof of Proposition \ref{th:nonempty}. We continue with the contraction part.

\bekezdes{\bf Additional properties of $\widetilde{\cals}$.}\label{ss:tilde}

 Fix an integer $N$ and $\ii\in \overline{S}_N$ as in subsection \ref{ss:nonempty}.
 The cube $\ii$ determines the integer
  $N(G)=\max_{\overline{J}\subseteq \overline{I}}\chi_k(x({\bf i}+1_{\overline{J}}))$, cf.
 (\ref{eq:NN3}).
  Choose $\widetilde{J}\subseteq \overline{I}$ which realizes this maximum:
$N(G)=\chi_k(x({\bf i}+1_{\widetilde{J}}))$.   $N(G)$  is the smallest integer $N$
 for which $\ii\in \overline{S}_N$.

 Theorem \ref{th:nonempty} applied for $\ii$ and $N=N(G)$
 provides a cycle $x({\bf i})+E_{\widetilde{\cals}}$ with
 $\widetilde{\cals}\subseteq \cals\ii $  and
 \begin{equation}\label{eq:ssng}
 \chi_k(x({\bf i})+E_{\widetilde{\cals}}+E_{\overline{J}})\leq N(G) \ \ \mbox{for any $\overline{J}
 \subseteq \overline{I}$}.
 \end{equation}
In the next paragraphs we will list some additional properties of  $\widetilde{\cals}$ and $\widetilde{J}$.
\begin{lemma}\label{lem:tildeS} (a) \ $\chi_k(x({\bf i}) +E_{\widetilde{\cals}}+E_{\widetilde{J}})=N(G)$.
In particular, the weight of the cube $(x({\bf i})+E_{\widetilde{\cals}},\overline{I})$ is $N(G)$.

(b) (i) There exists a computation sequence $\{x_n\}_{n=0}^t$ with
$x_0=x({\bf i})+E_{\widetilde{J}}$ and $x_t=x({\bf i})+E_{\widetilde{J}}+E_{\widetilde{\cals}}$ such that
$\chi_k(x_{n+1})\leq \chi_k(x_n)$ for any $n$.

(ii) There exists a computation sequence $\{y_n\}_{n=0}^{t'}$ with
$y_0=x({\bf i})+E_{\widetilde{J}}+E_{\widetilde{\cals}}$ and $y_{t'}=
x({\bf i})+E_{\widetilde{J}}+E_{\cals\ii}$ such that
$\chi_k(x_{n+1})\geq \chi_k(x_n)$ for any $n$.

(c) Using the notation  $\sigma_j(J)$ from \ref{bek:sigma}, one has:
$$\begin{array}{lll} (i) & \sigma_j(\widetilde{\cals})\geq 0 & \ \mbox{if \ $j\in \widetilde{J}$}\\
(ii) & \sigma_j(\widetilde{\cals}
)\leq 0 & \ \mbox{if \ $j\not\in \widetilde{J}$}.\end{array}$$

\end{lemma}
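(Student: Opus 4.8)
The plan is to prove the three parts of Lemma~\ref{lem:tildeS} by exploiting the combinatorial reformulation of Subsection~\ref{ss:nonempty} together with Propositions~\ref{propF1}, \ref{prop:G} and the criterion~(\ref{eq:artin}). Recall that $N(G)$ was defined in (\ref{eq:NN3}) as $\max_{\overline{J}\subseteq\overline{I}}\chi_k(x({\bf i}+1_{\overline{J}}))$, that $\widetilde{J}$ realizes this maximum, and that Theorem~\ref{th:nonempty} supplies $\widetilde{\cals}\subseteq\cals\ii$ with (\ref{eq:ssng}). The inequality (\ref{eq:ssng}) applied to $\overline{J}=\widetilde{J}$ already gives $\chi_k(x({\bf i})+E_{\widetilde{\cals}}+E_{\widetilde{J}})\leq N(G)$; for part~(a) I would establish the reverse inequality. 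Here the idea is that by Proposition~\ref{prop:G}(I)(b) one has $\chi_k(x({\bf i}+1_{\widetilde{J}}))=\chi_k(x({\bf i})+E_{\widetilde{J}\cup\cals({\bf i},\widetilde{J})})$, and $\widetilde{\cals}\subseteq\cals\ii$ together with the running of the (reformulated) Laufer algorithm forces $\chi_k(x({\bf i})+E_{\widetilde{J}\cup\widetilde{\cals}})\geq\chi_k(x({\bf i})+E_{\widetilde{J}\cup\cals({\bf i},\widetilde{J})})=N(G)$; indeed, adding the extra vertices of $\cals({\bf i},\widetilde{J})\setminus\widetilde{\cals}$ to $x({\bf i})+E_{\widetilde{J}\cup\widetilde{\cals}}$ can only decrease $\chi_k$ along the Laufer sequence (each step has $(\cdot,E_{j(n)})\ge 1$ by the choice inside $\cals(\overline{J})$). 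Combining the two inequalities yields equality, and since every cube $(x({\bf i})+E_{\widetilde{\cals}},\overline{J})$ with $\overline{J}\subseteq\overline{I}$ has weight $\leq N(G)$ by (\ref{eq:ssng}) while the face $\widetilde{J}$ attains $N(G)$, the weight of the whole cube $(x({\bf i})+E_{\widetilde{\cals}},\overline{I})$ is exactly $N(G)$.

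For part~(b)(i), I would produce the computation sequence connecting $x({\bf i})+E_{\widetilde{J}}$ to $x({\bf i})+E_{\widetilde{J}}+E_{\widetilde{\cals}}$ with non-increasing $\chi_k$ by invoking Proposition~\ref{prop:G}(II): set $\widetilde{\cals}$ in the role of the subset $\widetilde{\cals}$ there and $\widetilde{J}$ in the role of $\overline{J}$; the hypothesis (\ref{eq:sss}) required by that proposition, namely $\chi_k(x({\bf i})+E_{\widetilde{J}\cup\widetilde{\cals}})=\chi_k(x({\bf i}+1_{\widetilde{J}}))$, is precisely part~(a) just proved (the right-hand side being $N(G)$), so the conclusion of \ref{prop:G}(II) gives exactly the desired sequence $\{x_n\}$ with $\chi_k(x_{n+1})\leq\chi_k(x_n)$. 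For part~(b)(ii), the sequence from $x({\bf i})+E_{\widetilde{J}}+E_{\widetilde{\cals}}$ up to $x({\bf i})+E_{\widetilde{J}}+E_{\cals\ii}$ with non-decreasing $\chi_k$ comes from Proposition~\ref{prop:G}(III), applied with $\overline{J}=\widetilde{J}$ and with the cycle $l^*=E_{\cals\ii}-E_{\widetilde{\cals}}$, which is effective (since $\widetilde{\cals}\subseteq\cals\ii$) and supported in $\calj^*\setminus\cals(\overline{J})$ — here one needs to check that its support avoids $\cals({\bf i},\widetilde{J})$, which follows because $\cals\ii=\cals({\bf i},\overline{I})$ and along the ordered Laufer construction for $\widetilde{J}$ the not-yet-added vertices satisfy $\sigma_j(\widetilde{J}\cup\cdot)>0$, exactly the condition that places them outside the relevant support. (One should double-check the typo in the statement: the conclusion should read $\chi_k(y_{n+1})\geq\chi_k(y_n)$, consistent with \ref{prop:G}(III).)

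For part~(c), I would translate into the $\sigma_j$-language of \ref{bek:sigma}, where $\sigma_j(J)=1-(E_j,x({\bf i})+E_J+l'_{[k]})$ and the sign of $\sigma_j(\widetilde{\cals})$ detects whether adding $E_j$ to $x({\bf i})+E_{\widetilde{\cals}}$ raises or lowers $\chi_k$ (recall $\chi_k(z+E_j)=\chi_k(z)+1-(E_j,z+l'_{[k]})=\chi_k(z)+\sigma_j(\widetilde{\cals})$ when $z=x({\bf i})+E_{\widetilde{\cals}}$). For $j\in\widetilde{J}$: by part~(a) the cube attains its maximum $N(G)$ precisely on the face $E_{\widetilde{J}}$, and more sharply one argues that if $\sigma_j(\widetilde{\cals})<0$ for some $j\in\widetilde{J}$ then $\chi_k(x({\bf i})+E_{\widetilde{\cals}}+E_{\widetilde{J}})<\chi_k(x({\bf i})+E_{\widetilde{\cals}}+E_{\widetilde{J}\setminus j})\leq N(G)$ would contradict part~(a); hence $\sigma_j(\widetilde{\cals})\geq 0$. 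For $j\notin\widetilde{J}$ (so $j\in\overline{I}\setminus\widetilde{J}$): by maximality of $\widetilde{J}$ among all $\overline{J}\subseteq\overline{I}$ one has $\chi_k(x({\bf i}+1_{\widetilde{J}\cup j}))\leq\chi_k(x({\bf i}+1_{\widetilde{J}}))=N(G)$, and unwinding via \ref{bek:s_n}(P2) (with $j$ not adjacent to itself and the supports computed as in the algorithm) this forces $\sigma_j(\widetilde{\cals})\leq 0$; equivalently, adding $E_j$ to $x({\bf i})+E_{\widetilde{J}\cup\widetilde{\cals}}$ cannot strictly increase $\chi_k$ past $N(G)$, which is exactly $\sigma_j(\widetilde{\cals})\leq 0$. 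The main obstacle I anticipate is the careful bookkeeping in part~(c)(ii): one has to be sure that the Laufer-type computation relating $x({\bf i})+E_{\widetilde{J}}$ and $x({\bf i}+1_{\widetilde{J}})$ does not interfere with the chosen $\widetilde{\cals}$, i.e. that the support $\cals({\bf i},\widetilde{J})$ and the maximality-derived inequalities are compatible with $\widetilde{\cals}\subseteq\cals\ii$; this is where the structural results \ref{prop:G}(I)--(III) and property \ref{bek:s_n}(P2) must be combined with some care, rather than any single one of them finishing the job.
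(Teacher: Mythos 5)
Your overall architecture matches the paper's: the upper bound in (a) comes from Theorem \ref{th:nonempty} (i.e.\ (\ref{eq:ssng})), the lower bound from Laufer-type monotonicity, (b) from Proposition \ref{prop:G}, and (c) from testing (\ref{eq:ssng}) on the faces $\widetilde{J}\setminus\{j\}$ and $\widetilde{J}\cup\{j\}$. Part (c) is essentially the paper's argument (you should still record that the blow-up normalization \ref{ss:assumption} gives $(E_j,E_{\widetilde{J}\setminus\{j\}})=0$, which is what lets you identify $\sigma_j(\widetilde{\cals}\cup(\widetilde{J}\setminus\{j\}))$ with $\sigma_j(\widetilde{\cals})$). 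However, there are two genuine gaps.

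First, in (a) your lower bound runs a Laufer sequence from $x({\bf i})+E_{\widetilde{J}\cup\widetilde{\cals}}$ that ``adds the extra vertices of $\cals({\bf i},\widetilde{J})\setminus\widetilde{\cals}$'' and lands on $x({\bf i})+E_{\widetilde{J}\cup\cals({\bf i},\widetilde{J})}$. This presupposes $\widetilde{\cals}\subseteq\cals({\bf i},\widetilde{J})$, but at this stage you only know $\widetilde{\cals}\subseteq\cals\ii=\cals({\bf i},\overline{I})$; the containment $\widetilde{\cals}\subseteq\cals({\bf i},\widetilde{J})$ is exactly what the paper \emph{deduces from} (a) via Proposition \ref{prop:G}(II), so as written your argument is circular. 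The clean route is Lemma \ref{lem:laufb}, which asserts $\chi_k(x)\geq\chi_k(x({\bf i}+1_{\widetilde{J}}))$ for \emph{any} $x$ whose $\ocalj$--coordinates agree with those of $x({\bf i}+1_{\widetilde{J}})$, with no support hypothesis; applied to $x=x({\bf i})+E_{\widetilde{J}}+E_{\widetilde{\cals}}$ it closes (a) in one line.

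Second, (b)(ii). Proposition \ref{prop:G}(III) starts from $x({\bf i})+E_{\overline{J}\cup\cals\iij}$ and requires $|l^*|\subseteq\calj^*\setminus\cals\iij$. With $\overline{J}=\widetilde{J}$ and $l^*=E_{\cals\ii}-E_{\widetilde{\cals}}$ neither hypothesis holds unless $\widetilde{\cals}=\cals({\bf i},\widetilde{J})$: whenever $\widetilde{\cals}\subsetneq\cals({\bf i},\widetilde{J})$ the support of your $l^*$ meets $\cals({\bf i},\widetilde{J})$, and your starting point is not the one prescribed by (III). Your parenthetical claim that the support ``avoids $\cals({\bf i},\widetilde{J})$'' is therefore false in general. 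The repair is the paper's two-stage path: first connect $x({\bf i})+E_{\widetilde{J}\cup\widetilde{\cals}}$ to $x({\bf i})+E_{\widetilde{J}\cup\cals({\bf i},\widetilde{J})}$ by the sequence of \ref{prop:G}(I) with $\cals'=\widetilde{\cals}$ (this is legitimate once $\widetilde{\cals}\subseteq\cals({\bf i},\widetilde{J})$ is known from (a) and \ref{prop:G}(II)); along this piece $\chi_k$ is non-increasing with equal endpoints $N(G)$ by (a) and \ref{prop:G}(I)(b), hence constant, in particular non-decreasing. Only then apply (III) with $l^*=E_{\cals\ii\setminus\cals({\bf i},\widetilde{J})}$ to reach $x({\bf i})+E_{\widetilde{J}}+E_{\cals\ii}$.
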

\begin{proof} Note that
$$N(G)\stackrel{(1)}{=}
\chi_k(x({\bf i} +1_{\widetilde{J}}))\stackrel{(2)}{\leq}
\chi_k(x({\bf i}) +E_{\widetilde{\cals}}+E_{\widetilde{J}})
\stackrel{(3)}{\leq }N(G).$$
(1) follows from the definition of $N(G)$ and the choice of $\widetilde{J}$,
(2) from Lemma \ref{lem:laufb}, and (3) from Theorem \ref{th:nonempty} applied for $N=N(G)$. This proves  (a).
Identity (a) together with Proposition \ref{prop:G}(II) imply that $\widetilde{\cals}\subseteq
\cals({\bf i},\widetilde{J})$. Then there exists a  computation sequence connecting $x({\bf i})+E_{\widetilde{J}}$
with $x({\bf i})+E_{\widetilde{J}}+E_{\widetilde{\cals}}$  by  \ref{prop:G}(II), a sequence
connecting $x({\bf i})+E_{\widetilde{J}}+E_{\widetilde{\cals}}$  with
$x({\bf i})+E_{\widetilde{J}}+E_{\cals({\bf i},\widetilde{J})}$ by \ref{prop:G}(I), and finally, from
$x({\bf i})+E_{\widetilde{J}}+E_{\cals({\bf i},\widetilde{J})}$ to
$x({\bf i})+E_{\widetilde{J}}+E_{\cals({\bf i},\overline{I})}$ by \ref{prop:G}(III). This ends part (b).

Part (c) follows from (a) and equation (\ref{eq:ssng}) applied for $\widetilde{J}\setminus \{j\}$
(case $j\in \widetilde{J}$), respectively $\widetilde{J}\cup \{j\}$
(case $j\not\in \widetilde{J}$), and from the assumption \ref{ss:assumption}, which guarantees   $(E_j,E_{\widetilde{J}\setminus \{j\}})=0$.
\end{proof}

\bekezdes Let us recall what we already proved. For any fixed $\ii\in\overline{S}_N$ the space
 $\phi^{*}_N\ii$ is non--empty, cf. \ref{th:nonempty}, and it has the homotopy type of
 the product (cf. \ref{cor:con}):
 $$\Phi^{*}_N\ii=
 \psi(\phi^*_N\ii)\cap \{l^*\,:\, 0\leq l^*-\psi(x({\bf i}))\leq  E_{\cals\ii}\}
 \times \ii.$$
If $x\in \Phi^{*}_N\ii$ then  $x-x({\bf i})$ is reduced.  Moreover, $\Phi^{*}_N\ii$ has in it a distinguished $|I|$--dimensional
cube  $\{\psi(x({\bf i}))+E_{\widetilde{\cals}}\}\times \ii=(x({\bf i})+E_{\widetilde{\cals}},\overline{I})$.
Our goal is to construct a deformation retract from  $\Phi^{*}_N\ii$ to this cube (acting in the fiber direction).
This will be more complicated than the `standard' retractions
\ref{lem:con1}--\ref{lem:con2}--\ref{lem:con3}.
(Note that the point $x({\bf i})+E_{\widetilde{\cals}}+E_{\widetilde{J}}$
is not a $\chi_k$--minimal point of $\Phi^{*}_N\ii$,
it is maximal point in the direction $\ocalj$ and a minimal point in the direction $\calj^*$.)

To start with, we consider the connected components $\{G_\alpha\}_{\alpha\in A}$ of $\widetilde{\cals}$, and
the connected components $\{C_\beta\}_{\beta\in B}$ of $\cals\ii\setminus \widetilde{\cals}$.
During the contraction the supports $G_\alpha$ should be `added' and the supports $C_\beta$ should be `deleted'.
According to this, it is performed in several steps, during one step either we add one $G_\alpha$--type component, or we
delete one $C_\beta$--type component. At each step the fact that
which type is performed, or which $G_\alpha/C_\beta$ is manipulated is
decided by a technical `selection procedure'. This is the subject of the next Proposition, which will be applied at any
situation when the components $\{G_\alpha\}_{\alpha\in A'}$ still should be added and the
components $\{C_\beta\}_{\beta\in B'}$ still should be deleted: it chooses an element of $A'\cup B'$.
 The technical properties
associated with  the corresponding cases  will guarantee that the contraction stays below level $N$  of $\chi_k$.

Below, for any subset ${\calj'}\subseteq \ocalj$ and $i\in\calj^*$ we write $\calj'_i:=\{j\in \calj'\,:\, (E_i,E_j)=1\}$.
\begin{proposition}[Selection Procedure]\label{prop:sel}
Fix subsets $A'\subseteq A$ and $B'\subseteq B$ such that $A'\cup B'\not=\emptyset$.
Then either there exists $\alpha\in A'$ such that
$$(i) \ \ \ \mbox{for every  $i\in |G_\alpha|$ and every $j\in \widetilde{J}_i$ one has
$\sigma_j(({\widetilde{\cals}}\setminus i )\cup \cup_{\beta\in B'}C_\beta ) >0$}\hspace{8mm}$$
or, there exists $\beta\in B'$ such that
$$(ii)  \ \ \mbox{for every  $i\in |C_\beta|$ and every
$j\in \overline{I}_i\setminus  \widetilde{J}$ one has
$\sigma_j(({\widetilde{\cals}}\cup i)\setminus \cup_{\alpha\in A'}G_\alpha)< 0$}.$$
\end{proposition}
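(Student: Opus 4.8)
The plan is to prove the contrapositive: assuming that neither $(i)$ nor $(ii)$ can be achieved, I will produce a cycle inside $G$, which is impossible because $G$ is a tree (recall $M$ is a $\Q HS$, so its plumbing graph is a tree, and the blow-ups performed in \ref{ss:assumption} preserve this). Put $V:=\bigcup_{\alpha\in A'}|G_\alpha|\subseteq\widetilde{\cals}$ and $W:=\bigcup_{\beta\in B'}|C_\beta|\subseteq\cals\ii\setminus\widetilde{\cals}$, so that $V,W$ are disjoint, $V\subseteq\widetilde{\cals}$ and $W\cap\widetilde{\cals}=\emptyset$. Using $(E_i,E_j)=1$ whenever $j\in\widetilde{J}_i$ (resp. $j\in\overline{I}_i\setminus\widetilde{J}$), together with $i\in\widetilde{\cals}$ (resp. $i\notin\widetilde{\cals}$), a one-line computation from the definition $\sigma_j(S)=\sigma_j-(E_j,E_S)$ gives
\[
\sigma_j\big((\widetilde{\cals}\setminus i)\cup W\big)=\sigma_j(\widetilde{\cals})+1-(E_j,E_W),\qquad
\sigma_j\big((\widetilde{\cals}\cup i)\setminus V\big)=\sigma_j(\widetilde{\cals})-1+(E_j,E_V),
\]
for $\alpha\in A'$, $i\in|G_\alpha|$, $j\in\widetilde{J}_i$ in the first identity, and $\beta\in B'$, $i\in|C_\beta|$, $j\in\overline{I}_i\setminus\widetilde{J}$ in the second.

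Next I would feed in Lemma \ref{lem:tildeS}(c), which gives $\sigma_j(\widetilde{\cals})\ge0$ for $j\in\widetilde{J}$ and $\sigma_j(\widetilde{\cals})\le0$ for $j\in\overline{I}\setminus\widetilde{J}$. The first identity then shows that $(i)$ holds for a given $\alpha$ unless some $i\in|G_\alpha|$ carries a vertex $j\in\widetilde{J}_i$ with $(E_j,E_W)\ge1$; in that case $j$ is a bad vertex adjacent to $G_\alpha$ and to some $C_\beta$ with $\beta\in B'$. Symmetrically, $(ii)$ holds for a given $\beta$ unless there is a bad vertex $j\in\overline{I}\setminus\widetilde{J}$ adjacent to $C_\beta$ and to some $G_\alpha$ with $\alpha\in A'$.

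Now suppose $(i)$ fails for every $\alpha\in A'$ and $(ii)$ fails for every $\beta\in B'$. For each $\alpha\in A'$ choose, as above, a bad vertex $j_\alpha\in\widetilde{J}$ adjacent to $G_\alpha$ and to $C_{b(\alpha)}$ with $b(\alpha)\in B'$; for each $\beta\in B'$ choose $j_\beta\in\overline{I}\setminus\widetilde{J}$ adjacent to $C_\beta$ and to $G_{a(\beta)}$ with $a(\beta)\in A'$. On the finite nonempty set $A'\sqcup B'$ form the directed graph with arrows $\alpha\to b(\alpha)$ and $\beta\to a(\beta)$; every vertex has out-degree at least one, so there is a directed cycle, and since arrows run only from $A'$ to $B'$ and back it is alternating, say $\alpha_1\to\beta_1\to\alpha_2\to\cdots\to\alpha_m\to\beta_m\to\alpha_1$.

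Finally I translate this back into $G$: follow the closed walk that runs through $G_{\alpha_1}$ along its unique internal path from the vertex met by $j_{\beta_m}$ to the vertex met by $j_{\alpha_1}$, then the edge to $j_{\alpha_1}$, the edge into $C_{\beta_1}$, through $C_{\beta_1}$, the edge to $j_{\beta_1}$, into $G_{\alpha_2}$, and so on. This walk never backtracks: at each bad vertex its two incident edges land in the disjoint supports of a $\widetilde{\cals}$-component and a $(\cals\ii\setminus\widetilde{\cals})$-component; the internal stretches are geodesics of the tree $G$; and where an internal stretch degenerates to a point the two incident edges go to some $j_\alpha$ and some $j_\beta$ with $j_\alpha\in\widetilde{J}\not\ni j_\beta$, hence are distinct. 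A non-backtracking closed walk of positive length forces a cycle in $G$, contradicting that $G$ is a tree; so one of $(i)$, $(ii)$ must hold. The only delicate point — the main obstacle — is exactly this last verification, that the combinatorial alternating cycle does not collapse in $G$, and it is there that the disjointness of distinct components of $\widetilde{\cals}$ (resp. of $\cals\ii\setminus\widetilde{\cals}$), the separation of the bad vertices from $\calj^*$, and the dichotomy $j\in\widetilde{J}$ versus $j\notin\widetilde{J}$ at the two ends of each $C_\beta$ all get used.
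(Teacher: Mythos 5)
Your argument is correct and is essentially the paper's own: the same two $\sigma_j$--identities, combined with Lemma \ref{lem:tildeS}(c), show that each $\alpha$ failing (i) (resp.\ $\beta$ failing (ii)) forces an adjacency to some $C_\beta$ (resp.\ $G_{\alpha'}$) through a bad vertex lying in $\widetilde{J}$ (resp.\ outside $\widetilde{J}$), and the acyclicity of $G$ is exactly what prevents the alternating chain of components from closing up. The paper organizes the endgame as a forward search that must terminate because the visited $\alpha$'s and $\beta$'s remain pairwise distinct, while you take the contrapositive and extract an explicit cycle in the tree; these are equivalent formulations of the same pigeonhole argument.
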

\begin{proof}
Fix some $\alpha\in A'$ and assume that it does not satisfy (i). Then there exists $i_\alpha \in |G_\alpha|$
and $j_\alpha\in \widetilde{J}_{i_\alpha}$ such that
$\sigma_{j_\alpha}(({\widetilde{\cals}}\setminus i )\cup \cup_{\beta\in B'}C_\beta ) \leq 0$.
Note that $\sigma_{j_\alpha}({\widetilde{\cals}}\setminus i )=
\sigma_{j_\alpha}({\widetilde{\cals}} )+(E_{j_\alpha},E_{i_\alpha})>0$ by \ref{lem:tildeS}(c). These two combined
prove the existence of some $\beta\in B'$ and $i_\beta\in|C_\beta|$ with $(E_{j_\alpha}, E_{i_\beta})=1$.

Symmetrically, if for some $\beta\in B'$ (ii) is not true, then there exists $i_\beta\in |C_\beta|$ and
$j_\beta\in \overline{I}_{i_\beta}\setminus \widetilde{J}$ with
$\sigma_{j_\beta}(({\widetilde{\cals}}\cup i_\beta)\setminus \cup_{\alpha\in A'}G_\alpha)\geq  0$.
Since by \ref{lem:tildeS}(c) we have $\sigma_{j_\beta}({\widetilde{\cals}}\cup i_\beta)=
\sigma_{j_\beta}({\widetilde{\cals}})-(E_{j_\beta},E_{i_\beta})<0$, we get the existence of some
$\alpha\in A'$ and $i_\alpha\in |G_\alpha|$ with $(E_{j_\beta}, E_{i_\alpha})=1$.

Now the proof runs as follows. Start with any $\alpha\in A'$. If it satisfy (i) we are done. Otherwise,
as in the first paragraph,
we get  a $\beta$, such that $G_\alpha$ and $C_\beta$ are connected by a length two  path
having the middle vertex in $\widetilde{J}$.
If this $\beta $ satisfy (ii) we stop, otherwise we get by the second paragraph an $\alpha'$
such that $C_\beta$ and  $G_{\alpha'}$ are connected by a length two path whose middle vertex is not in
$\widetilde{J}$. Since the graph $G$  has no cycles,
$\alpha'\not=\alpha$. Then we continue the procedure with $\alpha'$. Either it satisfies (i) or
$G_{\alpha'}$ is connected with some $C_{\beta'}$ with $\beta'\not=\beta$. Continuing in this way, all the involved
 $\alpha$ indices, respectively  all the  $\beta$ indices are pairwise distinct because of the non--existence of
 a cycle in the graph. Since $A'\cup B'$ is finite, the procedure must stop.
\end{proof}

\bekezdes{\bf Contraction of  $\Phi^{*}_N\ii$.}\labelpar{ss:cont} 

We will drop the symbol $\ii$ from the notation
$\Phi^*_N\ii$: we  write simply $\Phi_N^*$. On the other hand, for any pair
$\emptyset\subseteq \cals_1\subseteq \cals_2\subseteq \cals\ii$, we define
$$\Phi^*_N(\cals_1,\cals_2):=
[\psi(\phi^*_N\ii)\cap
 \{l^*\,:\, E_{\cals_1}\leq l^*-\psi(x({\bf i}))\leq  E_{\cals_2}\}]
 \times \ii.$$
For example, $\Phi^*_N(\emptyset,\cals\ii)=\Phi^*_N$, while
$\Phi^*_N(\widetilde{\cals},\widetilde{\cals})=
\{(\psi(x({\bf i}))+E_{\widetilde{\cals}})\}\times \ii$, the cube on which we wish to contract $\Phi^*_N$.

If the Selection Procedure chooses some $\alpha'\in A'$ then we have to construct a deformation retract
$$c_{\alpha'}:\Phi^*_N( \bigcup_{\alpha\not\in A'}|G_\alpha|\,,\,
 \widetilde{\cals}\cup \bigcup_{\beta\in B'}|C_\beta|) \longrightarrow
 \Phi^*_N( \bigcup_{\alpha\not\in A'\setminus \alpha'}|G_\alpha|\,,\,
 \widetilde{\cals}\cup \bigcup_{\beta\in B'}|C_\beta|).$$
Otherwise, if some   $\beta'\in B'$ is chosen then we have to construct a deformation retract
$$c_{\beta'}:\Phi^*_N(\bigcup_{\alpha\not\in A'}|G_\alpha|\,,\,
 \widetilde{\cals}\cup \bigcup_{\beta\in B'}|C_\beta|) \longrightarrow
 \Phi^*_N( \bigcup_{\alpha\not\in A'}|G_\alpha|\,,\,
 \widetilde{\cals}\cup \bigcup_{\beta\in B'\setminus \beta'}|C_\beta|).$$
Their composition (in the selected order)  provides the wished deformation retract
$\Phi^*_N\to \Phi^*_N(\widetilde{\cals},\widetilde{\cals})$.
The two types of contractions have some asymmetries,
hence we will provide the details for both of them.

\bekezdes {\bf The construction of $c_{\alpha'}$. } Let $|G_{\alpha'}|=\{j_1,\ldots , j_t\}$.
By the properties of $\widetilde{J}$, cf. \ref{lem:tildeS}(b),
we have a computation sequence with $\chi_k$ non--increasing from
$x({\bf i})+E_{\widetilde{J}}$ to $x({\bf i})+E_{\widetilde{J}\cup\widetilde{\cals}}$.
Since the components $\{G_\alpha\}_\alpha$ do not interact, we can permute elements
belonging to different components $G_\alpha$, hence we may assume that the first part
completed the components $\cup_{\alpha\not\in A'}G_{\alpha}$, then we complete $G_{\alpha'}$
and the order $\{j_1,\ldots , j_t\}$ is imposed by the computation sequence. Therefore, for any $1\leq n\leq t$,
\begin{equation}\label{eq:CS}
\sigma_{j_n}(\widetilde{J}\cup \cup_{\alpha\not \in A'}|G_\alpha|\cup\{j_1,\ldots,j_{n-1}\})\leq 0.
\end{equation}

The contraction $c_{\alpha'}$ will be a composition $c_{\alpha',t}\circ \cdots \circ c_{\alpha',1}$, where
$c_{\alpha',n}$ corresponds to the completion of the cycles with $E_{j_n}$ ($1\leq n\leq t$):
\begin{align*}c_{\alpha',n}:\Phi^*_N( \bigcup_{\alpha\not \in A'}|G_\alpha|\cup\{j_1,\ldots,j_{n-1}\}\,,\,
& \widetilde{\cals}\cup \bigcup_{\beta\in B'}|C_\beta|) \longrightarrow \\
& \Phi^*_N(\bigcup_{\alpha\not \in A'}|G_\alpha|\cup\{j_1,\ldots,j_n\}
 \,,\,
 \widetilde{\cals}\cup \bigcup_{\beta\in B'}|C_\beta|)
 \end{align*}
defined as follows. Write $x=x({\bf i})+E_{\overline{J}}+l^*$ ($l^*$ is reduced) with
\begin{equation}\label{eq:SUPL}
\cup_{\alpha\not \in A'}|G_\alpha|\cup\{j_1,\ldots,j_{n-1}\}\subseteq
|l^*|\subseteq  \widetilde{\cals}\cup \bigcup_{\beta\in B'}|C_\beta|.\end{equation}
Then
$$c_{\alpha',n}(x)=\left\{\begin{array}{ll}
x & \ \mbox{if $ j_n\in |l^*|$},\\
x+E_{j_n} & \ \mbox{if $ j_n\not\in |l^*|$}.\end{array}
\right.$$
Note that for any $l^*$ as above with $|l^*|\not\ni j_n$, the inequality (\ref{eq:CS}) implies
\begin{equation}\label{eq:CS2}
\sigma_{j_n}(\widetilde{J}\cup |l^*|)\leq 0.
\end{equation}
Fix such an $l^*$ with $|l^*|\not \ni j_n$. Then,
for {\it  any} $\overline{J}\subseteq \overline {I}$, we have to prove
\begin{equation}\label{eq:NNN}
\chi_k(x({\bf i})+E_{\overline{J}}+l^*+E_{j_n})\leq N.
\end{equation}
Set $\overline{J}(l^*):=\{j\in\overline{I}\,:\, \sigma_j(|l^*|)>0\}$.
We claim that if (\ref{eq:NNN}) is valid for $\overline{J}(l^*)$ then it is valid for every 
$\overline{J}\subseteq
\overline{I}$. This follows from the next identity whose second term is
$\leq 0$ by the definition of $\overline{J}(l^*)$.
\begin{equation}\label{eq:MMM}\begin{split}
&\chi_k(x({\bf i})+E_{\overline{J}}+l^*+E_{j_n})-
\chi_k(x({\bf i})+E_{\overline{J}(l^*)}+l^*+E_{j_n})\\
&=\sum _{j\in \overline{J}\setminus \overline{J}(l^*)}\big[\sigma_j(|l^*|)-(E_j,E_{j_n})\big]
-
\sum _{j\in \overline{J}(l^*)\setminus \overline{J}}\big[\sigma_j(|l^*|)-(E_j,E_{j_n})\big].
\end{split}\end{equation}
On the other hand, using  Selection Procedure (and its notations) we get
$\widetilde{J}_{j_n}\subseteq \overline{J}(l^*)$. Indeed, by the choice of $\alpha'$ in \ref{prop:sel}(i),
for $j_n\in|G_{\alpha'}|$ and for any $j\in \widetilde{J}_{j_n}$ one has
$\sigma_j(\widetilde{\cals}\setminus j_n\cup\cup_{\beta\in B'}C_\beta)>0$.
Then
$\sigma_j(|l^*|)>0$ by the support condition (\ref{eq:SUPL}).
Then $\widetilde{J}_{j_n}\subseteq \overline{J}(l^*)$ implies:
\begin{equation}\label{eq:SIGIN}
\sigma_{j_n}(\overline{J}(l^*)\cup |l^*|)\stackrel{(1)}{\leq} \sigma_{j_n}(\widetilde{J}_{j_n}\cup |l^*|)
\stackrel{(2)}{=}
\sigma_{j_n}(\widetilde{J}\cup |l^*|)\stackrel{(3)}{\leq} 0.
\end{equation}
(1) follows from $\widetilde{J}_{j_n}\subseteq \overline{J}(l^*)$, (2) from
$(E_{j_n},E_{\widetilde{J}_{j_n}})=(E_{j_n},E_{\widetilde{J}})$, and (3)  from (\ref{eq:CS2}).
Therefore,
$$\chi_k(x({\bf i})+E_{\overline{J}(l^*)}+l^*+E_{j_n})-
\chi_k(x({\bf i})+E_{\overline{J}(l^*)}+l^*)=\sigma_{j_n}(\overline{J}(l^*)\cup |l^*|)\leq 0.$$
Since $\chi_k(x({\bf i})+E_{\overline{J}(l^*)}+l^*)\leq N$ (by induction),
(\ref{eq:NNN}) is valid for $\overline{J}(l^*)$.

\bekezdes {\bf The construction of $c_{\beta'}$ }. Let $|C_{\beta'}|=V_1 \cup V_2$, where $V_1:=|C_{\beta'}|
\cap (\cals(\vasi,\widetilde{J})\setminus \widetilde\cals)$ and $V_2:=|C_{\beta'}|\cap (\cals\ii\setminus 
\cals(\vasi,\widetilde{J}))$. The Laufer computation sequence given by \ref{prop:G}(I) connecting 
$x(\vasi)+E_{\widetilde{J}}+E_{\widetilde\cals}$ with $x(\vasi)+E_{\widetilde{J}}+
E_{\cals(\vasi,\widetilde{J})}$ gives an ordering on $V_1=\{j_1,\ldots,j_{t_s}\}$ with the property
\begin{equation}\label{eq:CS12}
\sigma_{j_n}(\widetilde{J}\cup \{j_1,\ldots,j_{n-1}\})=0
\end{equation}
for every $1\leq n \leq t_s$. Similarly, applying \ref{prop:G}(III) for $E_{\cals\ii \setminus \cals(\vasi,
\widetilde{J})}$ we have an ordering on $V_2=\{j_{t_s+1},\ldots,j_t\}$ such that
\begin{equation}\label{eq:CS13}
\sigma_{j_n}(\widetilde{J}\cup \{j_1,\ldots,j_{n-1}\})\geq 0
\end{equation}
for every $t_s+1\leq n \leq t$.

The contraction $c_{\beta'}$ will be $c_{\beta',1}\circ \ldots \circ c_{\beta',t}$, where $c_{\beta',n}$ 
corresponds to the deletion of the cycles with $E_{j_n}$ ($1\leq n\leq t$), i.e.
\begin{align*}
c_{\beta',n}:\Phi^*_N( \bigcup_{\alpha\not \in A'}|G_\alpha|\,,\,
& \widetilde{\cals}\cup \bigcup_{\beta\in B'}|C_\beta|\setminus \{j_{n+1},\ldots,j_t\}) \longrightarrow
\\ & \Phi^*_N(\bigcup_{\alpha\not \in A'}|G_\alpha|
 \,,\,
 \widetilde{\cals}\cup \bigcup_{\beta\in B'}|C_\beta| \setminus \{j_{n},\ldots,j_t\})
 \end{align*}
defined in the following way. Write $x=x(\vasi)+E_{\widetilde{J}}+l^*$ with
\begin{equation}\label{eq:SUPL2}
\cup_{\alpha\not \in A'}|G_\alpha|\subseteq
|l^*|\subseteq  \widetilde{\cals}\cup \bigcup_{\beta\in B'}|C_\beta|\setminus \{j_{n+1},\ldots,j_t\},
\end{equation}
then
$$c_{\beta',n}(x)=\left\{\begin{array}{ll}
x & \ \mbox{if $ j_n\not\in |l^*|$},\\
x-E_{j_n} & \ \mbox{if $ j_n\in |l^*|$}.\end{array}
\right.$$
Fix such an $l^*$ with $j_n\in |l^*|$, then we have to prove
\begin{equation}\label{eq:NNN2}
\chi_k(x({\bf i})+E_{\overline{J}}+l^*-E_{j_n})\leq N
\end{equation}
for any $\overline{J}\subseteq \overline{I}$. In this case the inequalities (\ref{eq:CS12}) and (\ref{eq:CS13}) 
implies
\begin{equation}\label{eq:CS22}
\sigma_{j_n}(\widetilde{J}\cup |l^*|\setminus j_n)\geq 0.
\end{equation}

Here we set $\overline{J}(l^*):=\{j\in \overline{I}\,:\, \sigma_j(|l^*|)\geq 0 \}$. Then if (\ref{eq:NNN2}) 
is valid for $\overline{J}(l^*)$ then it is so for any $\overline{J}\subseteq \overline{I}$. Indeed,
\begin{equation}\label{eq:MMM2}\begin{split}
&\chi_k(x({\bf i})+E_{\overline{J}}+l^*-E_{j_n})-
\chi_k(x({\bf i})+E_{\overline{J}(l^*)}+l^*-E_{j_n})\\
&=\sum _{j\in \overline{J}\setminus \overline{J}(l^*)}\big[\sigma_j(|l^*|)+(E_j,E_{j_n})\big]
-
\sum _{j\in \overline{J}(l^*)\setminus \overline{J}}\big[\sigma_j(|l^*|)+(E_j,E_{j_n})\big]\leq 0,
\end{split}\end{equation}
by the definition of $\overline{J}(l^*)$.
By the selection of $\beta'$ via \ref{prop:sel}(ii), for $j_n\in |C_\beta'|$ and for any 
$j\in \overline{I}_{j_n}\setminus \widetilde{J}$ one has $\sigma_j(({\widetilde{\cals}}\cup j_n)\setminus 
\cup_{\alpha\in A'}G_\alpha)< 0$, hence $\sigma_j(|l^*|)<0$, in other words $\overline{J}(l^*)\subseteq 
\widetilde{J}$.
Finally, from (\ref{eq:CS22}) we can deduce the inequality
\begin{align*}
\chi_k(x({\bf i})+E_{\overline{J}(l^*)}+l^*-E_{j_n})- &
\chi_k(x({\bf i})+E_{\overline{J}(l^*)}+l^*)=\\
& -\sigma_{j_n}(\overline{J}(l^*)\cup |l^*|\setminus j_n)\leq -\sigma_{j_n}(\widetilde{J}\cup |l^*|\setminus 
j_n)\leq 0.\end{align*}

\chapter{Seiberg--Witten invariants, \ periodic constants and \newline Ehrhart coefficients}\labelpar{c:SW}\

\indent This chapter is devoted to the study of the Seiberg--Witten invariants. 
We introduced the terminology in \ref{ss:swgen}, 
where we mentioned that in the last years several combinatorial expressions were established 
regarding these invariants. Recall that \cite{BN} provides a surgery formula, 
which is not induced by a surgery exact sequence, but --- more in the spirit of the present chapter --- 
involves the {\em periodic constant of a series with one variable}.

The breakthrough, which is the starting point of the theory presented in this chapter, is given in 
\cite{NSW}. It says that the Seiberg--Witten invariant appears as the 
{\em constant term} of a multivariable quadratic polynomial given by some special 
truncation of a series. It is important to emphasize that the origin and main motivation 
of this identity was an analytic identity. 
Several of the combinatorial objects have their analytic counterparts, 
for example, the analogue of the topological series $Z(\bt)$ (defined in \ref{SW}) is the
Hilbert--Poincar\'e series associated with the multivariable equivariant divisorial filtration of the
local ring of the singular germ, and its equivariant periodic constants are the
 equivariant geometric genera. This will be described also in Section \ref{s:motSW}, where we motivate 
the results from the analytical and topological point of view as well. 

In the case of one--variable series, the afformentioned constant term is realized by the concept of the  
{\em periodic constant} (of the corresponding function or its series), which appeared first in 
\cite{Opg,NO1}. This original definition will be presented in Subsection 
\ref{PC}. 

Our aim is to extend this concept to the multivariable case (see \ref{ss:mpc}) 
in order to get a combinatorial 
computation of the Seiberg--Witten invariants. It turns out that the right understanding of the 
multivariable periodic constant goes through {\em multivariable Ehrhart theory}, which is described 
in Section \ref{ss:PPET}. It helps to understand how {\em the multivariable Poincar\'e series} encodes  
this generalized periodic constant, explaining the difficulties in the cases with 
`higher complexity level'. In fact, the complexity level of the (non--convex) polytopes, associated 
by the Ehrhart theory, is `measured' by the number of vertices of the corresponding graph. However, 
we will prove in \ref{s:REDZt} that this 
can be considerably reduced and measured with the number of nodes, 
or even more, with the number of bad vertices. In this way, the Reduction Theorem \ref{red} extends 
to the level of these invariants and their connections.

This gives the final output which is a nice identification of the Seiberg--Witten invariants 
with certain coefficients of a multivariable Ehrhart polynomial (cf. \ref{s:Last}).     

The chapter is based on \cite{Ehrhart}. 
The terminology and results in Ehrhart theory, relevant to the present discussion, can be found 
in \cite{Bar,BP,Beck_c,Beck_m,Beck_p,BR1,BR2,BDR,CL,DR}, while for the connections with 
partition functions, see 
\cite{BV,SZV,Str}.

\section{Analytic and topological motivation}\labelpar{s:motSW}\
In this section we start with some useful notations and facts which will be used throughout the chapter. 
Then we present definitions and results regarding the {\em analytic Hilbert--Poincar\'e series} of normal 
surface singularities, which serve as a motivation for the topological side. 
After this part, we continue with the definition and immediate properties of the topological Poincar\'e series. 
A discussion regarding the statement of Theorem \ref{th:JEMS} will serve as a motivation and it 
provides a short summary for the connections between the three numerical datas: 
the Seiberg--Witten invariant, the periodic 
constant and the Ehrhart coefficient.

\subsection{Notations and facts (addendum to section \ref{intro:comb})}\labelpar{ss:11} \
Let $(X,0)$ be a complex normal surface singularity
whose {\em link $M$ is a rational homology sphere}.  Let
$\pi:\widetilde{X}\to X$ be a good resolution with dual graph
$G$ whose vertices will be denoted by $\cV$. Hence $G$ 
is a tree and all the irreducible exceptional divisors have genus
$0$. 

Let  $\delta_v$ be the valency of the vertex $v$. We distinguish the following
subsets of vertices: the set of {\it nodes}
$\calN=\{v\in \cV:\delta_v\geq 3\}$, and the set of {\it ends}
$\cE=\{v\in \cV:\delta_v= 1\}$. If we delete from $G$ the nodes and their adjacent
edges we get the collection of (maximal) {\it chains}
 of the graph. A {\it leg} is a chain which is connected by only one node.
$|\cV|$ or  $s$  stay for the number of vertices, while $|\calN|$ and $|\cE|$ for the
 number of nodes and ends, and $H:=H_1(M,\Z)$. 

We look at the combinatorics of the graph $G$ according to Section \ref{intro:comb}. Recall that 
the module $L'$ over $\setZ$ is freely generated by the (anti)duals $\{E_v^*\}_v$, where we
prefer the convention $ ( E_v^*, E_w) =  -1 $ for $v = w$, and
$0$ otherwise.
It will be useful to write $\det(G):=\det(-\frI)$, where $\frI$ is the negative definite intersection 
matrix. The inverse of $\frI$ has entries
 $(\frI^{-1})_{vw}=(E_v^*,E^*_w)$, all of them are negative. Furthermore, by a result 
 of \cite[page 83 and \S 20]{EN},
 \begin{equation}\label{eq:DETsgr}
  \begin{split}
 \mbox{ $-|H|\cdot (E_v^*,E^*_w)$
equals the determinant of the subgraph obtained}\\
\mbox{from $G$ by eliminating the
shortest path connecting $v$ and $w$.}\end{split}\end{equation}
The canonical class $\K\in L'$ was defined by the
adjunction formulae $(\K+E_v,E_v)+2=0$ for all $v\in\cV$.
The expression $\K^2+|\cV|$ will appear as the normalization term in several formulae.  
Therefore, we quote its combinatorial
expression in terms of the graph, cf. \cite{SWI}:
\begin{equation}\label{eq:K2}
\K^2+|\cV|=\sum_{v\in\cV}(E_v,E_v)+3|\cV|+2+\sum_{v,w\in\cV}\, (2-\delta_v)(2-\delta_w)\frI^{-1}_{vw},
\end{equation}
where $\delta_v$ is the valency of the vertex $v$.

Recall, that the Lipman cone is defined as $\calS'=\{l'\in L'\,:\, (l',E_v)\leq 0 \ \mbox{for all
$v$}\}$. It is generated over $\setZ_{\geq 0}$ by the
elements $E_v^*$. Since  all the entries of $E_v^*$
are strict positive, cf. (\ref{eq:POS}), for any fixed $a\in L'$ one has:
\begin{equation}\label{eq:finite}
\{l'\in \cS'\,:\, l'\ngeq a\} \ \ \mbox{is finite}.
\end{equation}

For any class $h\in H$ there exists a unique minimal element of
$\{l'\in L'\,:\,[l']=h\}\cap \calS'$, cf. \cite[5.4]{OSZINV} or Lemma \ref{def:KR}, which 
will be denoted by $s_h$ in this chapter. Nevertheless, if we look at it for a 
fixed class $[k]$, we use the notation $\lk$ as before. 

Furthermore, we set \,$\square=\{\sum_v
l'_vE_v\in L'\,:\, 0\leq l'_v <1\}$ for the `semi--open cube', and
for any $h\in H$ we consider the unique representative
$r_h\in \square$ with $[r_h]=h$. One has  $s_h\geq r_h$, and
usually $s_h\not=r_h$ (see e.g. \cite[4.5.3]{Ng}). Moreover,
using the generalized Laufer computation sequence of 
\cite[4.3.3]{Ng} connecting $-r_h$ with $-s_h$ one gets
\begin{equation}\label{chiineq}
\chi(s_h)\leq \chi(r_h).\end{equation}

One considers also the Pontrjagin dual $\widehat{H}$ of $H$ and denote by 
$\theta:H\to\widehat{H}$ the isomorphism $[l']\mapsto e^{2\pi i (l',\cdot )}$ between them.

\subsection{Equivariant multivariable Hilbert series \newline of divisorial
filtrations}\labelpar{FM} \
We fix a resolution $\pi$ of $(X,0)$ with resolution graph $G$. The lattice $L$ defines a 
{\em divisorial multi--index filtration} on $\cO_{(X,0)}$ in the following way:
for any $l=\sum_j l_j E_j\in L$ one can associate an ideal 
$$\cF(l):=\{ f\in \cO_{(X,0)} \ : \ (f)_G\geq l\}.$$
The ususal way to describe this multi--index filtration is taking the {\em Hilbert function} 
$\hh(l):=\dim \cO_{(X,0)}/\cF(l)$ and its corresponding generating series, called the 
{\em multivariable Hilbert series}
\begin{equation}
\cH(\bt)=\sum _{l=\sum l_jE_j\in L}
\hh(l)\bt^{l}\in
\Z[[L]], 
\end{equation}
where $\bt^l=t_1^{l_1}\cdots t_s^{l_s}$ and $\Z[[L]]$ stands for the $\Z[L]$--submodule 
of formal power series $\Z[[t_1^{\pm 1/\det(\frI)},\dots,t_s^{\pm 1/\det(\frI)}]]$, generated 
by the monomials $\bt^l$. More details and informations can be read from \cite{CHR,CDG}. 

We may also 
define the multivariable Poincar\'e series, which is more close to the topology of $(X,0)$. But first, 
let us present a more general interpretation defined in \cite{CDGEq,NPS}, which gives the equivariant 
version of this concept.

Let $c:(Y,0)\to (X,0)$ be the {\em universal abelian cover} of $(X,0)$ with Galois group $H=H_1(M,\Z)$, 
$\pi_Y :\widetilde{Y}\to Y$ the normalized pullback of $\pi$ by $c$, and 
$\widetilde{c}:\widetilde{Y}\to \widetilde{X}$ the morphism which covers $c$, i.e. the induced 
finite map which makes the diagram commutative. If we denote the pullback 
of the cycle $l'\in L'$ by $\widetilde c$ with $\widetilde{c}^*(l')$, then \cite[3.3]{Nline} proves 
that $\widetilde{c}^*(l')$ is an integral cycle (an element of the lattice $L_Y$ associated with 
$\widetilde Y$ which is, in fact, a partial resolution of $(Y,0)$ with Hirzebruch--Jung singularities, 
cf. \cite[3.2]{Nline}).\newline
Then $\cO_{(Y,0)}$ inherits the divisorial multi--index filtration:
\begin{equation*}\label{eq:03}
\cF(l'):=\{ f\in \cO_{Y,o} \ : \ {\rm div}(f\circ\pi_Y)\geq \widetilde{c}^*(l')\}.
\end{equation*}
The natural action of $H$ on $Y$ induces an action on $\cO_{(Y,0)}$ which keeps $\cF(l')$ 
invariant. Hence, $H$ acts on $\cO_{(Y,0)}/\cF(l')$ and we can define 
$\hh(l') $ to be the dimension of the $\theta([l'])$--eigenspace
of $\cO_{Y,o}/\cF(l')$, where  $\theta([l'])=e^{2\pi i (l',\cdot )}$ is a multiplicative 
character in $\widehat H$ (cf. \ref{ss:11}). Then  the {\em equivariant
multivariable Hilbert series} is
\begin{equation*}\label{eq:04}
\cH(\bt)=\sum_{l'\in L'}\hh(l')\bt^{l'}\in
\setZ[[L']].
\end{equation*}
In $\cH(\bt)$ the exponents $l'$ of the terms $\bt^{l'}$ reflect the $H$ eigenspace
decomposition too. E.g., $\sum_{l\in L}\hh(l)\bt^{l}$ corresponds
to the $H$--invariants, hence it is the Hilbert series defined at the beginning of this 
subsection.

If $l'$ is in the special `vanishing zone' $-\K+\calS'$,
then by vanishing (of a certain  first cohomology), and by the 
Riemann--Roch formula, one obtains (see \cite{NCL}) that the expression
 \begin{equation}\label{eq:KV}
 \hh(l')+\frac{(K+2l')^2+|\cV|}{8}
\end{equation}
depends only on the class $[l']\in L'/L$ of $l'$. 

The key bridge connecting  $\cH(\bt)$ with the
topology of the link  and with  $G$ is realized by defining the 
{\em equivariant multivariable Poincar\'e series} from $\cH(\bt)$ (cf. \cite{CDG,CDGEq,NPS,NCL}):
\begin{equation*}\label{eq:06}
\cP(\bt)=-\cH(\bt) \cdot \prod_v(1-t_v^{-1})\in \setZ[[L']].
\end{equation*}
Notice that apparently $\cP$ loses some analytic information of $\cH$. However, 
\cite[(3.2.6)]{NCL} shows explicitly that the identity can be `inverted'. Namely, 
if we write $\cP(\bt)=\sum_{l'}\bar{p}_{l'}\bt^{l'}$, then 
\begin{equation*} \label{eq:inv}
\hh(l')=\sum_{l\in L,\, l\not\geq 0} \bar{p}_{l'+l}.
\end{equation*}
This is well--defined, since by \cite[(3.2.2)]{NCL} one has that $\cP$ is supported on
$\cS'$, therefore the sum in the formula is finite via \ref{eq:finite}.
In particular, cf. (\ref{eq:KV}),
\begin{equation}\label{eq:KV2}
\sum_{l\in L,\, l\not\geq 0} \bar{p}_{l'+l}=-\mathrm{const}_{[-l']} -
\frac{(\K+2l')^2+|\cV|}{8}
\end{equation}
for any $l'\in-\K+\cS'$, where $\mathrm{const}_{[-l']}$ depends only on the class $[-l']$ of $-l'$.
The right hand side can be thought as a `multivariable Hilbert polynomial' of degree 2 associated with 
the series $\cH(\bt)$ ( or with $\cP(\bt)$). Its constant term is
the {\em normalized equivariant geometric genus} of the universal abelian cover $Y$ 
(see details in \cite{NCL}), that is 
\begin{equation}\label{eq:KV2b}
-\mathrm{const}_{[-r_h]}=
\dim(H^1(\widetilde{Y},\cO_{\widetilde{Y}}\,)_{\theta(h)})+
\frac{(\K+2r_h)^2+|\cV|}{8}.
\end{equation}
The main point is that $\cP(\bt)$ has a {\it topological candidate}, which is defined purely from the graph $G$ 
and will be the subject of the next subsection. The two series agree
for several singularities, see for example \cite{CDGEq,NPS,NCL}. \cite{NCL} proves that 
it is valid for splice--quotient singularities as well. 

It turns out that identification of their
constant terms (for `nice' analytic structures) is the subject of the
Seiberg--Witten Invariant Conjecture \ref{ss:SWIC}, since the constant term of the topological 
candidate will realize the Seiberg--Witten invariant (cf. \ref{th:JEMS}). 
Hence, if the identification holds, then $\mathrm{const}_{[-l']}=
\frsw_{[-l']*\sigma_{can}}(M)$ too, and (\ref{eq:KV2b}) creates the bridge between the combinatorial/
topological Seiberg--Witten theory and the analytic counterpart.

\subsection{The topological Poincar\'e series and $\frsw_\sigma(M)$}\labelpar{SW}
\begin{definition}
Consider the following rational function 
\begin{equation}\label{eq:INTR}
\prod_{v\in \cV} (1-\bt^{E^*_v})^{\delta_v-2}.
\end{equation}
Then its multivariable Taylor expansion $Z(\bt)=\sum p_{l'}\bt^{l'}$ at the origin is called 
the {\em topological (combinatorial) Poincar\'e series} associated with the plumbing graph $G$.
\end{definition}
Since the Lipman cone $\calS'$ is generated by the elements $E_v^*$ over $\Z_{\geq 0}$, $Z(\bt)$ 
is supported on $\calS'$ (i.e. $p_{l'}=0$ for every $l'\notin \calS'$). Therefore, if we apply the 
same {\em special truncation} as in the analytic case (\ref{eq:KV2}), then we get a finite sum 
\begin{equation}\label{eq:sump}
\sum_{l\in L,\, l\not\geq 0} p_{l'+l} \ .
\end{equation}
One has a natural decomposition $Z(\bt)=\sum_{h\in H}Z_h(\bt)$, where 
$Z_h(\bt)=\sum_{[l']=h}p_{l'}\bt^{l'}$ ($[l']$ is the class of $l'$). Then the sum (\ref{eq:sump}) involves
only the part $Z_{[l']}$ (sometimes we also write $Z_{l'}$ for $Z_{[l']}$). 

As we already mentioned at the end of \ref{FM}, $Z(\bt)$ is the topological candidate 
for $\cP(\bt)$, since they agree for `nice' analytic structures. This is the reason why (\ref{eq:KV2}) 
motivated the birth of the next theorem, which proves that $Z(\bt)$ encodes the Seiberg--Witten invariants 
of the link $M$. Moreover, it is the starting point of the research of the present 
chapter.
\begin{theorem}[\cite{NSW}]\labelpar{th:JEMS} Fix some $l'\in L'$.
Assume that for any  $v\in\cV$ the  $E^*_v$--coordinate  of $l'$
is larger than or equal to $-(E_v^2+1)$ for all $v$. Then
\begin{equation}\label{eq:SUM}\sum_{l\in L,\, l\not\geq 0}p_{l'+l }=
-\frsw_{[-l']*\sigma_{can}}(M)-\frac{(\K+2l')^2+|\cV|}{8},
\end{equation}
where
$*$ denotes the torsor action of $H$ on $\mathrm{Spin}^c(M)$.
\end{theorem}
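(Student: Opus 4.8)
The statement to prove is Theorem \ref{th:JEMS}, identifying the truncated sum $\sum_{l\in L,\, l\not\geq 0}p_{l'+l}$ of the topological Poincar\'e series with the combination $-\frsw_{[-l']*\sigma_{can}}(M)-\frac{(\K+2l')^2+|\cV|}{8}$ of the Seiberg--Witten invariant and the normalization term. My plan is to reduce everything to a statement about lattice cohomology and its Euler characteristic via the Reduction Theorem, and to match both sides against the combinatorial Seiberg--Witten formula. The strategy has three pillars: (i) express the truncated sum of $Z_{[l']}(\bt)$ as a ``counting function'' $\mathcal{Q}(x)$ on the lattice $L$ and identify its behavior with a quasipolynomial whose constant term is the left-hand side; (ii) identify that constant term with the normalized Euler characteristic $eu(\bH^*(G,k))$ of the lattice cohomology for $k=\K-2l'$, using that $Z(\bt)$ and $\chi_k$ are built from the same combinatorial data of $G$; (iii) invoke the result (stated in \ref{ss:swgen}, from \cite{NSW}) that $eu(\bH^*(G,k))$ equals $-\frsw_{[k]}(M)-\frac{k^2+|\cV|}{8}$, i.e.\ the normalized Euler characteristic of lattice cohomology is the Seiberg--Witten invariant up to the standard normalization.

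\textbf{Step-by-step.} First I would set up the generating-function bookkeeping: write $Z(\bt)=\prod_{v}(1-\bt^{E_v^*})^{\delta_v-2}$, expand, restrict to the $H$-component $Z_{[l']}$, and observe that the truncation $\sum_{l\not\geq 0}p_{l'+l}$ collects exactly the coefficients of $Z$ on the ``negative quadrant shift'' of $l'$; because $Z$ is supported on the Lipman cone $\calS'$ and of \ref{eq:finite} this sum is finite. Next, the key analytic input is that this truncated sum, as a function of $l'$ ranging over the translated cone $-\K+\calS'$, agrees with a (multivariable) quadratic quasipolynomial — this is essentially the content already quoted in \ref{eq:KV2} for the analytic $\cP(\bt)$, and the same argument (counting-function / Ehrhart-type behavior of $Z$) applies to the topological side. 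The hypothesis on the $E_v^*$-coordinates of $l'$ (namely $\geq -(E_v^2+1)$) is precisely the condition placing $l'$ in the zone where the counting function has stabilized to this quasipolynomial, so that its value there equals the constant term of the quasipolynomial plus the quadratic correction $-\frac{(\K+2l')^2+|\cV|}{8}$.

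Then comes the cohomological identification. Set $k:=\K-2l'\in Char$. The weight function $w_0=\chi_k$ on $L$ and the series $Z(\bt)$ both encode the Laufer-type combinatorics of $G$; the precise link is that the coefficients $p_{l'+l}$ of $Z$ are an ``Euler characteristic with signs'' of the increments of $\chi_k$ along coordinate directions, and summing over $l\not\geq 0$ telescopes into $-eu(\bH^*(G,k))$ (up to the normalization term). This is exactly the mechanism of \cite{NSW} and \cite{BN}: the periodic constant of $Z_{[l']}$ is the normalized Euler characteristic of lattice cohomology. At this point one applies the Reduction Theorem \ref{red} if one wishes a computable form, but it is not logically needed for the identity itself — the identity $eu(\bH^*(G,k))=-\frsw_{[k]}(M)-\frac{k^2+|\cV|}{8}$ follows from the already-quoted \ref{ss:swgen} result combined with $d(M,[k])/2$ and $\m_k$ bookkeeping, since $k^2+|\cV| = (\K+2l')^2+|\cV|$ and $[k]=[-l']*\sigma_{can}$ under the torsor action ($\sigma_{can}$ corresponds to $[-\K]$, hence $\K-2l'$ represents $[-l']*\sigma_{can}$). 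Assembling: LHS $=$ constant term of the quasipolynomial $= -eu(\bH^*(G,k)) - \frac{(\K+2l')^2+|\cV|}{8}\cdot 0$... more carefully, LHS $= -\frsw_{[-l']*\sigma_{can}}(M) - \frac{(\K+2l')^2+|\cV|}{8}$, which is the claim.

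\textbf{Main obstacle.} The hard part will be Step (i)--(ii): establishing rigorously that the truncated sum of the topological series is a quadratic quasipolynomial in $l'$ on the relevant zone and that its periodic/constant part is exactly $-eu(\bH^*(G,k))$. This requires either (a) a careful Ehrhart-theoretic analysis of the polytopes cut out by the exponent vectors $E_v^*$ — which is precisely what Chapter \ref{c:SW} develops — or (b) a direct combinatorial argument relating the coefficients $p_{l'}$ to counting lattice points below the graph of $\chi_k$, i.e.\ to the spaces $S_N$ of \ref{geomdef}. The bridge between ``coefficient of $Z$'' and ``contribution to $eu(\bH^*)$'' is the genuinely delicate point; once it is in place, the rest is matching normalization constants using \ref{eq:K2} and the torsor bookkeeping of \ref{comb:chandspinc}. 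I would expect to borrow the heart of this argument from \cite{NSW}, where the identity $\sum_{l\not\geq 0}p_{l'+l}=-eu(\bH^*(G,\K-2l'))-\frac{(\K+2l')^2+|\cV|}{8}$ is effectively proved, and then cite the categorification theorem of \ref{ss:swgen} to replace $eu$ by $\frsw$.
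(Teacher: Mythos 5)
The paper you are working from does not actually prove this theorem: it is imported verbatim from \cite{NSW} and used as a black box (it reappears later, in an equivalent change-of-variables form, as Theorem \ref{NSW}, again with attribution and no proof). So there is no ``paper's own proof'' to compare against line by line. That said, your outline does reproduce the correct architecture of the argument in \cite{NSW} as the thesis itself describes it: (A) a combinatorial identity expressing the truncated coefficient sum of $Z_{[l']}$ as $\chi_{k_r}(l)+eu(\bH^*(G,k_r))$ — this is exactly Theorem \ref{NSW}(2) — and (B) the categorification theorem of \ref{ss:swgen}, proved in \cite{NSW} via the surgery formula of \cite{BN}, identifying $eu(\bH^*(G,k_r))$ with the normalized Seiberg--Witten invariant. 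Your reading of the hypothesis is also right: the condition on the $E_v^*$--coordinates is equivalent to $(l'+\K,E_v)\leq -1$ for all $v$, i.e.\ $l'\in -\K+\mathrm{int}(\calS')$, which is precisely the stabilization zone appearing in Theorem \ref{NSW}(2). You are also correct that the Reduction Theorem \ref{red} is not logically needed here.

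As a standalone proof, however, the proposal has a genuine gap: step (ii), the bridge from the coefficients $p_{l'+l}$ to the Euler characteristic of lattice cohomology, is the entire mathematical content of the theorem, and you only assert it (and then explicitly say you would ``borrow the heart of this argument from \cite{NSW}'' — which is circular when \cite{NSW} is the source of the statement being proved). To close it one must actually prove the identity $Z_{\lk}(\bt)=\sum_{l}\bigl(\sum_{I}(-1)^{|I|+1}w(l,I)\bigr)\bt^{l+\lk}$ and then show that the truncated sum over a large rectangle computes $\min w+\sum_q(-1)^q\rank\bH^q_{red}$ plus the boundary contribution $\chi_{k_r}(l)$; the thesis carries out precisely this kind of argument for the \emph{reduced} series in the proof of Theorem \ref{thm:redZ}, which is the model you would need to follow. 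Two smaller but real errors: your ``telescopes into $-eu(\bH^*(G,k))$'' has the wrong sign — the correct identity is $\sum_{\bar l\not\geq l}p_{\bar l+\lk}=\chi_{k_r}(l)+eu(\bH^*(G,k_r))$, with a plus — and the sentence containing ``$\cdot\,0$'' shows the normalization bookkeeping ($\chi_{k_r}(l)$ versus $-((\K+2l')^2+|\cV|)/8$ via $w(k)=\chi_{k_r}(l)-(k_r^2+|\calj|)/8$, and the torsor identification $[\K-2l']=[-l']*\sigma_{can}$) was never actually carried through.
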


The finite sum on the left hand side appears as a {\em counting function}
of the coefficients of $Z_{[l']}$ associated with the special truncation, while the right hand side is a
{\em multivariable quadratic Hilbert polynomial} whose {\em constant term} is the 
normalized Seiberg--Witten invariant
$$-\frsw_{[-l']*\sigma_{can}}(M)-\frac{\K^2+|\cV|}{8}.$$

In order to guarantee the validity of the formula, the vector $l'$ should sit in a special
{\it chamber} described by the inequalities of the assumption. This, after we
establish the necessary  bridges,
will read as follows: \newline
{\em `the third degree' coefficient of a multivariable
Ehrhart quasipolynomial associated with a certain polytope and specific chamber can be
identified with the Seiberg--Witten invariant}.

In the followings, we will motivate and summarize the results of this chapter, which explains 
the above highlighted sentence. The way how one recovers the needed information from 
the series $Z(\bt)$ can be done at several
levels: 
\begin{itemize}
\item The first one is entirely at the level of series.
We develop a theory which associates with any series the counting function of its coefficients
(given by the truncation of the monomials) --- like the right hand side of (\ref{eq:SUM}).
This is usually a {\em piecewise quasipolynomial}. Once we fix a chamber, the free term of the counting
function is the so--called {\em periodic constant} (denoted by ${\rm pc}$). In this terminology, 
the Seiberg--Witten invariant can be interpreted
as the {\it multivariable periodic constant} ${\rm pc}(Z)$ (cf. \ref{ss:mpc})
of the series $Z(\bt)$, where the chosen chamber is described by the inequalities of the
assumption (a part of the Lipman cone $\calS'$). The `periodicity'
is related with the quasipolynomial behavior of the counting function.

The periodic constant of one--variable series
was introduced by N\'emethi and Okuma. Its idea, cf. \cite{NO1,Opg}, will be detailed in \ref{PC}. 
(For applications see e.g. \cite{NO,NO1,NSW,BN}.)

We create the general theory, which carries necessarily several
difficult technical ingredients. For example, one has to choose the `right'
truncation and summation procedure of the coefficients, which, in the context of general series, 
is not automatically motivated, and also it depends on the chamber decomposition of the space of exponents.
The theory has some similarities with the theory of vector partition functions as well.

\item On the other hand, there is a more sophisticated way to generalize the identity (\ref{eq:SUM}) too.
From any Taylor expansion of a multivariable rational function 
with denominator of type $\prod_i(1-\bt^{a_i})$ we construct a {\em polytope}
situated in a lattice which carries also a representation of a finite abelian group $H$.
Associated with these data, 
we consider the {\em equivariant multivariable Ehrhart piecewise quasipolynomials},
whose existence, main properties (like the {\em Ehrhart--MacDonald--Stanley type reciprocity law} 
or {\em chamber decompositions}) will also be established in \ref{ss:PPET}. 
This applied to the series $Z(\bt)$ above,
and to the quasipolynomial of those chambers which belong to the Lipman cone shows that 
the first three top--degree {\em Ehrhart coefficients} (at least)
will carry geometrical/topological meaning, including the Seiberg--Witten invariants of the link $M$.
\end{itemize}
Figure 4.1 (cf. \cite{Ehrhart}) is helping to summarize these two points with a 
schematic picture of 
these connections and areas we target.

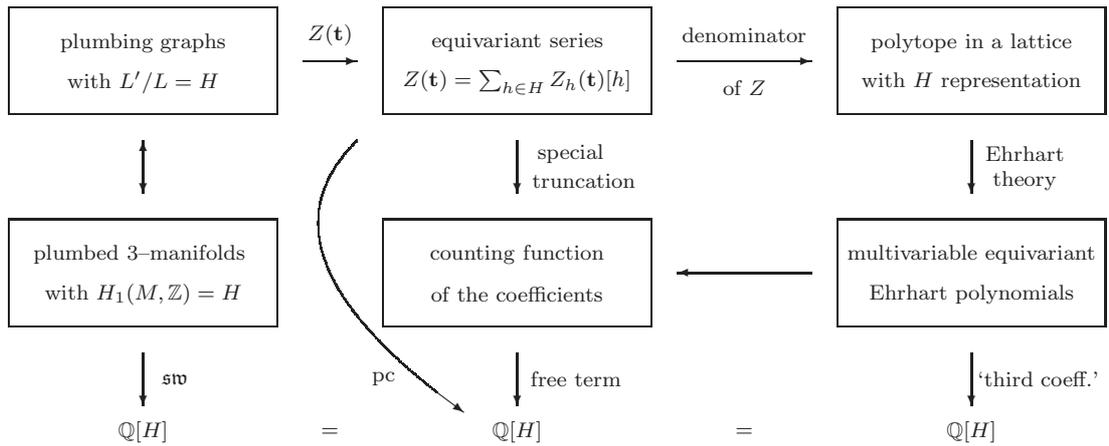
\begin{figure}[h!]\label{fig:2}
\begin{center}
\begin{picture}(400,180)(0,60)

\put(10,180){\framebox(100,40){}}
\put(60,207){\makebox(0,0){\scriptsize{plumbing graphs}}}
\put(60,192){\makebox(0,0){\scriptsize{with $L'/L=H$}}}

\put(10,100){\framebox(100,40){}}
\put(60,127){\makebox(0,0){\scriptsize{plumbed 3--manifolds }}}
\put(60,112){\makebox(0,0){\scriptsize{with $H_1(M,\Z)=H$}}}

\put(60,150){\vector(0,1){20}}\put(60,170){\vector(0,-1){20}}
\put(60,90){\vector(0,-1){20}}
\put(60,60){\makebox(0,0){\scriptsize{$\Q[H]$}}}
\put(72,80){\makebox(0,0){\scriptsize{$\frsw$}}}
\put(120,200){\vector(1,0){20}}
\put(130,210){\makebox(0,0){\scriptsize{$Z(\bt)$}}}
\put(130,60){\makebox(0,0){\scriptsize{$=$}}}
\qbezier(140,170)(100,130)(170,75)
\put(170,75){\vector(4,-3){10}}
\put(150,80){\makebox(0,0){\scriptsize{${\rm  pc}$}}}
\put(150,180){\framebox(100,40){}}
\put(200,207){\makebox(0,0){\scriptsize{equivariant series}}}
\put(200,192){\makebox(0,0){\scriptsize{$Z(\bt)=\sum_{h\in H}Z_h(\bt)[h]$}}}

\put(150,100){\framebox(100,40){}}
\put(200,127){\makebox(0,0){\scriptsize{counting function}}}
\put(200,112){\makebox(0,0){\scriptsize{of the coefficients}}}

\put(220,165){\makebox(0,0){\scriptsize{special}}}
\put(225,155){\makebox(0,0){\scriptsize{truncation}}}
\put(222,80){\makebox(0,0){\scriptsize{free term}}}
\put(200,170){\vector(0,-1){20}}
\put(200,90){\vector(0,-1){20}}
\put(200,60){\makebox(0,0){\scriptsize{$\Q[H]$}}}

\put(260,200){\vector(1,0){50}}
\put(285,210){\makebox(0,0){\scriptsize{denominator}}}
\put(285,190){\makebox(0,0){\scriptsize{of $Z$}}}
\put(285,60){\makebox(0,0){\scriptsize{$=$}}}
\put(310,120){\vector(-1,0){50}}

\put(320,180){\framebox(100,40){}}
\put(370,207){\makebox(0,0){\scriptsize{polytope in a lattice}}}
\put(370,192){\makebox(0,0){\scriptsize{with $H$ representation}}}

\put(320,100){\framebox(100,40){}}

\put(370,127){\makebox(0,0){\scriptsize{multivariable equivariant}}}
\put(370,112){\makebox(0,0){\scriptsize{Ehrhart polynomials}}}

\put(390,165){\makebox(0,0){\scriptsize{Ehrhart}}}
\put(390,155){\makebox(0,0){\scriptsize{theory}}}
\put(395,80){\makebox(0,0){\scriptsize{`third coeff.'}}}
\put(370,170){\vector(0,-1){20}}
\put(370,90){\vector(0,-1){20}}
\put(370,60){\makebox(0,0){\scriptsize{$\Q[H]$}}}
\end{picture}
\end{center}
\caption{The theories associated with $G$.}
\end{figure}

\subsection{A `classical' connection between  polytopes and
gauge invariants (and its limits).}\labelpar{ss:CLASSI} \
The coefficient identification (\ref{s:Last}), and in fact
(\ref{eq:SUM}) too, supply an additional addendum to the intimate relationship between
lattice point counting  and the Riemann--Roch formula, exploited in global algebraic geometry
by toric geometry.

In the literature of normal surface singularities there
is a sequence of results which connect the topology of the link
with the number of lattice points in a certain polytope. Here we list some 
historical details on this subject.

The first is based on the theory of {\em Newton non--degenerate hypersurface
singularities}, see e.g. the second volume of the monograph of 
Arnold, Gussein--Zade and  Varchenko \cite{AGV}. According to this, for such a germ one defines the
{\em Newton polytope} $\Gamma^-_N$ using the non--trivial monomials of the
defining equation of the germ. Then one can prove that several
invariants of the germ can be recovered from $\Gamma^-_N$. 
For example, by a result of Merle and Teissier
\cite{MT}, the geometric genus $p_g$ equals the {\em number of
lattice points} in $((\Z_{>0})^3\cap \Gamma^-_N)$, see also the work of 
Braun and N\'emethi \cite{BN07} into this direction. 

The second is provided by the Laufer--Durfee formula, which determines the
signature of the Milnor fiber $\sigma$ as $-8p_g-K^2-|\cV|$
(\cite{D78}).  Finally, there is a conjecture of Neumann and Wahl \cite{NW},
formulated for hypersurfaces with integral homology sphere links, 
and proved for Brieskorn, suspension \cite{NW}
and splice--quotient \cite{NO1} singularities, according to which
$\sigma/8=\lambda(M)$, the Casson invariant of the link.
Therefore, if all these steps run, for example as in the Brieskorn case,
then the Casson invariant of the link, normalized by $K^2+|\cV|$,
 can be expressed as the number of lattice points
of a polytope associated with the equation of the germ.

 This correspondence has several deficiencies. First, even in simple cases,
we do not know how to extend the correspondence to the equivariant
case, more precisely, how to express the equivariant geometric genus from
$\Gamma^-_N$. Second, the expected generalization, the
Seiberg--Witten invariant conjecture (see \ref{ss:SWIC}), which aims to
identify the Seiberg--Witten invariant of the link with $p_g$
(or $\sigma$), is still open in this case. And, finally, this family of germs is rather
 restrictive.

 The present chapter defines another polytope, which carries an action of the group $H$, and
its {\em Ehrhart invariants determine the Seiberg--Witten invariant in any case}. 
It is not described from
the equations of the germ, but from its multivariable `zeta--function' $Z(\bt)$.

\section{Equivariant multivariable Ehrhart theory}\labelpar{ss:PPET}\ 
In this section we generalize the classical Ehrhart theory to the equivariant multivariable version,
involving non--convex polytopes,
which will fit with our comparison with the equivariant multivariable series provided by plumbing graphs.

Let us start with a $d$--dimensional {\it rational lattice}
 $\calX\subset \Q^d$ and a group homomorphism $\rho:\calX\to
\fH$
to a finite abelian group $\fH$. We consider a {\it rational  vector--dilated
polytope} with parameter  $\bl=(\bl_1,\ldots, \bl_r)$, $\bl_v\in \Z^{m_v}$,
\begin{equation}\label{eq:POL}P^{(\bl)}=\bigcup_{v=1}^rP^{({\bl}_v)}_v, \ \ \mbox{where} \ \
P^{({\bl}_v)}_v=\{{\bf x}\in\R^d\,:\, {\bf A}_v{\bf x}\leq
\bl_v\},\end{equation} where ${\bf A}_v$
is an integral
$m_v\times d$ matrix. If $\{A_{v,\lambda i}\}_{\lambda i}$ and
$\{\bll_{v,\lambda}\}_\lambda$  are the entries of ${\bf A}_v$ and
$\bl_v$, then the inequality ${\bf A}_v{\bf x}\leq \bl_v$ in
(\ref{eq:POL}) reads as $\sum_{i=1}^dx_iA_{v,\lambda i}\leq
\bll_{v,\lambda}$ for any $\lambda=1,\ldots,m_v$.

We will vary the parameter $\bl$ in some `chambers' (described
below for the needed cases) such that the polytopes $P^{(\bl)} $
remain  {\it combinatorially stable} (or preserve their {\it combinatorial type})
 when $\bl$ runs in the same chamber.
This means that their face lattices are isomorphic.
(This implies that they are connected by homeomorphisms, which preserve
the stratification of the faces.) We also suppose that $P^{(\bl)}$
is homeomorphic to a $d$--dimensional manifold. Denote the set of
all closed facets of $P^{(\bl)}$ by $\calF$ and let $\calT$ be a
subset of $\calF$, such that $\cup_{F^{(\bl)}\in \calT}F^{(\bl)}$
is homeomorphic to a $(d-1)$--manifold.

Then  we have the
following generalization to the {\it equivariant version} of
results of Stanley \cite{S74}, McMullen \cite{M78} and Beck
\cite{Beck_c,Beck_m}.
\begin{theorem}\labelpar{th:REC}
For any $h\in \fH$ and $\calT\subset \calF$ let
\begin{equation}\label{eq:REC}
\calL_h({\bf A}, \calT,\bl)
:=\mbox{cardinality of}\ \big(\big(P^{(\bl)}\setminus \cup_{F^{(\bl)}\in \calT}
F^{(\bl)}\big)\cap \rho^{-1}(h)\big).\end{equation}

(a) If $\bl$ moves in some region in such a way that
$P^{(\bl)} $ stays  combinatorially stable then the expression
$\calL_h({\bf A},\calT,\bl)$ is a quasipolynomial in $\bl\in \Z^{\sum m_v}$.

(b) For a fixed combinatorial type of $P^{(\bl)} $ and for a fixed
$\calT$, the quasipolynomials $\calL_h({\bf A},\calT,\bl)$ and
$\calL_{-h}({\bf A},\calF\setminus \calT,\bl)$ satisfy the
Ehrhart--MacDonald--Stanley reciprocity law
\begin{equation}\label{eq:EMDS}
\calL_h({\bf A},\calT,\bl) =(-1)^d\cdot \calL_{-h}({\bf
A},\calF\setminus \calT,\bl)|_{\mbox{\tiny{\rm{replace  $\bl$ by
$-\bl$}}}}.\end{equation}
\end{theorem}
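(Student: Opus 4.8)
The plan is to reduce the equivariant, non-convex, multi-parameter statement to the classical single-polytope Ehrhart--MacDonald--Stanley theorem in two independent steps, and then to reassemble the pieces by inclusion--exclusion.

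First I would dispose of the equivariant layer. Since $\rho\colon\calX\to\fH$ is a homomorphism onto a finite abelian group, its kernel $N:=\Ker\rho$ is a sublattice of finite index (dividing $|\fH|$), and each fibre $\rho^{-1}(h)$ is either empty or a single coset $N+x_h$ with $x_h\in\calX$. Thus $\calL_h(\mathbf A,\calT,\bl)$ is the number of points of the translated lattice $N+x_h$ lying in $P^{(\bl)}\setminus\bigcup_{F\in\calT}F$. For the sign in part (b) it is cleaner to use the finite Fourier transform on $\fH$, writing $\mathbf 1_{\rho^{-1}(h)}(x)=\tfrac1{|\fH|}\sum_{\psi\in\widehat{\fH}}\overline{\psi(h)}\,\psi(\rho(x))$, so that $\calL_h$ becomes a $\Q$--linear combination over $\psi\in\widehat{\fH}$ of the $\psi$--twisted lattice-point sums $\sum_{x\in(P^{(\bl)}\setminus\cup_{\calT}F)\cap\calX}\psi(\rho(x))$. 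Each $\psi\circ\rho$ is constant on cosets of $N$, hence of finite order, and such a root-of-unity weight is absorbed by partitioning $\calX$ into cosets of $N$; so every twisted sum is an ordinary lattice-point count over translates of the fixed lattice $N$. It therefore suffices to prove (a) and (b) in the non-equivariant setting (trivial $\fH$, but allowing an arbitrary rational translate of $P^{(\bl)}$), the equivariant statements following by taking these linear combinations; the replacement $h\mapsto -h$ in (\ref{eq:EMDS}) is produced automatically by complex conjugation of the $\psi(h)$.

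Second, I would reduce the non-convex union to the convex case by inclusion--exclusion over the cover $\{P^{(\bl_v)}_v\}_{v=1}^r$. For each nonempty $S\subseteq\{1,\dots,r\}$ the partial intersection $P_S^{(\bl)}:=\bigcap_{v\in S}P^{(\bl_v)}_v=\{\bx:\mathbf A_v\bx\le\bl_v\ \forall v\in S\}$ is again a rational polytope cut out by an integral system of inequalities parametrised linearly by $\bl$, and combinatorial stability of $P^{(\bl)}$ on the chamber forces each $P_S^{(\bl)}$ to have a constant face lattice there. Writing $\mathbf 1_{\bigcup_v P_v}=\sum_{\emptyset\ne S}(-1)^{|S|+1}\mathbf 1_{P_S}$, together with the analogous identity for the facets in $\calT$ and their intersections with the $P_S$, expresses $\calL_h(\mathbf A,\calT,\bl)$ as an alternating sum of lattice-point counts of the individual rational polytopes $P_S^{(\bl)}$ with prescribed facets removed. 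For a single rational polytope both statements are classical: McMullen's theory of lattice-invariant valuations on rational polytopes, together with the parametric point of view of the vector-partition-function literature \cite{CL,Str,BV}, gives that the count is a quasipolynomial in $\bl$ on the chamber, proving (a); and the Ehrhart--MacDonald--Stanley reciprocity (\cite{S74,M78}, in the refined form of \cite{Beck_c,Beck_m} that interpolates between the closure and the relative interior by a chosen subset of open facets) gives the $(-1)^{d_S}$, $\bl\mapsto-\bl$ duality between ``facets in $\calT$ removed'' and ``facets not in $\calT$ removed'', where $d_S=\dim P_S^{(\bl)}$.

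The remaining point, which I expect to be the main obstacle, is the bookkeeping that converts the alternating sum of these per-$S$ reciprocities into the single clean identity (\ref{eq:EMDS}) with the uniform sign $(-1)^d$. This is exactly where the hypotheses that $P^{(\bl)}$ is homeomorphic to a $d$--manifold and that $\bigcup_{F\in\calT}F$ is a $(d-1)$--manifold enter: they force the lower-dimensional intersections $P_S^{(\bl)}$ with $d_S<d$, and the spurious boundary pieces created by inclusion--exclusion, to cancel in pairs once the duality is applied, leaving only the genuinely $d$--dimensional contribution with sign $(-1)^d$. I would make this precise inside the algebra of indicator functions of rational polyhedra, where ``passing to the relative interior'' is McMullen's valuation $\mathbf 1_P\mapsto(-1)^{\dim P}\mathbf 1_{\mathrm{relint}\,P}$, which is multiplicative for transverse intersections; the manifold hypotheses guarantee that the boundary complex of $P^{(\bl)}$ behaves like that of a pseudomanifold, so the Euler-characteristic cancellations go through just as in the single-polytope case, with the equivariant and multi-parameter decorations riding along passively.
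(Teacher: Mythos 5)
Your proposal is correct and follows essentially the same route as the paper: reduce the non-convex union to the convex pieces $P_v^{(\bl_v)}$ by additivity/inclusion--exclusion (the paper delegates the manifold-hypothesis bookkeeping to \cite[\S 2]{Beck_m}), reduce the equivariant count to an ordinary lattice-point count over cosets of $\ker\rho$ of a polytope with shifted rational parameters, and then invoke the classical parametric Ehrhart and reciprocity results of \cite{CL,Beck_m}. The only difference is that the paper handles the group by directly translating by a coset representative $\mathbf{r}_h$ (the sign $h\mapsto -h$ coming from the $\pm\mathbf{r}_h$ shift), whereas you use the finite Fourier transform --- an alternative the paper itself records in the remark immediately following the theorem.
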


To avoid any confusion regarding the expression of (\ref{eq:EMDS}) we note:
the  two quasipolynomials in (\ref{eq:EMDS}) are associated with that domain of definition
(chamber) which corresponds to the fixed combinatorial type.
Usually for $-\bl$ the combinatorial type  of $P^{(\bl)} $ is
different, hence the right hand side of (\ref{eq:EMDS}) {\it need
not equal} $(-1)^d\cdot \calL_{-h}({\bf A},\calF\setminus
\calT,-\bl)$. This last expression is the value at $-\bl$ of the
quasipolynomial associated with the chamber which contains $-\bl$.

For a  reformulation of the identity (\ref{eq:EMDS}) 
in terms of the fixed chamber see Theorem \ref{th:PQP}(c).

\begin{proof} The statements for $\fH=0$
 are identical with those of Beck from \cite{Beck_m}.
Part (a) above for arbitrary $\fH$ can be proved identically as in \cite{Beck_m}
applied for the situation when the parameters $\bl$ run in
an overlattice of $\Z^{\sum m_v}$, instead of  $\Z^{\sum m_v}$.
Equivalently, one can apply  \cite{CL},
which considers the non--equivariant case, but the integral  parameters $\bl$ of Beck
are replaced by  {\it rational affine  parameters}.

For the convenience of the reader we provide the proof.  First we
notice that via standard additivity formulae, cf.
\cite[\S\,2]{Beck_m}, it is enough to prove the statement for each
convex $P_v^{(\bl_v)}$. But, considering  $P_v^{(\bl_v)}$ and $K:=
\ker(\rho)$, for any ${\bf r}\in\calX$ one has the isomorphism
$$\{{\bf x}\in K+{\bf r}\,:\, {\bf A}_v{\bf x}\leq \bl_v\}\simeq
 \{{\bf y}\in K\,:\, {\bf A}_v{\bf y}\leq \bl_v-{\bf A}_v{\bf r}\}.$$
Hence \cite[Theorem 2]{CL} (or \cite{Beck_m} for an overlattice of $\Z^{\sum m_v}$)
 can  be applied, which shows (a). Next, part (b) can also be reduced
to \cite{Beck_m}.
Indeed, we can reduce the discussion again to $P_v^{(\bl_v)}$. We drop
the index $v$,  we  choose ${\bf r}_h\in \calX$ with
$\rho({\bf r}_h)=h$, and  we fix some $\bl_0$.
 Then for ${\bf x}\in K\pm {\bf r}_h $ with ${\bf A}{\bf x}\leq \bl_0$ we take
${\bf y}:={\bf x}\mp {\bf r}_y$ and ${\bf k}:=\bl_0\mp {\bf A}{\bf r}_h$, which satisfy
${\bf y}\in K$ and ${\bf A}{\bf y}\leq {\bf k}$. Therefore, using \cite{Beck_m}
for this polytope, we obtain 
$$\calL_h({\bf A},\calT,\bl_0)=
\calL_0({\bf A},\calT,{\bf k})=
 (-1)^d\cdot \calL_0({\bf A},\calF\setminus\calT,{\bf k})|_{\mbox{\tiny{\rm{replace  ${\bf k}$ 
 by $-{\bf k}$}}}}$$
 $$\hspace{5.4cm} =
 (-1)^d\cdot \calL_{-h}({\bf A},\calF\setminus \calT,\bl_0)|_{\mbox{\tiny{\rm{replace  ${\bf l_0}$
 by $-{\bf l_0}$}}}},
$$
where the second and the third term is associated with the lattice $K$. 
\end{proof}
\begin{definition}
The quasipolynomial $\calL_h({\bf A}, \calT,\bl)$  considered in Theorem \ref{th:REC}, associated with a fixed
combinatorial type of $P^{({\bf l})}$, is called the {\it equivariant multivariable quasipolynomial }
associated with the corresponding data.

If we vary ${\bf l}$ in $\Z^{\sum m_v}$ (hence we allow the variation of the combinatorial type)
we obtain the  {\it equivariant multivariable piecewise quasipolynomial } $\calL_h({\bf A}, \calT,\bl)$
(see also Theorem \ref{th:PQP} and Corollary \ref{cor:Taylor} below).
\end{definition}

\begin{remark}
Parallel to the collection $\{\calL_h\}_h$ defined in
(\ref{eq:REC}) one can consider their Fourier transforms as well:
for any character $\xi\in \widehat{\fH}=\Hom (\fH,S^1)$, one defines
\begin{equation}\label{eq:REC2}
\calL_\xi({\bf A}, \calT,\bl) :=\sum_{{\bf x}\in P^{(\bl)}\setminus
\cup_{F^{(\bl)}\in \calT} F^{(\bl)}}
\xi^{-1}(\rho({\bf x})),
\end{equation} which
satisfies $\calL_\xi=\sum_h\calL_h\cdot \xi^{-1}(h), \ \ \mbox{and
} \ \ |\fH|\cdot \calL_h=\sum_\xi\calL_\xi\cdot \xi(h)$. Hence, the
above properties of $\calL_h$ can be obtained from
similar properties of $\calL_\xi$ as well. Hence, Theorem \ref{th:REC} can be
deduced from \cite[\S\,4.3]{BV} too.
\end{remark}

\begin{remark} In the sequel we will not consider polytopes with this
high generality: our polytopes will be special ones associated with the
denominators of type $\prod_i (1-\bt^{a_i})$ of multivariable rational functions, or their
Taylor series.
In order to avoid unnecessary technical details,
the stability of the combinatorial type of $P^{({\bf l})}$, and the corresponding chamber decomposition of
$\R^{\sum m_v}$  will also be treated for this special polytopes, see \ref{bek:combtypes}.
\end{remark}

\section{Multivariable rational functions and their \newline periodic constants}\labelpar{s:PC}

\subsection{Historical remark: the one--variable case}\labelpar{PC} \
The concept of the periodic constant for one--variable series was introduced by N\'emethi and Okuma. 
One can find the details in \cite[3.9]{NO1} and \cite[4.8(1)]{Opg}, however, we present it in the sequel. 
 
Let  $S(t) = \sum_{l\geq 0} c_l t^l\in \Z[[t]]$  be a
formal power series. Suppose that for some positive integer $p$,
the expression $\sum_{l=0}^{pn-1} c_l$ is a polynomial $P_p(n)$ in
the variable $n$.  Then the constant term $P_p(0)$ of $P_p(n)$ is
independent of the `period' $p$. We call $P_p(0)$ the
\emph{periodic constant} of $S$ and denote it by $\mathrm{pc}(S)$.
For example, if $l\mapsto Q(l)$ is a quasipolynomial and
$S(t):=\sum_{l\geq 0}Q(l)t^l$, then one can take for $p$ the period
of $Q$,  and one shows that $\mathrm{pc}(\sum_{l\geq
0}Q(l)t^l)=0$.

Assume that $S(t)$ is the Hilbert series associated with a graded
algebra/vector space $A=\oplus_{l\geq 0}A_l$ (i.e. $c_l=\dim
A_l$), and the series $S$ admits a Hilbert quasipolynomial $Q(l)$
(that is, $c_l=Q(l)$ for $l\gg 0$). Since the periodic constant of
$\sum_lQ(l)t^l$ is zero, the periodic constant of $S(t)$ measures
exactly the difference between $S(t)$ and its `regularized series'
$S_{reg}(t):=\sum_{l\geq 0}Q(l)t^l$. That is:
$\mathrm{pc}(S)=(S-S_{reg})(1)$ collecting all the
anomalies of the starting elements of $S$.

Note that $S_{reg}(t)$ can be represented by a rational function
of negative degree with denominator of type
$A(t)=\prod_i(1-t^{a_i})$, and $(S-S_{reg})(t)$ is a polynomial.
Conversely,  one has the following reinterpretation of the
periodic constant \cite[7.0.2]{BN}. If $\sum_lc_lt^l$ is a
rational function $B(t)/A(t)$ with $A(t)= \prod_i(1-t^{a_i})$, and
one rewrites it as $C(t)+D(t)/A(t)$ with $C$ and $D$  polynomials
and $D(t)/A(t)$ of negative degree, then $\mathrm{pc}(S)=C(1)$.
From this fact one also gets that
$\mathrm{pc}(S(t))=\mathrm{pc}(S(t^N))$ for any $N\in \Z_{>0}$. We
will refer to $C(t)$ as the {\it polynomial part} of $S$.

As an  example, consider a subset $\calS\subset\Z_{\geq 0}$ with finite complement.  Then
 $S(t)=\sum_{s\in \calS}t^s$ rewritten is $1/(1-t)-\sum_{s\not\in \calS}t^s$, hence
 $\mathrm{pc}(S)=-\#(\Z_{\geq 0}\setminus \calS)$. In particular, if $\calS$ is the
 semigroup of a local irreducible plane curve singularity, then $-\mathrm{pc}(S)$ is the
 delta--invariant of that germ. Our study  below   includes the generalization of  this fact to
  surface singularities.
  
\subsection{Multivariable  generalization}\labelpar{ss:GENRF}
\bekezdes\labelpar{bek:LL'}
We wish  to extend the definition of the periodic constant to the case
  of Taylor expansions at the origin  of multivariable rational functions
of type
\begin{equation}\label{eq:func}
f(\bt)=\frac{\sum_{k=1}^r\iota_k\bt^{b_k}}{\prod_{i=1}^d
(1-\bt^{a_i})} \ \ \ \ (\iota_k\in\setZ).
\end{equation}
Let us explain the notation.
Let $L$ be a lattice of rank $s$ with fixed bases  $\{E_v\}_{v=1}^s$.
Let $L'$ be an overlattice of it with same rank, $L\subset L'\subset L\otimes \Q$ with $|L'/L|=\frdd$.
Then, in  (\ref{eq:func}),
$\{b_k\}_{k=1}^r, \ \{a_i\}_{i=1}^d\in L'$ and for any
$l'=\sum_vl'_vE_v\in L'$ we write $\bt^{l'}=t_1^{l'_1}\dots
t_{s}^{l'_{s}}$.  We also assume that
{\it all the coordinates $a_{i,v}$ of $a_i$  are strict positive},
Hence, in general, the coefficients $l'_v$ are not integral, and the
Laurent   expansion $Tf({\bt})$ of $f({\bt})$ at the origin is
$$Tf(\bt)=\sum_{l'}p_{l'}\bt^{l'}\in \Z[[t_1^{1/\frdd},\ldots, t_s^{1/\frdd}]]
[t_1^{-1/\frdd},\ldots, t_s^{-1/\frdd}]:=\Z[[\bt^{1/\frdd}]][\bt^{-1/\frdd}].$$
We also consider the natural partial ordering of $L\otimes \Q$ (defined as in  \ref{ss:11}).
If all vectors $b_k\geq 0$ then $Tf(\bt)$ is in
$\sum_{l'}p_{l'}\bt^{l'}\in \Z[[\bt^{1/\frdd}]]$.
Sometimes we will not make difference between $f$ and $Tf$.

\bekezdes\labelpar{bek:LL'2} 
This will be extended to the following equivariant case. We fix a
finite abelian group $\cG$, and for each $g\in \cG$ a series (or rational function)
 $Tf_g\in \Z[[\bt^{1/\frdd}]][\bt^{-1/\frdd}]$ as in
\ref{bek:LL'}, and we set
$$Tf^e(\bt):=\sum_{g\in G}\, Tf_g(\bt)\cdot [g]\in \Z[[\bt^{1/\frdd}]][\bt^{-1/\frdd}][G].$$
Sometimes this equivariant extension is given automatically in the context of \ref{bek:LL'}.
Indeed, if  in \ref{bek:LL'} we set  $H:=L'/L$, and for
\begin{equation}\label{eq:fh}
Tf=\sum_{l'}p_{l'}\bt^{l'} \ \ \ \mbox{we define} \ \ \
Tf_h:=\sum_{[l']=h}p_{l'}\bt^{l'},
\end{equation}
we obtain a decomposition of $Tf$ as a sum $\sum_hTf_h\in \Z[[\bt^{1/\frdd}]][\bt^{-1/\frdd}][H]$
(with $\frdd=|H|$).

In our cases we always start  with this  group $L'/L=H$ (hence
$f$ determines its decomposition $\sum_hf_h$). Nevertheless, some alterations will appear.
First, we might consider the non--equivariant case, hence we can forget the decomposition over $H$.
Another case appears as follows.  In order to simplify the rational function we will eliminate
some of its  variables (e.g., we substitute $t_i=1$ for certain indices $i$),
or we restrict $f$ to a linear subspace $V$. Then,
after this substitution,  the restricted function
$f|_{t_i=1}$ will not determine anymore the restrictions $(f_h)|_{t_i=1}$  of the `old' components $f_h$.
That is, the new pair of lattices $(L_V,L'_V)=(L\cap V,L'\cap V)$
and the `old group' $H=L'/L$ become rather independent.  In such cases we will
 keep the old group $H=L'/L$ (and the `old' decomposition  $f_h$)
without asking any compatibility with $L'_V/L_V$.

\bekezdes\label{bek:LL'3} 
Since all the coordinates $a_{i,v}$ of $a_i$ are strict positive,
for any $Tf(\bt)=\sum_{l'}p_{l'}\bt^{l'}$  we get a well--defined  counting function of the coefficients,
$$l'\mapsto Q(l'):=\sum_{l''\not\geq l'} \, p_{l''}.$$
If $Tf=\sum_hTf_h$, then each $Tf_h$ determines a counting function  $Q_h$ defined in the same way.

If $H=L'/L$ and $Tf$ decomposes into $\sum_hTf_h$ under the law from (\ref{eq:fh}), then
\begin{equation}\label{eq:PCDEFa}
\sum_{l''\not\geq l'} \, p_{l''}\cdot [l'']=\sum _{h\in H}\,Q_h(l')[h].
\end{equation}
The definitions are motivated by formulae (\ref{eq:SUM}) and (\ref{eq:KV2}).
The functions $Q_h(l')$ will be studied in the next subsections via Ehrhart theory.

\subsection{Ehrhart quasipolynomials associated with \newline denominators of rational functions}\labelpar{ss:EP} \
First we consider the
case $d>0$, the special  case  $d=0$ will  be treated  in
\ref{ss:d0}.

\bekezdes\labelpar{bek:pol} {\bf The polytope associated with $\{a_i\}_{i=1}^d$.}\labelpar{constr:pol}
In order to run the Ehrhart theory we have first to fix the lattice $\calX$ and the representation
$\rho:\calX\to \fH$, cf. section \ref{ss:PPET}.
First, we set  $\calX=\Z^d$ and  $\alpha:\calX\to L'$ given by
$\alpha({\bf x})=\sum_{i=1}^dx_ia_i\in L'$.
In the sequel we  consider two possibilities for  $(\fH,\rho)$
which basically will cover all the cases we wish to study (equivariant/non--equivariant cases combined with situations  before  or after the reduction of  variables,
 see the comment in \ref{bek:LL'2}):

(a) $\fH=H=L'/L$ and $\rho$ is the composition
 $\calX\stackrel{\alpha}{\longrightarrow} L'\to L'/L$.

(b) $\fH=0$ and $\rho=0$.

This choice has an effect on the equivariant decomposition $f^e=\sum_gf_g[g]$ of $f$ too. In case (a)
usually we have  $\cG=H$ and the decomposition is given by \ref{eq:fh}. In case (b) we can take either
$\cG=0$ (this can happen e.g. when we forget the decomposition in case (a), and
we sum up all the components),
or we can take any $\cG$ (by specifying each $f_g$). In this latter case each fixed $f_g$
behaves like a function in the non--equivariant case $\cG=0$, hence can be treated in the same way.

Since the case (b) follows from case (a) (by forgetting the extra information from $\fH$), in the sequel
we provide the details for case (a). Hence let us assume $\fH=\cG=L'/L$.

 Consider the
matrix ${\bf A}$ with column vectors $|H|a_i$ and write ${\bf A}_v$ for its rows. Then the construction of (\ref{eq:POL})
can be repeated (eventually completing each ${\bf A}_v$ to assure the inequalities $x_i\geq 0$ as well).
For  $l\in \sum_vl_vE_v\in L$ consider
\begin{equation}\label{eq:Pv}
P_v^{\triangleleft} :=\{{\bf x}\in (\R_{\geq 0})^d\,:\, |H|\cdot
\sum_ix_ia_{i,v} < l_v\} \ \ \mbox{and} \ \
P^\triangleleft :=\bigcup_{v=1}^sP_v^\triangleleft.\end{equation}  The closure $P_v$ of $P_v^\triangleleft$
 is a dilated
convex (simplicial)  polytope depending on the one-dimensional parameter $l_v$.
Moreover,  $P^\triangleleft$
is described  via the partial ordering  of $L\otimes\R$ as the set
$\{l\,:\, \sum_ix_ia_i\not\geq l/|H|\}$. Since $L'\subset L/|H|$, we can
restrict ourself to the lattice $L'$ (preserving all the general
results of section \ref{ss:PPET}). Hence for any $l'\in L'$ we
set
\begin{equation}\label{eq:PL}
P^{(l'),\triangleleft }:=\{{\bf x}\in (\R_{\geq 0})^d\,:\, \sum_ix_ia_i\not\geq l'\}, \ \ \
P^{(l')}=\mbox{closure of }(
P^{(l'),\triangleleft }).\end{equation}
The combinatorial type of $P^{(l')}$ might vary with $l'$. Nevertheless, by definition, the facets will be
grouped for all different combinatorial types  by the same principle: we consider the
coordinate facets $F_i:=P^{(l')}\cap \{x_i=0\}$, $1\leq i\leq d$, and
we denote by $\calT$ the collection of all other facets. Hence $P^{(l'),\triangleleft }=
P^{(l')}\setminus  \cup_{F^{(l')}\in \calT}F^{(l')}$. The construction is
motivated by the summation from (\ref{eq:SUM})
(although in the general statements the choice of $\calT$ is irrelevant).

 Then \ref{th:JEMS} and \ref{PC} lead to the next counting function defined in the
 group ring $\Z[H]$ of $H$:
\begin{equation}\label{eq:calP}
\calL^e({\bf A},\calT,l'):=\sum_{h\in H} \calL_h({\bf A},\calT,l')\cdot [h]
:=\sum 1\cdot [l'']\in \Z[H],
\end{equation}
where the last sum runs over $l''\in \big(P^{(l')}\setminus \cup_{F^{(l')}\in \calT}
F^{(l')}\big)\cap L'=P^{(l'),\triangleleft}\cap L'$.

The corresponding  non--equivariant counting function, corresponding to $\cG=0$ is denoted by
$$ \calL_{ne}({\bf A},\calT,l'):=\sum_{h\in H} \calL_h({\bf A},\calT,l')\in \Z.$$

Similarly, we set $\calL^e({\bf A},\calF\setminus \calT,l')$ too. For both of them
Theorem \ref{th:REC} applies.

By the very construction, we have the following identity. Consider the
 equivariant Taylor expansion at the origin of the function determined by the {\it denominator of $f$},  namely
\begin{equation}\label{eq:Taylorden}
\widetilde{f}^e(\bt)=\frac{1}{\prod_{i=1}^d (1-[a_i]\bt^{a_i})}=
\sum_{l''}\widetilde{p}_{l''}\bt^{l''}\cdot [l'']\in \Z[[\bt^{1/|H|}]]\, [H].
\end{equation}
Note that since all the $\{E_v\}$--coefficients of each $a_i$ are strict positive, for any
$l'\in L'$ the   set $\{l''\,:\, \widetilde{p}_{l''}\not=0, l''\not\geq l'\}$ is finite. Then, by the above construction,
\begin{equation}\label{eq:PCDEFaden}
\sum_{l''\not\geq l'} \, \widetilde{p}_{l''}\cdot [l'']=\calL^e({\bf A},\calT,l').
\end{equation}

\bekezdes\labelpar{bek:combtypes}{\bf Combinatorial types,
chambers.}  Next, we wish to make precise the {\it
combinatorial stability} condition. The result of Sturmfels
\cite{Str}, Brion--Vergne \cite{BV}, Clauss--Loechner \cite{CL} and
Szenes--Vergne \cite{SZV} implies that $\calL^e$ from (\ref{eq:PCDEFaden})
(that is, each $\calL_h$)
is a {\it piecewise
quasipolynomial} on $L'$: the parameter space $L\otimes \R$
decomposes into several chambers, the restriction of $\calL^e$ on
each chamber is a quasipolynomial, and $\calL^e$ is continuous. The chambers
are described as follows.

Notice that the combinatorial type of $P^{(l')}$ in (\ref{eq:PL})
vary in the same way as the closure of its {\it convex} complement in $\R_{\geq
0}^d$, namely
\begin{equation}\label{eq:PCOMP}
\{ {\bf x}\in (\R_{\geq 0})^d\,:\, \sum_ix_ia_i\geq l'\},
\end{equation}
since both are  determined by their common boundary $\calT$. The
inequalities  of (\ref{eq:PCOMP}) can be viewed as a {\it vector
partition}  $\sum_ix_ia_i+ \sum_v y_v(-E_v)=l'$, with $x_i\geq 0$
and $y_v\geq 0$.  Hence, according to the above references, we
have the following chamber decomposition of $L\otimes \R$.

Let ${\bf M}$  be the matrix with column vectors $\{a_i\}_{i=1}^d$
and $\{-E_v\}_{v=1}^s$. A subset $\sigma$ of indices of columns is
called {\it basis} if the corresponding columns form a basis of
$L\otimes \R$; in this case we write   $Cone({\bf M}_{\sigma})$
for the positive closed cone generated by them.  Then the chamber
decomposition is the polyhedral subdivision of $L\otimes \R$
provided by the common refinement of the cones $Cone({\bf
M}_{\sigma})$, where $\sigma$ runs all over the basis. {\it A chamber
is a closed cone of the subdivision whose interior is non--empty.}
Usually we denote them by $\calC$, let their
index set (collection) be $\mathfrak{C}$.

We will need the associated {\it disjoint} decomposition of $L\otimes
\R$ with relative open cones as well. A typical element  of this disjoint decomposition
is the {\it
relative interior of an intersection of type $\cap_{\calC\in
\mathfrak{C}'}\calC$}, where $\mathfrak{C}'$ runs over the subsets
of $\mathfrak{C}$. For these cones we use the notation
$\calC_{op}$.

Each chamber $\calC$ determines an open cone, namely its interior.
And, conversely, each top dimensional open cone determines a
chamber $\calC$, namely its closure.

 The next theorem is the
direct consequence of \cite[4.4]{BV}, \cite[0.2]{SZV} and
(\ref{th:REC}) using the additivity of the Ehrhart quasipolynomial
on the suitable convex parts of $P^{(l')}$. (We state it  for our
specific facet--collection $\calT$, the case which will be used
later, but it is true for any other facet--decomposition of the
boundary whenever  $\cup_{F^{(l')}\in \calT}F^{(l')}$ is
homeomorphic to a $(d-1)$--manifold.)
\begin{theorem}\labelpar{th:PQP}  (a) For each relative open cone  $\calC_{op}$ of $L\otimes \R$,
 $P^{(l')}$ is combinatorially stable, that is, the polytopes
  $\{P^{(l')}\}_{l'\in \calC_{op}}$ have the same combinatorial type.
 Therefore, for any fixed $h\in H$, the restrictions
 $\calL^{\calC_{op}}_h({\bf A},\calT)$ and  $\calL^{\calC_{op}}_h({\bf A},\calF\setminus \calT)$
to $\calC_{op}$  of
  $\calL_h({\bf A},\calT)$ and  $\calL_h({\bf A},\calF\setminus \calT)$
 respectively  are quasipolynomials.

(b) These quasipolynomials have a continuous extension to the
closure of $\calC_{op}$. Namely, if $\calC_{op}'$ is in the
closure of $\calC_{op}$, then
 $\calL^{\calC_{op}'}_h({\bf A},\calT)$ is the restriction to
 $\calC_{op}'$ of the (abstract) quasipolynomial  $\calL^{\calC_{op}}_h({\bf
 A},\calT)$. (Similarly for $\calL^{\calC_{op}}_h({\bf A},\calF\setminus
 \calT)$.)

In particular, for any chamber $\calC$ one has a well--defined
quasipolynomial  $\calL^{\calC}_h({\bf A},\calT)$, defined as
$\calL^{\calC_{op}}_h({\bf A},\calT)$, where $\calC_{op}$ is the
interior of $\calC$, which equals $\calL_h({\bf A},\calT)$ for
all points of $\calC$.

This also shows that for any two chambers $\calC_1$ and $\calC_2$
one has the continuity property
\begin{equation}\label{eq:CONTINa}
\calL_h^{{\calC_1}}({\bf A},\calT)|_{\calC_1\cap \calC_2}=
\calL_h^{{\calC_2}}({\bf A},\calT)|_{\calC_1\cap
\calC_2}.\end{equation}

(c) $\calL^{\calC}_h({\bf A},\calT)$ and $\calL^{\calC}_{-h}({\bf A},\calF\setminus \calT)$,
as abstract quasipolynomials associated with a fixed chamber $\calC$,   satisfy the reciprocity
 $$\calL^{\calC}_h({\bf A},\calT,l')=(-1)^d\cdot\calL^{\calC}_{-h}({\bf A},\calF\setminus \calT,-l').$$

\end{theorem}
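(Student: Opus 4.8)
\textbf{Proof plan for Theorem \ref{th:PQP}.}

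The plan is to deduce all three parts from the general equivariant reciprocity Theorem \ref{th:REC} together with the chamber-decomposition results of \cite{BV,SZV,CL,Str}, by carefully matching up the abstract data $(\calX,\rho,\bf A,\calT)$ of Section \ref{ss:PPET} with the concrete polytopes $P^{(l')}$ of \ref{constr:pol}. First I would fix the lattice $\calX=\Z^d$ and the representation $\rho:\calX\to H$ from \ref{constr:pol}(a), and recall that the matrix ${\bf A}$ has column vectors $|H|a_i$; the polytope $P^{(l')}$ is then exactly a vector-dilated polytope in the sense of (\ref{eq:POL}), with parameter $\bl=l'$ sitting (after the standard rescaling) in $L'\subset L/|H|$. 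The key observation for part (a) is that the combinatorial type of $P^{(l')}$ is governed by the convex complement (\ref{eq:PCOMP}), which is exactly the region cut out by the vector partition $\sum_i x_i a_i+\sum_v y_v(-E_v)=l'$, $x_i,y_v\ge 0$. By the cited results of Sturmfels, Brion--Vergne, Clauss--Loechner and Szenes--Vergne, the parameter space $L\otimes\R$ decomposes into the chambers $\calC$ obtained as the common refinement of the cones $Cone({\bf M}_\sigma)$ over bases $\sigma$, and on each relative-open cone $\calC_{op}$ the combinatorial type of $P^{(l')}$ is constant. Then Theorem \ref{th:REC}(a) (applied with $\bl$ ranging in an overlattice of $\Z^{\sum m_v}$ as in its proof, to accommodate the fact that $l'$ runs in $L'$ rather than $\Z^d$) gives that $\calL_h({\bf A},\calT,l')$, restricted to any such $\calC_{op}$, is a quasipolynomial; the same argument with $\calF\setminus\calT$ handles the complementary collection of facets.

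For part (b) I would argue continuity along faces of the chamber decomposition. Since $\calL_h({\bf A},\calT,l')$ is by construction the number of lattice points of $\rho^{-1}(h)$ in the fixed real region $P^{(l'),\triangleleft}$, and this region varies continuously (in the Hausdorff sense) with $l'$, no lattice point can be created or destroyed ``in the limit'' except along the facets in $\calT$, which are precisely the ones we have excised; a standard limiting argument — as in \cite{BV,SZV} — then shows that the quasipolynomial $\calL^{\calC_{op}}_h({\bf A},\calT)$ attached to a top-dimensional $\calC_{op}$ restricts, on any face $\calC'_{op}$ of its closure, to the quasipolynomial attached to $\calC'_{op}$. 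This yields a well-defined abstract quasipolynomial $\calL^{\calC}_h({\bf A},\calT)$ for each chamber $\calC$, obtained from the interior $\calC_{op}$ of $\calC$, and agreeing pointwise with $\calL_h({\bf A},\calT)$ on all of $\calC$. The continuity identity (\ref{eq:CONTINa}) on $\calC_1\cap\calC_2$ is then immediate, since both sides agree with the common value $\calL_h({\bf A},\calT)$ on that intersection (which is a union of lower-dimensional cones of the subdivision, where $\calL_h$ is unambiguously defined).

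For part (c) I would specialize the reciprocity law (\ref{eq:EMDS}) of Theorem \ref{th:REC}(b): for a fixed combinatorial type — equivalently, for $l'$ in a fixed chamber $\calC$ — one has $\calL_h({\bf A},\calT,\bl)=(-1)^d\calL_{-h}({\bf A},\calF\setminus\calT,\bl)\big|_{\bl\mapsto -\bl}$, where the right-hand side is interpreted as the value at $-\bl$ of the \emph{abstract} quasipolynomial associated with the chamber $\calC$ itself (not the chamber containing $-\bl$); this is exactly the formulation the remark after Theorem \ref{th:REC} is at pains to clarify. Rewriting this with $\bl=l'$ and the $d$-dimensional polytope $P^{(l')}$ gives $\calL^{\calC}_h({\bf A},\calT,l')=(-1)^d\cdot\calL^{\calC}_{-h}({\bf A},\calF\setminus\calT,-l')$, which is the claim. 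The main obstacle I anticipate is not any single deep step but the bookkeeping of part (b): one must be careful that the ``abstract quasipolynomial'' attached to a chamber is genuinely independent of which top-dimensional open cone in its closure one uses to define it, and that the excised facet collection $\calT$ behaves consistently under passing to boundary chambers — this is where the homeomorphism-to-a-manifold hypotheses on $\cup_{F\in\calT}F$ and $P^{(l')}$ from Section \ref{ss:PPET} are actually used, and where one has to invoke the additivity of the Ehrhart quasipolynomial over the convex pieces of $P^{(l')}$ rather than over $P^{(l')}$ itself.
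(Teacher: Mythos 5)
Your proposal follows essentially the same route as the paper, which derives Theorem \ref{th:PQP} directly from \cite[4.4]{BV}, \cite[0.2]{SZV} and Theorem \ref{th:REC}, using the additivity of the Ehrhart quasipolynomial over the convex pieces $P_v^{(l')}$ of the non--convex union $P^{(l')}$. Your identification of the vector--partition description (\ref{eq:PCOMP}) as the source of the chamber decomposition, and your reading of the reciprocity in (c) as an identity of abstract quasipolynomials attached to the \emph{fixed} chamber $\calC$, match the paper's intent exactly.
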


\noindent
We have the following consequences regarding the counting function $l'\mapsto Q_h(l')$ of $f^e(\bt)$
defined in (\ref{eq:PCDEFa}):

\begin{corollary}\labelpar{cor:Taylor}
(a)  $Q_h$ is a piecewise quasipolynomial.
Indeed, for any $h\in H$ and $l'\in L'$
\begin{equation}\label{eq:IND}
Q_h(l')=\sum_k\iota_k\cdot\calL_{h-[b_k]}({\bf A},\calT,l'-b_k).
\end{equation}   
In particular, the right hand side of (\ref{eq:IND}) is independent
of the representation of $f$ as in (\ref{eq:func}) (that is, of the
choice of $\{b_k,\,a_i\}_{k,i}$), it depends only on the rational function $f$.

(b) Fix a chamber $\calC$  of $L\otimes \R$, cf. \ref{th:PQP}, and for any $h\in H$ define
the quasipolynomial
\begin{equation}\label{eq:qbar}
\overline{Q}_h^\calC(l'):=
\sum_k\iota_k\cdot\calL^\calC_{h-[b_k]}({\bf A},\calT,l'-b_k).
\end{equation}
Then  the restriction of
$Q_h(l')$ to $\cap_k(b_k+\calC)$ is a quasipolynomial, namely
\begin{equation}\label{ex:qbar2}
Q_h(l')=\overline{Q}_h^\calC(l') \ \mbox{ on \
$\bigcap_k(b_k+\calC)$.}\end{equation} Moreover,
there exists $l'_*\in\calC$ such that $l'_*+\calC\subset \cap_k(b_k+\calC)$.

 (Warning:
$\calL^\calC_{h-[b_k]}({\bf A},\calT,l'-b_k)\not=\calL_{h-[b_k]}({\bf A},\calT,l'-b_k)$
unless $l'-[b_k]\in \calC$.)

(c) For any fixed $h\in H$,
the quasipolynomial  $\overline{Q}_h^\calC(l')$ satisfies  the following property:
for any $l'\in L'$ with $[l']=h$, and any $q\in\square$ (the semi-open unit cube), one has
\begin{equation}\label{eq:cccc}\overline{Q}_h^\calC(l')=\overline{Q}_h^\calC(l'-q).\end{equation}
In particular, by taking $l'=q=r_h$:
\begin{equation}\label{eq:cccc2}\overline{Q}_h^\calC(r_h)=\overline{Q}_h^\calC(0).\end{equation}
\end{corollary}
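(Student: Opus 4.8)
The plan is to deduce the quasi-periodicity property (\ref{eq:cccc}) directly from the defining formula (\ref{eq:qbar}) for $\overline{Q}_h^\calC$, by reducing it to a single statement about the equivariant Ehrhart quasipolynomials $\calL^\calC_h({\bf A},\calT)$ attached to the polytopes $P^{(l')}$ of \ref{constr:pol}. Concretely, I would first observe that, since $q\in\square$ satisfies $0\le q_v<1$ in every coordinate and $[q]$ accounts precisely for the class shift, it suffices to prove the "building block" identity
\begin{equation*}
\calL^\calC_{h}({\bf A},\calT,l')=\calL^\calC_{h+[q]}({\bf A},\calT,l'+q)\quad\text{for all }q\in\square,\ l'\in\calC,
\end{equation*}
because then substituting $l'\mapsto l'-b_k$, $h\mapsto h-[b_k]$ and summing against $\iota_k$ turns the right-hand side of (\ref{eq:qbar}) evaluated at $l'-q$ into the same expression evaluated at $l'$ (the shift $q$ is absorbed uniformly in each term, and the class bookkeeping matches because $[(l'-q)-b_k]=[l']-[q]-[b_k]$). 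The hypothesis $[l']=h$ and $l'\in L'$ then gives the stated equality on all of $L'$ via the continuous, chamber-wise quasipolynomial extension of Theorem \ref{th:PQP}(b).

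For the building block identity itself, the key point is geometric: the polytope $P^{(l')}=\overline{\{{\bf x}\in(\R_{\ge0})^d : \sum_i x_i a_i\not\ge l'\}}$ of (\ref{eq:PL}), as a region in $\R^d$, changes only when some hyperplane $\sum_i x_i a_{i,v}=l'_v$ sweeps past a lattice point of $\calX=\Z^d$. Perturbing $l'$ by $q$ with $0\le q_v<1$ moves each bounding hyperplane by a strictly sub-unit amount in the relevant direction, so no lattice point of $\Z^d$ crosses the boundary; hence the set $P^{(l'),\triangleleft}\cap L'$ and $P^{(l'+q),\triangleleft}\cap L'$ have "the same shape" and the counting functions agree up to the universal class relabelling by $[q]$. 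More precisely, the map $\alpha:\calX\to L'$ is fixed, the new bound $l'+q$ differs from $l'$ by a vector whose coordinates lie in $[0,1)$, and the correspondence $l''\mapsto l''$ identifies $P^{(l'),\triangleleft}\cap L'$ with a translate/relabelling inside $P^{(l'+q),\triangleleft}\cap L'$ that shifts the $H$-grading by $[q]$. This is exactly the argument establishing (\ref{chiineq}) in spirit — a sub-unit shift of a semi-open truncation cone is invisible to the integral points it contains — and it is also the content of the substitution trick ${\bf x}\in K+{\bf r}$, ${\bf y}:={\bf x}-{\bf r}$, ${\bf k}:=\bl-{\bf A}{\bf r}$ used in the proof of Theorem \ref{th:REC}.

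Once (\ref{eq:cccc}) is established, (\ref{eq:cccc2}) is immediate: take $l'=q=r_h$, which is legitimate since by definition $r_h\in\square$ and $[r_h]=h$, so that $\overline{Q}_h^\calC(r_h)=\overline{Q}_h^\calC(r_h-r_h)=\overline{Q}_h^\calC(0)$. One must only check that $r_h$ (and $0$) lie in the region $l'_*+\calC$ where $Q_h$ and $\overline{Q}_h^\calC$ literally coincide — but since $\overline{Q}_h^\calC$ is defined as an \emph{abstract} quasipolynomial on all of $L\otimes\R$ via the continuous extension of Theorem \ref{th:PQP}(b), the identity (\ref{eq:cccc}) holds as an identity of quasipolynomials, independent of whether the evaluation point sits in the "good" subcone.

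The main obstacle I anticipate is making the "no lattice point crosses the boundary" claim fully rigorous when the combinatorial type of $P^{(l')}$ is allowed to degenerate as $l'$ ranges over the boundary of the chamber $\calC$: one needs the quasipolynomials to be interpreted in the abstract sense of Theorem \ref{th:PQP}(b), so that the identity is first proven on the interior $\calC_{op}$ (where everything is a genuine quasipolynomial and the lattice-point bijection is transparent) and then propagated to $\partial\calC$ by continuity. A secondary technical point is checking that the class-shift bookkeeping $h\mapsto h+[q]$ is exactly compatible with the decomposition $f^e=\sum_h f_h[h]$ under the law (\ref{eq:fh}); this is a direct but slightly fiddly verification using $\rho=\alpha\bmod L$ and the fact that the $E_v$-coordinates of every $a_i$ are strictly positive so that all the relevant sums in (\ref{eq:PCDEFa}) are finite.
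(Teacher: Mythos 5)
Your argument for part (c) rests on the ``building block'' identity $\calL^\calC_{h}({\bf A},\calT,l')=\calL^\calC_{h+[q]}({\bf A},\calT,l'+q)$, and this identity is false. Take $d=s=1$, $L=\Z\subset L'=\tfrac12\Z$ (so $H=\Z_2$), $a_1=\tfrac12$, $q=\tfrac12$, $h=[\tfrac12]$, $l'=\tfrac12$: the left side counts odd $x\in\Z_{\geq0}$ with $x/2<\tfrac12$, which is $0$, while the right side is $\calL_0({\bf A},\calT,1)$, counting even $x\geq 0$ with $x/2<1$, which is $1$. The underlying geometric heuristic --- that a sub-unit shift of the truncation sweeps no lattice points --- is also wrong: the values $\sum_i x_ia_{i,v}$ over ${\bf x}\in\Z^d$ are spaced as finely as $1/|H|$, so plenty of points cross the boundary; they just all lie in the \emph{wrong residue class}. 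Moreover, even if your identity held, your substitution delivers $\overline{Q}_h^\calC(l'-q)=\overline{Q}_{h+[q]}^\calC(l')$ rather than $\overline{Q}_h^\calC(l')$: in (\ref{eq:qbar}) the class subscript $h-[b_k]$ is fixed by the definition and does not change when the argument changes, so the extra $[q]$ you introduce has nowhere to go. Since in the application (\ref{eq:cccc2}) one takes $q=r_h$ with $[q]=h\neq 0$ in general, this is not the statement to be proved.

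The correct mechanism keeps the class \emph{fixed} and uses integrality inside that class: for $[l']=h$, each term of $\overline{Q}_h^\calC(l')$ counts points ${\bf x}$ with $\rho({\bf x})=h-[b_k]=[l'-b_k]$, hence $\alpha({\bf x})-(l'-b_k)\in L$; and an integral vector $y$ satisfies $y\geq 0$ if and only if $y\geq -q$ for $q\in\square$. This gives $P^{(m'),\triangleleft}\cap\rho^{-1}(g)=P^{(m'-q),\triangleleft}\cap\rho^{-1}(g)$ whenever $[m']=g$ --- same class on both sides --- which applied with $m'=l'-b_k$, $g=h-[b_k]$ yields (\ref{eq:cccc}) term by term on a subcone $l'_*+\calC$ where all $l'-b_k$ and $l'-q-b_k$ lie in $\calC$; equality of the two quasipolynomials on such a subcone forces them to be equal everywhere, as you correctly note at the end. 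Separately, note that the statement also contains parts (a) and (b), which you do not address; they follow from the equivalence $b_k+\sum_ix_ia_i\not\geq l'\iff \sum_ix_ia_i\not\geq l'-b_k$ together with the fact that $\calC\cap\bigcap_k(b_k+\calC)$ contains a translate $l'_*+\calC$.
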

\begin{proof}
For (a) use (\ref{eq:PL}) and the fact that $b_k+\sum x_ia_i\not\geq l'$ if and only
if $\sum x_ia_i\not\geq l'-b_k$. Since the coefficients of the
Taylor expansion depend only on $f$, the second sentence  follows too.

For (b) use part (a)  and
the fact that $\calC\cap\bigcap_k(b_k+\calC)$ contains a set of type $l_*'+\calC$.

(c) Consider those values $l'$ in some $l'_*+\calC$ for which  all elements of type
$l'-b_k$ and $l'-q-b_k$ are in $\calC$. For these values $l'$, (\ref{eq:cccc})
follows from the identity
 $P^{(l'),\triangleleft}\cap\rho^{-1}(h)= P^{(l'-q),\triangleleft}\cap\rho^{-1}(h)$
whenever $[l']=h$.
This is true since  for any $l''$ with
$[l'']=[l']$, $l''\geq l'$ is equivalent with $l''\geq l'-q$. Indeed, taking $y=l''-l'$,
this reads as follows: for any $y\in L$, $y\geq 0$ if and only if $y\geq -q$.

Now, if two quasipolynomials agree on $l'_0+\calC$ then they are equal.
\end{proof}

\begin{remark}\labelpar{re:Szenes} Thanks to  \cite[Theorem 0.2]{SZV},
the continuity property \ref{eq:CONTINa} has the following
extension (coincidence of the quasipolynomials on neighboring strips).
Set $\Box({\bf A}):=\sum_i [0,1)a_i$. Then for any two chambers $\calC_1$ and $\calC_2$,
and  $S:=(-\Box({\bf A})+\calC_1) \cap (-\Box({\bf
A})+\calC_2)$
\begin{equation}\label{eq:CONTINb}
\calL_h^{{\calC_1}}({\bf A},\calT)|_{S}= \calL_h^{{\calC_2}}({\bf
A},\calT)|_{S}.\end{equation}
\end{remark}

\bekezdes\labelpar{ss:d0} {\bf The $d=0$ case.} All the above
properties can be extended for $d=0$ as well. Although the
polytope constructed in \ref{eq:PL} does not exist,  we can
look at the polynomial  $f(\bt)=\sum_k \iota_k
\bt^{b_k}$ itself. Then using notation of (\ref{eq:PCDEFa}) we set
$$\sum _{h\in H}\,Q_h(l')[h]=
\sum_{l''\not\geq l'} \, p_{l''}\cdot [l'']=\sum_{\{k \,:\, b_k\not\geq l'\}}\iota_k [b_k].$$
Moreover, we have  the chamber decomposition of $L\otimes\R$ defined by
$\{-E_v\}_{v=1}^s$ via the same principle as above. This means two chambers:
 $\calC_0:=\R_{\geq 0}\langle -E_v
\rangle$ and $\calC_1$,  the closure of the  complement of $\calC_0$ in $\R^s$.
Then $Q_h(l')=\sum_{\{k \,:\, [b_k]=h\}}\iota_k$ on
$\cap_k(b_k+\calC_1)$ and $0$ on $\cap_k(b_k+\calC_0)$.

\subsection{Multivariable equivariant periodic constant}\labelpar{ss:mpc}

We consider the situation of \ref{bek:LL'} and \ref{bek:pol}(a).
For each $h\in H$ define  $r_h\in L'$ as in \ref{ss:11}.
\begin{definition}\labelpar{def:pc}
Let  $\calK\subset L'\otimes \R$ be a
closed real cone whose affine closure aff$(\calK)$ has positive dimension.
For any $h\in H$ we assume
that there exist

$\bullet$ $l'_*\in \calK$

$\bullet$ a sublattice $\widetilde{L}\subset L$ of finite index, and

$\bullet$ a quasipolynomial $l'\mapsto \widetilde{Q}_h(l')$,
defined on  $\widetilde{L}\cap{\mathrm{aff}}(\calK)$
such that
\begin{equation}\label{eq:PCDEFx}
Q_h(l')=\widetilde{Q}_h(l') \ \ \mbox{
for any $\widetilde{L}\cap(l'_*+\calK)$.}
\end{equation}
Then we define the {\it equivariant periodic constant} of $f$ associated with   $\calK$  by
\begin{equation}\label{eq:PCDEF}\mathrm{pc}^{e,\calK}(f)=\sum_{h\in H}\,
\mathrm{pc}^{\calK}_h(f)\cdot [h]:= \sum_{h\in H}\,
\widetilde{Q}_h(0)\cdot [h]\in \Z[H],\end{equation} and  we say
that {\it $f$ admits a periodic constant in $\calK$}. (Sometimes
we will use the same notation for the real  cone $\calK$ and for
its lattice points  $\calK\cap L'$ in $L'$.)
\end{definition}
\begin{remark}
The above definition is independent of the choice of the
sublattice $\widetilde{L}$: it can be replaced by any
sublattice  of finite index. The advantage of such sublattices is
that convenient restrictions of $Q_h$ might  have nicer forms which are easier to
compute. The choice of  $\widetilde{L}$ corresponds to
the choice of $p$ in \ref{PC}, and it is responsible for the name
`periodic' in the name of $\mathrm{pc}^{e,\calK}(f)$.
\end{remark}
\begin{proposition}\labelpar{prop:pc}
(a)  Consider the chamber decomposition of $L\otimes \R$
given by the denominator $\prod_i(1-\bt^{a_i})$ of $f$
as in Theorem~\ref{th:PQP}.
Then $f$ admits  a periodic constant in each chamber $\calC$ and
\begin{equation}\label{eq:PCC}
{\mathrm{pc}}_h^\calC(f)\, = \,
\overline{Q}_h^\calC(r_h)=\overline{Q}_h^\calC(0).
\end{equation}
(b) If two functions $f_1$ and $f_2$ admit periodic constant in  some cone  $\calK$,
then the same is true for  $\alpha_1f_1+\alpha_2f_2$  and
$$\mathrm{pc}^\calK (\alpha_1f_1+\alpha_2f_2)=\alpha_1
\mathrm{pc}^\calK (f_1)+\alpha_2\mathrm{pc}^\calK (f_2) \ \ \ \ \
(\alpha_1,\,\alpha_2\in \C).$$ (c) If $f$ admits periodic
constants in two (top dimensional) cones $\calK_1$ and $\calK_2$,
and the interior $int(\calK_1\cap\calK_2)$ of the intersection
$\calK_1\cap\calK_2$ is non--empty, then $\mathrm{pc}^{\calK_1}(f)=
\mathrm{pc}^{\calK_2}(f)$.

In particular, if $\{\calC_i\}_{i=1,2}$ are two chambers as in (a), and $f$ admits a periodic constant
in $\calK$,  and $int(\calC_i\cap\calK)\not=\emptyset$ ($i=1,2$), then
$\mathrm{pc}^{\calC_1}(f)= \mathrm{pc}^{\calC_2}(f)$.
\end{proposition}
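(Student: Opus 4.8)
The plan is to prove the three parts of Proposition~\ref{prop:pc} in order, using systematically the structure results established in Corollary~\ref{cor:Taylor} and Theorem~\ref{th:PQP}. For part (a), the key point is that Corollary~\ref{cor:Taylor}(b) already provides, for each chamber $\calC$ of the denominator decomposition, a quasipolynomial $\overline{Q}_h^\calC(l')$ and a translate $l'_*+\calC\subseteq\bigcap_k(b_k+\calC)$ on which $Q_h$ agrees with $\overline{Q}_h^\calC$. This is precisely the data required in Definition~\ref{def:pc}: take $\calK=\calC$, the sublattice $\widetilde{L}$ to be the common period lattice of all the $\calL^\calC_{h-[b_k]}(\mathbf{A},\calT,\cdot)$ appearing in \eqref{eq:qbar}, and $\widetilde{Q}_h=\overline{Q}_h^\calC$. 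Hence $f$ admits a periodic constant in $\calC$ and $\mathrm{pc}_h^\calC(f)=\overline{Q}_h^\calC(0)$ by \eqref{eq:PCDEF}. The identity $\overline{Q}_h^\calC(0)=\overline{Q}_h^\calC(r_h)$ is then exactly \eqref{eq:cccc2} of Corollary~\ref{cor:Taylor}(c). One small technical check: $r_h$ need not lie in the translate where $Q_h=\overline{Q}_h^\calC$, but the equation \eqref{eq:PCC} is an assertion about the \emph{abstract} quasipolynomial $\overline{Q}_h^\calC$, so this is not an issue.

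For part (b), linearity is essentially formal. If $f_1,f_2$ admit periodic constants in $\calK$ with witnessing quasipolynomials $\widetilde{Q}_h^{(1)},\widetilde{Q}_h^{(2)}$ defined on $\widetilde{L}_1\cap\mathrm{aff}(\calK)$, $\widetilde{L}_2\cap\mathrm{aff}(\calK)$ respectively, then passing to $\widetilde{L}=\widetilde{L}_1\cap\widetilde{L}_2$ (still finite index, and the definition is insensitive to this by the Remark following Definition~\ref{def:pc}) and to a translate $l'_*\in\calK$ in the intersection of the two regions of validity, the counting function of $\alpha_1 f_1+\alpha_2 f_2$ is $\alpha_1 Q_h^{(1)}+\alpha_2 Q_h^{(2)}$ by $\setZ$-linearity of $l'\mapsto\sum_{l''\not\geq l'}p_{l''}$, which agrees there with the quasipolynomial $\alpha_1\widetilde{Q}_h^{(1)}+\alpha_2\widetilde{Q}_h^{(2)}$; evaluating at $0$ gives the claim.

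For part (c), suppose $f$ admits periodic constants in top-dimensional cones $\calK_1,\calK_2$ with $\mathrm{int}(\calK_1\cap\calK_2)\neq\emptyset$, witnessed by quasipolynomials $\widetilde{Q}_h^{1},\widetilde{Q}_h^{2}$ on finite-index sublattices; after intersecting sublattices we may assume a common $\widetilde{L}$. Both quasipolynomials agree with $Q_h$ on translates $l'_*^{i}+\calK_i$; since $\mathrm{int}(\calK_1\cap\calK_2)$ is non-empty and top-dimensional, one can find a translate of $\calK_1\cap\calK_2$ lying inside $(l'^1_*+\calK_1)\cap(l'^2_*+\calK_2)$, on which both $\widetilde{Q}_h^{1}$ and $\widetilde{Q}_h^{2}$ equal $Q_h$; two quasipolynomials that agree on a set containing a translate of a top-dimensional cone are equal, so $\widetilde{Q}_h^{1}=\widetilde{Q}_h^{2}$ as quasipolynomials, whence $\widetilde{Q}_h^{1}(0)=\widetilde{Q}_h^{2}(0)$ and $\mathrm{pc}^{\calK_1}(f)=\mathrm{pc}^{\calK_2}(f)$. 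The final sentence is the special case $\calK_1=\calC_1$, $\calK_2=\calC_2$ combined with transitivity through $\calK$: apply the statement just proved to the pairs $(\calC_1,\calK)$ and $(\calC_2,\calK)$.

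The main obstacle is not conceptual but bookkeeping: in part (c) one must be careful that ``agreeing on a translate of a top-dimensional cone forces equality of quasipolynomials'' genuinely uses that the cones are top-dimensional (so a translate contains a full-rank coset progression of the period lattice), and that all the sublattice and base-point reductions are compatible across the different cones — this is where Definition~\ref{def:pc}'s flexibility in choosing $\widetilde{L}$ and $l'_*$ is essential. Everything else follows mechanically from Corollary~\ref{cor:Taylor} and the piecewise-quasipolynomial continuity in Theorem~\ref{th:PQP}.
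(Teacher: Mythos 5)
Your proof is correct and follows essentially the same route as the paper: part (a) is exactly an application of Corollary \ref{cor:Taylor}(b)--(c), part (b) is the formal linearity of the counting function, and part (c) rests on the same key observation that two quasipolynomials agreeing with $Q_h$ on a common translate of a top--dimensional cone must coincide (the paper merely packages this by first reducing to the nested case $\calK_2\subset\calK_1$ via $\calK_1\cap\calK_2$, which is an inessential variation). Your explicit remark that \eqref{eq:PCC} concerns the abstract quasipolynomial $\overline{Q}_h^\calC$ rather than the counting function at $r_h$ is a correct and worthwhile clarification.
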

\begin{proof}
For (a) use Corollary \ref{cor:Taylor};   (b) is clear. For (c) we
can assume that $\calK_2\subset \calK_1$ (by considering $\calK_i$
and $\calK_1\cap\calK_2$). Then if $Q_h$ is quasipolynomial on
$l'_1+\calK_1$  (with $l'_1\in\calK_1$), then $(l'_1+\calK_2)\cap
\calK_2$ contains a set of type $l_2'+\calK_2$ with
$l'_2\in\calK_2$, on which one can take the restriction of the
previous quasipolynomial.
\end{proof}

\begin{remark}\labelpar{rem:fh}
  Note that in the rational presentation of $f$ we
might assume that  $a_i \in L$ for all $i$. Indeed, take  $o_i\in
\Z_{>0}$ such that $o_ia_i\in L$, and amplify the fraction by
 $\prod_i(1-\bt^{o_ia_i})/(1-\bt^{a_i})$. Therefore, for each $h$ we can
write $f_h(\bt)$ in the form
$$f_h(\bt)=\bt^{r_h}\sum_k \iota_k \cdot\frac{\bt^{\overline{b}_k}}{\prod_i(1-\bt^{a_i})},$$
where $a_i,\ \overline{b}_k \in L$, hence $f_h(\bt)/\bt^{r_h}\in \Z[[\bt]][\bt^{-1}]$.
Then if we consider the
non--equivariant periodic constant ${\rm pc}^\calC$ of $f_h(\bt)/\bt^{r_h}$,
\ref{eq:PCDEFa}, \ref{ex:qbar2} and \ref{eq:PCC} imply that
${\rm pc}_h^\calC(f(\bt))={\rm pc}^\calC(f_h(\bt)/\bt^{r_h})$ for all chambers
$\calC$ associated with $\{a_i\}_i$.
\end{remark}

 \begin{example}\labelpar{ex:1} \
Assume that $L=L'=\Z$ and $\calK=\R_{\geq 0}$, and consider
$S(t)$ as in \ref{PC}. If $S(t)$ admits  a periodic constant in
$\calK$, then $\mathrm{pc}^{\calK}(S)=\mathrm{pc}(S)$, where $\mathrm{pc}(S)$ is
the periodic constant
defined in \ref{PC}.
\end{example}

\begin{example}\labelpar{ex:2}
(a)  (The $d=0$ case) \ Assume that
$f(\bt)=\sum_{k=1}^r\iota_k\bt^{b_k}$. Then, using \ref{ss:d0}
(and its notation), $\mathrm{pc}^{e,\calC_0}(f)=0$ and
$\mathrm{pc}^{e,\calC_1}(f)=\sum_{k=1}^r\iota_k[b_k]\in\Z[H]$.

(b) Assume that the rank is $s=2$ and $f(\bt)=\bt^b/(1-\bt^a)$,
with both entries $(a_1,a_2)$ of $a$ positive.
We assume that $a\in L$ while $b\in L'$. Again, for $h\not=[b]$ the counting function,
hence its periodic constant too, is zero. Assume $h=[b]$, and write
$b=(b_1,b_2)$. Then the denominator provides three chambers:
$\calC_0:=\Z_{\geq  0}\langle -E_1,-E_2\rangle$,
$\calC_1:=\Z_{\geq  0}\langle a,-E_2\rangle$, $\calC_2:=\Z_{\geq
0}\langle a,-E_1\rangle$. Then the three quasipolynomials for
$1/(1-\bt^a)$ are $\calL^{\calC_0}_h=0$, \
$\calL^{\calC_i}_h(n_1,n_2)=\lceil n_i/ a_i\rceil$; hence
 ${\mathrm{pc}}^{\calC_0}_h(f)=0$, \
${\mathrm{pc}}^{\calC_i}_h(f)=\lceil -b_i/a_i\rceil$  ($i=1,2$).
In particular, ${\mathrm{pc}}^{\calC}_h(f)$, in general, depends on
the choice of $\calC$.

(c) Assume that $L=L'$ and
$f(t)=\frac{t_1^{b_1}t_2^{b_2}}{(1-t_1t_2)(1-t_1^2t_2)}$.  Then
the  chambers associated with the denominator are:
$\calC_0:=\R_{\geq 0}\langle -E_1,-E_2\rangle$, $\calC_2:=\R_{\geq
0}\langle -E_1,(1,1)\rangle$, $\calC:=\R_{\geq 0}\langle
(1,1),(2,1)\rangle$ and $\calC_1:=\R_{\geq 0}\langle
(2,1),-E_2\rangle$. Then, by a computation,
\begin{equation}
 \begin{array}[l]{ll}
    \calL^{\calC_0}=0; & \calL^{\calC_2}(l_1,l_2)=\frac{l_2^2}{2}+\frac{l_2}{2};\\
    \calL^{\calC}(l_1,l_2)=\frac{l_1^2}{2}+l_2^2+\frac{l_1}{2}-l_1
    l_2; &
    \calL^{\calC_1}(l_1,l_2)=\frac{l_1^2}{4}+\frac{l_1}{2}+\frac{1+(-1)^{l_1+1}}{8}.\\
 \end{array}
\end{equation}
Hence, by Proposition \ref{prop:pc} and (\ref{eq:qbar}), one has
$\mathrm{pc}^{\calC_*}(f)= \calL^{\calC_*}(-b_1,-b_2)$.
\end{example}

\begin{example}\labelpar{ex:NAM} {\bf Normal affine monoids.} \
Consider the following objects (cf.  \ref{bek:LL'}): a lattice $L$
with fixed bases $\{E_v\}_{v=1}^d$ (hence $s=d$) and with induced
partial ordering $\leq$, $L'\subset L\otimes \Q$ an overlattice
with finite abelian quotient $H:=L'/L$ and projection $\rho:L'\to
H$. Furthermore,  let  $\{a_i\}_{i=1}^d$ be linearly independent
vectors in $L'$ with all their $\{E_v\}$--coordinates positive.
Let $\calK$ be the positive real cone generated by the vectors
$\{a_i\}_i$, and consider the Hilbert series of $\calK$
$$f(\bt ):=\sum_{l'\in \calK\cap L'}\bt^{l'}.$$
Since $\calK$ depends only on the rays generated by the vectors
$a_i$, we can assume that $a_i\in L$ for all $i$.

Set  $\Box({\bf A})= \sum_{i=1}^d [0,1) a_i$ as above, and
consider the monoid $M:=\Z_{\geq 0}\langle a_i \rangle$ (cf.  e.g.
\cite[2.C]{BG}). Then the normal affine monoid
$\calK\cap L'$ is a module over  $M$ and if we set $B:=\Box({\bf
A})\cap L'$, \cite[Prop. 2.43]{BG} implies that
$$\calK\cap L'=\bigsqcup_{b\in B}b+M.$$
In particular, $f(\bt)$ equals $\sum_{b\in B}\bt^b/\prod_{i=1}^d (1-\bt^{a_i})$ and has the
form considered in \ref{ss:GENRF}. If the rank $d$ is $\geq 3$ then $\calK$ usually is cut
in more chambers. Indeed, take e.g. $d=3$, $a_i=(1,1,1)+E_i$ for $i=1,2,3$. Then $\calK$ is cut in its
 barycentric subdivision.  Nevertheless,  if $d= 2$ then $\calK$ consists of  a unique chamber and $f$ admits a periodic
 constant in $\calK$. Indeed, one has:
\begin{lemma}\labelpar{lem:d2}
 If $d=2$ then ${\mathrm pc}^{\calK}_h(f)=0$ \,for all $h \in H$.
\end{lemma}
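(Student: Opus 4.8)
The plan is to compute the periodic constant directly from the chamber structure established in Proposition~\ref{prop:pc}(a). Since $d=2$, the cone $\calK = \R_{\geq 0}\langle a_1,a_2\rangle$ is exactly one chamber of the decomposition of $L\otimes\R$ associated with the denominator $\prod_{i=1}^2(1-\bt^{a_i})$: indeed, the matrix ${\bf M}$ has column vectors $a_1,a_2,-E_1,\ldots,-E_d$, and the common refinement of the cones $Cone({\bf M}_\sigma)$ over bases $\sigma$ produces no proper subdivision of $\calK$ itself, because any basis $\sigma$ contributing a wall inside $\calK$ would have to mix an $a_i$ with some $-E_v$, and the resulting ray lies outside $\calK$ (all $\{E_v\}$-coordinates of the $a_i$ are strictly positive, so $Cone(a_1,a_2)$ meets the negative orthant only at the origin). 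Hence $\calK = \calC$ for a single chamber $\calC$, and $f$ admits a periodic constant in $\calK$ by Proposition~\ref{prop:pc}(a).

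Next I would use Remark~\ref{rem:fh} to reduce to the non-equivariant computation: since we may assume $a_i\in L$, each component $f_h(\bt)$ has the form $\bt^{r_h}\sum_{b\in B,\,[b]=h}\bt^{\overline b}/\prod_i(1-\bt^{a_i})$ with $\overline b\in L$, and ${\mathrm pc}^{\calK}_h(f) = {\mathrm pc}^{\calK}(f_h(\bt)/\bt^{r_h})$. By additivity (Proposition~\ref{prop:pc}(b)) and Corollary~\ref{cor:Taylor}(b), it suffices to show that for each $b\in B$ the function $g_b(\bt):=\bt^b/\prod_{i=1}^2(1-\bt^{a_i})$ has ${\mathrm pc}^{\calC}(g_b)=0$, equivalently that the quasipolynomial $\overline Q^{\calC}(l')$ counting lattice points of $P^{(l'-b),\triangleleft}$ satisfies $\overline Q^{\calC}(b)=0$, i.e.\ $\overline Q^{\calC}(0)=0$ after the shift. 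The key geometric fact in dimension $2$ is that the polytope $P^{(l')}$ of~(\ref{eq:PL}), the closure of $\{{\bf x}\in(\R_{\geq0})^2:\ x_1a_1+x_2a_2\not\geq l'\}$, for $l'$ deep inside $\calC$ is combinatorially a ``staircase complement'' whose interior-lattice-point count, together with its complement the convex region~(\ref{eq:PCOMP}), is governed by a single quasipolynomial; evaluating the abstract extension of this quasipolynomial at $l'=0$ gives $0$. This is precisely the two-variable instance worked out in Example~\ref{ex:2}(b), where for $f=\bt^b/(1-\bt^a)$ one finds ${\mathrm pc}^{\calC_0}_h=0$ and ${\mathrm pc}^{\calC_i}_h=\lceil -b_i/a_i\rceil$; the point here is that $\calK$ is the analogue of $\calC_0$, the cone on which the counting quasipolynomial is identically zero, because for $l'\in\calK$ the inequality $x_1a_1+x_2a_2\not\geq l'$ already fails at ${\bf x}=0$ only when $l'\leq 0$.

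More carefully, I would argue as follows. Writing $\calL^{\calC}({\bf A},\calT,l')$ for the Ehrhart quasipolynomial of $P^{(l'),\triangleleft}\cap L'$ on the chamber $\calC=\calK$, I claim $\calL^{\calC}({\bf A},\calT,0)=0$. For $l'$ in the interior of $\calK$, one checks that $P^{(l'),\triangleleft}$ is the region in $(\R_{\geq0})^2$ strictly below the line segment where $x_1a_1+x_2a_2$ crosses the ``box'' defined by $l'$; a direct count (as in Example~\ref{ex:2}(b)(c)) expresses $\calL^{\calC}(l_1,\ldots)$ as a polynomial in the coordinates of $l'$ with zero constant term — the $d=2$ case being rigid enough that the Ehrhart quasipolynomial of this triangular region has no constant term, since the region degenerates to a point (or empty set) as $l'\to 0$ along $\calK$. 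Then by Corollary~\ref{cor:Taylor}(c), $\overline Q_h^{\calC}(r_h)=\overline Q_h^{\calC}(0)$, and by Proposition~\ref{prop:pc}(a) we get ${\mathrm pc}^{\calK}_h(f)=\overline Q_h^{\calC}(0)=\sum_{b\in B,\,[b]=h}\calL^{\calC}({\bf A},\calT,-b)=0$, using that $-b\in\calC_0$-type region relative to the shifted polytope and that each summand vanishes. The main obstacle I anticipate is the clean verification that in dimension exactly $2$ the relevant Ehrhart quasipolynomial on the chamber $\calK$ has vanishing constant term — i.e.\ ruling out, via the explicit geometry of the two-ray cone and its box-complement, any ``constant anomaly'' that does appear once $d\geq 3$ (where $\calK$ splits into several chambers and the barycentric-type subdivision introduces nonzero periodic constants). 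All the machinery — chamber decomposition, continuity~(\ref{eq:CONTINa}), reciprocity~\ref{th:PQP}(c), and the reduction~\ref{rem:fh} — is already in place; the work is the two-dimensional lattice-point bookkeeping, which is routine but must be done honestly.
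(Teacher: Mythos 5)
Your setup is the same as the paper's: $\calK$ is a single chamber, $f$ decomposes as $\sum_k \bt^{b_k}/\prod_{i=1}^2(1-\bt^{a_i})$ with $b_k\in\Box({\bf A})\cap L'$, and everything reduces to showing each summand has vanishing periodic constant on $\calK$. The gap is at the key vanishing step. You assert that the relevant Ehrhart quasipolynomial has zero constant term ``since the region degenerates to a point (or empty set) as $l'\to 0$ along $\calK$.'' That is not an argument: the constant term of a quasipolynomial is precisely the anomaly that survives degeneration of the geometric region (this is the entire content of the periodic constant, cf.\ \ref{PC}), and your own cited Example \ref{ex:2}(b) shows that for $\bt^b/(1-\bt^a)$ the periodic constant on a chamber is $\lceil -b_i/a_i\rceil$, generally nonzero. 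Moreover the evaluation point is $-b_k$, which lies outside $\calK$, so $\calL^{\calK}(\calT)(-b_k)$ is only the abstract extension of the chamber quasipolynomial and carries no direct lattice-point meaning; no amount of ``honest two-dimensional bookkeeping'' of the polytope $P^{(l')}$ for $l'\in\calK$ lets you read off its value at $-b_k$ geometrically.

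The paper closes this gap with the Ehrhart--MacDonald--Stanley reciprocity of Theorem \ref{th:PQP}(c), which you list as ``available machinery'' but never deploy. Reciprocity converts $\calL^{\calK}_{[b_i-b_k]}(\calT)(-b_k)$ into $\calL^{\calK}_{[b_k-b_i]}(\calF\setminus\calT)(b_k)$, and now the argument point $b_k$ \emph{does} lie in $\calK$, so this value is an honest count: the number of lattice points $m=n_1a_1+n_2a_2$ with $n_1>0$, $n_2>0$ and $m\not> b_k$. Since $b_k=q_1a_1+q_2a_2$ with $0\leq q_1,q_2<1$ and all coordinates of $a_1,a_2$ are strictly positive, every such $m$ satisfies $m\geq a_1+a_2> b_k$, so the set is empty and the count is zero. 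This use of reciprocity to transport the evaluation from outside the chamber to inside it, combined with the fact that $b_k$ lies in the fundamental box, is the actual content of the lemma; without it your proof does not go through.
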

\begin{proof}
It is elementary to see that $\calK$ is one of the chambers (use the
construction from  \ref{bek:combtypes}). Take
$B=\{b_k\}_k$, and write $f=\sum_kf_k$, where
$f_k=\bt^{b_k}/(1-\bt^{a_1})(1-\bt^{a_2})$. The only relevant
classes $h\in H$ are given by $\{[b_i]: b_i\in B\}$, otherwise
already the Ehrhart quasipolynomials are zero (since $a_i\in L$). Fix such a class
$h=[b_i]$. Let $\calL^{\calK}_{h}(\calT)$ be the quasipolynomial
associated with the chamber $\calK$ and the denominator of $f$.
Then, by (\ref{eq:PCC}) and (\ref{eq:qbar}),
$\mathrm{pc}_h^{\calK}(f_k)=\calL^{\calK}_{[b_i-b_k]}(\calT)(-b_k)$.
This, by Reciprocity Law \ref{th:PQP}(c) equals
$\calL^{\calK}_{[b_k-b_i]}(\calF\setminus \calT)(b_k)$. Again,
since the denominator is a series in $L$, for $[b_k-b_i]\not=0$
the series is zero; so we may assume $[b_k-b_i]=0$.  But, since
$b_k\in \calK$, the value $\calL^{\calK}_0(\calF\setminus
\calT)(b_k)$  of the quasipolynomial carries its geometric
meaning, it is the cardinality of the set  $\{m=n_1a_1+n_2a_2\,:\,
n_1>0,\, n_2>0,\, m\not>b_k\}$. But since for any such $m$ one has
$m\geq a_1+a_2>b_k$, contradicting $m\not>b_k$, this set is empty.
\end{proof}
\end{example}

\begin{example}\labelpar{ex:AM} {\bf General  affine monoids of rank $d=2$.} \
 Consider the situation of Example \ref{ex:NAM} with $d=2$, and let $N$ be a submonoid of
$\widehat{N}=\calK\cap L'$ of rank 2, and we also assume that
$\widehat{N}$ is the normalization of $N$. Set $$f(\bt
):=\sum_{l'\in N}\bt^{l'}.$$ Then $f(\bt)$ is again of type
(\ref{eq:func}). Indeed, by \cite[Prop. 2.35]{BG},
$\widehat{N}\setminus N$ is a union of a finite family of sets of
type (I) $b\in \widehat{N}$, or (II) $b+\Z ka_i$, where $b\in
\widehat{N}$, $k\in\Z_{\geq 0}$, $i=1$ or 2.  Obviously, two sets
of type (II) with different $i$-values might have an intersection
point of type (I). In particular,
$$f(\bt)=\sum_{l'\in \widehat{N}}\bt^{l'}-\sum_i\frac{\bt^{b_{i,1}}}{1-\bt^{k_{i,1}a_1}}-
\sum_j\frac{\bt^{b_{j,2}}}{1-\bt^{k_{j,2}a_2}}+\sum_k(\pm
\bt^{b_k}).$$ Note that the periodic constant of the first sum is
zero by Lemma \ref{lem:d2}, and the others can easily be computed
(even with closed formulae) via Example \ref{ex:2}, parts (a) and
(b).

The computation shows that the periodic constant carries
information about the failure of normality of $N$ (compare with
the delta-invariant computation from the end of \ref{PC}).

The situation is similar when we consider a {\it semigroup} of $\widehat{N}$, that is, when we eliminate the
neutral element of the above $N$ as well.
\end{example}

\begin{example} {\bf Reduction of variables.}
The next statement is an example when the number of variables of the function $f$ can be reduced
in the procedure of the periodic constant computation. (For another reduction result, see Theorem \ref{th:REST}.)
 For simplicity we assume  $L'=L$.\end{example}
\begin{proposition}\labelpar{prop:pc2}
 Let $f(\bt)=\frac{\bt^{b}}{\prod_{i=1}^d (1-\bt^{a_i})}$ and
 assume that $b=\sum_{v=1}^s b_v E_v \in \calC$, where
 $\calC$ is  a chamber associated with the denominator.

 We consider  the
subset $Pos:=\{v \, : \, b_v>0\}$ with
cardinality $p$, and the projection $\R^s\to \R^p$,
defined by $(r_v)_{v=1}^s\mapsto (r_v)_{v\in Pos}$ and denoted by
$v\mapsto v^\reds $. Accordingly,  we set a new function
$f^\reds(\bz):=\frac{\bz^{b^\reds }} {\prod_{i=1}^d (1-\bz^{a_i^*\reds })}$ in
$p$ variables, and a new chamber $\calC^{\reds} :=\R_{\geq 0}\langle
\{w_j^\reds \}_j\rangle$, where $w_j$ are the generators of
\,$\calC=\R_{\geq 0}\langle \{w_j\}_j\rangle$. Then
$\mathrm{pc}^{\calC}(f)=\mathrm{pc}^{\calC^\reds }(f^\reds )$.
\end{proposition}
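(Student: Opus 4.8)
The statement claims that if $b$ lies in a chamber $\calC$, then the periodic constant computed from $f$ in $\calC$ coincides with the periodic constant of the "projected" function $f^\reds$ in the projected chamber $\calC^\reds$. My plan is to reduce everything to the explicit Ehrhart-theoretic formula for the periodic constant, namely \eqref{eq:PCC}: for the single-fraction function $f=\bt^b/\prod_i(1-\bt^{a_i})$ one has $\mathrm{pc}^\calC_h(f)=\overline{Q}_h^\calC(0)=\calL^\calC_{-[b]}(\bf A,\calT,-b)$, using Corollary~\ref{cor:Taylor}(b) and the Reciprocity Law \ref{th:PQP}(c) only as needed. So the real content is a comparison between two Ehrhart quasipolynomials associated with the polytopes $P^{(l')}$ of \eqref{eq:PL} for the two families of vectors $\{a_i\}$ and $\{a_i^\reds\}$, evaluated at $l'=-b$ and $l'=-b^\reds$ respectively. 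Since $L'=L$, the group $H$ is trivial here, so I can forget the equivariant decoration and work purely with the non-equivariant counting function $\calL_{ne}$.

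\textbf{Key steps.} First I would unwind the definition: $\mathrm{pc}^\calC(f)$ equals the free term of the quasipolynomial extension $\overline Q^\calC$ of the counting function $l'\mapsto Q(l')=\#\{l''\in L':\ l''\not\geq l',\ l''\in\mathrm{supp}(\widetilde f)\}$, where $\widetilde f=1/\prod_i(1-\bt^{a_i})$, shifted by $b$; concretely $Q(l')=\calL_{ne}(\bf A,\calT,l'-b)$ on the relevant region. The crucial geometric observation is the following: the lattice points $l''=\sum_i x_i a_i$ with $x_i\in\Z_{\geq 0}$ satisfying $l''\not\geq l'$ are counted by the polytope $P^{(l'),\triangleleft}$. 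I would show that when $l'$ lies in the cone $\calC$ (so that, by the combinatorial stability of $P^{(l')}$ over the interior of $\calC$, the relevant facets are exactly those indexed by coordinates $v$ with $l'_v>0$), the inequality $l''\not\geq l'$ is governed \emph{only} by the coordinates in $Pos$: the coordinates $v\notin Pos$ have $l'_v\leq 0$, and since all entries $a_{i,v}$ are strictly positive and $x_i\geq 0$, automatically $l''_v=\sum_i x_i a_{i,v}\geq 0\geq l'_v$ for those $v$, so they never contribute to the "not $\geq$" condition. Hence the count of lattice points is unchanged if we pass to the projection onto the $Pos$-coordinates. This is precisely the identity $P^{(b),\triangleleft}\cap L$ has the same cardinality as $P^{(b^\reds),\triangleleft}\cap L^\reds$ modulo the fibration by the dropped coordinates. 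The one subtlety is that dropping coordinates is only harmless once $b\in\calC$: one needs $b$ (and after the Corollary~\ref{cor:Taylor}(b) shift, also nearby points $l'_*+\calC$) to be in the chamber so that the facet structure $\calT$ is exactly the $Pos$-coordinate facets; for $b$ in a different chamber some $v\notin Pos$-facet could be "active" and the projection would lose information.

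\textbf{Carrying it out.} Concretely: (1) invoke Proposition~\ref{prop:pc}(a) and \eqref{eq:PCC} to write $\mathrm{pc}^\calC(f)=\overline Q^\calC(0)=\overline Q^\calC(r_h)$ with $r_h=0$ since $L'=L$, and then by \eqref{ex:qbar2} note $Q$ and $\overline Q^\calC$ agree on $b+\calC$ (in particular the quasipolynomial value at $-b$ that computes the periodic constant is obtained from genuine counting data on $-b+\calC$, after possibly replacing $b$ by $b+\ell_*$ for $\ell_*\in\calC$). (2) For $l'$ in this region, establish the bijection $P^{(l'),\triangleleft}\cap(\Z_{\geq 0})^d \leftrightarrow P^{(l'^\reds),\triangleleft}\cap(\Z_{\geq 0})^d$ — in fact it is the identity map on the index set $\{x_i\}$, since the defining condition $\sum_i x_i a_i\not\geq l'$ is equivalent to $\sum_i x_i a_i^\reds\not\geq l'^\reds$ by the positivity argument above. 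This shows $\calL_{ne}^\calC(\bf A,\calT,l')=\calL_{ne}^{\calC^\reds}(\bf A^\reds,\calT^\reds,l'^\reds)$ as functions on the chamber, hence the quasipolynomials are equal, hence their free terms at $l'=-b$ and $l'^\reds=-b^\reds$ agree. (3) Conclude $\mathrm{pc}^\calC(f)=\mathrm{pc}^{\calC^\reds}(f^\reds)$.

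\textbf{Main obstacle.} The delicate point I expect to need care with is matching the chamber and facet data under the projection: one must check that $\calC^\reds=\R_{\geq 0}\langle w_j^\reds\rangle$ is indeed a chamber for the projected denominator $\prod_i(1-\bz^{a_i^\reds})$ (not merely a cone), and that the collection $\calT$ of "non-coordinate" facets of $P^{(l')}$ projects onto the corresponding collection $\calT^\reds$ for $P^{(l'^\reds)}$, so that the quasipolynomial $\calL^{\calC^\reds}$ appearing in $f^\reds$'s periodic constant is literally the projection of $\calL^\calC$. This requires using the combinatorial stability over the interior of $\calC$ (Theorem~\ref{th:PQP}(a)) together with the hypothesis $b\in\calC$ to pin down that the only facets of $P^{(b)}$ that are hit are the ones indexed by $Pos$; the coordinates outside $Pos$ give facets $x_v=0$ in the projected picture too but they are precisely the coordinate facets we are \emph{not} removing, so removing $\calT$ versus $\calT^\reds$ is consistent. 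Once this bookkeeping is in place the numerical identity is immediate from the positivity of the $a_{i,v}$.
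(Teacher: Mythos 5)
Your plan correctly identifies the formula $\mathrm{pc}^{\calC}(f)=\calL^{\calC}({\bf A},\calT,-b)$ as the starting point, but the core of your argument — that the counting function for $\{a_i\}$ and the counting function for $\{a_i^\reds\}$ agree as quasipolynomials on the chamber, so that their values at $-b$ and $-b^\reds$ agree — has a gap. Your positivity argument ("for $v\notin Pos$ one has $l'_v\leq 0$, hence $\sum_ix_ia_{i,v}\geq 0\geq l'_v$ and $v$ is never a witness for $l''\not\geq l'$") is a \emph{pointwise} statement valid exactly at those $l'$ whose non--$Pos$ coordinates are $\leq 0$. It holds at $b$ by the definition of $Pos$, but it does not hold throughout $\calC$: chambers routinely cross coordinate hyperplanes (in Example \ref{ex:2}(c) the chamber $\R_{\geq 0}\langle -E_1,(1,1)\rangle$ contains points with $l_1$ of either sign), so at other points of $\calC$ some $P_v^{(l'_v)}$ with $v\notin Pos$ is a nonempty simplex and a priori contributes extra lattice points to the union. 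Hence the claimed identity of genuine counts "as functions on the chamber" is not established; at best the counts agree on the subcone $\calC\cap\{l'_v\leq 0,\ \forall v\notin Pos\}$, and to upgrade that to an identity of the abstract quasipolynomials (which is what you need, since $-b$ lies \emph{outside} $\calC$ and $\calL^{\calC}(\cdot,-b)$ is pure extrapolation, not a count — your parenthetical about "genuine counting data on $-b+\calC$" is also incorrect on this point) you would need either that this subcone is full-dimensional plus a Zariski-density argument for quasipolynomials, or the finer fact that redundancy of a constraint is constant on chambers. Neither is supplied, and the first can fail when $b$ sits on the relevant coordinate walls.

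The paper's proof avoids the whole issue by applying the Ehrhart--MacDonald--Stanley reciprocity law \emph{first}: $\mathrm{pc}^{\calC}(f)=\calL^{\calC}({\bf A},\calT,-b)=(-1)^d\calL^{\calC}({\bf A},\calF\setminus\calT,b)$. The evaluation point is now $+b\in\calC$, so the right-hand side is a genuine lattice-point count of $P^{(b)}$ with the coordinate facets removed, and the positivity argument is needed only at this single point: for $v\notin Pos$ one has $b_v\leq 0$, so $P_v^{(b_v)}$ minus the coordinate facets is empty and can be discarded. Reciprocity for the reduced data then converts $(-1)^d\calL^{\calC^\reds}({\bf A}^\reds,\calF^\reds\setminus\calT^\reds,b^\reds)$ back into $\mathrm{pc}^{\calC^\reds}(f^\reds)$. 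So reciprocity, which you relegate to "only as needed", is in fact the step that makes the argument pointwise and hence valid; without it your comparison of counting functions over a whole region does not close.
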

\begin{proof}
This is a direct application of  Theorem \ref{th:REC}(b).
 Indeed, by the Ehrhart--MacDonald--Stanley
reciprocity law,
 we get $\mathrm{pc}^{\calC}(f)=\calL^{\calC}({\bf A},
\calT,-b)=(-1)^d\cdot \calL^{\calC}({\bf A},\calF\setminus
\calT,b)$. Since $b \in \calC$, by the very definition of
$\calL^{\calC}({\bf A},\calF\setminus \calT)$,
this (modulo the sign) equals
the number of integral points of $P^{(b)}\setminus
\cup_{F^{(b)}\in \calF\setminus \calT} F^{(b)} \subset \R^d$.
But, if  $v\notin Pos$, i.e., $b_v\leq 0$, then in (\ref{eq:POL}) $P_v^{(b_v)}$  has only
non--positive integral points. Therefore we can
omit these polytopes without affecting the periodic constant. Then, this fact and
$b^\reds \in \calC^\reds$ imply that  $\mathrm{pc}^{\calC}$ can be computed as
$(-1)^d\calL^{\calC^\reds}({\bf A^\reds},\calF^\reds\setminus \calT^\reds,b^\reds)$.
\end{proof}

\begin{remark}\label{rem:SZenes}
Under the conditions of Proposition \ref{prop:pc2} we have the
following application of the statement from Remark \ref{re:Szenes}
(based on \cite{SZV}): {\it Assume that  $b \in \Box(\bf A)-\calC$ and
$b\geq 0$.  Then $\mathrm{pc}^{\calC}(f)=0$.} Indeed,
$\mathrm{pc}^{\calC}(f)=\calL^{C}({\bf
A},\calT,-b)=\calL^{C(-b)}({\bf A},\calT,-b),$ where $C(-b)$ is a
chamber containing $-b$. But  since $-b\leq 0$ one gets
$\calL^{C(-b)}({\bf A},\calT,-b)=0$ by \ref{prop:pc2}.

\bigskip

One of the key messages of the above examples (starting from
 \ref{ex:2}) is the following: `if $b$ is small compared with the $a_i$'s, then the periodic
constant is zero' (compare with \ref{PC} too).
\end{remark}

\subsection{The polynomial part in the $d=s=2$ case} \labelpar{ss:POLPART}\
In this case $\mathrm{rank}(L)=2$, and we have two vectors in the
denominator of $f$, namely $a_i=(a_{i,1},a_{i,2})$, $i=1,2$. We
will order them in such a way that $a_2$ sits in the cone of $a_1$
and $E_1$, that is, $\det {a_{1,1}\ a_{1,2}\choose a_{2,1}\
a_{2,2}}<0$. The chamber decomposition will be the following:
$\calC_0:=\R_{\geq 0}\langle -E_1,-E_2\rangle$, $\calC_2:=\R_{\geq
0}\langle -E_1, a_1\rangle$, $\calC:=\R_{\geq 0}\langle
a_1,a_2\rangle$ and $\calC_1:=\R_{\geq 0}\langle a_2,-E_2\rangle$
(the index choice is motivated by the formulae from
\ref{ex:2}(b)).

Our goal is to write any rational function (with denominator
$(1-\bt^{a_1})(1-\bt^{a_2})$) as a sum of $f^+(\bt)$ and
$f^-(\bt)$, such that $f^+\in\Z[L']$ (the  `polynomial part of
$f$'), and $\mathrm {pc}^{e,\calC}(f^-)=0$. This is a generalization
of the decomposition in the one--variable case discussed in
\ref{PC}, and will be a major tool in the computation of the
periodic constant in Section \ref{s:TN} for graphs with two nodes. 
The specific  form of the decomposition  is
motivated  by Examples \ref{ex:2}(b) and \ref{ex:NAM}.

As above, we set $\Box({\bf A})=[0,1)a_1+[0,1)a_2$ and for $i=1,2$
we also consider the strips $$\Xi_i:= \{b=(b_1,b_2)\in L\otimes
\R\ |\ 0\leq b_i<a_{i,i}\}.$$

\begin{theorem}\labelpar{th:POLPART}
(1) \ Any function
$f(\bt)=\big(\sum_{k=1}^r\iota_k\bt^{b_k}\big)/\prod_{i=1}^2
(1-\bt^{a_i})$ (with $\iota_k\in\Z$)
can be written as a sum $f(\bt)=f^+(\bt)+f^-(\bt)$,
where

\medskip

(a) \ $f^+(\bt)$ is a finite sum $\sum_{\ell} \kappa_\ell
\bt^{\beta_\ell}$, with $\kappa_\ell\in\Z$ and $\beta_\ell\in L'$;

(b) \ $f^-(\bt)$ has the form
\begin{equation}\label{eq:POLPART}
f^-(\bt)=\frac{\sum_{k=1}^r\iota_k\bt^{b'_k}}{\prod_{i=1}^2
(1-\bt^{a_i})}+
\frac{\sum_{i=1}^{n_1}\iota_{i,1}\bt^{b_{i,1}}}{1-\bt^{a_1}}+
\frac{\sum_{i=1}^{n_2}\iota_{i,2}\bt^{b_{i,2}}}{1-\bt^{a_2}},
\end{equation}
with $b'_k\in L'\cap \Box({\bf A})$ for all $k$, and   $b_{i,j}\in
L'\cap \Xi_j$ for any $i$ and $j=1,2$, and $\iota_k\, \iota_{i,1},\, \iota_{i,2}\in\Z$.

\medskip

(2) Consider a sum
\begin{equation}\label{eq:SUMPOL} \Sigma(\bt):=
\frac{Q(\bt)}{\prod_{i=1}^2 (1-\bt^{a_i})}+
\frac{Q_1(\bt)}{1-\bt^{a_1}}+
\frac{Q_2(\bt)}{1-\bt^{a_2}}+f^+(\bt),
\end{equation}
where $ Q(\bt):=\sum_{k=1}^r\iota_k\bt^{b'_k}$ with $b'_k\in
L'\cap \Box({\bf A})$ for all $k$;
$Q_j(\bt)=\sum_{i=1}^{n_1}\iota_{i,j}\bt^{b_{i,j}}$  with
 $b_{i,j}\in
L'\cap \Xi_j$ for any $i$ and $j=1,2$;  and finally $f^+\in\Z[L']$
is a polynomial as in part (a) above.

Then, if $\Sigma(\bt)=0$, then
$Q(\bt)=Q_1(\bt)=Q_2(\bt)=f^+(\bt)=0$.

In particular, the decomposition in part (1) is unique.

\medskip

 (3) The periodic constant of $f^-(\bt)$ associated
with the chamber $\calC$ is zero. Hence, in the decomposition (1)
one also has
$\mathrm{pc}^{e,\calC}(f)=\mathrm{pc}^{e,\calC}(f^+)=\sum_{\ell}
\kappa_\ell [\beta_\ell]\in\Z[H]$.
\end{theorem}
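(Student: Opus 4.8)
The plan is to handle the three assertions in order, since (2) and (3) both build on the normalizing construction of (1). For part (1), the strategy is a two-stage reduction of the numerator $\sum_k \iota_k \bt^{b_k}$. First, for each exponent $b_k$ I want to bring it into the fundamental parallelepiped $\Box(\mathbf{A})=[0,1)a_1+[0,1)a_2$ modulo the lattice $\Z a_1+\Z a_2$. The mechanism is the identity $\bt^{b}/\prod_i(1-\bt^{a_i}) = \bt^{b-a_j}/\prod_i(1-\bt^{a_i}) + \bt^{b-a_j}/(1-\bt^{a_{3-j}})$ (and its inverse, adding $a_j$), which lets me slide $b$ by $\pm a_j$ at the cost of producing a genuinely one-variable tail term with denominator $1-\bt^{a_{3-j}}$, plus, when I slide in the ``negative'' direction past $0$, a pure polynomial term in $\Z[L']$ that I dump into $f^+$. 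Since $a_1,a_2$ are linearly independent with strictly positive coordinates, finitely many such slides put every $b_k$ into $\Box(\mathbf{A})$; this produces the first term of \eqref{eq:POLPART} with $b'_k\in L'\cap\Box(\mathbf{A})$. Second, I do the same one-dimensional normalization on the tail fractions $\bt^{c}/(1-\bt^{a_j})$: repeatedly using $\bt^{c}/(1-\bt^{a_j}) = \bt^{c-a_j}/(1-\bt^{a_j}) + \bt^{c-a_j}$, I push the exponent $c$ into the strip $\Xi_j=\{0\le b_j < a_{j,j}\}$ — note one only controls the $j$-th coordinate here, which is exactly what $\Xi_j$ asks — again collecting polynomial remainders in $f^+$. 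This gives exactly the shape in part (1)(b). The key bookkeeping point is that each reduction step is an \emph{identity} of rational functions (equivalently of Taylor expansions), so no information is lost and the process provably terminates because the relevant coordinate strictly decreases and is bounded below.

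For part (2), uniqueness, I would argue by comparing Taylor expansions at the origin, or more cleanly by comparing $f^+$ first. Suppose $\Sigma(\bt)=0$. Multiply through by $\prod_{i=1}^2(1-\bt^{a_i})$: then $f^+(\bt)\prod_i(1-\bt^{a_i})$ equals $-Q(\bt) - Q_1(\bt)(1-\bt^{a_2}) - Q_2(\bt)(1-\bt^{a_1})$, so in particular $Q(\bt)+Q_1(\bt)(1-\bt^{a_2})+Q_2(\bt)(1-\bt^{a_1})$ must be divisible by $\prod_i(1-\bt^{a_i})$ in $\Z[L']$ — but its exponents all lie in $\Box(\mathbf{A})$ shifted by at most $\{0,a_1,a_2,a_1+a_2\}$, a region too small to accommodate a nonzero multiple of $(1-\bt^{a_1})(1-\bt^{a_2})$ unless it is zero; a direct degree/support argument in the two lattice directions $a_1,a_2$ finishes this, whence $f^+=0$. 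Then $Q + Q_1(1-\bt^{a_2}) + Q_2(1-\bt^{a_1}) = 0$; looking at the ``lowest'' monomials along the $a_1$-direction forces $Q_2=0$ (its exponents in $\Xi_1$ cannot be cancelled by the $\Box$-supported $Q$ nor by the $\Xi_2$-supported $Q_1$ in the relevant range), symmetrically $Q_1=0$, and then $Q=0$. The uniqueness of the decomposition in (1) follows because the difference of two decompositions is a $\Sigma(\bt)=0$ of the form \eqref{eq:SUMPOL}.

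For part (3), the claim is $\mathrm{pc}^{e,\calC}(f^-)=0$. By linearity of $\mathrm{pc}^{e,\calC}$ (Proposition \ref{prop:pc}(b)) it suffices to show each summand in \eqref{eq:POLPART} has vanishing periodic constant in $\calC=\R_{\geq 0}\langle a_1,a_2\rangle$. For the first term, $\bt^{b'_k}/\prod_i(1-\bt^{a_i})$ with $b'_k\in\Box(\mathbf{A})$: by Remark \ref{rem:SZenes} (the Szenes--Vergne strip statement of Remark \ref{re:Szenes}), since $b'_k\in\Box(\mathbf{A})-\calC$ — indeed $\Box(\mathbf{A})\subset\Box(\mathbf{A})-\calC$ as $\calC$ is the positive cone on the $a_i$ — and $b'_k\ge 0$, the periodic constant is $0$; alternatively, via reciprocity \ref{th:PQP}(c), $\mathrm{pc}^\calC_h(f^-_k)=\pm\calL^\calC_{[b'_k-\,\cdot\,]}(\calF\setminus\calT)(b'_k)$ counts $\{m=n_1a_1+n_2a_2: n_i>0,\ m\not> b'_k\}$, which is empty because any such $m\ge a_1+a_2 > b'_k$ for $b'_k\in\Box(\mathbf{A})$. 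For the tail terms $\bt^{b_{i,j}}/(1-\bt^{a_j})$ with $b_{i,j}\in\Xi_j$, I would invoke Example \ref{ex:2}(b): the periodic constant in the chamber spanned by $a_j$ and the relevant $-E_v$ is $\lceil -(b_{i,j})_j/a_{j,j}\rceil$, which is $0$ precisely when $0\le (b_{i,j})_j < a_{j,j}$, i.e. exactly the $\Xi_j$ condition; and $\calC$ lies in that chamber for the computation of this one-variable contribution (or one appeals to Proposition \ref{prop:pc}(c)). Summing, $\mathrm{pc}^{e,\calC}(f)=\mathrm{pc}^{e,\calC}(f^+)=\sum_\ell \kappa_\ell[\beta_\ell]$ since $f^+\in\Z[L']$ is a $d=0$ object and Example \ref{ex:2}(a) gives its periodic constant as the sum of its monomials' classes.

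\textbf{Main obstacle.} The routine-looking slides are fine; the delicate part is the support/degree argument in part (2) — making precise that the combined numerator $Q+Q_1(1-\bt^{a_2})+Q_2(1-\bt^{a_1})$, whose support is confined to a small translate of $\Box(\mathbf{A})$, cannot be a nonzero $\Z[L']$-multiple of $(1-\bt^{a_1})(1-\bt^{a_2})$, and simultaneously separating the $\Xi_1$- and $\Xi_2$-supported pieces. Here one must use crucially that $a_2$ lies strictly inside the cone on $a_1$ and $E_1$ (the sign of the determinant fixed at the start of \ref{ss:POLPART}), so that the two ``strip directions'' are genuinely transverse and the lowest-monomial extraction along each $a_i$-axis is unambiguous. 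I expect this to be the only step requiring real care; everything else is assembling the earlier machinery.
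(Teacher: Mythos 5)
Parts (1) and (3) of your plan are correct and coincide with the paper's proof: the normalization of the $b_k$ into $\Box({\bf A})$ modulo $\Z\langle a_1,a_2\rangle$ followed by the one--dimensional normalization of the tails into the strips $\Xi_j$ (your slide identities have the wrong sign, $\bt^{b}-\bt^{b-a_j}=-\bt^{b-a_j}(1-\bt^{a_j})$, but this only changes coefficients and is harmless), and the vanishing of ${\rm pc}^{e,\calC}$ on each summand of $f^-$ via the reciprocity count of Lemma \ref{lem:d2} and Example \ref{ex:2}(a)--(b) is exactly what the paper does.

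Part (2) has a genuine gap. Your first step rests on the claim that the exponents of $Q+Q_1(1-\bt^{a_2})+Q_2(1-\bt^{a_1})$ ``all lie in $\Box({\bf A})$ shifted by at most $\{0,a_1,a_2,a_1+a_2\}$''. This is false: $\Xi_1$ is the full strip $\{0\leq b_1<a_{1,1}\}$, unbounded in the second coordinate, so the exponents $b_{i,1}$ of $Q_1$ (and likewise those of $Q_2$) need not lie anywhere near $\Box({\bf A})$. Consequently the ``too small a region'' argument does not force $f^+=0$, and the subsequent lowest--monomial extraction, which you leave as the delicate point, is not set up to work in the order $f^+\Rightarrow Q_2\Rightarrow Q_1\Rightarrow Q$ either: before you know $Q=0$, divisibility by $1-\bt^{a_1}$ only applies to the combination $Q+(1-\bt^{a_2})Q_1$, not to $Q_1$ alone. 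The paper eliminates the pieces in the opposite order. First one decomposes everything according to the cosets $\Xi_{b'}=b'+\Z\langle a_1,a_2\rangle$; on a fixed coset $Q$ reduces to a single monomial $\iota\,\bt^{b'}$ (since $\Box({\bf A})$ is a fundamental domain), and multiplying $\Sigma=0$ by $\prod_i(1-\bt^{a_i})$ and evaluating at $t_1=t_2=1$ kills every term except $\iota$, giving $Q=0$. Only then does one use that $1-\bt^{a_1}$ and $1-\bt^{a_2}$ are coprime in the UFD $\Z[L']$ to conclude $1-\bt^{a_i}\mid Q_i$; a nonzero multiple of $1-\bt^{a_i}$ has $i$--th coordinate spread at least $a_{i,i}$ in its support, which contradicts $b_{i,j}\in\Xi_j$, so $Q_1=Q_2=0$ and finally $f^+=0$. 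You should replace your support argument by this coset--plus--evaluation step (or an equivalent device) before the divisibility argument can be applied.
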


\begin{proof}
(1) For every $b_k\in L'$ we have a (unique) $b_k'\in L'\cap
\Box({\bf A})$ such that $b_k-b_k'\in \Z\langle a_1,a_2\rangle$.
Set $Q(\bt):=\sum_{k=1}^r\iota_k\bt^{b'_k}$. Then $f(\bt)-Q(\bt)/
\prod_{i=1}^2 (1-\bt^{a_i})$ is a sum of terms of type
$\bt^{b'}(\bt^{k_1a_1+k_2a_2}-1)/\prod_{i=1}^2 (1-\bt^{a_i})$.
This decomposes as a sum with terms of type $\bt^c/(1-\bt^{a_i})$.
Then for every such  expression,
there exists $c_i\in \Xi_i$ such that $(\bt^c-\bt^{c_i})/(1-\bt^{a_i})$
is as in (a).

Part (2) is again elementary.
First we show that $Q(\bt)=0$. For any $b'\in L'\cap\Box({\bf A})$
consider $\Xi_{b'}:=b'+\Z\langle a_1,a_2\rangle$. For any $P(\bt)=
\sum \iota_k\bt^{c'_k}$ write $P_{b'}(\bt)=
\sum_{c_k\in \Xi_{b'}} \iota_k\bt^{c'_k}$ for its part supported on $\Xi_{b'}$.
This decomposition can be done for $Q$, $Q_1$, $Q_2$ and $f^+$, hence for $\Sigma(\bt)$.
Note that it  is enough to prove (2) for such $\Sigma_{b'}(\bt)$, for a fixed $b'$.
Hence, we can assume that $\Sigma(\bt)$ is supported on some $\Xi_{b'}$, $b'\in L'\cap\Box({\bf A})$.
Since $\Xi_{b'}\cap \Box({\bf A})=\{b'\}$, in this case $Q(\bt)=\iota \,\bt^{b'}$.
Multiplying $\Sigma(\bt)$ by $\prod_{i=1}^2 (1-\bt^{a_i})$ and substituting $t_1=t_2=1$ we get $\iota=0$.
Hence $Q(\bt)=0$.

Next, consider the identity $(1-\bt^{a_2})Q_1(\bt)+(1-\bt^{a_1})Q_2(\bt)+
\prod_{i=1}^2 (1-\bt^{a_i})\cdot f^+(\bt)=0$. Since $\Z[t_1,t_2]$ is UFD and the polynomials
 $1-\bt^{a_1}$ and $1-\bt^{a_2}$ are relative primes, we get that $1-\bt^{a_i}$ divides
 $Q_i(\bt)$. This together with the support assumption of  $Q_i$ implies $Q_i=0$.

(3) The vanishing of the periodic constant of the first fraction of
$f^-$ follows from the proof of Lemma \ref{lem:d2}. The vanishing
of $\mathrm{pc}^{e,\calC}$ of the other two fractions follows from
Example  \ref{ex:2}(b). For the last expression see Example
\ref{ex:2}(a).
\end{proof}
\begin{remark}\label{rem:POL} (a)
Part (a) of the proof provides an algorithm how one finds the  decomposition.

(b) Since $\mathrm{pc}^{e,\calC}(f^-)=0$ by (3), the above decomposition $f=f^++f^-$ is
well--suited for computing the periodic constant of $f$ associated with chamber $\calC$ via
$f^+$.
\end{remark}

\section{The case associated with plumbing graphs}\labelpar{s:PLRF}

\subsection{The new construction. Applications of Section \ref{s:PC}.}\labelpar{ss:cor}\
Consider the topological setup of a surface singularity, as in
subsection \ref{ss:11}. The lattice $L$ has  a canonical basis
$\{E_v\}_{v\in \calv}$ corresponding to the vertices of the graph $G$.
We investigate  the periodic constant of the rational function $Z(\bt)$, defined  in
\ref{SW} from $G$. Since  $Z(\bt)$ has the form
(\ref{eq:func}), all the results of section \ref{s:PC} can be applied.
In particular, if $\cale=\{v\in\calv\,:\, \delta_v=1\}$ denotes the set of {\em ends} of the
graph, then ${\bf A}$ has column vectors $a_v=E_v^*$ for $v\in\cale$. Hence,
the rank of the lattice/space where the polytopes $P^{(l')}=\cup_vP_v$ sit is $d=|\cale|$,
and the convex polytopes $\{P_v\}$ are indexed by $\calv$. Furthermore, the dilation parameter $l'$
of the polytopes runs in a $|\cV|$--dimensional space.   In the sequel we will drop
the symbol ${\bf A}$ from $\calL^\calC_h({\bf A},\calT,l')$.

(The construction has some analogies with the construction of the splice--quotient singularities
\cite{nw-CIuac}: in that case  the equations of the universal abelian cover of the
singularity are written in $\C^d$, together with an action of $H$. Nevertheless, in the present situation,
we are not obstructed with the semigroup and congruence relations present in that theory.)

In this new construction, a crucial additional ingredient comes
from singularity theory, namely Theorem \ref{th:JEMS} (in fact,
this is the main starting point and motivation of the whole
approach). This combined with facts from Section \ref{s:PC} give:
 \begin{corollary}\labelpar{cor:4.1}
 Let $\calS=\calS_{\R}$ be the (real) Lipman cone $\{x\in \R^{|\cV|}: (x,E_v)\leq 0 \ \mbox{for all $v$}\}$.

  (a) The rational function
  $Z(\bt)$ admits a periodic constant in the cone $\calS$, which  equals the
   normalized Seiberg--Witten invariant
 \begin{equation}\label{eq:4.2}
 \mathrm{pc}^{\calS}_h(Z)=
-\frac{(K+2r_h)^2+|\cV|}{8}-\frsw_{-h*\sigma_{can}}(M).\end{equation}

(b)  Consider the chamber decomposition associated with the
 denominator  of $Z(\bt)$ as in Theorem \ref{th:PQP}, and let $\calC$ be a chamber
such that $int(\calC\cap \calS)\not=\emptyset$.
Then $Z(\bt)$ admits a periodic constant in $\calC$,
which  equals both  $\mathrm{pc}^{\calS}_h(Z)$ (satisfying  (\ref{eq:4.2})) and also
\begin{equation}\label{eq:PCC2}
{\mathrm{pc}}_h^\calC(Z)\, = \,
\sum_k\iota_k\cdot\calL^\calC_{h-[b_k]}(\calT,-b_k)=
\sum_k\iota_k\cdot\calL^\calC_{[b_k]-h}(\calF\setminus \calT,b_k).
\end{equation}
In particular, ${\mathrm{pc}}_h^\calC(Z)$ does not depend on the choice of \,$\calC$
(under the above assumption).
\end{corollary}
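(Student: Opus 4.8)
The plan is to deduce Corollary \ref{cor:4.1} by combining Theorem \ref{th:JEMS} with the general machinery on periodic constants and Ehrhart quasipolynomials developed in Section \ref{s:PC}. First I would set up the dictionary: by construction in \ref{ss:cor}, $Z(\bt)=\sum p_{l'}\bt^{l'}$ has denominator $\prod_{v}(1-\bt^{E_v^*})^{\delta_v-2}$, hence it is exactly of the type (\ref{eq:func}) with the $a_i$'s the vectors $E_v^*$ for $v\in\cale$ (with multiplicities coming from $\delta_v-2$, split into numerator and denominator in the usual way when $\delta_v\le 1$), and the numerator exponents $b_k$ read off from the expansion of the factors with $\delta_v>2$. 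The counting function attached to $Z_{[l']}$ via \ref{bek:LL'3} is $Q_h(l')=\sum_{l''\not\ge l'}p_{l''}[l'']$, which is precisely the left-hand side of (\ref{eq:SUM}) once we specialize to the right class $h=[-l']$. So the content of Theorem \ref{th:JEMS} translates into: for $l'$ in the chamber of $\calS$ cut out by the inequalities $E_v^*$-coordinate $\ge -(E_v^2+1)$, the function $Q_h(l')$ agrees with the quadratic quasipolynomial $l'\mapsto -\frsw_{[-l']*\sigma_{can}}(M)-\tfrac{(\K+2l')^2+|\cV|}{8}$.

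For part (a), the key step is to check the hypotheses of Definition \ref{def:pc} with $\calK=\calS_{\R}$. The cone $\calS$ is a full-dimensional rational cone, and by (\ref{eq:finite}) the counting function $Q_h$ is well-defined on it. Theorem \ref{th:JEMS} gives, for all $l'$ in a shifted subcone $l'_*+\calS$ (I would take $l'_*$ large enough that the chamber inequalities of \ref{th:JEMS} hold throughout $l'_*+\calS$, using that $\calS$ is generated by the $E_v^*$ which all have strictly positive coordinates, cf. (\ref{eq:POS})), that $Q_h$ coincides with the explicit quadratic polynomial $\widetilde Q_h$. Hence $Z$ admits a periodic constant in $\calS$ and, by (\ref{eq:PCDEF}), $\mathrm{pc}^{\calS}_h(Z)=\widetilde Q_h(0)$. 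Now I would evaluate $\widetilde Q_h$ at the representative $r_h$ rather than at $0$: by Corollary \ref{cor:Taylor}(c), equation (\ref{eq:cccc2}), the quasipolynomial $\overline Q_h^\calC$ (and hence $\widetilde Q_h$, which is its restriction on the relevant chamber) satisfies $\overline Q_h^\calC(r_h)=\overline Q_h^\calC(0)$, so $\widetilde Q_h(0)=\widetilde Q_h(r_h)=-\tfrac{(\K+2r_h)^2+|\cV|}{8}-\frsw_{-h*\sigma_{can}}(M)$, which is (\ref{eq:4.2}). (Here I use $h$ and $-h$ interchangeably with the torsor conventions of \ref{comb:chandspinc}; a small bookkeeping remark on signs is needed.)

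For part (b), I would invoke Proposition \ref{prop:pc}. By Proposition \ref{prop:pc}(a) applied to the denominator of $Z$, the function $Z$ admits a periodic constant in every chamber $\calC$ of the associated chamber decomposition, and $\mathrm{pc}^\calC_h(Z)=\overline Q_h^\calC(r_h)=\overline Q_h^\calC(0)$; formula (\ref{eq:IND}) then gives the first equality in (\ref{eq:PCC2}), and the Reciprocity Law \ref{th:PQP}(c) gives the second equality $\calL^\calC_{h-[b_k]}(\calT,-b_k)=(-1)^d\calL^\calC_{[b_k]-h}(\calF\setminus\calT,b_k)$ — I should double check the sign $(-1)^d$ is absorbed correctly, either into the definition of the truncation $\calT$ chosen in \ref{constr:pol} or by the parity of $d=|\cale|$; this is the one genuinely fiddly point. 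Finally, if $\calC$ is a chamber with $\mathrm{int}(\calC\cap\calS)\ne\emptyset$, then Proposition \ref{prop:pc}(c) forces $\mathrm{pc}^\calC(Z)=\mathrm{pc}^\calS(Z)$, since $Z$ admits a periodic constant in both cones and their intersection has non-empty interior; this simultaneously gives that $\mathrm{pc}^\calC_h(Z)$ is independent of the choice of such $\calC$ and that it equals the Seiberg--Witten expression (\ref{eq:4.2}). The main obstacle I anticipate is not conceptual but the careful matching of conventions — the sign $(-1)^d$ in the reciprocity, the choice of $r_h$ versus $0$, and the torsor action $h$ versus $-h*\sigma_{can}$ — so that the quadratic polynomial produced by the Ehrhart side literally matches the one in Theorem \ref{th:JEMS}; everything else is a direct citation of the results already established.
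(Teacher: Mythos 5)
Your proposal is correct and follows essentially the same route as the paper: the paper's proof of (a) is exactly the substitution $l'=\tilde l+r_h$ identifying the left side of (\ref{eq:SUM}) with the counting function and reading off the constant term of the resulting quadratic quasipolynomial, and its proof of (b) is literally ``use Corollary \ref{cor:Taylor} and Proposition \ref{prop:pc}'', which is what you do. The one point you flag --- the factor $(-1)^d$ in the reciprocity step of (\ref{eq:PCC2}) --- is a genuine discrepancy in the stated formula (compare the proof of Proposition \ref{prop:pc2}, where the sign is kept), not a gap in your argument.
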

\begin{proof}
 Write  $l'=\tilde{l}+r_h$ with $\tilde{l}\in L$ in  (\ref{eq:SUM}).
Since
$\sum_{l\in L,\, l\not\geq 0}p_{l'+l}=
\sum_{l''\not\geq \tilde{l},\, [l'']=r_h}p_{l''}$,
(a)  follows from Theorem \ref{th:JEMS}. For  (b) use Corollary \ref{cor:Taylor} and
 Proposition \ref{prop:pc}.
\end{proof}

We note that the Lipman cone $\calS$ can indeed be cut in several
chambers (of the denominator of $Z$). This can happen even in the
simple case of Brieskorn germs.  Below we provide such an example
together with several exemplifying details of the construction.

\begin{example}\labelpar{ex:tref} {\bf Lipman cone cut in several chambers.} \
Consider the 3-manifold $S^3_{-1}(T_{2,3})$ (where $T_{2,3}$ is the right-handed trefoil knot), or, equivalently,
 the link of the
hypersurface singularity $z_1^2+z_2^3+z_3^7=0$. Its
plumbing graph $G$ and matrix $-\frI^{-1}$ are:

\begin{picture}(200,75)(60,10)
\put(150,55){\circle*{3}}
\put(180,55){\circle*{3}}
\put(120,55){\circle*{3}}
\put(150,25){\circle*{3}}
\put(150,55){\line(-1,0){30}}
\put(150,55){\line(1,0){30}}
\put(150,55){\line(0,-1){30}}
\put(150,70){\makebox(0,0){$E_0$}}
\put(180,70){\makebox(0,0){$E_3$}}
\put(120,70){\makebox(0,0){$E_1$}}
\put(135,25){\makebox(0,0){$E_2$}}
\put(160,45){\makebox(0,0){$-1$}}
\put(105,55){\makebox(0,0){$-2$}}
\put(195,55){\makebox(0,0){$-7$}}
\put(165,25){\makebox(0,0){$-3$}}
\put(345,45){\makebox(0,0){$-\frI^{-1}=\begin{pmatrix}
42&21&14&6\\
21&11&7&3\\
14&7&5&2\\
6&3&2&1
\end{pmatrix}$}}
\end{picture}

\noindent
 where the row/column vectors of $-\frI^{-1}$ are $E_0^*$,
$E_1^*$, $E_2^*$ and $E_3^*$ in the $\{E_v\}$ basis. The polytopes defined in (\ref{eq:POL}), or in (\ref{eq:Pv}),
with parameter $l=(l_0,l_1,l_2,l_3)\subset \Z^4$, sit in $\R^3$. Let $u_1,u_2,u_3$ be the basis of
$\R^3$. Then the polytopes  are the following convex closures:
\begin{eqnarray*}
 P_0^{(l)}& = & conv\left(0,\left(l_0/21\right)u_1,\left(l_0/14\right)u_2,
\left(l_0/6\right)u_3\right)\\
 P_1^{(l)}& = & conv\left(0,\left(l_1/11\right)u_1,\left(l_1/7\right)u_2,
\left(l_1/3\right)u_3\right)\\
 P_2^{(l)}& = & conv\left(0,\left(l_2/7\right)u_1,\left(l_2/5\right)u_2,
\left(l_2/2\right)u_3\right)\\
 P_3^{(l)}& = & conv\left(0,\left(l_3/3\right)u_1,\left(l_3/2\right)u_2,
\left(l_3/1\right)u_3\right).
\end{eqnarray*}
Since $E_0^*+\varepsilon(-E_0)$ is in the interior of the (real) Lipman cone
for $0<\varepsilon \ll 1$, we get that the Lipman cone is cut in several chambers.
The periodic constant can be computed with any of them.
In fact, by the continuity of the quasipolynomials associated with the chambers, any quasipolynomial
associated with any ray in the Lipman cone, even if it is situated at its boundary, provides the
periodic constant.
One such degenerated polytope provided by a ray on the boundary of $\calS$ is of special interest. Namely,
if we take $l=\lambda E_0^* \in \calS$ for $\lambda>0$, then $P^{(l)}=\bigcup_{v=0}^3 P_v^{(l)}$ is
the same as $P_0^{(l)}=conv(0,2\lambda u_1,3\lambda u_2,7\lambda u_3)$.
%
Moreover, if  $\calC$ is any chamber which contains the ray
$\R_{\geq 0} E_0^*$ at its boundary, then for any $l=\lambda
E^*_0$ one has
$\calL^{\calC}({\bf
A},\calT,l)=\calL(\widetilde{P}_0,\calT,\lambda)$, 
 where the last
is the classical Ehrhart polynomial of the tetrahedron
$\widetilde{P}_0:=conv(0,2 u_1, 3u_2,7u_3)$. Here we witness an
additional  coincidence of $\widetilde{P}_0$ with the Newton
polytope $G^-_N$ of the equation $z_1^2+z_2^3+z_3^7$.

We compute $ \calL(\widetilde{P}_0,\calT,\lambda)$  as follows.
From (\ref{eq:KV2})--(\ref{eq:KV2b}) and Corollary
\ref{cor:Taylor}, we get that
\begin{equation}\label{eq:NEW}
\chi(\lambda E_0^*)+\mbox{geometric genus of }
\{z_1^2+z_2^3+z_3^7=0\}=\calL(\widetilde{P}_0,\calT,\lambda)-\calL(\widetilde{P}_0,\calT,\lambda-1).
\end{equation}
Since this geometric genus is 1, and the free term of
$\calL(\widetilde{P}_0,\calT,\lambda)$ is zero (since for $\lambda
=0$ the zero polytope with boundary conditions contains no lattice
point), and $-K=2E_0+E_1+E_2+E_3$, we get that
$\calL(\widetilde{P}_0,\calT,
\lambda)=7\lambda^3+10\lambda^2+4\lambda$.
We emphasize that a formula as in (\ref{eq:NEW}), realizing a bridge between
the Riemann--Roch expression $\chi$ (supplemented with the geometric genus)
and the Ehrhart polynomial of the Newton diagram, was not known
for Newton non--degenerate germs.
\medskip

 In the sequel we will provide several examples, when
the Newton polytope is not even defined.
\end{example}

\subsection{Example. The case of lens spaces}\labelpar{ex:lens}\
As we will see in  Theorem \ref{th:REST},
the complexity of the problem depends basically  on the number of nodes  of  $G$. 
 In this subsection we treat the case when there are no nodes at all, that is $M$ is a
lens space. In this case the numerator of the rational function $f(\bt)$ is 1, hence
everything is described by the 2--dimensional polytopes determined by the denominator.
In the literature there are several results about lens spaces fitting in the present program,
here we collect the relevant ones completing with the new interpretations.
This subsection also serves as a preparatory part, or model,  for the study of chains of arbitrary graphs.

Assume that the plumbing graph is
\begin{picture}(170,20)(80,15)
\put(100,20){\circle*{3}}
\put(130,20){\circle*{3}}
\put(200,20){\circle*{3}}
\put(230,20){\circle*{3}}
\put(100,20){\line(1,0){50}}
\put(230,20){\line(-1,0){50}}
\put(165,20){\makebox(0,0){$\cdots$}}
\put(100,30){\makebox(0,0){$-k_1$}}
\put(130,30){\makebox(0,0){$-k_2$}}
\put(230,30){\makebox(0,0){$-k_s$}}
\put(200,30){\makebox(0,0){$-k_{s-1}$}}
\end{picture}
with all $k_v\geq 2$, and
$p/q$ is expressed via the (Hirzebruch, or negative) continued fraction
\begin{equation}\label{eq:HCF}
[k_1,\ldots, k_s]=k_1-1/(k_2-1/(\cdots -1/k_s)\cdots ).
\end{equation}
Then $M$ is the lens space $L(p,q)$. We also define $q'$ by
$q'q\equiv 1$ mod $p$, and $0\leq q'<p$. Furthermore, we set $g_v=[E^*_v]\in H$.
Then $g_s$ generates $H=\Z_p$, and any element
of $H$ can be written as $ag_s$ for some $0\leq a<p$. Recall the definitions of
$r_h$ and $s_h$ from \ref{ss:11} as well. 

From the analytic point of view $(X,0) $  is a cyclic quotient singularity
$(\C^2,o)/\Z_p$, where the action is $\xi*(x,y)=(\xi x,\xi^q y)$ (here $\xi$ runs over $p$--roots of unity).

\bekezdes {\bf The Seiberg--Witten invariant.} Since $(X,0)$ is rational,
in this case $Z(\bt)=P(\bt)$ (cf. subsection \ref{FM}). Moreover, in (\ref{eq:KV2b})
 $H^1(\cO_{\widetilde{Y}})=0$, hence
\begin{equation}\label{eq:lens2}
 \frsw_{-h*\sigma_{can}}(M)=
-\frac{(K+2r_h)^2+|\cV|}{8}=-\frac{K^2+|\cV|}{8}+\chi(r_h).
\end{equation}
On the other hand, in  \cite{OSZINV,Nlat} a similar formula is proved for the Seiberg--Witten invariant:
one only has to replace in (\ref{eq:lens2}) $\chi(r_h)$ by $\chi(s_h)$.
In particular, for lens spaces, and for any $h\in H$ one has
\begin{equation}\label{eq:lens3}
\chi(r_h)=\chi(s_h).
\end{equation}
(Note that, in general, for other links, $\chi(r_h)>\chi(s_h)$ might happen, see Example~\ref{ex:sh}.
Here, (\ref{eq:lens3})
follows from the vanishing of the geometric genus of the universal abelian cover of $(X,0)$.)

In general,  the coefficients of
the representatives $s_{ag_s}$ and $r_{ag_s}$ ($0\leq a<p$)
are rather complicated arithmetical expressions;
for $s_{ag_s}$ see \cite[10.3]{OSZINV} (where $g_s$ is defined with opposite sign).
The value  $\chi(s_{ag_s})$ is computed in
\cite[10.5.1]{OSZINV} as
\begin{equation}\label{eq:lens1}
\chi(s_{ag_s}) =\frac{a(1-p)}{2p} +\sum_{j=1}^a \Big\{\frac{jq'}{p}\Big\}.
\end{equation}

For completeness of the discussion we also recall that
$K=E_1^*+E_s^*-\sum_vE_v$ and
\begin{equation}\label{eq:lens4}
(K^2+|\cV|)/4=(p-1)/(2p)-3\cdot \bms(q,p),
\end{equation}
cf. \cite[10.5]{OSZINV}, where $\bms(q,p)$ denotes the Dedekind sum
\[
\bms(q,p)=\sum_{l=0}^{p-1}\Big(\Big( \frac{l}{p}\Big)\Big)
\Big(\Big( \frac{ ql }{p} \Big)\Big), \ \mbox{where} \ \
((x))=\left\{
\begin{array}{ccl}
\{x\} -1/2 & {\rm if} & x\in {\R}\setminus {\Z}\\
0 & {\rm if} & x\in {\Z}.
\end{array}
\right.
\]
In particular,  $\frsw_{-h*\sigma_{can}}(M)$ is determined via the formulae
(\ref{eq:lens2}) -- (\ref{eq:lens4}).

The non--equivariant picture looks as follows:
 $\sum_h \frsw_{-h*\sigma_{can}}=\lambda$,
the Casson--Walker invariant of $M$, hence (\ref{eq:lens2}) gives
$$\lambda=-p(K^2+|\cV|)/8+\textstyle{\sum_h}\chi(r_h).$$
This is compatible with (\ref{eq:lens4}) and formulae
$\lambda(L(p,q))=p\cdot\bms(q,p)/2$ and $\sum_h\chi(r_h)=(p-1)/4-p\cdot \bms(q,p)$, cf.
\cite[10.8]{OSZINV}.

\bekezdes {\bf The polytope and its quasipolynomial.}
We compare the above data with Ehrhart theory.
In this case $Z(\bt)=(1-\bt^{E^*_1})^{-1}(1-\bt^{E^*_s})^{-1}$.
The vectors $a_1=E^*_1$ and $a_s=E^*_s$ determine the polytopes
$P^{(l')}$ and a chamber decomposition.

For $1\leq v\leq w\leq s$ let $n_{vw}$ denote the numerator of the continued fraction
$[k_v,\ldots,k_w]$ (or, the determinant of the corresponding bamboo subgraph).
For example, $n_{1s}=p$, $n_{2s}=q$ and $n_{1,s-1}=q'$. We also set
$n_{v+1,v}:=1$. Then $pE^*_1=\sum_{v=1}^sn_{v+1,s}E_v$ and $pE^*_s=\sum_{v=1}^sn_{1,v-1}E_v$.

In particular, for any $l'=\sum_vl'_vE_v\in\calS'$, the (non--convex) polytopes are
\begin{equation}\label{eq:lens5}
P^{(l')}=\bigcup_{v=1}^s\Big\{\,(x_1,x_s)\in\R_{\geq 0}^2\,:\, x_1n_{v+1,s}+x_sn_{1,v-1}\leq pl'_v\Big\}
\subset \R_{\geq 0}^2.
\end{equation}
The representation $\Z^2\stackrel{\rho}{\longrightarrow}\Z_p$ is $(x_1,x_s)\mapsto (qx_1+x_s)g_s$.

Though $P^{(l')}$ is a plane polytope, the direct computation of its equivariant Ehrhart multivariable polynomial
(associated with a chamber, or just with the Lipman cone)
 is still highly non--trivial. Here  we will rely again on  Theorem \ref{th:JEMS}.
On a subset of type $l'_0+\calS'$ the identity (\ref{eq:SUM}) provides the counting function. The right hand side
of (\ref{eq:SUM})  depends on all the
coordinates of $l'$, hence all the  triangles $P_v$ contribute in $P^{(l')}$. Since this can happen only in
a unique combinatorial way, we get that there is a chamber $\calC$ which contains the Lipman cone.
Let $\calL^{e,\calC}$ be its quasipolynomial, and  $\calL^{e,\calS}$ its restriction to $\calS$.
Since the numerator of $Z(\bt)$ is 1, $\overline{Q}^{\calC}_h=\calL^{\calC}_h$.
Since this agrees with the right hand side of (\ref{eq:SUM}) on a cone of type
$l'_0+\calS'$, and the Lipman cone is in $\calC$,  we get that
\begin{equation}\label{eq:lens6}
Q_h(l')=\overline{Q}^{\calC}_h(l')=\calL^{\calS}_h(l')=-\frsw_{-h*\sigma_{can}}(M)-\frac{(K+2l')^2+|\cV|}{8}
\end{equation}
for any $l'\in (r_h+L)\cap \calS'$ and  $h\in H$.
Using the identity (\ref{eq:lens2}), this reads as
\begin{equation}\label{eq:lens7}
\calL^{\calS}_h(\calT,l')=\chi(l')-\chi(r_h),
\ \ \ l'\in (r_h+L)\cap \calS'.
\end{equation}
Note that for any fixed $h$ and any $l'$ there exists a unique $q=q_{l',h}\in \square$ such that
$l'+q:=l''\in r_h+L$.
Indeed, take for $q$ the representative of $r_h-l'$ in $\square$. Then (\ref{eq:cccc})
and (\ref{eq:lens7}) imply
\begin{equation}\label{eq:Lg}
\calL^{\calS}_h(\calT,l')
=\calL^{\calS}_h(\calT,l'')=\chi(l'+q_{l',h})-\chi(r_h).\end{equation}
This formula emphasizes the quasi--periodic behavior of  $\calL^{\calS}_h(\calT,l')$ as well.

If $l'$ is an element of  $L$ 
then $q_{l',h}=r_h$, hence (\ref{eq:Lg}) gives in this case
\begin{equation}\label{eq:Lg2}
\calL^{\calS}_h(\calT,l)
=\chi(l+r_h)-\chi(r_h)=\chi(l)-(l,r_h) \ \ \ \mbox{for $l\in L\cap \calS$}.\end{equation}
In particular,
$\mathrm{pc}(\calL_h^\calS(\calT))=\chi(r_h)-\chi(r_h)=0$ (a fact compatible with  $H^1(\cO_{\widetilde{Y}})=0$).



\medskip

Even the non--equivariant case looks rather interesting.
Let $\calL^{\calS}_{ne}(\calT)=\sum_{h\in H}\calL^{\calS}_h(\calT)$ be the Ehrhart polynomial
of $P^{(l')}$ (with boundary condition $\calT$), where we count all the lattice points
independently of their class in $H$.
Then,  (\ref{eq:Lg2}) gives
for $ l\in L\cap \calS$
\begin{equation}\label{eq:lens8}
\calL^{\calS}_{ne}(\calT,l)=p\cdot \chi(l)-(l,\textstyle{\sum_h}r_h)=
-p\cdot
(l,l)/2-p\cdot (l,K)/2-(l,\textstyle{\sum_h}r_h).
\end{equation}
In fact, $\sum_hr_h$ can  explicitly be computed. Indeed, set
 $d_v={\mathrm{gcd}}(p,n_{1,v-1})$ and  $p_v=p/d_v$.  Then one
checks that $aE^*_s=\sum_vn_{1,v-1}\frac{a}{p}E_v$,
$r_h=\sum_v\big\{n_{1,v-1}\frac{a}{p}\big\}E_v$ and $\sum_hr_h=\sum_vd_v\frac{p_v-1}{2}E_v$.

The coefficients of the polynomial $\calL^{\calS}_{ne}(\calT,l)$
can be compared with the coefficients given by general theory
of Ehrhart polynomials applied for $P^{(l)}$. E.g., the leading coefficient gives
 \begin{equation*}\label{eq:lens9}
-p \cdot(l,l)/2= \ \mbox{Euclidian area of }\ P^{(l)}.
\end{equation*}
Knowing that in $P^{(l)}$ all the $P_v$'s contribute, and it depends on $s$ parameters, and the
intersection of their boundary is messy, the simplicity and conceptual form of
(\ref{eq:lens8}) is rather remarkable.

\section{Reduction theorems for $Z(\bt)$}\labelpar{s:REDZt}\
The number of terms in the denominator $\prod_i(1-\bt^{a_i})$
of the series equals the number of variables of the corresponding partition function
(associated with vectors $a_i$), and it is also
the rank of the lattice where the corresponding polytope sit.
In the case of the series $Z(\bt)$ associated with plumbing graph, this is the number of
{\it end vertices} of $G$.
On the other hand, the number of variables of $Z(\bt)$ is the number $|\calv|$
of vertices of $G$.
Furthermore, in the Ehrhart theoretical part, the associated (non--convex) polytope will be a
union of $|\calv|$ simplicial polytopes. Hence, the number of facets and the
complexity of the polytope increases considerably with the number of vertices as well.

Nevertheless, the Theorem \ref{th:REST} eliminates a part of this abundance of
parameters: it says that from the periodic constant point of view, the number of variables of the series,
and also the number of simplicial polytopes in the union, can be reduced 
to the number of {\it nodes} of the graph.
Hence, in fact, the complexity level can be measured by the number of nodes. 

We can do even more: if we apply the machinery of the Reduction Theorem \ref{red} from the previous 
chapter, one can reduce the number of variables of $Z(\bt)$ to the number of the chosen bad vertices 
of the graph $G$ (in the sense of \ref{ss:bv}). 

The first approach is purely combinatorial, using the specialty of $G$. However, the second 
uses the Reduction Theorem \ref{red}, i.e. the hidden geometry which measures the rationality of the 
graph.

\subsection{Reduction to the node variables}\labelpar{ss:REST}\
Let $\calN$ denote the set of nodes
as above. Let
$\calS_\calN$ be the positive cone
$\R_{\geq 0}\langle E^*_n\rangle_{n\in \calN}$  generated by the dual base elements indexed by $\calN$, and  $V_\calN:= \R\langle E^*_n\rangle_{n\in \calN}$
be its supporting linear subspace in $L\otimes \R$. Clearly  $\calS_\calN\subset \calS$.
Furthermore, consider  $L_\calN:=\Z\langle E_n\rangle_{n\in \calN}$ generated by the node
base elements, and
the projection $pr_\calN:L\otimes \R\to L_\calN\otimes \R$ on the node coordinates.

\begin{lemma}\label{lem:RES}
The restriction of $pr_\calN$ to $V_\calN$, namely $pr_\calN:V_\calN\to L_\calN\otimes \R$, is an isomorphism.
\end{lemma}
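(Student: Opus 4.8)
The plan is the following. Both $V_\calN$ and $L_\calN\otimes\R$ have dimension $|\calN|$ --- the elements $\{E^*_n\}_{n\in\calN}$ are linearly independent since $\frI$, hence $\frI^{-1}$, is non--degenerate --- so it suffices to prove that $pr_\calN|_{V_\calN}$ is injective. First I would write each $E^*_n$ in the $\{E_v\}$--basis: from $(E^*_n,E_v)=-\delta_{nv}$ one gets $E^*_n=-\sum_{v\in\cV}(E^*_v,E^*_n)E_v$, the coefficients being (minus) the $n$--th column of $\frI^{-1}$, all of them strictly positive by \eqref{eq:POS}. Applying $pr_\calN$, which keeps only the node coordinates, gives
\[
pr_\calN(E^*_n)=-\sum_{m\in\calN}(E^*_m,E^*_n)\,E_m .
\]
Hence, in the bases $\{E^*_n\}_{n\in\calN}$ of $V_\calN$ and $\{E_m\}_{m\in\calN}$ of $L_\calN\otimes\R$, the map $pr_\calN|_{V_\calN}$ is represented by the principal submatrix of $-\frI^{-1}$ with rows and columns indexed by $\calN$, that is, by the Gram matrix $\big(-(E^*_m,E^*_n)\big)_{m,n\in\calN}$.

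The second step is to see that this matrix is invertible. I would argue directly from the negative definiteness of the intersection form on $L\otimes\Q$: if $x=\sum_{n\in\calN}a_nE^*_n$ lies in $\ker(pr_\calN|_{V_\calN})$, then the displayed identity forces $\sum_{n\in\calN}a_n(E^*_m,E^*_n)=0$ for every $m\in\calN$; multiplying by $a_m$ and summing over $m\in\calN$ yields $(x,x)=\sum_{m,n\in\calN}a_ma_n(E^*_m,E^*_n)=0$, whence $x=0$. Equivalently, one may note that $-\frI^{-1}$ is symmetric positive definite, so each of its principal blocks is positive definite, in particular non--singular. Either way $pr_\calN|_{V_\calN}$ is an injective linear map between spaces of the same finite dimension, hence an isomorphism; in particular it identifies $\calS_\calN$ with a concrete transversal slice inside $\calS$, which is what the subsequent reduction to the node variables will use.

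I do not expect a genuine obstacle here: the statement is essentially linear algebra, and the only delicate point is the bookkeeping that the matrix of $pr_\calN|_{V_\calN}$ is exactly the $\calN\times\calN$ principal block of $-\frI^{-1}$. Should one want explicit arithmetic rather than the abstract (non)vanishing, one could substitute the formula \eqref{eq:DETsgr}, which writes $-|H|\cdot(E^*_m,E^*_n)$ as the determinant of the subgraph obtained by deleting the shortest path between $m$ and $n$; but for the isomorphism claim this refinement is unnecessary.
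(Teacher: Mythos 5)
Your proof is correct and follows exactly the route the paper intends: its one-sentence proof ("negative definiteness of the intersection form ... guarantees that any minor situated centrally on the diagonal is non-degenerate") is precisely your observation that the matrix of $pr_\calN|_{V_\calN}$ in the bases $\{E^*_n\}_{n\in\calN}$ and $\{E_m\}_{m\in\calN}$ is the $\calN\times\calN$ principal block of $-\frI^{-1}$, which is positive definite and hence invertible. You have simply written out the bookkeeping the paper leaves implicit.
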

\begin{proof}
Follows from the negative definiteness of the intersection form of the plumbing, which guarantees that
any minor situated centrally on the diagonal is non--degenerate.
\end{proof}
Our goal is to prove that restricting the counting function to the subspace $V_\calN$, the non--node variables of $Z(\bt)$ and $Q(l')$ became non--visible, hence they can be eliminated. This fact will
provide a remarkable simplification in the periodic constant computation.
But, {\it before} any elimination--substitution,
we have first to decompose our series $Z(\bt)$ 
into $\sum_{h\in H}Z_h(\bt)[h]$
if we wish to preserve the information about all the $H$ invariants,
cf. the comment at the end of \ref{bek:LL'2}. 

\begin{theorem}\labelpar{th:REST}
(a) The restriction of $\calL^e_h({\bf A}, \calT,l')$ to $V_\calN$
 depends only on those coordinates which are indexed by the nodes (that is, it
depends only on $pr_\calN(l')$ whenever $l'\in V_\calN$).

(b) The same is true for the counting function $Q_h$ associated with $Z_h(\bt)$ as well.
In other words, if we consider the restriction
$$Z_h(\bt_\calN):=Z_h(\bt)|_{t_v=1\ \mbox{\tiny{for all $v\not\in\calN$}}}$$
then for any $l'\in L_\calN$, the counting functions $\sum_{l''\not\geq l'} p_{l''}[l'']$
of  $Z_h(\bt)$ and $Z_h(\bt_\calN)$ are the same.

(c) Consider the chamber decomposition of $\calS_\calN$ by intersections of type
$\calC_\calN:=\calC\cap\calS_\calN$, where $\calC$ denotes a chamber (of $Z$)
such that $int(\calC\cap \calS)\not=\emptyset$, and
the intersection of $\calC$ with the relative interior of $\calS_\calN$ is also
non--empty.
Then  \begin{equation}\label{eq:REDPC}
{\mathrm{pc}}^\calC(Z_h(\bt))=
 {\mathrm{pc}}^{\calC_\calN}(Z_h(\bt_\calN)).
 \end{equation}
\end{theorem}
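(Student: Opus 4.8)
The plan is to prove the three parts in sequence, with part (a) carrying the geometric heart of the argument and parts (b)--(c) being essentially bookkeeping on top of it. First I would set up the key observation that makes the non-node variables invisible: if $l' \in V_\calN = \R\langle E_n^*\rangle_{n\in\calN}$, then the $E_v$-coordinates of $l'$ for $v\notin\calN$ are completely determined by the node-coordinates $pr_\calN(l')$, via the isomorphism of Lemma \ref{lem:RES}. The point is to show that the chain-vertices never contribute an essential constraint. Recall $Z(\bt) = \prod_{v\in\calV}(1-\bt^{E_v^*})^{\delta_v-2}$, so only ends (where $\delta_v - 2 = -1$) appear in the denominator, while chain-vertices of valency $2$ contribute nothing, and nodes contribute to the numerator. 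Thus the polytope $P^{(l')} = \bigcup_{v\in\calV} P_v^{(l')}$ is a union of simplicial polytopes indexed by \emph{all} vertices, but the dilation of $P_v^{(l')}$ is governed by the single parameter $l'_v$. The reduction claim is that when $l'\in V_\calN$, the pieces $P_v^{(l')}$ for $v$ in a chain (a non-node, non-end vertex) are swallowed by the pieces coming from the ends/nodes of that chain, so they impose no new facets on the counting. Concretely, on a chain connecting two nodes (or a node and an end), the value $l'_v$ for $v$ interior to the chain, when $l'\in V_\calN$, is a \emph{convex combination} of the boundary values dictated by the continued-fraction data (as in the lens-space formulas \eqref{eq:lens5}), which forces $P_v^{(l')} \subseteq P_{v'}^{(l')} \cup P_{v''}^{(l')}$ for the two neighbors closer to the ends of the chain. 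This is exactly the phenomenon already analyzed in \ref{ex:lens} for bamboos, and the general case follows by applying that local analysis to each maximal chain of $G$ cut off by the nodes. Hence $\calL^e_h({\bf A},\calT,l')$, restricted to $V_\calN$, depends only on $pr_\calN(l')$, proving (a).

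For part (b), I would invoke Corollary \ref{cor:Taylor}(a), which expresses the counting function $Q_h(l')$ of $Z_h(\bt)$ as a $\Z$-linear combination $\sum_k \iota_k \calL_{h-[b_k]}({\bf A},\calT,l'-b_k)$ of the Ehrhart quasipolynomials, where the $b_k$ are the numerator exponents of $Z$, all of which lie in $V_\calN$ (being $\Z_{\geq 0}$-combinations of the $E_n^*$, $n\in\calN$, since the numerator of $\prod_v(1-\bt^{E_v^*})^{\delta_v-2}$ expands in products of the $\bt^{E_n^*}$). Therefore $l'-b_k \in V_\calN$ whenever $l'\in V_\calN$, and part (a) applies to each term: the whole sum depends only on $pr_\calN(l')$. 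This is precisely the assertion that the counting functions of $Z_h(\bt)$ and of its specialization $Z_h(\bt_\calN) = Z_h(\bt)|_{t_v=1,\, v\notin\calN}$ agree on $L_\calN$; the substitution $t_v=1$ for $v\notin\calN$ is legitimate on the level of the formal sum because, after grouping by $H$-class, each $Z_h$ is supported on $\calS'$ and for each fixed $l'$ only finitely many monomials satisfy $l''\not\geq l'$ (cf. \eqref{eq:finite}), so the truncated sum is finite and the specialization commutes with it.

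For part (c), I would combine part (b) with Corollary \ref{cor:4.1}(b) and Proposition \ref{prop:pc}(c). By Corollary \ref{cor:4.1}(b), for any chamber $\calC$ of the denominator of $Z$ with $\mathrm{int}(\calC\cap\calS)\neq\emptyset$, the function $Z_h(\bt)$ admits a periodic constant in $\calC$, computed as $\overline{Q}_h^\calC(0)$; and this value is independent of the choice of such $\calC$. Now choosing $\calC$ so that additionally $\calC$ meets the relative interior of $\calS_\calN$, the cone $\calC_\calN = \calC\cap\calS_\calN$ has nonempty relative interior inside $V_\calN$, and by part (b) the quasipolynomial $\overline{Q}_h^\calC$ restricted to (a finite-index sublattice of) $L_\calN\cap V_\calN$ coincides with the quasipolynomial $\overline{Q}_h^{\calC_\calN}$ attached to the reduced series $Z_h(\bt_\calN)$ and the chamber $\calC_\calN$ — since they agree on a translated copy $l'_*+\calC_\calN$ of the cone, two quasipolynomials agreeing there are equal. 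Evaluating at $0$ gives $\mathrm{pc}^\calC(Z_h(\bt)) = \overline{Q}_h^\calC(0) = \overline{Q}_h^{\calC_\calN}(0) = \mathrm{pc}^{\calC_\calN}(Z_h(\bt_\calN))$, which is \eqref{eq:REDPC}. The main obstacle I expect is part (a): making rigorous the claim that interior chain-pieces $P_v^{(l')}$ are absorbed requires a careful chain-by-chain analysis of the continued-fraction inequalities defining the $P_v^{(l')}$ (the $n_{vw}$-type determinants of \ref{ex:lens}) and checking that, for $l'\in V_\calN$, the half-space cut out by $P_v$ contains the intersection of the half-spaces cut out by its neighbors toward the ends — this is where the negative-definiteness of $\frI$ (via Lemma \ref{lem:RES}) and the specific shape of $Z(\bt)$ both get used, and it is the place where one must be most careful that nothing is lost when passing from all vertices to just the nodes.
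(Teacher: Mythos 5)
Your overall strategy coincides with the paper's: show that for $l'\in V_\calN$ the non--node pieces of the polytope $P^{(l')}$ are redundant, deduce (b) from Corollary \ref{cor:Taylor}(a) together with the fact that all numerator exponents $b_k$ lie in $\calS_\calN$, and get (c) by comparing the two quasipolynomials on a translated cone and evaluating at $0$. Your parts (b) and (c) are correct and essentially identical to the paper's. The problem is in (a), and it matters because (a) as literally phrased is nearly vacuous: $pr_\calN|_{V_\calN}$ is injective by Lemma \ref{lem:RES}, so \emph{any} function restricted to $V_\calN$ "depends only on $pr_\calN(l')$". The content actually needed for (b) is the polytope identity $\bigcup_{v\in\calV}P_v^{(l'),\triangleleft}=\bigcup_{n\in\calN}P_n^{(l'),\triangleleft}$ for $l'\in\calS_\calN$, i.e.\ that \emph{every} non--node piece --- the end pieces $P_e^{(l')}$, $e\in\cale$, included --- is swallowed by the node pieces.

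Your absorption scheme only treats "non--node, non--end" vertices and pushes each interior chain piece into the union of its two neighbours' pieces, hence, after iteration, into the pieces at the two extremities of the chain. On a chain joining two nodes this suffices, but on a leg one extremity is an end vertex $e(v)$ of $G$, so your iteration terminates with $P_v^{\triangleleft}\subseteq P_{n(v)}^{\triangleleft}\cup P_{e(v)}^{\triangleleft}$ and the piece $P_{e(v)}^{\triangleleft}$ is never disposed of. Since the reduced series $Z_h(\bt_\calN)$ only records the node inequalities, an unabsorbed end piece would make the two counting functions in (b) differ. What is needed is the asymmetric, one--sided containment $P_v^{(l'),\triangleleft}\subset P_{n(v)}^{(l'),\triangleleft}$ for every vertex $v$ of a leg, the end itself included. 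The paper obtains it from the global relation $E^*_v=\alpha E^*_{n(v)}+\sum_{w\in\calW}\alpha_wE_w$ with $\alpha,\alpha_w>0$ ($\calW$ the set of leg vertices): pairing with the $E^*_e$ gives $\sum_e x_eE^*_{ve}=\alpha\sum_e x_eE^*_{n(v)e}+\alpha_{e(v)}x_{e(v)}$, pairing with the $E^*_n$ gives $l'_v=\alpha\, l'_{n(v)}$ for $l'\in V_\calN$, and the positivity $x_{e(v)}\geq 0$ (the polytope sits in the positive orthant) yields the implication $\sum_e x_eE^*_{ve}<l'_v\Rightarrow\sum_e x_eE^*_{n(v)e}<l'_{n(v)}$. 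It is exactly this use of the sign of the $e(v)$--coordinate that breaks the symmetry of your neighbour--to--neighbour argument and absorbs the ends; without it the reduction to the node variables is not established.
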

The theorem applies as follows. Assume that we are interested in the computation of
$\mathrm{pc}^\calC_h(Z(\bt))$ for some chamber $\calC$
(e.g. when $\calC\subset \calS$, cf. Corollary \ref{cor:4.1}). Assume that
$\calC$ intersects the relative interior of $\calS_\calN$. Then,
the restriction to  $\calC\cap \calS_\calN$ of the quasipolynomial associated with
$\calC$ has two properties: it still preserves sufficient information to
determine $\mathrm{pc}^\calC_h(Z(\bt))$
(via the periodic constant of the restriction, see (\ref{eq:REDPC})),
but it also has the advantage that
for these dilation parameters $l'$ the  union $\cup_{v\in\cV} P^{(l'),\triangleleft}_v$
equals the union of significantly less polytopes, namely $\cup_{n\in\calN} P^{(l'),\triangleleft}_v$.

For example, when we have only one node, one has to handle one simplex instead of $|\cV|$ many.

\begin{proof} (a) We show that for any $l'\in  V_\calN$ one has the inclusions
\begin{equation}\label{eq:4.13}
P_v^{(l'),\triangleleft}\,\subset\,
\bigcup_{n\in \calN}P_n^{(l'),\triangleleft} \ \mbox{for any $v\not\in \calN$}.
\end{equation}
We  consider two cases. First we assume that $v$ is on  a leg (chain) connecting an
end $e(v)\in\cale$ with a node $n(v)$ (where $e(v)=v$ is also possible).
Then, clearly, (\ref{eq:4.13}) follows from
\begin{equation}\label{eq:4.13b}
P_v^{(l'),\triangleleft}\,\subset\,
P_{n(v)}^{(l'),\triangleleft} \ \ \mbox{\ \ \ for any $l'\in \calS_\calN$}.
\end{equation}
Let  $E^*_{uv}=(E^*_u)_v=-(E^*_u,E^*_v)$ be the $v$--cordinate of $E^*_u$. Note that
$E^*_{uv}=E^*_{vu}$.  Using the definition of the polytopes,
(\ref{eq:4.13b}) is equivalent with the implication (cf. \ref{bek:pol})
\begin{equation}\label{eq:4.14}
\big(\ \sum_{e\in\cale}x_eE^*_{ve} < l'_{v}\ \big)
\Longrightarrow
\big(\ \sum_{e\in\cale}x_eE^*_{n(v)e} < l'_{n(v)}\ \big) \ \
\mbox{\ \ for any $l'\in \calS_\calN$ and $x_e\geq 0$}.
\end{equation}
Let $\calW$ be the set of vertices
on this leg (including $e(v)$ but not $n(v)$). Then, one verifies that there exist
positive rational numbers $\alpha$ and $\{\alpha_w\}_{w\in \calW}$, such that
\begin{equation}\label{eq:4.15}
E^*_v=\alpha\, E^*_{n(v)}+\sum_{w\in\calW}\alpha_wE_w.
\end{equation}
The numbers $\alpha$ and $\{\alpha_w\}_{w\in \calW}$ can be determined from the linear
system obtained by  intersecting the identity (\ref{eq:4.15}) by $\{E_w\}_w$ and $E_{n(v)}$.
Intersecting (\ref{eq:4.15}) by $E^*_e$ ($e\in\cale$), we get that $E^*_{ve}=\alpha E^*_{n(v)e}$ for any
$e\not=e(v)$,  and $E^*_{v,e(v)}=\alpha E^*_{n(v),e(v)}+\alpha_{e(v)}$. Hence
\begin{equation}\label{eq:4.16}
 \sum_{e\in\cale}x_eE^*_{ve} =\alpha
 \sum_{e\in\cale}x_eE^*_{n(v)e}+ x_{e(v)}\alpha_{e(v)}.
\end{equation}
On the other hand, intersecting (\ref{eq:4.15}) with $E^*_n$, for $n\in\calN$, we get
$E^*_{vn}=\alpha E^*_{n(v)n}$. Since $l'$ is a linear combination of $E^*_{n}$'s,
we get that
\begin{equation}\label{eq:4.17}
-l'_v=(l',E^*_v)=\alpha (l',E^*_{n(v)})=-\alpha l'_{n(v)}.
\end{equation}
Since $x_{e(v)}\alpha_{e(v)}\geq 0$, (\ref{eq:4.16}) and (\ref{eq:4.17}) imply (\ref{eq:4.14}).
This ends the proof of this case.

Next, we assume that $v$ is on a chain  connecting two nodes $n(v)$ and $m(v)$.
Let $\calW$ be the set of vertices
on this bamboo (not including $n(v)$ and  $m(v)$).
Then we will show that
\begin{equation}\label{eq:4.13c}
P_v^{(l'),\triangleleft}\,\subset\,
P_{n(v)}^{(l'),\triangleleft}\cup P_{m(v)}^{(l'),\triangleleft} \ \ \mbox{for any $l'\in \calS_\calN$}.
\end{equation}
This follows  as above from the
existence of positive rational numbers $\alpha$, $\beta$ and $\{\alpha_w\}_{w\in \calW}$ with
\begin{equation}\label{eq:4.18}
E^*_v=\alpha\, E^*_{n(v)}+\beta\, E^*_{m(v)}+\sum_{w\in\calW}\alpha_wE_w.
\end{equation}

(b) follows from (a) and from the fact that
 all $b_k$  entries in the numerator of $Z(\bt)$  belong to $\calS_\calN$.

 (c) If ${\mathrm{pc}}^{\calC}Z_h(\bt)$ is computed as $\widetilde{Q}_h(0)$ for some
 quasipolynomial $\widetilde{Q}_h$ defined on $\widetilde{L}\subset L$, then part (b) guarantees that
 ${\mathrm{pc}}^{\calC_\calN}Z_h(\bt_\calN)$ can be  computed as
 $(\widetilde{Q}_h|_{\widetilde{L}\cap \calS_\calN})(0)$, which equals $\widetilde{Q}_h(0)$.
\end{proof}

\begin{example}
Consider the following graph $G$:
\begin{center}
\begin{picture}(150,80)(80,15)
\put(150,55){\circle*{3}}
\put(180,55){\circle*{3}}
\put(210,55){\circle*{3}}
\put(240,55){\circle*{3}}
\put(120,55){\circle*{3}}
\put(90,55){\circle*{3}}
\put(60,55){\circle*{3}}
\put(150,25){\circle*{3}}
\put(90,25){\circle*{3}}
\put(210,25){\circle*{3}}
\put(150,55){\line(-1,0){90}}
\put(150,55){\line(1,0){90}}
\put(150,55){\line(0,-1){30}}
\put(210,55){\line(0,-1){30}}
\put(90,55){\line(0,-1){30}}
\put(150,80){\makebox(0,0){$E_0$}}
\put(180,80){\makebox(0,0){$E_{02}$}}
\put(210,80){\makebox(0,0){$E_2$}}
\put(240,80){\makebox(0,0){$E_{21}$}}
\put(120,80){\makebox(0,0){$E_{01}$}}
\put(90,80){\makebox(0,0){$E_1$}}
\put(60,80){\makebox(0,0){$E_{11}$}}
\put(150,10){\makebox(0,0){$E_{03}$}}
\put(90,10){\makebox(0,0){$E_{12}$}}
\put(210,10){\makebox(0,0){$E_{22}$}}
\put(150,65){\makebox(0,0){$-1$}}
\put(180,45){\makebox(0,0){$-13$}}
\put(210,65){\makebox(0,0){$-1$}}
\put(255,55){\makebox(0,0){$-2$}}
\put(220,20){\makebox(0,0){$-3$}}
\put(160,20){\makebox(0,0){$-2$}}
\put(100,20){\makebox(0,0){$-3$}}
\put(60,45){\makebox(0,0){$-2$}}
\put(90,65){\makebox(0,0){$-1$}}
\put(120,45){\makebox(0,0){$-9$}}
\end{picture}
\end{center}
By Theorem \ref{th:REST} we are interested only in those polytopes $P_v\subset \R^5$
which are associated with the nodes $E_1$, $E_2$ and $E_0$. Let $l\in \calS_{\calN}$, i.e.
$l=\lambda_1 E_1^*+\lambda_2 E_2^*+\lambda_0 E_0^*$.
Then one can verify that  $\calS_{\calN}$ is divided by  the plane
$\lambda_1= (13/9)\lambda_2$. Hence, in general   $\calS_{\calN}$
can also be divided into several chambers.
(On the other hand, for graphs with at most two nodes this does not happen.)
\end{example}

\subsection{An application of the Reduction Theorem \ref{red}}\labelpar{s:aplred}\
As we already discussed earlier, N\'emethi \cite{NSW} proved that the normalized Euler characteristics 
of the lattice cohomology also agrees with the Seiberg--Witten invariant.
This result together with Theorem \ref{th:JEMS} emphasize that the Seiberg--Witten 
invariant can be recovered from the topological Poincar\'e series as well. 
The fact that (by Reduction Theorem \ref{red}) the Euler characteristic can be replaced 
by the Euler characteristic of the reduced lattice, suggests the existence of a reduction for the 
series as well.

First of all, let us recall the theorem from \cite{NSW} (same as \ref{th:JEMS}) in a different form 
which is more convenient for this subsection.
\begin{theorem}[\cite{NSW}]\labelpar{NSW} Fix one of the elements $l'_{[k]}$.
Then the following facts hold.

(1)\begin{equation*}\label{eq:Z}
Z_{\lk}(\bt)=
\sum_{l\in L}\,\Big(\sum_{I\subseteq \mathcal{J}} (-1)^{|I|+1}w(l,I)\Big)\, \vast^{l+l'_{[k]}}.
\end{equation*}

(2) Fix some   $l\in L$ with  $l+\lk \in -k_{can}+{\rm interior}(\calS')$. Then
\begin{equation*}
\sum_{\overline{l}\in L,\, \overline l \not\geq l}p_{\overline{l}+\lk }=
\chi_{k_r}(l)+eu( \bH^*(G,k_r).\end{equation*}
\end{theorem}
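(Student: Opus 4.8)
The plan is to deduce both parts from results already established in this chapter, essentially identifying the two sides of each identity with the normalized Euler characteristic of lattice cohomology. For part (1), I would start from the very definition of $Z(\bt)$ as the Taylor expansion of $\prod_v(1-\bt^{E_v^*})^{\delta_v-2}$ and rewrite the product using the combinatorial identity $\prod_v(1-\bt^{E_v^*})^{\delta_v-2}=\prod_v(1-\bt^{E_v^*})^{-1}\cdot\prod_v(1-\bt^{E_v^*})^{\delta_v-1}$, then expand each factor. The cleaner route, however, is to observe that for a fixed class $[k]$ the coefficient of $\bt^{l+l'_{[k]}}$ in $Z_{\lk}(\bt)$ is, after the standard manipulation of the alternating sum over subsets $I\subseteq\mathcal{J}$, precisely $\sum_{I\subseteq\mathcal{J}}(-1)^{|I|+1}w(l,I)$, where $w(l,I)=\max\{\chi_k(v): v\text{ vertex of }(l,I)\}$ is the weight function from \ref{ss:pl}. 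This is a purely formal computation: one writes $\prod_v(1-\bt^{E_v^*})^{\delta_v-2}$ as a signed sum over the choice, for each vertex, of how many times the factor $\bt^{E_v^*}$ is taken, and matches the resulting exponents with the cube structure of $L$; the weights enter because $\sum_v(\delta_v-2)E_v^*=-k_{can}$ and $\chi_k$ is quadratic. I would cite the combinatorial lemma of \cite{NSW} for the bookkeeping rather than redo it.

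For part (2), the key input is Corollary \ref{cor:4.1}(a) together with the Reduction Theorem \ref{red}. By Corollary \ref{cor:4.1}(a), for $l'\in r_h+L$ inside the appropriate shifted Lipman cone one has $\sum_{\overline l\not\geq l}p_{\overline l+\lk}=-\frac{(K+2\lk)^2+|\mathcal V|}{8}-\frsw_{-h*\sigma_{can}}(M)$, while by N\'emethi's theorem \cite{NSW} (the version quoted in \ref{s:aplred}) $\frsw_{-h*\sigma_{can}}(M)=-\frac{(K+2r_h)^2+|\mathcal V|}{8}-eu(\bH^*(G,k_r))$. Substituting, the normalization terms must be reconciled: here one uses that $\chi_{k_r}(l)=-\tfrac12(l,l+k_r)$ and the identity $(K+2\lk)^2=(K+2r_h)^2-8\chi_{k_r}(l-r_h+\cdots)$ coming from the shift relation $\chi_{k'}(x-l)=\chi_k(x)-\chi_k(l)$ recorded in \ref{ss:pl}. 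Carrying this through should collapse the difference of the two quadratic terms into exactly $\chi_{k_r}(l)$, leaving $\chi_{k_r}(l)+eu(\bH^*(G,k_r))$ on the right. The hypothesis $l+\lk\in -k_{can}+\mathrm{interior}(\calS')$ guarantees we are in the stable chamber where the counting function is the honest quasipolynomial, so the truncated sum really does equal the Ehrhart/periodic-constant expression and no boundary corrections appear.

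The main obstacle I expect is the careful matching of normalization constants and of the base points $r_h$ versus $\lk$ versus a general $l$ in part (2): the three lattice representatives differ, the weight functions are defined relative to $k_r$, and one must track how $\chi_{k_r}$ transforms under these shifts using \eqref{eq:RR} and the identity $\chi_{k'}(x-l)=\chi_k(x)-\chi_k(l)$. A secondary but genuine subtlety is justifying that the finite sum $\sum_{\overline l\not\geq l}p_{\overline l+\lk}$ is the \emph{same} object whether computed from $Z(\bt)$ directly or after the reduction of variables (Theorem \ref{th:REST}) and the rank reduction (Theorem \ref{red}); this requires invoking that the periodic constant is independent of the chamber within $\calS$ and that $l+\lk$ sitting deep in the interior of the Lipman cone forces agreement of the quasipolynomial with its regularization. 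Everything else is formal expansion of a rational function and linear algebra on $L'$.
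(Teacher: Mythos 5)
First, a point of comparison that matters: the thesis does not prove Theorem \ref{NSW} at all. It is quoted verbatim from \cite{NSW}, and the only thing the text supplies is the parenthetical remark immediately after the statement, translating notation (namely that in \cite{NSW} one has $w(k)=-(k^2+|\calj|)/8$, that $w(k_r+2l)=\chi_{k_r}(l)-(k_r^2+|\calj|)/8$, that the constant drops out of (1) because $\sum_{I}(-1)^{|I|}=0$, and that the sum in (2) is finite by (\ref{eq:POS})). So your proposal is not an alternative to a proof in the paper; it is an attempted re-derivation from other results quoted in the thesis. For part (1) your derivation has no independent content: the identity expressing the coefficients of $\prod_v(1-\bt^{E_v^*})^{\delta_v-2}$ as the alternating sums $\sum_I(-1)^{|I|+1}w(l,I)$ of \emph{maxima of $\chi_k$ over cube vertices} is precisely the combinatorial theorem of \cite{NSW} you propose to cite; it does not fall out of expanding geometric series and "matching exponents with the cube structure," because nothing in such an expansion produces a $\max$. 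Calling it a purely formal computation and then outsourcing the bookkeeping is just restating the claim.

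Part (2) contains a concrete error. You substitute $\frsw_{-h*\sigma_{can}}(M)=-\tfrac{(\K+2r_h)^2+|\cV|}{8}-eu(\bH^*(G,k_r))$, but the correct normalization in the $eu$--$\frsw$ identification uses the distinguished representative, i.e. $k_r^2=(\K+2\lk)^2$, not $(\K+2r_h)^2$. Since $(\K+2a)^2=\K^2-8\chi(a)$, the two differ by $8(\chi(r_h)-\chi(\lk))=8(\chi(r_h)-\chi(s_h))$, which is strictly positive in general (Example \ref{ex:sh}; see also (\ref{chiineq}) and the discussion around (\ref{eq:lens2})--(\ref{eq:lens3}), where the agreement of the two normalizations is a special feature of lens spaces). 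Carrying your substitution through Theorem \ref{th:JEMS} at $l'=l+\lk$ gives $\chi_{k_r}(l)+eu(\bH^*(G,k_r))+\chi(s_h)-\chi(r_h)$, not the asserted right-hand side; with the correct $k_r^2$ the computation does close up, via $(k_r+2l)^2=k_r^2-8\chi_{k_r}(l)$. Beyond this, be aware that your route inverts the logic of \cite{NSW}: there, (2) is deduced from (1) by a lattice-cohomological summation over a large rectangle, and the identification of $eu$ with the Seiberg--Witten invariant is obtained afterwards via the surgery formula of \cite{BN}. Deriving (2) from Theorem \ref{th:JEMS} plus the $eu=\frsw$ identification is legitimate as a consistency check inside the thesis, where both ingredients are available as citations, but it presupposes the deeper results rather than proving the theorem.
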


\noindent (In \cite{NSW} $w(k)$ is defined as $-(k^2+|\calj|)/8$ for  $k \in Char$.
If $k=k_r+2l$ then $w(k)=\chi_{k_r}(l)-(k_r^2+|\calj|)/8$. The last constant can be neglected in the
 sum of (1) since $\sum_{I\subseteq \mathcal{J}}(-1)^{|I|}=0$.
The sum in (2) is finite since $Z$ is supported in $\Z_{\geq 0}\langle E_j^*\rangle_j$ and
 all the entries of $E^*_j$ are strictly positive, cf. (\ref{eq:POS}).)

Recall that $\calj=\overline{\calj}\sqcup \calj^*$, where $\overline{\calj}$ is an
index set containing all the bad vertices. Let $\phi:L\to \overline{L}$ be the projection to the
$\overline{\calj}$--coordinates.
We also write $\overline{\bt}=\{t_j\}_{j\in\overline{\calj}}$ for the monomial variables
associated with $\overline{L}$, and
$\overline{\bt}^{\vasi}=\prod_{j\in\overline{\calj}} t_{j}^{i_j}$ for $\vasi=(i_1,\dots,i_{\nu})\in \overline{L}$.
Therefore,  $\bt^{l'}|_{t_j=1, \,\forall \,j\in \calj^*}=\overline{\bt}^{\phi(l')}$.

\begin{definition}{\bf The reduced series.}
 For any $h\in H$ define
\begin{equation*}
\overline Z_{h}(\overline{\bt}):=Z_{h}(\vast)\vert_{t_j=1, \, \forall j\in \mathcal{J}^*}.
\end{equation*}
\end{definition}
(We warn the reader that the reduced `non--decomposed' series $Z(\vast)\vert_{t_j=1, \,
\forall  j\in \mathcal{J}^*}$
usually does not contain sufficient information to reobtain each term
$\overline Z_{h}(\overline{\bt})$ ($h\in H$) from it.)

Fix one $\lk$, and write  $\overline Z_{\lk}(\overline{\bt})=
\sum_{\vasi\in \overline{L}} \overline p_{\vasi+\phi(l'_{[k]})}\overline{\bt}^{\vasi+\phi(l'_{[k]})}$.
Moreover, let $\overline \calS_{k}'$ be the projection of $\calS'\cap (l'_{[k]}+L)$.
Then $\overline Z_{k}(\overline{\bt})$ is supported on
$\overline \calS_{k}'$, and
for any $\vasi$ the sum  $ \sum_{\vasi'\ngeq \vasi}\overline p_{\vasi'+\phi(l'_{[k]})}$ is  finite
(properties inherited from $Z$).  Note that $\overline{\calS}:=\phi(\calS'\cap L)$ is a semigroup, and
$\overline \calS_{k}'$ is an $\overline{\calS}$--module.

Our next goal is to show that the series introduced above with reduced
variables preserves all these properties from Theorem \ref{NSW}: it can be recovered from the
reduced weighted cubes and has all the information about the Seiberg-Witten invariant.

\begin{theorem}\labelpar{thm:redZ}
Let $(\overline{L},\overline{w}[k])$ be as in \ref{bek:331}.
Then

(1) \begin{equation*}
  \overline Z_{\lk}(\overline{\bt})=
  \sum_{\vasi \in \overline L}\Big(\sum_{\overline{I}
  \subseteq \overline{\mathcal J}}(-1)^{|\overline{I}|+1}
  \overline{w}(\vasi,\overline{I})\Big)\overline{\bt}^{\vasi+\phi(l'_{[k]})}.
 \end{equation*}

(2)
 There exists $\vasi_0 \in \ocalS $ (characterized in the next Lemma
\ref{ilem})  such that for any $\vasi \in \vasi_0+\ocalS$
 $$ \sum_{\vasi'\ngeq \vasi}\overline p_{\vasi'+\phi(\lk)}=\overline{w}(\vasi)+
 eu(\bH^*(\overline{L},\overline{w}[k])).$$
Here $\overline{w}(\vasi)$ is a quasipolynomial
and   $eu(\bH^*(\overline{L},\overline{w}[k]))$ equals\,
 $eu( \bH^*(G,k_r))$.
\end{theorem}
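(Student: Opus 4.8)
The plan is to derive part (1) directly from the formula of Theorem~\ref{NSW}(1) by a fiberwise computation along the projection $\phi$, and then to obtain part (2) by running the counting-function argument of \cite{NSW} on the resulting expression, together with the already-proven Reduction Theorem~\ref{red}. Exactly as in the proof of \ref{red} (see \eqref{ss:assumption}), after finitely many blow-ups of intersection points of bad vertices I may assume $(E_j,E_{j'})=0$ for distinct $j,j'\in\ocalj$: both sides of the statement are unaffected, since $\bH^*(G,k_r)$ (hence its Euler characteristic) is stable by \ref{red} and \cite{Nlat,Nexseq}, and the transformation rule of $Z(\bt)$ under such a blow-up leaves the reduced series $\overline Z_h$ and the right-hand side of (1) unchanged.

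For part (1) I substitute $t_j=1$ for all $j\in\calj^*$ in
$$Z_{\lk}(\bt)=\sum_{l\in L}\Big(\sum_{I\subseteq\calj}(-1)^{|I|+1}w(l,I)\Big)\bt^{l+\lk}.$$
The coefficient of $\overline{\bt}^{\vasi+\phi(\lk)}$ in $\overline Z_{\lk}$ becomes the (finite) sum $\sum_{l\in\phi^{-1}(\vasi)}\sum_{I\subseteq\calj}(-1)^{|I|+1}w(l,I)$; writing $I=\overline I\sqcup I^*$ with $\overline I\subseteq\ocalj$, $I^*\subseteq\calj^*$, and regrouping by $\overline I$, the claim reduces to the fiberwise identity
$$\sum_{l\in\phi^{-1}(\vasi)}\ \sum_{I^*\subseteq\calj^*}(-1)^{|I^*|}\,w\big(l,\overline I\sqcup I^*\big)\ =\ \overline w(\vasi,\overline I)\ =\ \max_{\overline J\subseteq\overline I}\chi_{k_r}\big(x(\vasi+1_{\overline J})\big),$$
valid for every $\vasi\in(\Z_{\ge0})^\nu$ and $\overline I\subseteq\ocalj$ (the residual sign $(-1)^{|\overline I|}$ then matches $(-1)^{|\overline I|+1}\overline w(\vasi,\overline I)$). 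This is the Euler-characteristic shadow of the cube-by-cube contraction \eqref{eq:leray3}: I would prove it by the same inductive mechanism that produces \eqref{eq:leray3}, namely the generalized Laufer computation sequences of \ref{red:gLauferseq} and the contraction Lemmas \ref{lem:con1}--\ref{lem:con4}, but bookkeeping the alternating sum $\sum_{I^*}(-1)^{|I^*|}w(\cdot)$ in place of the cubical homotopy type — each elementary contraction that does not raise $\chi_{k_r}$ kills a cancelling pair of terms, so the whole $\calj^*$-direction collapses onto the distinguished cycles $x(\vasi)+E_{\overline J}$, whose weights are $\chi_{k_r}(x(\vasi+1_{\overline J}))$ by Proposition~\ref{propF1} and Proposition~\ref{prop:G}. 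Making this collapse precise — the sign bookkeeping and the verification that only these distinguished corners survive — is the main obstacle of the theorem.

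For part (2), once (1) is in hand, $\overline Z_{\lk}(\overline{\bt})$ is written in exactly the shape of the right-hand side of Theorem~\ref{NSW}(1), but for the finite-rank weighted free module $(\overline L,\overline w[k])$ of \ref{bek:331}, whose weight system is compatible and satisfies the required finiteness. Hence the telescoping/counting-function argument of \cite{NSW} — which uses only the formal properties of a compatible weight system together with the geometric realization $\bH^*=\bS^*$ of Theorem~\ref{th:HS} — applies verbatim and gives $\sum_{\vasi'\ngeq\vasi}\overline p_{\vasi'+\phi(\lk)}=\overline w_0(\vasi)+eu\big(\bH^*(\overline L,\overline w[k])\big)$ for all $\vasi\in\vasi_0+\ocalS$, where $\vasi_0$ is the cycle isolated by Lemma~\ref{ilem} (the reduced counterpart of the requirement $l+\lk\in-k_{can}+\mathrm{int}(\calS')$ in \ref{NSW}(2)). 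That $\overline w(\vasi)=\overline w_0(\vasi)=\chi_{k_r}(x(\vasi))$ is a quasipolynomial on $\vasi_0+\ocalS$ follows from Proposition~\ref{propF1} and Remark~\ref{rem:xi}: for $\vasi$ large the minimal solution $x(\vasi)^*$ of $B^t(\vasi+\overline c)+C(x(\vasi)^*+c^*)\le0$ is quasi-linear in $\vasi$, so $\chi_{k_r}(x(\vasi))$ is quasi-quadratic. Finally $eu(\bH^*(\overline L,\overline w[k]))=eu(\bH^*(G,k_r))$ is immediate: the Reduction Theorem~\ref{red} furnishes a graded $\Z[U]$-module isomorphism $\bH^*(G,k_r)\cong\bH^*(\overline L,\overline w[k])$, and the normalized Euler characteristic \eqref{eq:eu} — well defined since both reduced modules are finitely generated over $\Z$ — is an isomorphism invariant.
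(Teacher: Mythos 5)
Your overall architecture is right, and your part (2) is essentially the paper's argument: rewrite $\overline Z_{\lk}$ via part (1), run the counting-function argument of \cite{NSW} on the finite-rank weighted lattice $(\overline L,\overline w[k])$ with $\vasi_0$ supplied by Lemma \ref{ilem}, get quasipolynomiality of $\overline w(\vasi)$ from Proposition \ref{propF1}, and identify the two Euler characteristics through the isomorphism of Theorem \ref{red}. But part (1) has a genuine gap at exactly the step you flag as ``the main obstacle,'' and the mechanism you propose for closing it would not work as described. You want to prove the fiberwise identity $T(\vasi,\overline I):=\sum_{l^*}\sum_{I^*\subseteq\calj^*}(-1)^{|I^*|}w(x(\vasi)+l^*,\overline I\cup I^*)=\overline w(\vasi,\overline I)$ by arguing that ``each elementary contraction that does not raise $\chi_{k_r}$ kills a cancelling pair of terms.'' Two cubes of adjacent dimension and opposite sign only cancel in this weighted alternating sum if their \emph{weights are equal}; the contractions of Lemmas \ref{lem:con1}--\ref{lem:con4} only guarantee that $\chi_k$ is non-increasing, so in general the paired terms do not cancel and leave residues that your bookkeeping does not account for. (Compare the paper's proof of part (2), where a genuine pairwise cancellation is used, but only after verifying the exact weight equality $\overline w(\vasi',\overline I)=\overline w(\vasi'+1_{j(n)},\overline I\setminus j(n))$ on the cubes being paired.)

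The paper closes this step by a different and cleaner route. Fix $(\vasi,\overline I)$ and regard the fiber as a weighted lattice $(L^*,w^*)$ with $w^*(l^*,I^*):=w(x(\vasi)+l^*,\overline I\cup I^*)$. By \cite[Theorem 2.3.7]{NSW}, the alternating weighted sum over a suitable large rectangle $R^*$ equals $\min(w^*|_{R^*})+\sum_q(-1)^q\rank_\Z\bH^q_{red}(R^*,w^*)$. The proof of the Reduction Theorem shows precisely that $(L^*,w^*)$ has vanishing reduced cohomology, so $T=\min_{l^*}\max_{\overline J\subseteq\overline I}\chi_{k_r}(x(\vasi)+l^*+E_{\overline J})$. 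Lemma \ref{lem:laufb} gives the inequality $\geq\overline w(\vasi,\overline I)$ for every $l^*$, and Lemma \ref{lem:tildeS}(a) exhibits $l^*=E_{\widetilde\cals}$ where equality holds, which is what pins down $T=\overline w(\vasi,\overline I)$. So the missing ingredient in your proposal is not more careful sign bookkeeping along contractions but the normalized Euler characteristic identity applied to the fiber together with the vanishing of its reduced cohomology; without that, the identification of the surviving value with $\max_{\overline J}\chi_{k_r}(x(\vasi+1_{\overline J}))$ does not follow.
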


\begin{proof} (1)
We abbreviate $k_r$ by $k$ and $\overline{w}[k]$ by $\overline{w}$.
By \ref{eq:Z}(1) we get
$$\overline{Z}_{\lk}(\overline{\vast})=
\sum_{\vasi\in\overline{L}}\sum_{\overline{I}\subseteq \overline{\mathcal J}}(-1)^{|\overline{I}|+1}
\Big( \sum_{l^*\in L^*} \sum_{I^*\subseteq \mathcal{J}^*} (-1)^{|I^*|}w(x(\vasi)+l^*,\overline{I}\cup I^*)
 \Big)\,\overline{\vast}^{\vasi+\phi(l'_{[k]})},$$
 where $L^*\subset L$ is the sublattice of $\calj^*$--coordinates.
For a fixed $\vasi$ and $\overline{I}\subseteq \overline{\mathcal{J}}$, denote the coefficient in the last bracket by $T=T(\vasi,\overline{I})$.  Then we have to show that $T=\overline{w}(\vasi,\overline{I})$.

We define a weighted lattice $(L^*,w^*)$ as follows: the weight
 of a cube $(l^*,I^*)$ in $L^*$ is  $w^*(l^*,I^*):=w(x(\vasi)+l^*,\overline{I} \cup I^*)$
(hence it  depends on $(\vasi,\overline{I})$).
This is a compatible weight function on $L^*$ since $w$ is so, moreover
$T= \sum_{l^*\in L^*}\sum_{I^*\subseteq \mathcal{J}^*}(-1)^{|I^*|}w^*(l^*,I^*)$.

Note also that for any fixed ${\bf i}$ there are only finitely many
 $l^*\in L^*$ for which $({\bf i},l^*)\in\calS'$
(use (\ref{eq:POS})).  Hence,  the sum in $T$ is finite. Therefore,
(cf. \ref{ss:2} and \ref{ss:3}),  we can find a `large' rectangle
$R^*=R^*(l_1^*,l_2^*)=\{l^*\in L^* \,:\, l_1^*\leq l^*\leq l_2^*\}$ with certain $l_1^*$ and $l_2^*$
such that
$$T=\sum_{l^*\in R^*}\sum_{I^*\subseteq \mathcal{J^*}}(-1)^{|I^*|}w^*(l^*,I^*) \ \ \ \mbox{and} \ \ \
\bH^*(L^*,w^*)=\bH^*(R^*,w^*).$$
Using the result and methods of \cite[Theorem 2.3.7]{NSW}, for the counting function
$\mathcal{M}(t):=\sum_{l^*\in R^*}\sum_{I^*\subseteq \mathcal{J^*}}(-1)^{|I^*|}t^{w^*(l^{*},I^*)}$ we have
$$\lim_{t\rightarrow 1}\frac{\mathcal{M}(t)-t^{\min( w^*\vert_{R^*})}}{1-t}=\sum_{q\geq 0}(-1)^q
\rank_\Z(\bH^q_{red}(R^*,w^*)).$$
The Reduction Theorem \ref{red} and its proof says that $(L^*,w^*)$ has vanishing reduced cohomology, in particular
$\bH^q_{red}(R^*,w^*)=0$ for any $q\geq 0$. Hence
 $$T=\frac{d\mathcal{M}(t)}{dt}\big|_{t=1}=\min( w^*\vert_{R^*})=\min_{l^*\in L^*} \{w(x({\bf i})+l^*,\overline{I})\}=
 \min_{l^*\in L^*}\max_{\overline{J}\subseteq \overline{I}} \{\chi_k(x({\bf i})+l^*+E_{\overline{J}})\}.$$
By Lemma \ref{lem:laufb} $\chi_k(x({\bf i})+l^*+E_{\overline{J}})\geq \chi_k(x({\bf i}+1_{\overline{J}}))$, hence
\begin{equation}\label{eq:minmax}
\max_{\overline{J}\subseteq \overline{I}}
\chi_k(x({\bf i})+l^*+E_{\overline{J}})\geq
\max_{\overline{J}\subseteq \overline{I}}
\chi_k(x({\bf i}+1_{\overline{J}}))=\overline{w}({\bf i},\overline{I}).\end{equation}
But, by Lemma \ref{lem:tildeS}(a) (for notations see also \ref{ss:tilde}), the minimum
over $l^*$ of the left hand side is realized for $l^*=E_{\widetilde{\cals}}$ with equality
in (\ref{eq:minmax}),  hence
$T=\overline{w}({\bf i},\overline{I})$.

\medskip

We start the proof of part (2) by the following lemma,
 the analogue for $(\overline{L},\overline{w})$
  of Lemmas \ref{lem:con2} and \ref{lem:i+I},
which identifies $\vasi_0$.

\begin{lemma}\label{ilem}
(a) Fix $l+\lk\in \calS'$ and take the projection
$\vasi:=\phi(l)$. Then $x(\vasi)+\lk \in \calS'$, hence
$\overline{w}(\vasi+1_j)>\overline{w}(\vasi)$
for every $j\in \overline{\mathcal{J}}$.

(b) There exists $\vasi_0 \in  \overline \calS$ such that for any $\vasi\in \vasi_0+
\overline \calS$ one has a
sequence $\{\vasi_n\}_{n\geq 0}\in\overline \calS$ with
\begin{itemize}
\item[(i)] $\vasi_0=\vasi$, $\vasi_{n+1}=\vasi_n+1_{j(n)}$ for certain
$j(n)\in \overline{\mathcal{J}}$, and all entries of $\vasi_n$ tend to infinity as
$n\rightarrow\infty$;

\item[(ii)] for any  $n$ and  $0\leq \vasi_n'\leq \vasi_n$ with the same  $j(n)$-th coefficients, one has
$$\overline{w}(\vasi_n'+1_{j(n)})>\overline{w}(\vasi_n').$$
\end{itemize}

\end{lemma}
\begin{proof} (a) Since $l+\lk$ satisfies conditions (a)-(b) of \ref{lemF1}
in the definition of $x(\vasi)$, by the minimality of $x(\vasi)$ we get
that $l-x(\vasi)$ is effective and is supported on $\calj^*$.
Hence, $(x(\vasi)+\lk,E_j)\leq (l+\lk,E_j)\leq 0$ for any $j\in \ocalj$.
The last inequality follows from  \ref{propF1}.

(b) The negative definiteness of the intersection from guarantees the existence of
$\vasi_0$ with (i). For (ii) note that if $\vasi=\phi(l)$ as in (a), and $0\leq \vasi'\leq \vasi$, 
such that their $j$-entries agree, then  
automatically $\overline{w}(\vasi'+1_{j})>\overline{w}(\vasi')$. Indeed, 
$x(\vasi)-x(\vasi') $ is effective and supported on $\calj\setminus j$, hence 
 $(x(\vasi')+\lk,E_j)\leq (x(\vasi)+\lk,E_j)\leq 0$ and \ref{propF1} applies again.
\end{proof}

We fix an $\vasi$ as in Lemma \ref{ilem}(b). Then 
similarly as in subsection \ref{ss:3}, one obtains
\begin{equation}
\bH^*(\overline{L},\overline{w})\cong \bH^*(R(0,\vasi),\overline{w}),
\end{equation}
where $R(0,\vasi)=\{\vasi'\in \overline{L} \, : \, 0\leq \vasi'\leq \vasi \}$.
In particular, if we set
$$\cale(R(0,\vasi)):=\sum_{(\vasi',\overline{I})\subseteq R(0,\vasi)}(-1)^{|\overline{I}|+1}\overline{w}(\vasi',\overline{I})$$
(sum over all the cubes of $R(0,\vasi)$),
then \cite[Theorem 2.3.7]{NSW} ensures that
\begin{equation}\label{cale}
\cale(R(0,\vasi))=eu(\bH^*(R(0,\vasi),\overline{w})).
\end{equation}
In the sequel we follow closely the 
proof of Theorem \ref{NSW}(2) from  \cite[Theorem 3.1.1]{NSW}).

We choose a computation sequence $\{\vasi_n\}_{n\geq 0}$ as in \ref{ilem}
and set $R':=\{ \vasi' \in \overline L \ : \vasi'\geq 0 \ \mbox{and} \ \exists \,
j\in \overline{\calj} \ \mbox{with} \
(\vasi'-\vasi)_j\leq 0\}$.  $R'$ is
not finite, but $R'\cap  \ocalS_{k}'$ is a finite set.  Fix $\widetilde n$ so that $R'\cap
\ocalS_{k}'\subseteq R(0,\vasi_{\widetilde n})$, and define
$R'(\widetilde n):=R'\cap R(0,\vasi_{\widetilde n})$, \
$\partial_1 R'(\widetilde n):=R'\cap R(\vasi,\vasi_{\widetilde n})$,
and
$$\partial_2 R'(\widetilde n):=\{\vasi'\in  R'(\widetilde n)\,:\,
\exists j\in\overline{\calj} \ \mbox{with } \ (\vasi'-\vasi_{\widetilde n})_j=0\}.$$
 Then by part (1) of the theorem we have
$$\sum_{\vasi'\ngeq \vasi}\overline p_{\vasi'+\phi(\lk)}=\cale(R'(\widetilde n))-\cale
(\partial_1 R'(\widetilde n)\cup \partial_2 R'(\widetilde n)).$$
The right hand side is simplified as follows. 
First, notice that we may find $\widetilde n$ sufficiently high
in such a way, that if we choose a sequence
$\{{\bf j}_m\}_{m=0}^t$ from ${\bf j}_0=0$ to ${\bf j}_t=\vasi$ 
with ${\bf j}_{m+1}={\bf j}_m+1_{j(m)}$,
we have the following property:
\begin{center}
\noindent for every ${\bf j}' \in \partial_2 R'(\widetilde n)$ with ${\bf j}'\geq {\bf j}_m$ and
$({\bf j}')_{j(m)}=({\bf j}_m)_{j(m)}$ one has
$\overline w({\bf j}'+1_{j(m)})\leq \overline w({\bf j}')$.
\end{center}
Indeed, $(x({\bf j}')+\lk,E_{j(m)})$ is increasing in ${\bf j}'$ with fixed $j(m)$-th coefficient.
(Any  ${\bf j}' \in \partial_2 R'(\widetilde n)$ 
has `large' entries corresponding to coordinates
 $j$ when  $({\bf j}'-i_{\widetilde n})_j=0$, and `small' entry corresponding to $j(m)$. 
 Hence, when we increase the $j(m)$-th  entry by one,
the positivity of the quantities $(x({\bf j}')+\lk,E_{j(m)})$ is guaranteed  by the presence of `large' entries.)

Therefore, using the sequence $\{{\bf j}_m\}$ and
\ref{lem:con3}, there exists a contraction of $\partial_2 R'(\widetilde n)$ to
$\partial_1 R'(\widetilde n)\cap \partial_2 R'(\widetilde n)$ along which $\overline w$ is
non--increasing.
Then similarly as in (\ref{cale}), one get $\cale(\partial_2 R'(\widetilde n))=
\cale(\partial_1 R'(\widetilde n)\cap \partial_2 R'(\widetilde n))$, 
hence $\cale (\partial_1 R'(\widetilde n)\cup \partial_2 R'(\widetilde n))=
\cale(\partial_1 R'(\widetilde n))$ too.

Next, we claim that $\cale(R'(\widetilde n))=\cale(R(0,\vasi))$. Indeed,
using induction on the sequence $\{\vasi_n\}_{0\leq n<\widetilde n}$,
it is enough to show that $\cale(R'(n))=\cale(R'(n+1))$. This follows from
 \ref{ilem}, since for all $\overline I$ containing $j(n)$ and each
$(\vasi',\overline I)\in R'(n+1)\setminus R'(n)$ we have
$$\overline \omega(\vasi',\overline I)=\overline \omega(\vasi'+1_{j(n)},\overline I\setminus j(n)).$$
This ensures a combinatorial cancelation in the sum $\cale(R'(n+1))$, or an isomorphism in the
 corresponding lattice cohomologies, which gives the expected
equality.

With the same procedure applying to $\partial_1 R'(\widetilde n))$ we deduce the equality
$\cale(\partial_1 R'(\widetilde n))=\cale(\partial_1 R'(0))=-\overline \omega(\vasi)$. Hence the
identity follows.
\end{proof}

\begin{example}
In the reduced case, the expression $\overline{w}(\vasi)$ usually is a rather complicated
arithmetical quasipolynomial. E.g., assume that $G$ is a star--shaped graph whose central vertex has 
Euler decoration $b$ and the legs have Seifert invariants $(\alpha_j,\omega_j)_{j=1}^\ell$,
$0<\omega_j<\alpha_j$, ${\rm gcd}(\alpha_j,\omega_j)=1$. We fix the central vertex as the unique 
bad vertex. Then the lattice cohomology is completely determined by the 
sequence $\{\overline{w}(i)\}_{i\geq 0}$, for details see e.g. \cite{OSZINV}.

E.g., in the case of the canonical $spin^c$--structure, $\overline{w}(0)=0$ and 
$$\overline{w}(i+1)-\overline{w}(i)=1-ib-\sum_{j}\big\lceil \,i\omega_j/\alpha_j\,\big\rceil
\ \ \ (i\geq 0).$$ 

\end{example}

\begin{remark}
The fact that $\overline{w}(\vasi)$ is a quasipolynomial can be seen as follows.
Choose  $l\in \calS'\cap L$, $l=(\overline{l},l^*)$, such that $(l,E_j)=0$ for any $j\in \calj^*$.
Then one checks that $x(\vasi+n\overline{l})=x(\vasi)+nl$ for any $n\in \Z_{\geq 0}$, hence
$\overline{w}(\vasi+n\overline{l})=\chi_{k_r}(x(\vasi)+nl)$ is a polynomial in $n$.
  
\end{remark}

\section{Ehrhart theoretical interpretation of the\newline Seiberg--Witten invariant}\labelpar{s:Last}\
Let $G$ be a negative definite plumbing graph, a connected tree as in \ref{ss:11}.
Let  $\calN$ and $\cale$ be  the set of
nodes and  end--vertices as above. We assume that $\calN\not=\emptyset$.
If  $\delta_n$ denotes the valency of a node $n$, then $|\cale|=
2+\sum_{n\in \calN}( \delta_n-2)$.

We consider the
matrix $J$ with entries  $J_{nm}:=-(E^*_n,E^*_m)$ for $n,m\in\calN$.
By (\ref{ss:11}) it is a principal minor of $-\frI^{-1}$ (with rows and columns corresponding to the nodes).

Another incarnation of the matrix $J$ already appeared in subsection \ref{ss:NEC},
as the negative of the  inverse of the orbifold intersection matrix.
Indeed, let for any $n\in\calN$ take that component of $G\setminus
\cup_{m\in\calN\setminus n}\{m\}$
which contains $n$. It is a star--shaped graph,
let $e_n$ be its orbifold Euler number. Furthermore, for any
two nodes $n$ and $m$ which are connected by a chain, let $\alpha_{nm}$ be the determinant of
that chain (not including the nodes). Then define the orbifold intersection  matrix
(of size $|\calN|$)
as  $\frI^{orb}_{nn}=e_n$, $\frI^{orb}_{nm}=1/\alpha_{nm}$ if the two nodes $n\not=m$ are
connected by a chain,
and  $\frI^{orb}_{nm}=0$ otherwise; cf. \cite[4.1.4]{BN07} or \ref{ss:TNC}.
One can show (see \cite[4.1.4]{BN07}) that $\frI^{orb}$ is invertible, negative definite, and
$\det(-\frI^{orb})$ is the product of $\det(-\frI)$ with the determinants of all (maximal)
chains and legs of
$G$. This fact and  \ref{eq:DETsgr} imply that
$J=(-\frI^{orb})^{-1}$.

\subsection{The Ehrhart polynomial}\
In the sequel we assume that $L=L'$, that is $H=0$.

By  \ref{ss:cor}, $P^{(l)}$ sits in $\R^{|\cale|}$. Moreover, by 
Theorem \ref{ss:REST}, we can take
 $l$ of the form $l=\sum_{n\in \calN} \lambda_n E_n^*$, from the subcone of the Lipman cone
 generated by $\{E^*_n\}_{n\in\calN}$.

Then \ref{ss:REST} guarantees that  the associated polytope is $P^{(l)}=\bigcup_{n\in \calN}
P_n^{(l_n)}$, $P_n^{(l_n)}$ depending only on the component $l_n=-(l,E_n^*)$. Note that the
coefficients $\{\lambda_n\}_n$ and the entries $\{l_n\}_n$ are connected exactly by the
transformation law
$\left(l_n\right)_n=J \left(\lambda_n\right)_n$.

Take any chamber $\calC$ such that $int(\calC \cap \calS)\neq \emptyset$, as in \ref{cor:4.1}.
Let $\widehat \calL^{\calC}(P,\calT,(\lambda_n)_n)$
be the Ehrhart quasipolynomial  $\calL^{\calC}(P,\calT,(l_n)_n)$, associated with the
 denominator of $Z$, after changing the variables to $(\lambda_n)_n$ via $\left(l_n\right)_n=J \left(\lambda_n\right)_n$. It is convenient to normalize the coefficient of $\prod_n \lambda_n^{m_n}$
 by a factor $\prod _n m_n!$, hence we write
$$\widehat \calL^{\calC}(P,\calT,(\lambda_n)_n)=\sum_{\sum_n m_n\leq |\cale|
\atop m_n\geq 0; \ n\in \calN}  \widehat \fra^{\calC}_{(m_n)_n}\prod_n\frac{\lambda_n^{m_n}}{m_n!},$$
for certain  periodic functions $\widehat \fra^{\calC}_{(m_n)_n}$ in variables $(\lambda_n)_n$.
By \ref{eq:SUM}, \ref{cor:Taylor} and \ref{th:REST}
\begin{equation}\label{eq:Ehrcoeff}
 \chi\Big(\sum_{n\in \calN}\lambda_n E_n^*\Big)+{\rm pc}^{\calS}(Z)=\Delta((\lambda_n)_n),
\end{equation}
where
$$\Delta((\lambda_n)_n)= \sum_{0\leq k_n\leq \delta_n-2\atop \forall n\in\calN}
(-1)^{\sum_n k_n}\prod_n \binom{\delta_n-2}{k_n} \
\widehat \calL^{\calC}(P,\calT,(\lambda_n-k_n)_n)=$$
$$\sum_{\sum_n m_n\leq |\cale| \atop m_n\geq 0; \ n\in \calN}\ \Bigg(\sum_{0\leq p_n\leq m_n \atop
n\in\calN} (-1)^{\sum_n p_n}\cdot \prod_n\binom{m_n}{p_n}
\left( \sum_{k_n=0}^{\delta_n-2}(-1)^{k_n} \binom{\delta_n-2}{k_n} k_n^{p_n}\right)
\Bigg)\cdot \widehat \fra^{\calC}_{(m_n)_n}\prod_n\frac{\lambda_n^{m_n-p_n}}{m_n!}.$$
On the other hand, since
$\chi(l)=-(K+l,l)/2$,  the left hand side of (\ref{eq:Ehrcoeff})
is the quadratic function
$$\sum_{n,m\in\calN}(J_{nm}/2) \lambda_n\lambda_m+\sum_{n\in\calN}(-(K,E_n^*)/2)\lambda_n+{\rm pc}^{\calS}(Z).$$
Now we identify these coefficients with those of $\Delta((\lambda_n)_n)$ above. The additional
ingredient is the combinatorial formula (\ref{eq:PSz}), which also shows that for the non--zero
summands one necessarily has $p_n\geq \delta_n-2$ for any $n$. One gets the following result.
%
\begin{theorem}\label{th:MAINLAST}
\begin{eqnarray*}
 \widehat \fra^{\calC}_{(\delta_n,(\delta_m-2)_{m\neq n})}&=&J_{nn}\\
 \widehat \fra^{\calC}_{(\delta_n-1,\delta_m-1,(\delta_q-2)_{q\neq n,m})}&=&J_{nm} \ \
\mbox{for $n\not=m$}\\
 \widehat \fra^{\calC}_{(\delta_n-1,(\delta_m-2)_{m\neq n})}&=&
 \textstyle {
 -\frac{1}{2}(K,E_n^*)+\frac{1}{2}
 \sum_{m\in \calN}(\delta_m-2)J_{nm}}\\
\end{eqnarray*}
\begin{equation*}
\widehat \fra^{\calC}_{(\delta_n-2)_n}={\rm pc}^{\calS}(Z)-
\textstyle {
\sum_{n\in \calN} \frac{(\delta_n-2)(K,E_n^*)}{4}+\sum_{n\in \calN}
\frac{(\delta_n-2)(3\delta_n-7)J_{nn}}{24}
+ \sum_{n,m\in \calN \atop m\neq n}\frac{(\delta_n-2)(\delta_m-2)J_{nm}}{8}.}
\end{equation*}
\end{theorem}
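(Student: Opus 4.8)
The strategy is to carry out the coefficient comparison sketched before the statement, i.e. to equate the two polynomial expressions for $\chi(\sum_{n}\lambda_n E_n^*)+{\rm pc}^{\calS}(Z)$ term by term. First I would record the quadratic side explicitly: since $\chi(l)=-(K+l,l)/2$ and $l=\sum_{n\in\calN}\lambda_n E_n^*$, using $-(E_n^*,E_m^*)=J_{nm}$ one gets
\begin{equation*}
\chi\Big(\sum_{n}\lambda_n E_n^*\Big)+{\rm pc}^{\calS}(Z)=
\sum_{n,m}\frac{J_{nm}}{2}\lambda_n\lambda_m-\sum_n\frac{(K,E_n^*)}{2}\lambda_n+{\rm pc}^{\calS}(Z).
\end{equation*}
On the other side, $\Delta((\lambda_n)_n)$ is the alternating sum of the shifted Ehrhart quasipolynomials $\widehat\calL^{\calC}(P,\calT,(\lambda_n-k_n)_n)$ with binomial weights $\prod_n\binom{\delta_n-2}{k_n}$, coming from the product form $\prod_{v}(1-\bt^{E_v^*})^{\delta_v-2}$ of $Z(\bt)$ in (\ref{eq:INTR}) together with the reduction to node variables of Theorem~\ref{th:REST} and Corollary~\ref{cor:Taylor}(a) (equation~(\ref{eq:IND})). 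Expanding $(\lambda_n-k_n)^{m_n}$ by the binomial theorem regroups $\Delta$ as a polynomial in $(\lambda_n)_n$ whose coefficient of $\prod_n\lambda_n^{m_n-p_n}/m_n!$ is $\widehat\fra^{\calC}_{(m_n)_n}$ times the combinatorial factor
\begin{equation*}
\prod_{n\in\calN}\binom{m_n}{p_n}(-1)^{p_n}\sum_{k_n=0}^{\delta_n-2}(-1)^{k_n}\binom{\delta_n-2}{k_n}k_n^{p_n}.
\end{equation*}

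\textbf{Key step: the combinatorial identity.} The crucial input is the formula (\ref{eq:PSz}) for $\sum_{k=0}^{d}(-1)^k\binom{d}{k}k^{p}$, which is the $p$-th finite difference of $k^p$ and equals $0$ for $p<d$ and $(-1)^d d!$ for $p=d$ (more generally a Stirling-number expression). Applying this with $d=\delta_n-2$ forces, in every non-vanishing summand of $\Delta$, the inequality $p_n\ge\delta_n-2$ for all $n$. Since $\chi$ is quadratic, only monomials $\prod_n\lambda_n^{m_n-p_n}$ of total degree $\le 2$ survive on the left, so $\sum_n(m_n-p_n)\le 2$; combined with $m_n\ge p_n\ge\delta_n-2$ and $\sum_n m_n\le|\cale|=2+\sum_n(\delta_n-2)$, this pins down the finitely many relevant multi-indices $(m_n)_n$: they are exactly $(\delta_n-2)_n$ (constant term), $(\delta_n-1,(\delta_m-2)_{m\ne n})$ (linear), $(\delta_n,(\delta_m-2)_{m\ne n})$ and $(\delta_n-1,\delta_m-1,(\delta_q-2)_{q\ne n,m})$ (quadratic). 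For each of these I would evaluate the combinatorial factor above using (\ref{eq:PSz}): for the pure quadratic index $(\delta_n,(\delta_m-2)_{m\ne n})$ it reduces to $\binom{\delta_n}{\delta_n-2}\cdot 1\cdot\frac{1}{2}\cdot(\text{contributions with }p_n=\delta_n-1,\delta_n)$, and after collecting, the coefficient of $\lambda_n^2$ is $\widehat\fra^{\calC}_{(\delta_n,(\delta_m-2)_{m\ne n})}/2$ up to an explicitly computable rational factor that matches $J_{nn}/2$; similarly for the mixed quadratic and the linear coefficients.

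\textbf{Main obstacle and conclusion.} The genuine labour — and the place most likely to hide a sign or normalization slip — is the bookkeeping of these finite-difference evaluations, in particular correctly tracking the secondary contributions with $p_n\in\{\delta_n-1,\delta_n\}$ that feed into the linear and constant coefficients (these produce the correction terms $\frac12\sum_m(\delta_m-2)J_{nm}$ and the $(\delta_n-2)(3\delta_n-7)J_{nn}/24$ type sums via $\sum_k(-1)^k\binom{d}{k}k^{d+1}$ and $k^{d+2}$, which equal $-\binom{d+1}{2}d!$ and a known quadratic-in-$d$ multiple of $d!$ respectively). Once every combinatorial coefficient is identified, matching it against the corresponding coefficient of the quadratic $\chi$-polynomial — $J_{nm}/2$, $-(K,E_n^*)/2$, and ${\rm pc}^{\calS}(Z)$ — and solving for $\widehat\fra^{\calC}_{(m_n)_n}$ yields the four displayed formulae; for the constant term one additionally substitutes $\widehat\fra^{\calC}_{(\delta_n-1,\dots)}$ and $\widehat\fra^{\calC}_{(\delta_n,\dots)}$ already determined in the previous lines, which is exactly why ${\rm pc}^{\calS}(Z)$ appears with the indicated corrections. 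Finally, ${\rm pc}^{\calS}(Z)=-\frsw_{\sigma_{can}}(M)-(\K^2+|\cV|)/8$ by Corollary~\ref{cor:4.1}(a), giving the promised Ehrhart-theoretic reading of the Seiberg--Witten invariant as the normalized free coefficient $\widehat\fra^{\calC}_{(\delta_n-2)_n}$.
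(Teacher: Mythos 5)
Your proposal follows exactly the paper's argument: expand $\Delta((\lambda_n)_n)$ via the binomial theorem, invoke the combinatorial formula (\ref{eq:PSz}) to force $p_n\ge\delta_n-2$ in every non-vanishing summand and thereby isolate the four relevant multi-indices, and then match coefficients against the quadratic $\chi$-polynomial, solving successively for the quadratic, linear and constant Ehrhart coefficients. The only caveat is the sign in your parenthetical evaluation of $\sum_k(-1)^k\binom{d}{k}k^{d+1}$, which should carry $(-1)^d$ rather than $-1$; this is a slip in an illustrative aside and does not affect the structure of the argument.
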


Recall that   ${\rm pc}^{\calS}(Z)=-(K^2+|\calv|)/8-\lambda(M)$, where $\lambda(M)$ is the
Casson invariant of $M$. Hence $\widehat \fra^{\calC}_{(\delta_n-2)_n}$ equals
the normalized Casson invariant modulo some `easy terms'.

We emphasize that these formulae also  show  that the above coefficients are constants (as periodic functions
in $(\lambda_n)_n$) and independent of the chosen chamber $\calC$ in the Lipman cone.

\chapter{Seiberg--Witten and Ehrhart theoretical computations and examples}\labelpar{ch:4}\

\indent Applying the general theory developed in the previous chapter, we make detailed computations for
graphs with less than two nodes. As we have seen in \ref{ex:lens}, even in the special case of graphs 
without nodes (that is, the case of lens spaces), the description of the equivariant Ehrhart 
quasipolynomials is new.

In the one--node case (star--shaped graphs) we provide a detailed presentation of all the 
involved (Seiberg--Witten and Ehrhart) invariants, and we establish closed formulae in terms of the 
Seifert invariants. Here we make connection with already known topological results regarding 
the Seiberg--Witten invariants of Seifert 3--manifolds, and also with analytic invariants of 
weighted homogeneous singularities.

In the two node case again we make complete presentations in terms of the analogs of the
Seifert invariants of the chains and star--shaped subgraphs, including closed formulae for $\frsw(M)$.
But, this case has a very interesting additional surprise
in store. 

It turns out that the corresponding combinatorial series $Z(\bt)$ associated with
$G$, reduced to the two variables of the nodes, is the {\em Hilbert (characteristic)
series of an affine monoid of rank two (and some of its modules).}
In particular, the Seiberg--Witten invariant appears as the periodic constant
of Hilbert series associated with affine monoids (and certain modules indexed by $H$), and,
in some sense, measures the non--normality of these monoids.

At the end of the chapter, we provide some examples in which we demonstrate the calculation of the 
periodic constant (or equivalently, the normalized Euler characteristic of the lattice cohomology as well 
as the Seiberg--Witten invariant) from the topological Poincar\'e series $Z(\bt)$.
\section{The one--node case, star--shaped graphs}

\subsection{Seifert invariants and other notations}\labelpar{ss:seiferjel}\
Assume that the graph is star--shaped with $d$ legs.
Each leg is a chain  with normalized  Seifert  invariant $(\alpha_i,\omega_i)$,
where $0<\omega_i <\alpha_i$, gcd$(\alpha_i,\omega_i)=1$.
We also use $\omega_i'$ satisfying $\omega_i\omega_i'\equiv 1$ (mod $\alpha_i$), $0< \omega_i'<\alpha_i$.

If we consider the
Hirzebruch/negative continued fraction expansion, cf. (\ref{eq:HCF})
$$ \alpha_i/\omega_i=[b_{i1},\ldots, b_{i\nu_i}]=
b_{i1}-1/(b_{i2}-1/(\cdots -1/b_{i\nu_i})\cdots )\ \  \ \ (b_{ij}\geq 2),$$
then the $i^{\mathrm{th}}$ leg has $\nu_i$ vertices, say $v_{i1},\ldots, v_{i\nu_i}$,
 with decorations $-b_{i1},\ldots, -b_{i\nu_i}$, where
 $v_{i1}$ is connected by the central vertex. The corresponding base elements in $L$ are
$\{E_{ij}\}_{j=1}^{\nu_i}$.
Let  $b$ be the decoration of the  central vertex; this vertex also defines $E_0\in L$.
The plumbed 3--manifold $M$ associated with such a star--shaped graph has a Seifert structure. 
We will assume that $M$ is a rational homology sphere, or, equivalently, 
the central vertex has genus zero.

The classes
in $H=L'/L$ of the dual base elements are denoted by
$g_{ij}=[E^*_{ij}]$ and  $g_0=[E^*_0]$. For simplicity we also write
$E_i:=E_{i\nu_i}$ and $g_i:=g_{i\nu_i}$. A possible presentation of  $H$ is
\begin{equation}\label{eq:sei1}
H={\mathrm{ab}}\langle \, g_0,g_1,\ldots, g_d\,|\, -b\cdot g_0=\sum_{i=1}^d \omega_i\cdot g_i;
\, g_0=\alpha_i\cdot g_i\ (1\leq i\leq d)\rangle,
\end{equation}
cf. \cite{neumann.abel} (or use (\ref{eq:DET})).
The orbifold Euler number of $M$ is defined as $e=b+\sum_i\omega_i/\alpha_i$. The negative definiteness of
the intersection form implies $e<0$. We write $\alpha:=\mathrm{lcm}(\alpha_1,\ldots,\alpha_d)$,
$\frd:=|H|$ and
$\fro$ for the order of $g_0$ in $H$. One has (see e.g. \cite{neumann.abel})
\begin{equation}\label{eq:sei2}
\frd=\alpha_1\cdots\alpha_d|e|, \ \ \ \ \fro=\alpha|e|.
\end{equation}
Each leg has similar invariants as the graph of a lens space, cf. Example \ref{ex:lens}, and we can
introduce similar notation. For example, the determinant of the  $i^{\mathrm{th}}$ leg is $\alpha_i$.
We write $n^i_{j_1j_2}$ for  the determinant of the subchain  of the   $i^{\mathrm{th}}$ leg
connecting the vertices $v_{ij_1}$ and $v_{ij_2}$ (including these vertices too). Then, using the correspondence between intersection
pairing of the dual base elements and the determinants of the subgraphs, cf. (\ref{eq:DETsgr})
or \cite[11.1]{OSZINV}, one has
\begin{equation}\label{eq:DET}
\begin{array}{ll}
(a) \ \ (E^*_0,E^*_{ij}- n^i_{j+1,\nu_i}E^*_{i\nu_i})=0 \ \ & (b) \ \
g_{ij}=n^i_{j+1,\nu_i}g_{i\nu_i}\ \   (1\leq i\leq d,  \ 1\leq j\leq \nu_i)\\
(c) \ \ (E^*_i,E^*_0)=\frac{1}{\alpha_ie} \ \ & (d) \ \
(E^*_{0},E^*_{0})=\frac{1}{e} \ .
 \end{array}
\end{equation}
Part (b) also explains why we do not need to insert the generators $g_{ij}$ ($j<\nu_i$) in (\ref{eq:sei1}).

For any $l'\in L'$ we set $\tc(l'):=-(E^*_0,l')$, the $E_0$-coefficient of $l'$. Furthermore,
if $l'=c_0E^*_0+\sum_{i,j} c_{ij}E^*_{ij}\in L'$, then we define its {\em reduced transform }
 by $$l'_{red}:= c_0E^*_0+\sum_{i,j} c_{ij} \cdot n^i_{j+1,\nu_i}E^*_{i}.$$
By (\ref{eq:DET}) we get
$[l']=[l'_{red}]$ in $H$, $\tc(l')=\tc(l'_{red})$, and if
 $l'_{red}=\sum_{i=0}^dc_iE^*_i$, then $\tc(l'_{red})$ is
\begin{equation}\label{eq:ntilde}
\tc:=\frac{1}{|e|}\cdot \Big(\,c_0+
\sum_{i=1}^d\frac{c_i}{\alpha_i}\,\Big).
\end{equation}
If $h\in H$, and $l'_h\in L'$ is any of its lifting (that is, $[l'_h]=h$),
then $l'_{h,red}$ is also a lifting of the same $h$ with $\tc(l'_h)=\tc(l'_{h,red})$.
In general, $\tc=\tc(l'_h)$ depends on the lifting, nevertheless replacing $l'_h$ by $l'_h\pm E_0$ we modify
$\tc$ by $\pm 1$, hence we can always achieve
$\tc\in [0,1)$,  where it is  determined uniquely by $h$.
 For example, since $r_h\in \square$, its $E_0$--coefficient $\tc(r_h)$ is in
$[0,1)$.

  Finally, we consider
\begin{equation}\label{eq:discr}
\gamma:=\frac{1}{|e|}\cdot \Big( d-2-\sum_{i=1}^d \frac{1}{\alpha_i}\Big).
\end{equation}
It has several `names'.
 Since the canonical class is given by $K=-\sum_vE_v+\sum_{v}(\delta_v-2)E^*_v$,
by (\ref{eq:DET}) we get that the $E_0$--coefficient of $-K$ is $(K,E^*_0)=\gamma+1$. The number $-\gamma$ is
sometimes called the `log discrepancy' of $E_0$, $\gamma$ the `exponent' of the weighted
homogeneous germ $(X,0)$, and $\fro\gamma$ is the Goto--Watanabe $a$--invariant of the universal abelian
cover of $(X,0)$, see  \cite[(3.1.4)]{G-W} and \cite[(3.6.13)]{c-m}; while in
\cite{neumann.abel} $e\gamma$ appears as an orbifold Euler characteristic.

\subsection{Interpretation of Z(\bt)}\labelpar{ss:Z}\
By Theorem \ref{eq:REDPC}, for the periodic constant computation, we can
reduce ourself to the variable of the single node, it will be denoted by $t$.

First we analyze the equivariant rational function associated with  the denominator of $Z^e$
$$Z^{/H}(t)=\prod_{i=1}^d\,\big(1-t^{-(E^*_{i}, E^*_0)}[g_i]\big)^{-1}=
\sum_{x_1,\ldots , x_d\geq 0}\, t^{\,\sum_i
x_i/(\alpha_i|e|)}\ \Big[\,\sum _ix_ig_i\,\Big]\in \Z[[t^{1/\fro}]][H].
$$
 The right hand side of the above expression
can be transformed  as follows (cf. \cite[\S 3]{SWII}).
If we fix a lift $\sum_{i=0}^dc_iE_i^*$ of $h$
as above, then using the presentation (\ref{eq:sei1}) one gets that
$\sum_{i=1}^dx_ig_i$ equals $h$ if and only if there exist
$\ell,\ell_1,\ldots, \ell_d\in\Z$ such that
$$\begin{array}{lrll}
(a) \ & -c_0&= \ \ell_1+\cdots +\ell_d-\ell b &\\
(b) \ & x_i-c_i&= -\omega_i \ell-\alpha_i\ell_i & \ (i=1,\ldots, d).
\end{array}$$
Since $x_i\geq 0$, from (b) we get
$\tilde{\ell}_i:=\big\lfloor \frac{c_i-\omega_i\ell}{\alpha_i}\big\rfloor -\ell_i
\geq 0$.
Moreover, if we set for $\bc=(c_0,c_1,\ldots, c_d)$
\begin{equation}\label{eq:N}
N_\bc(\ell):=1+c_0-\ell b +\sum_{i=1}^d\Big\lfloor \frac{c_i-\omega_i\ell}{\alpha_i}\Big\rfloor,
\end{equation}
then the number of realizations of $h=\sum_ic_ig_i$ in the form $\sum_ix_ig_i$ is given by the number of
integers $(\tilde{\ell}_1,\ldots,\tilde{\ell}_d)$ satisfying
$\tilde{\ell}_i\geq 0$ and $\sum_i\tilde{\ell}_i=N_\bc(\ell)-1$.  This is
$\binom{N_\bc(\ell)+d-2}{d-1}$. Moreover, the non--negative
integer $\sum_ix_i/(\alpha_i|e|)$ equals $\ell+\tc$.
Therefore,
\begin{equation}\label{eq:ZH}
Z_h^{/H}(t)=\sum_{\ell\geq -\tc}\ \binom{N_\bc(\ell)+d-2}{d-1}\
 t^{\ell+\tc}.
\end{equation}
This expression is independent of the choice of $\bc=\{c_i\}_{i=0}^d$.
Similarly, for any function $\phi$, the expression
$\sum_{\ell\geq -\tc}\phi(N_\bc(\ell))t^{\ell+\tc}$ is independent of the choice of $\bc$, it depends only
on $h=\sum_ic_ig_i$.

Furthermore, one checks that $N_\bc(\ell)\leq 1+(\ell+\tc)|e|$, hence if $\ell+\tc<0$ then $N_\bc(\ell)\leq 0$,
therefore $\binom{N_\bc(\ell)+d-2}{d-1}=0$ as well. Hence, in (\ref{eq:ZH})  the inequality $\ell+\tc\geq 0$
below the sum, in fact, is not restrictive.

Next, we consider the numerator $(1-[g_0]t^{1/|e|})^{d-2}$ of $Z^e(t)$.
A similar computation as above done for $Z^e(t)$ (see \cite{neumann.abel} and \cite[\S 3]{SWII}),
or by multiplying (\ref{eq:ZH}) by the numerator and using $\sum_{k=0}^{d-2}(-1)^k\binom{d-2}{k}
\binom{N-k+d-2}{d-1}=\binom{N}{1}$,  gives
\begin{equation}\label{eq:Zt}
Z_h(t)
=\sum_{\ell\geq -\tc}\
\max\{0, N_\bc(\ell)\} \ t^{\ell+\tc}.
\end{equation}
In order to compute the periodic constant of $Z_h(t)$ we decompose $Z_h(t)$ into its
 `polynomial and negative degree parts', cf. \ref{PC}. Namely, we write $Z_h(t)=Z^{+}_h(t)+Z^{-}_h(t)$, where
\begin{equation}\label{eq:Sp}\begin{array}{l}
Z^{+}_h(t)=\sum_{\ell\geq -\tc}\ \max\big\{0, -N_\bc(\ell)\big\} \ t^{\ell+\tc} \ \
\mbox{(finite sum with positive exponents)}
\\ \  \ \\
Z^{-}_h(t)=\sum_{\ell\geq -\tc}\ N_\bc(\ell) \ t^{\ell+\tc}.
\end{array}
\end{equation} In  $Z^{-}_h$ it is convenient to
fix a choice with $\tc\in[0,1)$, hence  the summation is  over $\ell\geq 0$. Then
a computation shows that it is a rational function of negative degree
\begin{equation}\label{eq:Sp2}
Z^{-}_h(t)=\Big(\frac{1-e\tc}{1-t}-\frac{e\cdot t}{(1-t)^2}
-\sum_{i=1}^d\sum_{r_i=0}^{\alpha_i-1}\Big\{\frac{c_i-\omega_ir_i}{\alpha_i}\Big\}\, t^{r_i}\cdot
\frac{1}{1-t^{\alpha_i}}\Big)\cdot t^{\tc}.
\end{equation}
(This expression can be compared with the Laurent  expansion of $Z_h$ at $t=1$ which
was already considered
in the literature. Dolgachev, Pinkham, Neumann and Wagreich \cite{Do,P,neumann.abel,wa}
determine the first two terms (the pole part), while  \cite{SWII,OSZINV} the third terms as well.
Nevertheless the above $Z_h^{+}+Z_h^{-}$ decomposition does not coincide
with the `pole+regular part' decomposition of the Laurent expansion terms, and focuses on
 different aspects.)

Since the degree of $Z^{-}_h$ is negative (or by a direct computation)
 $\mathrm{pc}(Z^{-}_h)=0$, cf. \ref{PC}.

On the other hand, since $e<0$, in $Z^{+}_h(t)$ the sum is finite.
(The degree of $Z^{+}_0$ is $\leq \gamma$,
see e.g. \cite{NO2}. Since
$N_{{\bf c}(r_{h,red})}(\ell) \geq N_0(\ell)$,
  the degree of $Z_h^{+}$ is $\leq \gamma+\tc(r_h)$ too).
By  \ref{PC},
\begin{equation}\label{eq:PCpol}
\mathrm{pc}(Z_h)=Z_h^{+}(1)=\sum_{\ell\geq -\tc}\ \max\big\{0, -N_\bc(\ell)\big\}
\end{equation}
for {\it any} lifting $\bc$ of $h=\sum_ic_ig_i$.
In this sum the bound $\ell\geq -\tc$ is really restrictive.

We consider the non--equivariant version, the projection of $Z^e\in \Z[[t^{1/\fro}]][H]$
into $\Z[[t^{1/\fro}]]$ too
$$Z_{ne}(t)=\sum_hZ_h(t)=\frac{(1-t^{1/|e|})^{d-2}}{
\prod_{i=1}^d\,(1-t^{1/(|e|\alpha_i)})}\in \Z[[t^{1/\fro}]].
$$
We can get its $Z^{+}_{ne}+Z^{-}_{ne}$ decomposition either by summation of
$Z^{+}_{h}$ and $Z^{-}_{h}$, or as follows. Write
\begin{equation}\label{eq:NE}
Z_{ne}(t)=\frac{1}{(1-t^{1/|e|})^{2}} \prod_{i=1}^d\, \frac{1-t^{1/|e|}}{
1-t^{1/(|e|\alpha_i)}}=\frac{1}{(1-t^{1/|e|})^{2}}
\sum_{0\leq x_i<\alpha_i\atop 0\leq i\leq d} t^{\frac{1}{|e|}\cdot S(x)},
\end{equation}
where $S(x):=\sum_i \frac{x_i}{\alpha_i}$. Then its decomposition into
$Z_{ne}^{+}(t)+Z_{ne}^{-}(t)$ is
\begin{equation}\label{eq:NE1}
Z_{ne}^{-}(t)=
\sum_{0\leq x_i<\alpha_i\atop 0\leq i\leq d} t^{\frac{1}{|e|}\cdot \{S(x)\}}\cdot
\Big(\frac{1}{(1-t^{1/|e|})^{2}}-\frac{\lfloor S(x)\rfloor}{1-t^{1/|e|}}\Big)
\end{equation}
\begin{equation}\label{eq:NE2}
Z_{ne}^{+}(t)=
\sum_{0\leq x_i<\alpha_i\atop 0\leq i\leq d} t^{\frac{1}{|e|}\cdot \{S(x)\}}\cdot
\frac{t^{\frac{1}{|e|}\cdot \lfloor S(x)\rfloor}-\lfloor S(x)\rfloor
t^{\frac{1}{|e|}}+\lfloor S(x)\rfloor-1}{(1-t^{1/|e|})^{2}}.
\end{equation}
After dividing in $Z^+_{ne}(t)$ (or by L'Hospital rule), we get 
\begin{equation}\label{eq:NE3}
\mathrm{pc}(Z_{ne})=Z^+_{ne}(1)=\frac{1}{2}\cdot
\sum_{0\leq x_i<\alpha_i\atop 0\leq i\leq d} \lfloor S(x)\rfloor\cdot \lfloor S(x)-1\rfloor.
\end{equation}

\subsection{Analytic interpretations}\labelpar{ss:analy}\ 
Rational homology sphere negative definite
Seifert 3--manifolds  can be realized analytically as links of weighted homogeneous singularities,
or by equisingular deformations of weighted homogeneous singularities provided by splice--quotient
equations \cite{neumann.abel,nw-CIuac}.

Consider the smooth germ at the origin of  $\C^d$ with coordinate ring $\C\{z\}=\C\{z_1,\ldots, z_d\}$, where
$z_i$ corresponds to the $i^{\mathrm{th}}$ end. Then $H$ acts on it by the diagonal action
$h*z_i=\theta(g_i)(h)z_i$. Similarly, we can introduce a multidegree $deg(z_i)=E^*_i\in L'$,
hence the Poincar\'e series of $\C\{z\}$ associated with this multidegree is $\prod_i
(1-\bt^{E^*_i})^{-1}$. Moreover, considering the action of $H$ on it,
$\widetilde{Z}(\bt)=\prod_i
(1-[g_i]\bt^{E^*_i})^{-1}$ is the equivariant Poincar\'e series of $\C^d$, the invariant part
$\widetilde{Z}_0(\bt)$ being the Poincar\'e series of the corresponding quotient space $\C^d/H$.

In $\C^d$ one can consider the {\em splice equations} as follows. Consider a matrix $\{\lambda_{ij}\}_{ij}$
of full rank and of size $d\times(d-2)$. Then the equations
$\sum_{i=1}^d\lambda_{ij}z_i^{\alpha_i}=0$, for $j\in\{1,\ldots, d-2\}$, determine in $\C^d$ an isolated
complete intersection singularity $(Y,0)$ on which the group $H$ acts as well.
Then $(X,0)=(Y,0)/H$ is a normal surface singularity with the topological type of the Seifert manifold
we started with. The equivariant Poincar\'e series of $(Y,0)$ is $Z(\bt)$ (\cite{neumann.abel}).
For $(X,0)$, \cite{BN} proves the identity $P(\bt)=Z(\bt)$ mentioned in Subsection \ref{FM},
hence $Z(\bt)$ is also the Poincar\'e series of the equivariant divisorial filtration associated with all
the vertices.

Theorem~\ref{th:REST} reduces the filtration to the $\Z$--filtration: the
divisorial filtration associated with the central vertex.  In the weighted homogeneous case this
filtration is also induced by the weighted homogeneous equations. Then,
$Z^{/H}(t)$ is the Poincar\'e series of $\C^d/H$,
$Z(t)$  is the equivariant Poincar\'e series of $Y$, hence $Z_0(t)$ is the Poincar\'e series of $X$,
cf. \cite{Do,neumann.abel,P}.

By \ref{FM}, $\{\mathrm{pc}(Z_h)\}_{h\in H}$  are the equivariant
geometric genera of the universal abelian cover $Y$ of $X$, hence
$\mathrm{pc}(Z_0)$ and $\mathrm{pc}(Z_{ne})$ are the geometric genera
of $X$ and $Y$ respectively, cf. \cite{NCL}.

\subsection{Seiberg--Witten theoretical interpretations}\labelpar{ss:onenodeSW}\
Fix $h\in H$. Then, for any lifting $\sum_ic_ig_i$ of $h$, Corollary \ref{cor:4.1} and Equation
\ref{eq:PCpol} give
\begin{equation}\label{eq:pcrh}
  \mathrm{pc}(Z_h)=\sum_{\ell\geq -\tc}\ \max\big\{0, -N_\bc(\ell)\big\}=
-\frsw_{-h*\sigma_{can}}(M)
-\frac{(K+2r_h)^2+|\cV|}{8}.\end{equation}
Recall  that
$\sum_h\frsw_{-h*\sigma_{can}}(M)$ is the {\it Casson--Walker invariant}
$\lambda(M)$. Hence, for the non--equivariant version   we get
\begin{equation}\label{eq:pcrhne}
  \mathrm{pc}(Z_{ne})=\frac{1}{2}\cdot
\sum_{0\leq x_i<\alpha_i\atop 0\leq i\leq d} \lfloor S(x)\rfloor\cdot
\lfloor S(x)-1\rfloor=
-\lambda(M)-\frd\cdot\frac{K^2+|\cV|}{8}+\sum_h\chi(r_h).\end{equation}
For explicit formulae of $\lambda(M)$ and $K^2+|\cV|$ in terms of
Seifert invariants see e.g. \cite[2.6]{SWII}).

\begin{remark} (\ref{eq:pcrh})
can be compared with a known formulae of the Seiberg--Witten
invariants involving the representative $s_h$. This will also lead us to an
expression for $\chi(r_h)-\chi(r_s)$ in terms of $N_{{\bf c}}(\ell)$.
Let $\bc(s_h)=(c_0,\ldots,c_d)$ be the coefficients of $s_{h,red}$,
cf. \ref{ss:seiferjel}. The set of all reduced coefficients $\bc(s_h)$,
when $h$ runs in $H$, is characterized
in \cite[11.5]{OSZINV} by the inequalities
\begin{equation}\label{eq:shcar}
\left\{ \begin{array}{l}
c_0\geq 0, \ \ \alpha_i>c_i\geq 0 \ \ (1\leq i\leq d) \\
N_\bc(\ell)\leq 0 \ \ \mbox{for any $\ell<0$}.\end{array}\right.
\end{equation}
Moreover, for this special lifting $\bc(s_h)$ of $h$, in \cite[\S 11]{OSZINV} is proved
\begin{equation}\label{eq:pcsh}
\sum_{\ell\geq 0}\ \max\big\{0, -N_{\bc(s_h)}(\ell)\big\}=
-\frsw_{-h*\sigma_{can}}(M)
-\frac{(K+2s_h)^2+|\cV|}{8}.\end{equation}
Using the discussion from the end of \ref{ss:seiferjel}, this can be rewritten for {\it any} lifting
$\bc$ of $h$ as
\begin{equation}\label{eq:pcsh2}
\sum_{\ell\geq -\tc+\lfloor \tc(s_h)\rfloor}\ \ \max\big\{0, -N_{\bc}(\ell)\big\}=
-\frsw_{-h*\sigma_{can}}(M)
-\frac{(K+2s_h)^2+|\cV|}{8}.\end{equation}
This compared with (\ref{eq:pcrh}) gives for any lifting
$\bc$ of $h$
\begin{equation}\label{eq:pcsh3}
\sum_{-\tc+\lfloor \tc(s_h)\rfloor  >\ell\geq -\tc}\ \ \max\big\{0, -N_{\bc}(\ell)\big\}=
\chi(r_h)-\chi(s_h).\end{equation}
\end{remark}

\begin{example}\labelpar{ex:sh}
The sum in (\ref{eq:pcsh3}), in general, can be non--zero. This happens,
for example,  in the case of the link of a rational singularity whose universal abelian cover is not rational. 
Here is a concrete example, cf. \cite[4.5.4]{Ng}:
take the Seifert manifold with $b=-2$ and
three legs, all of them with Seifert invariants $(\alpha_i,\omega_i)=(3,1)$.
For  $ h=\sum_{i=1}^3g_i$ one has $s_h=\sum_{i=1}^3E_i^*$, the $E_0$-coefficient of $s_h$ is 1,
$r_h=s_h-E_0$, and $\chi(s_h)=0$, $\chi(r_h)=1$.
\end{example}

\subsection{Ehrhart theoretical interpretations}\labelpar{ss:Ehr}\
We fix $h\in H$ as above and {\it we assume that $\tc\in[0,1)$}. Note that $Z_h(t)$ has the form
$t^{\tc}\sum_{\ell\geq 0}p_\ell t^\ell$; here the exponents $\{\tc+\ell\}_{\ell\geq 0}$
are the possible $E_0$--coordinates of the elements $(r_h+L)\cap \calS'$.

Let us compute the counting  function for $Z_h$. If $S(t)=\sum_r p_rt^r$ is a series,
we write $Q(S)(r')=\sum_{r<r'}p_r$, for $r'\in\Q_{\geq 0}$.
\begin{lemma}\label{lemma:P} For any $n\in\N_{\geq 0}$ one has the following facts.

(a) \ $Q(Z_h)(n)=Q(Z_h)(n+\tc)$.

(b) \ $Q(Z_h^{+})(n)$ is a step function (hence piecewise polynomial)
with $$Q(Z_h^{+})(n)={\mathrm{pc}}(Z_h) \ \ \ \mbox{for $n> \mathrm{deg}(Z_h^{+})$}.$$

(c) \ $Q(Z_h^{-})(n)$ is a quasipolynomial:
\begin{equation}\label{eq:PSp2}
Q(Z^{-}_h)(n)=(1-e\tc)n-e\cdot \frac{n(n-1)}{2}
-\sum_{i=1}^d\sum_{r_i=0}^{\alpha_i-1}\Big\{\frac{c_i-\omega_ir_i}{\alpha_i}\Big\}\,
\Big\lceil \frac{n-r_i}{\alpha_i}\Big\rceil\end{equation}
\begin{equation*}
=-\frac{en^2}{2}+\frac{en}{2}(\gamma+1-2\tc)
-\sum_{i=1}^d\sum_{r_i=0}^{\alpha_i-1}\Big\{\frac{c_i-\omega_ir_i}{\alpha_i}\Big\}\,
\Big(\Big\{\frac{r_i-n}{\alpha_i}\Big\}-\frac{r_i}{\alpha_i}\Big).
\end{equation*}
\end{lemma}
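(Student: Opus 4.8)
The plan is to prove the three assertions (a), (b), (c) about the counting function $Q(Z_h)$ by working directly from the explicit formulae already established for $Z_h$, $Z_h^{+}$ and $Z_h^{-}$ in Subsection~\ref{ss:Z}. Recall that $Z_h(t)=Z_h^{+}(t)+Z_h^{-}(t)$ with $Z_h^{+}$ a finite polynomial supported in positive exponents of the form $\tc+\ell$ ($\ell\geq 0$), and $Z_h^{-}$ the rational function of negative degree given in (\ref{eq:Sp2}). First I would establish (a): since every exponent appearing in $Z_h$ is of the form $\tc+\ell$ with $\ell\in\Z_{\geq 0}$ (and $\tc\in[0,1)$ by our standing assumption), the condition $\tc+\ell<n$ for $n\in\N_{\geq 0}$ is equivalent to $\tc+\ell<n+\tc$, because $\ell<n$ iff $\ell<n+\tc$ for integers $\ell<n$ (and the boundary case $\ell=n$ is excluded by both since $\tc<1$ and $\tc\ge 0$). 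Hence summing the coefficients $p_\ell$ over exponents $<n$ is the same as summing over exponents $<n+\tc$, which is exactly $Q(Z_h)(n)=Q(Z_h)(n+\tc)$. The same argument applies verbatim to $Z_h^{+}$ and $Z_h^{-}$ separately.

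Next I would treat (b). From (\ref{eq:Sp}), $Z_h^{+}(t)=\sum_{\ell\geq 0}\max\{0,-N_\bc(\ell)\}\,t^{\ell+\tc}$ is a finite sum with non-negative integer coefficients, supported on $0\le \ell\le \mathrm{deg}(Z_h^{+})$. Its partial-sum (counting) function $n\mapsto Q(Z_h^{+})(n)=\sum_{\ell+\tc<n}\max\{0,-N_\bc(\ell)\}$ is therefore a non-decreasing step function of $n$, constant on unit intervals, hence piecewise polynomial (piecewise constant) in the sense required. For $n>\mathrm{deg}(Z_h^{+})$ all coefficients of $Z_h^{+}$ are already included in the sum, so $Q(Z_h^{+})(n)=Z_h^{+}(1)=\mathrm{pc}(Z_h)$ by (\ref{eq:PCpol}).

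The substantive part is (c): computing $Q(Z_h^{-})(n)$ from the closed form (\ref{eq:Sp2}) and checking it is a quasipolynomial equal to the displayed expression. Here I would expand $Z_h^{-}(t)=\big(\tfrac{1-e\tc}{1-t}-\tfrac{et}{(1-t)^2}-\sum_{i,r_i}\{\tfrac{c_i-\omega_ir_i}{\alpha_i}\}\,t^{r_i}\tfrac{1}{1-t^{\alpha_i}}\big)t^{\tc}$ as a power series in $t$ and read off the partial sums of the coefficients of $t^{\ell+\tc}$ for $\ell<n$. The term $\tfrac{1-e\tc}{1-t}$ contributes coefficient $1-e\tc$ at each $\ell\ge 0$, giving partial sum $(1-e\tc)n$; the term $-\tfrac{et}{(1-t)^2}=-e\sum_{\ell\ge1}\ell\, t^\ell$ contributes $-e\ell$ at level $\ell$, with partial sum $-e\cdot\tfrac{n(n-1)}{2}$; the term $-\{\tfrac{c_i-\omega_ir_i}{\alpha_i}\}\,t^{r_i}\tfrac{1}{1-t^{\alpha_i}}$ contributes that constant at exponents $r_i+\alpha_i\Z_{\ge0}$, so that among $\ell=0,\dots,n-1$ exactly $\lceil (n-r_i)/\alpha_i\rceil$ of them qualify (for $0\le r_i<\alpha_i$), giving the last sum in (\ref{eq:PSp2}). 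This yields the first displayed formula. The passage to the second form is the routine rewriting $\lceil (n-r_i)/\alpha_i\rceil=(n-r_i)/\alpha_i+\{(r_i-n)/\alpha_i\}$ together with the identity $\sum_i\sum_{r_i}\{\tfrac{c_i-\omega_ir_i}{\alpha_i}\}=\sum_i\tfrac{\alpha_i-1}{2}$ (Dedekind-type reciprocity over a full residue system) and the definition (\ref{eq:discr}) of $\gamma$ to absorb the linear-in-$n$ coefficient into $\tfrac{e}{2}(\gamma+1-2\tc)$. Each such fractional-part/floor sum over a complete residue system is periodic in $n$ with period $\alpha_i$, so the whole expression is a quasipolynomial in $n$ of degree $2$ with period $\alpha=\mathrm{lcm}(\alpha_i)$.

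\textbf{Main obstacle.} The only genuinely delicate point is the bookkeeping in the rewriting of part (c): correctly converting ceilings to fractional parts, keeping track of the $\tc$-shift, and verifying that the constant and linear coefficients reorganize into exactly the stated combination involving $e$, $\gamma$ and $\tc$. This requires care with the identity $\sum_{r=0}^{\alpha-1}\{\tfrac{c-\omega r}{\alpha}\}=\tfrac{\alpha-1}{2}$ (valid since $\omega$ is a unit mod $\alpha$, so $r\mapsto c-\omega r$ permutes residues) and with the definition of $e=b+\sum_i\omega_i/\alpha_i$ hidden inside $\gamma$; everything else is formal power-series manipulation and the elementary observation in (a).
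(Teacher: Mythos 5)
Your proof is correct and is exactly the direct verification the paper intends: the lemma is stated without proof there, being an immediate consequence of the decomposition $Z_h=Z_h^{+}+Z_h^{-}$ from (\ref{eq:Sp})--(\ref{eq:Sp2}), and your term-by-term reading of the partial sums (including the count $\lceil (n-r_i)/\alpha_i\rceil$ of exponents in $r_i+\alpha_i\Z_{\geq 0}$ below $n$, the identity $\lceil y\rceil=y+\{-y\}$, and the residue-permutation argument giving $\sum_{r}\{(c-\omega r)/\alpha\}=(\alpha-1)/2$, combined with the definition (\ref{eq:discr}) of $\gamma$) reproduces both displayed forms. The bookkeeping in your plan checks out, so no gaps.
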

In particular, if $n=m\alpha$  for  $m\in \Z$, and $n>\mathrm{deg}(Z^{+}_h)$,
then the double sum is zero, hence
\begin{equation}\label{eq:PP}
Q(Z_h)(n)=-\frac{en^2}{2}+\frac{en}{2}(\gamma+1-2\tc)+\mathrm{pc}(Z_h).
\end{equation}
This is compatible with the expression provided by Theorem \ref{th:JEMS} and Theorem
\ref{th:REST}. Indeed, let us fix any chamber $\calC$ such that $int(\calC\cap \calS')\not=\emptyset$, and
$\calC$ contains the ray $\calR=\R_{\geq 0}\cdot E^*_0$. Since the numerator of $f(\bt)$ is
$(1-\bt^{E^*_0})^{d-2}$, all the $b_k$--vectors belong to $\calR$. In particular, $\cap_k(b_k+\calC)$ intersects
$\calR$ along a semi--line $\calR^{\geq c}=\R_{\geq const}\cdot E^*_0$ of $\calR$. Since $Q_h(l')$
is quasipolynomial on $\cap_k(b_k+\calC)$, cf. (\ref{ex:qbar2}), and a restriction of it is determined
by  (\ref{eq:SUM}) whose right hand side is a quasipolynomial too, we obtain that the identity
(\ref{eq:SUM}) is valid on $\calR^{\geq c}$ as well.

Recall that for any $h\in H$ and $l'\in L'$ we have a unique $q_{l',h}\in\square $ with
$l'+q_{l',h}\in r_h+L$. Hence we get
\begin{equation}\label{eq:QPpol}
Q_h(l')=
-\frsw_{-h*\sigma_{can}}(M)-\frac{(K+2l'+2q_{l',h})^2+|\cV|}{8}\ \ \ \ (l'\in \calR^{\geq c}).
\end{equation}
The term $q_{l',h}$ is responsible for the  non--polynomial behavior. Nevertheless, if we assume that
 $l'=m\fro E^*_0\in\calR^{\geq c}\cap L$, $m\in \Z$, then $q_{l',h}=r_h$, hence by  (\ref{eq:pcrh})
 \begin{equation}\label{eq:PPP}
Q_h(l')=-\frac{(l',l'+K+2r_h)}{2}+\mathrm{pc}(Z_h).\end{equation}
By Theorem \ref{th:REST} $Q_h(l')$ from (\ref{eq:PPP}) depends only on
the $E_0$-coefficient of $l'=m\fro E_0^*$, which  is exactly $m\alpha$. One sees that in fact
(\ref{eq:PPP}) agrees with (\ref{eq:PP}) if we set $n=m\alpha$.

\medskip

The non--equivariant version can be obtained by summation of (\ref{eq:PP}). For this we need
$\sum_h\tc(r_h)$. Consider the group homomorphism $\varphi:H\to \Q/\Z$ given by
$h\mapsto [\tc(r_h)]$, or $ [E^*_v]\mapsto [-(E^*_0,E^*_v)]$. Its image is generated by the
classes of $1/(\alpha_i|e|)$, hence its order is $\fro$. Hence, $\tc(r_h)$ vanishes exactly
$\frd/\fro$ times (whenever $h\in \ker(\varphi)$). In all other cases $\tc(r_h)\not=0$, and
$\tc(r_h)+\tc(r_{-h})=1$. In particular, $2\sum_h\tc(r_h)=\frd-\frd/\fro$.
Therefore, the summation of (\ref{eq:PP}) provides
\begin{equation}\label{eq:PPne}
Q(Z_{ne})(n)=-\frac{\frd en^2}{2}+\frac{\frd en}{2}\Big(\gamma+\frac{1}{\fro}\Big)+\mathrm{pc}(Z_{ne})
\ \ \ \ \mbox{(for $n\in\alpha\Z$)}.
\end{equation}

Next, we will identify the coefficients of (\ref{eq:PP}) and (\ref{eq:PPne})
with the first three coefficient of the
Ehrhart quasipolynomial $\calL^\calC_h(\calT)$ via the identity (\ref{ex:qbar2}).

For simplicity we will assume that $\fro=1$, in particular all the $b_k$--vectors belong to $L$.

If $l'\in\calR$, then by Theorem \ref{th:REST} the counting function $\calL^\calC_h(\calT,l')$
of the polytope $P^{(l')}$ depends only on the $E_0$--coefficient of $l'$; let us denote this coefficient by $l'_0$.

Hence,  this $\calL^\calC_h(\calT,l'_0)$ is the Ehrhart quasipolynomial of the $d$--dimensional
simplicial polytope,
being its $h$--class counting function. Via (\ref{eq:DET}) the definition 
(\ref{eq:Pv}) of this polytope becomes
\begin{equation}\label{eq:P0}
P_0=\Big\{(x_1,\ldots,x_d)\in (\R_{\geq 0})^d\,:\, \sum_i\frac{x_i}{|e|\alpha_i}< l'_0\Big\}.
\end{equation}
Let
\begin{equation}\label{eq:fra}
\calL^\calC_h(\calT,l'_0)=\sum_{j=0}^d \, \fra_{h,j}(l'_0)\cdot \frac{(l'_0)^j}{j!}
\end{equation}
be the coefficients of the Ehrhart quasipolynomial: each $\fra_{h,j}(l'_0)$ is a periodic function in $l'_0$ and is normalized by $1/j!$.
Since the numerator of $f$ is $(1-t^{1/|e|})^{d-2}$, by
(\ref{ex:qbar2}) we obtain for $l'\in\calR$
\begin{equation}\label{eq:fraP}
Q_h(l')=\sum_{j=0}^d\, \fra_{h,j}(l'_0)\cdot \frac{1}{j!}\ \sum_{k=0}^{d-2} (-1)^k\binom{d-2}{k}\Big(l'_0-\frac{k}{|e|}\Big)^j.
\end{equation}
This equals the expression
(\ref{eq:QPpol}) above. The non--polynomial behavior of these
two expressions indicate that $\fra_j(l'_0)$ is indeed non--constant periodic, and can be determined explicitly.

Since we are interested primarily in the Seiberg--Witten invariant, namely in $\mathrm{pc}(Z_h)$, we perform
this explicit identification only via the expressions (\ref{eq:PP}) and (\ref{eq:PPP}). Hence, similarly
as in these cases, we take $l'=m\fro E_0^*\in\calR^{\geq c}\cap L$, and we identify (\ref{eq:PP})
with (\ref{eq:fraP}) evaluated for $l'$, whose $E_0$--coefficient is $l'_0=m\alpha=n$.
In this case $\fra_{h,j}(n)$ is a {\it constant}, denoted by $\fra_{h,j}$, and
\begin{equation}\label{eq:fraPP}
-\frac{en^2}{2}+\frac{ne}{2}(\gamma+1-2\tc)+\mathrm{pc}(Z_h)=
\sum_{j=0}^d \fra_{h,j}\cdot \frac{1}{j!}\ \sum_{k=0}^{d-2} (-1)^k\binom{d-2}{k}\Big(n-\frac{k}{|e|}\Big)^j.
\end{equation}
Here the following combinatorial expression is helpful (see e.g. \cite[p. 7-8]{PSz})
\begin{equation}\label{eq:PSz}
\frac{(-1)^d}{(d-2)!}\cdot \sum_{k=0}^{d-2}(-1)^k\binom{d-2}{k}k^j=\left\{
\begin{array}{ll}
0 & \mbox{if $j<d-2$},\\
1 & \mbox{if $j=d-2$},\\
(d-2)(d-1)/2 & \mbox{if $j=d-1$},\\
(d-2)(d-1)d(3d-5)/24 & \mbox{if $j=d$}.\end{array}\right.
\end{equation}
We obtain
\begin{equation}\label{eq:fra3a}
\begin{array}{rl}
\frac{\fra_{h,d}}{|e|^{d}}=&
\frac{1}{|e|}\\
\frac{\fra_{h,d-1}}{|e|^{d-1}}=&
\frac{d-2}{2|e|}-\frac{1}{2}(\gamma+1-2\tc)
\\ 
\frac{\fra_{h,d-2}}{|e|^{d-2}}=&
\mathrm{pc}(Z_h)+\frac{(d-2)(3d-7)}{24|e|}-\frac{d-2}{4}(\gamma+1-2\tc)
.
\end{array}
\end{equation}
In particular, the $\fra_{h,d-2}$ can be identified (up to
 `easy' extra terms) with  ${\rm pc}(Z_h)$ (with  analytical interpretation
$\dim (H^1(\widetilde{Y},\cO_{\widetilde{Y}})_{\theta(h)})$ and Seiberg--Witten
theoretical interpretation (\ref{eq:pcrh})).
The first coefficients can also be identified with the equivariant volume of $P_0$,
(a fact already known in the non--equivariant cases). Usually (in the non--equivariant case, and when
we count the points of all the facets)
the second coefficient can be related with the volumes of the facets.
Here we eliminate from this count some of the facets, and we are in the equivariant situation as well.

In the non--equivariant case, if $\sum_{j=0}^d \fra_{j}\frac{n^j}{j!}$ is the classical Ehrhart polynomial
of $P_0$, then
\begin{equation}\label{eq:fra3b}
\begin{array}{rl}
\frac{\fra_{d}}{|e|^d}=&
\prod_i\alpha_i\\
\frac{\fra_{d-1}}{|e|^{d-1}}=&
\prod_i\alpha_i \cdot\left(-\frac{1}{\alpha}+\sum_i\frac{1}{\alpha_i}\right)/2
\\ 
\frac{\fra_{d-2}}{|e|^{d-2}}=&
\prod_i\alpha_i \Big(
\frac{\mathrm{pc}(Z_{ne})}{\prod_i\alpha_i}-\frac{(d-2)(3d-5)}{24}+\frac{d-2}{4}(-\frac{1}{\alpha}+
\sum_i\frac{1}{\alpha_i})\Big).
\end{array}
\end{equation}
In this non--equivariant case the identities (\ref{eq:fra3b}) are valid even without
the assumption $\fro=1$ by Theorem \ref{th:MAINLAST}.

The formulae in (\ref{eq:fra3a}) and (\ref{eq:fra3b}) can be further simplified if we replace $P_0$
by $|e|P_0$, or if we substitute in the Ehrhart polynomial the new variable $\lambda:= |e|l_0'$ instead of $l_0'$; cf. Section \ref{s:Last}.

\section{The two--node case}\labelpar{s:TN}

\subsection{Notations and the group $H$}\labelpar{ss:TNC}\
We consider the graph $G$ from Figure \ref{fig:2nodes}.

\begin{figure}[h!]
\vspace{0.5cm}
\begin{center}
\begin{picture}(200,60)(80,15)
\put(100,40){\circle*{3}}
\put(140,40){\circle*{3}}
\put(100,40){\line(1,0){55}}
\put(171,40){\makebox(0,0){$\cdots$}}
\put(200,40){\circle*{3}}
\put(240,40){\circle*{3}}
\put(185,40){\line(1,0){55}}
\put(50,73){\circle*{3}}
\put(100,40){\line(-3,2){10}}
\put(50,73){\line(3,-2){10}}
\multiput(77,55)(-3,2){3}%
{\circle*{1}}

\multiput(70,43)(0,-3){3}%
{\circle*{1}}

\put(50,7){\circle*{3}}
\put(100,40){\line(-3,-2){10}}
\put(50,7){\line(3,2){10}}
\multiput(77,25)(-3,-2){3}%
{\circle*{1}}

\put(290,73){\circle*{3}}
\put(240,40){\line(3,2){10}}
\put(290,73){\line(-3,-2){10}}
\multiput(263,55)(3,2){3}%
{\circle*{1}}

\multiput(270,43)(0,-3){3}%
{\circle*{1}}

\put(290,7){\circle*{3}}
\put(240,40){\line(3,-2){10}}
\put(290,7){\line(-3,2){10}}
\multiput(263,25)(3,-2){3}%
{\circle*{1}}

\put(100,50){\makebox(0,0){$E_0$}}
\put(140,50){\makebox(0,0){$\overline E_1$}}
\put(200,50){\makebox(0,0){$\overline E_s$}}
\put(240,50){\makebox(0,0){$\widetilde E_0$}}
\put(35,75){\makebox(0,0){$E_1$}}\put(35,5){\makebox(0,0){$E_d$}}
\put(303,70){\makebox(0,0){$\widetilde E_1$}}
\put(303,5){\makebox(0,0){$\widetilde E_{\widetilde{d}}$}}

\multiput(10,85)(0,-8){12}{\line(0,-1){4}}
\multiput(10,85)(8,0){26}{\line(1,0){4}}
\multiput(214,85)(0,-8){12}{\line(0,-1){4}}
\multiput(214,-7)(-8,0){26}{\line(-1,0){4}}
\put(20,40){\makebox(0,0){$G_0$}}

\multiput(330,80)(-8,0){26}{\line(-1,0){4}}
\multiput(330,80)(0,-8){12}{\line(0,-1){4}}
\multiput(126,80)(0,-8){12}{\line(0,-1){4}}
\multiput(126,-13)(8,0){26}{\line(1,0){4}}
\put(320,35){\makebox(0,0){$\widetilde G_0$}}

\end{picture}
\end{center}
\vspace{1cm}
\caption{Graph with two nodes}
\label{fig:2nodes}
\end{figure}
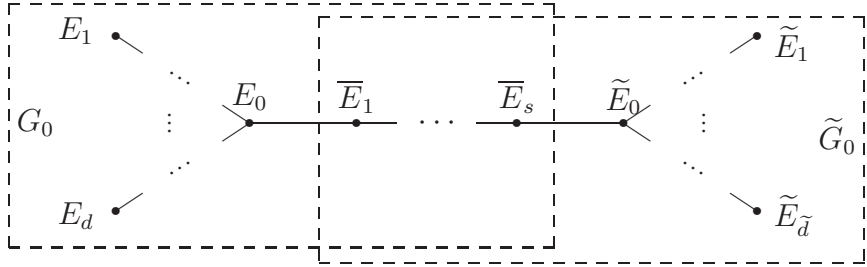

The nodes $E_0$ and $\widetilde{E}_0$ have decorations $b_0$ and $\widetilde{b}_0$ respectively.
Similarly as in the one--node case, we encode the decorations of maximal chains by
continued fraction expansions. In fact, it is convenient to consider the two maximal
star--shaped graphs $G_0$ and $\widetilde{G}_0$, and the corresponding normalized
Seifert invariants of their legs. Hence, let the
normalized Seifert invariants of the legs with ends $E_i$ ($1\leq i\leq d$) be  $(\alpha_i,\omega_i)$, while of the legs with ends  $\widetilde{E}_j$ ($1\leq j\leq \widetilde{d}$)
be $(\widetilde\alpha_j,\widetilde\omega_j)$.

The chain connecting the nodes, viewed in $G_0$ has normalized
Seifert invariants $(\alpha_0,\omega_0)$, while
viewed as a leg in $\widetilde{G}_0$, it has Seifert invariants
$(\alpha_0,\widetilde{\omega}_0)$. One has $\omega_0 \widetilde \omega_0=\alpha_0 \tau +1$.
Clearly, $\alpha_0$ is the determinant of the chain, and

\begin{picture}(400,40)(55,5)
\put(40,20){\makebox(0,0)[l]{$\omega_0:= \det(\hspace{23mm})$}}
\put(100,20){\circle*{3}}
\put(150,20){\circle*{3}}
\put(100,20){\line(1,0){10}}
\put(150,20){\line(-1,0){10}}
\put(125,20){\makebox(0,0){$\cdots$}}
\put(100,30){\makebox(0,0){$\overline E_2$}}
\put(150,30){\makebox(0,0){$\overline E_s$}}
\put(190,20){\makebox(0,0)[l]{$\widetilde{\omega}_0:= \det(\hspace{25mm})$}}
\put(250,20){\circle*{3}}
\put(300,20){\circle*{3}}
\put(250,20){\line(1,0){10}}
\put(300,20){\line(-1,0){10}}
\put(275,20){\makebox(0,0){$\cdots$}}
\put(250,30){\makebox(0,0){$\overline E_1$}}
\put(300,30){\makebox(0,0){$\overline E_{s-1}$}}

\put(346,20){\makebox(0,0)[l]{$\tau:= \det(\hspace{25mm}).$}}
\put(400,20){\circle*{3}}
\put(450,20){\circle*{3}}
\put(400,20){\line(1,0){10}}
\put(450,20){\line(-1,0){10}}
\put(425,20){\makebox(0,0){$\cdots$}}
\put(400,30){\makebox(0,0){$\overline E_2$}}
\put(450,30){\makebox(0,0){$\overline E_{s-1}$}}
\end{picture}

We denote the orbifold Euler numbers of the star--shaped subgraphs $G_0$
and $\widetilde G_0$ by
$$e=b_0+ \frac{\omega_0}{\alpha_0}+\sum_i \frac{\omega_i}{\alpha_i}  \ \ \ \mbox{and} \ \ \
\widetilde{e}=\widetilde b_0+
\frac{\widetilde\omega_0}{\alpha_0}+
\sum_j \frac{\widetilde\omega_j}{\widetilde\alpha_j}.$$
Consider the  {\it orbifold intersection matrix}
$\frI^{orb}=\left(
\begin{array}{lr}
 e & 1/\alpha_0 \\
 1/\alpha_0 & \widetilde{e}
\end{array}
\right)$, cf. \cite[4.1.4]{BN07}.

\noindent Then, the
negative definiteness of $\frI$ (or $G$)  implies
that  $\frI^{orb}$  is negative definite too, hence
$$\varepsilon:=\det \frI^{orb}=e \widetilde e - \frac{1}{\alpha_0^2}>0.$$
Then the determinant of the graph is $\det(G)=\det(-\frI)=\varepsilon\cdot \alpha_0 \prod_i \alpha_i \prod_j \widetilde{\alpha}_j$, cf. \cite{BN07}.

Using (\ref{eq:DETsgr}) we have the following intersection number of the
dual base elements:
\begin{equation}\label{eq:DET2node}
\begin{array}{rl}
(E^*_0)^2=\frac{\widetilde e}{\varepsilon}; \ \
(\widetilde E^*_0)^2=\frac{e}{\varepsilon}; \ \
(E^*_0,\widetilde E^*_0)=-\frac{1}{\alpha_0 \varepsilon}; \ \
(E^*_0,E^*_i)=\frac{\widetilde e}{\alpha_i \varepsilon}; \\
(E^*_0,\widetilde E^*_j)=-\frac{1}{\alpha_0 \widetilde \alpha_j \varepsilon}; \ \
(\widetilde E^*_0,E^*_i)=-\frac{1}{\alpha_0 \alpha_i \varepsilon}; \ \
(\widetilde E^*_0,\widetilde E^*_j)=\frac{e}{\widetilde \alpha_j \varepsilon}.
\end{array}
\end{equation}
Similarly as in \ref{ex:lens} or \ref{ss:seiferjel}, we can write
$n^i_{k_1,k_2}$, \  $\widetilde{n}^j_{k_1,k_2}$ resp. $\overline{n}_{k_1,k_2}$
 for the determinant of the
sub--chains  of the `left' $i^{th}$ leg, `right'  $j^{th}$ leg and connecting chain
 connecting the vertices $v_{k_1}$ and $v_{k_2}$. Let $\nu_i$ and $\tilde{\nu}_j$ be the
 number of vertices in the legs, cf. \ref{ss:seiferjel}. Then (with the standard notations, where
 $E_{i\ell}$ and $\widetilde{E}_{j\ell}$ are the vertices of the legs)
 one has the following slightly technical Lemma, but whose proof is standard based on the arithmetical
 properties of continued fractions:
\begin{lemma}\labelpar{lem:legeq}
(a) \
  $E^*_{i\ell}=n^i_{\ell+1,\nu_i} E^*_i+ \sum_{\ell<r\leq \nu_i}\frac{n^i_{1,\ell-1} n^i_{r+1,\nu_i}-
n^i_{1,r-1} n^i_{\ell+1,\nu_i}}{\alpha_i}E_{ir}$\ \ for any $1\leq \ell<\nu_i$.

(There is a similar formula for $\widetilde E^*_{j\ell}$.)

  (b) \  $\overline E^*_k=\overline{n}_{1,k-1}\overline E^*_1-\overline{n}_{2,k-1}E^*_0+
  \sum_{1\leq r<k} \frac{\overline{n}_{1,r-1} \overline{n}_{k+1,s}-\overline{n}_{1,k-1}
  \overline{n}_{r+1,s}}
{\alpha_0}\overline E_r \,$, for $1<k\leq s$.

(This is true even for $k=s+1$ with the identification $\overline{E}^*_{k+1}=\widetilde{E}^*_0$.)

\end{lemma}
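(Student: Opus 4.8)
\textbf{Proof plan for Lemma \ref{lem:legeq}.}
The plan is to reduce everything to the well-known arithmetic of Hirzebruch (negative) continued fractions, exactly as was used to derive the formulas in Example \ref{ex:lens} and Subsection \ref{ss:seiferjel}. First I would recall the basic facts: for a chain with vertices $v_1,\dots,v_m$ and decorations $-b_1,\dots,-b_m$, if $n_{k_1,k_2}$ denotes the determinant of the subchain between $v_{k_1}$ and $v_{k_2}$, then the $n_{k_1,k_2}$ satisfy the three-term recursions $n_{k_1,k_2}=b_{k_1}n_{k_1+1,k_2}-n_{k_1+2,k_2}$ and the symmetric one on the right index, together with the ``reversal'' identity and the Cramer-type relation $n_{1,\ell-1}n_{\ell+1,m}-n_{1,m}\,=\,n_{1,\ell}n_{\ell+2,m}\cdots$; more precisely the cross identity $n_{1,r-1}n_{\ell+1,m}-n_{1,\ell-1}n_{r+1,m}$ is divisible by the total determinant and the quotient records an intersection number. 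These are precisely the ingredients behind (\ref{eq:DETsgr}), which says that $-|H|(E_v^*,E_w^*)$ is the determinant of the graph obtained by deleting the shortest path from $v$ to $w$.

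Next, for part (a), I would fix a leg, say the $i$-th left leg, with vertices $E_{i1},\dots,E_{i\nu_i}$ attached to the node $E_0$, and expand $E_{i\ell}^*$ in the basis $\{E_v\}$ using $E_{i\ell}^*=\sum_w(-(E_{i\ell}^*,E_w^*))E_w$ adapted to the leg coordinates, or equivalently solve the linear system $\frI\,(\text{coeffs})=-\mathbf 1_{i\ell}$ restricted to the leg after separating the node direction. Restricting to the subchain $E_{i\ell},\dots,E_{i\nu_i}$ the intersection form is the (invertible, by negative definiteness) submatrix whose inverse entries are governed by the determinants $n^i_{k_1,k_2}$; the component of $E_{i\ell}^*$ along $E_i=E_{i\nu_i}^*$ direction is $n^i_{\ell+1,\nu_i}$ by (\ref{eq:DETsgr}) applied inside the leg, and the components along the intermediate $E_{ir}$ ($\ell<r\le\nu_i$) are exactly the cross-expressions $(n^i_{1,\ell-1}n^i_{r+1,\nu_i}-n^i_{1,r-1}n^i_{\ell+1,\nu_i})/\alpha_i$, since $\alpha_i=n^i_{1,\nu_i}$ is the leg determinant and these quotients are integral by the continued-fraction identities. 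This is a direct, if slightly tedious, computation with the three-term recursion; I would set it up once and then the claimed formula for $\widetilde E^*_{j\ell}$ follows verbatim by symmetry (swap the roles of the two nodes and of $\alpha_i,n^i$ with $\widetilde\alpha_j,\widetilde n^j$).

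For part (b) the connecting chain $\overline E_1,\dots,\overline E_s$ has two nodes attached (to $E_0$ on one side and to $\widetilde E_0$ on the other), so I would proceed the same way but keep track of both node directions: expand $\overline E_k^*$ using the subchain determinants $\overline n_{k_1,k_2}$ and the relations $\overline n_{1,s}=\alpha_0$, $\overline n_{2,s}=\omega_0$ (and $\overline n_{1,s-1}=\widetilde\omega_0$, etc.) from Subsection \ref{ss:TNC}. The coefficient along $\overline E_1^*$ comes out $\overline n_{1,k-1}$, the coefficient along $E_0^*$ comes out $-\overline n_{2,k-1}$ (up to sign from our convention $(E_v^*,E_w)=-\delta_{vw}$), and the interior coefficients along $\overline E_r$ ($1\le r<k$) are the cross-quotients $(\overline n_{1,r-1}\overline n_{k+1,s}-\overline n_{1,k-1}\overline n_{r+1,s})/\alpha_0$, integral again by the CF identities. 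The remark that this persists for $k=s+1$ with $\overline E^*_{s+1}=\widetilde E_0^*$ is then a matter of checking that the identity $\overline n_{2,s}/\alpha_0 = \omega_0/\alpha_0$ reproduces the entry $(E_0^*,\widetilde E_0^*)$ in (\ref{eq:DET2node}). The main obstacle is purely bookkeeping: organizing the continued-fraction recursions so that the cross-differences visibly collapse to the stated closed forms and checking divisibility by $\alpha_i$ (resp. $\alpha_0$); there is no conceptual difficulty, so I would state the needed CF lemma once and refer to it, rather than re-deriving it in the body of the proof.
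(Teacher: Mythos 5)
Your plan is correct and coincides with what the paper does: the paper gives no written proof, stating only that the lemma is "standard based on the arithmetical properties of continued fractions," and your proposal is exactly that standard verification — intersect the claimed right-hand side with each $E_w$, use the three-term recursion $n_{k_1,k_2}=b_{k_1}n_{k_1+1,k_2}-n_{k_1+2,k_2}$ to kill the interior equations, and use the cross-determinant identity $n_{1,\ell-1}n_{r+1,m}-n_{1,r-1}n_{\ell+1,m}=-n_{1,m}\,n_{\ell+1,r-1}$ for the integrality and the boundary terms. Nothing is missing.
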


Next, we give a presentation of $H=L'/ L$. Set $g_i:=[E^*_i]$ ($1\leq i\leq d$),
$\widetilde g_j:=[\widetilde E^*_j]$ ($1\leq j\leq \widetilde{d}$), $g_0:=[E^*_0]$ and
$\widetilde g_0:=[\widetilde E^*_0]$. Moreover we need to choose an additional generator
corresponding  to a vertex
sitting on the connecting chain: we choose $\overline{g}:=[\overline E^*_1]$ (this motivates
the choice in Lemma \ref{lem:legeq})(b) too).
The above lemma implies
\begin{equation}\label{eq:dual2nodes}
[E^*_{i\ell}]=n^i_{\ell+1,\nu_i}g_i, \ \ \ [\widetilde
E^*_{j\ell}]=\widetilde{n}^j_{\ell+1,\widetilde{\nu}_j}\widetilde g_j \ \ \
\mbox{and} \ \ \  [\overline{E}^*_k]=\overline{n}_{1,k-1}\overline{g}-\overline{n}_{2,k-1}g_0;
\end{equation}
and similar arguments as in the star--shaped case provides the following presentation for $H$
\begin{eqnarray}\label{eq:2Hpres}
{\textstyle  H={\mathrm{ab}}\langle \, g_0,\widetilde g_0, g_i, \widetilde g_j, \overline{g}
\,|\, g_0=\alpha_i\cdot g_i; \
\widetilde g_0 = \widetilde \alpha_j \cdot \widetilde g_j; \ \alpha_0 \cdot
\overline{g}=\omega_0 \cdot g_0 +\widetilde g_0;} \\ \nonumber {\textstyle
-\overline{g}-b_0\cdot g_0=\sum_i \omega_i \cdot g_i; \ -\widetilde \omega_0 \cdot
\overline{g} + \tau \cdot g_0
-\widetilde b_0 \cdot \widetilde g_0= \sum_j \widetilde \omega_j \cdot \widetilde g_j \rangle.}
\end{eqnarray}
Moreover,  for any  $l' \in L'$,
$$\textstyle{l'=c_0 E^*_0 + \widetilde c_0 \widetilde E^*_0+\sum_k \overline{c}_k
\overline{E}^*_k+\sum_{i,\ell} c_{i\ell}E^*_{i\ell}+\sum_{j\ell}
\widetilde c_{j\ell}\widetilde E^*_{j\ell},}$$
if we define its {\it reduced transform}  $l'_{red}$ by
\begin{equation*}
({c}_0-\sum_{k>1}
\overline{n}_{2,k-1}\overline{c}_k){E}^*_0 +\widetilde c_0 \widetilde E^*_0 + (\overline{c}_1+\sum_{k>1}\overline{n}_{1,k-1}\overline{c}_k)\overline{E}^*_1+
\sum_{i,\ell}
c_{i\ell}n^i_{\ell+1,\nu_i}E^*_i+ \sum_{j,\ell} \widetilde c_{j\ell}\widetilde n^j_{\ell+1,\widetilde{\nu}_j}\widetilde E^*_j,
\end{equation*}
then, by Lemma \ref{lem:legeq},  $[l']=[l'_{red}]$ in $H$.
Moreover,  if for any $l'\in L'$ we distinguish the $E_0$ and $\widetilde{E}_0$ coefficients,
that is,
we set $c(l'):=-(E^*_0,l')$ and $\widetilde c(l'):=-(\widetilde E^*_0,l')$,
then $c(l')=c(l'_{red})$
and $\widetilde c(l')=\widetilde c(l'_{red})$ as well. Lemma \ref{lem:legeq}(b) (applied for
$k=s+1$) provide these coefficients for $\overline{E}_1$:
\begin{equation}\label{eq:E1bar}
(\overline{E}_1^*,E_0^*)=\frac{1}{\varepsilon\alpha_0}\big(\omega_0\widetilde{e}-\frac{1}{\alpha_0}\big),
\ \ \ \
(\overline{E}_1^*,\widetilde{E}_0^*)=\frac{1}{\varepsilon\alpha_0}
\big(e-\frac{\omega_0}{\alpha_0}\big).
\end{equation}
We will use the coefficients $\bc=(c_0,\widetilde c_0,
\overline c, c_i,\widetilde c_j)$ to write an element
$l'_{red}=c_0E^*_0+ \widetilde{c}_0\widetilde E^*_0+ \overline{c}\overline{E}^*_1+
\sum_ic_iE^*_i+\sum_j\widetilde{c}_j\widetilde E^*_j$. Then (\ref{eq:DET2node}) and (\ref{eq:E1bar})
imply that
\begin{equation}\label{eq:AA}
\Bigg( \begin{array}{c}  c\\   \widetilde{c} \end{array} \Bigg)=
\Bigg( \begin{array}{c}  c(l'_{red})\\   \widetilde{c}(l'_{red})
 \end{array} \Bigg)
 = (-\frI^{orb})^{-1}\cdot \Bigg( \begin{array}{c}   A\\
\widetilde A   \end{array} \Bigg)
=\frac{1}{\varepsilon}
\Bigg( \begin{array}{lr}
 -\widetilde{e} & 1/\alpha_0 \\
 1/\alpha_0 & -e
\end{array} \Bigg)
\cdot \Bigg( \begin{array}{c} A\\
\widetilde A \end{array} \Bigg),
\end{equation}
where
\begin{equation*}
 A:=c_0+\sum_i \frac{c_i}{\alpha_i}+\frac{\omega_0}{\alpha_0} \overline{c}, \ \ \ \ \ \ \
 \widetilde A:=\widetilde c_0+\sum_j \frac{\widetilde c_j}{\widetilde \alpha_j}+\frac{1}{\alpha_0} \overline{c}.
\end{equation*}
Therefore, any
$h\in H$ has a lift of type  $l'_{h,red}$. Although the corresponding coefficients  $c$ and
$\widetilde c$ depend on the lift, by adding $\pm E_0$ and $\pm \widetilde E_0$ to $l'_{h,red}$ we can achieve
$c,\widetilde c \in [0,1)$, and these values
are uniquely determined by $h$. For example, the reduced  transform $(r_h)_{red}$ of
$r_h$ satisfies  $c((r_h)_{red})=c(r_h)\in[0,1) $ and $\widetilde c((r_h)_{red})=\widetilde c(r_h)
\in[0,1)$ since $r_h \in \square$.

As we will see, for different elements of $h\in H$, we have to shift the rank two lattices  by 
vectors of type $(c,\widetilde{c})$, hence the vectors 
$(c,\widetilde c)$ will play a crucial role later.

\subsection{Interpretation of $Z(\bt)$}\
If we wish to compute the periodic constant of $Z^e(\bt)$, by Theorem
 \ref{th:REST} we can eliminate all the variables of
$Z^e(\bt)$ except  the variables of the nodes; these remaining two variables
are denoted by $(t,\wtt)$.
Therefore the equivariant form of reciprocal of the denominator is
\begin{eqnarray*}
Z^{/H}(t,\wtt) &=& \prod_{i}\,\big(1-t^{-(E^*_{i}, E^*_0)}\wtt^{-(E^*_{i}, \widetilde E^*_0)}
[g_i]\big)^{-1}\cdot
\prod_{j}\,\big(1-t^{-(\widetilde E^*_{j}, E^*_0)}\wtt^{-(\widetilde E^*_{j},
\widetilde E^*_0)}[\widetilde g_j]
\big)^{-1}\\
&=& \sum_{x_i,\widetilde x_j\geq 0}\, t^{\,\frac{-\widetilde e}{\varepsilon}\sum_i
\frac{x_i}{\alpha_i} + \frac{1}{\alpha_0 \varepsilon} \sum_j \frac{\widetilde x_j}{\widetilde \alpha_j}}
\,\, \wtt^{\ \frac{1}{\alpha_0 \varepsilon}\sum_i
\frac{x_i}{\alpha_i} + \frac{-e}{\varepsilon} \sum_j \frac{\widetilde x_j}
{\widetilde \alpha_j}}\ \big[\,\textstyle{\sum _i} x_ig_i
+\textstyle{\sum_j} \widetilde x_j \widetilde g_j\big].
\end{eqnarray*}
We fix a lift $c_0 E^*_0+\widetilde c_0 \widetilde E^*_0 +\overline c\, \overline{E}^*_1+
\sum_i c_i E^*_i+\sum_j \widetilde c_j \widetilde E^*_j$ of $h$.  Then
the class of $\sum _i x_iE^*_i+\sum_j \widetilde x_j \widetilde E^*_j$
equals $h$ if and only if its  difference with
the lift is a linear combination of the relation in \ref{eq:2Hpres}. In other words,
if  there exist $\ell_0,\widetilde \ell_0, \overline \ell,\ell_i,
\widetilde \ell_j \in \Z$ such that
$$\begin{array}{lrlllrll}
(a) \ & -c_0&= \sum_i \ell_i-b_0 \ell_0 +\tau \widetilde \ell_0 +\omega_0 \overline \ell \hspace{5mm}&
(c) \ & x_i-c_i&= -\omega_i \ell_0-\alpha_i \ell_i & \ (i=1,\ldots, d)
\\
(b) \ & -\widetilde c_0&= \sum_j \widetilde \ell_j-\widetilde b_0 \widetilde\ell_0 +\overline \ell &
(d) \ & \widetilde x_j-\widetilde c_j&= -\widetilde \omega_j \widetilde\ell_0-\widetilde\alpha_j
\widetilde\ell_j & \ (j=1,\ldots, \widetilde d)
\\
(e) \ & -\overline c&= -\ell_0-\widetilde\omega_0 \widetilde\ell_0-\alpha_0 \overline \ell.
\end{array}$$
From (e) we deduce that
\begin{equation}\label{eq:CONGR}
\ell_0+\widetilde\omega_0 \widetilde\ell_0\equiv \overline c \,(\mbox{mod} \,\alpha_0).
\end{equation}
Since $x_i,\widetilde x_j\geq 0$, (c) and (d) implies $\frac{c_i-\omega_i\ell_0}{\alpha_i}\geq \ell_i$,
$\frac{\widetilde c_j-\widetilde\omega_j \widetilde\ell_0}{\widetilde\alpha_j}\geq \widetilde\ell_j$.
Recall also that $\omega_0 \widetilde \omega_0=\alpha_0 \tau +1$.
Therefore if we set $m_i:=\lfloor \frac{c_i-\omega_i\ell_0}{\alpha_i} \rfloor-\ell_i$ and $\widetilde m_j:=
\lfloor \frac{\widetilde c_j-\widetilde\omega_j \widetilde\ell_0}{\widetilde\alpha_j}\rfloor-\widetilde \ell_j$
 non--negative integers then the number of the realization of $h$ in the form $\sum_i x_ig_i+\sum_j\widetilde x_j\widetilde g_j$
is determined by the number of non--negative  integral $(d+\widetilde d)$--tuples $(m_i,\widetilde m_j)$
satisfying
$$\begin{array}{lrlr}
  & N_\bc(\ell_0,\widetilde \ell_0)&:= c_0+\frac{\omega_0}{\alpha_0}\overline c -(b_0+\frac{\omega_0}{\alpha_0})
\ell_0- \frac{1}{\alpha_0}\widetilde\ell_0 + \sum_i \lfloor \frac{c_i-\omega_i\ell_0}{\alpha_i}
\rfloor& = \sum_i m_i \, , \\
& \widetilde N_\bc(\ell_0,\widetilde \ell_0)&:= \widetilde c_0+\frac{1}{\alpha_0}\overline c -
(\widetilde b_0+\frac{\widetilde\omega_0}{\alpha_0}) \widetilde\ell_0- \frac{1}{\alpha_0}\ell_0 +
\sum_j \lfloor \frac{\widetilde c_j-\widetilde\omega_j\widetilde\ell_0}{\widetilde\alpha_i} \rfloor& =
\sum_j \widetilde m_j \, .
  \end{array}
$$
This number is $\binom{N_\bc(\ell_0,\widetilde \ell_0)+d-1}{d-1}\binom{\widetilde N_\bc(\ell_0,\widetilde \ell_0)
+\widetilde d-1}{\widetilde d-1}$ if $N_\bc$ and $\widetilde N_\bc$ are non--negative, otherwise it is $0$.
Note that (\ref{eq:CONGR}) guarantees that both $N_\bc$ and $\widetilde N_\bc $ are integers.
Furthermore, (c) and (d) and (\ref{eq:AA}) show  that
the exponent of $t$ and $\wtt$ in the formula of $Z_h^{/H}(t,\wtt)$
are  equal to $\ell_0+c$ and  $\widetilde \ell_0 +\widetilde c$ respectively. Hence
$$Z_h^{/H}(t,\wtt)=\sum  
\ \binom{N_\bc(\ell,\widetilde \ell)+d-1}{d-1}
\ \binom{\widetilde N_\bc(\ell,\widetilde \ell)+\widetilde d-1}{\widetilde d-1}\
 t^{\ell+c} \ \wtt^{\widetilde \ell +\widetilde c},
$$
where the sum runs over $(\ell,\widetilde \ell)\in \Z^2$ with  $\ell+\widetilde\omega_0 \widetilde\ell \equiv
\overline c \ (\mbox{mod} \ \alpha_0)$.\\
The numerator of $Z(t,\wtt)$ is
$\big(1-t^{-(E^*_{0}, E^*_0)}\wtt^{-(E^*_{0}, \widetilde E^*_0)}[g_0]\big)^{d-1}\cdot
\big(1-t^{-(\widetilde E^*_{0}, E^*_0)}\wtt^{-(\widetilde E^*_{0}, \widetilde E^*_0)}[\widetilde g_0]
\big)^{\widetilde{d}-1}$.
Hence we get $Z^e$ by multiplying this expression by $\sum_hZ_h^{/H}[h]$.
Recall that
 $h=c_0 g_0+\widetilde c_0 \widetilde g_0 +\overline c\, \overline{g}+
\sum_i c_i g_i+\sum_j \widetilde c_j \widetilde g_j$ is paired  with ${\bf c}$. Set
$h':=h+ k g_0+\widetilde k \widetilde g_0$ which corresponds  to ${\bf c'}={\bf c}+(k,
\widetilde k,0,0,0)$. Hence $Z_{h'}[h']$ is the next  sum according to the  decompositions
$h'=h+ k g_0+\widetilde k \widetilde g_0$:
$$
\sum_{k=0}^{d-1}(-1)^k\binom{d-1}{k}\sum_{\widetilde k=0}^{\widetilde d-1}
(-1)^{\widetilde k}\binom{\widetilde d-1}{\widetilde k} \cdot \hspace{3cm} $$
$$\sum_h \
\Bigg(
\sum_{\ell+\widetilde\omega_0 \widetilde\ell \equiv
\overline c \ (\alpha_0)}\ \binom{N_\bc(\ell,\widetilde \ell)+d-1}{d-1}
\ \binom{\widetilde N_\bc(\ell,\widetilde \ell)+\widetilde d-1}{\widetilde d-1}\
 t^{\ell+c+\frac{-\widetilde e k+\widetilde k/\alpha_0}{\varepsilon}}
  \ \wtt^{\ \widetilde \ell +\widetilde c+
 \frac{-e\widetilde k+k/\alpha_0}{\varepsilon}}\Bigg)\ [h']
$$
$$
=\sum_{k=0}^{d-1}(-1)^k\binom{d-1}{k}\sum_{\widetilde k=0}^{\widetilde d-1}
(-1)^{\widetilde k}\binom{\widetilde d-1}{\widetilde k} \cdot \hspace{3cm}$$
$$\sum_{h}\Bigg(
\sum_{\ell+\widetilde\omega_0 \widetilde\ell \equiv
\overline c \ (\alpha_0)}\ \binom{N_{\bc'}(\ell,\widetilde \ell)-k+d-1}{d-1}
\ \binom{\widetilde N_{\bc'}(\ell,\widetilde \ell)-\widetilde k +\widetilde d-1}{\widetilde d-1}\
 t^{\ell+c'} \ \wtt^{\widetilde \ell +\widetilde c'}\Bigg)\  [h'].$$
Rearranging and using the combinatorial formula
$\sum_{k=0}^{d-1}(-1)^k\binom{N-k+d-1}{d-1}\binom{d-1}{k}=1$ for $N\geq 0$ and $=0$ otherwise,
we get the following.
\begin{theorem}\label{th:UJZH}
For any $h\in H$ one has
\begin{equation}\label{eq:ZHTT}
 Z_h(t,\wtt)=\sum_{(\ell,\widetilde \ell)\in \cS_\bc} t^{\ell+c} \ \wtt^{ \ \widetilde\ell+\widetilde c},
 \ \ \ \ \mbox{where}
\end{equation}
\begin{equation}\label{eq:module}
\cS_\bc:=\left\{ (\ell,\widetilde \ell) \in \Z^2 \ : \ N_\bc(\ell,\widetilde \ell)\geq 0, \
\widetilde N_\bc(\ell,\widetilde \ell)\geq 0\ \ \mbox{and} \
\ \ell+\widetilde\omega_0 \widetilde\ell \equiv
\overline c \ (\mbox{mod} \ \alpha_0)
\right\}.
\end{equation}\end{theorem}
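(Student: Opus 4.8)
The plan is to derive the formula for $Z_h(t,\widetilde t)$ exactly as in the one--node case \ref{ss:Z}, following the outline already set up in \ref{ss:TNC}--\ref{ss:TN}. First I would start from the reduced equivariant series $Z^e(t,\widetilde t)$, where by Theorem \ref{th:REST} we have eliminated all variables except those of the two nodes $E_0$ and $\widetilde E_0$; this reduction preserves all the $H$--equivariant information (the comment at the end of \ref{bek:LL'2} and Theorem \ref{th:REST}(b)--(c)). The denominator of $Z(t,\widetilde t)$ is $\prod_i(1-t^{-(E_i^*,E_0^*)}\widetilde t^{-(E_i^*,\widetilde E_0^*)}[g_i])\cdot\prod_j(1-t^{-(\widetilde E_j^*,E_0^*)}\widetilde t^{-(\widetilde E_j^*,\widetilde E_0^*)}[\widetilde g_j])$, whose expansion $Z^{/H}(t,\widetilde t)$ is computed by expanding each geometric factor and grouping the monomials $\sum_i x_i g_i+\sum_j\widetilde x_j\widetilde g_j$ according to their class in $H$. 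Using the presentation \ref{eq:2Hpres} of $H$ one translates ``the class equals $h$'' into the existence of integers $\ell_0,\widetilde\ell_0,\overline\ell,\ell_i,\widetilde\ell_j$ satisfying the linear system (a)--(e) displayed in the excerpt.

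Next I would solve this system the same way as in the star--shaped case: from relation (e) one extracts the congruence $\ell_0+\widetilde\omega_0\widetilde\ell_0\equiv\overline c\pmod{\alpha_0}$ (\ref{eq:CONGR}), and from (c), (d) together with $x_i,\widetilde x_j\geq 0$ one gets that the free parameters $m_i:=\lfloor(c_i-\omega_i\ell_0)/\alpha_i\rfloor-\ell_i\geq 0$ and $\widetilde m_j:=\lfloor(\widetilde c_j-\widetilde\omega_j\widetilde\ell_0)/\widetilde\alpha_j\rfloor-\widetilde\ell_j\geq 0$ must satisfy $\sum_i m_i=N_{\bf c}(\ell_0,\widetilde\ell_0)$ and $\sum_j\widetilde m_j=\widetilde N_{\bf c}(\ell_0,\widetilde\ell_0)$, where $N_{\bf c},\widetilde N_{\bf c}$ are the two affine expressions defined in the excerpt (using $\omega_0\widetilde\omega_0=\alpha_0\tau+1$ to clear the mixed terms). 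The number of such tuples is $\binom{N_{\bf c}+d-1}{d-1}\binom{\widetilde N_{\bf c}+\widetilde d-1}{\widetilde d-1}$ when both are $\geq 0$ and $0$ otherwise, and by \ref{eq:AA} the $t,\widetilde t$ exponents are $\ell_0+c$ and $\widetilde\ell_0+\widetilde c$. This gives the closed form for $Z_h^{/H}$. I would then multiply by the numerator $(1-t^{-(E_0^*)^2}\widetilde t^{-(E_0^*,\widetilde E_0^*)}[g_0])^{d-1}(1-t^{-(\widetilde E_0^*,E_0^*)}\widetilde t^{-(\widetilde E_0^*)^2}[\widetilde g_0])^{\widetilde d-1}$, rewrite the shifted classes $h'=h+kg_0+\widetilde k\widetilde g_0$ via the corresponding shift ${\bf c}'={\bf c}+(k,\widetilde k,0,0,0)$ of the coefficient vector, and finally invoke the binomial identity $\sum_{k=0}^{d-1}(-1)^k\binom{N-k+d-1}{d-1}\binom{d-1}{k}=\delta_{N\geq 0}$ (and its analogue in $\widetilde k$) to collapse the double alternating sum. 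This yields precisely $Z_h(t,\widetilde t)=\sum_{(\ell,\widetilde\ell)\in\cS_{\bf c}}t^{\ell+c}\widetilde t^{\widetilde\ell+\widetilde c}$ with $\cS_{\bf c}$ as stated, and one checks the answer is independent of the chosen lift ${\bf c}$ of $h$ (as for $N_{\bf c}(\ell)$ in the one--node case).

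The main obstacle I anticipate is purely bookkeeping rather than conceptual: correctly carrying the arithmetic of the connecting chain through relation (e), i.e.\ verifying that the congruence $\ell_0+\widetilde\omega_0\widetilde\ell_0\equiv\overline c\pmod{\alpha_0}$ is exactly what makes both $N_{\bf c}$ and $\widetilde N_{\bf c}$ integral, and that the identity $\omega_0\widetilde\omega_0=\alpha_0\tau+1$ is used in the right place when eliminating $\overline\ell$ so that the coefficients of $\ell_0,\widetilde\ell_0$ in $N_{\bf c},\widetilde N_{\bf c}$ come out as $-(b_0+\omega_0/\alpha_0)=-e+\sum_i\omega_i/\alpha_i$ etc.\ and $-1/\alpha_0$ for the cross term. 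One must also be careful that the substitution ${\bf c}\mapsto{\bf c}'$ shifts $N_{\bf c}$ and $\widetilde N_{\bf c}$ by exactly $k$ and $\widetilde k$ respectively (and nothing else), which is where the block structure of $-\frI^{orb}$ in \ref{eq:AA} enters; the rest follows the star--shaped computation of \ref{ss:Z} line by line.
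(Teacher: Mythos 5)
Your proposal follows the paper's own proof essentially step for step: reduction to the node variables via Theorem \ref{th:REST}, expansion of the denominator with the class condition translated through the presentation (\ref{eq:2Hpres}) into the linear system (a)--(e), extraction of the congruence (\ref{eq:CONGR}) and the counts $N_{\bf c}$, $\widetilde N_{\bf c}$, multiplication by the numerator with the shift ${\bf c}\mapsto{\bf c}'$, and the alternating binomial identity to collapse the sum. The technical points you flag (integrality via the congruence, the role of $\omega_0\widetilde\omega_0=\alpha_0\tau+1$, and the shift of $N_{\bf c},\widetilde N_{\bf c}$ by $k,\widetilde k$) are exactly the ones the paper's computation handles.
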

It is straightforward to verify that
the right hand side of (\ref{eq:ZHTT})  does not depend on the choice of $\bc$, it depends only on $h$.
The identity (\ref{eq:ZHTT}) is remarkable: it realizes the bridge between the series $Z^e$ and the 
equivariant Hilbert series of {\it affine monoids and their modules}.

\subsection{The structure of $\cS_\bc$}\
Recall that for any $h \in H$ we consider a lift of $h$ identified by a certain
${\bf c}$ which detemines the pair $(c,\widetilde c)$ (cf.  (\ref{eq:AA})), and the integers
 $N_\bc(\frl)$ and $\widetilde N_\bc(\frl)$, where  $\frl=(\ell,\well) \in \Z^2$.
We define $$\Z^2({\bf c}):= \{(\ell,\well)\in \Z^2\,:\,
\ell+\widetilde \omega_0 \well \equiv \overline c \
(\mbox{mod} \ \alpha_0)\}.$$

If  $h=0$ then we always choose the zero lift with ${\bf c}={\bf 0}$.

If, in the definition of  $N_\bc(\frl)$  and  $\widetilde N_\bc(\frl)$, 
we replace each $[y]$ by $y$, we get  the entries of
$$\Bigg(\begin{array}{c}  A-e\ell_0-\widetilde \ell/\alpha_0 \\ \widetilde A-\ell_0/
\alpha_0-\widetilde e \widetilde \ell_0 \end{array}\Bigg)=
  -\frI^{orb}\Bigg( \begin{array}{c} \ell+c \\   \well+\widetilde c \end{array}\Bigg).$$
This motivates to define
\begin{equation}
\overline\calS_\bc:=\Big\{ \frl\in \zc \ : \ -\frI^{orb}
\Bigg( \begin{array}{c}
 \ell+c \\
 \well+\widetilde c
\end{array}\Bigg) \geq 0
\Big\}.
\end{equation}
Clearly $\calS_{{\bf c}}\subset \overline{\calS}_{{\bf c}}$.
We also consider
$\calC^{orb}$,  the real cone $\{\frl\in\R^2\ :\ -\frI^{orb}\cdot \frl\geq 0\}$.  Then
$\overline \calS_{\bf c}=\big(\calC^{orb}-(c,\widetilde{c})\big)\cap
\zc$.

\begin{lemma}\labelpar{lem:fingen} (1) \ $\calS_{\bf 0}$ and $\overline \calS_{\bf 0}$ are affine monoids.
$\overline \calS_{\bf 0}$ is the normalization of  $\calS_{\bf 0}$.

(2) \  $\calS_{{\bf c}}$ and $ \overline{\calS}_{{\bf c}}$ are  finitely generated $\calS_{\bf 0}$-modules,
$\calS_{{\bf c}}$ is a submodule of $ \overline{\calS}_{{\bf c}}$.
\end{lemma}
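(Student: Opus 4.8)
\textbf{Proof plan for Lemma \ref{lem:fingen}.}

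The plan is to verify everything from the explicit description of these sets as lattice points in shifted rational cones, using the theory of affine monoids (as in \cite{BG}). First I would treat part (1). The set $\calS_{\bf 0}$ is, by Theorem \ref{th:UJZH} (equation (\ref{eq:module}) with $\bc=\bf 0$, hence $\overline{c}=0$), the set of $(\ell,\widetilde\ell)\in\Z^2$ with $N_{\bf 0}(\ell,\widetilde\ell)\geq 0$, $\widetilde N_{\bf 0}(\ell,\widetilde\ell)\geq 0$ and $\ell+\widetilde\omega_0\widetilde\ell\equiv 0\ (\mathrm{mod}\ \alpha_0)$. I would first observe that the congruence condition defines a finite-index sublattice $\Z^2(\bf 0)\subset \Z^2$, and that for $\bc=\bf 0$ the linear inequalities defining $N_{\bf 0}$ and $\widetilde N_{\bf 0}$ are homogeneous (the floor functions $\lfloor -\omega_i\ell_0/\alpha_i\rfloor$ differ from the linear form $-\omega_i\ell_0/\alpha_i$ by a bounded amount, but the resulting region still agrees with a rational cone up to a bounded correction; more cleanly, one checks directly that $(\ell,\widetilde\ell)\in\calS_{\bf 0}$ is closed under addition, since $N_{\bf 0}$ and $\widetilde N_{\bf 0}$ are superadditive in $(\ell_0,\widetilde\ell_0)$ — adding two elements only increases the value of each floor sum by at least the sum of the two values, because $\lfloor a\rfloor+\lfloor b\rfloor\leq\lfloor a+b\rfloor$). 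This gives that $\calS_{\bf 0}$ is a submonoid of $\Z^2(\bf 0)$ containing $0$; since it is contained in the pointed cone $\calC^{orb}$ (which is pointed and full-dimensional because $-\frI^{orb}$ is negative definite, hence $\frI^{orb}$ negative definite and invertible by \ref{ss:TNC}), it is finitely generated by Gordan's lemma, i.e. an affine monoid. For $\overline\calS_{\bf 0}=\calC^{orb}\cap\Z^2(\bf 0)$ the same Gordan-type argument applies directly, and it is saturated in $\Z^2(\bf 0)$ by construction (it consists of \emph{all} lattice points of $\Z^2(\bf 0)$ in the cone $\calC^{orb}$), hence it is normal; see \cite[\S 2]{BG}. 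That $\overline\calS_{\bf 0}$ is the normalization of $\calS_{\bf 0}$ then follows once I show $\calS_{\bf 0}$ and $\overline\calS_{\bf 0}$ generate the same cone $\calC^{orb}$ and span the same lattice $\Z^2(\bf 0)$: the first because $\calC^{orb}$ is generated by its extremal rays, which one can realize as positive multiples of elements of $\calS_{\bf 0}$ (large integral points on each ray satisfy the floor inequalities), and the second because $\calS_{\bf 0}$ contains two $\Z$-linearly independent vectors (e.g. suitable large multiples of $E_0^*$ and $\widetilde E_0^*$ reduced into the sublattice).

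For part (2), I would use the general structure theorem for modules over affine monoids (\cite[Prop. 2.35 and 2.43]{BG}, as already invoked in Examples \ref{ex:NAM} and \ref{ex:AM}). The key point is that $\calS_{\bf c}$ and $\overline\calS_{\bf c}$ are each obtained from the corresponding $\bc=\bf 0$ object by the \emph{same} bounded/congruence shift: $\overline\calS_{\bf c}=(\calC^{orb}-(c,\widetilde c))\cap \zc$, and $\zc$ is a coset of $\Z^2(\bf 0)$, so $\overline\calS_{\bf c}$ is a finitely generated module over $\overline\calS_{\bf 0}$ (a lattice point set in a shifted cone over a normal affine monoid), hence also over the submonoid $\calS_{\bf 0}$. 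For $\calS_{\bf c}$ I would check directly that $\calS_{\bf 0}+\calS_{\bf c}\subseteq\calS_{\bf c}$, again from the superadditivity of $N$ and $\widetilde N$ and additivity of the congruence class (adding an element of $\Z^2(\bf 0)$ preserves the residue of $\ell+\widetilde\omega_0\widetilde\ell\ \mathrm{mod}\ \alpha_0$); finite generation of $\calS_{\bf c}$ as an $\calS_{\bf 0}$-module then follows from Gordan's lemma applied to the pointed cone together with the fact that $\calS_{\bf c}\subseteq\overline\calS_{\bf c}$ which is already finitely generated. The inclusion $\calS_{\bf c}\subseteq\overline\calS_{\bf c}$ is immediate since dropping the floors only weakens the inequalities (each $\lfloor x\rfloor\leq x$ enters with a positive sign, so $N_{\bf c}\geq 0$ forces $A-e\ell_0-\widetilde\ell_0/\alpha_0\geq 0$, etc.), exactly as in the derivation preceding the lemma.

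The main obstacle I anticipate is the bookkeeping in going from the floor-function description of $N_{\bf c}$, $\widetilde N_{\bf c}$ to a clean statement about a rational cone: one must be careful that the floor corrections are genuinely bounded (they are, since $0\leq \omega_i/\alpha_i<1$) so that the \emph{module} structure and finite generation are unaffected, even though the sets $\calS_{\bf c}$ are \emph{not} literally $\overline\calS_{\bf 0}$-modules of the naive shifted-cone form — this is why one argues finite generation via the sandwich $\calS_{\bf 0}\text{-submodule of }\overline\calS_{\bf c}$ rather than trying to identify $\calS_{\bf c}$ with a union of translated copies of $\calS_{\bf 0}$ on the nose. Everything else is a routine application of Gordan's lemma and the normality criterion for saturated submonoids.
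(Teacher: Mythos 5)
Your proposal is correct and follows essentially the same route as the paper, whose own proof dismisses (1) as elementary and proves (2) by exactly your sandwich: $\overline{\calS}_{\bf c}$ is finitely generated over $\overline{\calS}_{\bf 0}$ by \cite[2.12]{BG}, and $\overline{\calS}_{\bf 0}$ is in turn finitely generated as an $\calS_{\bf 0}$--module. One step to tighten: Gordan's lemma gives finite generation of $\overline{\calS}_{\bf 0}=\calC^{orb}\cap \Z^2({\bf 0})$, not of an arbitrary submonoid of it, so for $\calS_{\bf 0}$ itself you should run your own sandwich argument already in part (1) — $\calS_{\bf 0}$ contains cone--spanning elements $\frv_1,\frv_2$ (Lemma \ref{lem:VV}) and, by the superadditivity you noted, is a submodule of the finitely generated module $\overline{\calS}_{\bf 0}$ over the free monoid $\Z_{\geq 0}\frv_1+\Z_{\geq 0}\frv_2$, hence finitely generated by the ascending chain condition; the same remark upgrades "two linearly independent vectors" to the actual equality $\mathrm{gp}(\calS_{\bf 0})=\Z^2({\bf 0})$ needed for the normalization statement.
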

\begin{proof} (1) is elementary. By Corollary \cite[2.12]{BG}
 $\overline \calS_\bc$ is finitely generated over $\overline\calS_0$, but
$\overline \calS_0$ itself  is finitely generated as an $\calS_0$ module.
\end{proof}
\begin{lemma}\labelpar{lem:VV} There exists $\frv_1$ and $\frv_2$ elements of $\Z^2$ with the following properties:

(a) $\frv_1$ and $\frv_2$ belong to $\calS_{{\bf 0}}$ and
 $\R_{\geq 0}\frv_1+\R_{\geq 0}\frv_2=\calC^{orb}$.

 (b) For any $\frl\in\overline{\calS}_{{\bf c}}$ one has:
 $$
 \begin{array}{ll}
 (i) \ \ N_{{\bf c}}(\frl+\frv_1)=N_{{\bf c}}(\frl); \ \ & \ \
 (\widetilde{i}) \ \ \widetilde{N}_{{\bf c}}(\frl+\frv_2)=\widetilde{N}_{{\bf c}}(\frl);\\
 (ii) \ \ N_{{\bf c}}(\frl+\frv_2)\geq 0; \ \ & \ \
 (\widetilde{ii}) \ \ \widetilde{N}_{{\bf c}}(\frl+\frv_1)\geq 0.
 \end{array}
 $$
\end{lemma}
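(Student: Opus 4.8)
The idea is to take $\frv_1$ and $\frv_2$ to be suitably large integral generators of the two extremal rays of the cone $\calC^{orb}$. Set $(M_1(\frl),M_2(\frl)):=-\frI^{orb}\frl$, so that $\calC^{orb}=\{M_1\ge 0,\ M_2\ge 0\}$; and recall from the discussion preceding the definition of $\overline\calS_\bc$ that replacing each integral part $[\,\cdot\,]$ by $(\,\cdot\,)$ in $N_\bc(\frl)$, $\widetilde N_\bc(\frl)$ produces exactly the two coordinates of $-\frI^{orb}\bigl(\frl+(c,\widetilde c)\bigr)$. Separating the floor terms into integral and fractional parts, this reads (with $\{\,\cdot\,\}$ the fractional part)
\[
N_\bc(\frl)=M_1\bigl(\frl+(c,\widetilde c)\bigr)-\textstyle\sum_{i=1}^{d}\bigl\{\tfrac{c_i-\omega_i\ell}{\alpha_i}\bigr\},\qquad
\widetilde N_\bc(\frl)=M_2\bigl(\frl+(c,\widetilde c)\bigr)-\textstyle\sum_{j=1}^{\widetilde d}\bigl\{\tfrac{\widetilde c_j-\widetilde\omega_j\well}{\widetilde\alpha_j}\bigr\}.
\]
Since $\frI^{orb}$ is negative definite with $e,\widetilde e<0$ and positive off-diagonal entry $1/\alpha_0$, the extremal rays of $\calC^{orb}$ are $\rho_1:=\{M_1=0\}\cap\calC^{orb}=\R_{\ge 0}(1,-e\alpha_0)$ and $\rho_2:=\{M_2=0\}\cap\calC^{orb}=\R_{\ge 0}(-\widetilde e\alpha_0,1)$, and one computes directly $M_2(1,-e\alpha_0)=M_1(-\widetilde e\alpha_0,1)=\alpha_0\varepsilon>0$.

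Next I would fix one large positive integer $\kappa$, divisible by $\alpha_0\prod_i\alpha_i\prod_j\widetilde\alpha_j$ and with $\kappa\alpha_0\varepsilon\ge\max\{d,\widetilde d\}$, and set $\frv_1:=\kappa\,(1,-e\alpha_0)$, $\frv_2:=\kappa\,(-\widetilde e\alpha_0,1)$; the divisibility makes all entries integral. Then $\R_{\ge 0}\frv_1+\R_{\ge 0}\frv_2=\calC^{orb}$; the first coordinate of $\frv_1$ is $\kappa$ and the second of $\frv_2$ is $\kappa$, both divisible by every $\alpha_i$ and $\widetilde\alpha_j$, and the first coordinate $-\widetilde e\alpha_0\kappa$ of $\frv_2$ is divisible by every $\alpha_i$ as well (expand $\widetilde e\alpha_0\kappa$ and use that $\alpha_i\widetilde\alpha_j\mid\kappa$); moreover $\frv_1,\frv_2\in\Z^2({\bf 0})$ (the congruence reduces to $0\equiv0\pmod{\alpha_0}$ since $\alpha_0$ divides the relevant coordinates), and $M_2(\frv_1)=M_1(\frv_2)=\kappa\alpha_0\varepsilon\ge\max\{d,\widetilde d\}$. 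Because $\alpha_i$ divides the first coordinate of $\frv_1$, all fractional parts in $N_{\bf 0}(\frv_1)$ vanish, so by the displayed formula $N_{\bf 0}(\frv_1)=M_1(\frv_1)=0$ and $\widetilde N_{\bf 0}(\frv_1)=M_2(\frv_1)\ge 0$; with $\frv_1\in\Z^2({\bf 0})$ this yields $\frv_1\in\calS_{\bf 0}$, and symmetrically $\frv_2\in\calS_{\bf 0}$. This settles (a).

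For (b) the only tool needed is the identity $\lfloor x-k\rfloor=\lfloor x\rfloor-k$ for $k\in\Z$. Using it with $k=\omega_i\kappa/\alpha_i\in\Z$, together with $\frv_1\in\Z^2({\bf 0})$ (which guarantees $\frl+\frv_1\in\Z^2(\bc)$), shows for every $\frl\in\Z^2(\bc)$ that $N_\bc(\frl+\frv_1)=N_\bc(\frl)+M_1(\frv_1)=N_\bc(\frl)$; this is (i), and $(\widetilde i)$ is the dual statement for $\frv_2$ using $M_2(\frv_2)=0$. For (ii): if $\frl\in\overline\calS_\bc$ then $M_1\bigl(\frl+(c,\widetilde c)\bigr)\ge0$, so the displayed formula forces $N_\bc(\frl)>-d$, hence $N_\bc(\frl)\ge 1-d$ as it is an integer; since $\alpha_i$ divides the first coordinate of $\frv_2$, the same floor identity gives $N_\bc(\frl+\frv_2)=N_\bc(\frl)+M_1(\frv_2)\ge(1-d)+d=1\ge 0$. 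Statement $(\widetilde{ii})$ follows the same way from $\widetilde N_\bc(\frl)\ge 1-\widetilde d$ on $\overline\calS_\bc$ and $M_2(\frv_1)\ge\widetilde d$.

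There is no conceptual obstacle; the proof is bookkeeping. The delicate points are: matching each extremal ray of $\calC^{orb}$ to the correct $\frv$, so that $M_1$ vanishes identically along $\frv_1$ yet is $\ge d$ along $\frv_2$ (and dually for $M_2$); loading onto the single parameter $\kappa$ all the requirements simultaneously — divisibility by the leg determinants $\alpha_i,\widetilde\alpha_j$ (needed for the floor reduction), membership of $\frv_i$ in $\Z^2({\bf 0})$ (so that translating by $\frv_i$ keeps $N_\bc,\widetilde N_\bc$ in the domain where they are integer-valued), and the lower bounds $M_1(\frv_2)\ge d$, $M_2(\frv_1)\ge\widetilde d$ (needed in (ii), $(\widetilde{ii})$ because on $\overline\calS_\bc$ the functions $N_\bc,\widetilde N_\bc$ may fall short of their linear parts by up to $d$, resp. $\widetilde d$) — and checking that these can all be met at once, which is clear since they all amount to "$\kappa$ is a large enough common multiple."
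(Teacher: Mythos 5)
Your proof is correct and follows essentially the same route as the paper: the paper also chooses $\frv_1,\frv_2$ on the two extremal rays of $\calC^{orb}$ (its conditions (A)--(B), namely $N_{\bf 0}(\frv_1)=0$ with vanishing fractional parts, force exactly $M_1(\frv_1)=0$), imposes the divisibility needed to make $N_\bc$ additive along $\frv_1$, and proves (ii), $(\widetilde{ii})$ from the bound $N_\bc\geq -(d-1)$ on $\overline\calS_\bc$ together with a gain of at least $d-1$ when translating by $\frv_2$. Your contribution is merely to make the paper's existence assertion explicit via $\frv_1=\kappa(1,-e\alpha_0)$, $\frv_2=\kappa(-\widetilde e\alpha_0,1)$ for a large common multiple $\kappa$, which is a valid instantiation.
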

\begin{proof} We choose $\frv_1$ and $\frv_2$ such that
$\widetilde N_{{\bf 0}}(\frv_1)\geq \widetilde d-1$ and
$N_{{\bf 0}}(\frv_2)\geq d-1$, and with

(A)  $\frv_1=(\ell_1,\well_1)\in \zc$
such that $\{-\omega_i\ell_1/\alpha_i\}=0$ for all $i$, and  $N_{{\bf 0}} (\frv_1)=0$;

(B) $\frv_2=(\ell_2,\well_2)\in \zc$
such that $\{-\widetilde{\omega}_j\widetilde{\ell}_2/\widetilde{\alpha}_j\}=0$
for all $j$, and $\widetilde{N}_{{\bf 0}} (\frv_2)=0$.

\noindent  Then $\frv_1$ and $\frv_2$ satisfy (a), and (b)($i$), and (b)($\widetilde{i}$).
Furthermore,
note that $N_{{\bf c}}(\frl+\frv_2)\geq  N_{{\bf c}}(\frl)+N_{{\bf 0}}(\frv_2)$
and for any  $\frl\in\overline{\calS}_{{\bf c}}$
one has $N_{{\bf c}}(\frl)\geq -(d-1)$,
hence all the conditions will be satisfied.
\end{proof}
\begin{remark}
Usually, the `universal restrictions'  $\widetilde N_{{\bf 0}}(\frv_1)\geq \widetilde d-1$
  and $N_{{\bf 0}}(\frv_2)\geq d-1$  in the proof of
  Lemma \ref{lem:VV}  provide  rather `large' vectors  $\frv_1$ and $\frv_2$. Nevertheless,
usually much smaller vectors also satisfy (a) and (b). Here is another choice.
Besides (A) and (B) we impose the following:

(C) Let $\Box=\Box(\frv_1,\frv_2)=
 \left\{ \frl=q_1\frv_1+q_2\frv_2 \ : \ 0\leq q_1,q_2<1 \right\}$ be
  the semi--open cube in $\calC^{orb}$. Then we require $N_{{\bf 0}}(\frv_2)\geq 0$ and
  $N_{{\bf c}}(\frl_\Box+\frv_2)\geq 0$ for any
  $\frl_\Box\in (\Box-(c,\widetilde c))\cap\zc$; and symmetrically:
  $\widetilde N_{{\bf 0}}(\frv_1)\geq 0$ and
  $\widetilde N_{{\bf c}}(\frl_\Box+\frv_1)\geq 0$ for any
   $\frl_\Box\in (\Box-(c,\widetilde c))\cap\zc$.
\end{remark}

The wished inequality for any $\frl\in\overline{\calS}_{{\bf c}}$ then follows from
$N_{{\bf c}}(\frl_\Box +k_1\frv_1+k_2\frv_2+\frv_2)=N_{{\bf c}}(\frl_\Box +k_2\frv_2+\frv_2)\geq
N_{{\bf c}}(\frl_\Box +\frv_2)+k_2N_{{\bf 0}}(\frv_2)$ (and its symmetric version).

In the sequel the next two subsets of  $\overline{\calS}_{{\bf c}}$ will be crucial.
\begin{eqnarray*}
\calS^-_{{\bf c},1}
:=&\hspace*{-2mm}
\big\{ \frl\in (\Box-(c,\widetilde c))\cap\zc \ : \ N_\bc(\frl)<0\big\} , \\
\calS^-_{{\bf c},2}:=&\hspace*{-2mm}
\big\{\frl\in (\Box-(c,\widetilde{c}))\cap\zc \ : \ \widetilde N_\bc(\frl)
<0\big\}.
\end{eqnarray*}
Again, both sets $\calS^-_{{\bf c},1}$ and $\calS^-_{{\bf c},2}$ are independent of the choice of ${\bf c}$, they
depend only on $h$.

\begin{proposition}\labelpar{prop:str} Let $\frv_1$ and $\frv_2$
be as in Lemma \ref{lem:VV}. Then
\begin{eqnarray*}
 &(1)&  \overline\calS_\bc= \bigsqcup_{\frl\in (\Box-(c,\widetilde{c}))\cap \zc} \frl+\Z_{\geq 0}\frv_1+\Z_{\geq 0}\frv_2 \\
 &(2)&  \overline\calS_\bc\setminus \calS_\bc = \big(\bigsqcup_{\frl\,\in \calS^-_{\bc,1}}
 \frl+\Z_{\geq 0}\frv_1\big) \ \cup\ \big(\bigsqcup_{\frl\,\in \calS^-_{\bc,2}}
 \frl+\Z_{\geq 0}\frv_2\big), \\
 & &\mbox{but} \ \ \big(\bigsqcup_{\frl\,\in \calS^-_{\bc,1}}
 \frl+\Z_{\geq 0}\frv_1\big) \ \cap\ \big(\bigsqcup_{\frl\,\in \calS^-_{\bc,2}}
 \frl+\Z_{\geq 0}\frv_2\big)= \bigsqcup_{\frl\,\in \calS^-_{\bc,1} \cap \calS^-_{\bc,2}} \frl \ .
\end{eqnarray*}
\end{proposition}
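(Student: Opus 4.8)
The statement is a structural decomposition of the affine-monoid module $\overline{\calS}_{\bc}$ and of the "failure set" $\overline{\calS}_{\bc}\setminus\calS_{\bc}$, entirely in terms of the two generators $\frv_1,\frv_2$ coming from Lemma \ref{lem:VV} and the fundamental box $\Box=\Box(\frv_1,\frv_2)$. Everything is elementary once the right normal form for lattice points of $\calC^{orb}$ is fixed, so the plan is to first set up that normal form, then verify $(1)$, and finally peel off the two "negative strips" to get $(2)$.

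First I would establish $(1)$. By Lemma \ref{lem:VV}(a) the cone $\calC^{orb}$ is generated over $\R_{\geq 0}$ by $\frv_1$ and $\frv_2$, and $\frv_1,\frv_2$ are $\Z$-linearly independent (since $\frI^{orb}$ is nondegenerate). Hence every real point of $\calC^{orb}$ is uniquely $q_1\frv_1+q_2\frv_2$ with $q_i\geq 0$, and writing $q_i=k_i+\{q_i\}$ with $k_i\in\Z_{\geq 0}$ shows that every point of $\calC^{orb}$ lies in $\Box+\Z_{\geq 0}\frv_1+\Z_{\geq 0}\frv_2$, uniquely. Intersecting with the coset lattice $\zc$ (which is translation-invariant under $\frv_1,\frv_2$ since $\frv_1,\frv_2\in\calS_{\bf 0}\subset\Z^2(\bf 0)$) and then translating by $-(c,\widetilde c)$ gives exactly the claimed disjoint union for $\overline{\calS}_{\bc}=(\calC^{orb}-(c,\widetilde c))\cap\zc$. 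The finitely many coset representatives are precisely the points of $(\Box-(c,\widetilde c))\cap\zc$.

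Next, for $(2)$: a point $\frl\in\overline{\calS}_{\bc}$ fails to lie in $\calS_{\bc}$ iff $N_{\bc}(\frl)<0$ or $\widetilde N_{\bc}(\frl)<0$ (the congruence condition is automatic on $\zc$). Write $\frl=\frl_\Box+k_1\frv_1+k_2\frv_2$ via $(1)$. By Lemma \ref{lem:VV}(b)(i), $N_{\bc}$ is invariant under adding $\frv_1$; and by (b)(ii) together with $N_{\bf 0}(\frv_2)\geq 0$ (from the construction in the proof of Lemma \ref{lem:VV}) and superadditivity $N_{\bc}(\frl+\frv_2)\geq N_{\bc}(\frl)+N_{\bf 0}(\frv_2)$, adding $\frv_2$ to a point with $N_{\bc}\geq 0$ keeps $N_{\bc}\geq 0$; combined with (b)(ii) itself this forces: $N_{\bc}(\frl)<0$ happens precisely when $k_2=0$ and $N_{\bc}(\frl_\Box)<0$, i.e. when $\frl\in\frl_\Box+\Z_{\geq 0}\frv_1$ for some $\frl_\Box\in\calS^-_{\bc,1}$. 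Symmetrically $\widetilde N_{\bc}(\frl)<0$ happens precisely when $k_1=0$ and $\frl_\Box\in\calS^-_{\bc,2}$, i.e. $\frl\in\frl_\Box+\Z_{\geq 0}\frv_2$. Taking the union gives the first displayed formula in $(2)$, and the disjointness of the two families over distinct $\frl_\Box$ (resp. the uniqueness of the box representative from $(1)$) gives that the sub-unions are disjoint. For the intersection: a point lies in both $\frl_\Box+\Z_{\geq 0}\frv_1$ (some $\frl_\Box\in\calS^-_{\bc,1}$) and $\frl'_\Box+\Z_{\geq 0}\frv_2$ (some $\frl'_\Box\in\calS^-_{\bc,2}$); by the uniqueness of the decomposition in $(1)$ this requires $k_1=k_2=0$, hence the point is $\frl_\Box=\frl'_\Box$, which must therefore lie in $\calS^-_{\bc,1}\cap\calS^-_{\bc,2}$; conversely any such box point trivially lies in both families. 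This yields the third displayed identity.

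\textbf{Main obstacle.} The only genuinely nonroutine point is pinning down exactly when $N_{\bc}$ (resp. $\widetilde N_{\bc}$) can go negative along a $\frv_1,\frv_2$-translate — that is, establishing that negativity of $N_{\bc}$ is confined to the $k_2=0$ "strip" and similarly for $\widetilde N_{\bc}$. This is where Lemma \ref{lem:VV}(b) and the sign normalizations $N_{\bf 0}(\frv_2)\geq 0$, $\widetilde N_{\bf 0}(\frv_1)\geq 0$ from its proof are used in an essential way, via the superadditivity/invariance of $N_{\bc},\widetilde N_{\bc}$ under $\frv_1,\frv_2$. Everything else is bookkeeping on the semigroup decomposition already proved in part $(1)$.
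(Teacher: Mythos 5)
Your proof is correct and follows essentially the same route as the paper, which disposes of the proposition by appealing to Lemma \ref{lem:VV}(a)--(b) and the standard box decomposition of a module over an affine monoid (the paper cites \cite[4.36]{BG}); you have simply written out the details the paper leaves implicit. The only superfluous step is your appeal to superadditivity and $N_{\bf 0}(\frv_2)\geq 0$: since Lemma \ref{lem:VV}(b)(ii) already asserts $N_{\bc}(\frl+\frv_2)\geq 0$ for \emph{every} $\frl\in\overline{\calS}_{\bc}$, the confinement of negativity of $N_{\bc}$ to the $k_2=0$ strip follows directly from (b)(i)--(ii) without further argument.
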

\begin{proof} The statements follow from the choice of $\frv_1$ and $\frv_2$ and
properties (a) and (b) of Lemma \ref{lem:VV}.
Compare also with the structure theorem  \cite[4.36]{BG} of $\calS_{{\bf 0}}$ modules. \end{proof}

\subsection{The periodic constant and $\frsw$ in the equivariant case. }\
Set $\bt=(t,\wtt)$.
Using (\ref{eq:ZHTT}) and Proposition \ref{prop:str}
one can write $Z_h(\bt)/\bt^{(c,\widetilde c)}$ in the next form:
$$\sum_{\frl\in (\Box-(c,\widetilde c))\cap \Z^2\cap (\equiv_\bc)}
\frac{\bt^\frl}{(1-\bt^{\frv_1})(1-\bt^{\frv_2})}
-\sum_{\frl\in \calS^-_{\bc,1}}\frac{\bt^\frl}{1-\bt^{\frv_1}} -\sum_{\frl\in \calS^-_{\bc,2}}
\frac{\bt^\frl}{1-\bt^{\frv_2}}+\sum_{\frl\in \calS^-_{\bc,1}\cap \calS^-_{\bc,2}}\bt^\frl.$$

Next, we apply the decomposition established in subsection \ref{ss:POLPART}.
Here it is important to {\it choose ${\bf c}$ in such a way that
$c\in[0,1)$ and $\widetilde c\in[0,1)$}.

Note that $\frv_1\in\R_{>0}(1/\alpha_0,-e)$ and $\frv_2\in\R_{>0}(-\widetilde e,1/\alpha_0)$,
hence $\frv_2$ sits in the cone determined by $\frv_1$ and $(1,0)$. Then, as in \ref{ss:POLPART},
we set $\Xi_1:=\{(\ell,\widetilde \ell)\,:\, 0\leq \ell < \mbox{first coordinate of $\frv_1$}\}$
and $\Xi_2:=\{(\ell,\widetilde \ell)\,:\, 0\leq \widetilde \ell < \mbox{second  coordinate of $\frv_2$}\}$, and for any $\frl\in \calS^-_{\bc,i}$  the unique
$n_{\frl,i}$ such that $\frl-n_{\frl,i}\frv_i\in\Xi_i$, $i=1,2$.
Then subsection \ref{ss:POLPART} provides the following decomposition
\begin{equation*}\label{eq:2nZ+}
\begin{array}{l}
 Z^+_h(\bt)=\bt^{(c,\widetilde c)}
 \left( \sum_{\frl\in \calS^-_{\bc,1}}\sum_{j=1}^{n_{\frl,1}}\bt^{\frl-j\frv_1} +
 \sum_{\frl\in \calS^-_{\bc,2}}
\sum_{j=1}^{n_{\frl,2}}\bt^{\frl-j\frv_2}+\sum_{\frl\in \calS^-_{\bc,1}\cap
\calS^-_{\bc,2}}\bt^\frl
\right) \ \\
Z^-_h(\bt)=\bt^{(c,\widetilde c)} \left( \sum_{\frl\in (\Box-(c,\widetilde c))\cap \Z^2\cap (\equiv_\bc)}
\frac{\bt^\frl}{(1-\bt^{\frv_1})(1-\bt^{\frv_2})} -\sum_{\frl\in \calS^-_{\bc,1}}\frac{\bt^{\frl-n_{\frl,1}\frv_1}}{1-\bt^{\frv_1}}
-\sum_{\frl\in \calS^-_{\bc,2}}\frac{\bt^{\frl-n_{\frl,2}\frv_2}}{1-\bt^{\frv_2}} \right) \ .
\end{array}
\end{equation*}
Therefore, by \ref{rem:fh} and Theorem \ref{th:POLPART} we get
$${\rm pc}^{\calC^{orb}}_h(Z)=
{\rm pc}^{\calC^{orb}}(Z_h(\bt)/\bt^{(c,\widetilde c)})=Z^+_h(1,1)=
\sum_{\frl\in \calS^-_{\bc,1}}n_{\frl,1}+
\sum_{\frl\in \calS^-_{\bc,2}}n_{\frl,2}+
|\calS^-_{\bc,1} \cap \calS^-_{\bc,2}| \ .$$

\begin{corollary} Choose  ${\bf c}$ in such a way that
$c\in[0,1)$ and $\widetilde c\in[0,1)$. Then one has the following combinatorial formula for the
  normalized Seiberg--Witten invariant of $M$
 \begin{equation*}
-\frac{(K+2r_h)^2+|\cV|}{8}-\frsw_{-h*\sigma_{can}}(M)=
\sum_{\frl\in \calS^-_{\bc,1}}n_{\frl,1}+
\sum_{\frl\in \calS^-_{\bc,2}}n_{\frl,2}+
|\calS^-_{\bc,1} \cap \calS^-_{\bc,2}|.
\end{equation*}
\end{corollary}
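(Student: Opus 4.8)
The statement is a direct synthesis of the equivariant periodic-constant computation carried out just above and Corollary~\ref{cor:4.1}. The plan is to identify the two quantities it relates through the common object $\mathrm{pc}^{\calC^{orb}}_h(Z)$. First I would invoke Corollary~\ref{cor:4.1}(a), which states that $Z(\bt)$ admits a periodic constant along the (real) Lipman cone $\calS$ and that, for each $h\in H$,
\[
\mathrm{pc}^{\calS}_h(Z)=-\frac{(K+2r_h)^2+|\cV|}{8}-\frsw_{-h*\sigma_{can}}(M).
\]
So it suffices to show that the left-hand side of the displayed identity in the corollary equals $\mathrm{pc}^{\calC^{orb}}_h(Z)$, i.e. that the periodic constant computed along the orbifold cone $\calC^{orb}$ (equivalently, along the two node variables $(t,\wtt)$) coincides with the one computed along $\calS$.

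Second, I would establish exactly this coincidence via the reduction to node variables. By Theorem~\ref{th:REST}(c), applied with $\calN=\{E_0,\widetilde E_0\}$, one has $\mathrm{pc}^{\calC}_h(Z(\bt))=\mathrm{pc}^{\calC_\calN}_h(Z(\bt_\calN))$ for any chamber $\calC$ of the denominator of $Z$ that meets the interior of $\calS$ and whose intersection with the relative interior of $\calS_\calN$ is nonempty; here $Z(\bt_\calN)=Z(t,\wtt)$ is precisely the two-variable series computed in Theorem~\ref{th:UJZH}. One checks that $\calS_\calN$ (the cone spanned by $E_0^*,\widetilde E_0^*$) maps, under the change of variables $(c,\widetilde c)^t=(-\frI^{orb})^{-1}(A,\widetilde A)^t$ recorded in (\ref{eq:AA}), onto the orbifold cone $\calC^{orb}=\{\frl:-\frI^{orb}\frl\ge 0\}$, since $J=(-\frI^{orb})^{-1}$ (the identity noted at the start of Section~\ref{s:Last}). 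Combined with Corollary~\ref{cor:4.1}(b), which asserts that $\mathrm{pc}^{\calC}_h(Z)$ is independent of the admissible chamber $\calC$ and equals $\mathrm{pc}^{\calS}_h(Z)$, this yields
\[
\mathrm{pc}^{\calC^{orb}}_h(Z)=\mathrm{pc}^{\calS}_h(Z)=-\frac{(K+2r_h)^2+|\cV|}{8}-\frsw_{-h*\sigma_{can}}(M).
\]

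Third, I would quote the explicit evaluation of $\mathrm{pc}^{\calC^{orb}}_h(Z)$ obtained in the paragraphs immediately preceding the corollary. Writing $Z_h(\bt)/\bt^{(c,\widetilde c)}$ in the form dictated by Proposition~\ref{prop:str}, decomposing it into its polynomial part $Z^+_h$ and negative-degree part $Z^-_h$ as in subsection~\ref{ss:POLPART}, and using Remark~\ref{rem:fh} together with Theorem~\ref{th:POLPART}(3) (so that $\mathrm{pc}^{e,\calC^{orb}}(Z^-_h)=0$), one gets
\[
\mathrm{pc}^{\calC^{orb}}_h(Z)=Z^+_h(1,1)=\sum_{\frl\in \calS^-_{\bc,1}}n_{\frl,1}+\sum_{\frl\in \calS^-_{\bc,2}}n_{\frl,2}+|\calS^-_{\bc,1}\cap\calS^-_{\bc,2}|,
\]
valid once $\bc$ is chosen with $c,\widetilde c\in[0,1)$. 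Equating the two expressions for $\mathrm{pc}^{\calC^{orb}}_h(Z)$ gives the claimed formula.

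\textbf{Main obstacle.} The only genuinely delicate point is the compatibility needed to invoke Theorem~\ref{th:REST}(c) and Corollary~\ref{cor:4.1}(b) simultaneously: one must verify that there actually exists a chamber $\calC$ of the denominator of $Z$ which meets the interior of both $\calS$ and the relative interior of $\calS_\calN=\calS_{\{E_0,\widetilde E_0\}}$, so that the reduction and the chamber-independence of the periodic constant apply to the \emph{same} $\calC$. This is where the two-node hypothesis helps: as noted after Theorem~\ref{th:REST}, for graphs with at most two nodes $\calS_\calN$ is not subdivided further by the denominator of $Z$, so $\calC^{orb}$ (equivalently $\calS_\calN$ itself) is a single chamber of the reduced denominator, and the ray $E_0^*+\varepsilon(-E_0)$ type argument shows $\calS_\calN$ meets the interior of $\calS$. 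Once this is in place, everything else is bookkeeping with the already-established identities (\ref{eq:AA}), (\ref{eq:ZHTT}), Proposition~\ref{prop:str}, and the $Z^+/Z^-$ splitting; no new estimates are required.
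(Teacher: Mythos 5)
Your proposal is correct and follows exactly the paper's own argument, which simply cites Corollary \ref{cor:4.1}, Theorem \ref{th:REST}, and the periodic-constant computation ${\rm pc}^{\calC^{orb}}_h(Z)=Z_h^+(1,1)$ carried out immediately before the corollary. Your additional care about the chamber compatibility between $\calS$, $\calS_\calN$ and $\calC^{orb}$ is a sensible explicit check of a point the paper leaves implicit, but it does not change the route.
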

\begin{proof}
Use Corollary \ref{cor:4.1}, the Theorem \ref{th:REST} and the above computation.
\end{proof}

\subsection{The periodic constant and  $\lambda(M)$ in the non--equivariant case}\labelpar{ss:NEC}\
Though the non--equivariant $ Z_{ne}$  can be obtained by the sum $\sum_hZ_h$ treated in the previous subsection, here we
provide a more direct procedure, which leads to a new formula.
Write $J:=(-\frI^{orb})^{-1}$ and $\bt^{\binom{a}{b}}$ for $t^a\wtt^{b}$.
Applying the reduction \ref{th:REST} for the definition
(\ref{eq:INTR}) of $Z$, we get
\begin{eqnarray*}
 Z_{ne}(\bt)=\frac{(1-\bt^{J\binom{1}{0}})^{d-1}(1-\bt^{J\binom{0}{1}})^{\widetilde d-1}}
{\prod_i(1-\bt^{J\binom{1/\alpha_i}{0}})\prod_j(1-\bt^{J\binom{0}{1/\widetilde \alpha_j}})}.
\end{eqnarray*}
Set
$S(x):= \sum_i x_i/\alpha_i$ and $\widetilde S(\widetilde x):= \sum_j \widetilde{x}_j/\widetilde{\alpha}_j$.
 Similarly as in \ref{eq:NE}, $Z_{ne}(\bt)$ can be written as
\begin{eqnarray*}
\sum_{0\leq x_i<\alpha_i, 0\leq i\leq d\atop 0\leq \widetilde x_j<\widetilde\alpha_j,
0\leq j \leq\widetilde d}f(x,\widetilde x), \ \ \mbox{where} \ \ f(x,\widetilde x)=
 \frac{\bt^{J\binom{S(x)}{\widetilde S(\widetilde x)}}}{(1-\bt^{J\binom{1}{0}})
(1-\bt^{J\binom{0}{1}})}.
\end{eqnarray*}
By the substitution $u_1=\bt^{J\binom{1}{0}}$ and $u_2 =\bt^{J\binom{0}{1}}$, $f(x,\widetilde x)$ transforms into  $u_1^{S(x)}u_2^{\widetilde S(\widetilde x)}/ (1-u_1)(1-u_2)$.
The division of this fraction (with remainder) is elementary, hence $f(x,\widetilde x)$ equals
$$
\bt^{J\binom{S_{rat}}{\widetilde S_{rat}}} \left( \sum_{n=0}^{S_{int}-1}
\sum_{k=0}^{\widetilde S_{int}-1}\bt^{J\binom{n}{k}}
-\sum_{k=0}^{S_{int}-1}\frac{\bt^{J\binom{k}{0}}}{1-\bt^{J\binom{0}{1}}} -\sum_{\widetilde k=0}^{\widetilde S_{int}-1}
\frac{\bt^{J\binom{0}{\widetilde k}}}{1-\bt^{J\binom{1}{0}}}+ \frac{1}{(1-\bt^{J\binom{1}{0}})
(1-\bt^{J\binom{0}{1}})}\right)\ ,$$
where $S_{int}:=\lfloor S(x)\rfloor$, $\widetilde S_{int}:=\lfloor \widetilde
S(\widetilde x)\rfloor$,
$S_{rat}:=\{S(x)\}$ and $\widetilde S_{rat}:=\{\widetilde S(\widetilde x)\}$.

Then, by \ref{lem:d2} 
${\rm pc}^{\calC^{orb}}(\bt^{J\binom{S_{rat}}{\widetilde S_{rat}}}/ (1-\bt^{J\binom{1}{0}})
(1-\bt^{J\binom{0}{1}}))=0$. Moreover, \ref{ss:POLPART} gives a unique integer $s(k)\geq 0$
for $k\in \{0,\dots,S_{int}-1\}$ such that $\bt^{J\binom{k+S_{rat}}{-s(k)+\widetilde S_{rat}}}/1-\bt^{J\binom{0}{1}}$
has vanishing periodic constant with respect to $\calC^{orb}$. It turns out that $s(k)=\lfloor
-\widetilde e \alpha_0(k+S_{rat})+\widetilde S_{rat}\rfloor$.
Similarly $s(\widetilde k)=\lfloor -e\alpha_0(\widetilde k+
\widetilde S_{rat})+S_{rat}\rfloor$ in the case of
$\bt^{J \binom{-s(\widetilde k)+S_{rat}}{\widetilde k+
\widetilde S_{rat}}}/1-\bt^{J\binom{1}{0}}$.
Therefore, by \ref{th:POLPART}, for
\begin{equation*}
  \mathrm{pc}(Z_{ne})=
-\lambda(M)-\frd\cdot\frac{K^2+|\cV|}{8}+\sum_h\chi(r_h)
\end{equation*}
we get
\begin{equation*}
\sum_{0\leq x_i<\alpha_i, 0\leq i\leq d\atop 0\leq \widetilde x_j<\widetilde\alpha_j,
0\leq j \leq\widetilde d} \  \Big( S_{int}\widetilde S_{int} + \sum_{k=0}^{S_{int}-1}
 \lfloor -\widetilde e
\alpha_0(k+S_{rat})+\widetilde S_{rat}\rfloor
+ \sum_{\widetilde k=0}^{\widetilde S_{int}-1} \lfloor -e\alpha_0(\widetilde k+ \widetilde S_{rat})+S_{rat}\rfloor  \ \Big).
\end{equation*}

\subsection{Ehrhart theoretical interpretation}\
In general, 
in contrast with the one--node case \ref{ss:Ehr}, the direct determination
 of
the counting function of $Z_h(\bt)$, or equivalently, of the complete equivariant
 Ehrhart quasipolynomial associated with the corresponding polytope,  is rather hard.
Nevertheless, those coefficients which are relevant to us (e.g. those ones which
contain the information about the Seiberg--Witten invariants of the 3--manifold)
can be identified using  the right hand side of (\ref{eq:SUM}).
The computation is more transparent
when $L'=L$. In  that case, the two--variable Ehrhart polynomial has degree $d+\widetilde d$,
and a specific $d+\widetilde d-2$ degree coefficient
 is exactly the normalized Seiberg--Witten invariant of the 3--manifold.
We will not provide here the formulae, since
this identification was already established for any
negative definite plumbing graph with arbitrary number of nodes, see
Section \ref{s:Last}, where several other coefficients were computed as well.

\section{Examples}

\begin{eexample}\labelpar{ex1} Consider the following plumbing graph. 
\begin{figure}[h!]
\vspace{0.5cm}
\begin{center}
\begin{picture}(140,60)(80,15)
\put(110,40){\circle*{3}}
\put(140,40){\circle*{3}}
\put(170,40){\circle*{3}}
\put(200,40){\circle*{3}}
\put(80,60){\circle*{3}}
\put(80,20){\circle*{3}}
\put(200,60){\circle*{3}}
\put(200,20){\circle*{3}}
\put(110,40){\line(1,0){90}}
\put(110,40){\line(-3,2){30}}
\put(110,40){\line(-3,-2){30}}
\put(170,40){\line(3,2){30}}
\put(170,40){\line(3,-2){30}}
\put(30,20){\makebox(0,0){$E_2$}}
\put(30,60){\makebox(0,0){$E_1$}}
\put(250,40){\makebox(0,0){$\widetilde E_2$}}
\put(250,60){\makebox(0,0){$\widetilde E_1$}}
\put(250,20){\makebox(0,0){$\widetilde E_3$}}
\put(85,65){\makebox(0,0){$-2$}}
\put(85,15){\makebox(0,0){$-3$}}
\put(210,35){\makebox(0,0){$-5$}}
\put(190,65){\makebox(0,0){$-5$}}
\put(190,15){\makebox(0,0){$-5$}}
\put(115,50){\makebox(0,0){$-1$}}
\put(140,50){\makebox(0,0){$-9$}}
\put(165,50){\makebox(0,0){$-1$}}

\end{picture}
\end{center}
\caption{Graph for Example \ref{ex1}.}
\end{figure}
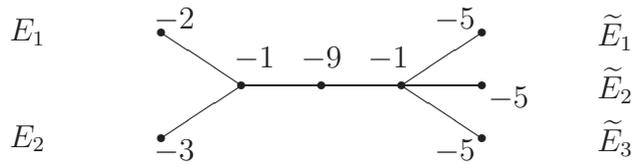
\noindent

The corresponding Seifert invariants are  $\alpha_1=2$, $\alpha_2=3$, $\widetilde{\alpha}_j=5$, $\alpha_0=9$ and
$\omega_i=\widetilde\omega_j=\omega_0=\widetilde\omega_0=1$ for all $i$ and  $j$.
Hence  $e=-1/18$, $\widetilde e=-13/45$ and $\varepsilon=1/(3^3\cdot 10)$.
For $h=0$ we choose $\bc=0$. Then
\begin{equation*}
\calS_0=\left\{
\begin{array}{l} (\ell,\well)\in \Z^2 \\
 8\ell - \well +9\cdot([\frac{-\ell}{2}]+[\frac{-\ell}{3}])\geq 0  \\
 8\well -\ell +27\cdot[\frac{-\well}{5}]\geq 0\\
 \ell+\well \equiv 0 \ (\mbox{mod} \ 9 )
\end{array}
\right\} \ \ \mbox{and} \ \ \
\overline\calS_0=\left\{
\begin{array}{l} (\ell,\well)\in \Z^2 \\
 \ell-2\well \geq 0 \\
 -5\ell+13\well \geq 0 \\
 \ell+\well \equiv 0 \ (\mbox{mod} \ 9 )
\end{array}
\right\} .
\end{equation*}
If we take the generators $\frv_1=(60,30)$ and $\frv_2=(26,10)$ (via conditions (A)-(B)-(C) following
Lemma \ref{lem:VV}), one can calculate explicitly the sets
\begin{equation*}
\calS^-_{{\bf 0},1}=\left\{
\begin{array}{l}
(13,5),(19,8),(25,11),\\
(31,14),(37,17),(43,20),\\
(49,23),(55,26),(61,29),\\
(67,32)
\end{array}
\right\} \ \mbox{and} \ \
\calS^-_{{\bf 0},2}=\left\{
\begin{array}{l}
(6,3),(19,8),(12,6),\\
(25,11),(24,12),(37,17),\\
(42,21),(55,26)
\end{array}\right\} .
\end{equation*}
This generates the next counting function of
$\overline \calS_{{\bf 0}}\setminus \calS_{{\bf 0}}$, namely
$\sum_{(\ell, \well)\in \overline \calS_{{\bf 0}}\setminus \calS_{{\bf 0}}}
\ t^\ell \widetilde{t}^{\well}\ =$
\begin{equation*}
\begin{array}{l}
\sum_{(\ell, \well)\in \overline \calS_{{\bf 0}}\setminus \calS_{{\bf 0}}}
\ t^\ell \widetilde{t}^{\well}\ =
\frac{t^{13}\wtt^5+t^{19}\wtt^8+t^{25}\wtt^{11}+t^{31}\wtt^{14}+t^{37}\wtt^{17}+
t^{43}\wtt^{20}+t^{49}\wtt^{23}+t^{55}\wtt^{26}+t^{61}\wtt^{29}+t^{67}\wtt^{32}}{1-t^{60}\wtt^{30}}+\\
 + \frac{t^6\wtt^3+t^{12}\wtt^6+t^{19}\wtt^8+t^{24}\wtt^{12}+t^{25}\wtt^{11}+t^{37}\wtt^{17}
 +t^{42}\wtt^{21}+t^{55}\wtt^{26}}{
 1-t^{26}\wtt^{10}}
 -t^{19}\wtt^8-t^{25}\wtt^{11}-t^{37}\wtt^{17}-t^{55}\wtt^{26} \ ,
\end{array}
\end{equation*}
which by \ref{eq:2nZ+} provides  $Z^+_0(t,\wtt)=t\wtt^{-1}+t^3\wtt^{2}+t^{-2}\wtt^2+t^{-1}\wtt+t^{11}\wtt^7+t^{16}\wtt^{11}+t^{-10}\wtt+
t^{29}\wtt^{16}+t^{3}\wtt^{6}+t^{19}\wtt^8+t^{25}\wtt^{11}+t^{37}\wtt^{17}+t^{55}\wtt^{26}$.
Hence ${\rm pc}^{\calC^{orb}}_0(Z)=Z^+_0(1,1)=13$.

It can be verified that there exists a splice--quotient type normal surface singularity whose
link is given by the above graph. It is a complete intersection in $(\C^4,0)$ with equations
$z^3+(y_2+2y_3)^2-y_1y_2(2y_2+3y_3)=y_1^5+(2y_2+3y_3)y_2y_3=0$.
Its geometric genus is 13 according to the above computation and \cite{NO}.
\end{eexample}

\begin{eexample}\labelpar{ex2} Let $G$ be the graph in Figure \ref{fig:pex2}.
\begin{figure}[h!]
\begin{center}
\begin{picture}(130,60)(80,15)
\put(80,40){\circle*{3}}
\put(110,40){\circle*{3}}
\put(140,40){\circle*{3}}
\put(170,40){\circle*{3}}
\put(200,40){\circle*{3}}
\put(80,60){\circle*{3}}
\put(80,20){\circle*{3}}
\put(200,60){\circle*{3}}
\put(200,20){\circle*{3}}
\put(80,40){\line(1,0){120}}
\put(110,40){\line(-3,2){30}}
\put(110,40){\line(-3,-2){30}}
\put(170,40){\line(3,2){30}}
\put(170,40){\line(3,-2){30}}
\put(30,40){\makebox(0,0){$E_2$}}
\put(30,20){\makebox(0,0){$E_3$}}
\put(30,60){\makebox(0,0){$E_1$}}
\put(250,40){\makebox(0,0){$\widetilde E_2$}}
\put(250,60){\makebox(0,0){$\widetilde E_1$}}
\put(250,20){\makebox(0,0){$\widetilde E_3$}}
\put(170,10){\makebox(0,0){$\widetilde E_0$}}
\put(140,10){\makebox(0,0){$E$}}
\put(110,10){\makebox(0,0){$E_0$}}
\put(70,35){\makebox(0,0){$-5$}}
\put(85,65){\makebox(0,0){$-5$}}
\put(85,15){\makebox(0,0){$-5$}}
\put(210,35){\makebox(0,0){$-5$}}
\put(190,65){\makebox(0,0){$-5$}}
\put(190,15){\makebox(0,0){$-5$}}
\put(115,50){\makebox(0,0){$-1$}}
\put(140,50){\makebox(0,0){$-7$}}
\put(165,50){\makebox(0,0){$-1$}}
\end{picture}
\end{center}
\caption{Graph for Example \ref{ex2}.}
\label{fig:pex2}
\end{figure}

The corresponding generalized Seifert invariants are $\alpha_i=\widetilde\alpha_j=5$, $\omega_0=
\widetilde\omega_0=\omega_i=\widetilde\omega_j=1$, $e=\widetilde e=-9/35$ and $\varepsilon=8/(7\cdot 35)$
for all $i,j\in \{ 1,\ldots,3\}$.
Let $h\in H$ determined by the following coefficients: $c_0=-2$, $\widetilde c_0=1$, $\overline c=2$,
$c_i=3$ and $\widetilde c_j=-2$ for any $i,j$. Then $(c,\widetilde c)=(3/4,3/4)$ which is
uniquely determined by the $h$. It is immediate that
\begin{equation*}
\calS_\bc=\left\{
\begin{array}{l}
 (\ell,\well)\in \Z^2\\
 6\ell - \well +21\cdot[\frac{3-\ell}{5}]\geq 12  \\
 6\well -\ell +21\cdot[\frac{-2-\well}{5}]\geq 9\\
 \ell+\well \equiv 2 \ (\mbox{mod} \ 7 )
\end{array}
\right\} \ \mbox{and} \ \ 
\overline\calS_\bc=\left\{
\begin{array}{l}
  (\ell,\well)\in \Z^2 \\
 9\ell-5\well \geq -3 \\
 -5\ell+9\well \geq -3 \\
 \ell+\well \equiv 2 \ (\mbox{mod} \ 7 )
\end{array}
\right\} \ .
\end{equation*}
If we choose $\frv_1:=(5,9)$ and $\frv_2:=(9,5)$ as generators for $\calC^{orb}$, one can
calculate $\calS^-_{{\bf c},1}$ and $\calS^-_{{\bf c},2}$ explicitly, i.e.
$$\calS^-_{{\bf c},1}=\left\{ (1,1),(4,5),(5,4),(9,7)\right\} \ \ \mbox{and} \ \ 
\calS^-_{{\bf c},2}=\left\{ (1,1),(4,5),(5,4),(7,9)
\right\} \ .$$
Therefore, the counting function of $\overline\calS_\bc\setminus \calS_\bc$ is 
$$-t^{3/4}\wtt^{3/4}\big( (t\wtt+t^4\wtt^5+t^5\wtt^4+t^9\wtt^7)/(1-t^5\wtt^9) 
+ (t\wtt+t^4\wtt^5+t^5\wtt^4+t^7\wtt^9)/(1-t^9\wtt^5)-t\wtt-t^4\wtt^5-t^5\wtt^4 \big).$$
Finally, using \ref{eq:2nZ+} we get 
$Z^+_h(t,\wtt)=-t^{3/4}\wtt^{3/4}(-\wtt^5-t^4\wtt^{-2}-t^{-5}-t^{-2}\wtt^4-t\wtt-t^4\wtt^5-t^5\wtt^4)$, 
hence ${\rm pc}^{\calC^{orb}}_h(Z)=Z^+_h(1,1)=7$.
\end{eexample}

\chapter{Lattice cohomological calculations and examples}\labelpar{ch:5}\

\indent N\'emethi's very first article on lattice cohomology \cite{OSZINV} presents a method, using 
{\em graded roots} (see \cite[\textsection 3]{OSZINV}), to compute 
the lattice cohomology in the case when the negative 
definite plumbing graph $G$ is almost rational, i.e. has only one bad vertex. 

In this case, as the Reduction Theorem 
\ref{red} shows, $\overline L=\Z_{\geq 0}$ and only $\bH^0$ might be non--zero. Moreover, one can find a bound 
$i_m\in\Z_{\geq 0}$, such that $\{\overline w(i)\}_{0\leq i\leq i_m}$ contains all the lattice 
cohomological data 
of $G$. Hence, it is enough to determine how the function $\overline w$ behaves along the `interval' $[0,i_m]$.

As an example, one can look at the case, when $M$ is a Seifert 3--manifold ($G$ is star--shaped). Then 
$\overline w(i+1)-\overline w(i)$, hence the lattice cohomology itself, can be calculated using the 
normalized Seifert invariants. Moreover, the sum $\sum\max\{0,\overline w(i)-\overline w(i+1)\}$, 
or equivalently, the Euler characteristic of the reduced lattice cohomology, equals the 
{\em Dolgachev--Pinkham invariant} (cf. \cite{P},\cite[11.14]{OSZINV}). Therefore, it gives the geometric 
genus of a normal surface singularity, which admits $M$ as its link and a good $\C^*$--action. 

This example automatically connects us to the Seiberg--Witten invariant conjecture 
(\ref{ss:SWIC} and \ref{ss:SWIrev}). 
Note also that in this almost rational case, the interval $[0,i_m]$ can not be simplified further. In 
other words, one can say that $[0,i_m]$ is the {\em minimal reduction} of the original lattice $L$. On the other hand, 
in the case of more bad vertices, a (multi)rectangle $R(0,\vasi_m)$ can be `reduced' further.

In this chapter, we will make some calculations and illustrations of the lattice cohomology for graphs 
having only two nodes.  
The first part applies the Reduction Theorem \ref{red}, 
calculates the special cycles $x(i,j)$ and their weights in terms of the normalized Seifert invariants 
of the maximal 
star--shaped subgraphs. We continue with the characterization of the optimal bound 
$\vasi_m$ and prove that the rectangle 
$R(0,\vasi_m)$ contains all the lattice cohomological informations. Notice that this can be generalized 
to arbitrary bad vertices as well.

In the second part, we provide some examples with figures to illustrate their lattice cohomology. 


\section{Graphs with 2 nodes}

\begin{figure}[h]
\begin{center}
\begin{picture}(200,60)(70,30)
\put(100,40){\circle*{3}}
\put(140,40){\circle*{3}}
\put(100,40){\line(1,0){55}}
\put(171,40){\makebox(0,0){$\cdots$}}
\put(200,40){\circle*{3}}
\put(240,40){\circle*{3}}
\put(185,40){\line(1,0){55}}
\put(50,73){\circle*{3}}
\put(100,40){\line(-3,2){10}}
\put(50,73){\line(3,-2){10}}
\multiput(77,55)(-3,2){3}%
{\circle*{1}}

\multiput(70,43)(0,-3){3}%
{\circle*{1}}

\put(50,7){\circle*{3}}
\put(100,40){\line(-3,-2){10}}
\put(50,7){\line(3,2){10}}
\multiput(77,25)(-3,-2){3}%
{\circle*{1}}

\put(290,73){\circle*{3}}
\put(240,40){\line(3,2){10}}
\put(290,73){\line(-3,-2){10}}
\multiput(263,55)(3,2){3}%
{\circle*{1}}

\multiput(270,43)(0,-3){3}%
{\circle*{1}}

\put(290,7){\circle*{3}}
\put(240,40){\line(3,-2){10}}
\put(290,7){\line(-3,2){10}}
\multiput(263,25)(3,-2){3}%
{\circle*{1}}

\put(100,50){\makebox(0,0){$E_0$}}
\put(100,30){\makebox(0,0){$b_0$}}
\put(45,55){\makebox(0,0){$\{E^l_v\}_{v\in\overline{1,s_l}}$}}
\put(40,25){\makebox(0,0){$(\alpha_l,\omega_l)_{l\in\overline{1,d}}$}}
\put(170,55){\makebox(0,0){$\{E_v\}_{v\in\overline{1,s}}$}}
\put(170,25){\makebox(0,0){$(\alpha,\omega,\widetilde \omega)$}}
\put(240,50){\makebox(0,0){$\widetilde E_0$}}
\put(240,30){\makebox(0,0){$\widetilde b_0$}}
\put(300,55){\makebox(0,0){$\{\widetilde E^l_v\}_{v\in\overline{1,\widetilde s_l}}$}}
\put(300,25){\makebox(0,0){$(\widetilde\alpha_l,\widetilde\omega_l)_{l\in\overline{1,\widetilde d}}$}}

\multiput(10,85)(0,-8){12}{\line(0,-1){4}}
\multiput(10,85)(8,0){26}{\line(1,0){4}}
\multiput(214,85)(0,-8){12}{\line(0,-1){4}}
\multiput(214,-7)(-8,0){26}{\line(-1,0){4}}
\put(20,77){\makebox(0,0){$G_0$}}

\multiput(330,80)(-8,0){26}{\line(-1,0){4}}
\multiput(330,80)(0,-8){12}{\line(0,-1){4}}
\multiput(126,80)(0,-8){12}{\line(0,-1){4}}
\multiput(126,-13)(8,0){26}{\line(1,0){4}}
\put(320,-4){\makebox(0,0){$\widetilde G_0$}}
\end{picture}
\end{center}
\vspace{1.5cm}
\caption{Seifert invariants in the two--node case}
\end{figure}
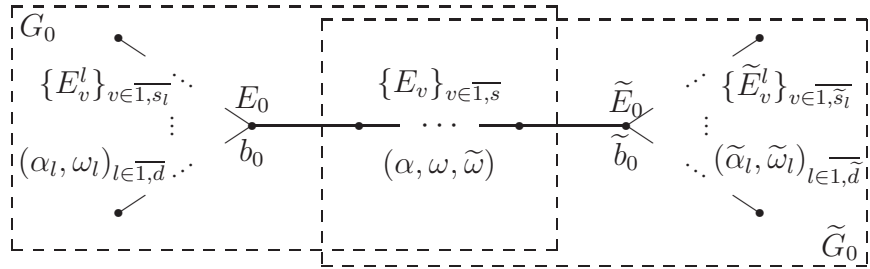

Similarly as in Section \ref{s:TN}, consider a negative definite plumbing graph $G$ with 
nodes $E_0$ and $\widetilde{E}_0$, which have decorations $b_0$ and 
$\widetilde{b}_0$ respectively (as it is shown in Figure 6.1). One has $d$ legs connected to $E_0$, 
$\widetilde d$ legs connected to $\widetilde E_0$, and a chain connecting the two nodes. 

Recall that one can consider the two maximal
star--shaped subgraphs $G_0$ and $\widetilde G_0$, and their normalized
Seifert invariants: we encode the legs with vertices $\{ E_v^l\}_{v\in \overline{1,s_l}}$ by 
$(\alpha_l,\omega_l)$ for any $l\in \overline{1,d}$ , while the legs with vertices $\{\widetilde{E}_v^l\}
_{v\in \overline{1,\widetilde s_l}}$ by $(\widetilde\alpha_l,\widetilde\omega_l)$ for 
$l\in \overline{1,\widetilde{d}}$.
For any $1\leq v\leq w\leq s_l$, we may define $n_{v,w}^l$ as the determinant of the chain 
starting from $E_v^l$ and ending with $E_w^l$, and similarly $\widetilde n_{v,w}^l$ as well. Notice that 
$\omega_l=n_{2,s_l}^l$ and respectively $\widetilde\omega_l=\widetilde n_{2,\widetilde s_l}^l$. We also set 
$n_{v,v-1}^l=\widetilde n_{v,v-1}^l:=1$ and $n_{v,w}^l=\widetilde n_{v,w}^l:=0$ for $w<v-1$, 
cf. \cite[10.2]{OSZINV}.
The chain connecting the nodes, viewed in $G_0$, has normalized
Seifert invariants $(\alpha,\omega)$, while it has $(\alpha,\widetilde{\omega})$, 
viewed as a leg in $\widetilde G_0$. One satisfies $\omega \widetilde \omega=\alpha \tau +1$ and, if we define 
the integers $n_{vw}$ for this chain too, 
then $\omega=n_{2,s}$, $\widetilde\omega=n_{1,s-1}$ and $\tau=n_{2,s-1}$.

The orbifold Euler numbers of the star--shaped subgraphs and the determinant of $G$ can be calculated via 
the formulae
$$e=b_0+ \frac{\omega}{\alpha}+\sum_{l=1}^d \frac{\omega_l}{\alpha_l},\ \ \
\widetilde{e}=\widetilde b_0+
\frac{\widetilde\omega}{\alpha}+
\sum_{l=1}^{\widetilde d} \frac{\widetilde\omega_l}{\widetilde\alpha_l}\ \ \ \mbox{and} \ \ \
\det(G)=\varepsilon\cdot \alpha_0 \prod_{l=1}^d \alpha_l 
\prod_{l'=1}^{\widetilde d} \widetilde{\alpha}_{l'}$$
where $\varepsilon:=\det \frI^{orb}=e \widetilde e - \frac{1}{\alpha_0^2}>0$ (see \ref{ss:TNC}).

\subsection{Reduction and the cycles $x(i,j)$}\labelpar{xiw}
\begin{lemma}
 G is a $2$--rational graph.
\end{lemma}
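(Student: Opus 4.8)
The statement asserts that a graph $G$ with exactly two nodes $E_0$ and $\widetilde E_0$ (as in Figure 6.1) is $2$--rational, i.e.\ it has a family of at most two bad vertices in the sense of \ref{ss:bv}. The natural candidates for the two bad vertices are of course the two nodes $E_0$ and $\widetilde E_0$ themselves. So the plan is to show that decreasing the decorations $b_0$ and $\widetilde b_0$ sufficiently (replacing them by $b_0'\le b_0$, $\widetilde b_0'\le \widetilde b_0$) yields a rational graph.

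\textbf{First step.} Recall from \ref{AL:3} (the examples after Definition \ref{def:rat}) and from the discussion right after Theorem \ref{thm:ratlatcoh} that if all the decorations of a graph are sufficiently negative --- e.g.\ if $(E_j,E)\le 0$ for every vertex $j$, where $E=\sum_v E_v$ --- then the graph is rational, and that the class of rational graphs is closed under decreasing self--intersections and under taking subgraphs. I would first observe that for every vertex $v$ of $G$ which is \emph{not} a node, the decoration $b_v\le -2$ already (since $G$, away from the nodes, is a disjoint union of chains with all weights $\le -2$ coming from negative continued fractions), so the only vertices where $(E_v,E)$ can be positive are the two nodes $E_0,\widetilde E_0$, where $(E_v,E)=b_v+\delta_v$. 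Hence it suffices to choose $b_0'\le -\delta_{E_0}$ and $\widetilde b_0'\le -\delta_{\widetilde E_0}$; for these decorations every vertex $v$ of the modified graph $G'$ satisfies $(E_v,E)\le 0$, so $G'$ is rational.

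\textbf{The point to check carefully.} The one thing that genuinely needs care is \emph{negative definiteness}: the definition of ``family of bad vertices'' in \ref{ss:bv} requires the modified graph $G'$ to be a (negative definite) \emph{rational graph}, and making decorations more negative preserves negative definiteness (decreasing a diagonal entry of a negative definite symmetric matrix keeps it negative definite --- indeed $\frI' = \frI + \mathrm{diag}(\ldots)$ with non--positive added entries, and for any $x\neq 0$, $x^t\frI'x = x^t\frI x + \sum(b_j'-b_j)x_j^2 \le x^t\frI x < 0$). Also $G'$ is still connected (we changed only weights, not edges), and its genus decorations are all zero, so it is a legitimate $\Q HS$ plumbing graph. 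Combined with the criterion $(E_v,E)\le 0 \ \forall v \Rightarrow$ rational, this shows $G'$ is a rational graph. Since $\{E_0,\widetilde E_0\}$ has cardinality $2$, $G$ is $2$--rational by definition.

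\textbf{Main obstacle.} There is no serious obstacle --- the result is essentially a bookkeeping observation, and the hardest part is merely making sure one invokes the right closure property (``sufficiently negative decorations $\Rightarrow$ rational'') with the correct bound on each node's weight, and checks that it is only the \emph{nodes} where positivity of $(E_v, E)$ could occur. One should also note (to match the later sections) that the choice $\nu=2$ here is typically optimal: $G$ is genuinely not almost rational in general, but we do not need that for the lemma. Thus the proof is short: exhibit $\{E_0,\widetilde E_0\}$ as a family of bad vertices and invoke Theorem \ref{thm:rat}/\ref{thm:ratlatcoh} via the $(E_v,E)\le 0$ criterion together with stability of rationality and negative definiteness under decreasing decorations.
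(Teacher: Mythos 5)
Your proof is correct and follows essentially the same route as the paper: take the two nodes as the bad vertices, decrease their decorations to $-\delta_{E_0}$ and $-\delta_{\widetilde E_0}$, and conclude rationality of the modified graph from the fact that all vertices then satisfy $-b_v\geq \delta_v$ (the paper invokes the Laufer algorithm at this point, which is the same computation as your $(E_v,E)\le 0$ criterion). The only cosmetic difference is that the paper first blows down possible $(-1)$--vertices on the chains rather than assuming the normalized continued--fraction form, a point your argument covers implicitly via the setup of Section 6.1.
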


\begin{proof}
 We may assume that for all the vertices of $G$, except $E_0$ and $\widetilde E_0$, we have $-b\geq \delta$, 
 where $b$ is the weight and $\delta$ is the valency of the vertex. Otherwise, we blow down first all these 
 non--nodes with weight $-1$. Then if we replace $b_0$ and $\widetilde b_0$ with $-d-1$ and $-\widetilde d-1$, 
 the Laufer algorithm \ref{La} shows that we get a rational graph.
\end{proof}

Now we can apply the reduction procedure associated with the two bad vertices $E_0$ and 
$\widetilde E_0$. This says that 
$$\bH^*(G,k_r)\cong\bH^*(\overline{L},\overline{w}[k]),$$
where $\overline{L}=\Z_{\geq 0}^2$ and $\overline{w}[k](i,j):=\chi_{k_r}(x(i,j))$ for all $(i,j)\in \Z_{\geq 0}^2$.

{\em For simplicity, from now on we assume that $[k]$ is the canonical class, hence $k_r=k_{can}$ and 
$l'_{[k_{can}]}=0$.} Since any kind of object will be associated with this class, 
we will omit $k$ from the notations. 

In the sequel we start to determine 
the cycles $x(i,j)$ and their $\chi$--values. 

\begin{proposition}\labelpar{eq:xij}
Assume that 
 \begin{equation*}
x(i,j)=i E_0+j \widetilde E_0+\sum_{v=1}^s m_v E_v
+ \sum_{1\leq v \leq s_l \atop 1\leq l \leq d} 
m_v^l E_v^l 
+ \sum_{1\leq v \leq \widetilde s_l \atop 1\leq l \leq \widetilde d} 
\widetilde m_v^l \widetilde E_v^l, 
 \end{equation*}
where $m_v$,$m_v^l$ and $\widetilde m_v^l$ denote the coefficients of the corresponding $E_v$, $E_v^l$ 
and $\widetilde E_v^l$ (we set also $m_0=m_0^l=i$ and $m_{s+1}=\widetilde m_0^l=j$).
Then these coefficients can be calculated by the following recursive formulae 
\begin{align*}
&(a)& \ \  m_v=\left\lceil \frac{m_{v-1}\cdot n_{v+1,s}+j}{n_{v,s}} \right\rceil=
\left\lceil \frac{i+m_{v+1}\cdot n_{1,v-1}}{n_{1,v}} \right\rceil \ \ \mbox{for} \ \ v\in\{1,\ldots,s\};\\
&(b)& \ \  m_v^l=\left\lceil \frac{m_{v-1}^l\cdot n_{v+1,s_l}^l}{n_{v,s_l}^l} \right\rceil \ \ 
\mbox{for} \ \ l\in \{1,\dots,d\}\ \ \mbox{and} \ \ v\in\{1,\ldots,s_l\};\\ 
&(\widetilde b)&\ \ \widetilde m_v^l=\left\lceil \frac{\widetilde m_{v-1}^l\cdot 
\widetilde n_{v+1,\widetilde s_l}^l}
{\widetilde n_{v,\widetilde s_l}^l} \right\rceil \ \ 
\mbox{for} \ \ l\in \{1,\dots,\widetilde d\}\ \ \mbox{and} \ \ v\in\{1,\ldots,\widetilde s_l\}.
\end{align*}
\end{proposition}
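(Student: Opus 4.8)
The cycle $x(i,j)$ is characterized by Proposition~\ref{lemF1}: among all cycles with $m_0(x)=i$ and $m_{\widetilde 0}(x)=j$ which satisfy $(x,E_v)\le 0$ at every non-distinguished vertex $v$ (here $l'_{[k]}=0$, so the inequalities are simply $(x,E_v)\le 0$ for $v\in\calj^*$), it is the minimal one. So the plan is to verify that the cycle defined by the recursive formulae $(a)$, $(b)$, $(\widetilde b)$ satisfies these two conditions, and then invoke uniqueness. First I would observe that the legs decouple: each leg of $G_0$, each leg of $\widetilde G_0$, and the connecting chain is a string of non-distinguished vertices whose only interaction with the rest of the graph is through its endpoint adjacent to a node. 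Hence the system of inequalities $(x+l'_{[k]},E_v)\le0$, $v\in\calj^*$, splits into independent one-dimensional subsystems, one per chain, each with a prescribed boundary value coming from the node coefficient(s). This reduces the whole statement to the following elementary claim about a single bamboo: given a chain with vertices $E_1,\dots,E_t$, determinants $n_{v,w}$, decorations $-b_v$, and a prescribed adjacent coefficient $m_0$ (and possibly $m_{t+1}$ at the other end), the minimal coefficients $m_v\ge 0$ with $(x,E_v)\le0$ for $1\le v\le t$ are given by $m_v=\lceil (m_{v-1}n_{v+1,t}+m_{t+1}n_{1,v-1})/n_{1,v}\rceil$ (with the appropriate terms dropped when an end is free).

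The key computational input is the standard identity for determinants of subchains of a bamboo, namely the continued-fraction recursions $n_{1,v}=b_v n_{1,v-1}-n_{1,v-2}$ and $n_{v,t}=b_v n_{v+1,t}-n_{v+2,t}$, together with the ``cross'' relation $n_{1,v-1}n_{v+1,t}-n_{1,v}n_{v+1,t+1}=\pm\, n_{\,\cdot}$ type identities (these are exactly the arithmetic facts used in Lemma~\ref{lem:legeq} and \cite[10.2, 11.1]{OSZINV}). Using these, the condition $(x,E_v)= -b_v m_v + m_{v-1}+m_{v+1}\le 0$, i.e. $m_v\ge (m_{v-1}+m_{v+1})/b_v$, can be solved downward along the chain by induction, and one checks that the closed form $m_v=\lceil(m_{v-1}n_{v+1,t}+m_{t+1}n_{1,v-1})/n_{1,v}\rceil$ is both admissible (satisfies all the inequalities) and minimal (decreasing any single $m_v$ violates the inequality at $v$, using that the $n$'s are positive). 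The two displayed expressions in $(a)$ agree by the cross-determinant identity, so I would establish one of them and derive the other; formulae $(b)$ and $(\widetilde b)$ are the special case $m_{t+1}=0$ (the free end of a leg), with $m_0=i$ (resp. $j$) the node coefficient.

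After the per-chain analysis, assembling is immediate: the cycle built from these leg/chain coefficients, together with the imposed $m_0=i$, $m_{\widetilde 0}=j$, satisfies Proposition~\ref{lemF1}(a)--(b), hence by the uniqueness in \ref{lemF1}(c) it equals $x(i,j)$. One should also note effectivity $x(i,j)\ge0$ is automatic from the ceilings (all $n$'s positive and $i,j\ge0$), consistent with \ref{lemF1}(ii). The main obstacle I anticipate is purely bookkeeping: correctly tracking the two boundary contributions on the connecting chain (which has \emph{both} endpoints adjacent to nodes, so both $i$ and $j$ enter, explaining why $(a)$ carries the extra $+j$ and $+i$ terms absent from $(b)$, $(\widetilde b)$), and making sure the continued-fraction identities are applied with the right index conventions ($n_{v,v-1}=1$, $n_{v,w}=0$ for $w<v-1$). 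There is no conceptual difficulty beyond the decoupling observation and the classical bamboo determinant identities; the argument is a localization of the generalized Laufer computation of \ref{lemF1} to chains, exactly parallel to how Artin's cycle is computed leg-by-leg on a star-shaped graph in \cite[\S10--11]{OSZINV}.
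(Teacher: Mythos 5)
Your overall strategy is the paper's: characterize $x(i,j)$ by the minimality property of Proposition~\ref{lemF1} (equivalently the matrix form in Remark~\ref{rem:xi}), note that the inequalities at the vertices of $\calj^*$ decouple into independent tridiagonal systems, one per chain, with the node coefficients $i,j$ entering only as boundary data, and solve each system recursively; formulae $(b)$, $(\widetilde b)$ are indeed the one--boundary--value specializations of $(a)$. However, two steps of your per--chain argument are defective as written. First, the ``closed form'' you assert for a single bamboo, $m_v=\lceil (m_{v-1}n_{v+1,t}+m_{t+1}n_{1,v-1})/n_{1,v}\rceil$, is false: for a chain of two $(-2)$--vertices with boundary values $m_0=2$, $m_3=1$ it gives $m_1=\lceil 5/2\rceil =3$, while the minimal solution of $2x_1-x_2\geq 2$, $-x_1+2x_2\geq 1$ is $(2,2)$. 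The correct recursions are exactly the two of statement $(a)$: from the left the far boundary value enters undiluted and the denominator is $n_{v,t}$ (not $n_{1,v}$), and symmetrically from the right; a hybrid of the two numerators over either denominator is wrong for interior $v$ (compare the remark after the proposition, where even the naive non--recursive expression is shown to fail except at the endpoints).

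Second, your minimality justification --- ``decreasing any single $m_v$ violates the inequality at $v$'' --- does not identify the candidate with the unique minimal solution: non--decreasability one coordinate at a time does not exclude an admissible cycle that is smaller in several coordinates simultaneously, and in fact lowering $m_v$ by one need not violate the inequality at $v$ itself (what it violates is a positive combination of the inequalities at $v,v+1,\dots$). The missing device, which is the actual content of the paper's proof, is the telescoping normalization: multiply the $v$--th inequality of the chain by $n_{v+1,t}>0$ and use $k_vn_{v+1,t}=n_{v,t}+n_{v+2,t}$ to rewrite the whole system as $y_1\geq y_2\geq \cdots\geq y_{t+1}=m_{t+1}$, where $y_v:=n_{v,t}m_v-n_{v+1,t}m_{v-1}$. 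From $y_v\geq m_{t+1}$ one reads off the stated recursion, and the same inequality applied to an arbitrary admissible solution gives, by induction on $v$, that it dominates the candidate coordinatewise; admissibility and minimality then both follow. With the corrected per--chain formula and this reduction in place of your local--minimality argument, your plan coincides with the paper's proof.
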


\begin{proof}
 We use the interpretation of $x(i,j)$ from \ref{rem:xi}. This claims that 
$x(i,j)^*$ is the minimal solution 
 for the following system 
 of inequalities:
\begin{equation}\label{eq:matrsyst}
 -C\cdot x(i,j)^*\geq -B^t\cdot\Bigg(
 \begin{array}{c}
  i\\
  j
 \end{array}\Bigg),
\end{equation} 
where $C$ is the intersection matrix of the graph obtained from $G$ by deleting the bad vertices 
$E_0, \widetilde E_0$  and all their adjacent edges. 
Hence we may write
$$C=\left(
\begin{array}{ccc}
 \ddots &   0            &   0\\
 0     & \frI_{leg} &   0\\
 0     &   0            &  \ddots
\end{array}
\right),
$$
where the diagonal of the block structure contains the intersection matrices of the legs of $G$.
Since these blocks/legs do not interact, one can split the system 
(\ref{eq:matrsyst}) and look for each leg separately.

Therefore, the problem reducts to finding the minimal integral solutions of the system
\begin{equation}
 \left( 
\begin{array}{ccccc}
 k_1 & -1 & 0 & 0 & 0\\
 -1 & k_2 & -1 & 0 & 0\\
 0 & -1 & \ddots & -1 & 0\\
 0 & 0 & -1 & k_{s-1} & -1\\
 0 & 0 & 0 & -1 & k_s\\ 
\end{array}
\right)\cdot 
\left(
\begin{array}{c}
x_1\\
x_2\\
\vdots\\
x_{s-1}\\
x_s\\
\end{array}
\right)\geq 
\left(
\begin{array}{c}
a\\
0\\
\vdots\\
0\\
b\\
\end{array}
\right),
\end{equation}
where $-k_t$ denotes the weight of $E_t$ on a chain with vertices $\{E_1,\ldots,E_s\}$, 
$a$ and $b$ are some integral parameters. We have to observe, that if we multiply the $t^{th}$ row by 
$n_{t+1,s}$ and use the equality 
$k_t n_{t+1,s}-n_{t+2,s}=n_{t,s}$ (consequence of Lemma \ref{lem:legeq} or \cite[10.2]{OSZINV}), then 
it gives an equivalent system 
$$
n_{t,s}x_t-n_{t+1,s}x_{t-1}\geq n_{t+1,s}x_{t+1}-n_{t+2,s}x_t \ \ \ \mbox{for} \ \ 1\leq t \leq s, 
$$ where we set $x_{-1}:=a$ and $x_{s+1}:=b$. Its minimal solutions can be calculated recursively. 
Indeed, the minimal solution for $x_t$ does 
depend only on $x_{t-1}$ and it is determined by the inequality 
$n_{t,s}x_t-n_{t+1,s}x_{t-1}\geq b$. 

Therefore, $x_t=\lceil (x_{t-1} n_{t+1,s}+b)/n_{t,s}\rceil$. In 
particular, $x_1=\lceil (a\omega +b)/\alpha\rceil$. Notice that one can achieve the solutions 
from the other way around, if we multiply the $t^{th}$ row by $n_{1,t-1}$. In this case, we get 
$x_t=\lceil (a+x_{t+1} n_{1,t-1})/n_{1,t}\rceil$, in particular, 
$x_s=\lceil (a+b\widetilde\omega)/\alpha\rceil$.

Then, it is straightforward that if we choose $a=i$ and $b=j$, we find $m_v$, for $a=i$ and $b=0$ we get $m_v^l$
and finally $a=0$ and $b=j$ gives $\widetilde m_v^l$. 
\end{proof}

\begin{remark}
\begin{itemize}
\item[(a)] We can compare this formula with \cite[11.11]{OSZINV}, since if we take $j=0$ (resp. $i=0$) we get 
the special cycles $x(i)$ (resp. $x(j)$) associated with the almost rational graph $G_0$ (resp. $\widetilde G_0$).

\item[(b)] Since one can get a recursive formula for $m_v$ from both directions 
(either starting from $E_0$ or from $\widetilde E_0$), this gives interesting arithmetical relations 
between the coefficients.
\item[(c)] Notice that in general, the recursive formula for $m_v$ can not be simplified to 
$m'_v:=\lceil (in_{v+1,s}+j n_{1,v-1})/\alpha\rceil$, except $v\in\{1,s\}$. 
E.g., one can imagine a leg connecting to 
only one bad vertex $E_0$ (that is $j=0$), 
as it is shown in the next picture.
\begin{center}
\begin{picture}(200,50)(50,30)
\put(60,40){\circle*{4}}
\put(100,40){\circle*{3}}
\put(60,40.5){\line(1,0){200}}
\put(140,40){\circle*{3}}
\put(180,40){\circle*{3}}
\put(220,40){\circle*{3}}
\put(260,40){\circle*{3}}
\put(60,50){\makebox(0,0){$E_0$}}
\put(100,50){\makebox(0,0){$-2$}}
\put(140,50){\makebox(0,0){$-2$}}
\put(180,50){\makebox(0,0){$-3$}}
\put(220,50){\makebox(0,0){$-3$}}
\put(260,50){\makebox(0,0){$-3$}}
\end{picture}
\end{center}
Then if we choose $i=5$, one can calculate easily that $m_2=3,m_3=2$ and $m_4=1$. But 
$m'_2=3,m'_3=1$ and $m'_4=1$, which do not satisfy the needed inequality $-x_2+3x_3-x_4\geq 0$.
 
\item[(d)] However, it turns out that the weight of the cycle $x'(i,j)$ (with coefficients $m'_v$) 
equals the weight of $x(i,j)$, since the only coefficients which contribute to the weight, are 
the ones `around' the bad vertices.
\end{itemize}
\end{remark}

The general result \ref{propF1} and the previous formula provides 
$\overline w(i,j)$ for any $(i,j)\in \overline L$.

\begin{corollary}\labelpar{eq:w}
\noindent
\begin{itemize}
 \item[(a)] $\Delta_1(i,j):=\overline w(i+1,j)-\overline w(i,j)=1-i b_0-
\left\lceil \frac{i \omega+j}{\alpha}\right\rceil-
\sum_{l=1}^d \left\lceil \frac{i \omega_l}{\alpha_l}\right\rceil$ and 

 \item[(b)] $\Delta_2(i,j):=\overline w(i,j+1)-\overline w(i,j)=1-j \widetilde b_0-
\left\lceil \frac{i+j \widetilde \omega}{\alpha}\right\rceil-
\sum_{l=1}^{\widetilde d} \left\lceil \frac{j \widetilde\omega_l}{\widetilde\alpha_l}\right\rceil$.
\end{itemize}
Moreover, 
\begin{eqnarray*}
\overline w(i,j)=i+j-\frac{i(i-1)}{2}b_0-\frac{j(j-1)}{2}\widetilde b_0
&-&\sum_{q=0}^{i-1}\left(\left\lceil \frac{q\omega+j}{\alpha}\right\rceil+
\sum_{l=1}^d \left\lceil \frac{q\omega_l}{\alpha_l}\right\rceil\right)\\
&-&\sum_{q=0}^{j-1}\left(\left\lceil \frac{q\widetilde\omega}{\alpha}\right\rceil+
\sum_{l=1}^{\widetilde d} \left\lceil \frac{q\widetilde\omega_l}{\widetilde\alpha_l}\right\rceil\right).
\end{eqnarray*}

\end{corollary}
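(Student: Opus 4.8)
The plan is to combine Proposition~\ref{propF1} --- which expresses the one--step increments of $\overline{w}$ via intersection numbers of the special cycles $x(i,j)$ with the bad vertices --- with the explicit coefficient formulas of Proposition~\ref{eq:xij}. Recall that here $k_r=k_{can}$ and $l'_{[k_{can}]}=0$, so Proposition~\ref{propF1} applied to the bad vertex $E_0$ (respectively $\widetilde E_0$) gives immediately
\[
\Delta_1(i,j)=\overline{w}(i+1,j)-\overline{w}(i,j)=1-(x(i,j),E_0),\qquad
\Delta_2(i,j)=1-(x(i,j),\widetilde E_0).
\]
Thus the whole statement reduces to computing these two intersection numbers and then telescoping.

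For the intersection numbers I would use the plumbing structure of $G$: the node $E_0$ has self--intersection $b_0$, is adjacent to the first vertex $E_1$ of the connecting chain and to the first vertices $E_1^l$ ($1\le l\le d$) of the $d$ legs on its side, and is adjacent to nothing on $\widetilde E_0$'s side; symmetrically, $\widetilde E_0$ has self--intersection $\widetilde b_0$ and is adjacent to the last vertex $E_s$ of the connecting chain and to the first vertices $\widetilde E_1^l$ ($1\le l\le \widetilde d$). Writing $x(i,j)$ as in Proposition~\ref{eq:xij} (with $m_0=m_0^l=i$, $m_{s+1}=\widetilde m_0^l=j$) this yields
\[
(x(i,j),E_0)=i\,b_0+m_1+\sum_{l=1}^d m_1^l,\qquad
(x(i,j),\widetilde E_0)=j\,\widetilde b_0+m_s+\sum_{l=1}^{\widetilde d}\widetilde m_1^l.
\]
Now I substitute the coefficient formulas. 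For the connecting chain, the two recursions of Proposition~\ref{eq:xij}(a) evaluated at the two ends, together with $n_{2,s}=\omega$, $n_{1,s-1}=\widetilde\omega$, $n_{1,s}=\alpha$, give $m_1=\lceil(i\omega+j)/\alpha\rceil$ and $m_s=\lceil(i+j\widetilde\omega)/\alpha\rceil$. For the legs, which carry no second bad vertex, Proposition~\ref{eq:xij}(b) and ($\widetilde b$) with $n_{2,s_l}^l=\omega_l$, $n_{1,s_l}^l=\alpha_l$ (and the tilde versions) give $m_1^l=\lceil i\omega_l/\alpha_l\rceil$ and $\widetilde m_1^l=\lceil j\widetilde\omega_l/\widetilde\alpha_l\rceil$. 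Plugging these back into $\Delta_1,\Delta_2$ produces parts (a) and (b) verbatim.

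For the closed form of $\overline{w}(i,j)$ I would telescope along the staircase path $(0,0)\to(0,1)\to\cdots\to(0,j)\to(1,j)\to\cdots\to(i,j)$ inside $\overline{L}=\Z_{\ge0}^2$. Since $\overline{w}(0,0)=\chi(x(0,0))=\chi(0)=0$ by Proposition~\ref{propF1}, this gives
\[
\overline{w}(i,j)=\sum_{q=0}^{j-1}\Delta_2(0,q)+\sum_{q=0}^{i-1}\Delta_1(q,j),
\]
and inserting the formulas of (a),(b) and evaluating the elementary sums $\sum_{q=0}^{n-1}1=n$ and $\sum_{q=0}^{n-1}q=n(n-1)/2$ yields exactly the asserted expression (the arguments $j$ inside the ceilings in the first sum and the absence of $i$ in the second are precisely what this choice of path produces).

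There is no deep obstacle: the content is entirely carried by Propositions~\ref{propF1} and~\ref{eq:xij}, which are already available. The only point requiring care is bookkeeping --- matching the orientation conventions of the connecting chain so that $m_1$ is read from the ``$E_0$--side'' recursion and $m_s$ from the ``$\widetilde E_0$--side'' recursion (with $\widetilde E_0$ adjacent to $E_s$, not $E_1$), and confirming that the ceilings appearing in the two partial sums are those dictated by the chosen staircase rather than another one. A convenient consistency check is $\Delta_1(i,j)+\Delta_2(i+1,j)=\Delta_2(i,j)+\Delta_1(i,j+1)$, which holds because $\overline{w}$ is a well--defined function on $\overline{L}$ and can also be verified directly from (a)--(b); this shows any other monotone path would give the same closed formula.
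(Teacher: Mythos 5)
Your proposal is correct and follows essentially the same route as the paper: the paper's (very terse) proof likewise derives (a) and (b) from the one--step increment formula $\Delta=1-(x(\mathbf{i}),E_{j})$ of Proposition~\ref{propF1} combined with the coefficient formulas of Proposition~\ref{eq:xij}, and obtains the closed form by induction on $i$ and $j$, which is exactly your telescoping along a staircase path. Your explicit bookkeeping of which coefficients of $x(i,j)$ meet each node, and the path--independence check, just fill in details the paper leaves implicit.
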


\begin{proof}
 The formulae follow from \ref{lemF1} and \ref{eq:xij}. The formula for $\overline w$ is calculated 
by inductions on $i$ and on $j$. Hence, when we change the order of the inductions, we get a similar 
formula for $\overline w$.
\end{proof}

\subsection{The optimal bound for the reduced lattice}\
Consider the subset $\sol$ of $\overline L$ with the following definition:
\begin{equation}\label{def:Sol}
 \sol:=\left\{ (i,j)\in \overline L_{>0} \ : \ 
\Delta_1(i-1,j)<0, \ 
\Delta_2(i,j-1)<0
\right\}.
\end{equation}

Set $d(i):=1-i(b_0+\omega/\alpha)-\sum_{l=1}^d \lceil i\omega_l/\alpha_l \rceil$ and similarly 
$\widetilde d(j):=1-j(\widetilde b_0+\widetilde\omega/\alpha)-\sum_{l=1}^{\widetilde d} \lceil j
\widetilde\omega_l/\widetilde\alpha_l \rceil$.
Then using the explicit formulae \ref{eq:w} of $\Delta_1$ and $\Delta_2$, one can see that $(i,j)\in\sol$ 
if and only if the following system of inequalities holds:
\begin{equation}\label{eq:Sol1}\left\{
 \begin{array}{c}
  i\geq \alpha \cdot\widetilde d(j-1)+1\\ 
  j\geq \alpha \cdot d(i-1)+1 \ .
 \end{array}
 \right.
\end{equation}
Indeed, using the formulae from \ref{eq:w} and the definition of the ceiling function $\lceil \ \rceil$, e.g., 
the first inequality $\Delta_1(i-1,j)<0$ is equivalent with 
$((i-1)\omega+j)/\alpha>1-(i-1)b_0-\sum_{l=0}^d\lceil(i-1)\omega_l/\alpha_l\rceil$. Then we multiply 
by $\alpha$ and use that the expressions on both sides are integers. 

We write $\alpha_0$, respectively $\widetilde\alpha_0$, for the least common multiple of the 
numbers $\alpha_l$ for any $l\in \overline{1,d}$, respectively of $\widetilde\alpha_l$ for all 
$l\in \overline{1,\widetilde d}$. Then $(i,j)$ can be written in 
the form $(\alpha_0 q+i_0,\widetilde\alpha_0 \widetilde q+j_0)$ for some 
$q,\widetilde q \in \Z_{\geq 0}$ and 
$(i_0,j_0)\in \{0,\dots,\alpha_0-1\}\times \{0,\dots,\widetilde\alpha_0-1\}$.\\
(\ref{eq:Sol1}) implies that for a fixed $(i_0,j_0)$, if $(q,\widetilde q)$ satisfies the 
system
\begin{equation}\label{eq:SI}\left\{
 \begin{array}{c}
  (\alpha \alpha_0 e)\cdot q + \widetilde\alpha_0 \cdot \widetilde q\geq \alpha \cdot d(i_0-1)-(j_0-1)\\
  \alpha_0 \cdot q + (\alpha\widetilde\alpha_0 \widetilde e)\cdot \widetilde q
\geq \alpha\cdot \widetilde d(j_0-1)-(i_0-1) \ ,
 \end{array}
\right.\tag{$SI_{(i_0,j_0)}$}
\end{equation}
then $(\alpha_0 q+i_0,\widetilde\alpha_0 \widetilde q+j_0)$ belongs to $\sol$. 

The next lemmas provide some important properties of this set and give the optimal bound for the lattice 
cohomological data, in the sense mentioned in the introductory part of this chapter. 

\begin{lemma}
 The number of elements in $\sol$ is finite.
\end{lemma}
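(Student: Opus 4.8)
The plan is to show that the defining inequalities of $\sol$ force both coordinates to be bounded, so that $\sol$ sits inside a finite rectangle. The key structural fact is the sign of the orbifold Euler numbers: negative definiteness of $\frI^{orb}$ (established in \ref{ss:TNC}) gives $e<0$, $\widetilde e<0$ and $\varepsilon = e\widetilde e - 1/\alpha_0^2 > 0$. First I would work with the reformulation \eqref{eq:Sol1}: an element $(i,j)\in\sol$ satisfies $i\geq \alpha\widetilde d(j-1)+1$ and $j\geq \alpha d(i-1)+1$, where $d(i) = 1 - i(b_0+\omega/\alpha) - \sum_l\lceil i\omega_l/\alpha_l\rceil$ and similarly for $\widetilde d$. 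The point is that $d(i) = |e|\,i + O(1)$ as $i\to\infty$ (since $b_0 + \omega/\alpha + \sum_l \omega_l/\alpha_l = e < 0$, and each ceiling differs from $i\omega_l/\alpha_l$ by less than $1$), and likewise $\widetilde d(j) = |\widetilde e|\, j + O(1)$.

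Next I would substitute these asymptotics into the two inequalities. Plugging $j \geq \alpha d(i-1)+1 \geq \alpha|e|(i-1) + c_1$ into $i \geq \alpha\widetilde d(j-1)+1 \geq \alpha|\widetilde e|(j-1) + c_2$ yields $i \geq \alpha^2 |e||\widetilde e|\, i + (\text{linear-in-nothing constant})$, i.e. $i(1 - \alpha^2|e||\widetilde e|) \geq \text{const}$. Now $\alpha^2 |e||\widetilde e| = \alpha^2 e\widetilde e = \alpha^2(\varepsilon + 1/\alpha_0^2) > 1$ precisely because $\varepsilon>0$ and $\alpha \geq \alpha_0 \geq 1$ — actually one must be slightly careful here: $\alpha$ is the determinant of the connecting chain and $\alpha_0$ in this subsection denotes $\mathrm{lcm}(\alpha_l)$, a different object, so the cleanest route is to use $e\widetilde e > 1/\alpha^2$ directly, which follows from $\varepsilon = e\widetilde e - 1/\alpha^2 >0$ once one checks the connecting-chain determinant entering $\varepsilon$ is this $\alpha$ (it is, by \ref{ss:TNC} with the notational identification $\alpha_0 \leftrightarrow \alpha$). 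Hence $\alpha^2 e\widetilde e > 1$, the coefficient $1 - \alpha^2 e\widetilde e$ is strictly negative, and the inequality $i(1-\alpha^2 e\widetilde e)\geq \text{const}$ gives an upper bound $i \leq i_m$. By symmetry $j\leq j_m$ as well, so $\sol \subseteq \{0,\dots,i_m\}\times\{0,\dots,j_m\}$ is finite.

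Alternatively, and perhaps more cleanly for the write-up, I would phrase the finiteness geometrically: the two real inequalities cut out a region of $\R^2_{\geq 0}$ whose recession cone is $\{(x,y) : \alpha|\widetilde e|\, y \leq x,\ \alpha|e|\, x \leq y\}$, and this cone is $\{0\}$ exactly when the product of slopes $\alpha^2|e||\widetilde e|$ exceeds $1$, i.e. under the condition just verified; a region of $\R^2$ defined by finitely many affine inequalities with trivial recession cone is bounded, hence contains finitely many lattice points. The main obstacle I anticipate is purely bookkeeping: pinning down the $O(1)$ error terms in $d(i)$ and $\widetilde d(j)$ explicitly enough (using $x-1 < \lceil x\rceil \leq x$) to make the elimination rigorous, and keeping the two conflicting uses of the symbol $\alpha_0$ (lcm of leg-determinants here versus the connecting-chain determinant in Section~\ref{s:TN}) straight so that the inequality $e\widetilde e > 1/\alpha^2$ is invoked with the correct $\alpha$. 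Once that is settled, the contradiction-from-unboundedness argument is immediate.
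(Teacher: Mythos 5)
Your argument is correct and is essentially the paper's own: both reduce to the pair of affine inequalities coming from $\Delta_1<0$, $\Delta_2<0$ and eliminate one variable by a positive linear combination, the resulting coefficient being negative precisely because $\varepsilon=\det\frI^{orb}>0$ (equivalently $\alpha^2 e\widetilde e>1$). The only cosmetic difference is that the paper first passes to residue classes modulo the lcm's $\alpha_0,\widetilde\alpha_0$ so that the ceilings become exactly linear in $(q,\widetilde q)$, whereas you absorb them into $O(1)$ error terms via $x\leq\lceil x\rceil<x+1$; your remark about the clash between the two uses of $\alpha_0$ is also on target.
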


\begin{proof}
 We multiply the first (resp. second) inequality in (\ref{eq:SI}) with the positive number 
$-\alpha \widetilde e$ (resp. $-\alpha e$), and sum up the two inequalities. Then, using that 
$-\alpha^2\alpha_0 \varepsilon <0$ (resp. $-\alpha^2\widetilde\alpha_0 \varepsilon <0$), we get 
\begin{equation}\label{eq:vartheta}
q\leq \vartheta(i_0,j_0) \ \ \ \mbox{and} \ \ \ \widetilde q\leq \widetilde\vartheta(i_0,j_0),
\end{equation}
where 
$\vartheta(i_0,j_0):=\left\lfloor\big(\alpha^2\widetilde e \cdot d(i_0-1)-\alpha(\widetilde e (j_0-1)+
\widetilde d(j_0-1))+ i_0-1\big)/\alpha^2 \alpha \varepsilon\right\rfloor$, and symmetrically one 
defines 
$\widetilde\vartheta(i_0,j_0)$ as well. It is enough to look at the cases when $\vartheta(i_0,j_0)$ and 
$\widetilde\vartheta(i_0,j_0)$ are non--negative.
The facts, that the number of possible pairs $(i_0,j_0)$ is finite and each of them can be completed 
with finitely many solutions  
$(q,\widetilde q)\in \{0,\dots,\vartheta(i_0,j_0)\}\times \{0,\dots,\widetilde\vartheta(i_0,j_0)\}$, 
proves that $\sol$ has only finitely many elements. 
\end{proof}

\begin{lemma}\labelpar{lem:solelm}
\begin{itemize}
\item[(a)] If $\vasi_1=(i_1,j_1)$ and $\vasi_2=(i_2,j_2)$ are elements of $\sol$, then 
$\max\{\vasi_1,\vasi_2\}:=(\max\{i_1,i_2\},\max\{j_1,j_2\}) \in \sol$ too. In particular, there exists 
an element $\vasi_m=(i_m,j_m)$ which is the (unique) maximum of $\sol$.
\item[(b)] We have the isomorphism
$$\bH^*(G,k_{can})\cong\bH^*(R(0,\vasi_m),\overline w).$$
\end{itemize}
\end{lemma}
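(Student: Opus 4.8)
The plan is to prove (a) by a direct monotonicity argument and (b) by feeding the Reduction Theorem \ref{red} into the contraction machinery of \ref{ss:2}--\ref{ss:3}. For (a), recall from Corollary \ref{eq:w} that $\Delta_1(i,j)=\overline{w}(i+1,j)-\overline{w}(i,j)$ depends on $j$ only through the term $\lceil(i\omega+j)/\alpha\rceil$, and that $\Delta_2(i,j)=\overline{w}(i,j+1)-\overline{w}(i,j)$ depends on $i$ only through $\lceil(i+j\widetilde{\omega})/\alpha\rceil$; hence $j\mapsto\Delta_1(i,j)$ and $i\mapsto\Delta_2(i,j)$ are non-increasing. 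Given $\vasi_1=(i_1,j_1),\vasi_2=(i_2,j_2)\in\sol$, put $\vasi=(i,j):=\max\{\vasi_1,\vasi_2\}$; the only nontrivial case is $i=i_1\geq i_2$ and $j=j_2\geq j_1$. Then, using monotonicity in the second (resp. first) variable together with $\vasi_1\in\sol$ (resp. $\vasi_2\in\sol$), one gets $\Delta_1(i-1,j)\leq\Delta_1(i_1-1,j_1)<0$ and $\Delta_2(i,j-1)\leq\Delta_2(i_2,j_2-1)<0$, and since $i,j>0$ this means $\vasi\in\sol$ by \ref{def:Sol}. As $\sol$ is finite by the preceding lemma (and if $\sol=\emptyset$ one simply sets $\vasi_m=0$), the coordinatewise maximum of all its elements lies in $\sol$ and is the required greatest element $\vasi_m$.

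For (b), the Reduction Theorem \ref{red} gives $\bH^*(G,k_{can})\cong\bH^*(\overline{L},\overline{w})$ with $\overline{L}=\Z_{\geq0}^2$, so it suffices to show that the inclusion $R(0,\vasi_m)\hookrightarrow\overline{L}$ induces an isomorphism $\bH^*(R(0,\vasi_m),\overline{w})\cong\bH^*(\overline{L},\overline{w})$. Passing to the geometric $\bS^*$-realization (Theorem \ref{th:HS}), this follows once one shows that for every $N$ the sublevel set $S_N\subseteq\overline{L}$ deformation retracts onto $S_N\cap R(0,\vasi_m)$, which in turn is produced by a single deformation retraction of $[0,\infty)^2$ onto the rectangle $R(0,\vasi_m)$ along which $\overline{w}$ is non-increasing, assembled from the elementary contractions of \ref{lem:con1}--\ref{lem:con3} exactly as in \ref{ss:3}. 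The key observation is that from any lattice point $(i,j)$ with $i>i_m$ or $j>j_m$ one can take a unit step toward the rectangle --- decreasing $i$ if $i>i_m$, decreasing $j$ if $j>j_m$ --- without increasing $\overline{w}$, i.e. with $\Delta_1(i-1,j)\geq 0$ or $\Delta_2(i,j-1)\geq 0$ respectively: here part (a) is decisive, since if $i,j\geq 1$ and both $\Delta_1(i-1,j)<0$ and $\Delta_2(i,j-1)<0$ held, then $(i,j)\in\sol$, forcing $i\leq i_m$ and $j\leq j_m$, a contradiction.

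The main obstacle is organizing these steps into an honest, cubical, weight-non-increasing deformation retraction. Unlike the almost rational case $\nu=1$, neither the $i$-direction nor the $j$-direction contraction works globally, because both $\Delta_1$ and $\Delta_2$ change sign far out in the quadrant; one must interleave the two directions according to the sign pattern dictated by part (a) --- taking a $j$-step first whenever $\Delta_1$ is still negative --- and verify, using the explicit formulae of Corollary \ref{eq:w} and the bounds of \ref{eq:vartheta}, that the procedure terminates inside $R(0,\vasi_m)$ and respects the cube structure in the sense of \ref{ss:2}. The boundary strips $\{i=0\}$ and $\{j=0\}$ are special, since there part (a) is not available ($\sol\subseteq\overline{L}_{>0}$), and must be handled by the one-variable reduction for the star-shaped subgraphs $\widetilde{G}_0$ and $G_0$ (using $d(0)=\widetilde{d}(0)=1$ and negative definiteness to see that $\overline{w}$ is eventually non-decreasing along these strips within the range covered by $R(0,\vasi_m)$). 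Once the retraction is constructed, applying it level by level to each $S_N$ and invoking Theorem \ref{th:HS} yields the claimed isomorphism; as noted in the text, the same scheme reduces $\bH^*$ to a rectangle for an arbitrary number of bad vertices.
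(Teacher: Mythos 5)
Part (a) of your proposal is correct and is exactly the paper's argument: the monotonicity of $\Delta_1$ in $j$ and of $\Delta_2$ in $i$, read off from Corollary \ref{eq:w}, immediately gives $\Delta_1(\max\{i_1,i_2\}-1,\max\{j_1,j_2\})<0$ and $\Delta_2(\max\{i_1,i_2\},\max\{j_1,j_2\}-1)<0$.

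For part (b) your overall strategy (a weight--non--increasing retraction onto the rectangle, read through the $\bS^*$--realization) is the right one, but your ``key observation'' contains a genuine gap. From $(i,j)\notin\sol$ with $i,j\geq 1$ the dichotomy of \ref{def:Sol} only gives that \emph{at least one} of $\Delta_1(i-1,j)\geq 0$, $\Delta_2(i,j-1)\geq 0$ holds --- it does not give you the inequality in the direction pointing toward the rectangle. For instance, if $i>i_m$ but $j\leq j_m$, it may well be that $\Delta_1(i-1,j)<0$ and only $\Delta_2(i,j-1)\geq 0$, so your prescribed $i$--step strictly increases $\overline w$. You partially acknowledge this (``one must interleave the two directions'') but then leave unverified precisely the point where the content lies: that the interleaved procedure terminates at $R(0,\vasi_m)$ and defines an honest cubical retraction. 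The paper avoids the pointwise bookkeeping entirely by shrinking whole columns/rows of a rectangle: starting from a large $\vasi$ with $\bH^*(\overline L,\overline w)\cong\bH^*(R(0,\vasi),\overline w)$ (Lemma \ref{ilem}), if the corner $(i,j)\notin\sol$ and, say, $\Delta_1(i-1,j)\geq 0$, then monotonicity gives $\Delta_1(i-1,j')\geq\Delta_1(i-1,j)\geq 0$ for all $j'\leq j$, so collapsing the last column $R[0,(i,j)]\to R[0,(i-1,j)]$ is weight--non--increasing on every segment and preserves each $S_N$ up to homotopy. The corner can never drop below $\vasi_m$: if it reaches $i=i_m$ with $j>j_m$, then $\Delta_1(i_m-1,j)\leq\Delta_1(i_m-1,j_m)<0$, forcing a $j$--shrink, and symmetrically in the other coordinate; since $i+j$ decreases at each step the process stops exactly at the maximum $\vasi_m$ of $\sol$. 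In this organization the boundary strips $\{i=0\}$, $\{j=0\}$ require no separate treatment, so the extra one--variable analysis you propose there is unnecessary (and, as you set it up, would still not close the termination gap).
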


\begin{proof}
For part (a), notice that the formulae in Corollary \ref{eq:w} imply that 
$\Delta_1(i_1-1,j)<0$, $\Delta_2(i,j_1-1)<0$ for any 
$j\geq j_1$ and $i\geq i_1$. Similarly, one gets $\Delta_1(i_2-1,j')<0$ and $\Delta_2(i',j_2-1)<0$ for any 
$j'\geq j_2$ and $i'\geq i_2$. Hence, $\Delta_1(\max\{i_1-1,i_2-1\},\max\{j_1,j_2\})<0$ and 
$\Delta_2(\max\{i_1,i_2\},\max\{j_1-1,j_2-1\})<0$ as well.

In the case of (b), we pick an element $(i,j)$ which does not belong to $\sol$. By (\ref{def:Sol}), 
it satisfies at 
least one of the inequalities: $\Delta_1(i-1,j)\geq 0$ and $\Delta_2(i,j-1)\geq 0$. Without loss of 
generality, we may assume that $\Delta_1(i-1,j)\geq 0$. 

Then one can consider the natural inclusion 
$\iota:R[0,(i-1,j)]\longrightarrow R[0,(i,j)]$ and its retract $\rho:R[0,(i,j)]\longrightarrow 
R[0,(i-1,j)]$, where $\rho|_{R[0,(i-1,j)]}$ is the identity map and $\rho(i,j')=(i-1,j')$ for every 
$0\leq j'\leq j$.

Since the explicit formula \ref{eq:w}(a) implies $\Delta_1(i-1,j')\geq \Delta_1(i-1,j)$ for $j'\leq j$, 
for any $N$ the inclusion 
$\iota_N:S_N\cap R[0,(i-1,j)]\to S_N\cap R[0,(i,j)]$ and its retract 
$\rho_N:S_N\cap R[0,(i,j)]\to S_N\cap R[0,(i-1,j)]$ induce isomorphism at the level of 
simplicial cohomology. Hence, \ref{geomdef} implies 
$$\bH^*(R[0,(i,j)],\overline w)\cong \bH^*(R[0,(i-1,j)],\overline w).$$

Choose $\vasi\in \overline L$ as in the Lemma \ref{ilem}, for which there exists a sequence 
$\{\vasi_n=(i_n,j_n)\}_{n\geq 0}$ with the following properties: $\vasi_0=\vasi$, $\vasi_{n+1}$ 
is either $(i_n+1,j_n)$ or $(i_n,j_n+1)$, $\vasi_n$ tends to the infinity, and 
for any $\vasi_n'=(i_n',j_n')\leq\vasi_n$ with $i_n'=i_n$ (respectively $j_n'=j_n$), 
one has $\Delta_1(i_n,j_n)>0$ (respectively $\Delta_2(i_n,j_n)>0$). Moreover, 
we have $\bH^*(\overline L,\overline w)\cong\bH^*(R(0,\vasi),\overline w)$ as well. 

Therefore, if $\vasi \in \sol$, then this argument implies that this is 
the maximum point and we are done. Otherwise, we apply the above procedure. 
The procedure stops after finitely many steps when 
arrives to $\vasi_m \in \sol$ (by the same argument as before) and we get  
$$\bH^*(R(0,\vasi),\overline w)\cong \bH^*(R(0,\vasi_m),\overline w).$$
\end{proof}
\begin{remark}
\begin{itemize}
\item[(a)]
Notice that, with the previous lemma, we give a `bound' for $\bH^*(\overline L,\overline w)$. 
In other words, all the lattice cohomological data is concentrated into $R(0,\vasi_m)$.
Moreover, the proof emphasizes that this bound is {\em optimal}.
\item[(b)]
If we look at $\chi(l)=-(l,l+\K)/2$ as a real 
function on $L\otimes\R$, then $\chi$ is increasing if $l\geq -\K$. Hence, it can be shown that 
the lattice cohomology is concentrated into $R(0,-\K)$.
Let $\vasi_{can}:=(\lfloor(-\K,E_0^*)\rfloor,\lfloor(-\K,\widetilde E_0^*) \rfloor)$, i.e. the 
projection of $\lfloor -\K \rfloor$ (the floor function $\lfloor \ \rfloor$ is taken componentwise) via 
$\phi:L\to \overline L$. Therefore, $R(0,\vasi_{can})$ has the same lattice cohomology as $\overline L$.
Notice that almost all the examples in 
\ref{s:examples} have the property $\vasi_m=\vasi_{can}$, however, this is not the case in general, see 
Example \ref{ex:boundlesscan}. 

\end{itemize}
\end{remark}

\section{Examples}\labelpar{s:examples}\
In this section we provide some examples and their lattice cohomology calculations 
using illustrating pictures on the weight structure of the corresponding $R(0,\vasi_m)$. 

These pictures have the lattice point 
$(0,0)$ at the lower left corner, the horizontal direction is the direction of $i$, 
while the vertical is for $j$.  
The red frames highlight the generators of $\bH^0$ and the dashed red frames are for marking 
$\bH^1$. We also display a chosen minimal reduction set, i.e. a (non--unique) subset of $R(0,\vasi_m)$, 
which contains the lattice cohomology information, and it is a minimal set with respect to this property. As a consequence, 
we read off $eu(\bH^*)$, $eu(\bH^0)$ and $eu(\gamma_{min}):=\min_{\gamma}eu(\gamma,\K)$, where $\gamma$ connects 
$0$ with $\vasi_m$, and in most cases, we discuss their relations.

\begin{eexample}\labelpar{ex:6.2.1}
 Let us consider the graph from Figure \ref{fig:gex1}. 
\begin{figure}[ht!]
\begin{center}
\begin{picture}(140,60)(75,15)
\put(110,40){\circle*{3}}
\put(140,40){\circle*{3}}
\put(170,40){\circle*{3}}
\put(200,40){\circle*{3}}
\put(80,60){\circle*{3}}
\put(80,40){\circle*{3}}
\put(80,20){\circle*{3}}
\put(200,60){\circle*{3}}
\put(200,20){\circle*{3}}
\put(110,40){\line(1,0){90}}
\put(110,40){\line(-3,2){30}}
\put(110,40){\line(-1,0){30}}
\put(110,40){\line(-3,-2){30}}
\put(170,40){\line(3,2){30}}
\put(170,40){\line(3,-2){30}}
\put(85,65){\makebox(0,0){$-5$}}
\put(70,40){\makebox(0,0){$-5$}}
\put(85,15){\makebox(0,0){$-5$}}
\put(210,40){\makebox(0,0){$-5$}}
\put(190,65){\makebox(0,0){$-5$}}
\put(190,15){\makebox(0,0){$-5$}}
\put(115,50){\makebox(0,0){$-1$}}
\put(140,50){\makebox(0,0){$-11$}}
\put(165,50){\makebox(0,0){$-1$}}
\end{picture}
\end{center}
\caption{Graph for Example \ref{ex:6.2.1}}
\label{fig:gex1}
\end{figure}
The reduction of its lattice is simple in the 
sense that the bound (see Lemma \ref{lem:solelm}) $\vasi_m=(7,7)$ is small. Notice that it is also 
equal to $\vasi_{can}$, since 
$\lfloor(-\K,E_0^*)\rfloor=\lfloor(-\K,\widetilde E_0^*)\rfloor=7$, where $E_0$ and $\widetilde E_0$ 
represent the nodes, as ususal. 
Therefore, Figure \ref{fig:ex1} presents $R[0,(7,7)]$, from where one can read off the full lattice 
cohomological data. Hence, 
$$\bH^0(G,\K)=\calT_{-10}^+ \oplus \calT_{-2}(1)\oplus \calT_0(1) \ \ \mbox{and} \ \ 
\bH^1(G,\K)=\calT_0(1).$$ Moreover, these imply that $eu(\bH^0)=7$, $eu(\bH^*)=6$ and the minimal 
reduction helps us to see $eu(\gamma_{min})=6$. 
\begin{figure}[ht!]
\psset{unit=1cm}
\begin{center}
\begin{pspicture}(0,0.5)(0,6)
\begin{psmatrix}[colsep=0.5cm,rowsep=0.2cm]
 1 & 1 & -1 & -2 & -2 & -1 & 0 & -1 \\
 1 & 1 & -1 & -2 & -2 & -1 & 1 & 0 \\
 -1 & -1 & -3 & -4 & -4 & -3 & -1 & -1 \\
 -2 & -2 & -4 & -5 & -5 & -4 & -2 & -2 \\
 -2 & -2 & -4 & -5 & -5 & -4 & -2 & -2 \\
 -1 & -1 & -3 & -4 & -4 & -3 & -1 & -1 \\
 1 & 1 & -1 & -2 & -2 & -1 & 1 & 1 \\
 0 & 1 & -1 & -2 & -2 & -1 & 1 & 1\\
\ncline[nodesep=0.1cm]{8,1}{7,1}
\ncline[nodesep=0.1cm]{7,1}{7,2}
\ncline[nodesep=0.1cm]{7,2}{6,2}
\ncline[nodesep=0.1cm]{6,2}{5,2}
\ncline[nodesep=0.1cm]{5,2}{5,3}
\ncline[nodesep=0.1cm]{5,3}{5,4}
\ncline[nodesep=0.1cm]{5,4}{5,5}
\ncline[nodesep=0.1cm]{5,5}{4,5}
\ncline[nodesep=0.1cm]{4,5}{3,5}
\ncline[nodesep=0.1cm]{3,5}{3,6}
\ncline[nodesep=0.1cm]{3,6}{3,7}
\ncline[nodesep=0.1cm]{3,7}{3,8}\ncline[nodesep=0.1cm]{3,8}{2,8}\ncline[nodesep=0.1cm]{2,8}{1,8}
\ncline[nodesep=0.1cm]{3,6}{2,6}
\ncline[nodesep=0.1cm]{2,6}{1,6}\ncline[nodesep=0.1cm]{1,6}{1,7}\ncline[nodesep=0.1cm]{1,7}{1,8}
\psframe[linecolor=red,linearc=0.2](3.6,3.9)(2.3,2.7)
\psframe[linecolor=red,linearc=0.2](0.22,1.1)(-0.2,0.6)
\psframe[linecolor=red,linearc=0.2](6.1,6)(5.7,5.6)
\psframe[linecolor=red,linestyle=dashed,linearc=0.2](5.5,5.4)(4.6,4.85)
\end{psmatrix}
\end{pspicture}
\end{center}
\caption{Lattice cohomology of Example 6.2.1}
\label{fig:ex1}
\end{figure}
\end{eexample}

\begin{eexample}\labelpar{ex:boundlesscan}

\begin{figure}[ht!]
\centering
\begin{picture}(140,60)(75,15)
\put(110,40){\circle*{3}}
\put(140,40){\circle*{3}}
\put(170,40){\circle*{3}}
\put(200,40){\circle*{3}}
\put(80,60){\circle*{3}}
\put(80,40){\circle*{3}}
\put(80,20){\circle*{3}}
\put(200,60){\circle*{3}}
\put(200,20){\circle*{3}}
\put(110,40){\line(1,0){90}}
\put(110,40){\line(-3,2){30}}
\put(110,40){\line(-1,0){30}}
\put(110,40){\line(-3,-2){30}}
\put(170,40){\line(3,2){30}}
\put(170,40){\line(3,-2){30}}
\put(85,65){\makebox(0,0){$-4$}}
\put(70,40){\makebox(0,0){$-4$}}
\put(85,15){\makebox(0,0){$-4$}}
\put(210,40){\makebox(0,0){$-5$}}
\put(190,65){\makebox(0,0){$-5$}}
\put(190,15){\makebox(0,0){$-5$}}
\put(115,50){\makebox(0,0){$-1$}}
\put(140,50){\makebox(0,0){$-11$}}
\put(165,50){\makebox(0,0){$-1$}}
\end{picture}
\vspace{0.5cm}
\caption{Graph for Example \ref{ex:boundlesscan}}
\label{fig:gex2}
\end{figure}
Let $G$ be similar as in the previous example, except we increase the weights of the legs on the right side 
(Figure \ref{fig:gex2}). 
Then, as we will see in the sequel, the structure is much more tricky. 
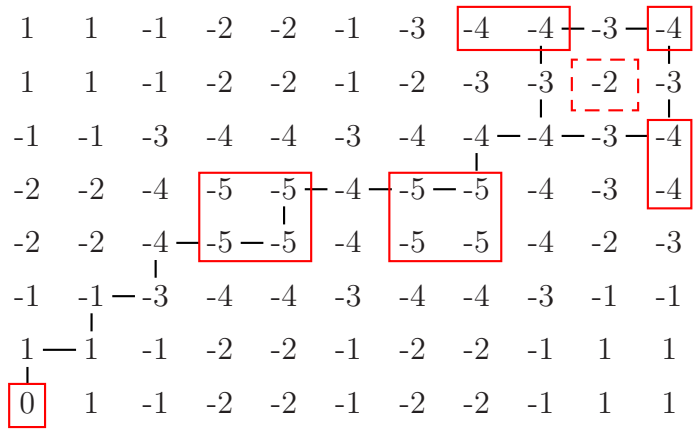
\begin{figure}[ht!]
\psset{unit=1cm}
\begin{center}
\begin{pspicture}(0,0.5)(0,6)
\begin{psmatrix}[colsep=0.5cm,rowsep=0.2cm]
 1 & 1 & -1 & -2 & -2 & -1 & -3 & -4 & -4 & -3 & -4 \\
 1&1&-1&-2&-2&-1&-2&-3&-3&-2&-3\\
 -1&-1&-3&-4&-4&-3&-4&-4&-4&-3&-4\\
 -2&-2&-4&-5&-5&-4&-5&-5&-4&-3&-4\\
 -2&-2&-4&-5&-5&-4&-5&-5&-4&-2&-3\\
 -1&-1&-3&-4&-4&-3&-4&-4&-3&-1&-1\\
 1&1&-1&-2&-2&-1&-2&-2&-1&1&1\\
 0&1&-1&-2&-2&-1&-2&-2&-1&1&1
\ncline[nodesep=0.1cm]{8,1}{7,1}
\ncline[nodesep=0.1cm]{7,1}{7,2}
\ncline[nodesep=0.1cm]{7,2}{6,2}
\ncline[nodesep=0.1cm]{6,2}{6,3}
\ncline[nodesep=0.1cm]{6,3}{5,3}
\ncline[nodesep=0.1cm]{5,3}{5,4}
\ncline[nodesep=0.1cm]{5,4}{5,5}
\ncline[nodesep=0.1cm]{5,5}{4,5}
\ncline[nodesep=0.1cm]{4,5}{4,6}
\ncline[nodesep=0.1cm]{4,6}{4,7}
\ncline[nodesep=0.1cm]{4,7}{4,8}
\ncline[nodesep=0.1cm]{4,8}{3,8}
\ncline[nodesep=0.1cm]{3,8}{3,9}\ncline[nodesep=0.1cm]{3,9}{2,9}\ncline[nodesep=0.1cm]{2,9}{1,9}
\ncline[nodesep=0.1cm]{1,9}{1,10}\ncline[nodesep=0.1cm]{1,10}{1,11}
\ncline[nodesep=0.1cm]{3,9}{3,10}\ncline[nodesep=0.1cm]{3,10}{3,11}\ncline[nodesep=0.1cm]{3,11}{2,11}
\ncline[nodesep=0.1cm]{2,11}{1,11}
\psframe[linecolor=red,linearc=0.2](-6.3,3.2)(-4.8,2)
\psframe[linecolor=red,linearc=0.2](-3.8,3.2)(-2.3,2)
\psframe[linecolor=red,linearc=0.2](-2.9,5.4)(-1.4,4.8)
\psframe[linecolor=red,linearc=0.2](-0.4,3.9)(0.2,2.7)
\psframe[linecolor=red,linearc=0.2](-8.8,-0.2)(-8.3,0.4)
\psframe[linecolor=red,linearc=0.2](-0.4,5.4)(0.2,4.8)
\psframe[linecolor=red,linestyle=dashed,linearc=0.2](-1.4,4.7)(-0.5,4)
\end{psmatrix}
\end{pspicture}
\end{center}
\caption{Lattice cohomology for Example \ref{ex:boundlesscan}}
\label{fig:ex2}
\end{figure}
Notice that the coefficients of 
$-\K$ corresponding to the nodes are $122/9$ and $83/9$, hence $\vasi_{can}=(13,9)$. On the other hand, 
the bound is $(10,7)$, which is, in this case, smaller than $\vasi_{can}$. 
One can read off the lattice cohomology of $R[0,(10,7)]$ from Figure \ref{fig:ex2}. Namely, 
$$\bH^0(G,\K)=\calT_{-10}^+ \oplus \calT_{-10}(1)\oplus \calT_{-8}(1)^3\oplus \calT_0(1) 
\ \ \mbox{and} \ \ \bH^1(G,\K)=\calT_{-6}(1).$$
Then $eu(\gamma_{min})=eu(\bH^*)=9<eu(\bH^0)=10$.
\end{eexample}

\begin{eexample}\labelpar{ex:6.2.3}
In this example, we put two vertices on one of the legs and consider the graph in Figure \ref{fig:gex3}.
\begin{figure}[ht!]
\centering
\begin{picture}(140,60)(75,15)
\put(110,40){\circle*{3}}
\put(140,40){\circle*{3}}
\put(170,40){\circle*{3}}
\put(210,40){\circle*{3}}
\put(70,67){\circle*{3}}
\put(90,53){\circle*{3}}
\put(70,40){\circle*{3}}
\put(70,13){\circle*{3}}
\put(210,67){\circle*{3}}
\put(210,13){\circle*{3}}
\put(110,40){\line(1,0){100}}
\put(110,40){\line(-3,2){40}}
\put(110,40){\line(-1,0){40}}
\put(110,40){\line(-3,-2){40}}
\put(170,40){\line(3,2){40}}
\put(170,40){\line(3,-2){40}}
\put(70,72){\makebox(0,0){$-2$}}
\put(90,60){\makebox(0,0){$-3$}}
\put(60,40){\makebox(0,0){$-5$}}
\put(70,8){\makebox(0,0){$-5$}}
\put(220,40){\makebox(0,0){$-5$}}
\put(210,72){\makebox(0,0){$-5$}}
\put(210,8){\makebox(0,0){$-5$}}
\put(115,50){\makebox(0,0){$-1$}}
\put(140,50){\makebox(0,0){$-14$}}
\put(165,50){\makebox(0,0){$-1$}}
\end{picture}
\vspace{0.5cm}
\caption{Graph for Example \ref{ex:6.2.3}}
\label{fig:gex3}
\end{figure}
The bound is $(12,7)$, $R[0,(12,7)]$ is shown by Figure \ref{fig:ex3}, hence the lattice cohomology is 
$$\bH^0(G,\K)=\calT_{-12}^+ \oplus \calT_{-12}(1)\oplus \calT_{-8}(1)^3\oplus \calT_0(1) 
\ \ \mbox{and} \ \ \bH^1(G,\K)=\calT_{-6}(1).$$ 
Therefore, these formulae and the minimal reduction set shows that 
$eu(\gamma_{min})=eu(\bH^*)=10$ and $eu(\bH^0)=11$.
\begin{figure}[ht!]
\psset{unit=1cm}
\begin{center}
\begin{pspicture}(0,0.5)(0,6)
\begin{psmatrix}[colsep=0.5cm,rowsep=0.2cm]
 1 & 1 & -1 & -2 & -3 & -3 & -2 & -3 & -3 & -4 & -4 & -3 & -4 \\
 1 & 1 & -1 & -2 & -3 & -3 & -2 & -3 & -3 & -3 & -3 & -2 & -3 \\
 -1 & -1 & -3 & -4 & -5 & -5 & -4 & -5 & -5 & -5 & -4 & -3 & -4 \\
 -2 & -2 & -4 & -5 & -6 & -6 & -5 & -6 & -6 & -6 & -5 & -3 & -4 \\
 -2 & -2 & -4 & -5 & -6 & -6 & -5 & -6 & -6 & -6 & -5 & -3 & -3 \\
 -1 & -1 & -3 & -4 & -5 & -5 & -4 & -5 & -5 & -5 & -4 & -2 & -2 \\
 1 & 1 & -1 & -2 & -3 & -3 & -2 & -3 & -3 & -3 & -2 & 0 & 0 \\
 0 & 1 & -1 & -2 & -3 & -3 & -2 & -3 & -3 & -3 & -2 & 0 & 0
\ncline[nodesep=0.1cm]{8,1}{7,1}
\ncline[nodesep=0.1cm]{7,1}{7,2}
\ncline[nodesep=0.1cm]{7,2}{6,2}
\ncline[nodesep=0.1cm]{6,2}{6,3}
\ncline[nodesep=0.1cm]{6,3}{5,3}
\ncline[nodesep=0.1cm]{5,3}{5,4}
\ncline[nodesep=0.1cm]{5,4}{5,5}
\ncline[nodesep=0.1cm]{5,5}{4,5}
\ncline[nodesep=0.1cm]{4,5}{4,6}
\ncline[nodesep=0.1cm]{4,6}{4,7}
\ncline[nodesep=0.1cm]{4,7}{4,8}
\ncline[nodesep=0.1cm]{4,8}{4,9}\ncline[nodesep=0.1cm]{4,9}{4,10}
\ncline[nodesep=0.1cm]{4,10}{3,10}\ncline[nodesep=0.1cm]{3,10}{3,11}
\ncline[nodesep=0.1cm]{3,11}{2,11}\ncline[nodesep=0.1cm]{2,11}{1,11}
\ncline[nodesep=0.1cm]{1,11}{1,12}\ncline[nodesep=0.1cm]{1,12}{1,13}
\ncline[nodesep=0.1cm]{3,11}{3,12}\ncline[nodesep=0.1cm]{3,12}{3,13}
\ncline[nodesep=0.1cm]{3,13}{2,13}\ncline[nodesep=0.1cm]{2,13}{1,13}
\psframe[linecolor=red,linearc=0.2](-10.5,-0.2)(-10,0.4)
\psframe[linecolor=red,linearc=0.2](-7.1,3.2)(-5.6,2)
\psframe[linecolor=red,linearc=0.2](-4.6,3.2)(-2.3,2)
\psframe[linecolor=red,linearc=0.2](-2.9,5.4)(-1.4,4.8)
\psframe[linecolor=red,linearc=0.2](-0.4,3.9)(0.2,2.7)
\psframe[linecolor=red,linearc=0.2](-0.4,5.4)(0.2,4.8)
\psframe[linecolor=red,linestyle=dashed,linearc=0.2](-1.4,4.7)(-0.5,4)
\end{psmatrix}
\end{pspicture}
\end{center}
\caption{Lattice cohomology of Example \ref{ex:6.2.3}}
\label{fig:ex3}
\end{figure}
\end{eexample}

\begin{eexample}\labelpar{ex:6.2.4}
 Now, we take an example when the graph has two vertices on the chain connecting the two nodes. Let 
$G$ be as in Figure \ref{fig:gex4}.
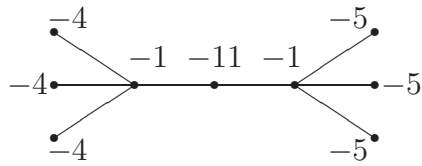
\begin{figure}[h!]
\centering
\begin{picture}(140,60)(75,15)
\put(110,40){\circle*{3}}
\put(140,40){\circle*{3}}
\put(170,40){\circle*{3}}
\put(200,40){\circle*{3}}
\put(80,60){\circle*{3}}
\put(80,20){\circle*{3}}
\put(230,60){\circle*{3}}
\put(230,20){\circle*{3}}
\put(110,40){\line(1,0){90}}
\put(110,40){\line(-3,2){30}}
\put(110,40){\line(-3,-2){30}}
\put(200,40){\line(3,2){30}}
\put(200,40){\line(3,-2){30}}
\put(85,65){\makebox(0,0){$-2$}}
\put(85,15){\makebox(0,0){$-3$}}
\put(220,65){\makebox(0,0){$-2$}}
\put(220,15){\makebox(0,0){$-3$}}
\put(115,50){\makebox(0,0){$-1$}}
\put(195,50){\makebox(0,0){$-1$}}
\put(140,50){\makebox(0,0){$-7$}}
\put(165,50){\makebox(0,0){$-8$}}
\end{picture}
\vspace{0.5cm}
\caption{Graph for Example \ref{ex:6.2.4}}
\label{fig:gex4}
\end{figure}
Then, $R[0,(20,14)]$ has to be checked for the lattice cohomology calculation, as it is in Figure 
\ref{fig:ex4}. The cohomology is 
$$\bH^0(G,\K)=\calT_{-2}^+ \oplus \calT_{-2}(1)^3 \oplus \calT_{0}(1) 
\ \ \mbox{and} \ \ \bH^1(G,\K)=\calT_{0}(1)^2.$$ However, the minimal set for the reduction is much 
more interesting: contains the set $R[(6,6),(14,8)]$, which can not be reduced further and contains 
the two $\bH^1$ generators. Together with the cohomology modules show that 
$$eu(\bH^*)=4<eu(\gamma_{min})=5<eu(\bH^0)=6.$$ 
\begin{figure}[ht!]
\psset{unit=1cm}
\begin{center}
\begin{pspicture}(0,0.5)(0,6)
\begin{psmatrix}[colsep=0.4cm,rowsep=0.1cm]
 2 & 2 & 1 & 1 & 1 & 1 & 1 & 1 & 0 & 0 & 0 & 0 & 0 & 1 & 0 & 0 & 0 & 0 & 0 & 1 & 0 \\
 2 & 2 & 1 & 1 & 1 & 1 & 1 & 1 & 0 & 0 & 0 & 0 & 0 & 1 & 0 & 0 & 0 & 0 & 0 & 1 & 1 \\
 1 & 1 & 0 & 0 & 0 & 0 & 0 & 0 & -1 & -1 & -1 & -1 & -1 & 0 & -1 & -1 & -1 & -1 & -1 & 0 & 0 \\
 1 & 1 & 0 & 0 & 0 & 0 & 0 & 0 & -1 & -1 & -1 & -1 & -1 & 0 & -1 & -1 & -1 & -1 & -1 & 0 & 0 \\
 1 & 1 & 0 & 0 & 0 & 0 & 0 & 0 & -1 & -1 & -1 & -1 & -1 & 0 & -1 & -1 & -1 & -1 & -1 & 0 & 0 \\
 1 & 1 & 0 & 0 & 0 & 0 & 0 & 0 & -1 & -1 & -1 & -1 & -1 & 0 & -1 & -1 & -1 & -1 & -1 & 0 & 0 \\
 1 & 1 & 0 & 0 & 0 & 0 & 0 & 0 & -1 & -1 & -1 & -1 & -1 & 0 & -1 & -1 & -1 & -1 & -1 & 0 & 0 \\
 1 & 1 & 0 & 0 & 0 & 0 & 0 & 1 & 0 & 0 & 0 & 0 & 0 & 1 & 0 & 0 & 0 & 0 & 0 & 1 & 1 \\
 0 & 0 & -1 & -1 & -1 & -1 & -1 & 0 & -1 & -1 & -1 & -1 & -1 & 0 & 0 & 0 & 0 & 0 & 0 & 1 & 1 \\
 0 & 0 & -1 & -1 & -1 & -1 & -1 & 0 & -1 & -1 & -1 & -1 & -1 & 0 & 0 & 0 & 0 & 0 & 0 & 1 & 1 \\
 0 & 0 & -1 & -1 & -1 & -1 & -1 & 0 & -1 & -1 & -1 & -1 & -1 & 0 & 0 & 0 & 0 & 0 & 0 & 1 & 1 \\
 0 & 0 & -1 & -1 & -1 & -1 & -1 & 0 & -1 & -1 & -1 & -1 & -1 & 0 & 0 & 0 & 0 & 0 & 0 & 1 & 1 \\
 0 & 0 & -1 & -1 & -1 & -1 & -1 & 0 & -1 & -1 & -1 & -1 & -1 & 0 & 0 & 0 & 0 & 0 & 0 & 1 & 1 \\
 1 & 1 & 0 & 0 & 0 & 0 & 0 & 1 & 0 & 0 & 0 & 0 & 0 & 1 & 1 & 1 & 1 & 1 & 1 & 2 & 2 \\
 0 & 1 & 0 & 0 & 0 & 0 & 0 & 1 & 0 & 0 & 0 & 0 & 0 & 1 & 1 & 1 & 1 & 1 & 1 & 2 & 2
\ncline[nodesep=0.1cm]{15,1}{14,1}\ncline[nodesep=0.1cm]{14,1}{14,2}\ncline[nodesep=0.1cm]{14,2}{13,2}
\ncline[nodesep=0.1cm]{13,2}{12,2}\ncline[nodesep=0.1cm]{12,2}{11,2}\ncline[nodesep=0.1cm]{11,2}{10,2}
\ncline[nodesep=0.1cm]{10,2}{9,2}\ncline[nodesep=0.1cm]{9,2}{9,3}\ncline[nodesep=0.1cm]{9,3}{9,4}
\ncline[nodesep=0.1cm]{9,4}{9,5}\ncline[nodesep=0.1cm]{9,5}{9,6}\ncline[nodesep=0.1cm]{9,6}{9,7}

\ncline[nodesep=0.1cm]{9,7}{8,7}\ncline[nodesep=0.1cm]{8,7}{7,7}\ncline[nodesep=0.1cm]{7,7}{7,8}
\ncline[nodesep=0.1cm]{7,8}{7,9}\ncline[nodesep=0.1cm]{7,9}{7,10}\ncline[nodesep=0.1cm]{7,10}{7,11}
\ncline[nodesep=0.1cm]{7,11}{7,12}\ncline[nodesep=0.1cm]{7,12}{7,13}\ncline[nodesep=0.1cm]{7,13}{7,14}
\ncline[nodesep=0.1cm]{7,14}{7,15}

\ncline[nodesep=0.1cm]{9,7}{9,8}
\ncline[nodesep=0.1cm]{9,8}{9,9}\ncline[nodesep=0.1cm]{9,9}{9,10}\ncline[nodesep=0.1cm]{9,10}{9,11}
\ncline[nodesep=0.1cm]{9,11}{9,12}\ncline[nodesep=0.1cm]{9,12}{9,13}\ncline[nodesep=0.1cm]{9,13}{9,14}
\ncline[nodesep=0.1cm]{9,14}{9,15}\ncline[nodesep=0.1cm]{9,15}{8,15}\ncline[nodesep=0.1cm]{8,15}{7,15}

\ncline[nodesep=0.1cm]{7,15}{7,16}\ncline[nodesep=0.1cm]{7,16}{7,17}
\ncline[nodesep=0.1cm]{7,17}{7,18}\ncline[nodesep=0.1cm]{7,18}{7,19}
\ncline[nodesep=0.1cm]{7,19}{6,19}\ncline[nodesep=0.1cm]{6,19}{5,19}
\ncline[nodesep=0.1cm]{5,19}{4,19}\ncline[nodesep=0.1cm]{4,19}{3,19}
\ncline[nodesep=0.1cm]{3,19}{2,19}\ncline[nodesep=0.1cm]{2,19}{2,20}
\ncline[nodesep=0.1cm]{2,20}{1,20}\ncline[nodesep=0.1cm]{1,20}{1,21}

\psframe[linecolor=red,linearc=0.2](-14.5,-0.2)(-14,0.4)
\psframe[linecolor=red,linearc=0.2](-13.4,4)(-9.6,1.1)
\psframe[linecolor=red,linearc=0.2](-9,4)(-5.3,1.1)
\psframe[linecolor=red,linearc=0.2](-9,7.7)(-5.3,4.8)
\psframe[linecolor=red,linearc=0.2](-4.8,7.7)(-1,4.8)
\psframe[linecolor=red,linearc=0.2](-0.4,9)(0.2,8.4)
\psframe[linecolor=red,linestyle=dashed,linearc=0.2](-5.4,4.7)(-4.7,4.1)
\psframe[linecolor=red,linestyle=dashed,linearc=0.2](-9.7,4.7)(-9,4.1)
\end{psmatrix}
\end{pspicture}
\end{center}
\caption{Lattice cohomology of Example \ref{ex:6.2.4}}
\label{fig:ex4}
\end{figure}

\end{eexample}

\begin{eexample}\labelpar{ex:6.2.5}
In the previous examples, the generators of $\bH^1$ have a very special `shape' (that is, the loop is the 
smallest possible, containing only one lattice point). 
We provide a graph, shown in Figure \ref{fig:gex5}, which is interesting 
in this sense, i.e. it has more complicated $\bH^1$ 
generators. 
\begin{figure}[ht!]
\centering
\begin{picture}(140,60)(75,15)
\put(110,40){\circle*{3}}
\put(140,40){\circle*{3}}
\put(170,40){\circle*{3}}
\put(200,40){\circle*{3}}
\put(80,60){\circle*{3}}
\put(80,40){\circle*{3}}
\put(80,20){\circle*{3}}
\put(200,60){\circle*{3}}
\put(200,20){\circle*{3}}
\put(110,40){\line(1,0){90}}
\put(110,40){\line(-3,2){30}}
\put(110,40){\line(-1,0){30}}
\put(110,40){\line(-3,-2){30}}
\put(170,40){\line(3,2){30}}
\put(170,40){\line(3,-2){30}}
\put(85,65){\makebox(0,0){$-3$}}
\put(70,40){\makebox(0,0){$-4$}}
\put(85,15){\makebox(0,0){$-5$}}
\put(210,40){\makebox(0,0){$-3$}}
\put(190,65){\makebox(0,0){$-4$}}
\put(190,15){\makebox(0,0){$-5$}}
\put(115,50){\makebox(0,0){$-1$}}
\put(140,50){\makebox(0,0){$-10$}}
\put(165,50){\makebox(0,0){$-1$}}
\end{picture}
\vspace{0.5cm}
\caption{Graph for Example \ref{ex:6.2.5}}
\label{fig:gex5}
\end{figure}
Since the bound for the reduced lattice is big and can not be illustrated here with a picture, 
we give only 
the cohomology modules. $\bH^0(G,\K)=\calT_{-48}^+ \oplus\calT_{0}(1) \oplus \calT_{-26}(1) \oplus 
\calT_{-30}(1)\oplus \calT_{-36}(1)\oplus \calT_{-42}(1)\oplus \calT_{-44}(1)^2
\oplus \calT_{-46}(1)^2$ and 
$\bH^1(G,\K)=\calT_{-24}(1)\oplus\calT_{-40}(1)\oplus\calT_{-42}(1)\oplus\calT_{-44}(1)$, 
where the last three components are generated by the $1$--cycles from Figure \ref{fig:ex5}. The lower 
left corners of the blocks are in positions $(24,24)$, $(39,39)$ and $(44,44)$.
\begin{figure}[ht!]
\psset{unit=1cm}
\begin{center}
\begin{pspicture}(0,0.5)(0,2)
\begin{psmatrix}[colsep=0.5cm,rowsep=0.1cm]
 -20 & -20 & -21 & -22 & \ \ \ & -21 & -21 & -21 & -22 & \ \ \ & -22 & -22 & -22 & -23 \\
 -20 & -19 & -20 & -21 & \ \ \ & -21 & -20 & -20 & -21 & \ \ \ &  -22 & -21 & -21 & -22 \\
 -20 & -19 & -19 & -20 & \ \ \ & -22 & -21 & -20 & -21 & \ \ \ & -23 & -22 & -21 & -22 \\
 -21 & -20 & -20 & -20 & \ \ \ & -23 & -22 & -21 & -21 & \ \ \ & -24 & -23 & -22 & -22
\psline[linecolor=red,linestyle=dashed,linearc=0.2](-2.8,1.6)(-1,1.6)
\psline[linecolor=red,linestyle=dashed,linearc=0.2](-1,1.6)(-1,0.5)
\psline[linecolor=red,linestyle=dashed,linearc=0.2](-1,0.5)(-1.8,0.5)
\psline[linecolor=red,linestyle=dashed,linearc=0.2](-1.8,0.5)(-1.8,1)
\psline[linecolor=red,linestyle=dashed,linearc=0.2](-1.8,1)(-2.8,1)
\psline[linecolor=red,linestyle=dashed,linearc=0.2](-2.8,1)(-2.8,1.6)

\psline[linecolor=red,linestyle=dashed,linearc=0.2](-7.8,1.6)(-5.9,1.6)
\psline[linecolor=red,linestyle=dashed,linearc=0.2](-5.9,1.6)(-5.9,0.5)
\psline[linecolor=red,linestyle=dashed,linearc=0.2](-5.9,0.5)(-6.8,0.5)
\psline[linecolor=red,linestyle=dashed,linearc=0.2](-6.8,0.5)(-6.8,1)
\psline[linecolor=red,linestyle=dashed,linearc=0.2](-6.8,1)(-7.8,1)
\psline[linecolor=red,linestyle=dashed,linearc=0.2](-7.8,1)(-7.8,1.6)

\psline[linecolor=red,linestyle=dashed,linearc=0.2](-12.8,1.6)(-11.8,1.6)
\psline[linecolor=red,linestyle=dashed,linearc=0.2](-11.8,1.6)(-11.8,1)
\psline[linecolor=red,linestyle=dashed,linearc=0.2](-11.8,1)(-10.9,1)
\psline[linecolor=red,linestyle=dashed,linearc=0.2](-10.9,1)(-10.9,0.5)
\psline[linecolor=red,linestyle=dashed,linearc=0.2](-10.9,0.5)(-12.8,0.5)
\psline[linecolor=red,linestyle=dashed,linearc=0.2](-12.8,0.5)(-12.8,1.6)
\end{psmatrix}
\end{pspicture}
\end{center}
\caption{The `shape' of some $\bH^1$ generators in Example \ref{ex:6.2.5}}
\label{fig:ex5}
\end{figure}
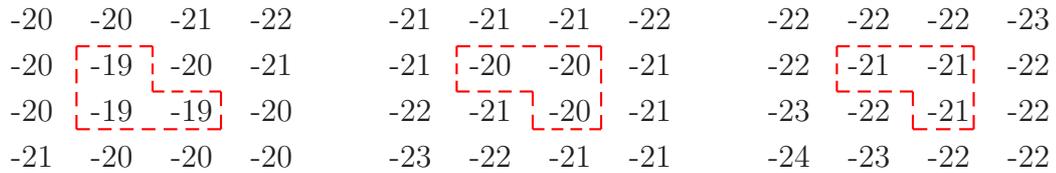
\end{eexample}

\begin{eexample}\labelpar{example:superisol}
 The last example provides a counterexample for the SWI Conjecture. In other words, for the graph 
$G$ given in Figure \ref{fig:gex6}, there exists an analytic realization for which the $p_g>eu(\bH^*)$.
\begin{figure}[ht!]
\centering
\begin{picture}(140,60)(75,15)
\put(110,40){\circle*{3}}
\put(140,40){\circle*{3}}
\put(170,40){\circle*{3}}
\put(200,40){\circle*{3}}
\put(230,40){\circle*{3}}
\put(260,40){\circle*{3}}
\put(80,40){\circle*{3}}
\put(50,40){\circle*{3}}
\put(110,10){\circle*{3}}
\put(170,10){\circle*{3}}
\put(110,40){\line(1,0){150}}
\put(110,40){\line(-1,0){60}}
\put(110,40){\line(0,-1){30}}
\put(170,40){\line(0,-1){30}}
\put(80,50){\makebox(0,0){$-2$}}
\put(50,50){\makebox(0,0){$-2$}}
\put(100,10){\makebox(0,0){$-4$}}
\put(180,10){\makebox(0,0){$-2$}}
\put(200,50){\makebox(0,0){$-3$}}
\put(230,50){\makebox(0,0){$-2$}}
\put(260,50){\makebox(0,0){$-2$}}
\put(110,50){\makebox(0,0){$-1$}}
\put(140,50){\makebox(0,0){$-31$}}
\put(170,50){\makebox(0,0){$-1$}}
\end{picture}
\vspace{0.5cm}
\caption{Graph for Example \ref{example:superisol}}
\label{fig:gex6}
\end{figure}
This example appeared in \cite[pg. 6]{LMN} and \cite[7.3.3]{Nlat}, since this topological type admits 
a superisolated hypersurface singularity with geometric genus $p_g^{(si)}=10$. On the other hand, 
if we take the complete  
intersection $\{z_1^3+z_2^4+z_3^5z_4=z_3^7+z_4^2+z_1^4z_2=0\}\subset (\C^4,0)$ divided by the 
diagonal $\Z_5$--action $(p^2,p^4,p,p)$, we get a splice--quotient type singularity with geometric genus 
$p_g^{(sq)}=8$. For other `generic' analytic types, $p_g$ drops even more.

Then we can analyze the lattice cohomological structure using the reduction to the nodes. The bound is 
$(30,34)$. Hence, in Figure \ref{fig:ex6} we show how the weight structure of $R[0,(30,34)]$ looks like.
Since this rectangle is rather big, the figure is constructed in a way that the point $(0,0)$ stays 
at the upper left corner, the vertical direction stands for $i$ and the 
horizontal is the direction of $j$. Moreover, the chosen minimal reduction set is visualized 
by the bold face characters.
One can read that 
$$\bH^0(G,\K)=\calT_{-10}^+ \oplus \calT_{-10}(3) \oplus \calT_{0}(1)^2 
\ \ \mbox{and} \ \ \bH^1(G,\K)=\calT_{-4}(1)^2.$$
This implies that $eu(\bH^0)=10$, $eu(\bH^*)=8$ and by traveling along the minimal reduction set one calculates 
$eu(\gamma_{min})=10$. Therefore, $p_g^{(si)}=eu(\gamma_{min})>eu(\bH^*)$, which also shows that the 
superisolated hypersurface structure is `extremal' in the sense that its geometric genus hits the 
maximum of possible values (cf. \ref{ss:SWIrev}).  
\begin{figure}[p]
\psset{unit=1cm}
\begin{center}
\begin{pspicture}(0,0.5)(0,6)
\begin{psmatrix}[colsep=0.04cm,rowsep=0.05cm]
\bf 0 &\bf 1 & 0 & 0 & -1 & -1 & -2 & -2 & -2 & -2 & -2 & -2 & -2 & -2 & -2 & -1 & -1 & 0 & 0 & 1 & 1 & 2 & 3 & 4 & 5 & 6 & 7 & 8 & 9 & 11 & 12 & 14 & 15 & 16 & 16 \\
 1 &\bf 1 & 0 & 0 & -1 & -1 & -2 & -2 & -2 & -2 & -2 & -2 & -2 & -2 & -2 & -1 & -1 & 0 & 0 & 1 & 1 & 2 & 3 & 4 & 5 & 6 & 7 & 8 & 9 & 11 & 12 & 14 & 14 & 15 & 15 \\
 0 &\bf 0 &\bf -1 &\bf -1 &\bf -2 &\bf -2 &\bf -3 &\bf -3 &\bf -3 &\bf -3 &\bf -3 &\bf -3 &\bf -3 &\bf -3 &\bf -3 & -2 & -2 & -1 & -1 & 0 & 0 & 1 & 2 & 3 & 4 & 5 & 6 & 7 & 8 & 10 & 11 & 12 & 12 & 13 & 13 \\
 -1 & -1 & -2 & -2 & -3 & -3 & -4 & -4 & -4 & -4 & -4 & -4 & -4 & -4 &\bf -4 & -3 & -3 & -2 & -2 & -1 & -1 & 0 & 1 & 2 & 3 & 4 & 5 & 6 & 7 & 9 & 9 & 10 & 10 & 11 & 11 \\
 -1 & -1 & -2 & -2 & -3 & -3 & -4 & -4 & -4 & -4 & -4 & -4 & -4 & -4 &\bf -4 & -3 & -3 & -2 & -2 & -1 & -1 & 0 & 1 & 2 & 3 & 4 & 5 & 6 & 7 & 8 & 8 & 9 & 9 & 10 & 10 \\
 -1 & -1 & -2 & -2 & -3 & -3 & -4 & -4 & -4 & -4 & -4 & -4 & -4 & -4 &\bf -4 & -3 & -3 & -2 & -2 & -1 & -1 & 0 & 1 & 2 & 3 & 4 & 5 & 6 & 6 & 7 & 7 & 8 & 8 & 9 & 9 \\
 -2 & -2 & -3 & -3 & -4 & -4 & -5 & -5 & -5 & -5 & -5 & -5 & -5 & -5 &\bf -5 & -4 & -4 & -3 & -3 & -2 & -2 & -1 & 0 & 1 & 2 & 3 & 4 & 4 & 4 & 5 & 5 & 6 & 6 & 7 & 7 \\
 -2 & -2 & -3 & -3 & -4 & -4 & -5 & -5 & -5 & -5 & -5 & -5 & -5 & -5 &\bf -5 & -4 & -4 & -3 & -3 & -2 & -2 & -1 & 0 & 1 & 2 & 3 & 3 & 3 & 3 & 4 & 4 & 5 & 5 & 6 & 6 \\
 -2 & -2 & -3 & -3 & -4 & -4 & -5 & -5 & -5 & -5 & -5 & -5 & -5 & -5 &\bf -5 & -4 & -4 & -3 & -3 & -2 & -2 & -1 & 0 & 1 & 2 & 2 & 2 & 2 & 2 & 3 & 3 & 4 & 4 & 5 & 5 \\
 -2 & -2 & -3 & -3 & -4 & -4 & -5 & -5 & -5 & -5 & -5 & -5 & -5 & -5 &\bf -5 & -4 & -4 & -3 & -3 & -2 & -2 & -1 & 0 & 1 & 1 & 1 & 1 & 1 & 1 & 2 & 2 & 3 & 3 & 4 & 4 \\
 -2 & -2 & -3 & -3 & -4 & -4 & -5 & -5 & -5 & -5 & -5 & -5 & -5 & -5 &\bf -5 & -4 & -4 & -3 & -3 & -2 & -2 & -1 & 0 & 0 & 0 & 0 & 0 & 0 & 0 & 1 & 1 & 2 & 2 & 3 & 3 \\
 -2 & -2 & -3 & -3 & -4 & -4 & -5 & -5 & -5 & -5 & -5 & -5 & -5 & -5 &\bf -5 & -4 & -4 & -3 & -3 & -2 & -2 & -1 & -1 & -1 & -1 & -1 & -1 & -1 & -1 & 0 & 0 & 1 & 1 & 2 & 2 \\
 -2 & -2 & -3 & -3 & -4 & -4 & -5 & -5 & -5 & -5 & -5 & -5 & -5 & -5 &\bf -5 &\bf -4 &\bf -4 &\bf -3 &\bf -3 &\bf -2 & -2 & -2 & -2 & -2 & -2 & -2 & -2 & -2 & -2 & -1 & -1 & 0 & 0 & 1 & 1 \\
 -1 & -1 & -2 & -2 & -3 & -3 & -4 & -4 & -4 & -4 & -4 & -4 & -4 & -4 &\bf -4 &\bf -3 &\bf -3 &\bf -2 &\bf -2 &\bf -1 &\bf -2 & -2 & -2 & -2 & -2 & -2 & -2 & -2 & -2 & -1 & -1 & 0 & 0 & 1 & 1 \\
 -1 & -1 & -2 & -2 & -3 & -3 & -4 & -4 & -4 & -4 & -4 & -4 & -4 & -4 &\bf -4 &\bf -3 &\bf -3 &\bf -2 &\bf -2 &\bf -2 &\bf -3 & -3 & -3 & -3 & -3 & -3 & -3 & -3 & -3 & -2 & -2 & -1 & -1 & 0 & 0 \\
 -1 & -1 & -2 & -2 & -3 & -3 & -4 & -4 & -4 & -4 & -4 & -4 & -4 & -4 &\bf -4 &\bf -3 &\bf -3 &\bf -2 &\bf -3 &\bf -3 &\bf -4 & -4 & -4 & -4 & -4 & -4 & -4 & -4 & -4 & -3 & -3 & -2 & -2 & -1 & -1 \\
 0 & 0 & -1 & -1 & -2 & -2 & -3 & -3 & -3 & -3 & -3 & -3 & -3 & -3 &\bf -3 &\bf -2 &\bf -2 &\bf -2 &\bf -3 &\bf -3 &\bf -4 & -4 & -4 & -4 & -4 & -4 & -4 & -4 & -4 & -3 & -3 & -2 & -2 & -1 & -1 \\
 1 & 1 & 0 & 0 & -1 & -1 & -2 & -2 & -2 & -2 & -2 & -2 & -2 & -2 &\bf -2 &\bf -1 &\bf -2 &\bf -2 &\bf -3 &\bf -3 &\bf -4 & -4 & -4 & -4 & -4 & -4 & -4 & -4 & -4 & -3 & -3 & -2 & -2 & -1 & -1 \\
 1 & 1 & 0 & 0 & -1 & -1 & -2 & -2 & -2 & -2 & -2 & -2 & -2 & -2 &\bf -2 &\bf -2 &\bf -3 &\bf -3 &\bf -4 &\bf -4 &\bf -5 & -5 & -5 & -5 & -5 & -5 & -5 & -5 & -5 & -4 & -4 & -3 & -3 & -2 & -2 \\
 2 & 2 & 1 & 1 & 0 & 0 & -1 & -1 & -1 & -1 & -1 & -1 & -1 & -1 & -2 & -2 & -3 & -3 & -4 & -4 &\bf -5 & -5 & -5 & -5 & -5 & -5 & -5 & -5 & -5 & -4 & -4 & -3 & -3 & -2 & -2 \\
 3 & 3 & 2 & 2 & 1 & 1 & 0 & 0 & 0 & 0 & 0 & 0 & 0 & -1 & -2 & -2 & -3 & -3 & -4 & -4 &\bf -5 & -5 & -5 & -5 & -5 & -5 & -5 & -5 & -5 & -4 & -4 & -3 & -3 & -2 & -2 \\
 4 & 4 & 3 & 3 & 2 & 2 & 1 & 1 & 1 & 1 & 1 & 1 & 0 & -1 & -2 & -2 & -3 & -3 & -4 & -4 &\bf -5 & -5 & -5 & -5 & -5 & -5 & -5 & -5 & -5 & -4 & -4 & -3 & -3 & -2 & -2 \\
 5 & 5 & 4 & 4 & 3 & 3 & 2 & 2 & 2 & 2 & 2 & 1 & 0 & -1 & -2 & -2 & -3 & -3 & -4 & -4 &\bf -5 & -5 & -5 & -5 & -5 & -5 & -5 & -5 & -5 & -4 & -4 & -3 & -3 & -2 & -2 \\
 6 & 6 & 5 & 5 & 4 & 4 & 3 & 3 & 3 & 3 & 2 & 1 & 0 & -1 & -2 & -2 & -3 & -3 & -4 & -4 &\bf -5 & -5 & -5 & -5 & -5 & -5 & -5 & -5 & -5 & -4 & -4 & -3 & -3 & -2 & -2 \\
 7 & 7 & 6 & 6 & 5 & 5 & 4 & 4 & 4 & 3 & 2 & 1 & 0 & -1 & -2 & -2 & -3 & -3 & -4 & -4 &\bf -5 &\bf -5 &\bf -5 &\bf -5 &\bf -5 &\bf -5 &\bf -5 &\bf -5 &\bf -5 & -4 & -4 & -3 & -3 & -2 & -2 \\
 9 & 9 & 8 & 8 & 7 & 7 & 6 & 6 & 5 & 4 & 3 & 2 & 1 & 0 & -1 & -1 & -2 & -2 & -3 & -3 & -4 & -4 & -4 & -4 & -4 & -4 & -4 & -4 &\bf -4 & -3 & -3 & -2 & -2 & -1 & -1 \\
 10 & 10 & 9 & 9 & 8 & 8 & 7 & 6 & 5 & 4 & 3 & 2 & 1 & 0 & -1 & -1 & -2 & -2 & -3 & -3 & -4 & -4 & -4 & -4 & -4 & -4 & -4 & -4 &\bf -4 & -3 & -3 & -2 & -2 & -1 & -1 \\
 11 & 11 & 10 & 10 & 9 & 9 & 7 & 6 & 5 & 4 & 3 & 2 & 1 & 0 & -1 & -1 & -2 & -2 & -3 & -3 & -4 & -4 & -4 & -4 & -4 & -4 & -4 & -4 &\bf -4 &\bf -3 &\bf -3 &\bf -2 &\bf -2 & -1 & -1 \\
 13 & 13 & 12 & 12 & 11 & 10 & 8 & 7 & 6 & 5 & 4 & 3 & 2 & 1 & 0 & 0 & -1 & -1 & -2 & -2 & -3 & -3 & -3 & -3 & -3 & -3 & -3 & -3 & -3 & -2 & -2 & -1 &\bf -1 &\bf 0 & 0 \\
 15 & 15 & 14 & 14 & 12 & 11 & 9 & 8 & 7 & 6 & 5 & 4 & 3 & 2 & 1 & 1 & 0 & 0 & -1 & -1 & -2 & -2 & -2 & -2 & -2 & -2 & -2 & -2 & -2 & -1 & -1 & 0 & 0 &\bf 1 &\bf 1 \\
 16 & 16 & 15 & 14 & 12 & 11 & 9 & 8 & 7 & 6 & 5 & 4 & 3 & 2 & 1 & 1 & 0 & 0 & -1 & -1 & -2 & -2 & -2 & -2 & -2 & -2 & -2 & -2 & -2 & -1 & -1 & 0 & 0 & 1 &\bf 0

\psframe[linecolor=red,linearc=0.2](-15.4,16.6)(-14.9,17.2)
\psframe[linecolor=red,linearc=0.2](-12.7,13.7)(-8.7,10)
\psframe[linecolor=red,linearc=0.2](-6.6,7.1)(-2.6,3.3)
\psframe[linecolor=red,linearc=0.2](-0.4,0.4)(0.2,-0.2)
\psframe[linecolor=red,linestyle=dashed,linearc=0.2](-7,9.9)(-6.5,9.3)
\psframe[linecolor=red,linestyle=dashed,linearc=0.2](-8.7,7.7)(-8.2,7.1)
\end{psmatrix}
\end{pspicture}
\end{center}
\caption{Lattice cohomology of Example \ref{example:superisol}}
\label{fig:ex6}
\end{figure}
\end{eexample}


\begin{thebibliography}{999}

\bibitem{AGV} Arnold, V.I., Gussein--Zade, S.M. and  Varchenko, A.N.:
Singularities of differentiable
maps volume 2 (engl.vers.), {\em Monographs in mathematics, Birkh\"auser} {\bf 83}, 1988.

\bibitem{Artin62} Artin, M.:
Some numerical criteria for contractibility of curves on algebraic surfaces, 
{\em  Amer. J. of Math.} {\bf 84} (1962),  485--496.

\bibitem{Artin66} Artin, M.:
On isolated rational singularities of surfaces, 
{\em Amer. J. of Math.} {\bf 88} (1966), 129--136.

\bibitem{BPVVbook} Barth, W., Peters, C. and Van de Ven, A.: Compact complex surfaces, 
{\em Springer--Verlag}, Berlin \& Heidelberg, 1984.

\bibitem{Bar} Barvinok, A.I.:
 Computing the Ehrhart polynomial of a convex lattice
 polytope, {\it Discrete Comput. Geom.} {\bf  12} (1994), 35--48.

\bibitem{BP}  Barvinok, A. and Pommersheim, J.: An algorithmic theory of lattice points in
polyhedra, {\em New Perspectives in Algebraic Combinatorics} {\bf 38} (1999), 91--147.

\bibitem{Beck_c} Beck, M.: A closer look at lattice points in rational simplices, 
{\em Electron. J. Combin.} {\bf 6} (1999), Research Paper 37, 9 pp. (electronic).

\bibitem{Beck_m} Beck, M.: Multidimensional Ehrhart reciprocity, 
{\em J. Combin. Theory Ser. A} {\bf 97} (2002), no. 1, 187--194.

\bibitem{Beck_p} Beck, M.: The Partial-Fractions Method for Counting Solutions to Integral
Linear Systems, {\em Disc. \& Comp. Geom.} (2004), 437--446.

\bibitem{BR1} Beck, M., Robins, S.: Explicit and efficient formulas for the
 lattice point count inside rational polygons, {\it Discrete \& Computational
 Geometry} {\bf  27}  (2002), 443--459.

\bibitem{BR2} Beck, M.,  Robins, S.: Computing the Continuous Discretely:
 Integer-point enumeration in polyhedra, {\it Undergraduate Texts in
 Mathematics}, {\em Springer-Verlag}, New York 2007.

\bibitem{BDR} Beck, M., Diaz, R., Robins, S.:  The Frobenius problem,
 rational polytopes, and Fourier-Dedekind sums, {\it Journal of Number
 Theory}, {\bf  96} (2002), 1--21.

\bibitem{BN07} Braun, G. and N\'emethi, A.:
Invariants of Newton non--degenerate surface singularities, {\em Compos. Math.} {\bf 143} (2007),
1003--1036.

\bibitem{BN} Braun, G. and  N\'emethi, A.:
Surgery formula for the Seiberg-Witten invariants
of negative definite plumbed 3-manifolds,
{\em Journal f\"ur die Reine und angewandte Mathematik} {\bf 638} (2010), 189--208.

\bibitem{B66a} Brieskorn, E.: Examples of singular normal complex spaces which are topological 
manifolds, {\em Proc. Nat. Acad. Sci.} {\bf 55} (1966), 1395--1397.

\bibitem{B66b} Brieskorn, E.: Beispiele zur Differentialtopologie von Singularit\"aten, 
{\em Invent. Math.} {\bf 2} (1966), 1--14.

\bibitem{BK} Brieskorn, E. and Kn\"orrer, H.: 
Plane algebraic curves, 
{\em Birkh\"auser}, Boston, 1986.

\bibitem{BV} Brion, M. and Vergne, M.: Residue formulae, vector partition functions and
lattice points in rational polytopes, {\em J. Amer. Math. Soc.} {\bf 10} (1997), 797--833.

\bibitem{c-m}
Bruns, W. and Herzog, J.: Cohen--Macaulay rings (Revised edition),
{\em Cambridge Stud. Adv. Math.}, vol. 39, Cambridge Univ. Press, Cambridge (1998).

\bibitem{BG} Bruns, W. and Gubeladze, J.: Polytopes, rings and K-theory, 
{\em Springer Mon. in Math. Ser.}, XIV, Springer (2009).


\bibitem{CDG} Campillo, A., Delgado, F. and Gusein-Zade, S. M.:
Poincar\'e series of a rational surface singularity, {\em Invent. Math.} {\bf 155} (2004),
no. 1, 41--53.

\bibitem{CDGEq} Campillo, A., Delgado, F. and Gusein-Zade, S. M.:
Universal abelian covers of rational
surface singularities and multi-index filtrations,
{\em Funk. Anal. i Prilozhen.} {\bf 42} (2008), no. 2, 3--10.

\bibitem{CL} Clauss, Ph. and Loechner, V.: Parametric Analysis of Polyedral Iteration Spaces,
{\em J. of VLSI Signal Proc.} (1998), 1--16.

\bibitem{CHR} Cutkosky, S. D.,  Herzog, J. and Reguera, A.:
Poincar\'e series of resolutions of surface singularities,
{\em Trans. Amer. Math. Soc.} {\bf 356} (2004), no. 5,  1833--1874.

\bibitem{DR}
Diaz, R.,  Robins, S.: The Ehrhart polynomial of a lattice
 polytope, {\it Annals of Mathematics}, {\bf 145} (1997), 503--518.

\bibitem{Dimca} Dimca, A.: Singularities and Topology of Hypersurfaces,
{\em Universitext, Springer--Verlag}, 1992.

\bibitem{Do} Dolgachev, I.V.: On the link space of a Gorenstein quasihomogeneous
surface singularity, {\em Math Annalen} {\bf 265} (1983), 529--540.

\bibitem{D78} Durfee, A.H.: The signature of smoothings of complex surface singularities,
{\em Math. Ann.} {\bf 232} (1978), no. 1, 85--98.

\bibitem{EN} Eisenbud, D. and Neumann, W.: Three-dimensional
link theory and invariants of plane curve singularities,
{\em Ann. of Math. Studies} {\bf 110}, Princeton University Press, 1985.

\bibitem{G62} Grauert, H.: \"Uber Modifikationen und exzeptionelle analytische Mengen, 
{\em Math. Annalen} {\bf 146} (1962), 331--368.

\bibitem{Greene} Greene, J.: A surgery triangle for lattice cohomology, 
{\em Alg. \& Geom. Topology} {\bf 13} (2013), 441--451.

\bibitem{GS} Gompf, R.E. and Stipsicz, I.A.:  An Introduction to
$4$-Manifolds and Kirby Calculus, {\em Graduate Studies in Mathematics} vol. {\bf 20},
Amer. Math. Soc., 1999.

\bibitem{GN} Gorsky, E. and N\'emethi, A.: Poincar\'e series of algebraic links and lattice homology,
{\em arXiv:1301.7636 [math.AG]} (2013).

\bibitem{G-W} Goto, S. and Watanabe, K.: On graded rings I., 
{\em J. Math. Soc. Japan} {\bf 30} no. 2 (1978), 179--213.


\bibitem{HNK71} Hirzebruch, F., Neumann, W. and Koh, S.S.: Differentiable manifolds and quadratic forms, 
{\em Marcel Dekker, Inc. New York}, 1971.

\bibitem{KM} Kronheimer, P. and Mrowka, T.: Monopoles and three--manifolds, 
{\em New Math. Mon., Cambridge Univ. Press}, 2010.

\bibitem{Ehrhart} L\'aszl\'o, T. and N\'emethi, A.: Ehrhart theory of polytopes and Seiberg--Witten 
invariants of plumbed 3--manifolds, {\em arXiv:1211.2539 [math.AG]} (2012).

\bibitem{redthm} L\'aszl\'o, T. and N\'emethi, A.: The reduction theorem for lattice cohomology, 
{\em arXiv:1302.4716 [math.GT]} (2013).

\bibitem{Lauferb} Laufer, H.B.: Normal two--dimensional singularities, {\em Ann. of Math. Studies} {\bf 71}, 
Princeton University Press, 1971.

\bibitem{Laufer72} Laufer, H.B.: On rational singularities,
{\em Amer. J. of Math.} {\bf 94} (1972), 597-608.

\bibitem{Laufer73} Laufer, H.B.: Taut two--dimensional singularities, 
{\em Math. Ann.} {\bf 205} (1973), 131--164.

\bibitem{Laufer77} Laufer, H.B.: On minimally elliptic singularities,
{\em Amer. J. of Math.} {\bf 99} (1977), 1257--1295.

\bibitem{Lescop} Lescop, C.: Global surgery formula for the Casson--Walker
invariant, {\em Ann. of Math. Studies}  {\bf 140}, Princeton Univ. Press, 1996.

\bibitem{LeDT} L\^{e}, D.T.: Singularit\'es isol\'ees des intersections compl\'etes, 
Introduction \'a la th\'eorie des singularit\'es, I, 1--48, 
{\em Travaux en Cours} {\bf 36}, Hermann, Paris, 1988. 

\bibitem{Lim} Lim, Y.: Seiberg--Witten invariants for $3$--manifolds in the 
case $b_1=0$ or $1$, {\em Pacific J. of Math.} {\bf 195} no. 1 (2000), 179--204.

\bibitem{LMN} Luengo-Velasco, I., Melle-Hern\'andez, A. and N\'emethi, A.:
Links and analytic invariants of superisolated singularities,
{\em Journal of Alg. Geom.} {\bf 14} (2005), 543--565.

\bibitem{MW} Marcolli, M. and Wang, B.L.: Seiberg--Witten invariant and the Casson--Walker 
invariant for rational homology $3$--spheres, {\em Geom. Dedicata} {\bf 91} (2002), 45--58.

\bibitem{M95} Mellor, B.: $Spin^c$--manifolds,
{\em http://myweb.lmu.edu/bmellor}, unpublished exposition, 1995.

\bibitem{MN} Mendris, R. and N\'emethi, A.: The link of ${f(x,y)+z^n=0}$ and Zariski's Conjecture, 
{\em Compositio Math.} {\bf 141}(2) (2005), 502--524.

\bibitem{M} Mendris, R.: The link of suspension singularities and Zariski's conjecture, 
{\em PhD Dissertation}, The Ohio State University, 2003.

\bibitem{MT}
Merle, M. and  Teissier, B.:
Conditions d'adjonction, d'apr\'es DuVal,  in {\em S\'eminaire sur les Singularit\'es des Surfaces}, 
Lecture Notes in Math., {\bf 777} (1980), Springer, Berlin, 229--245.

\bibitem{M78} McMullen, P.: Lattice invariant valuations on rational polytopes, {\em Arch. Math.},
{\bf 31} (1978), 509--516.

\bibitem{Milnorbook} Milnor, J.: Singular points of complex
hypersurfaces, {\em Ann. of Math. Studies} {\bf 61}, Princeton
Univ. Press, 1968.

\bibitem{Morgan} Morgan, J. W.: The Seiberg--Witten equations and applications to the topology of 
smooth four--manifolds, {\em Mathematical Notes} {\bf 44}, Princeton University Press, 1996.

\bibitem{Mum61} Mumford, D.: The topology of normal singularities of an algebraic surface and 
criterion for simplicity, {\em IHES Publ. Math.} {\bf 9} (1961), 5--22.


\bibitem{Nem99} N\'emethi, A.: ``Weakly'' elliptic Gorenstein singularities of surfaces, 
{\em Invent. Math.} {\bf 137} (1999), 145--167.

\bibitem{NSW} N\'emethi, A.: The Seiberg-Witten invariants of negative definite plumbed 3-manifolds,
{\em  J. Eur. Math. Soc.} {\bf 13} No. 4 (2011), 959--974.

\bibitem{NPS} N\'emethi, A.: Poincar\'e series associated with surface singularities, in Singularities I, 271--297,
{\em Contemp. Math.} {\bf 474}, Amer. Math. Soc., Providence RI, 2008.

\bibitem{NCL} N\'emethi, A.: The cohomology of line bundles of splice-quotient singularities,
{\em Advances in Math.} {\bf 229} 4 (2012), 2503--2524.

\bibitem{Ninv} N\'emethi, A.: Invariants of normal surface singularities, 
{\em Proceedings of the Conference: 
Real and Complex Singularities, Sao Carlos, Brazil, August 2002}, 
{\em Contemporary Mathematics} {\bf 354} (2004), 161--208.

\bibitem{OSZINV}  N\'emethi, A.: On the Ozsv\'ath-Szab\'o
invariant of negative definite plumbed 3-manifolds,
{\em Geometry and Topology}, {\bf 9} (2005), 991--1042.

\bibitem{Nsur}  N\'emethi, A.: On the Heegaard Floer homology of $S^3_{-d}(K)$ and
unicuspidal rational plane curves,  {\em Fields Institute
Communications} Vol. {\bf 47} (2005), 219--234;
 ``Geometry and Topology of Manifolds'',
Editors: H.U. Boden, I. Hambleton, A.J. Nicas and B.D. Park,
 (Proceedings of the  Conference at McMaster University, May 2004).

\bibitem{Nsurrat}  N\'emethi, A.: On the Heegaard Floer homology of
$S^3_{-p/q}(K)$, math.GT/0410570, publishes as part of \cite{Ng}.

\bibitem{Nlat}  N\'emethi, A.: Lattice cohomology of normal surface singularities,
 {\em Publ. RIMS. Kyoto Univ.}, {\bf 44} (2008), 507--543.

\bibitem{Nexseq} N\'emethi, A.: Two exact sequences for lattice cohomology,
Proceedings of the conference organized to honor H. Moscovici's 65th birthday,
{\em Contemporary Math.} {\bf 546} (2011), 249--269.

\bibitem{Ng} N\'emethi, A.:  Graded roots and singularities,
 Proceedings {\em Advanced
School and Workshop} on {\em Singularities in Geometry and
Topology} ICTP (Trieste, Italy),
 World Sci. Publ., Hackensack, NJ, 2007, 394--463.

\bibitem{Nfive} N\'emethi, A.: Five lectures on normal surface singularities,
lectures at the Summer School in {\em Low dimensional topology} Budapest,
Hungary, 1998; Bolyai Society Math. Studies {\bf 8} (1999), 269--351.

\bibitem{Nline} N\'emethi, A.: Line bundles associated with normal surface singularities, 
{\em arXiv:math/0310084 [math.AG]}, published as part of \cite{Ng}.

\bibitem{SWI} N\'emethi, A. and Nicolaescu, L.I.:
 Seiberg--Witten invariants and surface singularities,
{\em Geometry and Topology} {\bf 6} (2002), 269--328.

\bibitem{SWII} N\'emethi, A. and Nicolaescu, L.I.:
Seiberg--Witten invariants and surface singularities II (singularities with good $\C^*$-action),
{\em Journal of London Math. Soc.} {\bf 69} (2) (2004), 593--607.

\bibitem{SWIII} N\'emethi, A. and Nicolaescu, L.I.:
Seiberg--Witten invariants and surface singularities: Splicings and cyclic covers,
{\em Selecta Mathematica} {\bf 11} nr. 3-4 (2005), 399--451.

\bibitem{NO} N\'emethi, A. and Okuma, T.:
The Seiberg--Witten invariant conjecture for splice--quotients,
{\em Journal of London Math. Soc.} {\bf 28} (2008), 143--154.

\bibitem{NO1} N\'emethi, A. and Okuma, T.:
On the Casson invariant conjecture of Neumann--Wahl, {\em Journal of Algebraic Geometry}
{\bf 18} (2009), 135--149.

\bibitem{NO2} N\'emethi, A. and Okuma, T.:
The embedding dimension of weighted homogeneous surface singularities,
{\em J. of Topology} {\bf 3} (2010), 643--667.

\bibitem{NR} N\'emethi, A. and Rom\'an, F.:
The lattice cohomology of $S^3_{-d}(K)$, Proceedings of
the `Recent Trends on Zeta Functions in Algebra and Geometry',
2010 Mallorca (Spain), {\em Contemporary Mathematics}.

\bibitem{Nem10} N\'emethi, A.: Some properties of the lattice cohomology, 
{\em Proceedings of the `Geometry Conference' meeting}, organized at Yamagata University, Japan, 
September 2010.

\bibitem{NP} Neumann, W.D.: A calculus for plumbing applied to the
topology of complex surface singularities and degenerating
complex curves, {\em Transactions of the AMS} {\bf 268} (2)
(1981), 299--344.

\bibitem{neumann.abel}
Neumann, W.D.: Abelian covers of quasihomogeneous surface singularities,
{\em Singularities, Part 2 (Arcata, Calif., 1981), Proc. Sympos. Pure Math.}, vol. 40,
Amer. Math. Soc., Providence, RI (1983), 233--243.

\bibitem{NW} Neumann, W.D. and Wahl, J.: Casson invariant of links of singularities,
{\em Comment. Math. Helv.} {\bf 65} no. 1 (1990), 58--78.

\bibitem{nw-CIuac} Neumann, W.D. and Wahl, J.: 
Complete intersection singularities of splice type as universal abelian covers, 
{\em Geom. Topol.} {\bf 9} (2005), 699--755.

\bibitem{Nic00_1} Nicolaescu, L.: Notes on Seiberg-Witten theory, 
{\em AMS Graduate Series in Math. Monograph} {\bf 28}, xii+ 482pp, 2000.

\bibitem{Nic00_2} Nicolaescu, L.: Seiberg--Witten invariants of rational homology $3$--spheres, 
{\em http://www3.nd.edu/~lnicolae/swc.pdf}, unpublished manuscript, 2000.

\bibitem{Nic04} Nicolaescu, L.: Seiberg--Witten invariants of rational homology $3$--spheres, 
{\em Comm. in Cont. Math.} {\bf 6} no. 6 (2004), 833--866. 

\bibitem{Opg} Okuma, T.: The geometric genus of splice quotient singularities,
{\em Trans. Amer. Math. Soc.} {\bf 360} no. 12 (2008), 6643--6659.

\bibitem{OSz00} Ozsv\'ath, P.S. and Szab\'o, Z.: The theta divisor and 
the Casson--Walker invariant, {\em arXiv:math/0006194 [math.GT]} (2000).

\bibitem{OSzP} Ozsv\'ath, P.S. and Szab\'o, Z.: On the Floer
homology of plumbed three-manifolds, {\em Geom. Topol.} {\bf 7} (2003),
185--224.

\bibitem{OSz} Ozsv\'ath, P.S. and Szab\'o, Z.:
Holomorphic disks and topological invariants for closed three-manifolds,
{\em Ann. of Math.} {\bf 159} (2) no. 3 (2004), 1027--1158.

\bibitem{OSz7} Ozsv\'ath, P.S. and Szab\'o, Z.: Holomorphic discs
and three--manifold invariants: properties and applications,
{\em Annals of Math.} {\bf 159} (2004), 1159--1245.

\bibitem{OSz06a} Ozsv\'ath, P.S. and Szab\'o, Z.: An Introduction to Heegaard Floer homology, 
{\em Floer homology, gauge theory, and low--dimensional topology} {\bf 5} (2006), 3--27.

\bibitem{OSz06b} Ozsv\'ath, P.S. and Szab\'o, Z.: Lectures on Heegaard Floer homology, 
{\em Floer homology, gauge theory, and low-dimensional topology} {\bf 5} (2006), 29--70.

\bibitem{OSSz1} Ozsv\'ath, P., Stipsicz, A. and Szab\'o, Z.:
A spectral sequence on lattice homology, {\em arXiv:1206.1654 [math.GT]} (2012).

\bibitem{OSSz2} Ozsv\'ath, P., Stipsicz, A. and Szab\'o, Z.:
Knots in lattice homology, {\em arXiv:1208.2617 [math.GT]} (2012).

\bibitem{OSSz3} Ozsv\'ath, P., Stipsicz, A. and Szab\'o, Z.:
Knot lattice homology in L-spaces, {\em arXiv:1207.3889 [math.GT]} (2012).

\bibitem{P} Pinkham, H.: Normal surface singularities with $\C^*$ action, 
{\em Math. Ann.} {\bf 117} (1977), 183--193.

\bibitem{PSz} P\'olya, G. and Szeg\H{o}, G.: Problems and theorems in
analysis I., Classics in Mathematics, {\em Springer} (1998).

\bibitem{Rus} Rustamov, R.: A surgery formula for renormalized
Euler characteristic of Heegaard Floer homology, math.GT/0409294.

\bibitem{Sev02} Sevaliev, N.: Invariants for homology 3--spheres, 
{\em Springer, Ser. Low--Dim. Top} {\bf 140}, 2002.

\bibitem{S74} Stanley, R.P.: Combinatorial reciprocity theorem,
{\it Adv. in Math.} {\bf 14} (1974), 194--253.

\bibitem{Str} Sturmfels, B.: On vector partition functions,
 {\em J. Combin. Theory Ser. A} {\bf 72} no. 2 (1995), 302--309.

\bibitem{SZV} Szenes, A. and Vergne, M.: Residue formulae for
vector partitions and Euler-Maclaurin sums, {\it Adv. in Applied Math.} {\bf 30} (2003), 295--342.

\bibitem{wa} Wagreich, P.: The structure of quasihomogeneous singularities,
{\em Proc. of Symp. in Pure Path.}, {\bf 40} Part 2 (1983), 593--611.

\bibitem{Wall} Wall, C.T.C.: Singular points of plane curves, 
{\em Cambridge Univ. Press}, Cambridge, 2004.

\bibitem{Witten} Witten, E.: Monopoles and four--manifolds, 
{\em Mathematical Research Letters} {\bf 1} no. 6 (1994), 769--796.	 

\bibitem{Yau80} Yau, S.S.-T.: On maximally elliptic singularities, 
{\em Trans. of the AMS} {\bf 257} no. 2 (1980), 269--329.

\bibitem{Zieg} Ziegler, G.M.: Lectures on Polytopes, {\em Springer}, ed.7, (1995)



\end{thebibliography}
\end{document}